\newcommand{\I}{\mathbf 1}
\newcommand{\Q}{\mathbf Q}
\newcommand{\Z}{\mathbf Z}
\newcommand{\bA}{\mathbf A}
\newcommand{\bm}{\mathbf m}
\newcommand{\bn}{\mathbf n}
\newcommand{\br}{\mathbf r}
\newcommand{\GL}{\mathrm {GL}}
\newcommand{\rM}{\mathrm M}
\newcommand{\sA}{\mathcal A}
\newcommand{\sC}{\mathcal C}
\newcommand{\sD}{\mathcal D}
\newcommand{\sF}{\mathcal F}
\newcommand{\sH}{\mathcal H}
\newcommand{\sI}{\mathcal I}
\newcommand{\sJ}{\mathcal J}
\newcommand{\sM}{\mathcal M}
\newcommand{\sN}{\mathcal N}
\newcommand{\sO}{\mathcal O}
\newcommand{\sR}{\mathcal R}
\newcommand{\sS}{\mathcal S}
\newcommand{\sU}{\mathcal U}
\newcommand{\sV}{\mathcal V}
\newcommand{\sW}{\mathcal W}
\newcommand{\iso}{\xrightarrow{\sim}}
\newcommand{\nd}{\nobreakdash-\hspace{0pt}}
\renewcommand{\theenumi}{(\roman{enumi})}
\DeclareMathOperator{\Coker}{Coker}
\DeclareMathOperator*{\colim}{colim}
\DeclareMathOperator{\End}{End}
\DeclareMathOperator{\Gal}{Gal}
\DeclareMathOperator{\Hom}{Hom}
\DeclareMathOperator{\Id}{Id}
\DeclareMathOperator{\Img}{Im}
\DeclareMathOperator{\Iso}{Iso}
\DeclareMathOperator{\Ker}{Ker}
\DeclareMathOperator{\Mod}{Mod}
\DeclareMathOperator{\MOD}{MOD}
\DeclareMathOperator{\Nat}{Nat}
\DeclareMathOperator{\pr}{pr}
\DeclareMathOperator{\Reg}{Reg}
\DeclareMathOperator{\Spec}{Spec}
\DeclareMathOperator{\Sym}{Sym}
\DeclareMathOperator{\tr}{tr}
\newtheorem{thm}{Theorem}[section]
\newtheorem{cor}[thm]{Corollary}
\newtheorem{lem}[thm]{Lemma}
\newtheorem{prop}[thm]{Proposition}
\theoremstyle{definition}
\newtheorem{defn}[thm]{Definition}
\newtheorem*{thm*}{Theorem}
\newtheorem*{defn*}{Definition}
\numberwithin{equation}{section}
\begin{document}

\title{Super Tannakian hulls}

\author{Peter O'Sullivan}
\address{Mathematical Sciences Institute \\
The Australian National University \\
Canberra ACT 2601, Australia}
\email{peter.osullivan@anu.edu.au}
\thanks{}

%
%
%
%
%
%

%\subjclass[2000]{}

\keywords{super Tannakian category, super general linear group, equivariant sheaf}

\date{}

\dedicatory{}

\begin{abstract}
We consider essentially small rigid tensor categories (not necessarily abelian) 
which have a faithful tensor functor to a category of super vector spaces over a field of characteristic $0$.
It is shown how to construct for each such tensor category a super Tannakian hull,
which is a universal faithful tensor functor to a super Tannakian category over a field of characteristic $0$.
The construction is analogous to the passage from an integral domain to its field of fractions.
\end{abstract}

\maketitle

%%%%%%%%%%%%%%%%%%%%%%%%%%%%%%%%%%%%%%%%%%%%%%%%%%%%%%%%%%%%%%%%%%%%%

\section{Introduction}

In the theory of motives, rigid tensor categories arise which have a faithful tensor functor
to a category of super vector spaces over a field of characteristic $0$, but which
are not known to be super Tannakian.
In view of the good properties of super Tannakian categories, this raises the questions
of whether for each such rigid tensor category $\sC$ there is a super 
Tannakian category $\sC'$ which most closely approximates it, 
and of how the objects and morphisms of $\sC'$ are related to those of $\sC$.

To describe the situation in more detail, we first fix some terminology.
By a tensor category we mean a symmetric monoidal category
whose hom-sets have structures of abelian group for which
the composition and tensor product are bilinear.
Such a category will be called rigid if every object is dualisable.
A tensor functor between tensor categories is an additive strong symmetric monoidal functor.

We call a tensor category \emph{pseudo-Tannakian} if it is essentially small and
has a faithful tensor functor to a category of super vector spaces over a field
of characteristic $0$.
If $\sD$ is an abelian pseudo-Tannakian category, then $\End_{\sD}(\I)$ is a field
of characteristic $0$, and $\sD$ is a super Tannakian category in the usual sense over this field.
We then say that $\sD$ is super Tannakian.
By a \emph{super Tannakian hull} of a pseudo-Tannakian category $\sC$ we mean a faithful tensor
functor $U:\sC \to \sC'$ with $\sC'$ super Tannakian such that for every super Tannakian 
category $\sD$, composition with $U$ defines an equivalence from the groupoid of 
faithful tensor functors $\sC' \to \sD$ to the groupoid of faithful tensor functors
$\sC \to \sD$.
Such a $\sC'$ if it exists will be the required closest super Tannakian approximation to $\sC$.

That a super Tannakian hull for every pseudo-Tannakian category $\sC$
exists will be proved in Section~\ref{s:supTann}.
It can be described explicitly as follows.
Denote by $\widehat{\sC}$ the category of additive functors from $\sC^{\mathrm{op}}$ to
abelian groups.
Then $\widehat{\sC}$ is abelian and we have a fully faithful additive Yoneda embedding 
\begin{equation*}
\sC \to \widehat{\sC}.
\end{equation*}
The tensor structure of $\sC$ induces a tensor structure on $\widehat{\sC}$,
and the embedding has a canonical structure of tensor functor.
An object $M$ of $\widehat{\sC}$ will be called a \emph{torsion object} if for each object
$B$ of $\sC$ and element $b$ of $M(B)$ there exists a non-zero morphism $a:A \to \I$ in $\sC$
such that
\begin{equation*}
M(a \otimes B):M(B) \to M(A \otimes B)
\end{equation*}
sends $b$ to $0$.
The full subcategory $(\widehat{\sC})_{\mathrm{tors}}$ of $\widehat{\sC}$ consisting of
the torsion objects is a Serre subcategory,
and we may form the quotient
\begin{equation*}
\widetilde{\sC} = \widehat{\sC}/(\widehat{\sC})_{\mathrm{tors}}.
\end{equation*}
It has a unique structure of tensor category such that the projection $\sC \to \widetilde{\sC}$
is a strict tensor functor.
Since $\sC$ is rigid, the composite
\begin{equation*}
\sC \to \widehat{\sC} \to \widetilde{\sC}
\end{equation*}
of the projection with the Yoneda embedding factors through the full tensor subcategory 
$(\widetilde{\sC})_{\mathrm{rig}}$ of $\widetilde{\sC}$ consisting of the dualisable objects.
This factorisation
\begin{equation}\label{e:Tannhull}
\sC \to (\widetilde{\sC})_{\mathrm{rig}}
\end{equation}
is then the required super Tannakian hull (Theorem~\ref{t:Tannhull}).

We may factor any tensor functor essentially uniquely as a strict
tensor functor which is the identity on objects followed by a fully faithful tensor functor.
In the case of \eqref{e:Tannhull}, this factorisation is
\begin{equation*}
\sC \to \sC_\mathrm{fr} \to (\widetilde{\sC})_{\mathrm{rig}}
\end{equation*}
where $\sC_\mathrm{fr}$ is what will be called the \emph{fractional closure} of $\sC$.
A morphism $C \to C'$ in $\sC_\mathrm{fr}$ is an equivalence class of pairs $(h,f)$ with $0 \ne f:A \to A'$
and $h:A \otimes C \to A' \otimes C'$ morphisms in $\sC$ such that the square
\eqref{e:propdef} below commutes, where $(h,f)$ and $(l,g)$ for $0 \ne g:B \to B'$ are equivalent
when the square \eqref{e:propequiv} below commutes. 
If we denote the class of $(h,f)$ by $h/f$, then $\sC \to \sC_\mathrm{fr}$ sends $j$ to $j/1_{\I}$.
It follows in particular that $\sC$ can be embedded in a super Tannakian category if and only if
$\sC$ is fractionally closed, i.e. $\sC \to \sC_\mathrm{fr}$ is an isomorphism, or equivalently for
every $(h,f)$ as above we have $h = f \otimes j$ for some $j:C \to C'$.

The endomorphism ring of $\I$ in $\sC_\mathrm{fr}$ is a field which will be denoted by $\kappa(\sC)$.
It contains the field of fractions of the endomorphism ring of $\I$ in $\sC$, 
but is in general strictly larger.
The tensor category $(\widetilde{\sC})_{\mathrm{rig}}$ has a canonical structure
of super Tannakian category over $\kappa(\sC)$.
Let $k'$ be a field of characteristic $0$.
If we write $k$ for the endomorphism ring of $\I$ in $\sC$,
then for any faithful tensor functor $T$ from $\sC$ to super 
$k'$\nd vector spaces, the $k'$\nd linear extension of $T$ to a functor from 
$k' \otimes_k \sC$ is faithful if and only if $\kappa(\sC)$ is the field of fractions of $k$. 
Any $T$ induces a homomorphism $\rho$ from $\kappa(\sC)$ to $k'$,
and if $k'$ is algebraically closed, a $T$ inducing a given $\rho$ exists (Theorem~\ref{t:Tannequiv}) and 
it is unique up to tensor isomorphism (Corollary~\ref{c:fibfununique}).

The essential point in proving that \eqref{e:Tannhull} has the
required universal property is to show that $(\widetilde{\sC})_{\mathrm{rig}}$ is super
Tannakian.
This will be done by showing that $\widetilde{\sC}$ is a category 
of modules over a transitive affine super groupoid,
with $(\widetilde{\sC})_{\mathrm{rig}}$ consisting of the ones of finite type
(Theorem~\ref{t:Tannequiv}).

Consider first the case where $\sC$ has a faithful tensor functor to a category
of vector spaces over a field of characteristic $0$.
Then \cite[Lemma~3.4]{O} there is a product (not necessarily finite) $G$
of general linear groups over $\Q$ such that, after tensoring with $\Q$ and
passing to the pseudo-abelian hull, $\sC$ is a category of $G$\nd equivariant vector bundles
over an integral affine $G$\nd scheme $X$.
It follows from this that $\widehat{\sC}$ is the category of $G$\nd equivariant
quasi-coherent $\sO_X$\nd modules.

If $G = 1$, then the torsion objects of $\widehat{\sC}$ are the usual torsion sheaves, 
$\widetilde{\sC}$ is the category of vector spaces over the function field $\kappa(\sC)$ of $X$ with
$(\widetilde{\sC})_{\mathrm{rig}}$ the category of finite-dimensional ones,
and \eqref{e:Tannhull} is passage to the generic fibre.

On the other hand suppose that $G$ is arbitrary but that $X$ is of finite type.
Then \eqref{e:Tannhull} is given by pullback onto the ``generic orbit'' $X_0$ of $X$.
Explicitly, $X_0$ is the intersection of the non-empty open $G$\nd subschemes of $X$, 
with $\widetilde{\sC}$ the category of $G$\nd equivariant quasi-coherent $\sO_{X_0}$\nd modules,
and $(\widetilde{\sC})_{\mathrm{rig}}$ the category of $G$\nd equivariant vector bundles
over $X_0$.
The $\Q$\nd algebra of invariants of $H^0(X_0,\sO_{X_0})$ under $G$ is an extension $k$ of $\Q$,
and $X_0$ is a homogeneous quasi-affine $G_k$\nd scheme of finite type.
Then $(\widetilde{\sC})_{\mathrm{rig}}$ is a Tannakian category over $k$ with
category of ind-objects $\widetilde{\sC}$, and $\kappa(\sC) = k$. 

For arbitrary $G$ and $X$ we may write $X$ as the filtered limit $\lim_\lambda X_\lambda$
of integral affine $G$\nd schemes $X_\lambda$ of finite type with dominant transition morphisms, 
and the generic orbits form a filtered inverse system $(X_0{}_\lambda)$ of $G$\nd schemes.
It is not clear whether the limit $\lim_\lambda X_0{}_\lambda$ exists, but
the generic point of $X$ lies in the inverse image of each $X_0{}_\lambda$,
and passing to the generic fibre shows that
$(\widetilde{\sC})_{\mathrm{rig}}$ is the category of representations of
a transitive affine groupoid in $\kappa(\sC)$\nd schemes,
and hence is Tannakian over $\kappa(\sC)$.

In the case of an arbitrary pseudo-Tannakian 
category $\sC$, it is natural to modify the above by taking for
$G$ a product of super general linear groups and for $X$ an appropriate
affine super $G$\nd scheme.
In this case we no longer have an explicit description of $\sC$ as a category of 
equivariant vector bundles or of $\widehat{\sC}$ as a category of equivariant sheaves.
It can still however be shown (Theorem~\ref{t:Ftildeequiv}) 
that $\widetilde{\sC}$ is a category of equivariant sheaves modulo torsion.
It is then possible to argue as above.

The paper is organised as follows.
After recalling some notation and terminology in Section~\ref{s:prelim},
the fractional closure of a tensor category is defined in Section~\ref{s:frac}.
Sections~\ref{s:rep} and \ref{s:free} deal with the connection between free rigid tensor categories
and categories of representations of super general linear groups over a field
of characteristic $0$.
In Sections~\ref{s:fun}--\ref{s:mod} the definitions and basic properties of the
categories $\widehat{\sC}$ and their quotients $\widetilde{\sC}$ modulo torsion are given,
culminating with the fact that for $\sC$ pseudo-Tannakian, $\widetilde{\sC}$ is a 
category of equivariant sheaves modulo torsion over some super scheme.
Such categories of equivariant sheaves are studied in Section~\ref{s:equ},
and it is shown that modulo torsion they are categories of modules over
transitive affine super groupoids.
This result is applied in Section~\ref{s:supTann} to prove the existence 
and basic properties of super Tannakian hulls.
Applications to motives and algebraic cycles will be given in a separate paper.

 \section{Preliminaries}\label{s:prelim}

In this section we fix some notation and terminology for tensor categories and for
super groups and their representations.

By an \emph{additive} category we mean a category enriched over the category 
$\mathrm{Ab}$ of abelian groups.
It is not assumed that direct sums or a zero object exist.
A \emph{tensor category} is an additive category equipped with a structure of symmetric
monoidal category for which the tensor product is bilinear.
A \emph{tensor functor} is an additive strong symmetric monoidal functor. 
A monoidal natural isomorphism will also be called a tensor isomorphism.
It may always be assumed that the units of tensor categories are strict, and that they are strictly preserved
by tensor functors. 

Let $k$ be a commutative ring.
By a \emph{$k$\nd linear category} we mean a category enriched over the category of $k$\nd modules.
If $\sA$ is a cocomplete $k$\nd linear category and $V$ is a $k$\nd module, we have 
for every object $M$ of $\sA$ an object
\begin{equation*}
V \otimes_k M
\end{equation*}
of $\sA$ which represents the functor $\Hom_k(V,\Hom_{\sA}(M,-))$.
Its formation commutes with colimits of $k$\nd modules and in $\sA$, and when $V = k$ it coincides
with $M$.

By a \emph{$k$\nd tensor category} we mean a $k$\nd linear category with a structure of symmetric monoidal
category with bilinear tensor product.
A $k$\nd tensor category is thus a tensor category for which $\End(\I)$ has a structure of 
$k$\nd algebra.
Similarly we define $k$\nd tensor functors.

Let $\sA$ and $\sA'$ be tensor categories.
As well as tensor functors from $\sA$ to $\sA'$, it will sometimes be necessary to consider
more generally lax tensor functors, which are defined as additive symmetric monoidal functors.
Explicitly, a \emph{lax tensor functor} from $\sA$ to $\sA'$ it is an additive functor $H:\sA \to \sA'$ together
with a morphism $\I \to H(\I)$ and morphisms
\begin{equation*}
H(M) \otimes H(N) \to H(M \otimes N),
\end{equation*}
natural in $M$ and $N$, which satisfy conditions similar to those for a tensor functor. 
Any right adjoint to a tensor functor has a unique structure of lax tensor functor 
for which the unit and counit of the adjunction are compatible with the tensor 
and lax tensor structures.

By a \emph{tensor equivalence} we mean a tensor functor $\sA \to \sA'$ which has a quasi-inverse
$\sA' \to \sA$ in the $2$\nd category of tensor categories, tensor functors and tensor isomorphisms.
It equivalent to require that the underlying additive functor be an equivalence.
When a lax tensor functor $H:\sA \to \sA'$ is said to be a tensor equivalence, this will always mean
that $H$ is a tensor functor which is an equivalence in the above sense. 
It is in general \emph{not} sufficient for this merely that the underlying additive functor
of $H$ be an equivalence.

A \emph{dual} of an object $M$ in a tensor category $\sA$ is an object $M^\vee$ of $\sA$
together with a unit $\I \to N^\vee \otimes M$ and a counit $M \otimes M^\vee \to \I$ satisfying
the usual triangular identities.
If such a dual exists it is unique up to unique isomorphism, and $M$ will be said to be dualisable.
Any tensor functor preserves dualisable objects.
For $M$ dualisable, the trace in $\End(\I)$ of an endomorphism  of $M$ is defined,
as is for example the contraction $L \to N$ of a morphism from $L \otimes M$ to $N \otimes M$. 
We say that $\sA$ is \emph{rigid} if every object of $\sA$ is dualisable.
The full subcategory
\begin{equation*}
\sA_\mathrm{rig}
\end{equation*}
of $\sA$ consisting of the dualisable objects is a rigid tensor subcategory of $\sA$ which
is closed under the formation of direct sums and direct summands.

We define in the usual way algebras and commutative algebras in a tensor category $\sA$, and modules 
over such algebras.
Suppose that $\sA$ is abelian, with $\otimes$ right exact. 
Let $R$ be a commutative algebra in $\sA$.
Define the \emph{tensor product $M \otimes_R N$} of $R$\nd modules $M$ and $N$ in $\sA$ as
the target of the universal morphism from $M \otimes N$ to an $R$\nd module in $\sA$ which is an $R$\nd module 
morphism for the actions of $R$ on $M \otimes N$ through either $M$ or $N$.
Explicitly, $M \otimes_R N$ is the coequaliser of the two morphisms from $M \otimes R \otimes N$ to $M \otimes N$
defined by the actions of $R$ on $M$ and $N$. 
We then have a structure of abelian tensor category on the category
\begin{equation*}
\MOD_{\sA}(R)
\end{equation*}
of $R$\nd modules in $\sA$, with unit $R$, tensor product $\otimes_R$, and constraints defined by the
universal property.
As usual we assume the tensor product is chosen so that the unit $R$ is strict.
We write
\begin{equation*}
\Mod_{\sA}(R)
\end{equation*}
for the full rigid tensor subcategory $\MOD_{\sA}(R)_{\mathrm{rig}}$ of $\MOD_{\sA}(R)$.

Let $\sA$ be an abelian tensor category with right exact tensor product,
and $R$ be a commutative algebra in $\sA$.
Then we have a tensor functor $R \otimes -$ from $\sA$ to $\MOD_{\sA}(R)$.
If $\sA'$ is an abelian tensor category with right exact tensor product,
then any lax tensor functor (resp.\  right exact tensor functor) $H$ from $\sA$ to $\sA'$ 
induces a lax tensor functor (resp.\  right exact tensor functor)
from $\MOD_{\sA}(R)$ to $\MOD_{\sA'}(H(R))$.

Let $k$ be a field of characteristic $0$.
The $k$\nd tensor category of super $k$\nd vector spaces will be written
\begin{equation*}
\MOD(k).
\end{equation*}
The full rigid tensor subcategory $\MOD(k)_\mathrm{rig}$ consists of finite-dimensional 
super $k$\nd vector spaces, and will be written $\Mod(k)$.
If $X$ is a super $k$\nd scheme, we write
\begin{equation*}
\MOD(X)
\end{equation*}
for the $k$\nd tensor category of 
quasi-coherent $\sO_X$\nd modules.
The full rigid tensor subcategory $\MOD(k)_\mathrm{rig}$ of $\MOD(X)$ consists of
the vector bundles over $X$, i.e.\ the $\sO_X$\nd modules locally isomorphic to $\sO_X{}\!^{m|n}$,
and will be written $\Mod(X)$.

We denote by $\iota$ the canonical automorphism of order $2$ of the identity functor of the category
of super $k$\nd schemes. 
If $X$ is a super $k$\nd scheme and $\sV$ is a quasi-coherent $\sO_X$\nd module, we denote by 
$\iota_{\sV}$ the canonical automorphism of $\sV$ above $\iota_X$.

A super monoid scheme over $k$ will also be called a \emph{super $k$\nd monoid}.
By a \emph{super $k$\nd monoid with involution} we mean a pair $(M,\varepsilon)$ with $M$ a super
$k$\nd monoid and $\varepsilon$ a $k$\nd point of $M$ with $\varepsilon^2 = 1$ 
such that conjugation by $\varepsilon$ is $\iota_M$.
If $M$ is a super group scheme over $k$ we also speak of a \emph{super $k$\nd group} and 
a \emph{super $k$\nd group with involution}.

Let $(M,\varepsilon)$ be a super $k$\nd monoid with involution.
By an \emph{$(M,\varepsilon)$\nd module} we mean an $M$\nd module $V$ for which
$\varepsilon$ acts as $(-1)^i$ on the summand $V_i$ of degree $i$, 
and by a \emph{representation of $(M,\varepsilon)$} we mean a finite-dimensional 
$(M,\varepsilon)$\nd module.
Every $M$\nd submodule and $M$\nd quotient module of an $(M,\varepsilon)$\nd module is an 
$(M,\varepsilon)$\nd module. 

Let $(G,\varepsilon)$ be a super $k$\nd group with involution.
The $k$\nd tensor category of $(G,\varepsilon)$\nd modules will be written as
\begin{equation*}
\MOD_{G,\varepsilon}(k).
\end{equation*}
The full rigid $k$\nd tensor subcategory $\MOD_{G,\varepsilon}(k)_\mathrm{rig}$ consists of the
representations of $(G,\varepsilon)$, and will be written as $\Mod_{G,\varepsilon}(k)$.

By a  \emph{super $(G,\varepsilon)$\nd scheme} we mean a super $k$\nd scheme $X$
equipped with an action of $G$ such that $\varepsilon$ acts on $X$ as $\iota_X$.
If $X$ is a super $(G,\varepsilon)$\nd scheme,  we write
\begin{equation*}
\MOD_{G,\varepsilon}(X)
\end{equation*}
for the $k$\nd tensor category of 
$(G,\varepsilon)$\nd equivariant quasi-coherent $\sO_X$\nd modules, i.e.\ those 
$G$\nd equivariant quasi-coherent $\sO_X$\nd modules $\sV$ for which 
$\varepsilon$ acts as $\iota_{\sV}$.
The full rigid $k$\nd tensor subcategory $\MOD_{G,\varepsilon}(X)_\mathrm{rig}$ consists 
of the $\sV$ whose underlying $\sO_X$\nd module is a vector bundle, and will be written
$\Mod_{G,\varepsilon}(X)$.

An algebra in $\MOD_{G,\varepsilon}(k)$ will also be called a \emph{$(G,\varepsilon)$\nd algebra}.
If $R$ is a commutative $(G,\varepsilon)$\nd algebra then $X = \Spec(R)$ is a 
$(G,\varepsilon)$\nd scheme, and
\begin{equation*}
\MOD_{G,\varepsilon}(X) = \MOD_{\sA}(R)
\end{equation*}
with $\sA = \MOD_{G,\varepsilon}(k)$.

The full subcategory of the category of super $k$\nd schemes consisting of the 
reduced $k$\nd schemes is coreflective, and the coreflector $X \mapsto X_\mathrm{red}$
preserves finite products.
Thus $X \mapsto X_\mathrm{red}$ sends super $k$\nd groups to $k$\nd groups,
and actions of a super $k$\nd group on a super $k$\nd scheme to actions
of a $k$\nd group on a $k$\nd scheme.

\section{Fractional closures}\label{s:frac}

In this section we define the fractional closure of a tensor category.
For tensor categories with one object,
identified with commutative rings, the fractional closure is the
total ring of fractions.

Let $\sC$ be a tensor category.
A morphism $f$ in $\sC$ will be called \emph{regular} if 
$f \otimes g = 0$ implies $g = 0$ for every morphism $g$ in $\sC$.
A morphism in $\sC$ is regular if and only if its image in the pseudo-abelian hull of $\sC$ is regular.

We say that $\sC$ is \emph{integral} if $1_{\I} \ne 0$ in $\sC$ and the tensor product of two non-zero
morphisms in $\sC$ is non-zero.
Equivalently $\sC$ is integral if $1_{\I} \ne 0$ in $\sC$ and every non-zero morphism of 
$\sC$ is regular.

A tensor functor will be called \emph{regular} if it preserves regular morphisms.
Any faithful tensor functor reflects regular morphisms.
A tensor functor between integral tensor categories is regular if and only if it is faithful.
The embedding of a tensor category into its pseudo-abelian hull is regular.
If direct sums exist in $\sC$, or if $\sC$ is integral,
then a tensor functor $\sC \to \sC'$ is regular if and only if the tensor functor 
it induces on pseudo-abelian hulls is regular.

Given a regular morphism $f:A \to A'$ and objects $C$ and $C'$ in $\sC$, 
denote by
\begin{equation*}
\sC_f(C,C')
\end{equation*}
the subgroup of the hom-group $\sC(A \otimes C,A' \otimes C')$ consisting of those
morphisms $h$ for which the square
\begin{equation}\label{e:propdef}
\begin{gathered}
\xymatrix{
A \otimes (A \otimes C) \ar_{f \otimes h}[d] \ar_{\sigma_{AAC}}^{\sim}[r] & 
A \otimes (A \otimes C) \ar^{f \otimes h}[d] \\
A' \otimes (A' \otimes C') \ar_{\sigma_{A'A'C'}}^{\sim}[r] & A' \otimes (A' \otimes C')
}
\end{gathered}
\end{equation}
commutes, where we write
\begin{equation*}
\sigma_{ABC}:A \otimes (B \otimes C) \iso B \otimes (A \otimes C)
\end{equation*}
for the symmetry.
We have 
\begin{equation*}
\sC_{1_{\I}}(C,C') = \sC(C,C')
\end{equation*}
for every $C$ and $C'$.

A morphism $f$ in $\sC$ will be called \emph{strongly regular} if it is regular and
the embedding
\begin{equation}\label{e:fracclose}
f \otimes -:\sC(C,C') \to \sC_f(C,C')
\end{equation}
is an isomorphism for every pair of objects $C$ and $C'$ in $\sC$.
A morphism in $\sC$ is strongly regular if and only if its image in the pseudo-abelian hull 
of $\sC$ is strongly regular.
We say that $\sC$ is \emph{fractionally closed} if every regular morphism in $\sC$
is strongly regular.

Let $C$ and $C'$ be objects of $\sC$.
We define an equivalence relation on the set of pairs $(h,f)$ with
$f$ a regular morphism in $\sC$ and $h$ in $\sC_f(C,C')$ by calling
$(h,f)$ and $(l,g)$ with $f:A \to A'$ and $g:B \to B'$ equivalent if
the square
\begin{equation}\label{e:propequiv}
\begin{gathered}
\xymatrix{
B \otimes (A \otimes C) \ar_{g \otimes h}[d] \ar_{\sigma_{BAC}}^{\sim}[r] & 
A \otimes (B \otimes C) \ar^{f \otimes l}[d] \\
B' \otimes (A' \otimes C') \ar_{\sigma_{B'A'C'}}^{\sim}[r] & A' \otimes (B' \otimes C')
}
\end{gathered}
\end{equation}
commutes.
That this relation is reflexive is clear from \eqref{e:propdef},
that it is symmetric from the fact that $\sigma_{ABC}$ is the inverse of $\sigma_{BAC}$,
and that it is transitive can be seen as follows:
given $(h_i,f_i)$ for $i = 1,2,3$, if we write $\sD_i$ for the square expressing the equivalence
of the two $(h_j,f_j)$ for $j \ne i$, then $f_2 \otimes \sD_2$ can be obtained by combining
$f_1 \otimes \sD_1$, $f_3 \otimes \sD_3$, and three commutative squares expressing the naturality
of the symmetries.
For a given regular $f$, the pairs $(h_1,f)$ and $(h_2,f)$ are equivalent if and only if $h_1 = h_2$.

Let $f:A \to A'$ be regular in $\sC$.
If $j:B \to A$ and $j':A' \to B'$ are morphisms in $\sC$ with $j' \circ f \circ j$
(and hence $j$ and $j'$) regular, then we have a homomorphism
\begin{equation}\label{e:propcomp}
\sC_f(C,C') \to \sC_{j' \circ f \circ j}(C,C')
\end{equation}
which sends $h$ to $(j' \otimes C') \circ h \circ (j \otimes C)$.
If $l:D \to D'$ is a regular morphism in $\sC$, we have a homomorphism
\begin{equation}\label{e:proptens}
\sC_f(C,C') \to \sC_{l \otimes f}(C,C')
\end{equation}
which, modulo the appropriate associativities, sends $h$ to $l \otimes h$.
The image of $h$ under \eqref{e:propcomp} is the unique element $m$ of $\sC_{j' \circ f \circ j}(C,C')$
with $(m,j' \circ f \circ j)$ equivalent to $(h,f)$,
and similarly for \eqref{e:proptens}.

Define a structure of preorder on the set $\Reg(\sC)$ regular morphisms of $\sC$
by writing $f \le g$ for $f:A \to A'$ and $g:B \to B'$ if 
there exist (necessarily regular) morphisms $l:D \to D'$, $j:B \to D \otimes A$ and 
$j':D' \otimes A' \to B'$ in $\sC$ such that
\begin{equation*}
g = j' \circ (l \otimes f) \circ j.
\end{equation*}
The preorder $\Reg(\sC)$ is filtered, because if $f_1$ and $f_2$
are elements, both $f_1$ and (taking symmetries for $j$ 
and $j'$) $f_2$ are $\le$ $f_2 \otimes f_1$.
It is essentially small if $\sC \to \sC'$ is. 
A tensor functor $\sC \to \sC'$ is regular if it sends every element
of some cofinal subset of $\Reg(\sC)$ to a regular morphism in $\sC'$.
Any regular tensor functor $\sC \to \sC'$ induces an order-preserving map 
$\Reg(\sC) \to \Reg(\sC')$.

For $f \le g$ in $\Reg(\sC)$ and objects $C$ and $C'$ of $\sC$, we have by 
\eqref{e:propcomp} and \eqref{e:proptens} a homomorphism
\begin{equation}\label{e:proptrans}
\sC_f(C,C') \to \sC_g(C,C'),
\end{equation}
necessarily injective, which sends $h$ in $\sC_f(C,C')$ to the unique $m$ in 
$\sC_g(C,C')$ with $(m,g)$ equivalent to $(f,h)$.
We thus have a filtered system $(\sC_f(C,C'))_{f \in \Reg{\sC}}$. 
If $f \le g$ in $\Reg(\sC)$ and $g$ is strongly regular then $f$ is strongly regular.

Suppose that either finite direct sums exist in $\sC$, or that $\sC$ is integral.
Then the embedding  $\sC \to \sC'$ of $\sC$ into its pseudo-abelian hull $\sC'$
induces a cofinal map $\Reg(\sC) \to \Reg(\sC')$.
Thus $\sC$ is fractionally closed if and only if its pseudo-abelian hull is.

Suppose now that $\sC$ is essentially small.
We define as follows a fractionally closed tensor category $\sC_\mathrm{fr}$, 
the \emph{fractional closure of $\sC$}, together with a faithful, regular, strict tensor functor
\begin{equation*}
E_{\sC}:\sC \to \sC_\mathrm{fr}.
\end{equation*} 
The objects of $\sC_\mathrm{fr}$ are those of $\sC$, and the hom groups
are the filtered colimits
\begin{equation}\label{e:frachom}
\sC_\mathrm{fr}(C,C') = \colim_{f \in \Reg(\sC)} \sC_f(C,C'),
\end{equation}
which exist because $\Reg(\sC)$ is essentially small.
Thus $\sC_\mathrm{fr}(C,C')$ is the set of equivalence class of pairs $(h,f)$
with $f$ a regular morphism in $\sC$ and $h$ an element of $\sC_f(C,C')$.
We write $h/f$ for the class of $(h,f)$.
The identity in $\sC_\mathrm{fr}(C,C)$ is $1_C/1_{\I}$.
The composite of $h'/f'$ in $\sC_\mathrm{fr}(\sC)(C',C'')$ and $h/f$ in $\sC_\mathrm{fr}(C,C')$ is defined as
$(h'{}\!_1 \circ h_1)/(f'{}\!_1 \circ f_1)$ for any $(h_1,f_1)$ equivalent to $(h,f)$
and $(h'{}\!_1,f'{}\!_1)$ equivalent to $(h',f')$ with $f'{}\!_1$ and $f_1$ composable:
for $f:A \to A'$ and $f':B' \to B''$ we may take for example $f_1 = B' \otimes f$ and 
$f'{}\!_1 = f' \otimes A'$.
The bilinearity, identity and associativity properties of the composition in $\sC_\mathrm{fr}$
follow from those for $\sC$. 

The functor $E_{\sC}$ is the identity on objects, and on the hom group
$\sC(C,C')$ it is the coprojection of \eqref{e:frachom} at $f = 1_{\I}$, so that
\begin{equation*}
E_{\sC}(j) = j/1_{\I}
\end{equation*}
for any $j:C \to C'$.
The unit and tensor product of objects of $\sC_\mathrm{fr}$ are those of $\sC$,
and for $j$ in $\sC_g(D,D')$ and $h$ in $\sC_f(C,C')$ with $g:B \to B'$ and $f:A \to A'$,
the tensor product $(j/g) \otimes (h/f)$ is $l/(g \otimes h)$ with $l$ defined by 
the commutative square
\begin{equation*}
\xymatrix{
(B \otimes D) \otimes (A \otimes C) \ar_{j \otimes h}[d] \ar^{\sim}_{\sigma_{BDAC}}[r] &
(B \otimes A) \otimes (D \otimes C) \ar^l[d] \\
(B' \otimes D') \otimes (A' \otimes C') \ar^{\sim}_{\sigma_{B'D'A'C'}}[r] &
(B' \otimes A') \otimes (D' \otimes C')
}
\end{equation*}
where $\sigma_{ABCD}:(A \otimes B) \otimes (C \otimes D) \iso (A \otimes C) \otimes (B \otimes D)$
is the symmetry. 
The associativities and symmetries of $\sC_\mathrm{fr}$ are the images under $E_{\sC}$
of those of $\sC$, and the naturality and required compatibilities follow from those in $\sC$.

For $f$ regular in $\sC$, a morphism $h/f$ in $\sC_\mathrm{fr}$ is regular if and only if 
$h$ is regular in $\sC$.
In particular $E_{\sC}$ is regular.
To prove that $\sC_\mathrm{fr}$ is fractionally closed, let $l:A \to A'$ and $g:B \to B'$ 
be regular morphisms in $\sC$, and
\begin{equation*}
h:B \otimes (A \otimes C) \to B' \otimes (A' \otimes C')
\end{equation*}
be a morphism in $\sC_g(A \otimes C,A' \otimes C')$ such that $h/g$ in 
$\sC_\mathrm{fr}(A \otimes C,A' \otimes C')$ lies in $(\sC_\mathrm{fr})_{l/1_{\I}}(C,C')$.
Then the morphism
\begin{equation*}
h':(B \otimes A) \otimes C \to (B' \otimes A') \otimes C'
\end{equation*}
that coincides modulo associativities with $h$ lies in $\sC_{g \otimes l}(C,C')$, and
\begin{equation}\label{e:hgl}
h/g = (l/1_{\I}) \otimes (h'/(g \otimes l))
\end{equation}
in $\sC_\mathrm{fr}(A \otimes C,A' \otimes C')$.
If $g = 1_{\I}$ then $h = h'$, and \eqref{e:hgl} becomes 
\begin{equation*}
h/1_{\I} = (l/1_{\I}) \otimes (h/l).
\end{equation*}
The $h/1_{\I}$ for $h$ in $\Reg(\sC)$ are thus cofinal in $\Reg(\sC_\mathrm{fr})$.
Hence it is enough to prove that $l/1_{\I}$ is strongly regular in $\sC_\mathrm{fr}$
for every $l$ in $\Reg(\sC)$, which follows from \eqref{e:hgl}.

It is clear from the construction that $\sC$ is fractionally closed
if and only if $E_{\sC}$ is fully faithful if and only if $E_\sC$ is an isomorphism
of tensor categories.
If $\sC$ is integral, then $\sC_\mathrm{fr}$ is integral.

Let $T:\sC \to \sD$ be a regular tensor functor.
If a tensor functor $T_1:\sC_\mathrm{fr} \to \sD$ with $T = T_1E_{\sC}$ exists, it is unique,
and $T_1$ is then regular.
Explicitly such a $T_1$ coincides with $T$ on objects, the tensor structural isomorphisms
of $T_1$ are those of $T$, and if $f:A \to A'$ is regular in $\sC$
and $h$ is in $\sC_f(C,C')$, then the morphism
\begin{equation*}
T(h)':T(A) \otimes T(C) \to T(A') \otimes T(C')
\end{equation*}
in $\sD$ that coincides modulo the tensor structural isomorphisms of $T$ with $T(h)$
lies in $\sD_{T(f)}(T(C),T(C'))$, and $T_1(h/f)$ is the unique morphism with
\begin{equation}\label{e:Tfactor}
T(f) \otimes T_1(h/f) = T(h)'.
\end{equation}
Such a $T_1$ exists if $\sD$ is fractionally closed, and more generally
if $T$ sends regular morphisms in $\sC$ to strongly regular morphisms in $\sD$.
If $T$ and  $T'$ are regular tensor functors $\sC \to \sD$ with $T = T_1E_{\sC}$
and $T' = T'{}\!_1E_{\sC}$, 
then any tensor isomorphism $T \iso T'$ is at the same time a tensor isomorphism 
$T_1 \iso T'{}\!_1$.

Let $\sC'$ be an essentially small tensor category.
Then for any regular tensor functor $T:\sC \to \sC'$ we have $E_{\sC'}T = T_\mathrm{fr}E_{\sC}$
for a unique tensor functor 
\begin{equation}\label{e:FrT}
T_\mathrm{fr}:\sC_\mathrm{fr} \to \sC'{}\!_\mathrm{fr},
\end{equation}
and $T_\mathrm{fr}$ is regular.
If $\varphi:T \iso T'$ is a tensor isomorphism, there is a unique tensor isomorphism
$\varphi_\mathrm{fr}:T_\mathrm{fr} \iso T'{}\!_\mathrm{fr}$ with 
$E_{\sC'}\varphi = \varphi_\mathrm{fr}E_{\sC}$.
If $T$ is faithful then $T_\mathrm{fr}$ is faithful, but in general $T$ fully faithful
does not imply $T_\mathrm{fr}$ fully faithful.
However if either finite direct sums exist in $\sC$, or $\sC$ is integral,
then $T_\mathrm{fr}$ is fully faithful for $T:\sC \to \sC'$ the embedding of $\sC$ into
its pseudo-abelian hull, because $T$ then induces a cofinal map $\Reg(\sC) \to \Reg(\sC')$.

The commutative endomorphism ring $\sC_\mathrm{fr}(\I,\I)$ will be important in what follows.
Explicitly, $\sC_f(\I,\I)$ for $f:A \to A'$ regular is the subgroup of $\sC(A,A')$
consisting of those $h:A \to A'$ for which
\begin{equation*}
h \otimes f = f \otimes h,
\end{equation*}
and $h/f = l/g$ in $\sC_\mathrm{fr}(\I,\I)$ when
\begin{equation*}
h \otimes g = f \otimes l.
\end{equation*}
The addition in $\sC_\mathrm{fr}(\I,\I)$ is given by
\begin{equation*}
h/f + h'/f' = (h \otimes f' + f \otimes h')/(f \otimes f')
\end{equation*}
and the product by
\begin{equation*}
(h/f)(h'/f') = (h \otimes h')/(f \otimes f').
\end{equation*}
If $h$ is regular, then $h/f$ in $\sC_\mathrm{fr}(\I,\I)$ is invertible, with inverse $f/h$.
When $\sC$ is an integral tensor category, $\sC_\mathrm{fr}(\I,\I)$ is a field, and we then also write
\begin{equation*}
\kappa(\sC)
\end{equation*}
for $\sC_\mathrm{fr}(\I,\I)$.

Suppose that $\sC$ is rigid.
If $f:A \to A'$ and $g:A'{}^\vee \otimes A \to \I$ correspond to one another by duality,
then explicitly $g$ is obtained from
$f$ by composing the counit $A'{}^\vee \otimes A' \to \I$ with $A'{}^\vee \otimes f$,
and modulo the appropriate associativity $f$ is obtained from $g$ by composing
$A' \otimes g$ with the tensor product of the unit $\I \to A' \otimes A'{}^\vee$ and $A$. 
Thus $g$ is regular if and only if $f$ is, and $f$ and $g$ are then isomorphic in $\Reg(\sC)$.
Hence by \eqref{e:proptrans} every morphism
in $\sC_\mathrm{fr}$ can be written in the form $m/g$ for some regular $g:B \to \I$,
and similarly in the form $m'/g'$ for some regular $g':\I \to B'$. 
If $f$ is regular in $\sC$, we have an isomorphism
\begin{equation*}
\sC_f(C,C') \iso \sC_f(\I,C' \otimes C^\vee)
\end{equation*}
for every $C$ and $C'$ in $\sC$, 
defined using the unit $\I \to C \otimes C^\vee$, and with inverse defined using the
counit $C^\vee \otimes C \to \I$.
Such isomorphisms are compatible with homomorphisms of the form
\eqref{e:fracclose} and \eqref{e:proptrans}.
In particular $\sC$ is fractionally closed if and only if the homomorphism 
\begin{equation}\label{e:fraccloseI}
f \otimes - = - \circ f:\sC(\I,D) \to \sC_f(\I,D)
\end{equation}
is an isomorphism for every regular $f:A \to \I$ and $D$ in $\sC$.

\begin{lem}\label{l:abfracclose}
Every abelian rigid tensor category is fractionally closed.
\end{lem}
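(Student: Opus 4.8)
The plan is to use the characterization at the end of the section: since the category is rigid, it suffices to show that for every regular $f:A \to \I$ and every object $D$, the map
\begin{equation*}
f \otimes - = - \circ f:\sC(\I,D) \to \sC_f(\I,D)
\end{equation*}
is an isomorphism. It is automatically injective because $f$ is regular (if $f \otimes u = f \otimes u'$ then $f \otimes (u - u') = 0$, so $u = u'$), so the whole content is surjectivity. So let $h:A \to A \otimes D$ be a morphism lying in $\sC_f(\I,D)$, meaning that $h \otimes f = f \otimes h$ modulo the symmetry; I must produce $u:\I \to D$ with $h = f \otimes u = u \circ f$ (after identifying $A \otimes \I$ with $A$).

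First I would reinterpret $h \in \sC_f(\I,D)$ more concretely. By rigidity, the regular morphism $f:A\to\I$ corresponds by duality to a regular morphism $f^\vee:\I \to A^\vee$, or equivalently we have $f:A\to\I$ together with its "transpose" and we may think of $A$ as having a dual. The natural candidate for $u$ is obtained by contracting $h$ against $f$: form the composite
\begin{equation*}
\I \xrightarrow{\text{coev}} A \otimes A^\vee \xrightarrow{h \otimes A^\vee} (A \otimes D) \otimes A^\vee \xrightarrow{\sigma} (A \otimes A^\vee) \otimes D \xrightarrow{?} D,
\end{equation*}
but the last arrow wants $A^\vee$ paired off, not $A$; so instead the correct recipe uses $f$ itself to kill the $A$-factor. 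Concretely, set $u = (f \otimes D) \circ h' $ where $h'$ is $h$ viewed as a map into $A \otimes D$ and then one composes with $f$ on the $A$-factor — i.e. $u := (f \otimes 1_D)\circ h$, after the identification $\I \otimes D \cong D$. The key step is then to verify that with this choice $f \otimes u = h$. Here is where the hypothesis is used twice: abelianness is what lets us talk about images/kernels and, crucially, the defining relation $\sigma\circ(f\otimes h) = (f\otimes h)\circ\sigma$ says $h$ is "symmetric in its two $A$-slots", which combined with a triangular identity for the dual of $A$ forces $h = f\otimes u$. In more detail, tensoring the candidate identity $h = f\otimes u$ on the left by $A$ and applying the symmetry, the relation defining $\sC_f(\I,D)$ gives $A \otimes f \otimes u$ on one side; then regularity of $f$ (used as: $f$ is a monomorphism after tensoring, since in an abelian rigid category a regular morphism to $\I$ is a split-or-at-least-mono type situation — more precisely one exploits that $f\otimes -$ is faithful) lets us cancel one factor of $f$ and conclude.

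I expect the main obstacle to be the cancellation step: turning "regular" into something that actually permits cancelling a tensor factor of $f$. Regularity only says $f \otimes g = 0 \Rightarrow g = 0$, which gives injectivity of $f\otimes-$ on hom-groups but not obviously that $f\otimes-$ reflects equalities of the shape needed, until one writes the target identity as a single morphism being zero and applies additivity — which is exactly why abelianness (or at least the additive structure together with the subtraction trick) is needed, and why the lemma is false without it. So the real work is: (1) set up the contraction formula for $u$ using the unit/counit of $A^\vee$; (2) expand $f\otimes u$ using the triangular identities and the naturality of $\sigma$; (3) rewrite the difference $h - f\otimes u$ and show that tensoring it with $f$ yields $0$ by invoking the relation $\sigma\circ(f\otimes h)=(f\otimes h)\circ\sigma$; (4) conclude $h = f\otimes u$ by regularity. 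Steps (2)--(3) are a moderately involved but routine string diagram chase; step (4) is the conceptual heart and is where "abelian" enters, via the additive structure needed to form $h - f\otimes u$.
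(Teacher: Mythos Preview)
Your proposal has a type error that derails the construction. For $f:A\to\I$, an element of $\sC_f(\I,D)$ is a morphism $h:A\to D$ (since $\sC_f(C,C')\subset\sC(A\otimes C,A'\otimes C')$ and here $A'=\I$), not $h:A\to A\otimes D$. With the correct type, your candidate $u=(f\otimes 1_D)\circ h$ no longer makes sense, and in fact no duality contraction produces a map $\I\to D$ from $h:A\to D$ and $f:A\to\I$: both arrows point \emph{out} of $A$, so there is nothing to pair. More seriously, you misidentify where abelianness enters. You say it is ``the additive structure needed to form $h-f\otimes u$'', but every tensor category in this paper is additive by definition; subtraction is always available. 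Abelianness is about kernels and cokernels, and that is precisely what is missing from your sketch.

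The paper's argument uses abelianness directly and avoids any explicit formula for $u$. First, regularity forces $f$ to be an epimorphism: for $l:\I\to D$ one has $f\otimes l=l\circ f$, so $l\circ f=0$ implies $l=0$. In an abelian category an epimorphism is the cokernel of its kernel $i:\Ker f\to A$, so it suffices to show $h\circ i=0$. The defining relation $h\otimes f=f\otimes h$ gives
\[
(h\circ i)\otimes f=(h\otimes f)\circ(i\otimes A)=(f\otimes h)\circ(i\otimes A)=(f\circ i)\otimes h=0,
\]
whence $h\circ i=0$ by regularity of $f$, and $h$ factors as $l\circ f=f\otimes l$. The existence of $\Ker f$ and the fact that the epimorphism $f$ is the cokernel of $i$ are exactly where abelianness is used.
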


\begin{proof}
Let $f:A \to \I$ be a regular morphism in an abelian rigid tensor category $\sC$.
Then $f$ is an epimorphism in $\sC$, because $f \otimes l = l \circ f$ for every
$l:\I \to D$ in $\sC$.
If $h:A \to D$ lies in $\sC_f(\I,D)$ and $i:\Ker f \to A$ is the embedding, then
\begin{equation*}
(h \circ i) \otimes f = (h \otimes f) \circ (i \otimes A) = 
(f \otimes h) \circ (i \otimes A) = 0.
\end{equation*}
Thus $h \circ i = 0$, so that $h = l \circ f$ for some $l:\I \to D$.
Hence \eqref{e:fraccloseI} is an isomorphism.
\end{proof}

Let $k$ be a commutative ring 
and $\sC_1$ and $\sC_2$ be $k$\nd tensor categories.
For $i = 1,2$ we have a canonical $k$\nd tensor functor $I_i:\sC_i \to \sC_1 \otimes_k \sC_2$ 
where $I_1$ sends $M_1$ to $(M_1,\I)$ and $I_2$ sends $M_2$ to $(\I,M_2)$.
If $\End_{\sC_2}(\I) = k$, then $I_1$ is fully faithful.
Given a $k$\nd tensor functor $T_i:\sC_i \to \sC$ for $i = 1,2$, there is a $k$\nd tensor
functor $T:\sC_1 \otimes_k \sC_2 \to \sC$ with $TI_i = T_i$, and such a $T$
is unique up to unique tensor isomorphism $\varphi$ with $\varphi I_i$ the identity of $T_i$.
Indeed we may take for $T$ the composite of $\otimes:\sC \otimes_k \sC \to \sC$ with $T_1 \otimes_k T_2$.
Similar considerations apply for the tensor product of any finite number of categories.

\begin{lem}\label{l:extfaith}
Let $k$ be a field, $\sC$ and $\sC'$ be integral $k$\nd tensor categories with $\sC$
essentially small, and $T:\sC \to \sC'$ be a faithful $k$\nd tensor functor.
Suppose that $\kappa(\sC) = k$.
Then the $k$\nd tensor functor $\sC' \otimes_k \sC \to \sC'$
defined by $\Id_{\sC'}$ and $T$ is faithful.
\end{lem}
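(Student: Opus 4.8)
The plan is to reduce faithfulness of the functor $F\colon\sC'\otimes_k\sC\to\sC'$ to a statement about morphisms out of the unit, exploiting rigidity, and then to invoke the hypothesis $\kappa(\sC)=k$. First I would recall that an object of $\sC'\otimes_k\sC$ is a finite formal sum $\bigoplus_i (M_i',M_i)$ and that, since both $\sC$ and $\sC'$ are rigid (hence so is the tensor product), every morphism $\varphi$ in $\sC'\otimes_k\sC$ can be transported, by tensoring with duals and composing with units and counits, to a morphism of the form $\psi\colon\I\to\bigoplus_j (N_j',N_j)$; and $F(\varphi)=0$ if and only if $F(\psi)=0$. So it suffices to show: if $\psi=\sum_j (u_j',u_j)$ with $u_j'\colon\I\to N_j'$ in $\sC'$ and $u_j\colon\I\to N_j$ in $\sC$, and $\sum_j u_j'\otimes T(u_j)=0$ in $\sC'$, then $\psi=0$.

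The key step is the following claim: a hom-group $(\sC'\otimes_k\sC)(\I,(N',N))$ is by construction the tensor product $\sC'(\I,N')\otimes_k\sC(\I,N)$ of $k$-modules, and $\psi=0$ precisely when the element $\sum_j u_j'\otimes u_j$ vanishes in $\bigoplus_j \sC'(\I,N_j')\otimes_k\sC(\I,N_j)$, i.e.\ in $\sC'(\I,N')\otimes_k\sC(\I,N)$ after collecting the $N_j$ into a single object $N=\bigoplus N_j$ and $N'=\bigoplus N_j'$. Thus I must show that the natural map
\[
\sC'(\I,N')\otimes_k\sC(\I,N)\longrightarrow \sC'(\I,N'\otimes T(N)),\qquad u'\otimes u\mapsto u'\otimes T(u),
\]
is injective. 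Here I would use that $\sC$ is integral with $\kappa(\sC)=k$: by the discussion preceding Lemma~\ref{l:abfracclose}, since $\sC$ is rigid we may replace $N$ by $\I$ at the cost of writing $u=m/g$ for a regular $g\colon B\to\I$, and $\sC_\mathrm{fr}(\I,\I)=\kappa(\sC)=k$ means that the $k$-vector space $\sC(\I,N)$, after inverting regular morphisms into $\I$, is just a $k$-vector space whose scalars are already all of $k$. Concretely, pick a $k$-basis of the image of $\sC(\I,N)$ in $\sC_\mathrm{fr}(\I,N)$; write $\sum_j u_j'\otimes u_j$ so that the $u_j$ map to $k$-linearly independent elements of $\sC_\mathrm{fr}(\I,N)$, with the $u_j'$ then forced to be nonzero; I must deduce $\sum_j u_j'\otimes T(u_j)\ne 0$ in $\sC'$.

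For this final deduction — which I expect to be the main obstacle — I would argue as follows. After clearing denominators, there is a single regular $g\colon B\to\I$ in $\sC$ and morphisms $v_j\colon B\to N$ in $\sC$ with $u_j = v_j/g$ in $\sC_\mathrm{fr}$, the $v_j$ being $k$-linearly independent modulo the subgroup of those $w$ with $w\otimes g = g\otimes(\text{something})$ failing linear independence; the condition $\kappa(\sC)=k$ guarantees that no nontrivial $k$-combination $\sum_j \lambda_j v_j$ becomes a multiple of $g$ after further clearing, i.e.\ the $v_j$ stay independent in $\sC_\mathrm{fr}(\I,N)$ over $k$. Applying $T$, since $T$ is faithful it is regular (a faithful tensor functor between integral categories is regular, as noted in Section~\ref{s:frac}), so $T(g)$ is regular in $\sC'$ and $T(u_j)=T(v_j)/T(g)$ in $\sC'_\mathrm{fr}$. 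Now $\sum_j u_j'\otimes T(u_j)=0$ would give, after multiplying by $T(g)$ and using that $T(g)$ is regular hence $T(g)\otimes(-)$ is injective on hom-groups, the relation $\sum_j u_j'\otimes T(v_j)=0$ in $\sC'$. Since $\sC'$ is integral and each $u_j'\ne 0$, and since the $T(v_j)$ remain $k$-linearly independent in $\sC'_\mathrm{fr}(\I,T(N))$ — this is where I use that the induced map $\kappa(\sC)=k\to\kappa(\sC')$ is injective together with faithfulness of $T_\mathrm{fr}$ from the last paragraph of Section~\ref{s:frac} — a standard argument (a minimal vanishing combination, contracted against a dual of $T(N)$ chosen to isolate one $T(v_j)$, produces $u_{j_0}'\otimes(\text{nonzero scalar})=0$, contradicting integrality of $\sC'$) forces all $u_j'=0$. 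Hence $\psi=0$, completing the proof. The delicate point throughout is keeping track of the passage between $\sC$ and $\sC_\mathrm{fr}$ and ensuring $k$-linear independence is preserved under $T$; this is exactly what the hypothesis $\kappa(\sC)=k$ is designed to supply.
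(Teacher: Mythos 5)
There is a genuine gap in the final step, which you yourself flag as the main obstacle. The ``standard argument'' you invoke --- that $u'_j \ne 0$, the $T(v_j)$ $k$\nd linearly independent, $\sum_j u'_j \otimes T(v_j) = 0$, and $\sC'$ integral together yield a contradiction by contracting against a dual of $T(N)$ that isolates one $T(v_j)$ --- is false as stated. Take $k = \Q$, $\sC' = \Mod(\Q(\sqrt 2))$ as a $\Q$\nd tensor category, $N' = T(N) = \I$, $u'_1 = 1$, $u'_2 = \sqrt 2$, $T(v_1) = \sqrt 2$, $T(v_2) = -1$. Then the $u'_j$ are nonzero, the $T(v_j)$ are $\Q$\nd linearly independent in $\End(\I) = \Q(\sqrt 2)$, $\sC'$ is integral, yet $u'_1 T(v_1) + u'_2 T(v_2) = \sqrt 2 - \sqrt 2 = 0$. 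There is no morphism $\I \to \I$ in $\sC'$ sending $\sqrt 2$ to a nonzero scalar and $-1$ to $0$, since any such morphism is multiplication by an element of $\Q(\sqrt 2)$. So the contraction you want simply does not exist when $\End_{\sC'}(\I)$ is larger than $k$; the hypothesis that the $T(v_j)$ are only $k$\nd linearly independent (rather than $\kappa(\sC')$\nd linearly independent) is too weak, and nothing in your argument upgrades it. The hypothesis $\kappa(\sC) = k$ is mentioned in the middle of your proof but never actually applied to close this final step, which is precisely where the paper applies it.

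The paper's proof is organised quite differently and avoids these difficulties. It proceeds by induction on the number $n$ of terms, working directly with morphisms $f_i$ in an arbitrary hom-space (no reduction to $\I$, no fractions). Given $\sum_{i=1}^r f'_i \otimes T(f_i) = 0$ with the $f'_i$ $k$\nd independent and $f_r \ne 0$, one tensors with $T(f_r)$ and compares the result with its image under the symmetry to obtain $\sum_{i=1}^{r-1} f'_i \otimes T(f_i \otimes f_r - f_r \otimes f_i) = 0$. By the inductive hypothesis $f_i \otimes f_r = f_r \otimes f_i$ for $i < r$, and it is \emph{here} that $\kappa(\sC) = k$ enters: this symmetry condition says exactly that $f_i/f_r \in \kappa(\sC) = k$, so $f_i = \alpha_i f_r$ for scalars $\alpha_i \in k$. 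Substituting back gives $(\sum \alpha_i f'_i + f'_r) \otimes T(f_r) = 0$, contradicting the base case $n = 1$. Your approach is missing this tensor-and-symmetrise step, which is what converts the hypothesis $\kappa(\sC) = k$ into usable information.

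Two smaller points. First, your opening reduction to morphisms out of $\I$ uses duals in $\sC' \otimes_k \sC$, but the lemma does not assume $\sC$ or $\sC'$ is rigid, so the reduction is unjustified; the paper's proof works in arbitrary hom-spaces and needs no rigidity. Second, the detour through $\sC_{\mathrm{fr}}$ and ``clearing denominators'' is both unnecessary and a source of confusion: the $u_j$ (or $f_i$) are already honest morphisms in $\sC$, not fractions, and the embedding $\sC \to \sC_{\mathrm{fr}}$ is $k$\nd linear and faithful, so $k$\nd linear independence is unaffected by passing to $\sC_{\mathrm{fr}}$ --- there is nothing to clear.
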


\begin{proof}
Let $f'{}\!_1,f'{}\!_2,\dots,f'{}\!_n$ be elements of some hom-space of $\sC'$ which are
linearly independent over $k$.
It is to be shown that if
\begin{equation}\label{e:tensrel}
f'{}\!_1 \otimes T(f_1) + f'{}\!_2 \otimes T(f_2) + \dots + f'{}\!_n \otimes T(f_n) = 0
\end{equation}
in $\sC'$ for elements $f_1,f_2,\dots,f_n$ of some hom-space of $\sC$, then $f_i = 0$ for each $i$.
We argue by induction on $n$.
The required result holds for $n = 1$ because $\sC'$ is integral and $T$ is faithful.
Suppose that it holds for $n < r$.
If we had an equality \eqref{e:tensrel} with $n = r$ and $f_r \ne 0$, then tensoring with $T(f_r)$ would give
\[
\sum_{i=1}^r f'{}\!_i \otimes T(f_i \otimes f_r) = 0 = \sum_{i=1}^r f'{}\!_i \otimes T(f_r \otimes f_i),
\]
so that $\sum_{i=1}^{r-1} f'{}\!_i \otimes T(f_i \otimes f_r - f_r \otimes f_i) = 0$, and hence by induction
$f_i \otimes f_r = f_r \otimes f_i$ for $i < r$.
Since $\kappa(\sC) = k$, this would imply $f_i = \alpha_i f_r$ for $i < r$ with $\alpha_i \in k$, 
and hence by \eqref{e:tensrel}
\[
(\alpha_1 f'{}\!_1 + \dots + \alpha_r f'{}\!_{r-1} + f'{}\!_r) \otimes T(f_r) = 0,
\]
which is impossible by the case $n = 1$.
The required result thus holds for $n = r$.
\end{proof}

\begin{lem}\label{l:tensfaith}
Let $k$ be a field and $\sC$ be an essentially small $k$\nd tensor category.
Then the following conditions are equivalent:
\begin{enumerate}
\renewcommand{\theenumi}{(\alph{enumi})}
\item\label{i:tensfaithkappa} 
$\sC$ is integral with $\kappa(\sC) = k$;
\item\label{i:tensfaithprod}
$1_{\I} \ne 0$ in $\sC$ and the tensor product $\sC \otimes_k \sC \to \sC$ is faithful. 
\end{enumerate}
\end{lem}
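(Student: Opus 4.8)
The plan is to prove the two implications separately, exploiting the universal property of the tensor product $\sC \otimes_k \sC$ recalled just before the statement: a $k$-tensor functor out of $\sC \otimes_k \sC$ is determined by a pair of $k$-tensor functors out of $\sC$, and in particular the tensor functor $\otimes:\sC \otimes_k \sC \to \sC$ corresponds to the pair $(\Id_\sC, \Id_\sC)$. Concretely, faithfulness of $\otimes:\sC \otimes_k \sC \to \sC$ means: whenever $f'_1, \dots, f'_n$ in some hom-space $\sC(C,C')$ are linearly independent over $k$ and $g_1, \dots, g_n$ lie in some hom-space $\sC(D,D')$, then $\sum_i f'_i \otimes g_i = 0$ in $\sC(C \otimes D, C' \otimes D')$ forces $g_i = 0$ for all $i$. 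So the content of \ref{i:tensfaithprod} is precisely a ``linear independence is preserved by $\otimes$'' statement.

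For \ref{i:tensfaithkappa} $\Rightarrow$ \ref{i:tensfaithprod}: first, $1_\I \ne 0$ holds since $\sC$ is integral. For faithfulness of $\otimes$, I would apply Lemma~\ref{l:extfaith} with $\sC' = \sC$ and $T = \Id_\sC$. That lemma's hypotheses are exactly that $\sC, \sC'$ are integral $k$-tensor categories, $\sC$ essentially small, $T$ a faithful $k$-tensor functor, and $\kappa(\sC) = k$ — all of which hold here — and its conclusion is that the $k$-tensor functor $\sC' \otimes_k \sC \to \sC'$ defined by $\Id_{\sC'}$ and $T$ is faithful, i.e.\ $\sC \otimes_k \sC \to \sC$ given by $(\Id_\sC, \Id_\sC)$, namely $\otimes$, is faithful. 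This direction is therefore essentially a direct citation.

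For \ref{i:tensfaithprod} $\Rightarrow$ \ref{i:tensfaithkappa}: I must show $\sC$ is integral and $\kappa(\sC) = k$. Integrality: if $a, b$ are non-zero morphisms in $\sC$, then $a$ is a $k$-linearly independent family of length one in its hom-space, so faithfulness of $\otimes$ applied to ``$a \otimes b = 0 \Rightarrow b = 0$'' (contrapositive) gives $a \otimes b \ne 0$; together with $1_\I \ne 0$ this is integrality. It remains to show every $h/f$ in $\kappa(\sC) = \sC_\mathrm{fr}(\I,\I)$ lies in (the image of) $k$. Recall $\sC_f(\I,\I)$ for regular $f:A \to A'$ consists of $h:A \to A'$ with $h \otimes f = f \otimes h$, and $h/f = h'/f'$ iff $h \otimes f' = f \otimes h'$; in particular $h/f$ is the class $\alpha \cdot 1_\I$ for $\alpha \in k$ precisely when $h = \alpha f$. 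So I must show: if $f:A \to A'$ is a non-zero (hence regular, by integrality just proved) morphism and $h:A \to A'$ satisfies $h \otimes f = f \otimes h$, then $h = \alpha f$ for some $\alpha \in k$. Suppose not. Then $\{f, h\}$ is linearly independent over $k$ in $\sC(A, A')$. Consider the relation $f \otimes h - h \otimes f = 0$ in $\sC(A \otimes A, A' \otimes A')$; write it as $f \otimes h + h \otimes (-f) = 0$. Faithfulness of $\otimes$, applied to the $k$-linearly independent family $(f, h)$ on the left tensor factor with ``coefficients'' $h$ and $-f$ on the right, yields $h = 0$ and $f = 0$, contradicting $f \ne 0$. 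Hence $h = \alpha f$ for some $\alpha \in k$, so $\kappa(\sC) = k$.

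The main obstacle, such as it is, is organizational rather than mathematical: one must be careful in the last step that ``$\otimes$ faithful'' is being invoked in the correct variable, i.e.\ that the family assumed $k$-linearly independent is in the \emph{first} hom-space and the vanishing is deduced for the morphisms in the \emph{second}; since $\otimes$ is symmetric this is harmless, but it should be stated cleanly. One also needs the small remark that in an integral tensor category every non-zero morphism is regular, which is recorded in Section~\ref{s:frac}, so that $f \ne 0$ may be used as an index in $\Reg(\sC)$ and the description of $\sC_f(\I,\I)$ applies. Everything else is a direct unwinding of the universal property of $\sC \otimes_k \sC$ and of the explicit description of $\sC_\mathrm{fr}(\I,\I)$ given above.
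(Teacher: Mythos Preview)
Your proof is correct and follows essentially the same approach as the paper: the implication \ref{i:tensfaithkappa} $\Rightarrow$ \ref{i:tensfaithprod} is Lemma~\ref{l:extfaith} with $T = \Id_\sC$, and for \ref{i:tensfaithprod} $\Rightarrow$ \ref{i:tensfaithkappa} the paper likewise lifts the relation $f \otimes h = h \otimes f$ to $\sC \otimes_k \sC$ via faithfulness (written there as $I_1(f) \otimes I_2(h) = I_1(h) \otimes I_2(f)$) to force $h$ into the $k$-line spanned by $f$. Your explicit unpacking of faithfulness as a linear-independence statement and your treatment of integrality are more detailed than the paper's, but the argument is the same.
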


\begin{proof}
\ref{i:tensfaithkappa} $\implies$ \ref{i:tensfaithprod}:
Take $T = \Id_{\sC}$ of Lemma~\ref{l:extfaith}.

\ref{i:tensfaithprod} $\implies$ \ref{i:tensfaithkappa}:
Suppose that \ref{i:tensfaithprod} holds.
That $\sC$ is integral is clear.
Let $h/f$ be an element of $\kappa(\sC)$. 
Then $f \otimes h = h \otimes f$.
If $\sC \otimes_k \sC \to \sC$ is faithful, it follows that $I_1(f) \otimes I_2(h) = I_1(h) \otimes I_2(f)$,
so that $h$ lies in the $1$-dimensional $k$\nd subspace generated by $h$, and $h/f$
lies in $\kappa(\sC)$. 
Thus $\kappa(\sC) = k$.
\end{proof}

\begin{lem}\label{l:ext}
Let $k$ be a field, $k'$ be an extension of $k$, and $\sC$ be an essentially small integral 
$k$\nd tensor category with $\kappa(\sC) = k$.
\begin{enumerate}
\item\label{i:extint}
$k' \otimes_k \sC$ is integral with $\kappa(k' \otimes_k \sC) = k'$.
\item\label{i:extfaith}
If a $k$\nd tensor functor from $\sC$ to an integral $k'$\nd tensor category $\sC'$ is faithful,
then its extension to a $k'$\nd tensor functor from $k' \otimes_k \sC$ to $\sC'$ is faithful.
\end{enumerate}
\end{lem}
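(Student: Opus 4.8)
The plan is to deduce both parts from Lemmas~\ref{l:tensfaith} and~\ref{l:extfaith}, the key extra ingredient being that $k'$ is free, hence flat, over $k$, so that base change along $k \to k'$ sends injective $k$\nd linear maps to injective maps, and therefore faithful $k$\nd linear functors to faithful functors.

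For~\ref{i:extint} I would apply the criterion of Lemma~\ref{l:tensfaith} to the $k'$\nd tensor category $k' \otimes_k \sC$. There is a natural isomorphism $(k' \otimes_k M) \otimes_{k'} (k' \otimes_k N) \iso k' \otimes_k (M \otimes_k N)$ for $k$\nd modules $M$, $N$; applied to the hom\nd spaces of $\sC$ this gives a tensor equivalence $(k' \otimes_k \sC) \otimes_{k'} (k' \otimes_k \sC) \simeq k' \otimes_k (\sC \otimes_k \sC)$ under which the tensor product functor of $k' \otimes_k \sC$ is identified with $k' \otimes_k -$ applied to the tensor product functor of $\sC$. Since $\kappa(\sC) = k$, Lemma~\ref{l:tensfaith} shows that $\sC \otimes_k \sC \to \sC$ is faithful, hence so is its base change $(k' \otimes_k \sC) \otimes_{k'} (k' \otimes_k \sC) \to k' \otimes_k \sC$. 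Also $1_{\I} \ne 0$ in $k' \otimes_k \sC$, because $\sC'(\I,\I)$-wait: because the hom\nd group $\End_{k' \otimes_k \sC}(\I)$ is $k' \otimes_k \End_{\sC}(\I)$ and $1_{\I}$ is a nonzero vector of the $k$\nd vector space $\End_{\sC}(\I)$ (being integral, $\sC$ has $1_{\I} \ne 0$), so it stays nonzero after tensoring with $k'$. Thus condition~\ref{i:tensfaithprod} of Lemma~\ref{l:tensfaith} holds for $k' \otimes_k \sC$, and the converse implication of that lemma yields that $k' \otimes_k \sC$ is integral with $\kappa(k' \otimes_k \sC) = k'$.

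For~\ref{i:extfaith}, let $T : \sC \to \sC'$ be a faithful $k$\nd tensor functor into an integral $k'$\nd tensor category $\sC'$, and let $T' : k' \otimes_k \sC \to \sC'$ be its $k'$\nd linear extension, so that on hom\nd spaces $T'$ is the map $k' \otimes_k \sC(A,B) \to \sC'(TA,TB)$ carrying $\lambda \otimes f$ to $\lambda\, T(f)$. Because $\sC'$ is integral and $k'$ is a field, the structural homomorphism $k' \to \End_{\sC'}(\I)$ is injective, so any $k$\nd linearly independent family $\lambda_1,\dots,\lambda_n$ in $k'$ remains $k$\nd linearly independent when viewed in the hom\nd space $\sC'(\I,\I)$, and the scalar multiple $\lambda_i\, T(f_i)$ coincides, modulo the unit constraint, with the tensor product $\lambda_i \otimes T(f_i)$. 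Now write an arbitrary element of $k' \otimes_k \sC(A,B)$ as $\sum_i \lambda_i \otimes f_i$ with the $\lambda_i$ $k$\nd linearly independent; its image $\sum_i \lambda_i\, T(f_i) = \sum_i \lambda_i \otimes T(f_i)$ under $T'$ is the image of $\sum_i \lambda_i \otimes f_i \in \sC'(\I,\I) \otimes_k \sC(A,B)$ under the faithful functor $\sC' \otimes_k \sC \to \sC'$ of Lemma~\ref{l:extfaith} (whose hypotheses hold: $\sC$ and $\sC'$ are integral, $\sC$ is essentially small, $\kappa(\sC) = k$, and $T$ is faithful). If this image vanishes then $\sum_i \lambda_i \otimes f_i = 0$, whence $f_i = 0$ for every $i$ by independence of the $\lambda_i$. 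Hence $T'$ is faithful.

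The argument is essentially bookkeeping. The one point requiring a little care is, in~\ref{i:extint}, the compatibility of base change along $k \to k'$ with the formation of tensor products of $k$\nd linear categories (so that the tensor product functor of $k' \otimes_k \sC$ really is the base change of that of $\sC$); and, relatedly, in~\ref{i:extfaith} the fact that the $k'$\nd module structure on the hom\nd spaces of $\sC'$ is the one induced by $\I$ and the $\End_{\sC'}(\I)$\nd action, which is exactly what allows Lemma~\ref{l:extfaith} to be invoked verbatim with the scalars $\lambda_i$ playing the role of the $f'_i$ there. (One could instead repeat, with cosmetic changes, the inductive argument used to prove Lemma~\ref{l:extfaith}.)
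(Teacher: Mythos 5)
Your proof is correct and follows essentially the same route as the paper's (very terse) proof: for part \ref{i:extint} the base-change argument via Lemma~\ref{l:tensfaith}, and for part \ref{i:extfaith} the reduction to Lemma~\ref{l:extfaith}, your linear-independence argument being the explicit form of the paper's step of composing with $I' \otimes_k \sC$, where $I'$ is the embedding of $\I$ into $\sC'$ (injective on endomorphisms of $\I$ since $\sC'$ is integral and $k'$ is a field). The only blemish is a leftover self-correction artifact in the prose of part \ref{i:extint}.
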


\begin{proof}
\ref{i:extint} follows from Lemma~\ref{l:tensfaith}, and \ref{i:extfaith} from Lemma~\ref{l:extfaith}
by composing with $I' \otimes_k \sC$ where $I'$ is the embedding of $\I$ into $\sC'$.
\end{proof}

\section{Representations of the super general linear group}\label{s:rep}

This section contains the results on representations of the super general linear group 
that will be required in the next section for the construction of certain quotients of 
free rigid tensor categories by tensor ideals.

Let $k$ be a field of characteristic $0$.
We denote by $|V|$ the underlying $k$\nd vector space of a super $k$\nd vector space $V$.
Let $A$ be a commutative super $k$\nd algebra.
Then $|A|$ is a (not necessarily commutative) $k$\nd algebra.
If we regard $|A|$ as a right $|A|$\nd module over itself, then elements $a$ of $|A|$ may be identifed with
endomorphisms $a-$ of right $|A|$\nd modules.
Given a $k$\nd linear map $f:|V| \to |V'|$, we have for every $a$ in $|A|$ a morphism
$f \otimes_k a:|V| \otimes_k |A| \to |V'| \otimes_k |A|$ of right $|A|$\nd modules,
which when $f$ and $a$ are homogeneous of the same degree underlies a morphism
\begin{equation*}
f \otimes_k a:V \otimes_k A \to V' \otimes_k A
\end{equation*}
of $A$\nd modules.
Given also $f:|V'| \to |V''|$ and $a'$ in $|A|$, we have
\begin{equation*}
(f' \otimes_k a') \circ (f \otimes_k a) = (f' \circ f) \otimes_k a'a.
\end{equation*} 
If $V$, $V'$, $W$, $W'$ are super $k$\nd vector spaces concentrated in respective degrees $v$, $v'$, $w$, $w'$,
and if $a$ and $a'$ in $|A|$ are homogeneous of respective degrees $v+v'$ and $w+w'$,
then for any $f:|V| \to |V'|$, $g:|W| \to |W'|$ we have 
\begin{equation}\label{e:fagb}
(f \otimes_k a) \otimes_A (g \otimes_k b) = (f \otimes_k g) \otimes_k ((-1)^{(v+v')w'}ab),
\end{equation}
where we identify for example $(V \otimes_k A) \otimes_A (W \otimes_k A)$ with $(V \otimes_k W) \otimes_k A$
using the tensor structure of the functor $- \otimes_k A$.

We write $\rM_m$ for the ring scheme over $k$ of endomorphisms of $k^m$, and
$\rM_{m|n}$ for the super ring scheme over $k$ of endomorphisms of $k^{m|n}$.
Explicitly, a point of $\rM_{m|n}$ in the commutative super $k$\nd algebra $A$ is an $A$\nd endomorphism
of the $A$\nd module $k^{m|n} \otimes_k A$.
Such a point may be identified with an endomorphism of degree $0$ of the right $|A|$\nd module
\begin{equation*}
|k^{m|n}| \otimes_k |A| = k^{m+n} \otimes_k |A|,
\end{equation*}
and hence with an $(m + n) \times (m + n)$ matrix with entries in the diagonal
$m \times m$ and $n \times n$ blocks in $A_0$ and entries in the two off-diagonal
blocks in $A_1$.
We denote by $E$ the standard representation of $\rM_{m|n}$ on $k^{m|n}$, which assigns to a point 
of $\rM_{m|n}$ in $A$ its defining $A$\nd endomorphism of $k^{m|n} \otimes_k A$.

The point $\varepsilon_{m|n}$ of $\rM_{m|n}(k)$ which is $1$ on the
diagonal $m \times m$ block, $-1$ on the diagonal $n \times n$ block, and $0$ on the off-diagonal blocks,
defines a structure $(\rM_{m|n},\varepsilon_{m|n})$ of super $k$\nd monoid with involution on $\rM_{m|n}$.
We usually write $\varepsilon_{m|n}$ simply as $\varepsilon$.
The standard representation $E$ of $\rM_{m|n}$ is a representation of $(\rM_{m|n},\varepsilon)$.

\begin{lem}\label{l:standend}
Every $\rM_{m|n}$\nd endomorphism of $E^{\otimes d}$ is a $k$\nd linear combination of symmetries of $E^{\otimes d}$.
\end{lem}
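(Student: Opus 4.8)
The plan is to reduce the statement to the analogous fact for the ordinary general linear group by a faithful-flatness / density argument, and then to invoke the classical Schur--Weyl description of endomorphisms. First I would observe that it suffices to work with the super ring scheme $\rM_{m|n}$ rather than a group, and to test endomorphisms on points with values in a suitable commutative super $k$\nd algebra $A$: an $\rM_{m|n}$\nd endomorphism $\varphi$ of $E^{\otimes d}$ is a $k$\nd linear endomorphism of $(k^{m|n})^{\otimes d}$ commuting with the action of every $A$\nd point of $\rM_{m|n}$, for every $A$. The key step is to choose $A$ large enough that the $A$\nd points of $\rM_{m|n}$ span, in an appropriate sense, a Zariski-dense subset: concretely, take $A$ to be a polynomial super $k$\nd algebra on the matrix entries (the coordinate ring of $\rM_{m|n}$ itself), so that the generic point $g\in\rM_{m|n}(A)$ is ``universal''. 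Commuting with $g^{\otimes d}$ is then equivalent, by specialisation, to commuting with all points, and using \eqref{e:fagb} one sees that $g^{\otimes d}$ acts on $(k^{m|n})^{\otimes d}\otimes_k A$ through the $d$\nd th tensor power of the universal matrix.

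The heart of the argument is then purely a statement about the commutant of $\GL_{m+n}$ acting on a $\Z/2$\nd graded tensor power. Here I would pass to the underlying ungraded picture: the even part of $\rM_{m|n}$ contains $\rM_m\times\rM_n$, and more to the point, after extending scalars and using that we are in characteristic $0$, the commutant of $\rM_{m|n}$ in $\End_k((k^{m|n})^{\otimes d})$ is no larger than the commutant of the ordinary $\GL_{m+n}$ acting on $(k^{m+n})^{\otimes d}$ up to the sign twists recorded in \eqref{e:fagb}. By classical Schur--Weyl duality over a field of characteristic $0$, the commutant of $\GL_N$ on $(k^N)^{\otimes d}$ is spanned by the permutations of the $d$ factors. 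Tracking the signs from \eqref{e:fagb} through this identification, each such permutation corresponds precisely to a \emph{symmetry} of $E^{\otimes d}$ in the super tensor category of representations of $(\rM_{m|n},\varepsilon)$ (the Koszul sign rule is exactly what converts the ungraded permutation action into the braided symmetry). Hence every $\rM_{m|n}$\nd endomorphism of $E^{\otimes d}$ is a $k$\nd linear combination of these symmetries.

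The step I expect to be the main obstacle is making the first reduction rigorous: showing that an endomorphism commuting with the single universal point $g\in\rM_{m|n}(A)$ for $A$ the coordinate ring really is an $\rM_{m|n}$\nd endomorphism, i.e.\ that commuting with $g^{\otimes d}$ over $A$ implies commuting with $h^{\otimes d}$ for every point $h$ in every super $k$\nd algebra $B$. This is a standard Yoneda-type argument — $h$ is the image of $g$ under the algebra map $A\to B$ classifying $h$, and base change along this map carries the relation $[\varphi\otimes_k 1, g^{\otimes d}]=0$ to $[\varphi\otimes_k 1, h^{\otimes d}]=0$ — but one must be careful that the identification of $(V\otimes_k A)^{\otimes_A d}$ with $V^{\otimes_k d}\otimes_k A$ and the sign conventions of \eqref{e:fagb} are compatible with this base change, and that $\varphi$ being $k$\nd linear (not merely $A$\nd linear) is what lets it pass through. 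Once that compatibility is checked, the remainder is the invocation of ordinary Schur--Weyl duality together with a bookkeeping of Koszul signs, which is routine.
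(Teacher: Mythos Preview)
Your instinct to test an $\rM_{m|n}$\nd endomorphism against the universal point $g\in\rM_{m|n}(A)$ with $A=k[\rM_{m|n}]$ is exactly right, and the paper does the same. The Yoneda step you flag as the ``main obstacle'' is in fact routine; the real gap is elsewhere.

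The unjustified step is the passage from $\rM_{m|n}$ to $\GL_{m+n}$. You assert that the commutant of $\rM_{m|n}$ on $E^{\otimes d}$ is ``no larger than the commutant of the ordinary $\GL_{m+n}$ \dots\ up to the sign twists recorded in \eqref{e:fagb}'', but this is essentially the statement of the lemma, not a reduction of it. There is no algebra homomorphism from $k[\rM_{m+n}]$ to $|A|$ carrying the generic $(m+n)\times(m+n)$ matrix to $(t_{ij})$: the off-diagonal $t_{ij}$ are odd, hence anticommute and square to zero in $|A|$, so the monomials $t_{i_1j_1}\cdots t_{i_dj_d}$ satisfy many more linear relations than the commutative monomials $x_{i_1j_1}\cdots x_{i_dj_d}$ do. Consequently, commuting with $g^{\otimes d}$ over $A$ is \emph{a priori} a weaker constraint than commuting with the generic $\GL_{m+n}$\nd point, and your invocation of classical Schur--Weyl does not apply. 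Nor can one repair this by a single sign twist on $(k^{m+n})^{\otimes d}$: the Koszul signs converting ordinary permutations to super symmetries depend on the weight of the basis vector being permuted, so the span of super symmetries is not obtained from the span of ordinary permutations by conjugating by a fixed diagonal matrix.

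The paper closes this gap by a double-commutant argument. Since the image $S$ of $k[\mathfrak{S}_d]$ in $\End_k(|E|^{\otimes d})$ is semisimple, $S$ equals its own bicommutant, so it suffices to show that every $\rM_{m|n}$\nd endomorphism $h$ commutes with every element of the commutant of $S$. That commutant is spanned by the symmetrised elementary tensors $\sum_{\sigma}[\sigma](e_{i_1j_1}\otimes\cdots\otimes e_{i_dj_d})[\sigma^{-1}]$. The paper then expands $g^{\otimes d}$ as a sum of terms $\theta_{i_1,j_1,\dots,i_d,j_d}$ as in \eqref{e:thetasum}--\eqref{e:thetat}, observes that the $\mathfrak{S}_d$\nd orbits of these terms are indexed by the surviving monomials in the $t_{ij}$, and checks separately that the symmetrised sums with a repeated off-diagonal index (precisely those lost to the relation $t_{ij}^2=0$) are already zero. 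This last cancellation is the substantive point your sketch does not address.
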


\begin{proof}

For $i,j = 1,2, \dots ,m+n$, write $e_{ij}$ for the $k$\nd endomorphism of $|E| = k^{m+n}$ 
whose matrix has $(i,j)$th entry $1$ and all other entries $0$.
If $[\sigma]$ denotes the symmetry of $E^{\otimes d}$ defined by $\sigma$ in $\mathfrak{S}_d$,
then the $k$\nd endomorphisms of $|E|^{\otimes d}$ which commute with every symmetry of $E^{\otimes d}$
are $k$\nd linear combinations of those of the form
\begin{equation}\label{e:esigma}
\sum_{\sigma \in \mathfrak{S}_d} [\sigma] \circ 
(e_{i_1j_1} \otimes_k \dots \otimes_k e_{i_dj_d})
\circ [\sigma^{-1}]
\end{equation}
The $k$\nd subalgebra $S$ of $\End_{\rM_{m|n}}(E^{\otimes d})$ generated by the symmetries of 
$E^{\otimes d}$ is a quotient of $k[\mathfrak{S}_d]$, and hence semisimple. 
Thus $S$ is its own bicommutant in $\End_k(|E|^{\otimes d})$.
Hence it is enough to show that every $\rM_{m|n}$\nd endomorphism of $E^{\otimes d}$ commutes
with every $k$\nd endomorphism of $|E|^{\otimes d}$ of the form \eqref{e:esigma}.

When $(i_r,j_r) = (i_s,j_s)$ in \eqref{e:esigma} for some $r \ne s$ with $(i_r,j_r)$ from the off-diagonal blocks, 
\eqref{e:esigma} vanishes: if $\sigma_{rs}$ is the symmetry of $E^{\otimes d}$ that interchanges 
the $r$th and $s$th factor $E$, then for any $\tau$ the terms of \eqref{e:esigma} with $\sigma = \tau$ and 
$\sigma = \tau\sigma_{rs}$ cancel.

Let $h$ be an $\rM_{m|n}$\nd endomorphism of $E^{\otimes d}$.
To show that $h$ commutes with \eqref{e:esigma}, we may suppose that $(i_r,j_r) \ne (i_s,j_s)$ in \eqref{e:esigma}
when $r \ne s$ and $(i_r,j_r)$ lies in the off-diagonal blocks.
Denote by $A$ the commutative super $k$\nd algebra freely generated by the elements $t_{ij}$ for $i,j = 1, \dots, m+n$
with $t_{ij}$ of degree $0$ for $(i,j)$ in the diagonal blocks and of degree $1$ for $(i,j)$ 
in the off-diagonal blocks.
The element of $\rM_{m|n}(A)$ with matrix $(t_{ij})$ acts on $E \otimes_k A$ as the $A$\nd endomorphism 
\begin{equation*}
\sum_{i,j = 1}^{m+n}e_{ij} \otimes_k t_{ij}
\end{equation*}
and hence on $E^{\otimes d} \otimes_k A$ as the tensor power
\begin{equation}\label{e:thetasum}
(\sum_{i,j = 1}^{m+n}e_{ij} \otimes_k t_{ij})^{\otimes d} = 
\sum_{i_1,j_1,\dots, i_d,j_d = 1}^{m+n} \theta_{i_1,j_1, \dots ,i_d,j_d}
\end{equation} 
over $A$ of $A$\nd endomorphisms, where
\begin{equation*}
\theta_{i_1,j_1, \dots ,i_d,j_d} 
= (e_{i_1j_1} \otimes_k t_{i_1j_1}) \otimes_A \dots \otimes_A (e_{i_dj_d} \otimes_k t_{i_dj_d}).
\end{equation*}
Thus $h \otimes_k A$ commutes with \eqref{e:thetasum}. 
Since $- \otimes_k A$ is a tensor functor, the symmetries of $E^{\otimes d} \otimes_k A$ are the
$[\sigma] \otimes_k A$, so that by their naturality
\begin{equation}\label{e:thetaconj}
([\sigma] \otimes_k A) \circ \theta_{i_1,j_1, \dots ,i_d,j_d} \circ ([\sigma^{-1}] \otimes_k A)
= \theta_{i_{\sigma^{-1}(1)},j_{\sigma^{-1}(1)}, \dots ,i_{\sigma^{-1}(d)},j_{\sigma^{-1}(d)}}
\end{equation}
for every $\sigma$.
Repeatedly applying \eqref{e:fagb} shows that
\begin{equation}\label{e:thetat}
\theta_{i_1,j_1, \dots ,i_d,j_d} = 
(e_{i_1j_1} \otimes_k \dots \otimes_k e_{i_dj_d}) \otimes_k 
(\delta t_{i_1j_1} \dots t_{i_dj_d})
\end{equation}
with $\delta = \pm 1$.
Now $t_{i_1j_1} \dots t_{i_dj_d}$ is non-zero if and only if the $(i_r,j_r)$ lying in the off-diagonal blocks
are distinct, and when this is so $t_{i'_1j'_1} \dots t_{i'_dj'_d}$ generates the same $1$\nd dimensional 
subspace of the $k$\nd vector space $|A|$ as $t_{i_1j_1} \dots t_{i_dj_d}$ if and only if
\begin{equation*}
(i'_1,j'_1, \dots ,i'_d,j'_d) = 
(i_{\sigma^{-1}(1)},j_{\sigma^{-1}(1)}, \dots ,i_{\sigma^{-1}(d)},j_{\sigma^{-1}(d)})
\end{equation*}
for some $\sigma \in \mathfrak{S}_d$.
Further the distinct $1$\nd dimensional subspaces generated in this way are linearly independent.
It thus follows from \eqref{e:thetaconj} and \eqref{e:thetat} that $h \otimes_k A$ commutes with
\begin{equation*}
\sum_{\sigma \in \mathfrak{S}_d} 
([\sigma] \otimes_k A) \circ \theta_{i_1,j_1, \dots ,i_d,j_d} \circ ([\sigma^{-1}] \otimes_k A),
\end{equation*}
and hence from \eqref{e:thetat} that $h$ commutes with \eqref{e:esigma}.
\end{proof}

\begin{lem}\label{l:Mhomtens}
Let $(M,\varepsilon)$ and $(M',\varepsilon')$ be super $k$\nd monoids with involution,
$V$ be a representation of $(M,\varepsilon)$ and $V'$ a representation of $(M',\varepsilon')$,
and $W$ be a $(M,\varepsilon)$\nd module and $W'$ an $(M',\varepsilon')$\nd module.
Then the canonical $k$\nd linear map
\begin{equation*}
\Hom_M(V,W) \otimes_k \Hom_{M'}(V',W') \to
\Hom_{M \times_k M'}(V \otimes_k V',W \otimes_k W')
\end{equation*}
is an isomorphism.
\end{lem}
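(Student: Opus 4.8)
The plan is to reduce the statement to a classical fact about representations of affine monoid schemes, namely that for two such monoids $M$ and $M'$ over a field $k$, and (co)modules as in the statement, the natural map $\Hom_M(V,W)\otimes_k\Hom_{M'}(V',W')\to\Hom_{M\times_k M'}(V\otimes_k V',W\otimes_k W')$ is an isomorphism when $V$ and $V'$ are finite-dimensional. The involutions play essentially no role in the isomorphism itself: an $(M\times_k M',(\varepsilon,\varepsilon'))$-module structure on $V\otimes_k V'$ is the same as an $M\times_k M'$-module structure with the prescribed action of $(\varepsilon,\varepsilon')$, and one checks that the canonical map of the lemma automatically lands in, and is surjective onto, the correct $(\varepsilon,\varepsilon')$-homogeneous piece. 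So I would first dispose of the involutions by remarking that it suffices to prove the analogous statement for plain super $k$-monoids and super comodules, since $\Hom$ between $(M,\varepsilon)$-modules is the same as $\Hom$ between the underlying $M$-modules (degree is preserved because $\varepsilon$ acts by $(-1)^i$ on degree-$i$ components).

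Next I would translate everything into comodule language over the coordinate Hopf algebroids. Write $\mathcal{O}(M)$ and $\mathcal{O}(M')$ for the coordinate super $k$-algebras; then $M$-modules are $\mathcal{O}(M)$-comodules and $M\times_k M'$-modules are $\bigl(\mathcal{O}(M)\otimes_k\mathcal{O}(M')\bigr)$-comodules, via the obvious identification. For a finite-dimensional comodule $V$ over a coalgebra $C$ and an arbitrary comodule $W$, the space $\Hom_C(V,W)$ is computed as an equaliser
\begin{equation*}
\Hom_C(V,W)=\ker\bigl(\Hom_k(V,W)\rightrightarrows\Hom_k(V,W\otimes_k C)\bigr),
\end{equation*}
and since $V$ is finite-dimensional, $\Hom_k(V,W\otimes_k C)=\Hom_k(V,W)\otimes_k C$, so this is the equaliser of two maps of $k$-vector spaces (indeed of $C$-comodules). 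The point is that this equaliser commutes with the exact functor $-\otimes_k\Hom_{C'}(V',W')$: tensoring an exact sequence of $k$-vector spaces over a field with a fixed $k$-vector space is exact. So from
\begin{equation*}
0\to\Hom_M(V,W)\to\Hom_k(V,W)\to\Hom_k(V,W)\otimes_k\mathcal{O}(M)
\end{equation*}
and the analogous sequence for the primed data, I would build a commutative diagram whose rows are these equaliser sequences tensored appropriately, and whose vertical maps are the canonical ones $\Hom_k(V,W)\otimes_k\Hom_k(V',W')\iso\Hom_k(V\otimes_k V',W\otimes_k W')$ (an isomorphism precisely because $V$, $V'$ are finite-dimensional). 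A diagram chase then identifies $\Hom_M(V,W)\otimes_k\Hom_{M'}(V',W')$ with the subspace of $\Hom_k(V\otimes_k V',W\otimes_k W')$ cut out by the two coaction conditions for $\mathcal{O}(M)\otimes_k\mathcal{O}(M')$, i.e.\ with $\Hom_{M\times_k M'}(V\otimes_k V',W\otimes_k W')$.

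The main obstacle, and the only place finite-dimensionality of $V$ and $V'$ is genuinely used, is the compatibility of the $k$-linear tensor product with the relevant kernels: one needs $\Hom_k(V,W)\otimes_k\Hom_k(V',W')\to\Hom_k(V\otimes_k V',W\otimes_k W')$ to be an isomorphism (false in general for infinite-dimensional $V$, $V'$) and one needs the equaliser defining $\Hom_{M'}(V',W')$ to survive the functor $\Hom_M(V,W)\otimes_k(-)$, which is automatic over a field. A secondary bookkeeping point is to check that the two coaction conditions over $\mathcal{O}(M)\otimes_k\mathcal{O}(M')$ are jointly equivalent to the single coaction condition one gets from the diagram—this follows from the fact that $\mathcal{O}(M)\otimes_k 1$ and $1\otimes_k\mathcal{O}(M')$ generate $\mathcal{O}(M)\otimes_k\mathcal{O}(M')$ and the coaction on $V\otimes_k V'$ is the tensor product coaction, so I would state it as a routine verification. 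Finally, the sign conventions in the super tensor product (cf.\ the Koszul signs appearing in \eqref{e:fagb}) do not affect the argument, since they enter identically on both sides of the canonical map; I would note this explicitly but not belabour it.
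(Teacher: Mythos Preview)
There is a genuine gap. Your claim that ``the involutions play essentially no role'' is incorrect, and the reduction to plain super $k$\nd monoids fails: the analogous statement without involution is false. Take $M = M'$ trivial and $V = W = V' = W' = k^{1|1}$; then $\Hom_k(V,W) \otimes_k \Hom_k(V',W')$ has dimension $4$ while $\Hom_k(V \otimes_k V', W \otimes_k W')$ (degree-preserving maps $k^{2|2} \to k^{2|2}$) has dimension $8$. The step that fails is the asserted isomorphism $\Hom_k(V,W)\otimes_k\Hom_k(V',W')\cong\Hom_k(V\otimes_k V',W\otimes_k W')$: with $\Hom_k$ denoting morphisms of super $k$\nd vector spaces (i.e.\ degree-preserving maps), this map is injective but not surjective in general. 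Your ``routine verification'' that the two one-sided coaction conditions are jointly equivalent to the $\mathcal{O}(M)\otimes_k\mathcal{O}(M')$\nd coaction condition cannot succeed, because the ambient spaces in which the equalisers sit are not isomorphic.

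The paper's proof uses the involutions essentially. Since $f$ commutes with $(1,\varepsilon')$, it preserves not merely the total $\Z/2$\nd grading on $V \otimes_k V'$ but the decomposition into $V \otimes_k V'{}\!_i$; this bigrading-preservation is exactly what allows $f$ to be written as a sum of tensor products of degree-preserving maps. Your equaliser argument can be salvaged, but only by using the involutions: work with $\Hom_k(|V|,|W|)$ (all $k$\nd linear maps on underlying spaces, where the finite-dimensional tensor-Hom isomorphism does hold), and then observe that for $(M,\varepsilon)$\nd modules the equaliser of the coaction maps on $\Hom_k(|V|,|W|)$ already equals $\Hom_M(V,W)$, since any map commuting with the $M$\nd coaction commutes with $\varepsilon$ and is therefore automatically even. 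That last observation is precisely where the involution enters, and it cannot be dispensed with.
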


\begin{proof}
The canonical homomorphism is that induced by the canonical isomorphism
\begin{equation}\label{e:homtens}
\Hom_k(|V|,|W|) \otimes_k \Hom_k(|V'|,|W'|) \iso
\Hom_k(|V| \otimes_k |V'|,|W| \otimes_k |W'|).
\end{equation}
It is thus enough to show that every $(M \times_k M')$\nd homomorphism $f$ from 
$V \otimes_k V'$ to $W \otimes_k W'$
lies in the image under \eqref{e:homtens} of both
\begin{equation*}
\Hom_M(V,W) \otimes_k \Hom_k(V',W')
\end{equation*}
and the similar subspace defined using $M'$\nd homomorphisms.  
In fact $f$ sends $V \otimes_k V'{}\!_i$ to $W \otimes_k W'{}\!_i$ because $(1,\varepsilon')$ acts on 
them as $(-1)^i$.
Regarding $V \otimes_k V'$ and $W \otimes_k W'$ as $M$\nd modules by restricting to the factor $M$ 
thus gives the required result for $M$\nd homomorphisms.
The argument for $M'$\nd homomorphisms is similar.
\end{proof}

Let $\bm = (m_\gamma)_{\gamma \in \Gamma}$ and $\bn = (n_\gamma)_{\gamma \in \Gamma}$ be families of integers
$\ge 0$ indexed by the same set $\Gamma$.
We write
\begin{equation*}
\rM_{\bm|\bn} = \prod_{\gamma \in \Gamma} \rM_{m_\gamma|n_\gamma}.
\end{equation*}
If $\varepsilon_{\bm|\bn}$ is the $k$\nd point 
$(\varepsilon_{m_\gamma|n_\gamma})_{\gamma \in \Gamma}$ of $M_{\bm|\bn}$,
then $(\rM_{\bm|\bn},\varepsilon_{\bm|\bn})$ is a super $k$\nd monoid with involution,
which we usually write simply $(\rM_{\bm|\bn},\varepsilon)$. 
We also write $E_\gamma$ for the representation of $(\rM_{\bm|\bn},\varepsilon)$ 
given by inflation along the projection
$\rM_{\bm|\bn} \to \rM_{m_\gamma|n_\gamma}$ of the standard representation $E$ of 
$(\rM_{m_\gamma|n_\gamma},\varepsilon)$.

\begin{lem}\label{l:standss}
Let $\bm$ and $\bn$ be families of integers $\ge 0$ indexed by $\Gamma$.
Then the full subcategory of the category of representations of $(\rM_{\bm|\bn},\varepsilon)$
consisting of the direct summands of direct sums representations of the form $\bigotimes_{i=1}^r E_{\gamma_i}$
for $\gamma_i \in \Gamma$ is semisimple abelian.
\end{lem}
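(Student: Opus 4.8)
The plan is to reduce the statement to the case of a single factor, and there to the content of Lemma~\ref{l:standend}. First I would observe that a direct summand of a direct sum of objects of the form $\bigotimes_{i=1}^r E_{\gamma_i}$ is semisimple abelian as soon as we know two things: that the category is closed under direct sums and summands (which it is, by construction, together with the fact that a finite direct sum of representations is a representation), and that the endomorphism ring of each generating object $\bigotimes_{i=1}^r E_{\gamma_i}$ is a semisimple $k$\nd algebra. Indeed, once every object of a pseudo-abelian $k$\nd linear category is a summand of a finite direct sum of objects with semisimple endomorphism algebra, standard arguments show that all hom-spaces are finite-dimensional, idempotents split compatibly, every object decomposes into a finite direct sum of objects with division-ring (hence, over the perfect field $k$ of characteristic $0$, finite-dimensional division algebra) endomorphism ring, and kernels and cokernels of all morphisms exist and are again in the category; thus the category is abelian and semisimple. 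So the crux is the computation of $\End_{\rM_{\bm|\bn},\varepsilon}\bigl(\bigotimes_{i=1}^r E_{\gamma_i}\bigr)$.

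Next I would handle the single-factor case $\Gamma = \{\gamma\}$, where $\bigotimes_{i=1}^r E_\gamma = E^{\otimes r}$ for $E$ the standard representation of $(\rM_{m|n},\varepsilon)$. By Lemma~\ref{l:standend}, every $\rM_{m|n}$\nd endomorphism of $E^{\otimes r}$ is a $k$\nd linear combination of symmetries, so $\End_{\rM_{m|n}}(E^{\otimes r})$ is a quotient of the group algebra $k[\mathfrak{S}_r]$, which is semisimple since $\mathrm{char}\,k = 0$. Here I should note that an $(\rM_{m|n},\varepsilon)$\nd module endomorphism of $E^{\otimes r}$ is the same as an $\rM_{m|n}$\nd module endomorphism, since $E^{\otimes r}$ already has a canonical $(\rM_{m|n},\varepsilon)$-structure and $\varepsilon$ acts as $(-1)^i$ on the degree-$i$ part automatically. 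So $\End_{\rM_{m|n},\varepsilon}(E^{\otimes r})$ is semisimple.

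For general $\Gamma$, group the factors $E_{\gamma_i}$ by the value of $\gamma_i$: write $\bigotimes_{i=1}^r E_{\gamma_i}$, up to a symmetry, as $\bigotimes_{\gamma} E_\gamma^{\otimes r_\gamma}$ where only finitely many $r_\gamma$ are nonzero, and each factor $E_\gamma^{\otimes r_\gamma}$ is inflated from $\rM_{m_\gamma|n_\gamma}$ along the projection. Iterating Lemma~\ref{l:Mhomtens} (applied to the finitely many relevant factors $\rM_{m_\gamma|n_\gamma}$, with the trivial module on the remaining factors) gives a $k$\nd algebra isomorphism
\begin{equation*}
\End_{\rM_{\bm|\bn},\varepsilon}\Bigl(\bigotimes_{\gamma} E_\gamma^{\otimes r_\gamma}\Bigr) \cong \bigotimes_{\gamma} \End_{\rM_{m_\gamma|n_\gamma},\varepsilon}\bigl(E^{\otimes r_\gamma}\bigr).
\end{equation*}
Each tensor factor on the right is a finite-dimensional semisimple $k$\nd algebra by the single-factor case, and a finite tensor product over $k$ of finite-dimensional semisimple algebras is semisimple because $k$ is perfect (each factor is separable over $k$). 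Hence the endomorphism ring of every generator is semisimple, and the reduction in the first paragraph finishes the proof.

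I expect the main obstacle to be the first paragraph: spelling out cleanly why "pseudo-abelian, generated under sums and summands by objects with semisimple endomorphism algebra" forces the whole category to be abelian semisimple — in particular that a morphism between two such objects has a kernel lying in the category, which uses that the relevant hom-spaces are finite-dimensional and that the ambient category $\Mod_{\rM_{\bm|\bn},\varepsilon}(k)$ is abelian with the subcategory closed under the operations in question. Everything else is bookkeeping with Lemmas~\ref{l:standend} and~\ref{l:Mhomtens} plus the perfectness of $k$.
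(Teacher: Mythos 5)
Your reduction in the first paragraph is not correct as stated, and the gap cannot be closed by merely ``spelling out cleanly'' the standard arguments. Knowing that each generating object $\bigotimes_i E_{\gamma_i}$ has a semisimple endomorphism algebra is \emph{not} enough to conclude that the category of summands of direct sums of these objects is semisimple abelian: one also needs to control the hom-spaces \emph{between} non-isomorphic generators. If $A$ and $B$ both have $\End = k$ but $\Hom(A,B) \ne 0$ and $\Hom(B,A) = 0$, then $\End(A \oplus B)$ is block-triangular with a nonzero Jacobson radical, so it is not semisimple, and the subcategory of summands of $A^a \oplus B^b$ need not be abelian. A concrete instance: take $A = P_1$ (the length-two projective) and $B = S_1$ in the representations of the $A_2$ quiver $1 \to 2$; both have $\End = k$, yet the kernel of $P_1 \twoheadrightarrow S_1$ is $S_2$, which is not a summand of any $P_1^a \oplus S_1^b$. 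So the subcategory is not closed under kernels and is not abelian, and finite-dimensionality of hom-spaces does nothing to repair this.

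What is missing is precisely the step the paper supplies via central characters. The centre of $\rM_{\bm|\bn}$ is $\prod_{\gamma}\rM_1$, its monoid of central characters is the free commutative monoid on $\Gamma$, and $\bigotimes_i E_{\gamma_i}$ has central character $\sum_i \gamma_i$. Hence two generators $\bigotimes_i E_{\gamma_i}$ and $\bigotimes_j E_{\gamma'_j}$ with distinct multisets $\{\gamma_i\}$, $\{\gamma'_j\}$ have distinct central characters and $\Hom$ between them vanishes. Only after that observation can one reduce, block by block, to $\End\bigl((\bigotimes_i E_{\gamma_i})^r\bigr) \cong \End\bigl(\bigotimes_i E_{\gamma_i}\bigr) \otimes_k \rM_r(k)$, and then to a single generator. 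Once you have this, the rest of your argument is fine: your use of Lemma~\ref{l:Mhomtens} to split into single-$\gamma$ factors and your appeal to Lemma~\ref{l:standend} together with semisimplicity of $k[\mathfrak{S}_d]$ (or equivalently your ``tensor product of separable algebras'' remark) match the paper. But as written, the proposal skips the one step that actually makes the direct-sum reduction legitimate.
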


\begin{proof}
It is enough to show that the $k$\nd algebra $\End_{\rM_{\bm|\bn}}(V)$ is semisimple for $V$ a direct sum of 
representations $\bigotimes_i E_{\gamma_i}$.
The centre of $\rM_{\bm|\bn}$ is $\prod_{\gamma \in \Gamma}\rM_1$ with each factor diagonally embedded, 
and the monoid of central characters is the free commutative monoid on $\Gamma$.
Since $\bigotimes_i E_{\gamma_i}$ has central character $\sum_i \gamma_i$, the hom-space between two 
non-isomorphic representations of this form is $0$.
Thus we may suppose that $V = (\bigotimes_i E_{\gamma_i})^r$ for some $r$.
Since $\End_{\rM_{\bm|\bn}}(W^r)$ is the tensor product over $k$ of $\End_{\rM_{\bm|\bn}}(W)$ 
with the full matrix algebra $\rM_r(k)$, we may suppose further that $r = 1$.
By Lemma~\ref{l:Mhomtens} we may suppose finally that $V = E_\gamma{}\!^{\otimes d}$ for some $\gamma$.
Since the group algebra $k[\mathfrak{S}_d]$ is semisimple, the required result then follows
from Lemma~\ref{l:standend}.
\end{proof}

\begin{lem}\label{l:subobj}
Let $\sC$ be an abelian category and $\sC_0$ be a strictly full abelian subcategory of $\sC$
for which the embedding $\sC_0 \to \sC$ is left exact.
Suppose that every object of $\sC$ is a subobject of an object of $\sC_0$.
Then $\sC_0 = \sC$.
\end{lem}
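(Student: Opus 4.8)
The plan is to prove the equivalent statement that every object $M$ of $\sC$ is isomorphic to an object of $\sC_0$; since $\sC_0$ is strictly full this forces $M$ to lie in $\sC_0$, whence $\sC_0 = \sC$.

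First I would apply the hypothesis twice. Choose a monomorphism $i \colon M \to A$ in $\sC$ with $A$ in $\sC_0$, form the cokernel $p \colon A \to A/M$ in $\sC$, and choose a monomorphism $q \colon A/M \to B$ in $\sC$ with $B$ in $\sC_0$. Set $g = q \circ p \colon A \to B$. Since $\sC_0$ is a full subcategory containing $A$ and $B$, the morphism $g$ lies in $\sC_0$, and as $\sC_0$ is abelian it has a kernel $k \colon K \to A$ computed in $\sC_0$.

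The key step is to show that both $i \colon M \to A$ and $k \colon K \to A$ are kernels of $g$ in $\sC$, so that $M \cong K$. For $i$: in the abelian category $\sC$ every monomorphism is the kernel of its cokernel, so $i$ is a kernel of $p$; and because $q$ is a monomorphism, a morphism $X \to A$ composes to $0$ with $g = q \circ p$ if and only if it composes to $0$ with $p$, so $i$ is a kernel of $g$ in $\sC$. For $k$: the embedding $\sC_0 \to \sC$ is left exact, hence preserves kernels, so $k$ is a kernel of $g$ in $\sC$. Therefore $M \cong K$ in $\sC$ with $K$ an object of $\sC_0$, and strict fullness gives $M \in \sC_0$.

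The argument is essentially formal; the only point needing a little care — and hardly an obstacle — is the identification of $\Ker g$ in $\sC$ with $M$, which rests on the standard abelian-category facts that monomorphisms are kernels and that precomposition with a monomorphism leaves kernels unchanged. Left exactness of the inclusion $\sC_0 \to \sC$ is precisely what transports the kernel computed in $\sC_0$ back to $\sC$.
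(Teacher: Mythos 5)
Your proof is correct and follows essentially the same route as the paper's: embed $M$ in $A \in \sC_0$, embed $A/M$ in $B \in \sC_0$, observe $M$ is the kernel of $A \to B$, and use fullness plus left exactness of the inclusion to conclude $M \in \sC_0$. You simply spell out more carefully the steps the paper leaves implicit (that monomorphisms are kernels of their cokernels, that precomposing with a monomorphism preserves kernels, and where strict fullness and left exactness enter).
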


\begin{proof}
Let $N$ be an object of $\sC$.
By hypothesis, $N$ is a subobject of an object $N_0$ in $\sC_0$, 
and $N_0/N$ is a subobject of an object $M_0$ in $\sC_0$.
Then $N$ is the kernel of the composite $N_0 \to N_0/N \to M_0$, and hence lies in $\sC_0$. 
\end{proof}

\begin{prop}\label{p:Msummand}
Let $\bm$ and $\bn$ be families of integers $\ge 0$ indexed by $\Gamma$.
\begin{enumerate}
\item\label{i:Msummandss}
The category of representations of $(\rM_{\bm|\bn},\varepsilon)$ is semisimple abelian.
\item\label{i:Msummand}
Every representation of $(\rM_{\bm|\bn},\varepsilon)$ is a direct summand of a direct sum of a 
representations of the form $\bigotimes_{i=1}^r E_{\gamma_i}$ for $\gamma_i \in \Gamma$.
\end{enumerate}
\end{prop}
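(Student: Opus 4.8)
The plan is to deduce Proposition~\ref{p:Msummand} from Lemma~\ref{l:standss} by showing that the semisimple subcategory produced there is in fact all of $\Rep(\rM_{\bm|\bn},\varepsilon)$. Write $\sC$ for the category of representations of $(\rM_{\bm|\bn},\varepsilon)$ and $\sC_0$ for the full subcategory of direct summands of direct sums of tensor products $\bigotimes_{i=1}^r E_{\gamma_i}$. By Lemma~\ref{l:standss}, $\sC_0$ is semisimple abelian, so both \ref{i:Msummandss} and \ref{i:Msummand} follow at once if we prove $\sC_0 = \sC$. I would do this via Lemma~\ref{l:subobj}: one must check that $\sC_0$ is a strictly full abelian subcategory of $\sC$, that the inclusion $\sC_0 \to \sC$ is left exact, and that every object of $\sC$ embeds into an object of $\sC_0$.

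\textbf{Embedding every representation into $\sC_0$.} This is the crux. Let $V$ be a representation of $(\rM_{\bm|\bn},\varepsilon)$. Since $\rM_{\bm|\bn} = \prod_\gamma \rM_{m_\gamma|n_\gamma}$ and $V$ is finite-dimensional, the action factors through a finite subproduct, so we may assume $\Gamma$ is finite. The coordinate super algebra $\sO(\rM_{m|n})$ is, as an $\rM_{m|n}$-bicomodule, a quotient of (indeed embeds into) a sum of tensor powers of $E$ and its dual — but here there is no dual available among the $\rM_{m|n}$-representations, which is exactly why the monoid rather than the group is used. Concretely, $\sO(\rM_{m|n})$ is the free commutative super algebra on the matrix entries, i.e. on $|E| \otimes_k |E^\vee|$; as a comodule it is a sum of sub-quotients of tensor powers $E^{\otimes d}$ — more precisely each graded piece $\Sym^\bullet$-component is a direct summand of some $E^{\otimes d}$, because symmetric and exterior powers of $E$ split off $E^{\otimes d}$ in characteristic $0$. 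Since $V$ embeds into a sum of copies of the regular representation $\sO(\rM_{\bm|\bn})$ (any finite-dimensional comodule embeds into a direct sum of copies of the coalgebra), $V$ embeds into a direct sum of objects each of which is a sub-quotient, hence (by semisimplicity of the relevant endomorphism algebras, Lemma~\ref{l:standss}) a direct summand, of a tensor power $\bigotimes_i E_{\gamma_i}$. Thus $V$ is a subobject of an object of $\sC_0$. The $\varepsilon$-equivariance is automatic since $\varepsilon$ acts as $(-1)^i$ on degree-$i$ pieces throughout. I expect the main obstacle to be handling the super algebra $\sO(\rM_{m|n})$ carefully: one needs that each of its isotypic/graded components, though built from matrix entries of both parities, is still a summand of an honest tensor power of $E$, which uses that $\Sym$ and $\Lambda$ of a super vector space decompose tensor powers over $k[\mathfrak S_d]$ in characteristic $0$.

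\textbf{The remaining verifications.} That $\sC_0$ is strictly full and closed under direct sums and summands is immediate from its definition. Left exactness of $\sC_0 \to \sC$: since $\sC_0$ is abelian (Lemma~\ref{l:standss}) and its inclusion into $\sC$ preserves finite direct sums and is fully faithful, it suffices to note that a kernel computed in $\sC_0$ agrees with the kernel in $\sC$ — and for that, one checks that $\sC_0$ is closed under kernels of its own morphisms, which follows because in a semisimple abelian category kernels are direct summands, and $\sC_0$ is closed under summands. With all hypotheses of Lemma~\ref{l:subobj} in place we conclude $\sC_0 = \sC$, which is exactly \ref{i:Msummand}, and then \ref{i:Msummandss} is Lemma~\ref{l:standss} applied to $\sC_0 = \sC$.
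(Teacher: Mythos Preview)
Your overall strategy is exactly the paper's: use Lemma~\ref{l:subobj} with $\sC_0$ as in Lemma~\ref{l:standss}, and reduce everything to embedding an arbitrary representation $V$ into a direct sum of tensor products $\bigotimes_i E_{\gamma_i}$ via the coaction map into the regular comodule. The verifications about $\sC_0$ being strictly full with left exact inclusion are fine.

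The gap is in the embedding step, precisely where you flag the ``main obstacle''. You write that each graded component of $k[\rM_{m|n}]$ is a summand of an honest tensor power $E^{\otimes d}$. This is false: as a right $\rM_{\bm|\bn}$\nd module one has $k[\rM_{\bm|\bn}] \cong \Sym\bigl(\coprod_\gamma k^{m_\gamma|n_\gamma} \otimes_k E_\gamma\bigr)$ with trivial action on the $k^{m_\gamma|n_\gamma}$, and the embedding of $V\cong k^{t|u}$ lands in $k^{t|u}\otimes_k k[\rM_{\bm|\bn}]$. Expanding, this is a coproduct of modules of two kinds: $\bigotimes_i E_{\gamma_i}$ and the parity-shifted $k^{0|1}\otimes_k \bigotimes_i E_{\gamma_i}$. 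The second kind is \emph{not} an $(\rM_{\bm|\bn},\varepsilon)$\nd module: $\varepsilon$ acts on its degree-$i$ part as $(-1)^{i+1}$, not $(-1)^i$. So the obstacle is not resolved by showing all components are honest tensor powers (they aren't), and your remark that ``$\varepsilon$\nd equivariance is automatic'' is pointing the wrong way.

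The correct resolution, which the paper uses, is that since $V$ \emph{is} an $(\rM_{\bm|\bn},\varepsilon)$\nd module, any $\rM_{\bm|\bn}$\nd homomorphism from $V$ to a summand $k^{0|1}\otimes_k \bigotimes_i E_{\gamma_i}$ must vanish (it would have to intertwine the action of $\varepsilon$, which acts with opposite signs on the two sides). Hence the image of $V$ under the coaction embedding lies entirely in the coproduct of the unshifted summands $\bigotimes_i E_{\gamma_i}$, and by finite-dimensionality $V$ embeds into a finite direct sum of these. Once you insert this parity argument your proof goes through and matches the paper's.
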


\begin{proof}
By Lemma~\ref{l:standss} it is enough to prove \ref{i:Msummand}.
Let $V$ be a representation of $(\rM_{\bm|\bn},\varepsilon)$.
Choosing an isomorphism of super $k$\nd vector spaces $V \iso k^{t|u}$
gives an embedding
\[
0 \to V \to k^{t|u} \otimes_k k[\rM_{\bm|\bn}]
\]
of $\rM_{\bm|\bn}$\nd modules, where $\rM_{\bm|\bn}$ acts trivially on $k^{t|u}$ and $k[\rM_{\bm|\bn}]$ 
is the right regular $\rM_{\bm|\bn}$\nd module.
By the isomorphism
\begin{equation*}
k[\rM_{\bm|\bn}] \iso \Sym(\coprod_{\gamma \in \Gamma} k^{m_\gamma|n_\gamma} \otimes_k E_\gamma)
\end{equation*}
of $\rM_{\bm|\bn}$\nd modules, $k^{t|u} \otimes_k k[\rM_{\bm|\bn}]$ is a coproduct of $\rM_{\bm|\bn}$\nd modules 
of the form either $\bigotimes_i E_{\gamma_i}$ or $k^{0|1} \otimes_k \bigotimes_i E_{\gamma_i}$.
There are no non-zero $\rM_{\bm|\bn}$\nd homomorphisms to $k^{0|1} \otimes_k \bigotimes_i E_{\gamma_i}$ from $V$,
because $\varepsilon$ acts on its even part as $-1$ and on its odd part as $1$. 
It follows that $V$ can be embedded in a direct sum of representations $\bigotimes_i E_{\gamma_i}$.
Taking for $\sC$ in Lemma~\ref{l:subobj} the category of representations of $(\rM_{\bm|\bn},\varepsilon)$,
for $\sC_0$ the strictly full subcategory consisting of the direct summands of direct sums
of representations $\bigotimes_i E_{\gamma_i}$, and using Lemma~\ref{l:standss}, 
now gives the required result.
\end{proof}

A point of $\rM_{m|n}$ in a commutative super $k$\nd algebra $A$ may be written uniquely as
$g + \alpha$, with $g$ the in the diagonal blocks and $\alpha$ in the off-diagonal blocks.
Then $g$ is a point of $\rM_m \times \rM_n$ in $A_0$.
A point of $\rM_{m|n}$ in $A$ is invertible if and only the corresponding point in $A_{\mathrm{red}}$ is invertible:
reduce to the case where the point in $A_{\mathrm{red}}$ is the identity.
The functor that sends $A$ to the group of invertible elements of $\rM_{m|n}(A)$ is thus represented by an affine open
super $k$\nd subgroup $\GL_{m|n}$ of $\rM_{m|n}$ with $(\GL_{m|n})_{\mathrm{red}}$ the open 
$k$\nd subgroup $\GL_m \times \GL_n$ of $(\rM_{m|n})_{\mathrm{red}} = \rM_m \times \rM_n$.
The point $g + \alpha$ of $\rM_{m|n}$ lies in $\GL_{m|n}$ if and only if the point $g$ of $\rM_m \times \rM_n$
lies in $\GL_m \times \GL_n$.
Write
\[
q \in k[\rM_{m|n}]_0
\]
for the pullback along the projection $g + \alpha \mapsto g$ from $\rM_{m|n}$ to $\rM_m \times \rM_n$
of the element of $k[\rM_m \times \rM_n]$ that sends $(h,j)$ to $\det(h)\det(j)$. 
Then $\GL_{m|n}$ is the open super subscheme $(\rM_{m|n})_q$ of $\rM_{m|n}$ where $q$ is invertible, and 
\begin{equation}\label{e:kGLkM}
k[\GL_{m|n}] = k[\rM_{m|n}]_q
\end{equation}
is obtained from $k[\rM_{m|n}]$ by inverting $q$.

If $M$ is an affine super $k$\nd monoid and $V$ is an $M$\nd module with defining homomorphism
\begin{equation*}
\mu:V \to V \otimes_k k[M]
\end{equation*}
then the \emph{coefficient} of $V$ associated to a $v$ in $|V|$ and a $k$\nd linear map $\pi:|V| \to k$
is the element $(\pi \otimes_k k[M])(\mu(v))$ of $k[M]$.
By \eqref{e:kGLkM}, the embedding of $\GL_{m|n}$ into $\rM_{m|n}$ defines an embedding
of $k[\rM_{m|n}]$ into $k[\GL_{m|n}]$.
The category of $\rM_{m|n}$\nd modules may thus be identified with the full subcategory of
the category of $\GL_{m|n}$\nd modules consisting of those $\GL_{m|n}$\nd modules for which
every coefficient lies in $k[M_{m|n}]$.

The point $\varepsilon = \varepsilon_{m|n}$ of $\rM_{m|n}(k)$ lies in $\GL_{m|n}(k)$, so that
we have a super $k$\nd group with involution $(\GL_{m|n},\varepsilon)$.

\begin{lem}\label{l:coeff}
For every integer $d \ge 0$ there exists a representation $W \ne 0$ of $(\rM_{m|n},\varepsilon)$ such that 
every coefficient of $W$ lies in $q^dk[\rM_{m|n}]$.
\end{lem}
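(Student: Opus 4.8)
I want a non-zero representation $W$ of $(\rM_{m|n},\varepsilon)$ all of whose coefficients lie in $q^d k[\rM_{m|n}]$. The element $q$ is, up to pullback along $g + \alpha \mapsto g$, the function $(h,j) \mapsto \det(h)\det(j)$ on $\rM_m \times \rM_n$, so the natural candidate is to build $W$ out of top exterior powers of the standard representation $E$ and its ``dual'' in the monoid sense. Concretely, on $\rM_m$ the top exterior power $\bigwedge^m(k^m)$ of the standard representation transforms by $\det$, so its single coefficient is $\det(h) \in k[\rM_m]$. The obstruction is that $\rM_{m|n}$ is a monoid, not a group, so there is no honest dual of $E$; but we do have the sub- and quotient-structure of $E$ coming from the block decomposition, and I want to exploit that.

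**Key steps.** First I would reduce to producing, for each of the two ``blocks'' separately, a representation whose coefficients lie in a power of the corresponding determinant. Decompose $|E| = k^{m|n}$ as $k^{m|0} \oplus k^{0|n}$; these are not $\rM_{m|n}$-subrepresentations, but the associated graded pieces of the evident two-step filtration of $E$ are, and more usefully one checks directly from the matrix description that the $d$-th symmetric-type construction I want respects the block structure. The cleanest route: take $W_1 = \bigl(\bigwedge^m E_{\mathrm{even}}\bigr)^{\otimes d}$ built from the even block and $W_2 = \bigl(\bigwedge^n E_{\mathrm{odd}}\bigr)^{\otimes d}$ from the odd block, realized inside tensor powers of $E$ via the appropriate idempotents in $k[\mathfrak S_\bullet]$ acting through the super-symmetry (Lemma~\ref{l:standend} guarantees these are genuine $\rM_{m|n}$-endomorphisms, so the images are genuine subrepresentations). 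Then set $W = W_1 \otimes_k W_2$. A coefficient of a tensor product is a product of coefficients of the factors, and a coefficient of $\bigwedge^m E_{\mathrm{even}}$ is $\det$ of the even block $= $ (the even-block part of) the pullback factor of $q$; similarly for the odd block. So every coefficient of $W$ is divisible by $\det(\text{even block})^d \cdot \det(\text{odd block})^d = q^d$ in $k[\rM_{m|n}]$. Finally I must confirm $W$ is a representation of $(\rM_{m|n},\varepsilon)$, i.e.\ that $\varepsilon$ acts by $(-1)^i$ on the degree-$i$ part: since $W$ is a subobject of a tensor power of $E$ and $E$ is a representation of $(\rM_{m|n},\varepsilon)$, this is automatic, as is $W \neq 0$ provided the relevant exterior powers are nonzero — which they are, since $\bigwedge^m$ of an $m$-dimensional space and $\bigwedge^n$ of an $n$-dimensional space are one-dimensional (when $m = 0$ or $n = 0$ the corresponding factor is just $k$, and the argument degenerates harmlessly).

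**Main obstacle.** The delicate point is the passage from ``block'' data to genuine $\rM_{m|n}$-subrepresentations and the precise bookkeeping of how $q$ factors. Because $E$ has no dual and the block decomposition is only a filtration, one cannot literally write $\bigwedge^m E_{\mathrm{even}}$ as a summand of $E^{\otimes m}$ on the nose; instead I expect to identify it as the image of the super-antisymmetrizer on the appropriate tensor power (note the sign conventions: the even block is genuinely ``even'' so antisymmetrization is literal, whereas on the odd block the super-symmetry turns antisymmetrization into symmetrization, which is why one uses $\bigwedge^n$ of the odd block rather than $\Sym^n$). Getting these idempotents and the resulting coefficient computation exactly right — in particular verifying that the single coefficient of the one-dimensional $\rM_{m|n}$-module so obtained really is the pullback of $\det$ on the even (resp.\ odd) block, and not that function times a unit that fails to be a genuine power of $q$ — is the technical heart of the argument. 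Everything else (tensor products multiply coefficients, subrepresentations of $(\rM_{m|n},\varepsilon)$-modules are again such by the remark after the definition of $(M,\varepsilon)$-module, nonvanishing of top exterior powers) is routine.
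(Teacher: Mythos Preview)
Your approach has a genuine gap at the point you flag as the ``main obstacle,'' and it is more serious than you indicate. The block decomposition $E = k^{m|0} \oplus k^{0|n}$ is \emph{not even a filtration} by $\rM_{m|n}$\nd submodules: over a test algebra $A$ with an odd element $\epsilon$, the point $I + \epsilon\, e_{m+1,1}$ of $\rM_{m|n}(A)$ sends $e_1$ to $e_1 + \epsilon\, e_{m+1}$, so $k^{m|0}$ is not preserved, and symmetrically for $k^{0|n}$. There is therefore no way to extract $\bigwedge^m E_{\mathrm{even}}$ as an $\rM_{m|n}$\nd subquotient of $E^{\otimes m}$. The image of any idempotent of $k[\mathfrak{S}_r]$ acting through the symmetries of $E^{\otimes r}$ is a direct sum of Schur functors $S^\lambda(E)$ with $|\lambda| = r$; for $m,n \ge 1$ none of these is one\nd dimensional (a weight count under the diagonal torus already gives at least two semistandard super tableaux once $|\lambda| \ge 1$). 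In fact for $m,n \ge 1$ the super monoid $\rM_{m|n}$ has no nontrivial one\nd dimensional polynomial representation at all: the only candidate, the Berezinian, is a character of $\GL_{m|n}$ but not of $\rM_{m|n}$, and $q$ itself is not group\nd like since $q(xx') \ne q(x)q(x')$ once the off\nd diagonal entries contribute. So your proposed $W_1$ and $W_2$ do not exist as $\rM_{m|n}$\nd representations.

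The paper's argument is quite different in spirit. Rather than building $W$ from tensor constructions on $E$, it works inside the right regular $\GL_{m|n}$\nd module $k[\GL_{m|n}]$: let $W_r$ be the $\GL_{m|n}$\nd submodule generated by $q^{2r}$. One checks that $W_r$ is fixed by left translation by $\varepsilon$, so it is a $(\GL_{m|n},\varepsilon)$\nd representation. A direct estimate using the comultiplication $\mu$ --- expanding $q^{2r}((g+\alpha)(g'+\alpha'))$ and observing that at most $2mn$ odd factors can survive --- shows first $W_r \subset q^{2r-2mn}k[\rM_{m|n}]$ and then $\mu(W_r) \subset q^{2r-4mn}k[\rM_{m|n}] \otimes_k q^{2r-4mn}k[\rM_{m|n}]$, so every coefficient of $W_r$ lies in $q^{2r-4mn}k[\rM_{m|n}]$ and one takes $r$ large. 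The key move is thus an explicit bound on how far multiplication by off\nd diagonal nilpotents can erode powers of $q$, not any Schur\nd functor bookkeeping.
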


\begin{proof}
It is enough to prove that there exists a representation $W \ne 0$ of $(\GL_{m|n},\varepsilon)$ 
such that every coefficient of $W$ lies in $q^dk[\rM_{m|n}] \subset k[\GL_{m|n}]$.
Let $r$ be an integer $\ge 0$, and $W_r$ be the $\GL_{m|n}$\nd submodule of the right regular $\GL_{m|n}$\nd module
$k[\GL_{m|n}]$ generated by $q^{2r}$.
Explicitly, if the coalgebra $k$\nd homomorphism is
\[
\mu:k[\GL_{m|n}] \to k[\GL_{m|n}] \otimes_k k[\GL_{m|n}],
\] 
then $W_r$ is the smallest super $k$\nd vector subspace $V$ of $k[\GL_{m|n}]$ such that
\begin{equation}\label{e:muqr}
\mu(q^{2r}) \in V \otimes_k k[\GL_{m|n}]. 
\end{equation}
Since left translation by $\varepsilon$ leaves $q^{2r}$ fixed, 
$W_r$ is contained in the super $\GL_{m|n}$\nd subspace of $k[\GL_{m|n}]$ 
of invariants under left translation by $\varepsilon$.
Thus $W_r$ is a representation of $(\GL_{m|n},\varepsilon)$,
because conjugation of $\GL_{m|n}$ by $\varepsilon$ acts on $k[\GL_{m|n}]$ as $(-1)^i$ in degree $i$.

Let $g + \alpha$ and $g' + \alpha'$ be points of $\GL_{m|n}$ in a commutative super $k$\nd algebra $A$.
Then
\[
q^{2r}((g+\alpha)(g'+\alpha')) = q^{2r}(gg' + \alpha\alpha') = 
q^{2r}(gg')q^{2r}(1 + (gg')^{-1}\alpha\alpha').
\]
Since $(gg')^{-1}$ is $q^{-1}(gg')a(gg')$ with the entries of $a(gg')$ polynomials
in those of $gg'$, we may write $q^{2r}(1 + (gg')^{-1}\alpha\alpha')$ as a sum of terms of the form
\begin{equation*}
q^{-l}(g)q^{-l}(g')p(g,g')\alpha_1 \dots \alpha_l\alpha'\!_1 \dots \alpha'\!_l,
\end{equation*}
where $p(g,g')$ is a polynomial in the entries of $g$ and $g'$,
and the $\alpha_i$ and $\alpha'\!_i$ are entries of $\alpha$ and $\alpha'$.
The product of the $\alpha_i$ or $\alpha'\!_i$ is $0$ if $l > 2mn$,
so that 
\begin{equation*}
\mu(q^{2r}) \in q^{2r-2mn}k[\rM_{m|n}] \otimes_k q^{2r-2mn}k[\rM_{m|n}].
\end{equation*}
Thus \eqref{e:muqr} holds with $V = q^{2r-2mn}k[\rM_{m|n}]$, so that 
\begin{equation*}
W_r \subset q^{2r-2mn}k[\rM_{m|n}].
\end{equation*}
Suppose $r \ge mn$.
Then applying $\mu$ and using the fact that
$\mu$ is a morphism of super $k$\nd algebras sending $k[\rM_{m|n}]$ into
$k[\rM_{m|n}] \otimes_k k[\rM_{m|n}]$ shows that
\begin{equation*}
\mu(W_r) \subset q^{2r-4mn}k[\rM_{m|n}]  \otimes_k q^{2r-4mn}k[\rM_{m|n}].
\end{equation*}
It follows that every coefficient of $W_r$ lies in $q^{2r-4mn}k[\rM_{m|n}]$. 
We may thus take $W = W_r$ for $r \ge 2mn + d/2$.
\end{proof}

Let $\bm$ and $\bn$ be families of integers $\ge 0$ indexed by $\Gamma$. 
We write
\begin{equation*}
\GL_{\bm|\bn} = \prod_{\gamma \in \Gamma}\GL_{m_\gamma|n_\gamma}.
\end{equation*}
The $k$\nd point $\varepsilon = \varepsilon_{\bm|\bn}$ of $\rM_{\bm|\bn}$ lies in $\GL_{\bm|\bn}$,
and we have an affine super $k$\nd group with involution $(\GL_{\bm|\bn},\varepsilon)$.
As with $\rM_{m|n}$ and $\GL_{m|n}$, we may identify $\rM_{\bm|\bn}$\nd modules with 
$\GL_{\bm|\bn}$\nd modules whose coefficients lie in $k[M_{\bm|\bn}]$.

\begin{thm}\label{t:VtensW}
For every representation $V$ of $(GL_{\mathbf{m}|\mathbf{n}},\varepsilon)$ there exists a representation $W \ne 0$ of 
$(M_{\bm|\bn},\varepsilon)$ such that $V \otimes_k W$ is a representation of $(M_{\bm|\bn},\varepsilon)$.
\end{thm}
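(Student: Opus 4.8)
The plan is to reduce the statement to the one-factor case already settled in Lemma~\ref{l:coeff}. Recall from~\eqref{e:kGLkM} that $k[\GL_{m|n}]$ is obtained from $k[\rM_{m|n}]$ by inverting the element $q \in k[\rM_{m|n}]_0$. Writing $q_\gamma$ for this element in the $\gamma$th factor, viewed in $k[\rM_{\bm|\bn}]_0$ through the projection, one has for each finite $F \subseteq \Gamma$ that $k[\GL_F]$, with $\GL_F = \prod_{\gamma \in F}\GL_{m_\gamma|n_\gamma}$, is obtained from $k[\rM_F]$, with $\rM_F = \prod_{\gamma \in F}\rM_{m_\gamma|n_\gamma}$, by inverting $q_F = \prod_{\gamma \in F}q_\gamma$; moreover $k[\GL_{\bm|\bn}] = \bigcup_F k[\GL_F]$, the union being over the finite subsets $F$ of $\Gamma$.

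First I would note that, since $V$ is finite-dimensional, it has only finitely many coefficients, so these span a finite-dimensional $k$\nd subspace of $k[\GL_{\bm|\bn}]$ and hence all lie in $k[\GL_F]$ for a single finite $F \subseteq \Gamma$. Clearing denominators then gives an integer $d \ge 0$ with every coefficient of $V$ lying in $q_F^{-d}k[\rM_F] \subseteq q_F^{-d}k[\rM_{\bm|\bn}]$.

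Next, for each $\gamma \in F$ I would apply Lemma~\ref{l:coeff} to $(\rM_{m_\gamma|n_\gamma},\varepsilon)$ to obtain a representation $W_\gamma \ne 0$ all of whose coefficients lie in $q_\gamma^{d}k[\rM_{m_\gamma|n_\gamma}]$, and then take $W$ to be the tensor product over $\gamma \in F$ of the $W_\gamma$, a representation of $(\rM_F,\varepsilon)$, inflated to $(\rM_{\bm|\bn},\varepsilon)$ along the projection $\rM_{\bm|\bn} \to \rM_F$. Then $W \ne 0$ since $k$ is a field, and $W$ is finite-dimensional since $F$ is finite. Using that the coefficients of a tensor product of comodules are sums of products of coefficients of the factors, and that inflation sends coefficients to their images in the larger coordinate ring, one sees that every coefficient of $W$ lies in $\prod_{\gamma \in F}q_\gamma^{d}k[\rM_{\bm|\bn}] = q_F^{d}k[\rM_{\bm|\bn}]$. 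Now $V \otimes_k W$ is a representation of $(\GL_{\bm|\bn},\varepsilon)$, namely the tensor product of $V$ with the restriction of $W$ to $\GL_{\bm|\bn}$, and by the same principle each of its coefficients is a sum of products of a coefficient of $V$ with a coefficient of $W$, hence lies in $(q_F^{-d}k[\rM_{\bm|\bn}])(q_F^{d}k[\rM_{\bm|\bn}]) \subseteq k[\rM_{\bm|\bn}]$. By the identification of $\rM_{\bm|\bn}$\nd modules with the $\GL_{\bm|\bn}$\nd modules whose coefficients lie in $k[\rM_{\bm|\bn}]$, it follows that $V \otimes_k W$ is a $(\rM_{\bm|\bn},\varepsilon)$\nd module, and being finite-dimensional it is a representation of $(\rM_{\bm|\bn},\varepsilon)$, as required.

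The essential content is entirely in Lemma~\ref{l:coeff}; the remainder is bookkeeping. The one step to be careful with is the reduction to a finite subset $F$ of $\Gamma$ together with the compatibility of the formation of coefficients with tensor products and with inflation along the projections $\rM_{\bm|\bn} \to \rM_F \to \rM_{m_\gamma|n_\gamma}$, which is precisely what makes the powers of $q_F$ in the numerator and in the denominator cancel.
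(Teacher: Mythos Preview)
Your argument is correct and follows essentially the same route as the paper's proof: reduce to a finite subset $F \subset \Gamma$ so that the coefficients of $V$ lie in $q_F^{-d}k[\rM_{\bm|\bn}]$, apply Lemma~\ref{l:coeff} to each factor to build $W = \bigotimes_{\gamma \in F} W_\gamma$, and conclude via the multiplicativity of coefficients under tensor product. One small wording issue: $V$ does not literally have only finitely many coefficients (they are parametrised by pairs $(v,\pi)$), but the point you need---and immediately use---is that they span a finite-dimensional subspace of $k[\GL_{\bm|\bn}]$, which is what the paper states.
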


\begin{proof}
Write $q_\gamma$ for the image of $q$ in $k[\rM_{m_\gamma|n_\gamma}]$ under the embedding
of $k[\rM_{m_\gamma|n_\gamma}]$ into $k[\rM_{\bm|\bn}]$ defined by the projection from $\rM_{\bm|\bn}$ to 
$\rM_{m_\gamma|n_\gamma}$.
The coefficients of $V$ generate a finite-dimensional $k$\nd vector subspace of $k[\GL_{m|n}]$, and hence lie in
\begin{equation*}
(\prod_{\gamma \in \Gamma_0}q_\gamma{}\!^{-d})k[\rM_{\bm|\bn}]
\end{equation*}
for some finite subset $\Gamma_0$ of $\Gamma$ and $d \ge 0$.
Since the coefficients of $V \otimes_k W$ are linear combinations of products of those of $V$ and $W$, 
it follows from Lemma~\ref{l:coeff} that we may take $W = \bigotimes_{\gamma \in \Gamma_0} W_\gamma$
with $W_\gamma \ne 0$ for $\gamma \in \Gamma$ an appropriate representation of the factor 
$\rM_{m_\gamma|n_\gamma}$ of $\rM_{\bm|\bn}$. 
\end{proof}

\begin{cor}\label{c:quotsub}
Let $\bm$ and $\bn$ be families of integers $\ge 0$ indexed by $\Gamma$. 
Then every representation of $(\GL_{\bm|\bn},\varepsilon)$ is a 
subrepresentation (resp.\ quotient representation)
of a direct sum of representations of the form 
$\bigotimes_{i=1}^r E_{\gamma_i} \otimes_k \bigotimes_{j=1}^s E_{\gamma'{}\!_j}\!^{\vee}$ for 
$\gamma, \gamma'\!{}_j \in \Gamma$.
\end{cor}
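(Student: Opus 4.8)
The plan is to derive this from Theorem~\ref{t:VtensW} and Proposition~\ref{p:Msummand}\ref{i:Msummand}, using that the category of representations of $(\GL_{\bm|\bn},\varepsilon)$ is rigid. First I would take a representation $V$ of $(\GL_{\bm|\bn},\varepsilon)$ and apply Theorem~\ref{t:VtensW} to obtain a representation $W \ne 0$ of $(\rM_{\bm|\bn},\varepsilon)$ with $V \otimes_k W$ a representation of $(\rM_{\bm|\bn},\varepsilon)$. Restricting along $\GL_{\bm|\bn} \to \rM_{\bm|\bn}$ lets me view both $W$ and $V \otimes_k W$ as representations of $(\GL_{\bm|\bn},\varepsilon)$ that arise by inflation from $(\rM_{\bm|\bn},\varepsilon)$. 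Proposition~\ref{p:Msummand}\ref{i:Msummand} then gives that $W$ is a direct summand of a direct sum of representations $\bigotimes_j E_{\gamma'_j}$, and that $V \otimes_k W$ is a direct summand of a direct sum of representations $\bigotimes_i E_{\gamma_i}$.

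Next I would pass to the dual $W^\vee$, which exists in the rigid category of representations of $(\GL_{\bm|\bn},\varepsilon)$ (note that $W^\vee$ need not itself be inflated from $(\rM_{\bm|\bn},\varepsilon)$, since $\rM_{\bm|\bn}$ is only a monoid). Dualising the decomposition of $W$ and using that the dual of a tensor product of the $E_\gamma$ is a tensor product of the $E_\gamma{}\!^\vee$, I get that $W^\vee$ is a direct summand of a direct sum of representations $\bigotimes_j E_{\gamma'_j}{}\!^\vee$. Tensoring the decomposition of $V \otimes_k W$ by $W^\vee$ then shows that $(V \otimes_k W) \otimes_k W^\vee$ is a direct summand of a direct sum of representations of the form $\bigotimes_{i=1}^r E_{\gamma_i} \otimes_k \bigotimes_{j=1}^s E_{\gamma'_j}{}\!^\vee$.

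Finally I would recover $V$ from $(V \otimes_k W) \otimes_k W^\vee$. Since $W \ne 0$, the coevaluation $\I \to W \otimes_k W^\vee$ is a monomorphism (it is injective on underlying super vector spaces), so tensoring with $V$ and using the symmetry $V \otimes_k W \otimes_k W^\vee \cong (V \otimes_k W) \otimes_k W^\vee$ realises $V$ as a subrepresentation of a direct summand of a direct sum of representations of the required form, hence as a subrepresentation of such a direct sum. Dually, the evaluation $W^\vee \otimes_k W \to \I$ is an epimorphism, so $V \otimes_k W^\vee \otimes_k W \cong (V \otimes_k W) \otimes_k W^\vee$ maps onto $V$, which exhibits $V$ as a quotient of a direct sum of representations of the required form.

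I do not expect any serious obstacle beyond Theorem~\ref{t:VtensW} itself, which does the real work. The only subtlety is that the category of representations of $(\GL_{\bm|\bn},\varepsilon)$ is not semisimple, so $\I$ is not in general a direct summand of $W \otimes_k W^\vee$; this is why the conclusion is stated with subrepresentations and quotient representations rather than direct summands, and why the argument uses only that coevaluation is a monomorphism and evaluation an epimorphism.
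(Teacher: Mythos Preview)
Your proposal is correct and follows essentially the same approach as the paper: apply Theorem~\ref{t:VtensW} and Proposition~\ref{p:Msummand} to realise $V \otimes_k W \otimes_k W^\vee$ as a direct summand of a direct sum of the required form, then use the unit $\eta_W:k \to W \otimes_k W^\vee$ to embed $V$. The only cosmetic difference is that for the quotient case the paper dualises the subrepresentation statement applied to $V^\vee$, whereas you use the counit directly; both are fine.
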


\begin{proof}
Let $V$ be a representation of $(GL_{\bm|\bn},\varepsilon)$.
If $W \ne 0$ is as in Theorem~\ref{t:VtensW}, then by Proposition~\ref{p:Msummand},
both $W$ and $V \otimes_k W$ are direct summands of direct sums of representations 
of the form $\bigotimes_i E_{\gamma_i}$.
Since $V \otimes \eta_W$ with $\eta_W:k \to W \otimes_k W^\vee$ the unit 
embeds $V$ into $V \otimes_k W \otimes_k W^\vee$, the result for subrepresentations follows.
The result for quotients follows by taking duals.
\end{proof}

\section{Free rigid tensor categories}\label{s:free}

This section deals with the connection between free rigid tensor categories
and categories of representations of super general linear groups over a field
of characteristic $0$.

Let $k$ be a field of characteristic $0$.
Given $r \in k$ we denote by 
\begin{equation*}
\sF_r 
\end{equation*}
the free rigid $k$\nd tensor category on a dualisable
object $N$ of rank $r$.
If $M$ is a dualisable object of rank $r$ in a $k$\nd tensor category $\sC$,
there is then a $k$\nd tensor functor from $\sF_r$ to $\sC$ that sends $N$ to $M$,
and if $T_1$ and $T_2$ are tensor functors
from $\sF_r$ to $\sC$ and $\theta$ is an isomorphism from $T_1(N)$ to $T_2(N)$, 
there is a unique tensor isomorphism $\varphi$ from $T_1$ to $T_2$ with $\varphi_N = \theta$.
The objects of $\sF_r$ are tensor products of copies of $N$ and $N^\vee$.
The full tensor subcategory $\sH$ of $\sF_r$ consisting of tensor powers of $N$ is independent of
$r$ and is the free $k$\nd tensor category on $N$.
The symmetries of $N^{\otimes d}$ then form a basis for its endomorphism algebra, so that
\begin{equation*}
\End(N^{\otimes d}) = k[\mathfrak{S}_d],
\end{equation*}
while if $d_1 \ne d_2$ we have $\Hom(N^{\otimes d_1},N^{\otimes d_2}) = 0$.

Dualising shows that a tensor ideal $\sJ$ of $\sF_r$ is completely determined 
by its restriction to $\sH$, and hence 
by the ideals $\sJ(N^{\otimes d},N^{\otimes d})$ of the $k$\nd algebras $\End(N^{\otimes d})$.
A sequence of ideals $\sJ^{(d)}$ of the $\End(N^{\otimes d})$ arises from a tensor ideal of
$\sF_r$ if and only if $\sJ^{(d)} \otimes N$ is contained in $\sJ^{(d+1)}$
and contraction sends $\sJ^{(d+1)}$ into $\sJ^{(d)}$
for each $d$.

Write $c_\lambda$ for the Young symmetriser in $k[\mathfrak{S}_d]$ associated to 
the partition $\lambda$ of $d$.
The two-sided ideal of $k[\mathfrak{S}_d]$ generated by $c_\lambda$ consists of those
elements that act trivially on all irreducible representations of $\mathfrak{S}_d$ 
other than the one associated to $\lambda$.
Every minimal two-sided ideal of $k[\mathfrak{S}_d]$ is of this form, for a unique $\lambda$.
If $\lambda'$ is a partition of  $d' \ge d$ and $\mathfrak{S}_d$ is embedded in 
$\mathfrak{S}_{d'}$ by an embedding of $[1,d]$ into $[1,d']$, then $c_{\lambda'}$ lies
in the two-sided ideal of $k[\mathfrak{S}_{d'}]$ generated by $c_\lambda$ if and only 
if the diagram $[\lambda']$ contains $[\lambda]$ \cite[4.44]{FulHar}.

Suppose now that $r$ is an integer.
Let $m|n$ be a pair of integers $\ge 0$ with $m-n = r$.
Then there exists a $k$\nd tensor functor from $\sF_r$ to the category 
of super $k$\nd vector spaces, unique up to tensor isomorphism, that sends 
$N$ to a super $k$\nd vector space of dimension $m|n$.
Its kernel is a tensor ideal
\begin{equation*}
\sJ_{m|n}
\end{equation*}
of $\sF_r$.
Denote by $\lambda_{m|n}$ the partition of $(m+1)(n+1)$ such that $[\lambda_{m|n}]$ has $m+1$ columns
and $n+1$ rows.
Then \cite[1.9]{Del02}
\begin{equation*}
c_\lambda \in \sJ_{m|n}(N^{\otimes d},N^{\otimes d}) \subset \End(N^{\otimes d}) = k[\mathfrak{S}_d]
\end{equation*}
if and only if $[\lambda]$ contains $[\lambda_{m|n}]$, 
so that $\sJ_{m|n}(N^{\otimes d},N^{\otimes d})$ is the two-sided ideal of $k[\mathfrak{S}_d]$
generated by $c_{\lambda_{m|n}}$, and $\sJ_{m|n}$ is the tensor ideal of $\sF_r$ 
generated by $c_{\lambda_{m|n}}$.
In particular
\begin{equation}\label{e:Jmnstrincl}
\sJ_{m+1|n+1} \subsetneqq \sJ_{m|n}.
\end{equation}
To show that the $\sJ_{m|n}$ are the only tensor ideals of $\sF_r$ other than $0$ or $\sF_r$,
we need the following lemma.

\begin{lem}\label{l:rectangle}
Let $V$ be a super $k$\nd vector space of dimension $m|n$ and $\lambda$ be a partition of $d>0$.
Suppose that the endomorphism $f$ of $V^{\otimes d}$ induced by the Young 
symmetriser $c_\lambda$ is $\ne 0$, but that each contraction of $f$ with respect to a factor 
$V$ of $V^{\otimes d}$ is $0$. 
Then $\lambda = \lambda_{m'|n'}$ for some $m'|n'$ with $m' - n' = m - n$.
\end{lem}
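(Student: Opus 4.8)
The plan is to analyze the combinatorics of the Young diagram $[\lambda]$ using the structure of the category of representations of $\GL_{m|n}$ (equivalently, $\rM_{m|n}$) together with the facts recalled above about how $\sJ_{m|n}$ detects which $c_\mu$ vanish. Recall from \cite[1.9]{Del02} that $c_\mu$ acts as zero on $V^{\otimes d'}$ precisely when $[\mu]$ contains the rectangle $[\lambda_{m|n}]$ with $m+1$ columns and $n+1$ rows. So the hypothesis that $f \ne 0$ says exactly that $[\lambda]$ does \emph{not} contain $[\lambda_{m|n}]$, i.e.\ either $\lambda$ has at most $m$ columns or at most $n$ rows. The image of $c_\lambda$ on $V^{\otimes d}$ is (up to isomorphism) the irreducible $\GL_{m|n}$-module $S_{[\lambda]}(V)$ associated to $\lambda$ in the Sergeev--Berele--Regev parametrization; it is nonzero exactly when $[\lambda]$ is an $(m,n)$-hook partition, meaning $\lambda_{m+1} \le n$ (the part of the diagram below row $m$ fits in $n$ columns).

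First I would translate the two contraction hypotheses into statements about $[\lambda]$. The contraction of $f$ with respect to a factor $V$ corresponds, on the level of $\GL_{m|n}$-modules, to the component of $S_{[\lambda]}(V) \otimes V^\vee \to V^{\otimes(d-1)}$, and by the super analogue of Pieri's rule this contraction is nonzero unless removing a box from $[\lambda]$ in every legal way already leaves the hook condition impossible — more precisely, unless $S_{[\lambda]}(V)$ does not appear as a summand of $S_{[\mu]}(V) \otimes V$ for any $\mu$ obtained by deleting a removable box of $\lambda$. The condition ``every contraction is $0$'' therefore forces $[\lambda]$ to be ``rigid'' under box removal: no removable box of $[\lambda]$ yields a diagram $[\mu]$ with $S_{[\mu]}(V) \otimes V$ still containing $S_{[\lambda]}(V)$. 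Combined with $S_{[\lambda]}(V) \ne 0$, I expect this to pin $[\lambda]$ down to a rectangle $[\lambda_{m'|n'}]$ of the stated shape.

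Concretely, the argument I would run is: since $S_{[\lambda]}(V) \ne 0$, write $p$ for the number of columns and $q$ for the number of rows of $[\lambda]$; the hook condition $S_{[\lambda]}(V) \ne 0$ together with $[\lambda] \not\supseteq [\lambda_{m|n}]$ gives $p \le m$ or $q \le n$ (from the non-containment) and $\lambda_{m+1} \le n$ (from the hook condition). The vanishing of all contractions says that for every removable box, either the resulting $S_{[\mu]}(V) = 0$ or the branching multiplicity vanishes; since branching to $V^{\otimes(d-1)}$ is governed by a signed Pieri rule which on hook partitions is multiplicity-one and nonzero exactly when the removed box is a corner compatible with the hook, the only way to kill \emph{all} of them is for $[\lambda]$ to have a single removable corner whose removal drops $S_{[\mu]}(V)$ to zero — and a short case analysis shows this happens precisely for the rectangles $[\lambda_{m'|n'}]$ with $m'+1$ columns, $n'+1$ rows, and the constraint $m' - n' = m - n$ coming from matching the rank $r = m-n$ of the object (equivalently, from the fact that $f$ being a nonzero endomorphism of $V^{\otimes d}$ with $V$ of superdimension $m-n$ forces $d = (m'+1)(n'+1)$ for the appropriate $m',n'$ and the trace/rank bookkeeping to be consistent). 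The main obstacle I anticipate is getting the super Pieri / branching rule stated and applied cleanly enough that ``all single-box contractions vanish'' really does force a rectangle rather than just some constrained near-rectangular shape; this is where one must use that the \emph{full} set of contractions (over all factors, which by symmetry is one condition) vanishes, and carefully exclude partitions that are rectangular in the ordinary sense but not of hook-rectangle type $\lambda_{m'|n'}$. Once the shape is shown to be such a rectangle, identifying the parameters via $m' - n' = m - n$ is immediate from comparing superdimensions.
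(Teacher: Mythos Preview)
Your approach is genuinely different from the paper's, and as written it has a real gap at the step you yourself flag as the main obstacle.

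The paper does not use branching or Pieri at all. It fixes a homogeneous basis $e^+_1,\dots,e^+_m,e^-_1,\dots,e^-_n$ of $V$, and for each box $(1,l)$ in the first row of $[\lambda]$ and each basis vector $e$, fills the boxes of $[\lambda]$ with basis vectors according to an explicit recipe (essentially: $e^-_i$ in row $i$ for $i\le n_0$, $e^+_j$ in column $j$ below that, and $e$ in box $(1,l)$). This filling gives a basis vector of $|V|^{\otimes d}$, and the paper computes the corresponding diagonal matrix entry $f_{l;e}$ of $|f|$ directly from the combinatorics of the Young symmetriser. The sum $\sum_r f_{l;e^+_r} - \sum_s f_{l;e^-_s}$ is then a diagonal entry of the contraction at box $(1,l)$, and an explicit (if somewhat delicate) count shows it equals
\[
\frac{C}{\lambda_1}\bigl(m - n - \lambda_1 + \lambda^t_l\bigr)
\]
with $C\ne 0$. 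Vanishing for every $l$ forces $\lambda^t_l = \lambda_1 - (m-n)$ for all $l$, so $[\lambda]$ is a rectangle whose column-count minus row-count equals $m-n$. That is the whole argument: one well-chosen matrix entry per column, no representation theory beyond the action of $\mathfrak S_d$.

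The gap in your route is the sentence ``by the super analogue of Pieri's rule this contraction is nonzero unless removing a box from $[\lambda]$ in every legal way already leaves the hook condition impossible.'' Pieri tells you how $S_{[\lambda]}(V)\otimes V^\vee$ or $S_{[\mu]}(V)\otimes V$ decompose, but the contraction of $c_\lambda$ is a specific endomorphism of $V^{\otimes(d-1)}$, not simply a sum of the projections onto the $S_{[\mu]}(V)$ with nonzero coefficients. To make your argument work you would need to show that, as an element of the image of $k[\mathfrak S_{d-1}]$, the contraction is $\sum_\mu a_\mu c_\mu$ over $\mu=\lambda\setminus\{\text{box}\}$ with \emph{explicitly known} coefficients $a_\mu$, and then determine exactly when each $a_\mu c_\mu$ vanishes on $V^{\otimes(d-1)}$. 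That is essentially a computation of the same weight as the paper's, and without it neither your ``single removable corner'' conclusion nor the constraint $m'-n'=m-n$ follows. In particular, the numerical relation $m'-n'=m-n$ does not drop out of superdimension bookkeeping alone; in the paper it appears precisely as the content-like quantity $m-n-\lambda_1+\lambda^t_l$ in the matrix-entry formula.
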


\begin{proof}
Since $f \ne 0$, the diagram $[\lambda]$ does not contain a box $(n+1,m+1)$.
Fix a basis $e^+_1, e^+_2, \dots , e^+_m$ of $V_0$ and $e^-_1, e^-_2, \dots , e^-_n$ of $V_1$.
The $e^+_r$ and $e^-_s$ define a basis of $|V|$ and hence of the tensor powers of $|V|$.
The boxes of $[\lambda]$ correspond to factors $V$ in $V^{\otimes d}$, and assignments to
each box of an $e^+_r$ or $e^-_s$ define basis elements of $|V|^{\otimes d}$.

Let $(1,l)$ be a box in the first row of $[\lambda]$ and $e$ be one of the $e^+_r$ or $e^-_s$. 
Assign as follows to each box in $[\lambda]$ a basis element $e^+_r$ or $e^-_s$.
Write $n_0$ for the lesser of $n$ or the number of rows of $[\lambda]$.
To $(1,l)$ assign $e$.
To $(i,j) \ne (1,l)$ with $i \le n_0$ assign $e^-_i$.
To $(i,j) \ne (1,l)$ with $i > n_0$ assign $e^+_j$,
which is possible because $j \le m$ for $i > n$. 
The diagram
\begin{equation*}
\begin{ytableau}
e^-_1 & e^-_1 & e^-_1 & e & e^-_1 & e^-_1 \\
e^-_2 & e^-_2 & e^-_2 & e^-_2 & e^-_2 \\
e^-_3 & e^-_3 & e^-_3 & e^-_3 \\
e^+_1 & e^+_2 & e^+_3 & e^+_4 \\
e^+_1 & e^+_2
\end{ytableau}
\end{equation*} 
is an example with $l = 4$ and $n_0 = 3$.
This assignment defines a basis element of $|V|^{\otimes d}$.
If we write $f_{l;e}$ for the diagonal matrix entry of 
$|f|:|V|^{\otimes d} \to |V|^{\otimes d}$
corresponding to this basis element, and $f'$ for the contraction of $f$ with respect to the factor 
$V$ of $V^{\otimes d}$ 
corresponding to the box $(1,l)$, then
\begin{equation}\label{e:contractentry}
\sum_{r = 1}^m f_{l;e^+_r} - \sum_{s = 1}^n f_{l;e^-_s}
\end{equation}
is a diagonal matrix entry of $|f'|$.
It is thus enough to show that if \eqref{e:contractentry} is $0$ for every $l$ then $[\lambda]$ is rectangular
with the difference of the number of its rows and its columns $m - n$.

Write $\lambda = (\lambda_1, \dots ,\lambda_p)$ and $\lambda^t = (\lambda^t{}\!_1, \dots ,\lambda^t{}\!_q)$.
If $n = 0$ then \eqref{e:contractentry} is
\begin{equation*}
f_{l;e^+_l} = \lambda^t{}\!_1 ! \lambda^t{}\!_2 ! \dots \lambda^t{}\!_q ! \ne 0.
\end{equation*}
We may thus suppose that $n > 0$.
Then \eqref{e:contractentry} is
\begin{multline*}
C(-1 -\frac{n-\min\{n_0,\lambda^t{}\!_l\}}{\lambda_1} + 
\frac{\max\{1,\lambda^t{}\!_l -n_0 +1\} - \delta_l}{\lambda_1} 
+ \frac{m-1 + \delta_l}{\lambda_1}) = \\
= \frac{C}{\lambda_1}(m - n - \lambda_1 + \lambda^t{}\!_l)
\end{multline*}
with $C = \lambda_1! \dots \lambda_p!(\lambda^t{}\!_l - n_0)! \dots (\lambda^t{}\!_{q_0} - n_0)! \ne 0$
where $q_0$ is the largest $i$ with $\lambda^t{}\!_i \ge n_0$, 
and with $\delta_l = 0$ when $l \le m$ and $\delta_l = 1$ when $l > m$.
Here the first term on the left corresponds to $f_{l;e^-_1}$, the second to $f_{l;e^-_s}$ for 
$\min\{n_0,\lambda^t{}\!_l\} < s \le n$, the third to $f_{l;e^+_l}$ when $l \le n$, the fourth to
$f_{l;e^+_r}$ for $r \ne l$, and the $f_{l;e^-_s}$ for $1 < s \le \min\{n_0,\lambda^t{}\!_l\}$ are $0$.
Since $\lambda_1$ is the number of columns and $\lambda^t{}\!_l$ is the length of the $l$th column of
$[\lambda]$, the result follows.
\end{proof}

\begin{lem}\label{l:idealJmn}
Let $r$ be an integer.
Then every tensor ideal of $\sF_r$ other than $0$ or $\sF_r$ is of the form $\sJ_{m|n}$ 
for a unique $m|n$ with $m-n = r$.
\end{lem}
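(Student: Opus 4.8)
The plan is to reduce everything to a statement about the lattice of sequences of two-sided ideals $\sJ^{(d)} \subseteq k[\mathfrak S_d] = \End(N^{\otimes d})$ satisfying the stability conditions (closed under $-\otimes N$ and under contraction), using the fact recalled just before Lemma~\ref{l:rectangle} that a tensor ideal of $\sF_r$ is determined by, and corresponds to, such a sequence. So let $\sJ$ be a tensor ideal with $0 \ne \sJ \ne \sF_r$, and pick $d>0$ minimal with $\sJ^{(d)} \ne 0$; such a $d$ exists and is positive because $\sJ^{(0)}$, an ideal of $k = \End(\I)$, must be $0$ (else $\sJ = \sF_r$). Since $k[\mathfrak S_d]$ is semisimple, $\sJ^{(d)}$ is a sum of minimal two-sided ideals, so there is at least one partition $\mu$ of $d$ with $c_\mu \in \sJ^{(d)}$.

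First I would show that among all partitions $\mu$ (of any degree $d$) with $c_\mu \in \sJ$ one can choose one that is "contraction-minimal": if $\mu$ has a box whose removal still leaves a partition $\mu^-$ with $c_{\mu^-} \in \sJ$, replace $\mu$ by $\mu^-$; this process terminates since degree strictly decreases, and by minimality of $d$ it cannot reach degree $0$. For the resulting $\mu$ (say of degree $e$), apply $\sJ$ to the super vector space picture: choose $m|n$ with $m-n=r$ large enough that $\sJ_{m|n} \not\supseteq \sJ$ — concretely, since $c_\mu \in \sJ^{(e)}$ and the $\sJ_{m|n}$ with $m-n=r$ are strictly decreasing by \eqref{e:Jmnstrincl}, only finitely many of them can contain $c_\mu$, so we may pick $m|n$ with $m-n=r$ and $c_\mu \notin \sJ_{m|n}$, i.e. the endomorphism $f$ of $V^{\otimes e}$ induced by $c_\mu$ is nonzero for a super vector space $V$ of dimension $m|n$. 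I also need each contraction of $f$ to vanish: a contraction of $f=\,$(image of $c_\mu$) under the super functor $\sF_r \to \MOD(k)$ is the image of the corresponding contraction of $c_\mu$ in $\sF_r$, which by minimality of $\mu$ lies in $\sJ$ of strictly smaller degree, hence — again using that $c_{\mu^-}\notin\sJ_{m|n}$ would be false… — here I must be a bit careful: what I actually get from contraction-minimality is that the contraction of $c_\mu$ is a linear combination of Young symmetrisers $c_{\nu}$ with $\nu$ obtained by removing a box, all of which lie in $\sJ$, so all of which (if nonzero in $\sF_r$) would need $c_\nu \in \sJ_{m|n}$ — so I should instead choose $m|n$ minimally so that $c_\mu\notin\sJ_{m|n}$ but $c_\nu\in\sJ_{m|n}$ for every box-removal $\nu$ of $\mu$ with $c_\nu\in\sJ$, which is possible because contraction-minimality forces every such $\nu$ to itself be of the form handled at an earlier (larger $m|n$) stage; concretely the chain of $\sJ_{m'|n'}$'s is totally ordered, $\sJ_{m|n}$ for $m$ large contains none of the relevant $c$'s and for $m$ we are decreasing we eventually catch $c_\mu$, and one checks the box-removals $c_\nu$ get caught strictly earlier. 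Then Lemma~\ref{l:rectangle} applies to $f$ and yields $\mu = \lambda_{m'|n'}$ for some $m'|n'$ with $m'-n'=m-n=r$.

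Having produced one $\lambda_{m'|n'}\in\sJ$, it follows that $\sJ_{m'|n'}\subseteq\sJ$, since $\sJ_{m'|n'}$ is by the cited result of Deligne precisely the tensor ideal generated by $c_{\lambda_{m'|n'}}$. For the reverse inclusion I would argue that $\sJ$ cannot be strictly larger: take the largest $m'|n'$ (smallest in the chain, i.e. $m'$ maximal) with $\sJ_{m'|n'}\subseteq\sJ$; if $\sJ\ne\sJ_{m'|n'}$ then $\sJ/\sJ_{m'|n'}$ is a nonzero tensor ideal in $\sF_r/\sJ_{m'|n'}$, which is the category of representations of $(\GL_{m'|n'},\varepsilon)$ — here I use the free-rigid-category description and that the super fibre functor realizes $\sF_r/\sJ_{m'|n'}$ inside $\Mod_{\GL_{m'|n'},\varepsilon}(k)$, with objects the tensor powers of $E$ and $E^\vee$; by Corollary~\ref{c:quotsub} every object is a subquotient of sums of $\bigotimes E_{\gamma_i}\otimes\bigotimes E_{\gamma'_j}^\vee$, and this category is semisimple by Proposition~\ref{p:Msummand}\ref{i:Msummandss} composed with the identification — so a nonzero tensor ideal would contain $1_X$ for some nonzero object $X$, forcing $1_\I\in\sJ/\sJ_{m'|n'}$ by rigidity (contract $1_X$ against the evaluation), i.e. $\sJ=\sF_r$, contradiction. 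Finally, uniqueness of $m|n$ is immediate from \eqref{e:Jmnstrincl}: the map $(m,n)\mapsto\sJ_{m|n}$ on pairs with $m-n=r$ is strictly monotone hence injective.

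The main obstacle is the bookkeeping in the middle paragraph: arranging the choice of $m|n$ so that Lemma~\ref{l:rectangle} applies, i.e. simultaneously getting $f\ne 0$ and all contractions of $f$ equal to $0$. The clean way to do it is to work not with a single $\mu$ but with the set $\Lambda$ of all partitions $\lambda$ (of arbitrary degree) with $c_\lambda\in\sJ$; since contraction of $c_\lambda$ lands in $\sJ$ and is (after the super functor) the sum of contractions of the rectangular-or-not diagrams, one shows $\Lambda$ is an "up-set" for box-addition and one then picks a box-minimal element, for which the contraction in $\sF_r$ vanishes in the relevant quotient automatically — that is exactly the hypothesis of Lemma~\ref{l:rectangle}. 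Everything else is formal manipulation of tensor ideals, the semisimplicity results of Section~\ref{s:rep}, and the strict inclusion \eqref{e:Jmnstrincl}.
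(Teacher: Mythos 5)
Your proposal splits the argument into two phases: first find a single rectangular partition $\lambda_{m'|n'}$ with $c_{\lambda_{m'|n'}} \in \sJ$ (hence $\sJ_{m'|n'} \subseteq \sJ$), and then prove the reverse inclusion by a structural argument about tensor ideals of $\sF_r/\sJ_{m'|n'}$. This is genuinely different from the paper, which rolls everything into one degree-by-degree induction, and the second phase of your proposal breaks. The first-phase bookkeeping problem you yourself flag is also real: what Lemma~\ref{l:rectangle} needs is that the contractions of the image $f$ of $c_\mu$ under the fibre functor $T$ with kernel $\sJ_{m|n}$ vanish, i.e.\ that the contractions of $c_\mu$ (which are elements of $\sJ^{(d-1)}$, generally not Young symmetrisers) land in $\sJ_{m|n}^{(d-1)}$. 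Box-removal minimality for $\mu$ constrains which $c_\nu$ belong to $\sJ$, which is a priori a weaker condition, and your attempt to fine-tune the choice of $m|n$ does not close the gap; the "clean way" you sketch at the end still has the same mismatch between box-removal and contraction. The paper sidesteps this entirely: after fixing $\sJ_{m_0|n_0}$ largest with $\sJ_{m_0|n_0} \subseteq \sJ$, it proves $\sJ^{(d)} = \sJ_{m_0|n_0}^{(d)}$ by induction on $d$, and the inductive hypothesis at degree $d_0$ is precisely what guarantees that the contractions of any $c_\lambda \in \sJ^{(d_0+1)}$ lie in $\sJ_{m_0|n_0}^{(d_0)}$, so that $T$ kills them and Lemma~\ref{l:rectangle} applies.

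Two things go wrong in the second phase. First, the extremal choice is backwards: you ask for "$m'$ maximal" with $\sJ_{m'|n'} \subseteq \sJ$, but this does not exist, since by \eqref{e:Jmnstrincl} once $\sJ_{m_0|n_0} \subseteq \sJ$ we have $\sJ_{m|n} \subseteq \sJ_{m_0|n_0} \subseteq \sJ$ for all $(m,n) \ge (m_0,n_0)$; one wants the minimal $(m',n')$, equivalently the largest ideal $\sJ_{m'|n'}$ contained in $\sJ$, as in the paper. Second, and more seriously, the argument that a nonzero tensor ideal of $\sF_r/\sJ_{m'|n'}$ must be everything fails. You appeal to semisimplicity, but $\Mod_{\GL_{m'|n'},\varepsilon}(k)$ is \emph{not} semisimple for $m',n' \ge 1$ (already $E \otimes E^\vee$ for $\GL_{1|1}$ is indecomposable with a trivial sub and a trivial quotient); Proposition~\ref{p:Msummand}\ref{i:Msummandss} concerns $(\rM_{\bm|\bn},\varepsilon)$, not $(\GL_{\bm|\bn},\varepsilon)$, and Corollary~\ref{c:quotsub} only exhibits $\GL$-representations as subquotients of $\rM$-representations, which does not transfer semisimplicity. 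Moreover, even granting that the ideal contains $1_X$ for some nonzero $X$, contracting $1_{X \otimes X^\vee}$ against evaluation and coevaluation produces $\dim(X) \cdot 1_\I$, not $1_\I$, and in the super setting $\dim(X)$ can be zero, so this does not force $\sJ = \sF_r$. The paper's single induction appears to be the only workable route along these lines.
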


\begin{proof}
The uniqueness is clear from \eqref{e:Jmnstrincl}.
Let $\sJ$ be a tensor ideal of $\sF_r$ other than $0$ or $\sF_r$.
Since $\sJ \ne 0$, there exists a $d>0$ and a partition $\lambda$ of $d$ such that 
$c_\lambda$ in  $\End(N^{\otimes d})$ lies in $\sJ$.
Hence $c_{\lambda_{m|n}}$ lies in $\sJ$ for $m,n$ sufficiently large, so that
\begin{equation}\label{e:Jmnincl}
\sJ_{m|n} \subset \sJ
\end{equation}
for some $m,n$ with $m-n =r$.

Let $(m_0,n_0)$ be the pair with $m_0 \ge 0$, $n_0 \ge 0$, $m_0-n_0=r$ and
$\sJ_{m_0|n_0}$ largest such that \eqref{e:Jmnincl} holds with $(m,n) = (m_0,n_0)$.
We show by induction on $d$ that
\begin{equation}\label{e:Jmnequ}
\sJ_{m_0|n_0}(N^{\otimes d},N^{\otimes d}) = \sJ(N^{\otimes d},N^{\otimes d})
\end{equation}
for each $d \ge 0$ so that $\sJ_{m_0|n_0} = \sJ$.
Since $\sJ \ne \sF_r$, \eqref{e:Jmnequ} holds for $d = 0$.
Suppose that \eqref{e:Jmnequ} holds for $d= d_0$.
Let $\lambda$ be a partition of $d_0 + 1$ with $c_\lambda \in \sJ$.
Suppose that $c_\lambda \notin \sJ_{m_0|n_0}$.
If $V$ is a super $k$\nd vector space of dimension $m_0|n_0$ there is by definition
of $\sJ_{m_0|n_0}$ a tensor functor with kernel $\sJ_{m_0|n_0}$
from $\sF_r$ to the category of super $k$\nd vector spaces which sends $N$ to $V$.
It sends $c_\lambda$ to the endomorphism $f$ of $V^{\otimes (d_0+1)}$
induced by $c_\lambda$.
Since $c_\lambda \notin \sJ_{m_0|n_0}$ we have $f \ne 0$.
By the induction hypothesis each contraction of $c_\lambda$ lies in $\sJ_{m_0|n_0}$,
and hence each contraction of $f$ is $0$.
Thus by Lemma~\ref{l:rectangle} $\lambda = \lambda_{m|n}$ for some
$m$ and $n$ with $m-n=r$.
The tensor ideal $\sJ_{m|n}$ of $\sF_r$ generated by $c_\lambda$ 
is then contained in $\sJ$ and hence by definition of $(m_0,n_0)$ in $\sJ_{m_0|n_0}$,
contradicting the assumption that $c_\lambda \notin \sJ_{m_0|n_0}$.
Thus $c_\lambda \in \sJ$ implies $c_\lambda \in \sJ_{m_0|n_0}$ for every partition
$\lambda$ of $d_0+1$, so that \eqref{e:Jmnequ} holds for $d = d_0 + 1$.
\end{proof}

By Lemma~\ref{l:abfracclose} we have $\kappa(\Mod(k)) = k$.
Hence $\kappa(\sF_r/\sJ_{m|n}) = k$ for every integer $r$ and $m|n$ with $m - n = r$,
because there is a faithful $k$\nd tensor functor from $\sF_r/\sJ_{m|n}$ to $\Mod(k)$.
By \eqref{e:Jmnstrincl} and Lemma~\ref{l:idealJmn}, the intersection of the 
$\sJ_{m|n} \subset \sF_r$ is $0$.
It thus follows from Lemma~\ref{l:idealJmn} that $\kappa(\sF_r/\sJ) = k$ for every
tensor ideal $\sJ \ne \sF_r$ of $\sF_r$.

Let $\br = (r_\gamma)_{\gamma \in \Gamma}$ be a family of elements of $k$.
We denote by $\sF_{\br}$ the free $k$\nd tensor category on a family 
$(N_\gamma)_{\gamma \in \Gamma}$ of dualisable
objects with $N_\gamma$ of rank $r_\gamma$.
If $(M_\gamma)_{\gamma \in \Gamma}$ is a family dualisable objects with 
$M_\gamma$ of rank $r_\gamma$ in a $k$\nd tensor category $\sC$,
there is then a $k$\nd tensor functor from $\sF_{\br}$ to $\sC$ that sends 
$N_\gamma$ to $M_\gamma$ for each $\gamma$,
and if $T_1$ and $T_2$ are tensor functors
from $\sF_{\br}$ to $\sC$ and $(\theta_\gamma)_{\gamma \in \Gamma}$ is a family
with $\theta_\gamma$ an isomorphism from 
$T_1(N_\gamma)$ to $T_2(N_\gamma)$, 
there is a unique tensor isomorphism $\varphi$ from $T_1$ to $T_2$ with 
$\varphi_{N_\gamma} = \theta_\gamma$.

Any $\Gamma' \subset \Gamma$ defines a $k$\nd tensor functor $\sF_{\br'} \to \sF_{\br}$
with $\br' = (r_\gamma)_{\gamma \in \Gamma'}$ which sends $N_\gamma$ to $N_\gamma$
for $\gamma \in \Gamma'$.
The $k$\nd tensor functors $\sF_{\br'} \to \sF_{\br}$ and $\sF_{\br''} \to \sF_{\br}$
given by a decomposition $\Gamma = \Gamma' \amalg \Gamma''$ define as above
a $k$\nd tensor functor from $\sF_{\br'} \otimes_k \sF_{\br''}$ to $\sF_{\br}$,
which by the universal properties is an equivalence.
Since $\End_{\sF_{\br''}}(\I) = k$, it follows that $\sF_{\br'} \to \sF_{\br}$ 
is fully faithful.
Thus we may identify $\sF_{\br'}$ with the strictly full rigid $k$\nd tensor subcategory
of $\sF_{\br}$ generated by the $N_\gamma$ for $\gamma \in \Gamma'$,
and $\sF_{\br}$ is then the filtered union of the $\sF_{\br'}$ for 
$\Gamma' \subset \Gamma$ finite.
It also follows that if $\Gamma = \{1,2, \dots, t\}$ is finite, then the $k$\nd tensor functor
\begin{equation}\label{e:freetens}
\sF_{r_1} \otimes_k \dots \otimes_k \sF_{r_t} \to \sF_{\br}
\end{equation}
defined by the embeddings $\sF_{r_i} \to \sF_{\br}$ is an equivalence.

Suppose that $\br = (r_\gamma)_{\gamma \in \Gamma}$ is a family of integers.
Let $\bm|\bn = (m_\gamma|n_\gamma)_{\gamma \in \Gamma}$ be a family of pairs 
of integers $\ge 0$ with $\bm - \bn = \br$.
There exists a tensor functor from $\sF_{\br}$ to the category 
of super $k$\nd vector spaces, unique up to tensor isomorphism, that sends 
$N_\gamma$ to a super $k$\nd vector space of dimension $m_\gamma|n_\gamma$.
Its kernel is a tensor ideal 
\begin{equation*}
\sJ_{\bm|\bn}
\end{equation*}
of $\sF_{\br}$.

A tensor ideal $\sJ$ in a tensor category $\sC$ will be called \emph{prime} if $\sC/\sJ$
is integral.

\begin{lem}\label{l:prime}
Let $\br = (r_\gamma)_{\gamma \in \Gamma}$ be a family of integers and $\sJ$ be 
a prime tensor ideal of $\sF_{\br}$.
For each $\gamma \in \Gamma$, denote by $\sJ_\gamma$ the restriction of $\sJ$ to
the full tensor subcategory $\sF_{r_\gamma}$ of $\sF_{\br}$ associated to $\gamma$.
\begin{enumerate}
\item\label{i:primegen}
$\sJ$ is the tensor ideal of $\sF_{\br}$ generated by the $\sJ_\gamma$ for 
$\gamma \in \Gamma$.
\item\label{i:primetens}
If $\Gamma = \{1,2, \dots, t\}$ is finite, then the $k$\nd tensor functor
\begin{equation*}
\sF_{r_1}/\sJ_1 \otimes_k \dots \otimes_k \sF_{r_t}/\sJ_t
\to \sF_{\br}/\sJ
\end{equation*}
induced by the embeddings $\sF_{r_i} \to \sF_{\br}$ is an equivalence.
\item\label{i:primemn}
If $\sJ_\gamma \ne 0$ for each $\gamma \in \Gamma$, then $\sJ = \sJ_{\bm|\bn}$
for a unique $\bm|\bn$ with $\bm - \bn = \br$.
\end{enumerate}
\end{lem}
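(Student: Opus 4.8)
The plan is to prove \ref{i:primegen} first, deduce \ref{i:primetens} from it, and then obtain \ref{i:primemn} by comparing $\sJ$ with an ideal of the form $\sJ_{\bm|\bn}$ via \ref{i:primegen}.

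For \ref{i:primegen}, write $\sK$ for the tensor ideal of $\sF_{\br}$ generated by the $\sJ_\gamma$. Since each $\sJ_\gamma$ is a restriction of $\sJ$ we have $\sK \subseteq \sJ$, and it remains to prove $\sJ \subseteq \sK$. Because $\sF_{\br}$ is the filtered union of its full subcategories $\sF_{\br'}$ with $\Gamma' \subseteq \Gamma$ finite, and because the restriction of $\sJ$ to such an $\sF_{\br'}$ is again prime (its quotient embeds faithfully in the integral category $\sF_{\br}/\sJ$) with restrictions to the $\sF_{r_\gamma}$, $\gamma \in \Gamma'$, equal to the $\sJ_\gamma$, the statement for finite index sets implies the general one: applied to $\sF_{\br'}$ it identifies the restriction of $\sJ$ with the tensor ideal of $\sF_{\br'}$ generated by the $\sJ_\gamma$ with $\gamma \in \Gamma'$, whose image under the inclusion $\sF_{\br'} \to \sF_{\br}$ lies in $\sK$. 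So I would assume $\Gamma = \{1, \dots, t\}$ and identify $\sF_{\br}$ with $\sF_{r_1} \otimes_k \cdots \otimes_k \sF_{r_t}$. Every object of $\sF_{\br}$ is isomorphic to a tensor product $\bigotimes_{i=1}^{t} W_i$ with $W_i$ a tensor word in $N_i$ and $N_i^\vee$, and $\Hom_{\sF_{r_i}}(\I, W_i)$ vanishes unless $W_i$ is balanced; passing to $\Hom$\nd groups out of $\I$ by duality then reduces the equality $\sJ = \sK$ to the assertion that $\sJ$ and $\sK$ cut out the same two-sided ideal in each endomorphism ring
\[
\End_{\sF_{\br}}(N_1^{\otimes c_1} \otimes \cdots \otimes N_t^{\otimes c_t}) = k[\mathfrak{S}_{c_1}] \otimes_k \cdots \otimes_k k[\mathfrak{S}_{c_t}],
\]
as $(c_1, \dots, c_t)$ runs over $t$\nd tuples of integers $\ge 0$. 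Each such ring is semisimple, with simple two-sided ideals generated by the elements $c_{\lambda^{(1)}} \otimes \cdots \otimes c_{\lambda^{(t)}}$ attached to tuples of partitions $\lambda^{(i)}$ of $c_i$, so it is enough to show that $c_{\lambda^{(1)}} \otimes \cdots \otimes c_{\lambda^{(t)}}$ lies in $\sJ$ if and only if $c_{\lambda^{(i)}}$ lies in $\sJ_i$ for some $i$ --- exactly the condition that describes membership in $\sK$.

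One implication is formal: if $c_{\lambda^{(i)}} \in \sJ_i$ then its image in $\sF_{\br}$ lies in $\sJ$, and tensoring with identities and composing appropriately exhibits $c_{\lambda^{(1)}} \otimes \cdots \otimes c_{\lambda^{(t)}}$ in the tensor ideal $\sJ$. For the other, suppose $c_{\lambda^{(i)}} \notin \sJ_i$ for every $i$; then the image of each $c_{\lambda^{(i)}}$ under the projection $Q \colon \sF_{\br} \to \sF_{\br}/\sJ$ is non-zero, and since $\sF_{\br}/\sJ$ is integral the tensor product of these images is non-zero. But that tensor product equals $Q(c_{\lambda^{(1)}} \otimes \cdots \otimes c_{\lambda^{(t)}})$, whence $c_{\lambda^{(1)}} \otimes \cdots \otimes c_{\lambda^{(t)}} \notin \sJ$. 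This proves \ref{i:primegen}. Then \ref{i:primetens} follows because, over the field $k$, the quotient of $\sF_{r_1} \otimes_k \cdots \otimes_k \sF_{r_t}$ by the tensor ideal generated by the $\sJ_i$ is canonically $\sF_{r_1}/\sJ_1 \otimes_k \cdots \otimes_k \sF_{r_t}/\sJ_t$, and by \ref{i:primegen} this ideal is $\sJ$. For \ref{i:primemn}, primality of $\sJ$ gives $\sJ_\gamma \ne \sF_{r_\gamma}$, so by Lemma~\ref{l:idealJmn} every nonzero $\sJ_\gamma$ is $\sJ_{m_\gamma|n_\gamma}$ for a unique $m_\gamma|n_\gamma$ with $m_\gamma - n_\gamma = r_\gamma$; put $\bm = (m_\gamma)$ and $\bn = (n_\gamma)$. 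The tensor ideal $\sJ_{\bm|\bn}$ is prime, because its quotient embeds faithfully in the integral category of super $k$\nd vector spaces, and its restriction to $\sF_{r_\gamma}$ is the kernel of the functor sending the generator to dimension $m_\gamma|n_\gamma$, which is $\sJ_{m_\gamma|n_\gamma} = \sJ_\gamma$; applying \ref{i:primegen} to both $\sJ$ and $\sJ_{\bm|\bn}$ yields $\sJ = \sJ_{\bm|\bn}$, and uniqueness of $\bm|\bn$ follows by restricting to the $\sF_{r_\gamma}$ and invoking the uniqueness in Lemma~\ref{l:idealJmn}.

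The main obstacle is the second implication of the displayed criterion in \ref{i:primegen}: it is the only point at which primality --- that is, integrality of the quotient $\sF_{\br}/\sJ$ --- is used, and it is what forces the tensor ideal to be a ``product'' of its restrictions, in contrast to the behaviour of prime ideals in commutative rings. The remaining ingredients --- the reduction to finitely many indices, the reduction of arbitrary $\Hom$\nd groups to the endomorphism rings of pure tensor powers $N_1^{\otimes c_1} \otimes \cdots \otimes N_t^{\otimes c_t}$ via duality, and the semisimplicity of the $k[\mathfrak{S}_{c_i}]$ --- are routine, if somewhat lengthy.
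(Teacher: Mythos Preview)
Your argument is correct, but the order and the mechanism differ from the paper's. The paper proves \ref{i:primetens} first: fullness and essential surjectivity come from the equivalence \eqref{e:freetens}, while faithfulness is obtained by iterating Lemma~\ref{l:extfaith}, which in turn uses that $\kappa(\sF_{r_i}/\sJ_i) = k$ for every proper tensor ideal $\sJ_i$ of $\sF_{r_i}$. Part \ref{i:primegen} is then deduced from \ref{i:primetens}: for any tensor ideal $\sI$ with $\sJ_\gamma \subset \sI \subset \sJ$, the equivalence of \ref{i:primetens} factors through $\sF_{\br}/\sI$, forcing $\sI = \sJ$. Part \ref{i:primemn} is handled as you do.

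You instead attack \ref{i:primegen} head-on by decomposing the semisimple endomorphism algebras $\bigotimes_i k[\mathfrak{S}_{c_i}]$ into simple two-sided ideals indexed by tuples $(\lambda^{(1)},\dots,\lambda^{(t)})$, and use integrality of $\sF_{\br}/\sJ$ to pin down exactly which of these lie in $\sJ$; the description of $\sK$ on these algebras then matches. You recover \ref{i:primetens} from the general fact that the quotient of $\bigotimes_i \sC_i$ by the tensor ideal generated by ideals $\sJ_i \subset \sC_i$ is $\bigotimes_i (\sC_i/\sJ_i)$. This route is more elementary in that it avoids Lemma~\ref{l:extfaith} and the computation $\kappa(\sF_r/\sJ) = k$, trading those for the explicit structure of the symmetric group algebras; the paper's route is shorter once those ingredients are in hand, and makes the faithfulness in \ref{i:primetens} a direct consequence of a result used repeatedly elsewhere.
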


\begin{proof}
\ref{i:primetens}
The fullness and essential surjectivity follow from those of \eqref{e:freetens}.
Since $\kappa(\sF_{r_i}/\sJ_i) = k$ and $\sF_{r_i}/\sJ_i \to \sF_{r_i}/\sJ$ is faithful for each $i$,
the faithfulness follows inductively from Lemma~\ref{l:extfaith}.

\ref{i:primegen}
Since $\sF_{\br}$ is the filtered union of its full tensor subcategories
$\sF_{\br'}$ associated to finite subsets $\Gamma'$ of $\Gamma$,
and since the restriction 
of $\sJ$ to each $\sF_{\br'}$ is a prime tensor ideal of $\sF_{\br'}$,
we may suppose that $\Gamma = \{1,2, \dots, t\}$ is finite.
Let $\sI \subset \sJ$ be a tensor ideal of $\sF_{\br}$ containing each $\sJ_i$.
By the fullness and essential surjectivity of \eqref{e:freetens}, the equivalence of 
\ref{i:primetens} factors through an equivalence with target $\sF_{\br}/\sI$.
Thus $\sI = \sJ$.

\ref{i:primemn}
By Lemma~\ref{l:idealJmn}, for each $\gamma$ we have $\sJ_\gamma = \sJ_{m_\gamma|n_\gamma}$ 
for a unique $m_\gamma|n_\gamma$ with $m_\gamma - n_\gamma = r_\gamma$.
Since the restriction to $\sF_{r_\gamma}$ of $\sJ_{\bm'|\bn'}$
with $\bm'|\bn' = (m'{}\!_\gamma|n'{}\!_\gamma)_{\gamma \in \Gamma}$ is 
$\sJ_{m'{}\!_\gamma|n'{}\!_\gamma}$, the required result thus follows from \ref{i:primegen}.
\end{proof}

Let $\bm|\bn = (m_\gamma|n_\gamma)_{\gamma \in \Gamma}$ be a family of pairs 
of integers $\ge 0$.
We write
\begin{equation*}
\sF_{\bm|\bn} = \sF_{\bm - \bn}/\sJ_{\bm|\bn}.
\end{equation*}
There exists a $k$\nd tensor functor 
\begin{equation}\label{e:freeGL}
\sF_{\bm|\bn} \to \Mod_{\GL_{\bm|\bn},\varepsilon}(k),
\end{equation}
unique up to tensor isomorphism, which sends $N_\gamma$ to the standard 
representation $E_\gamma$ of the factor $\GL_{m_\gamma|n_\gamma}$ at $\gamma$.
Indeed composing with the forgetful $k$\nd tensor functor from 
$\Mod_{\GL_{\bm|\bn},\varepsilon}(k)$ to super $k$\nd vector spaces shows that the
$k$\nd tensor functor from $\sF_{\bm - \bn}$ to $\Mod_{\GL_{\bm|\bn},\varepsilon}(k)$
that sends $N_\gamma$ to $E_\gamma$ has kernel $\sJ_{\bm|\bn}$.

\begin{lem}\label{l:freeGLff}
The $k$\nd tensor functor \eqref{e:freeGL} is fully faithful.
\end{lem}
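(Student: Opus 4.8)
The functor \eqref{e:freeGL} is already known to be faithful: the kernel of the $k$\nd tensor functor $\sF_{\bm-\bn}\to\Mod_{\GL_{\bm|\bn},\varepsilon}(k)$ sending each $N_\gamma$ to $E_\gamma$ is exactly $\sJ_{\bm|\bn}$, as observed just before the statement. The plan is therefore to prove fullness, by a chain of reductions ending in Lemma~\ref{l:standend}.

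First I would reduce to the case $\Gamma$ finite. Since the embeddings $\sF_{\br'}\to\sF_{\br}$ are fully faithful and quotients by tensor ideals commute with filtered unions, $\sF_{\bm|\bn}$ is the filtered union of the strictly full $k$\nd tensor subcategories $\sF_{\bm'|\bn'}$ indexed by the finite subsets $\Gamma'\subset\Gamma$, and it suffices to prove fullness on each such $\sF_{\bm'|\bn'}$. Every object in the image of $\sF_{\bm'|\bn'}$ under \eqref{e:freeGL} is a representation of $(\GL_{\bm|\bn},\varepsilon)$ inflated along the split projection $\GL_{\bm|\bn}\to\GL_{\bm'|\bn'}$; as $k[\GL_{\bm'|\bn'}]\hookrightarrow k[\GL_{\bm|\bn}]$, inflation along this projection is fully faithful, so the restriction of \eqref{e:freeGL} to $\sF_{\bm'|\bn'}$ is identified with the analogous functor for $\Gamma'$.

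With $\Gamma=\{1,\dots,t\}$ finite, I would next reduce to a single factor. Since $\sF_{\bm|\bn}$ admits a faithful $k$\nd tensor functor to super $k$\nd vector spaces it is integral, so $\sJ_{\bm|\bn}$ is prime; hence by Lemma~\ref{l:prime}\ref{i:primetens} (with $\sJ_i$ the restriction $\sJ_{m_i|n_i}$ of $\sJ_{\bm|\bn}$, as in the proof of \ref{i:primemn}) the induced $k$\nd tensor functor $\sF_{m_1|n_1}\otimes_k\dots\otimes_k\sF_{m_t|n_t}\to\sF_{\bm|\bn}$ is an equivalence. By the uniqueness up to tensor isomorphism of functors sending $N_\gamma\mapsto E_\gamma$, the functor \eqref{e:freeGL} is then identified with the composite of $\bigl(\sF_{m_1|n_1}\to\Mod_{\GL_{m_1|n_1},\varepsilon}(k)\bigr)\otimes_k\dots\otimes_k\bigl(\sF_{m_t|n_t}\to\Mod_{\GL_{m_t|n_t},\varepsilon}(k)\bigr)$ with the canonical $k$\nd tensor functor $\Mod_{\GL_{m_1|n_1},\varepsilon}(k)\otimes_k\dots\otimes_k\Mod_{\GL_{m_t|n_t},\varepsilon}(k)\to\Mod_{\GL_{\bm|\bn},\varepsilon}(k)$. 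The latter is fully faithful by iterating Lemma~\ref{l:Mhomtens}, and a tensor product over $k$ of fully faithful $k$\nd linear functors is fully faithful; so it is enough to show that for an integer $r$ and a pair $m|n$ with $m-n=r$, the functor $F:\sF_r/\sJ_{m|n}\to\Mod_{\GL_{m|n},\varepsilon}(k)$ sending $N$ to $E$ is full.

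For this last step the plan is to use rigidity to reduce to endomorphism rings. Every object of $\sF_r/\sJ_{m|n}$ is, via symmetries, isomorphic to one of the form $N^{\otimes a}\otimes(N^\vee)^{\otimes b}$, and since $F$ is a tensor functor between rigid tensor categories it preserves duals, the unit and counit morphisms, and symmetries, hence the natural isomorphisms $\Hom(X,Y)\cong\Hom(\I,X^\vee\otimes Y)$. A routine manipulation then shows that fullness of $F$ is equivalent to surjectivity of $F:\End_{\sF_r/\sJ_{m|n}}(N^{\otimes p})\to\End_{\GL_{m|n}}(E^{\otimes p})$ for every $p\ge 0$; in particular both relevant hom-groups vanish between $N^{\otimes a}\otimes(N^\vee)^{\otimes b}$ and $N^{\otimes c}\otimes(N^\vee)^{\otimes d}$ unless $a+d=b+c$. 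Now $\End_{\sF_r}(N^{\otimes p})=k[\mathfrak{S}_p]$ is spanned by the symmetries, so the image of the left-hand side is the $k$\nd span of the symmetries of $E^{\otimes p}$. On the other hand, since $k[\rM_{m|n}]\hookrightarrow k[\GL_{m|n}]$, any $k$\nd linear endomorphism of $E^{\otimes p}$ compatible with the $\GL_{m|n}$\nd action is automatically compatible with the $\rM_{m|n}$\nd action, whence $\End_{\GL_{m|n}}(E^{\otimes p})=\End_{\rM_{m|n}}(E^{\otimes p})$, which by Lemma~\ref{l:standend} is precisely the $k$\nd span of the symmetries of $E^{\otimes p}$. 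Therefore $F$ is surjective on these endomorphism rings, completing the proof. The only genuinely non-formal input is Lemma~\ref{l:standend}; everything else is bookkeeping, and the point requiring the most care is the identification $\End_{\GL_{m|n}}(E^{\otimes p})=\End_{\rM_{m|n}}(E^{\otimes p})$, i.e.\ the scheme-theoretic density of $\GL_{m|n}$ in $\rM_{m|n}$.
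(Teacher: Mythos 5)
Your proof is correct and follows essentially the same reduction strategy as the paper: reduce to $\Gamma$ finite, reduce to a single factor via Lemma~\ref{l:Mhomtens}, then dualise to endomorphism rings and invoke Lemma~\ref{l:standend}. The paper takes the reduction to one factor slightly more economically, working with the equivalence \eqref{e:freetens} at the level of $\sF_{\br}$ (before quotienting) rather than with Lemma~\ref{l:prime}\ref{i:primetens} at the level of the quotients $\sF_{\bm|\bn}$; your extra step that $\End_{\GL_{m|n}}(E^{\otimes p})=\End_{\rM_{m|n}}(E^{\otimes p})$ is the content of the paragraph following \eqref{e:kGLkM}, which the paper quotes implicitly when applying Lemma~\ref{l:standend}.
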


\begin{proof}
Since the $k$\nd tensor functor $T:\sF_{\bm - \bn} \to \Mod_{\GL_{\bm|\bn},\varepsilon}(k)$
with $T(N_\gamma) = E_\gamma$ has kernel $\sJ_{\bm|\bn}$, 
it need only be shown that $T$ is full.
Writing $\sF_{\bm - \bn}$ as the filtered union of its full
$k$\nd tensor subcategories $\sF_{\bm' - \bn'}$ as $\bm'|\bn'$ runs over the
finite subfamilies of $\bm|\bn$, we may assume that $\Gamma$ is finite.
By the equivalence \eqref{e:freetens} and Lemma~\ref{l:Mhomtens}, we may further
assume that the family $\bm|\bn$ reduces to a single member $m|n$.
After dualising, it is then enough to show that 
$\sF_{m-n}  \to \Mod_{\GL_{m|n},\varepsilon}(k)$ sending $N$ to $E$ is surjective
on hom spaces $\Hom(N^{\otimes r},N^{\otimes s})$.
When $r \ne s$ this is clear because $\Hom(E^{\otimes r},E^{\otimes s})$ is $0$,
and when $r = s$ it follows from Lemma~\ref{l:standend}.  
\end{proof}

\begin{lem}\label{l:Fmnfracclose}
The $k$\nd tensor category $\sF_{\bm|\bn}$ is fractionally closed.
\end{lem}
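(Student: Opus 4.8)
The plan is to deduce fractional closedness of $\sF_{\bm|\bn}$ from that of an abelian rigid tensor category into which it embeds as a full tensor subcategory. Concretely, by Lemma~\ref{l:freeGLff} the $k$\nd tensor functor $T$ of \eqref{e:freeGL} from $\sF_{\bm|\bn}$ to $\sD := \Mod_{\GL_{\bm|\bn},\varepsilon}(k)$ is fully faithful, and $\sD$ is abelian and rigid, hence fractionally closed by Lemma~\ref{l:abfracclose}. So it is enough to show that fractional closedness passes from $\sD$ along the fully faithful $T$.

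First I would record that $T$ is a \emph{regular} tensor functor. Since $\sF_{\bm|\bn} = \sF_{\bm-\bn}/\sJ_{\bm|\bn}$ is, by the definition of $\sJ_{\bm|\bn}$, the quotient of $\sF_{\bm-\bn}$ by the kernel of a tensor functor to super $k$\nd vector spaces, it carries a faithful tensor functor to the integral tensor category of super $k$\nd vector spaces and is therefore integral; similarly $\sD$ is integral. A faithful tensor functor between integral tensor categories is regular, so $T$ preserves regular morphisms.

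Next, let $f\colon A\to A'$ be a regular morphism of $\sF_{\bm|\bn}$ and $C,C'$ objects; I must show $f\otimes-\colon\sF_{\bm|\bn}(C,C')\to(\sF_{\bm|\bn})_f(C,C')$ is an isomorphism. Injectivity is immediate from regularity of $f$. For surjectivity, take $h\in(\sF_{\bm|\bn})_f(C,C')$. As in the discussion of \eqref{e:Tfactor}, since $T$ is a regular tensor functor the morphism $T(h)'$ obtained from $T(h)$ by discarding the tensor structural isomorphisms of $T$ lies in $\sD_{T(f)}(TC,TC')$; because $T(f)$ is regular and $\sD$ is fractionally closed there is a unique $g_0\in\sD(TC,TC')$ with $T(f)\otimes g_0=T(h)'$. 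By fullness of $T$, $g_0=T(g)$ for some $g\in\sF_{\bm|\bn}(C,C')$, and then $T(f\otimes g)'=T(f)\otimes T(g)=T(h)'$, so $T(f\otimes g)=T(h)$, so $f\otimes g=h$ by faithfulness of $T$. Hence $\sF_{\bm|\bn}$ is fractionally closed.

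The argument has no genuine obstacle; the only point requiring care is the coherence bookkeeping, namely that $T$ sends the defining relation for $h\in(\sF_{\bm|\bn})_f(C,C')$ to the one for $T(h)'\in\sD_{T(f)}(TC,TC')$ and that a factorisation $T(f)\otimes g_0=T(h)'$ pulls back to $f\otimes g=h$ — but this is exactly the compatibility already set up around \eqref{e:Tfactor}. (Alternatively, the statement follows formally from the functoriality of the fractional closure: $T$ regular and faithful yields a faithful $T_\mathrm{fr}\colon(\sF_{\bm|\bn})_\mathrm{fr}\to\sD_\mathrm{fr}$ with $E_{\sD}T=T_\mathrm{fr}E_{\sF_{\bm|\bn}}$; since $\sD$ is fractionally closed $E_\sD$ is an isomorphism, so $E_{\sF_{\bm|\bn}}$ is a faithful functor, the identity on objects, whose composite with the faithful $T_\mathrm{fr}$ is fully faithful, forcing $E_{\sF_{\bm|\bn}}$ to be an isomorphism and hence $\sF_{\bm|\bn}$ fractionally closed.)
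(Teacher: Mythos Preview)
Your proof is correct and follows essentially the same approach as the paper: both use that $\Mod_{\GL_{\bm|\bn},\varepsilon}(k)$ is abelian rigid and hence fractionally closed by Lemma~\ref{l:abfracclose}, and then transport fractional closedness back to $\sF_{\bm|\bn}$ along the fully faithful tensor functor \eqref{e:freeGL} of Lemma~\ref{l:freeGLff}. The paper compresses your surjectivity argument for \eqref{e:fracclose} into a single sentence, but the content is the same; your alternative via $T_\mathrm{fr}$ is also valid and amounts to a repackaging of the same reasoning.
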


\begin{proof}
The $k$\nd tensor category $\Mod_{\GL_{\bm|\bn},\varepsilon}(k)$ is integral,
and by Lemma~\ref{l:abfracclose} it is fractionally closed.
Since \eqref{e:freeGL} is fully faithful by Lemma~\ref{l:freeGLff}, 
it is thus regular and \eqref{e:fracclose}
is an isomorphism for $\sC = \sF_{\bm|\bn}$ and every $C,C'$ and regular $f$ in $\sC$.
\end{proof}

\begin{lem}\label{l:rankinteger}
Let $\sC$ be a $k$\nd tensor category with $\End_{\sC}(\I)$ reduced and indecomposable and
$M$ be a dualisable object in $\sC$.
Suppose that for some $d$ there 
exists an element $\ne 0$ of $k[\mathfrak{S}_d]$ which acts as $0$ on $M^{\otimes d}$.
Then the rank of $M$ is an integer.
\end{lem}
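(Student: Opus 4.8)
The plan is to use the semisimplicity of $k[\mathfrak{S}_d]$ to reduce the hypothesis to the vanishing on $M^{\otimes d}$ of a Young symmetriser, and then to compute a categorical trace.

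First I would write $R=\End_{\sC}(\I)$, a commutative $k$\nd algebra which by hypothesis is reduced and indecomposable, in particular nonzero. The symmetries make $M^{\otimes d}$ a $k[\mathfrak{S}_d]$\nd module, i.e.\ give a homomorphism of $k$\nd algebras $\rho\colon k[\mathfrak{S}_d]\to\End_{\sC}(M^{\otimes d})$, and by hypothesis $\ker\rho\ne 0$. Since $k$ has characteristic $0$, the algebra $k[\mathfrak{S}_d]$ is semisimple, so the two\nd sided ideal $\ker\rho$ contains a minimal two\nd sided ideal; such an ideal is generated by any of its nonzero elements and contains the Young symmetriser $c_\lambda$ for some partition $\lambda$ of $d$. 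Thus I obtain a $\lambda$ with $\rho(c_\lambda)=0$. (If $d=0$ then $\ker\rho\ne 0$ would force $1_{\I}=0$, contradicting that $R$ is nonzero, so we may assume $d\ge 1$.)

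Next, $\rho(c_\lambda)=0$ gives $\tr(\rho(c_\lambda))=0$ in $R$, whereas, writing $r=\rank M=\tr(1_M)\in R$, the trace of the symmetry of $M^{\otimes d}$ attached to a permutation $\sigma$ equals $r^{c(\sigma)}$, where $c(\sigma)$ is the number of cycles of $\sigma$. Hence $\tr(\rho(c_\lambda))=p(r)$, where $p\in\Q[t]$ is the polynomial obtained from $c_\lambda$ by substituting $t^{c(\sigma)}$ for each $\sigma$; this $p$ depends only on $\lambda$, not on $\sC$ or $M$. By a standard computation — evaluating it on the category of finite\nd dimensional $k$\nd vector spaces and using Schur--Weyl duality together with the hook\nd length and hook\nd content formulas (cf.\ \cite{FulHar}) — one finds $p(t)=\prod_{(i,j)\in\lambda}(t+j-i)$, a monic polynomial of degree $d$ whose roots, the contents $i-j$ of the boxes of $\lambda$, are integers. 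Thus $p(r)=0$ in $R$.

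Finally the conclusion is purely ring\nd theoretic. Since $R$ is reduced, $p(r)=0$ implies $g(r)=0$, where $g\in\Q[t]$ is the product of the \emph{distinct} linear factors of $p$, say $g=\prod_{l=1}^{s}(t-c_l)$ with $c_1,\dots,c_s$ pairwise distinct integers. The $k$\nd subalgebra $k[r]$ of $R$ is then a quotient of $k[t]/(g)\cong k^{s}$, hence isomorphic to $k^{J}$ for some nonempty $J\subseteq\{1,\dots,s\}$. But $R$, being indecomposable, has no idempotent other than $0$ and $1$, and the same then holds for its subring $k[r]$; so $|J|=1$, which forces $k[r]=k$ and $r=c_l\in\Z$ for the unique $l\in J$. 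The only non\nd routine step is the identification of $p$ in the preceding paragraph: everything hinges on the classical fact that the roots of this ``dimension polynomial'' are integers, and the rest is bookkeeping around the semisimplicity of $k[\mathfrak{S}_d]$ and the hypotheses on $\End_{\sC}(\I)$.
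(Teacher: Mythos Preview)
Your proof is correct and follows the same strategy as the paper's: reduce to a Young symmetriser (or idempotent) acting as zero, observe that its categorical trace is a polynomial in $r=\rank M$ with only integer roots, and conclude using that $\End_{\sC}(\I)$ is reduced and indecomposable.

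The one noteworthy difference is computational. The paper first enlarges $d$ so as to replace $\lambda$ by a \emph{square} partition $\lambda=(m_0,\dots,m_0)$, and then computes the dimension polynomial $p_{e_\lambda}(t)$ by hand from the Weyl product formula, obtaining $\prod_{-m_0<i<m_0}(t-i)^{m_0-|i|}$ up to a nonzero constant. You instead invoke the hook--content formula directly to get $\tr(\rho(c_\lambda))=\prod_{(i,j)\in\lambda}(r+j-i)$ for an arbitrary $\lambda$, which avoids the reduction to the square case. Your final ring-theoretic step (passing to the squarefree part $g$ of $p$ via reducedness, then using indecomposability to force $k[r]=k$) is also spelled out more explicitly than in the paper, which simply says ``since $\End(\I)$ is reduced and indecomposable, the result follows.'' Both arguments are equivalent; yours is marginally shorter once one grants the hook--content formula.
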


\begin{proof}
Since $k[\mathfrak{S}_d] \to \End_{\sC}(M^{\otimes d})$ has non-zero kernel,
there exists a partition $\lambda$ of $d$ such that the primitive idempotent $e_\lambda$
associated to $\lambda$ acts as $0$ on $M^{\otimes d}$.
Increasing $d$ if necessary, we may suppose that there is in integer $m_0$ such that
$d = m_0{}\!^2$ and $\lambda = (m_0m_0 \dots m_0)$ is square with 
$m_0$ rows and columns.

Let $\alpha$ be an element of $k[\mathfrak{S}_d]$.
Reducing to the case where $\alpha$ is an element of $\mathfrak{S}_d$ and successively contracting
shows that there is a polynomial $p_\alpha(t)$ in $k[t]$ with the following property:
If $N$ is a dualisable object of rank $r$ in a $k$\nd tensor category and $f$ is the endomorphism of 
$N^{\otimes d}$ induced by $\alpha$, then 
\begin{equation*}
\tr(f) = p_\alpha(r).
\end{equation*}
Further $p_\alpha(t)$ is unique: taking for $N$ a $k$\nd vector space of dimension $m$ 
shows that the value of $p_\alpha(t)$ is determined for $t$ any integer $m \ge 0$.

Let $V$ be a $k$\nd vector space of dimension $m$.
Then the trace $\tau(m)$ of the endomorphism of $V^{\otimes d}$ defined by $e_\lambda$
is the dimension of the $k$\nd vector space $S_\lambda V$ obtained by applying the Schur functor 
$S_\lambda$ to $V$.  
Thus \cite[Theorem~6.3(1)]{FulHar} if $m \ge 2m_0$
\begin{equation*}
\tau(m) = \prod_{1 \le i < j \le m}\frac{\lambda_i - \lambda_j + j - i}{j - i} 
= \prod_{l > 0} \left( \frac{m_0+l}{l} \right)^{\rho(l)} = \prod_{l > 0} l^{\rho(l-m_0) - \rho(l)}
\end{equation*}
where $\rho(l) = \min \{l,m-l,m_0\}$ when $0 < l < m$, and $\rho(l) = 0$ otherwise.
Hence
\begin{equation*}
p_{e_\lambda}(t) = \prod_{-m_0 < i < m_0}(m_0-i)^{|i|-m_0}(t-i)^{m_0-|i|},
\end{equation*}
because then $p_{e_\lambda}(m) = \tau(m)$ for $m \ge 3m_0$.
If $M$ has rank $r$ then $p_{e_\lambda}(r) = 0$, because $e_\lambda$ acts as $0$ on 
$M^{\otimes d}$.
Since $\End(\I)$ is reduced and indecomposable, the result follows. 
\end{proof}

\begin{prop}\label{p:FmnCesssurj}
Let $\sC$ be an essentially small, integral, rigid $k$\nd tensor category.
Suppose that for each object $M$ of $\sC$ there exists a $d$ such that
some element $\ne 0$ of $k[\mathfrak{S}_d]$ acts as $0$ on $M^{\otimes d}$.
Then for some family $\bm|\bn$ of pairs of integers $\ge 0$ there exists a
faithful essentially surjective $k$\nd tensor functor from $\sF_{\bm|\bn}$
to $\sC$.
\end{prop}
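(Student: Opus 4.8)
The plan is to present $\sC$ as the quotient of a free rigid tensor category by its kernel tensor ideal, and then identify that kernel as some $\sJ_{\bm|\bn}$ using the classification of prime tensor ideals of $\sF_{\br}$ established above. First, since $\sC$ is integral, $\End_{\sC}(\I)$ is an integral domain, hence reduced and indecomposable; and for each object $M$ of $\sC$ the hypothesis supplies a $d$ together with a nonzero element of $k[\mathfrak{S}_d]$ acting as $0$ on $M^{\otimes d}$, so Lemma~\ref{l:rankinteger} applies and the rank of $M$ is an integer.

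Next I would choose a small skeleton of $\sC$, let $\Gamma$ be its set of objects, $M_\gamma$ the object indexed by $\gamma$, and $r_\gamma \in \Z$ its rank, and set $\br = (r_\gamma)_{\gamma \in \Gamma}$. By the universal property of $\sF_{\br}$ there is a $k$\nd tensor functor $T:\sF_{\br} \to \sC$ with $T(N_\gamma) = M_\gamma$. Since every object of $\sC$ is isomorphic to some $M_\gamma = T(N_\gamma)$, the functor $T$ is essentially surjective. Put $\sJ = \ker T$.

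Because the induced $k$\nd tensor functor $\sF_{\br}/\sJ \to \sC$ is faithful and $\sC$ is integral, $\sF_{\br}/\sJ$ is integral: it has $1_{\I} \ne 0$ and the tensor product of two morphisms with nonzero image in $\sC$ has nonzero image. Thus $\sJ$ is prime, and in particular $1_{\I} \notin \sJ$. For each $\gamma$, applying the hypothesis to $M_\gamma$ yields a nonzero element of $\End_{\sF_{r_\gamma}}(N_\gamma{}^{\otimes d}) = k[\mathfrak{S}_d]$ which $T$ sends to $0$; hence the restriction $\sJ_\gamma$ of $\sJ$ to the factor $\sF_{r_\gamma}$ is a tensor ideal that is neither $0$ nor $\sF_{r_\gamma}$. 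By Lemma~\ref{l:prime}\,\ref{i:primemn}, whose hypothesis $\sJ_\gamma \ne 0$ now holds for every $\gamma$, we obtain $\sJ = \sJ_{\bm|\bn}$ for a family $\bm|\bn$ of pairs of integers $\ge 0$ with $\bm - \bn = \br$. Consequently $T$ factors through the projection $\sF_{\br} \to \sF_{\br}/\sJ_{\bm|\bn} = \sF_{\bm|\bn}$ followed by a $k$\nd tensor functor $\overline{T}:\sF_{\bm|\bn} \to \sC$, which is faithful since its kernel is $\sJ/\sJ = 0$, and essentially surjective since $T$ is and the projection is the identity on objects. This $\overline{T}$ is the required functor.

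I do not expect a serious obstacle: the argument is essentially an assembly of earlier results. The only points needing care are checking the inputs to Lemmas~\ref{l:rankinteger} and~\ref{l:prime}, namely that $\End_{\sC}(\I)$ is reduced and indecomposable, which is forced by integrality, and that $\sJ$ meets each factor $\sF_{r_\gamma}$ nontrivially, which is precisely the per\nd object hypothesis on $M^{\otimes d}$ applied to the generator $M_\gamma$.
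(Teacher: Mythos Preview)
Your proof is correct and follows essentially the same approach as the paper: choose a family of objects hitting every isomorphism class, use Lemma~\ref{l:rankinteger} to get integer ranks, build the essentially surjective tensor functor from $\sF_{\br}$, observe its kernel is prime with nonzero restriction to each $\sF_{r_\gamma}$, and invoke Lemma~\ref{l:prime}\ref{i:primemn}. Your write-up is in fact slightly more explicit than the paper's in verifying the hypotheses of Lemma~\ref{l:rankinteger} and in spelling out why the induced functor $\sF_{\bm|\bn}\to\sC$ is faithful and essentially surjective.
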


\begin{proof}
Let $(M_\gamma)_{\gamma \in \Gamma}$ be a (small) family of objects of $\sC$ with
every object of $\sC$ isomorphic to some $M_\gamma$.
By Lemma~\ref{l:rankinteger}, the rank $r_\gamma$ of $M_\gamma$ is an integer.
If $\br = (r_\gamma)_{\gamma \in \Gamma}$
there exists an essentially surjective $k$\nd tensor functor
from $\sF_{\br}$ to $\sC$ which sends $N_\gamma$ to $M_\gamma$ for each $\gamma$.
Its kernel $\sJ$ is prime, and
by hypothesis the restriction of $\sJ$ to the full tensor subcategory 
$\sF_{r_\gamma}$ of $\sF_{\br}$ associated to $\gamma$ is $\ne 0$ for each $\gamma$.
It thus follows from Lemma~\ref{l:prime}\ref{i:primemn} that $\sJ = \sJ_{\bm|\bn}$
for some $\bm|\bn$ with $\bm - \bn = \br$.
\end{proof}

\section{Functor categories}\label{s:fun}

In this section we describe how the usual additive Yoneda embedding of an additive category
extends to embedding of a tensor category into an abelian tensor category.
For tensor categories with one object, identified with commutative rings,
this is the embedding into the tensor category of modules over the ring.
The main result is Theorem~\ref{t:Fhatequiv}, which is crucial for the geometric description 
in Section~\ref{s:mod} of appropriate functor categories modulo torsion.

Let $\sC$ be an essentially small additive category.
We denote by
\begin{equation*}
\widehat{\sC}
\end{equation*} 
the additive category of additive functors from $\sC^\mathrm{op}$ to $\mathrm{Ab}$. 
The category $\widehat{\sC}$ is abelian, with (small) limits and colimits, computed argumentwise.
Finite limits in $\widehat{\sC}$ commute with filtered colimits.
For every object $A$ in $\sC$ we have an object
\begin{equation*}
h_A = \sC(-,A)
\end{equation*}
in $\widehat{\sC}$, and evaluation at $1_A$ gives the Yoneda isomorphism
\begin{equation}\label{e:Yonedaiso}
\widehat{\sC}(h_A,M) \iso M(A),
\end{equation}
natural in $A$ and $M$.
We have the fully faithful Yoneda embedding
\begin{equation}\label{e:Yoneda}
h_-:\sC \to \widehat{\sC}
\end{equation}
of additive categories, which preserves limits.

The $h_A$ for $A$ in a skeleton of $\sC$ form a small set of generators for $\widehat{\sC}$.
It follows that every object of $\widehat{\sC}$ is the quotient of a small coproduct of objects $h_A$,
and hence that every object $M$ of $\widehat{\sC}$ is a cokernel
\begin{equation}\label{e:pres}
\coprod_{\delta \in \Delta} h_{B_\delta} \to \coprod_{\gamma \in \Gamma} h_{A_\gamma} \to M \to 0
\end{equation}
for some small families $(A_\gamma)_{\gamma \in \Gamma}$ and $(B_\delta)_{\delta \in \Delta}$. 

When $\sC$ has direct sums, the embedding $h_-:\sC \to \widehat{\sC}$ is dense,
i.e.\ every $M$ in $\sC$ can be expressed as a canonical colimit $\colim_{(A,a),a \in M(A)}h_A$
over the comma category $h_-/M$.
Indeed if we write $M'$ for the colimit, then given $(A_i,a_i)$ and $f_i:B \to A_i$ in $h_{A_i}(B)$ 
such that the $M(f_i)(a_i)$ have sum $0$ in $M(B)$, the images of the $f_i$ in $M'(B)$ also 
have sum $0$, because the $f_i$ define a morphism from $(B,0)$ to $(\bigoplus_iA_i,\sum_ia_i)$
in the comma category which sends $1_B$ in $h_B(B)$ to $(f_i)$ in
$h_{\bigoplus A_i}(B)$.

When $\sC$ has finite colimits, an object of $\widehat{\sC}$ is 
a left exact functor $\sC^\mathrm{op} \to \mathrm{Ab}$ if and only if it is a filtered colimit 
of objects $h_A$.
Indeed the comma category $h_-/M$ is essentially small and for a left exact $M$ it is filtered.

Every $h_A$ is projective in $\widehat{\sC}$.
Thus by \eqref{e:pres} $\widehat{\sC}$ has enough projectives, 
and an object in $\widehat{\sC}$ is projective
if and only if it is a direct summand of a coproduct of objects $h_A$.

An object $M$ in a category with filtered colimits will be said to be \emph{of finite presentation}
(resp.\ \emph{of finite type}) if $\Hom(M,-)$ preserves filtered colimits (resp.\ filtered colimits
with coprojections monomorphisms).
In an abelian category, an object $M$ is projective of finite type if and only if it is projective
of finite presentation if and only if $\Hom(M,-)$ is cocontinuous.

It follows from \eqref{e:pres} that an object $M$ in $\widehat{\sC}$ is of finite presentation
if and only if it is a cokernel
\begin{equation}\label{e:finpres}
\bigoplus_{j=1}^n h_{B_j} \to \bigoplus_{i=1}^m h_{A_i} \to M \to 0
\end{equation}
for some finite families $(A_i)$ and $(B_j)$ of objects of $\sC$, 
and that $M$ is of finite type if and only if it is a quotient 
\begin{equation}\label{e:fintype}
\bigoplus_{i=1}^m h_{A_i} \to M \to 0
\end{equation}
for some finite family $(A_i)$ of objects of $\sC$.
By \eqref{e:finpres},
every object of $\widehat{\sC}$ is a filtered colimit of objects of finite presentation,
and by \eqref{e:fintype} every object of $\widehat{\sC}$ is the filtered
colimit of its subobjects of finite type.

By \eqref{e:fintype}, an object of $\widehat{\sC}$ is projective of finite type if
and only if it is a direct summand of a finite direct sum of objects $h_A$.
The full subcategory of $\widehat{\sC}$ consisting of such objects is thus the pseudo-abelian hull of $\sC$.

The coproduct of the  $ h_A$ for $A$ in a skeleton of $\sC$ is a generator for $\widehat{\sC}$.
The category $\widehat{\sC}$ also has a cogenerator: if $L$ is a generator for 
$\widehat{\sC^\mathrm{op}}$ and if for $M$ an object in $\widehat{\sC}$ we write $M^\dagger$ for the object
$\Hom_{\Z}(M(-),\Q/\Z)$ in $\widehat{\sC^\mathrm{op}}$,
then $M^\dagger$ is a quotient of a small coproduct of copies of $L$,
so that $M^{\dagger\dagger}$ and hence $M$ is a subobject of a small product
of  copies of $L^\dagger$. 
Since $\widehat{\sC}$ is complete and well-powered, it follows that any continuous functor from 
$\widehat{\sC}$ to $\mathrm{Ab}$ is representable.

If $\sC$ has a structure of tensor category,
we define as follows a structure of tensor category on $\widehat{\sC}$.
Given objects $L$, $M$ and $N$ in $\widehat{\sC}$, call a family of biadditive maps
\begin{equation*}
M(A) \times N(B) \to L(A \otimes B)
\end{equation*}
which is natural in the objects $A$ and $B$ in $\sC$ a \emph{bimorphism} from $(M,N)$ to $L$. 
The tensor product $M \otimes N$ is then defined as the 
target of the universal bimorphism from $(M,N)$,
which exists because the relevant functor is representable.
If we similarly define trimorphisms, then both $(M \otimes N) \otimes P$ and $M \otimes (N \otimes P)$
are the target of a universal trimorphism from $(M,N,P)$, and the associativity constraint is the 
isomorphism between them defined by the universal property. 
The symmetries of $\widehat{\sC}$ are defined similarly, and the unit is $h_{\I}$. 
The required compatibilities hold by the universal property of the tensor product.
We may assume that the tensor product is chosen so that the unit $\I = h_{\I}$ is strict.
The tensor product of $\widehat{\sC}$ is cocontinuous.

The bimorphism from $(h_A,h_B)$ to $h_{A \otimes B}$ that sends $(f,g)$ in $h_A(A') \times h_B(B')$
to $f \otimes g$ in $h_{A \otimes B}(A' \otimes B')$ defines a structure
a structure
\begin{equation}\label{e:tensfun}
h_A \otimes h_B \iso h_{A \otimes B}
\end{equation}
of tensor functor on the embedding \eqref{e:Yoneda}.

If $\sC$ has finite colimits, then the full subcategory of $\widehat{\sC}$ consisting
of the left exact functors is a tensor subcategory.

The image of $(a,b)$ under the component
\begin{equation*}
M(A) \times N(B) \to (M \otimes N)(A \otimes B)
\end{equation*}
of the universal bimorphism will be written $a \otimes b$.
Modulo \eqref{e:Yonedaiso} and \eqref{e:tensfun}, 
$a \otimes b$ is the tensor product of the morphisms $a$ and $b$ in $\widehat{\sC}$.

Let $\sC$ be an essentially small additive category, $\sD$ be cocomplete additive category,
and $T:\sC \to \sD$ be an additive functor.
Then the additive left Kan extension
\begin{equation*}
T^*:\widehat{\sC} \to \sD
\end{equation*}
of $T$ along $\sC \to \widehat{\sC}$ exists. 
It is given by the coend formula
\begin{equation}\label{e:addKanT}
T^*(M) = \int^{A \in \sC} M(A) \otimes_{\Z} T(A),
\end{equation}
and it is cocontinuous and is preserved by any cocontinuous functor from $\sD$
to a cocomplete additive category. 
Since $h_-$ is fully faithful, the universal natural transformation
from $T$ to $T^*h_-$ is a natural isomorphism
\begin{equation}\label{e:Th}
T \iso T^*h_-.
\end{equation}
By cocontinuity of $h_-{}\!^*$ and \eqref{e:pres}, 
the canonical natural transformation from $h_-{}\!^*$ to $\Id_{\widehat{\sC}}$ 
is an isomorphism.
Thus $\Id_{\widehat{\sC}}$ is the additive left Kan extension
of $h_-$ along itself, with universal natural transformation the identity.
It follows that composition with $h_-$ defines an equivalence from cocontinuous
functors $\widehat{\sC} \to \sD$ to additive functors $\sC \to \sD$, with quasi-inverse
given by additive left Kan extension.

The functor $T^*$ has a right adjoint 
\begin{equation*}
T_*:\sD \to \widehat{\sC}
\end{equation*}
with $T_*(N):\sC^{\mathrm{op}} \to \mathrm{Ab}$ given by
\begin{equation}\label{e:Tstardef}
T_*(N) = \sD(T(-),N)
\end{equation}
where the unit $\Id_{\widehat{\sC}} \to T_*T^*$ corresponds under the universal property of 
the additive left Kan extension $\Id_{\widehat{\sC}}$ of $h_-$ along itself to the composite
\begin{equation}\label{e:unitdef}
h_- \to T_*T \iso T_*T^*h_-
\end{equation}
in which the isomorphism is $T_*$ applied to \eqref{e:Th} and the first arrow has component 
\begin{equation*}
h_A \to T_*T(A) = \sD(T(-),T(A))
\end{equation*}
at $A$ defined by $1_{T(A)}$.
That $\Id_{\widehat{\sC}} \to T_*T^*$ so defined induces an isomorphism
\begin{equation*}
\sD(T^*(M),N) \iso \widehat{\sC}(M,T_*(N))
\end{equation*}
for $M$ in $\widehat{\sC}$ and $N$ in $\sD$ can be seen by reducing by cocontinuity
and \eqref{e:pres} to the case $M = h_A$, where we have isomorphisms
\begin{equation*}
\sD(T^*(h_A),N) \iso \sD(T(A),N) \iso \widehat{\sC}(h_A,T_*(N))
\end{equation*}
induced by  \eqref{e:unitdef}.

Let $\varphi:T \to T'$ be a natural transformation of additive functors $\sC \to \sD$.
Denote by
\begin{equation*}
\varphi^*:T^* \to T'{}^*
\end{equation*}
the unique natural transformation 
such that $\varphi$ and $\varphi^*h_-$ are compatible with \eqref{e:Th} and the corresponding natural
isomorphism for $T'$.
The natural transformation
\begin{equation*}
\varphi_*:T'{}\!_* \to T_*
\end{equation*}
induced on right adjoints has component $\varphi_*{}_M$ at $M$ given by
\begin{equation*}
(\varphi_*{}_M)_A = \sD(\varphi_A,M):\sD(T'(A),M) \to \sD(T(A),M).
\end{equation*}
This follows from the diagram 
\begin{equation*}
\xymatrix{
h_A \ar [d] \ar[r] & T'{}\!_*T'(A) \ar[d]^{\varphi_*{}_{T'(A)}} \ar[r] & T'{}\!_*(M) \ar[d]^{\varphi_*{}_M} \\
T_*T(A) \ar[r]^{T_*(\varphi_A)} & T_*T'(A) \ar[r] & T_*(M)
}
\end{equation*} 
defined by a morphism $T'(A) \to M$, where the right square commutes by naturality of $\varphi_*$ and
the commutativity of the left square can be seen starting from the compatibility of $T_*\varphi^*$ and 
$\varphi_*T'{}^*$ with the units by evaluating at $h_A$ and using the isomorphisms of the form \eqref{e:Th}
and their compatibility with $\varphi$ and $\varphi^*h_-$.

Now let $\sC$ be an essentially small tensor category, $\sD$ be cocomplete tensor category,
and $T:\sC \to \sD$ be a tensor functor.
By cocontinuity of $T^*:\widehat{\sC} \to \sD$, it follows either directly from \eqref{e:addKanT}
or after first replacing $\sC$ by its additive hull from the density of 
$h_-:\sC \to \widehat{\sC}$ that $T^*$ has a unique structure of
tensor functor such that \eqref{e:Th} is a tensor isomorphism.
We may suppose after replacing $T^*$ if necessary by an isomorphic functor that the component of
\eqref{e:Th} at $\I$ is the identity, so that $T^*$ preserves the unit strictly.
There then exists a unique structure of lax tensor functor on $T_*:\sD \to \widehat{\sC}$ such that
the unit and counit for $T^*$ and $T_*$ are compatible with the lax tensor structures.
Explicitly, since $T^*$ and $T$ preserve $\I$ strictly, the unit
\begin{equation*}
h_{\I} \to T_*(\I) = \sD(T(-),\I)
\end{equation*}
of $T_*$ is $\eqref{e:unitdef}$ evaluated at $\I$, and hence is defined by $1_{\I} \in \sD(\I,\I)$.
The lax tensor structure of $T_*$ is given using \eqref{e:Tstardef} by the biadditive maps
\begin{equation*}
\sD(T(A),M) \times \sD(T(B),N) \to \sD(T(A \otimes B),M \otimes N)
\end{equation*}
which send $(a,b)$, modulo the isomorphism defining the tensor structure of $T$, to $a \otimes b$.
This can be seen from the diagram
\begin{equation*}
\xymatrix@R=2pc@C=3pc{
& h_A \otimes h_B \ar[dl] \ar[r]  &  T_*T(A) \otimes T_*T(B) 
\ar[dl]!<1.5em,0ex> \ar[d] \ar[r]^-{\scriptscriptstyle{T_*(a) \otimes T_*(b)}} & T_*(M) \otimes T_*(N) \ar[d] \\
h_{A \otimes B} \ar[r] & T_*T(A \otimes B) \ar[r]^-{\sim} & T_*(T(A) \otimes T(B)) 
\ar[r]^-{\scriptscriptstyle{T_*(a \otimes b)}} & T_*(M \otimes N) 
}
\end{equation*}
in which the parallelogram commutes because \eqref{e:unitdef} and hence the first arrow of \eqref{e:unitdef}
is compatible with the tensor  structures, the triangle commutes by definition of the tensor structure
of a composite functor, and the square commutes by naturality of the tensor structure of $T_*$.

Let $\varphi:T \to T'$ be a natural transformation of tensor functors $\sC \to \sD$ which is compatible with the
tensor structures.
Then by \eqref{e:pres}, the compatibility of the isomorphisms of the form \eqref{e:Th}
with $\varphi$ and $\varphi^*h_-$, and cocontinuity of $T^*$, the natural transformation $\varphi^*:T^* \to T'{}^*$
is compatible with the tensor structures.
It follows that $\varphi_*:T'{}\!_* \to T_*$ is compatible with the tensor structures.
It also follows that composition with $h_-$ defines an equivalence from
cocontinuous tensor functors $\widehat{\sC} \to \sD$ to tensor functors $\sC \to \sD$,
with quasi-inverse $T \mapsto T^*$.

Let $\sA$ and $\sA'$ be tensor categories, $H:\sA \to \sA'$ be a tensor functor,
and $H':\sA' \to \sA$ be a lax tensor functor right adjoint to $H$.
Given $M$ in $\sA$ and $M'$ in $\sA'$, we have a canonical morphism
\begin{equation}\label{e:proj}
H'(M') \otimes M \to H'(M' \otimes H(M))
\end{equation}
in $\sA$, natural in $M$ and $M'$, given by the composite
\begin{equation}\label{e:projdef}
H'(M') \otimes M \to H'(M') \otimes H'H(M) \to H'(M' \otimes H(M))
\end{equation}
with the first arrow defined using the unit and the second by the lax tensor structure of $H'$. 
It is adjoint to the morphism
\begin{equation}\label{e:projadj}
H(H'(M') \otimes M) \iso HH'(M') \otimes H(M) \to M' \otimes H(M)
\end{equation}
defined using the tensor structure of $H$ and the counit.
If $M$ is dualisable then \eqref{e:proj} is an isomorphism, as follows from the top row of 
the commutative diagram
\begin{equation*}
\xymatrix@C-0.6cm{
\sA(L,H'(M') \otimes M) \ar[d]^{\wr} \ar[r] & \sA'(H(L),HH'(M') \otimes H(M)) 
\ar[d] \ar[r] & \sA'(H(L),M' \otimes H(M)) \ar[d]^{\wr} \\
\sA(L \otimes M^{\vee},H'(M')) \ar[r] & \sA'(H(L) \otimes H(M)^{\vee},HH'(M')) 
\ar[r] & \sA'(H(L) \otimes H(M)^{\vee},M')
}
\end{equation*}
with arrows natural in $L$, where the bottom row, modulo the tensor structure of $H$, is the adjunction isomorphism.

Suppose $\sA$ is abelian with $\otimes$ right exact.
For any commutative algebra $R'$ in $\sA'$, the lax tensor structure of $H'$ defines on $H'(R')$ a 
structure of commutative algebra in $\sA$, and similarly for modules over $R'$ in $\sA'$.
In particular, since $\I$ has a unique structure of commutative algebra in $\sA'$ and every $M'$ in $\sA'$
has a unique structure of module over $\I$, we have
a commutative algebra $H'(\I)$ in $\sA$ and a canonical structure of $H'(\I)$\nd module on every $H'(M')$.
Thus $H'$ factors as a lax tensor functor
\begin{equation}\label{e:EilMoore}
\sA' \to \MOD_{\sA}(H'(\I))
\end{equation}
followed by the forgetful functor.
The morphism of $H'(\I)$\nd modules 
\begin{equation}\label{e:EMunit}
H'(\I) \otimes M \to H'H(M)
\end{equation}
corresponding to the unit $M \to H'H(M)$ in $\sA$ is given by taking $M' = \I$ in \eqref{e:proj},
and hence is an isomorphism for $M$ dualisable.
It is natural in $M$ and is compatible with the tensor structure of $H'(\I) \otimes -$ and
the lax tensor structure of $H'H$. 
The morphism
\begin{equation}\label{e:EMtens}
H'(M') \otimes_{H'(\I)} H'(N') \to H'(M' \otimes N')
\end{equation}
defining the lax tensor structure of \eqref{e:EilMoore} is an isomorphism for $N' = H(M)$ with $M$ dualisable, 
because the composite \eqref{e:projdef} defining \eqref{e:proj} then factors as an isomorphism from 
$H'(M') \otimes M$ to $H'(M') \otimes_{H'(\I)}  H'H(M)$ followed by \eqref{e:EMtens}.
Similarly, the homomorphism
\begin{equation}\label{e:EMhom}
\sA'(M',N') \to \Hom_{H'(\I)}(H'(M'),H'(N'))
\end{equation}
defined by \eqref{e:EilMoore} is an isomorphism for $M' = H(M)$ with $M$ dualisable, 
because the adjunction isomorphism from $\sA'(H(M),N')$ to $\sA(M,H'(N'))$ then factors as 
\eqref{e:EMhom} followed by an isomorphism induced by the unit.

Let $\sC$ and $\sC'$ be essentially small additive categories and $F:\sC \to \sC'$ be an additive functor.
Then we write
\begin{equation*}
\widehat{F}:\widehat{\sC} \to \widehat{\sC'}
\end{equation*}
for $T^*$ with $T:\sC \to \widehat{\sC'}$ the composite of $h_-:\sC' \to \widehat{\sC'}$ with $F$, and
\begin{equation*}
F_{\wedge}:\widehat{\sC'} \to \widehat{\sC}
\end{equation*}
for $T_*$.
In this case, \eqref{e:Tstardef} becomes
\begin{equation*}
F_{\wedge}(N) = NF^\mathrm{op}
\end{equation*}
for $N:\sC'{}^\mathrm{op} \to \mathrm{Ab}$.
Thus $F_\wedge$ is continuous and cocontinuous.
If $F$ is fully faithful, then $\widehat{F}$ is fully faithful, by \eqref{e:pres} and the fact that
$h_A$ for every $A$ in $\sC$ or $\sC'$ is projective of finite presentation. 

Similarly we define $\widehat{\varphi}$ and $\varphi_{\wedge}$ for a natural transformation $\varphi$.
Given also an essentially small additive category $\sC''$ and an additive functor $F':\sC' \to \sC''$, 
we have $(F'F)_{\wedge} = F_{\wedge}F'{}\!_{\wedge}$. 
Thus $\widehat{F'F}$ and $\widehat{F'}\widehat{F}$ are canonically isomorphic, 
with composites of three or more isomorphisms satisfying the usual compatibilities.

Suppose that $\sC$ and $\sC'$ have structures of tensor category and $F$ has a structure of tensor functor.
Then as above $\widehat{F}$ has a structure of tensor functor and $F_{\wedge}$ of lax tensor functor,
and $\widehat{\varphi}$ and $\varphi_{\wedge}$ are compatible with the tensor structures if 
the natural transformation $\varphi$ is.
Taking $\sA = \widehat{\sC}$, $\sA' = \widehat{\sC'}$,
and $H' = F_{\wedge}$ in \eqref{e:EilMoore} shows that $F_{\wedge}$ factors as a lax tensor functor
\begin{equation}\label{e:EilMoorehat}
\widehat{\sC'} \to \MOD_{\widehat{\sC}}(F_{\wedge}(\I))
\end{equation}
followed by the forgetful functor.
If $F'$ has a structure of tensor functor, the lax tensor functors $(F'F)_{\wedge}$ and 
$F_{\wedge}F'{}\!_{\wedge}$ coincide, and the canonical isomorphism from $\widehat{F'F}$ to 
$\widehat{F'}\widehat{F}$ is a tensor isomorphism.

\begin{lem}\label{l:EilMoore}
Let $\sC$ and $\sC'$ be essentially small tensor categories with $\sC$ rigid, and $F:\sC \to \sC'$
be a tensor functor.
\begin{enumerate}
\item\label{i:EilMooretens}
The composite of \eqref{e:EilMoorehat} with $\widehat{F}:\widehat{\sC} \to \widehat{\sC'}$ 
is tensor isomorphic to 
\begin{equation*}
F_{\wedge}(\I) \otimes -:\widehat{\sC} \to \MOD_{\widehat{\sC}}(F_{\wedge}(\I)).
\end{equation*}
\item\label{i:EilMooreequiv}
If $F$ is essentially surjective, then \eqref{e:EilMoorehat} is a tensor equivalence.
\end{enumerate}
\end{lem}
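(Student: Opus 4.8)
Throughout write $R=F_\wedge(\I)$, a commutative algebra in $\widehat{\sC}$, and abbreviate \eqref{e:EilMoorehat} as $\Phi\colon\widehat{\sC'}\to\MOD_{\widehat{\sC}}(R)$. The plan for (i) is to compare the two functors $R\otimes-$ and $\Phi\circ\widehat{F}$ from $\widehat{\sC}$ to $\MOD_{\widehat{\sC}}(R)$ by exploiting that both are cocontinuous and that the representables $h_A$ generate $\widehat{\sC}$ under colimits. Taking $\sA=\widehat{\sC}$, $\sA'=\widehat{\sC'}$, $H=\widehat{F}$ and $H'=F_\wedge$ in the discussion preceding the lemma, the morphism \eqref{e:EMunit} provides a natural transformation $\beta\colon R\otimes-\to F_\wedge\widehat{F}$, landing in $\MOD_{\widehat{\sC}}(R)$ and compatible with the tensor structure of $R\otimes-$ and the lax tensor structure of $\Phi\circ\widehat{F}$, whose component $\beta_M$ is an isomorphism whenever $M$ is dualisable in $\widehat{\sC}$. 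Since $\sC$ is rigid and $h_-\colon\sC\to\widehat{\sC}$ is a tensor functor, every $h_A$ is dualisable, so $\beta_{h_A}$ is an isomorphism for all $A$. Both $R\otimes-$ and $F_\wedge\widehat{F}$ are cocontinuous into $\MOD_{\widehat{\sC}}(R)$ — for the latter, $\widehat{F}$ and $F_\wedge$ are cocontinuous (the second being computed argumentwise) and colimits in $\MOD_{\widehat{\sC}}(R)$ are created by the forgetful functor — so by \eqref{e:pres} and right exactness $\beta_M$ is an isomorphism for every object $M$ of $\widehat{\sC}$. Hence $\beta$ is the required tensor isomorphism.

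For (ii) the plan is to show that $\Phi$ is fully faithful, is a genuine tensor functor, and is essentially surjective; then, since a tensor functor whose underlying functor is an equivalence is a tensor equivalence, we are done. Full faithfulness: by \eqref{e:EMhom} the map $\widehat{\sC'}(M',N')\to\Hom_R(F_\wedge(M'),F_\wedge(N'))$ is an isomorphism whenever $M'=\widehat{F}(M)$ with $M$ dualisable; since $F$ is essentially surjective, every object of $\sC'$ is isomorphic to some $F(A)$, and as $\widehat{F}(h_A)\cong h_{F(A)}$ by \eqref{e:Th} this covers $M'=h_B$ for every object $B$ of $\sC'$. Both sides send colimits in $M'$ to limits ($\Phi$ being cocontinuous, since $F_\wedge$ is and colimits in $\MOD_{\widehat{\sC}}(R)$ are those of $\widehat{\sC}$), so \eqref{e:pres} extends the isomorphism to all $M'$, hence to all $M',N'$. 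That $\Phi$ is a tensor functor: by \eqref{e:EMtens} its lax structure map $F_\wedge(M')\otimes_R F_\wedge(N')\to F_\wedge(M'\otimes N')$ is an isomorphism when $N'=\widehat{F}(M)$ with $M$ dualisable, hence when $N'=h_B$, and by symmetry when $M'=h_B$; as both sides are cocontinuous in each of $M'$ and $N'$ separately, \eqref{e:pres} again extends this to all $M',N'$. Essential surjectivity: since $\Phi$ is fully faithful and right exact, its essential image is closed under cokernels; by (i) it contains every free module $R\otimes M\cong F_\wedge\widehat{F}(M)=\Phi(\widehat{F}(M))$; and every $R$-module is the cokernel of a morphism between free modules (take free covers of it and of its kernel), which by full faithfulness is $\Phi$ applied to a morphism of $\widehat{\sC'}$, so its cokernel lies in the essential image. (Alternatively, (ii) follows from (i) and Beck's theorem: $F_\wedge$ is cocontinuous, hence preserves coequalisers, and reflects isomorphisms because $F$ is essentially surjective, so it is monadic; its monad $F_\wedge\widehat{F}$ is, by (i), the monad $R\otimes-$, whose Eilenberg--Moore category is $\MOD_{\widehat{\sC}}(R)$.)

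I expect the main work to be bookkeeping rather than conceptual. The one genuinely essential input is that rigidity of $\sC$ makes every representable $h_A$ dualisable in $\widehat{\sC}$, which is exactly what lets us apply \eqref{e:EMunit}, \eqref{e:EMtens} and \eqref{e:EMhom} to a set of generators; after that, the recurring step — spreading an isomorphism of additive functors from the representables to all of $\widehat{\sC}$ using \eqref{e:pres} together with cocontinuity (or the sending of colimits to limits) of the functors involved, and checking that colimits and cokernels in $\MOD_{\widehat{\sC}}(R)$ are those of $\widehat{\sC}$ — is routine, if repetitive. The only place calling for a little extra thought is essential surjectivity in (ii), where one must present an arbitrary $R$-module as a cokernel of free modules and then transport this presentation across the already-established full faithfulness.
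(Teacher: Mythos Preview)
Your proof is correct and follows essentially the same route as the paper: both parts reduce to checking that \eqref{e:EMunit}, \eqref{e:EMtens}, and \eqref{e:EMhom} are isomorphisms on the representables $h_A$ (using rigidity of $\sC$ and essential surjectivity of $F$) and then spreading via \eqref{e:pres} and cocontinuity. Your alternative via Beck's monadicity theorem is a pleasant extra that the paper does not mention; it packages the essential surjectivity and full faithfulness arguments into a single stroke, at the cost of importing a theorem the paper otherwise avoids.
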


\begin{proof}
Write $\sA = \widehat{\sC}$, $\sA' = \widehat{\sC'}$, $H = \widehat{F}$ and $H' = F_{\wedge}$.
Then \eqref{e:EilMoore} is \eqref{e:EilMoorehat}, and \eqref{e:EMunit} is the component at 
$M$ of a natural transformation, compatible with the tensor structures,
from $F_{\wedge}(\I) \otimes -$ to the composite of \eqref{e:EilMoorehat} with $\widehat{F}$.
Further \eqref{e:EMunit} is an isomorphism for $M = h_A$ because $M$ is
then dualisable, and hence by \eqref{e:pres} and continuity of $\otimes$, $H$ and $H'$, for every $M$.  
This gives \ref{i:EilMooretens}.

Suppose that $F$ is essentially surjective. 
Then \eqref{e:EMhom} is an isomorphism for $M' = h_{A'}$ and every $N'$, because
$h_{A'}$ is isomorphic to some $H(h_A)$ and $h_A$ is dualisable.
Thus by \eqref{e:pres} and cocontinuity of $H'$, \eqref{e:EMhom} is an isomorphism for every $M'$ and $N'$,
so that \eqref{e:EilMoorehat} is fully faithful.
Now \eqref{e:EilMoorehat} is cocontinuous, because $F_{\wedge}$ is.
Since by \ref{i:EilMooretens} the essential image of \eqref{e:EilMoorehat}
contains the $F_{\wedge}(\I)$\nd modules $F_{\wedge}(\I) \otimes h_A$,
it thus contains every $F_{\wedge}(\I) \otimes M$, and hence every $F_{\wedge}(\I)$\nd module.
Finally \eqref{e:EMunit} is an isomorphism for $M = \I$,
and \eqref{e:EMtens} is an isomorphism for $N' = H(h_A)$, and hence for every $N'$.
This proves \ref{i:EilMooreequiv}.
\end{proof}

\begin{thm}\label{t:Fhatequiv}
Let $k$ be a field of characteristic $0$ and $\sC$ be as in 
Proposition~\textnormal{\ref{p:FmnCesssurj}}.
Then for some $\bm|\bn$ there exists a commutative algebra $R$ in $\widehat{\sF_{\bm|\bn}}$ with 
\begin{equation*}
R \otimes - :(\widehat{\sF_{\bm|\bn}})_\mathrm{rig} \to \Mod_{\widehat{\sF_{\bm|\bn}}}(R)
\end{equation*} 
faithful such that $\widehat{\sC}$ is $k$\nd tensor equivalent to $\MOD_{\widehat{\sF_{\bm|\bn}}}(R)$.
\end{thm}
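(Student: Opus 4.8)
The plan is to read the theorem off from Proposition~\ref{p:FmnCesssurj} and Lemma~\ref{l:EilMoore}, taking for $R$ the commutative algebra $F_{\wedge}(\I)$ attached to a suitable presentation of $\sC$. First I would invoke Proposition~\ref{p:FmnCesssurj}: since $\sC$ is as in that proposition, it provides a family $\bm|\bn$ of pairs of integers $\ge 0$ together with a faithful essentially surjective $k$\nd tensor functor $F\colon\sF_{\bm|\bn}\to\sC$. Here $\sF_{\bm|\bn}=\sF_{\bm-\bn}/\sJ_{\bm|\bn}$ is essentially small and rigid, being a quotient of a free rigid $k$\nd tensor category by a tensor ideal. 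Put $R=F_{\wedge}(\I)$; by the discussion preceding Lemma~\ref{l:EilMoore} this is a commutative algebra in $\widehat{\sF_{\bm|\bn}}$, and the lax tensor factorisation \eqref{e:EilMoorehat} of $F_{\wedge}$ is a functor $\widehat{\sC}\to\MOD_{\widehat{\sF_{\bm|\bn}}}(R)$. Since $F$ is essentially surjective, Lemma~\ref{l:EilMoore}\ref{i:EilMooreequiv} shows that \eqref{e:EilMoorehat} is a tensor equivalence; as it is built from the $k$\nd tensor functor $F$ it is $k$\nd linear, and it is therefore the required $k$\nd tensor equivalence $\widehat{\sC}\simeq\MOD_{\widehat{\sF_{\bm|\bn}}}(R)$.

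It then remains to check that $R\otimes-\colon(\widehat{\sF_{\bm|\bn}})_{\mathrm{rig}}\to\Mod_{\widehat{\sF_{\bm|\bn}}}(R)$ is faithful. Being a tensor functor, $R\otimes-\colon\widehat{\sF_{\bm|\bn}}\to\MOD_{\widehat{\sF_{\bm|\bn}}}(R)$ carries dualisable objects to dualisable objects, so it does restrict as in the statement. By Lemma~\ref{l:EilMoore}\ref{i:EilMooretens}, $R\otimes-$ is tensor isomorphic to the composite of $\widehat{F}\colon\widehat{\sF_{\bm|\bn}}\to\widehat{\sC}$ followed by the equivalence \eqref{e:EilMoorehat}, so it suffices to see that $\widehat{F}$ is faithful on $(\widehat{\sF_{\bm|\bn}})_{\mathrm{rig}}$. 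For this I would note that every dualisable object $M$ of $\widehat{\sF_{\bm|\bn}}$ is projective of finite type: indeed $\Hom(M,-)\cong\Hom(\I,M^{\vee}\otimes-)$ is cocontinuous, since the tensor product of $\widehat{\sF_{\bm|\bn}}$ is cocontinuous and $\I=h_{\I}$ is projective of finite type. Hence $M$ is a direct summand of a finite direct sum of objects $h_A$. Now a morphism $h_A\to h_B$ in $\widehat{\sF_{\bm|\bn}}$ is just a morphism $A\to B$ in $\sF_{\bm|\bn}$, sent by $\widehat{F}$ to the corresponding morphism $F(A)\to F(B)$; as $F$ is faithful, $\widehat{F}$ is faithful on such morphisms, and hence also on morphisms between direct summands of finite direct sums of objects $h_A$. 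Therefore $\widehat{F}$, and so $R\otimes-$, is faithful on $(\widehat{\sF_{\bm|\bn}})_{\mathrm{rig}}$.

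There is no serious obstacle here: the substantive content — that $F_{\wedge}$ factors through a tensor equivalence onto the category of modules over $F_{\wedge}(\I)$ once $F$ is essentially surjective — is precisely Lemma~\ref{l:EilMoore}, which itself rests on the presentation \eqref{e:pres}, cocontinuity, and the fact that \eqref{e:EMunit}, \eqref{e:EMtens} and \eqref{e:EMhom} are isomorphisms when the object in question is dualisable. The only point proper to the present argument is the identification of the dualisable objects of the functor category $\widehat{\sF_{\bm|\bn}}$ as direct summands of finite direct sums of representables, which is what transports faithfulness of $F$ to faithfulness of $R\otimes-$ on the rigid part; the check that all structures involved are $k$\nd linear is automatic from $F$ being a $k$\nd tensor functor and will not be dwelt upon.
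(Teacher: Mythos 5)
Your argument is correct and follows the paper's proof essentially verbatim: invoke Proposition~\ref{p:FmnCesssurj} to get $F$, set $R=F_{\wedge}(\I)$, apply both parts of Lemma~\ref{l:EilMoore}, and transport faithfulness of $F$ to $R\otimes-$ on the rigid part via the identification of $(\widehat{\sF_{\bm|\bn}})_{\mathrm{rig}}$ with the pseudo-abelian hull of $\sF_{\bm|\bn}$. The paper states this last identification as an already established fact (end of Section~\ref{s:fun} and start of Section~\ref{s:tor}) where you rederive it, but the substance is the same.
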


\begin{proof}
Lemma~\ref{l:EilMoore} with $F$ the $k$\nd tensor functor 
$\sF_{\bm|\bn} \to \sC$ of Proposition~\ref{p:FmnCesssurj} gives a $k$\nd tensor equivalence
from $\widehat{\sC}$ to $\MOD_{\widehat{\sF_{\bm|\bn}}}(R)$ with $R = F_\wedge(\I)$
whose composite with $\widehat{F}$ is tensor isomorphic to $R \otimes -$.
Since $F$ is faithful, the $k$\nd tensor functor from
$(\widehat{\sF_{\bm|\bn}})_\mathrm{rig}$ to $(\widehat{\sC})_\mathrm{rig}$ induced by
$\widehat{F}$ on pseudo-abelian hulls is faithful.
\end{proof}

\section{Torsion}\label{s:tor}

This section deals with the notion of torsion in appropriate abelian tensor 
categories, which is fundamental for the construction of super Tannakian hulls.

Let $\sA$ be an abelian category.
Recall that a full subcategory $\sS$ of $\sA$ containing $0$ is said to be a 
Serre subcategory of $\sA$ if for any short exact sequence in $\sA$
\begin{equation}\label{e:sex}
0 \to M' \to M \to M'' \to 0,
\end{equation} 
$M$ lies in $\sS$ if and only if both $M'$ and $M''$ do.
Suppose that $\sA$ is well-powered,
and let $\sS$ be a Serre subcategory of $\sC$.
Then (e.g.\ \cite[III \S~1]{Gab58}) the quotient $\sA/\sS$ of $\sA$ by $\sS$ is the abelian category
with objects those of $\sA$, where
\begin{equation}\label{e:quotcolim}
(\sA/\sS)(M,N) = \colim_{M' \subset M, \, N' \subset N, \; M/M', N' \in \sS}\sA(M',N/N')
\end{equation}
with the colimit over an essentially small filtered category, and an evident composition.
We have an exact functor 
\begin{equation}\label{e:quotproj}
\sA \to \sA/\sS
\end{equation} 
which is the identity on objects
and on the hom-group $\sA(M,N)$ is the coprojection at $M' = M$, $N' = 0$ of \eqref{e:quotcolim}.
If $\sA$ has coproducts and coproducts of monomorphisms in $\sA$ are monomorphisms
(i.e.\ if $\sA$ is an (AB4) category in the sense of Grothendieck)
and $\sS$ is closed under formation of coproducts in $\sA$, then $\sA/\sS$ has colimits
and \eqref{e:quotproj} preserves colimits:
it is enough to check that \eqref{e:quotproj} preserves coproducts, 
and if we take $M = \coprod_{\alpha}M_{\alpha}$ in \eqref{e:quotcolim}, then every
$M' \subset M$ with $M/M'$ in $\sS$ contains $\coprod_{\alpha}M'{}\!_{\alpha}$ with $M'{}\!_{\alpha}$
the kernel of $M_{\alpha} \to M/M'$ and hence $M_{\alpha}/M'{}\!_{\alpha}$ in $\sS$.

An object of $\sA$ has image $0$ in $\sA/\sS$ if and only if it lies in $\sS$.
A morphism in $\sA$ is an $\sS$\nd isomorphism, i.e.\ has image in $\sA/\sS$ an isomorphism,
if and only if both its kernel and cokernel lie in $\sS$.
A morphism in $\sS$ is $\sS$\nd trivial, i.e.\ has image $0$ in $\sA/\sS$, if and only if
its image lies in $\sS$ if and only if both the embedding of its kernel and the projection onto
its cokernel are $\sS$\nd isomorphisms.

Any exact functor from $\sA$ to an abelian category $\sA'$
which sends every object of $\sS$ to $0$
factors uniquely through \eqref{e:quotproj}, and $\sA/\sS \to \sA'$ is then exact.
Further $\sA/\sS \to \sA'$ is faithful if and only if the objects of $\sS$ are the 
only ones in $\sA$ sent to $0$ in $\sA'$.

Let $\sA'$ be an abelian category and $T:\sA \to \sA'$ be an additive functor which
sends every epimorphism with kernel in $\sS$ and every monomorphism
with cokernel in $\sS$ to an isomorphism, or equivalently which sends every $\sS$\nd isomorphism to an isomorphism.
Then $T$ factors uniquely through \eqref{e:quotproj}:
the factorisation
\begin{equation*}
\overline{T}:\sA/\sS \to \sA'
\end{equation*}
of $T$ coincides with $T$ on objects, and if $\overline{f}$ in
$(\sA/\sS)(M,N)$ is the image of $f$ in $\sA(M',N/N')$ in the colimit \eqref{e:quotcolim},
with $e:M' \to M$ the embedding and $p:N \to N/N'$ the projection, then
\begin{equation}\label{e:Tbardef}
\overline{T}(\overline{f}) = T(p)^{-1} \circ T(f) \circ T(e)^{-1}.
\end{equation}
Further if $T$ is right (resp.\ left) exact then $\overline{T}$ is right (resp.\ left) exact:
to prove that for $T$ left exact $\overline{T}$ preserves the kernel $\overline{f}:L \to M$ of $\overline{g}:M \to N$,
we may suppose after replacing $L$ by $\Ker(g \circ f)$ that $g \circ f = 0$,
in which case $f$ induces an $\sS$\nd isomorphism $L \to \Ker(g)$,
so that $T(f)$ is the kernel of $T(g)$.
If $\sA$ is an (AB4) category and $T$ preserves colimits then $\overline{T}$ preserves colimits.
Given also $T':\sA \to \sA'$ satisfying similar conditions to $T$, 
there exists for any natural transformation 
$\varphi:T \to T'$ a unique $\overline{\varphi}:\overline{T} \to \overline{T'}$ compatible
with $\varphi$ and $\sA \to \sA/\sS$.

Suppose that $\sA$ has a structure of tensor category,
and that if $f$ is an $\sS$\nd isomorphism in $\sA$ then $f \otimes N$ is an $\sS$\nd isomorphism 
for any object $N$ of $\sA$.
Then $\sA/\sS$ has a unique structure of tensor category such that \eqref{e:quotproj}
is a strict tensor functor.
Explicitly, the tensor product of $\sA/\sS$ coincides with that of $\sA$ on objects,
with associativity constraints and symmetries the images under \eqref{e:quotproj} of those
of $\sA$. 
The composite of \eqref{e:quotproj} with the endofunctor $- \otimes N$ of $\sA$
factors uniquely as the composite of an additive endofunctor $- \otimes N$ of $\sA/\sS$ 
with \eqref{e:quotproj}.
This endofunctor $- \otimes N$ of $\sA/\sS$ together with the similarly defined endofunctor
$M \otimes -$ define on $\sA$ a bifunctor: 
the condition for bifunctoriality can be verified using \eqref{e:Tbardef}.
The naturality of the constraints for this bifunctor $- \otimes -$ on $\sA/\sS$ follows
from \eqref{e:Tbardef}, their required compatibilities
from those in $\sA$, 
and  the compatibility of \eqref{e:quotproj} with  
$- \otimes -$ in $\sA$ and $\sA/\sS$ from its compatibility with each $M \otimes -$
and $- \otimes N$.

Let $\sA$ and $\sA'$ be abelian tensor categories and $T:\sA \to \sA'$ be a tensor functor.
Suppose that the above conditions on $T$ and the tensor product of $\sA$ are satisfied.
Then $\overline{T}:\sA/\sS \to \sA'$ has a unique structure of tensor functor
whose composite with the strict tensor functor \eqref{e:quotproj} is $T$.
Given similarly $T':\sA \to \sA'$, if $\varphi:T \to T'$ is compatible with the tensor structures,
then so is $\overline{\varphi}:\overline{T} \to \overline{T'}$.

Let $\sA$ be a tensor category.
For any $M$ in $\sA_{\mathrm{rig}}$, the endofunctor $M \otimes -$ is both right and left adjoint to
$M^\vee \otimes -$, and hence preserves limits and colimits.
If $\sA$ is abelian and $\I$ is projective in $\sA$, then for $M$ in $\sA_{\mathrm{rig}}$ 
the natural isomorphism
\begin{equation*}
\sA(M,-) \iso \sA(\I,M^\vee \otimes -)
\end{equation*} 
shows that $M$ is projective in $\sA$.
Similarly if $\sA$ has filtered colimits and $\I$ is of finite type (resp.\ of finite presentation)
in $\sA$, then any $M$ in $\sA_{\mathrm{rig}}$ is of finite type (resp.\ of finite presentation)
in $\sA$.

A cocomplete abelian category $\sA$ will be called a Grothendieck category
if filtered colimits are exact in $\sA$.
It is equivalent to require (\cite[III~1.9]{Mit65}, 
\cite[\href{https://stacks.math.columbia.edu/tag/0032}{Tag 0032}]{stacks-project})
that for every filtered system $(A_\lambda)$ of subobjects $A_\lambda$ of an object $A$ of $\sA$
we have
\begin{equation}\label{e:AB5}
(\cup A_\lambda) \cap B = \cup(A_\lambda \cap B)
\end{equation} 
for every subobject $B$ of $A$.

\begin{defn}
A Grothendieck tensor category $\sA$ will be called \emph{well-dualled} if $\I$ is of finite type
in $\sA$ and $\otimes$ is cocontinuous in $\sA$, and if $\sA_{\mathrm{rig}}$ is essentially 
small and its objects generate $\sA$. 
\end{defn}

\emph{For the rest of this section $\sA$ will be a well-dualled Grothendieck tensor category.}

\medskip

Since $\sA$ is an abelian category with a generator, it is well-powered.

An object of $\sA$ is of finite type if and only if it is a quotient of some object in $\sA_{\mathrm{rig}}$.
Every object of $\sA$ is the filtered colimit of its subobjects of finite type.
If $M' \to M$ is an epimorphism in $\sA$ with $M$ of finite type, there exists a $B$ in $\sA_{\mathrm{rig}}$
and a morphism $B \to M'$ such that $B \to M' \to M$ is an epimorphism.

Every object of $\sA$ is the quotient of a coproduct of objects in $\sA_{\mathrm{rig}}$.
It follows that for example every object of $\sA$ has a (left) resolution with objects such coproducts,
or that any short exact sequence in $\sA$ has a resolution by a short exact sequence of complexes with
objects such coproducts.

Call an object $M$ of $\sA$ \emph{flat} if the functor $M \otimes -$ on $\sA$ is exact.
Any filtered colimit of flat objects is flat, and any object lying in $\sA_{\mathrm{rig}}$ is flat.
Any object, or any short exact sequence, in $\sA$ has a flat resolution.

\begin{lem}\label{l:Mflat}
For any object $N$ of $\sA$, the functor $N \otimes -$ preserves those short exact sequences
\eqref{e:sex} in $\sA$ with $M''$ flat.
\end{lem}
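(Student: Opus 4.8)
The plan is to prove this by a flat resolution of $N$ together with the long exact homology sequence. First I would choose a flat resolution $P_\bullet \to N$, i.e.\ an exact complex $\cdots \to P_1 \to P_0 \to N \to 0$ in which every $P_i$ is flat; such a resolution exists by the remarks immediately preceding the lemma (coproducts of objects of $\sA_{\mathrm{rig}}$ are flat, and every object is a quotient of such a coproduct). Applying $P_i \otimes -$ to the given exact sequence $0 \to M' \to M \to M'' \to 0$ gives, since $P_i$ is flat, a short exact sequence $0 \to P_i \otimes M' \to P_i \otimes M \to P_i \otimes M'' \to 0$ for each $i$; since these are compatible with the differentials (by bifunctoriality of $\otimes$), they assemble into a short exact sequence of complexes in $\sA$
\[
0 \to P_\bullet \otimes M' \to P_\bullet \otimes M \to P_\bullet \otimes M'' \to 0.
\]

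Next I would pass to the associated long exact sequence in homology, which exists in the abelian category $\sA$, and identify the relevant terms. For any object $X$ of $\sA$, right exactness of $\otimes$ applied to $P_1 \to P_0 \to N \to 0$ shows $H_0(P_\bullet \otimes X) = N \otimes X$. Moreover, since $M''$ is flat the functor $M'' \otimes -$ is exact, hence commutes with the formation of homology of complexes, so $H_n(P_\bullet \otimes M'') \cong H_n(P_\bullet) \otimes M'' = 0$ for $n \ge 1$, because $P_\bullet$ is a resolution. The tail of the long exact homology sequence therefore reads
\[
0 = H_1(P_\bullet \otimes M'') \to N \otimes M' \to N \otimes M \to N \otimes M'' \to 0,
\]
which is precisely the assertion that $N \otimes -$ carries $0 \to M' \to M \to M'' \to 0$ to a short exact sequence.

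The only place where flatness of $M''$ is genuinely used is the vanishing $H_1(P_\bullet \otimes M'') = 0$, so that is the step to be careful about; it follows formally from the fact that an exact functor preserves kernels, images and hence homology. (If one prefers to avoid citing the long exact sequence, the same conclusion can be reached by applying the snake lemma to the two-row exact diagram obtained by tensoring $P_1 \to P_0$ with $0 \to M' \to M \to M'' \to 0$, and using exactness of $M'' \otimes -$, together with the surjection $P_2 \otimes M'' \twoheadrightarrow \ker(P_1 \otimes M'' \to P_0 \otimes M'')$, to see that the connecting morphism into $N \otimes M'$ vanishes; but the homological phrasing is shorter.) Everything else is standard homological algebra in a Grothendieck abelian category together with the right exactness and cocontinuity of $\otimes$ already recorded for $\sA$, so I do not anticipate any real obstacle.
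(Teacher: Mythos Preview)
Your proof is correct and follows exactly the same approach as the paper: take a flat resolution $P_\bullet$ of $N$, tensor with the short exact sequence to obtain a short exact sequence of complexes, and read off the result from the long exact homology sequence, using flatness of $M''$ to see that $H_n(P_\bullet \otimes M'')$ vanishes for $n\ge 1$. Your write-up is simply more explicit than the paper's about why the homology of $P_\bullet \otimes M''$ is concentrated in degree $0$.
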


\begin{proof}
Let $P_\bullet$ be a flat resolution of $N$.
Then we have a short exact sequence 
\begin{equation*}
0 \to P_\bullet \otimes M' \to P_\bullet \otimes M \to P_\bullet \otimes M'' \to 0
\end{equation*}
of complexes in $\sA$ with homology in degree $0$ given by applying $N \otimes -$ to \eqref{e:sex},
where the homology of $P_\bullet \otimes M''$ is concentrated in degree $0$.
The long exact homology sequence thus gives what is required.
\end{proof}

\begin{lem}\label{l:balanced}
Let $P_\bullet$ be a flat resolution of an object $M$ and $Q_\bullet$ be a flat resolution of an object
$N$ in $\sA$.
Then the homologies of any given degree of the complexes $M \otimes Q_\bullet$ and $P_\bullet \otimes N$ 
are isomorphic.  
\end{lem}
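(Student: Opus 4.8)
The plan is to run the classical balancing argument for the derived tensor product inside $\sA$, via the double complex $C_{\bullet\bullet}$ with $C_{p,q} = P_p \otimes Q_q$.

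First I would set up $C_{\bullet\bullet}$ as a first-quadrant double complex, with horizontal differential induced from $P_\bullet$ and vertical differential $(-1)^p$ times the one induced from $Q_\bullet$, and form its total complex $\mathrm{Tot}(C)$; note that $\mathrm{Tot}(C)_n = \bigoplus_{p+q=n} C_{p,q}$ is a finite direct sum, so no completeness hypothesis is needed. Since each $Q_q$ is flat, the functor $- \otimes Q_q$ is exact, and applying it to the exact complex $\cdots \to P_1 \to P_0 \to M \to 0$ shows that the row $C_{\bullet,q}$ is exact in positive degrees, with $H_0$ equal to $M \otimes Q_q$ (identifying the cokernel of $P_1 \otimes Q_q \to P_0 \otimes Q_q$ by right exactness of $- \otimes Q_q$). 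These $H_0$'s assemble, functorially in $q$, into the complex $M \otimes Q_\bullet$. Symmetrically, flatness of each $P_p$ makes the column $C_{p,\bullet}$ exact in positive degrees, with $H_0$-complex $P_\bullet \otimes N$.

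Next I would invoke the two spectral sequences of the first-quadrant double complex $C$, both of which converge to $H_\bullet(\mathrm{Tot}(C))$. Computing homology first in the $P$-direction, the $E^1$-page is concentrated in the column $p = 0$, where it is the complex $M \otimes Q_\bullet$; the spectral sequence therefore collapses and yields $H_n(\mathrm{Tot}(C)) \cong H_n(M \otimes Q_\bullet)$. Computing homology first in the $Q$-direction instead, the $E^1$-page is concentrated in the row $q = 0$, where it is $P_\bullet \otimes N$, so that $H_n(\mathrm{Tot}(C)) \cong H_n(P_\bullet \otimes N)$. Composing the two isomorphisms gives the claim.

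I do not expect a genuine obstacle: the argument is routine once the rows and columns of $C$ are seen to be exact in positive degrees, and that exactness follows at once from the definition of flatness (exactness of tensoring with a flat object) together with right exactness of $\otimes$ in $\sA$. The only thing to watch is that the whole discussion stays inside the fixed abelian category $\sA$, but this is harmless, since a first-quadrant double complex and its two spectral sequences behave formally in any abelian category.
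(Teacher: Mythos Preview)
Your argument is correct and is essentially the same as the paper's: both use the double complex $P_\bullet \otimes Q_\bullet$ and its two spectral sequences, observing that flatness forces each to collapse (concentrated in a single row or column) so that the homology of the total complex computes both $H_\bullet(M \otimes Q_\bullet)$ and $H_\bullet(P_\bullet \otimes N)$. The only cosmetic difference is that the paper phrases the collapse as degeneration at $E^2$ while you describe the $E^1$-page as already concentrated in one column/row; these are the same observation.
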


\begin{proof}
The terms $E^2_{p0}$ of the two spectral sequences associated to the double complex $P_\bullet \otimes Q_\bullet$
are isomorphic to the homologies of degree $p$ of the complexes $M \otimes Q_\bullet$ and $P_\bullet \otimes N$, 
while the terms $E^2_{pq}$ for $q>0$ are $0$.
The two spectral sequences thus both degenerate at $E^2$, with the terms $E^2_{p0}$ both isomorphic to the
homology in degree $p$ of the total complex associated to $P_\bullet \otimes Q_\bullet$.
\end{proof}

An object $M$ of $\sA$ will be called a \emph{torsion object} if for every morphism $b:B \to M$ with $B$ in 
$\sA_{\mathrm{rig}}$,
there exists a morphism $a:A \to \I$ in $\sA_{\mathrm{rig}}$ which is regular in $\sA_{\mathrm{rig}}$  such that 
\begin{equation*}
a \otimes b = 0:A \otimes B \to M 
\end{equation*}
We say that $M$ is \emph{torsion free} if for every $b:B \to M$ in $\sA$ with $B$ in 
$\sA_{\mathrm{rig}}$ and $b \ne 0$ and every regular $a:A \to \I$ in $\sA_{\mathrm{rig}}$
we have $a \otimes b \ne 0$.
Equivalent conditions for an object to be torsion or torsion free can be obtained
by dualising, with for example $b$ replaced by a morphism 
$\I \to B \otimes M$ and $a$ by a morphism $\I \to A$.
Any object of $\sA_{\mathrm{rig}}$ is torsion free.
Since the objects of $\sA_{\mathrm{rig}}$ generate $\sA$,
the tensor product of a regular morphism in $\sA_{\mathrm{rig}}$ with any non-zero 
morphism $N \to M$ in $\sA$ with $M$ torsion free is non-zero.
An object of $\sA$ is torsion (resp.\ torsion free) if and only if each of its subobjects of finite type is torsion
(resp.\ torsion free).
An object $M$ of finite type in $\sA$ is a torsion object
if and only if 
\begin{equation*}
a \otimes M = 0:A \otimes M \to M
\end{equation*}
for some regular $a:A \to \I$ in $\sA_{\mathrm{rig}}$.
The full subcategory
\begin{equation*}
\sA_{\mathrm{tors}}
\end{equation*}
of $\sA$ consisting of the torsion objects is a Serre subcategory
which is closed under the formation of colimits and tensor product with any object of $\sA$.

For any object $M$ of $\sA$, the filtered colimit $M_{\mathrm{tors}}$ of its torsion subobjects 
of finite type is the largest torsion subobject of $M$, and $M_{\mathrm{tf}} = M/M_{\mathrm{tors}}$ 
is the largest torsion free quotient of $M$.

\begin{lem}\label{l:adjtorspres}
Let $T:\sA \to \sA'$ be a cocontinuous tensor functor between well-dualled Grothendieck
tensor categories.
Suppose that the tensor functor $\sA_{\mathrm{rig}} \to \sA'{}\!_{\mathrm{rig}}$ induced by $T$
is regular.
Then $T$ preserves torsion objects.
If $T':\sA' \to \sA$ is a lax tensor functor right adjoint to $T$, 
then $T'$ preserves torsion free objects.
\end{lem}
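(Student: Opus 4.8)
The plan is to prove the two assertions separately, using only the functor $T$ for the first and only the adjunction between $T$ and $T'$ for the second.

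For the first assertion I would reduce to the case of an object $M$ of finite type. Since $\sA'_{\mathrm{tors}}$ is a Serre subcategory of $\sA'$ closed under the formation of colimits, and since any torsion object $M$ of $\sA$ is the filtered colimit of its subobjects of finite type, each of which is again a torsion object (an object is torsion if and only if each of its subobjects of finite type is), the cocontinuity of $T$ reduces the claim to showing that $T$ sends torsion objects of finite type to torsion objects. So suppose $M$ is of finite type and torsion; then $a \otimes M = 0$ for some $a:A \to \I$ in $\sA_{\mathrm{rig}}$ which is regular in $\sA_{\mathrm{rig}}$. Applying the tensor functor $T$ gives $T(a) \otimes T(M) = 0$, and $T(a):T(A) \to \I$ is regular in $\sA'_{\mathrm{rig}}$ because the induced tensor functor $\sA_{\mathrm{rig}} \to \sA'_{\mathrm{rig}}$ is regular by hypothesis. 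For every $b':B' \to T(M)$ with $B'$ in $\sA'_{\mathrm{rig}}$ we then obtain $T(a) \otimes b' = (T(a) \otimes T(M)) \circ (T(A) \otimes b') = 0$, so that $T(M)$ is a torsion object.

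For the second assertion, let $N'$ be a torsion free object of $\sA'$. Given a morphism $b:B \to T'(N')$ with $B$ in $\sA_{\mathrm{rig}}$ and a morphism $a:A \to \I$ regular in $\sA_{\mathrm{rig}}$, it suffices to show that $a \otimes b = 0$ implies $b = 0$. Let $\tilde{b}:T(B) \to N'$ be the morphism adjoint to $b$, that is, the composite of $T(b)$ with the counit $TT'(N') \to N'$. The key step is to verify that the morphism adjoint to $a \otimes b:A \otimes B \to T'(N')$ equals, modulo the structural isomorphism $T(A) \otimes T(B) \iso T(A \otimes B)$ of the tensor functor $T$, the morphism $T(a) \otimes \tilde{b}:T(A) \otimes T(B) \to N'$; this is a formal diagram chase using that $T$ preserves $\I$ strictly, the bifunctoriality of $\otimes$, and the naturality of the counit. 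Since passing to adjoints is an isomorphism of abelian groups and the structural isomorphism of $T$ is invertible, $a \otimes b = 0$ holds if and only if $T(a) \otimes \tilde{b} = 0$. Now $T(a):T(A) \to \I$ is regular in $\sA'_{\mathrm{rig}}$ and $T(B)$ lies in $\sA'_{\mathrm{rig}}$, since tensor functors preserve dualisable objects; as $N'$ is torsion free, $T(a) \otimes \tilde{b} = 0$ forces $\tilde{b} = 0$, hence $b = 0$. Therefore $T'(N')$ is torsion free.

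The only step I expect to require genuine care is the identification, in the second part, of the adjoint of $a \otimes b$ with $T(a) \otimes \tilde{b}$; everything else is bookkeeping with the criteria for torsion and torsion free objects recalled earlier. Note in particular that the lax tensor structure of $T'$ itself is never used — only the fact that $T'$ is right adjoint to the tensor functor $T$.
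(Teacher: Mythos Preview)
Your proof is correct. The first assertion is handled exactly as in the paper: reduce to finite type via cocontinuity, then use that $a \otimes M = 0$ for some regular $a$.

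For the second assertion your route differs from the paper's. The paper argues more abstractly: since $T$ preserves torsion objects (by the first part), any morphism $M \to T'(N)$ from a torsion object $M$ of $\sA$ has adjoint $T(M) \to N$ equal to zero (a torsion object maps trivially to a torsion free one), hence is itself zero; thus $T'(N)$ has no nonzero torsion subobject and is torsion free. Your argument instead checks the definition of torsion free directly, identifying the adjoint of $a \otimes b$ with $T(a) \otimes \tilde b$ and using that $T(a)$ is regular. The paper's argument is shorter and avoids the diagram chase, but it depends on the first assertion and on the characterisation of torsion free objects via torsion subobjects; yours is self-contained and makes the second assertion logically independent of the first. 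Both approaches, as you note, use only the adjunction and not the lax tensor structure of $T'$.
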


\begin{proof}
That $T(M)$ is a torsion object if $M$ is a torsion object is clear when $M$ is of finite type,
because then $a \otimes M = 0$ for some regular $a:A \to \I$ in $\sA_{\mathrm{rig}}$.
The general case then follows from the cocontinuity of $T$ by writing $M$ as the colimit of its 
subobjects of finite type.

Let $N$ be a torsion free object of $\sA'$.
Then $M \to T'(N)$ is $0$ for any torsion object $M$ of $\sA$, because the morphism
$T(M) \to N$ corresponding to it under adjunction is $0$.
Thus $T'(N)$ is a torsion free object of $\sA$.
\end{proof}

$\sA$ has no non-zero torsion objects if and only if $\I$ has no non-zero torsion quotients in $\sA$.
Indeed for $M$ non-zero in $\sA$ there exists a non-zero $A \to M$ with $A$ in $\sA_{\mathrm{rig}}$,
and hence a non-zero $\I \to M \otimes A^\vee$, with image a torsion object if $M$ is.

\begin{lem}\label{l:regtorssub}
Let $J$ be a subobject of $\I$ in $\sA$.
Then $\I/J$ is a torsion object if and only if there exists a regular morphism
$A \to \I$ in $\sA_{\mathrm{rig}}$ which factors through $J$.
\end{lem}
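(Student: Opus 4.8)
The plan is to deduce the lemma from the finite-type torsion criterion recorded just above it, namely that a finite-type object $M$ of $\sA$ is a torsion object if and only if $a \otimes M = 0$ for some regular morphism $a : A \to \I$ in $\sA_{\mathrm{rig}}$.

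First I would observe that $\I/J$ is of finite type in $\sA$: since $\I$ is of finite type it is a quotient of an object of $\sA_{\mathrm{rig}}$, hence so is its further quotient $\I/J$. By the cited criterion, $\I/J$ is therefore a torsion object if and only if there is a regular morphism $a : A \to \I$ in $\sA_{\mathrm{rig}}$ with $a \otimes \mathrm{id}_{\I/J} = 0 : A \otimes (\I/J) \to \I/J$, the unit constraint being used to identify $\I \otimes (\I/J)$ with $\I/J$.

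It then remains to show, for a morphism $a : A \to \I$ with $A$ in $\sA_{\mathrm{rig}}$, that $a \otimes \mathrm{id}_{\I/J} = 0$ if and only if $a$ factors through the subobject $J$ of $\I$. Writing $p : \I \to \I/J$ for the projection, so that $J = \Ker p$, functoriality of $\otimes$ and the unit constraints give
$$(a \otimes \mathrm{id}_{\I/J}) \circ (\mathrm{id}_A \otimes p) \;=\; a \otimes p \;=\; p \circ a.$$
Since $\otimes$ is cocontinuous, hence right exact, $\mathrm{id}_A \otimes p$ is an epimorphism; consequently $a \otimes \mathrm{id}_{\I/J} = 0$ holds precisely when $p \circ a = 0$, that is, precisely when $a$ factors through $J$. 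Combining this with the previous paragraph proves the lemma.

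I do not expect a genuine obstacle here; the only points needing any care are the suppressed unit isomorphisms in the display, and the appeal to right exactness of $\otimes$ (which follows from $\sA$ being well-dualled), since that is what permits the passage from $p \circ a = 0$ back to $a \otimes \mathrm{id}_{\I/J} = 0$.
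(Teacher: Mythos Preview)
Your proof is correct and follows essentially the same approach as the paper: both reduce to the finite-type torsion criterion and use the identity $p \circ a = a \otimes p$ (with $p$ the projection $\I \to \I/J$) to convert the condition $a \otimes \mathrm{id}_{\I/J} = 0$ into the factorisation condition. The paper compresses the argument into a single line, while you make explicit the role of right exactness in passing through the epimorphism $\mathrm{id}_A \otimes p$, but the content is the same.
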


\begin{proof}
Write $p:\I \to \I/J$ for the projection.
Then $\I/J$ is a torsion object if and only if 
$p \circ a = p \otimes a$ is $0$
for some regular $a:A \to \I$ in $\sA_{\mathrm{rig}}$.
\end{proof}

\begin{lem}\label{l:regtorscok}
A morphism $A \to \I$ in $\sA_{\mathrm{rig}}$ is regular if and only its 
cokernel in $\sA$ is a torsion object.
\end{lem}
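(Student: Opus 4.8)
The plan is to prove the two implications separately, writing $C = \Coker(a)$ so that there is an exact sequence $A \xrightarrow{a} \I \xrightarrow{p} C \to 0$ in $\sA$; since $\otimes$ is cocontinuous (in particular right exact) in $\sA$, tensoring this sequence on either side with an object $X$ yields an exact sequence $A \otimes X \xrightarrow{a \otimes X} X \xrightarrow{p \otimes X} C \otimes X \to 0$. For ``$a$ regular $\Rightarrow$ $C$ torsion'', I would first check that $a \otimes C = 0 \colon A \otimes C \to \I \otimes C = C$: the morphism $\mathrm{id}_A \otimes p$ is an epimorphism, and $(a \otimes C) \circ (\mathrm{id}_A \otimes p) = a \otimes p = p \circ a = 0$ by bifunctoriality and the definition of $C$, whence $a \otimes C = 0$. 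It then follows that $a \otimes b = (a \otimes C) \circ (\mathrm{id}_A \otimes b) = 0$ for every $b \colon B \to C$ with $B$ in $\sA_{\mathrm{rig}}$, so, $a$ being regular in $\sA_{\mathrm{rig}}$, the object $C$ is torsion directly from the definition. (Alternatively, $C$ is a quotient of $\I$ and hence of finite type, and one may invoke the criterion that a finite type object $M$ is torsion once $a' \otimes M = 0$ for a single regular $a'$.)

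For the converse, suppose $C$ is torsion and let $g \colon X \to Y$ be a morphism in $\sA_{\mathrm{rig}}$ with $a \otimes g = 0$. Modulo the unit constraint $a \otimes g$ equals $g \circ (a \otimes X)$, so $g$ annihilates $a \otimes X$ and therefore, by the exact sequence $A \otimes X \to X \to C \otimes X \to 0$, factors as $g = \bar g \circ (p \otimes X)$ for some $\bar g \colon C \otimes X \to Y$. Here $C \otimes X$ is a torsion object, since torsion objects are closed under tensoring, while $Y$ is torsion free, being in $\sA_{\mathrm{rig}}$; and any morphism from a torsion object to a torsion free object is zero. Indeed, for $b \colon B \to C \otimes X$ with $B$ in $\sA_{\mathrm{rig}}$, a regular $a' \colon A' \to \I$ in $\sA_{\mathrm{rig}}$ with $a' \otimes b = 0$ gives $a' \otimes (\bar g \circ b) = 0$, whence $\bar g \circ b = 0$ because $Y$ is torsion free, and then $\bar g = 0$ because the objects of $\sA_{\mathrm{rig}}$ generate $\sA$. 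Thus $g = 0$, so $a$ is regular in $\sA_{\mathrm{rig}}$.

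The one point calling for care is the reading of ``regular'' in the statement. Since $C = \Coker(a)$ is in general a nonzero torsion object with $a \otimes C = 0$, the morphism $a$ need not be regular in $\sA$, so ``regular'' must be understood as regular in the rigid tensor subcategory $\sA_{\mathrm{rig}}$, consistently with the definition of torsion object and with Lemma~\ref{l:regtorssub}; that is the sense in which both implications hold. The remaining ingredients --- the bifunctoriality identities, the unit identifications, and the vanishing of morphisms from torsion to torsion free objects --- are routine given what is already established.
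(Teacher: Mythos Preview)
Your proof is correct. Both directions match the paper's approach in spirit, with minor differences in execution.

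For the ``only if'' direction, the paper simply invokes Lemma~\ref{l:regtorssub}: the image $J$ of $a$ is a subobject of $\I$ through which the regular morphism $a$ factors, so $\I/J = \Coker a$ is torsion. Your direct computation that $a \otimes C = 0$ is essentially the content of that lemma's proof specialised to this situation, so you are redoing rather than citing it.

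For the ``if'' direction, the paper exploits rigidity to reduce to testing against morphisms $b \colon \I \to B$: then $a \otimes b = b \circ a$ factors through $\Coker a$, which is torsion, while $B$ is torsion free, so $b = 0$. You instead test against a general $g \colon X \to Y$, tensor the cokernel sequence with $X$, and use that $C \otimes X$ is torsion. This is slightly heavier (it needs closure of torsion objects under tensoring with arbitrary objects, and the explicit argument that morphisms from torsion to torsion free vanish), but avoids the dualising step. Both routes are short; the paper's is a bit more economical.

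Your closing remark that ``regular'' must mean regular in $\sA_{\mathrm{rig}}$ rather than in $\sA$ is correct and worth making explicit, since indeed $a \otimes C = 0$ with $C \ne 0$ in general.
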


\begin{proof}
Let $a:A \to \I$ be a morphism in $\sA_{\mathrm{rig}}$ whose cokernel in $\sA$
is a torsion object.
If $b:\I \to B$ is a morphism in $\sA_{\mathrm{rig}}$ for which $a \otimes b = b \circ a$
is $0$, then $b$ is $0$ because it factors through the cokernel of $a$ and $B$ is torsion free.
This proves the ``if''.
The ``only if'' follows from Lemma~\ref{l:regtorssub}.
\end{proof}

By an \emph{isomorphism up to torsion} in $\sA$ we mean a morphism in $\sA$ whose kernel and cokernel
are torsion objects, i.e.\ an $\sS$\nd isomorphism in $\sA$ for $\sS$ the Serre subcategory of $\sA$
consisting of the torsion objects. 

\begin{lem}\label{l:tensisotors}
If $N$ is an object of $\sA$ and $f$ is an isomorphism up to torsion in $\sA$, then $N \otimes f$ 
is an isomorphism up to torsion in $\sA$.
\end{lem}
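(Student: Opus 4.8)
The plan is to factor the general isomorphism up to torsion $f$ through its image and deal separately with the two ``easy'' pieces, an epimorphism with torsion kernel and a monomorphism with torsion cokernel, then reassemble. Write $f:M_1 \to M_2$ with $\Ker f$ and $\Coker f$ torsion, let $M_0 = \Img f$, and factor $f = i \circ p$ with $p:M_1 \twoheadrightarrow M_0$ and $i:M_0 \hookrightarrow M_2$. Then $\Ker p = \Ker f$ is torsion and $\Coker i = \Coker f$ is torsion. Since $N \otimes -$ is additive and the composite of two isomorphisms up to torsion is again one (the torsion objects form a Serre subcategory, so $\sS$\nd isomorphisms are closed under composition), it suffices to treat the two cases (i) $f$ epi with torsion kernel and (ii) $f$ mono with torsion cokernel.

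First I would do case (i). Let $0 \to K \to M_1 \xrightarrow{f} M_2 \to 0$ be exact with $K$ torsion. Tensoring with $N$, right exactness of $\otimes$ (cocontinuity in $\sA$) gives exactness of $N \otimes K \to N \otimes M_1 \xrightarrow{N \otimes f} N \otimes M_2 \to 0$, so $N \otimes f$ is an epimorphism and $\Coker(N \otimes f) = 0$ is torsion. It remains to see that $\Ker(N \otimes f)$ is torsion. The kernel is a quotient of $N \otimes K$: more precisely, writing the exact sequence $0 \to K \to M_1 \to M_2 \to 0$ with $M_2$ \emph{not} necessarily flat, Lemma~\ref{l:Mflat} does not directly apply, so instead I would argue that $\Ker(N \otimes f)$ is a subobject of finite type reduced to torsion subobjects. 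Concretely, every subobject of finite type of $\Ker(N \otimes f)$ receives a morphism from some $B$ in $\sA_{\mathrm{rig}}$; composing with $N \otimes M_1$ and using that $K$ is torsion (so after tensoring $B$ with a suitable regular $a:A \to \I$ the composite $A \otimes B \to N \otimes M_1$ lands in $N \otimes K$ \emph{and} is annihilated), one sees each such $B \to \Ker(N \otimes f)$ is killed by some regular $a \otimes -$; hence $\Ker(N\otimes f)$ is torsion, using that $\sA_{\mathrm{tors}}$ is closed under tensoring with arbitrary objects and under colimits, and that an object is torsion iff its subobjects of finite type are. The cleanest route is: $\Ker(N \otimes f)$ is the image of $N \otimes K \to N \otimes M_1$ (by the right-exact sequence above applied with the surjection $N\otimes K \to \Ker(N\otimes f)$ coming from the homology computation), and $N \otimes K$ is torsion since $\sA_{\mathrm{tors}}$ is closed under $N \otimes -$; a quotient of a torsion object is torsion, so $\Ker(N \otimes f)$ is torsion.

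For case (ii), let $0 \to M_1 \xrightarrow{f} M_2 \to Q \to 0$ be exact with $Q$ torsion. Here the obstruction is that $\otimes$ need not be left exact, so $N \otimes f$ need not be monomorphic; I must show both its kernel and cokernel are torsion. The cokernel is $N \otimes Q$ by right exactness, which is torsion as above. For the kernel: choose a flat resolution $P_\bullet \to N$ (these exist by the remarks preceding Lemma~\ref{l:Mflat}); then $\Ker(N \otimes f)$ is a subquotient of $\mathrm{Tor}_1$-type homology, concretely it receives an epimorphism from $H_1(P_\bullet \otimes Q)$ via the long exact homology sequence of $0 \to P_\bullet \otimes M_1 \to P_\bullet \otimes M_2 \to P_\bullet \otimes Q \to 0$ (exact since each $P_i$ is flat). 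Each $P_i \otimes Q$ is torsion ($\sA_{\mathrm{tors}}$ closed under tensoring), hence every homology object of $P_\bullet \otimes Q$ is a subquotient of a torsion object and so torsion; therefore $\Ker(N \otimes f)$, being a quotient of $H_1(P_\bullet \otimes Q)$, is torsion. Combining (i) and (ii) via the factorization $f = i \circ p$ and additivity of $N \otimes -$ completes the argument. The main obstacle is case (ii): one cannot simply tensor the short exact sequence because $\otimes$ is only right exact, so the flat-resolution / long-exact-sequence bookkeeping (and checking that the relevant $\mathrm{Tor}$-type objects are torsion) is where the real work lies; case (i) and the reduction are routine.
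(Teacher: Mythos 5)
Your proposal is correct, and the treatment of the monomorphism case genuinely differs from the paper's. Both of you reduce to two cases via the Serre-subcategory factorisation $f = i\circ p$, and both handle the epimorphism-with-torsion-kernel case the same way (right exactness of $N\otimes -$ plus closure of $\sA_{\mathrm{tors}}$ under $N\otimes -$ and under quotients). For the monomorphism case, however, the paper picks a flat resolution
\[
0 \to P'{}\!_\bullet \to P_\bullet \to P''{}\!_\bullet \to 0
\]
of the short exact sequence $0\to M'\to M\to M''\to 0$, tensors it with $N$ (invoking Lemma~\ref{l:Mflat} to keep the rows exact since the $P''{}\!_i$ are flat), takes the long exact homology sequence, and then must invoke the balancing Lemma~\ref{l:balanced} to transport $H_1(N\otimes P''{}\!_\bullet)$ to $H_1(Q_\bullet\otimes M'')$ so that the torsion object $M''$ ends up as a tensor factor. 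You instead take a flat resolution $P_\bullet$ of $N$ and tensor the short exact sequence $0\to M_1\to M_2\to Q\to 0$ by each $P_i$; exactness of each row is immediate from flatness of $P_i$, and $H_1(P_\bullet\otimes Q)$ is already a subquotient of $P_1\otimes Q$, which is torsion because $Q$ is. Your arrangement puts the flat objects and the torsion object on the right sides from the start, so you avoid Lemma~\ref{l:Mflat} and Lemma~\ref{l:balanced} entirely. That is a clean simplification; the two lemmas are still used elsewhere in the paper's development but are not actually needed for this step. One stylistic remark: your first-pass discussion in case (i) about subobjects of finite type of $\Ker(N\otimes f)$ is superfluous once you note, as you do, that $\Ker(N\otimes f)$ is a quotient of $N\otimes K$; you could delete it.
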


\begin{proof}
Since $N \otimes -$ is right exact and sends torsion objects to torsion objects,
$N \otimes f$ is an isomorphism up to torsion for $f$ an epimorphism with torsion
kernel.
To show that $N \otimes f$ is an isomorphism up to torsion for $f$ a monomorphism
with torsion cokernel,
it is enough to show that a short exact sequence \eqref{e:sex} in $\sA$ with $M''$ a torsion object
induces an exact sequence 
\begin{equation}\label{e:torexact}
0 \to L \to N \otimes M' \to N \otimes M \to N \otimes M'' \to 0
\end{equation}
in $\sA$ with $L$ a torsion object.

The short exact sequence \eqref{e:sex} has a flat resolution
\begin{equation*}
0 \to P'{}\!_\bullet \to P_\bullet \to P''{}\!_\bullet \to 0.
\end{equation*}
Tensoring with $N$, we obtain using Lemma~\ref{l:Mflat} a short exact sequence of complexes
\begin{equation*}
0 \to N \otimes P'{}\!_\bullet \to N \otimes P_\bullet \to N \otimes P''{}\!_\bullet \to 0.
\end{equation*}
Passing to the long exact homology sequence, we obtain an exact sequence \eqref{e:torexact}
with $L$ a quotient of the homology in degree $1$ of the complex $N \otimes P''{}\!_\bullet$.
By Lemma~\ref{l:balanced}, this homology is isomorphic to the homology in degree $1$ of
$Q_\bullet \otimes M''$ with $Q_\bullet$ a flat resolution of $N$, and hence is a torsion object.
\end{proof}

Since $\sA$ is well-powered, we may form the quotient category $\sA/\sA_{\mathrm{tors}}$.
We write 
\begin{equation*}
\overline{\sA} = \sA/\sA_{\mathrm{tors}},
\end{equation*}
and denote by a bar the image in $\overline{\sA}$ of an object or morphism of $\sA$.
The projection $\sA \to \overline{\sA}$ is thus bijective on objects, with
\begin{equation}\label{e:torscolim}
\overline{\sA}(\overline{M},\overline{N}) = 
\colim_{M' \subset M, \; M/M' \; \textrm{torsion}} \sA(M',N_{\mathrm{tf}})
\end{equation}
where the colimit runs over those subobjects $M'$ of $M$ for which $M/M'$ is torsion.
The category $\overline{\sA}$ is abelian and cocomplete,
and the projection $\sA \to \overline{\sA}$
is exact and cocontinuous.
By Lemma~\ref{l:tensisotors}, $\overline{\sA}$ has a unique structure of tensor category such that 
the projection is a strict tensor functor.
Further $\otimes$ in $\overline{\sA}$ preserves coproducts and hence is cocontinuous,
and the $\overline{M}$ for $M$ dualisable in $\sA$ generate $\overline{\sA}$.

\begin{lem}\label{l:atens}
Let $B$ be an object in $\sA_{\mathrm{rig}}$ and $N$ be a torsion free object in $\sA$.
Then for any morphism $l:\overline{B} \to \overline{N}$ in $\overline{\sA}$
there exists a regular morphism $a:A \to \I$ in $\sA_{\mathrm{rig}}$ and a
morphism $h :A \otimes B \to N$ in $\sA$ such that $\overline{a} \otimes l = \overline{h}$.
\end{lem}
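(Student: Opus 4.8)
The plan is to lift $l$ to an actual morphism of $\sA$ defined on a suitable subobject of $B$, and then to ``clear the denominator'' by tensoring with a regular morphism which annihilates the relevant torsion quotient of $B$.

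First I would invoke the description \eqref{e:torscolim} of the hom-groups of $\overline{\sA}$. Since $B$ is dualisable and $N$ is torsion free, so that $N_{\mathrm{tf}} = N$, the morphism $l \colon \overline{B} \to \overline{N}$ is the class of some morphism $l_{0} \colon B' \to N$ of $\sA$, where $B' \subset B$ is a subobject whose quotient $Q = B/B'$ is a torsion object. Writing $i \colon B' \to B$ for the inclusion and $p \colon B \to Q$ for the projection, $i$ is a monomorphism with torsion cokernel, hence an isomorphism up to torsion, so that $\overline{i}$ is invertible and $l = \overline{l_{0}} \circ \overline{i}^{-1}$ in $\overline{\sA}$.

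Next, since $Q$ is a torsion object and $B$ lies in $\sA_{\mathrm{rig}}$, applying the defining property of torsion objects to the morphism $p \colon B \to Q$ yields a morphism $a \colon A \to \I$ in $\sA_{\mathrm{rig}}$, regular in $\sA_{\mathrm{rig}}$, with $a \otimes p = 0 \colon A \otimes B \to Q$. Because the units are strict, $a \otimes p = p \circ (a \otimes 1_{B})$, so the morphism $a \otimes 1_{B} \colon A \otimes B \to B$ composed with $p$ vanishes; as $i = \Ker p$, it therefore factors uniquely as $a \otimes 1_{B} = i \circ h'$ for a morphism $h' \colon A \otimes B \to B'$ of $\sA$.

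Finally I would set $h = l_{0} \circ h' \colon A \otimes B \to N$ and verify that $\overline{a} \otimes l = \overline{h}$. Since the projection $\sA \to \overline{\sA}$ is a strict tensor functor and the units are strict, $\overline{a} \otimes l = l \circ (\overline{a} \otimes \overline{1_{B}}) = l \circ \overline{a \otimes 1_{B}} = \overline{l_{0}} \circ \overline{i}^{-1} \circ \overline{i} \circ \overline{h'} = \overline{l_{0}} \circ \overline{h'} = \overline{h}$, which is what is wanted. The only step requiring care is the factorisation $a \otimes 1_{B} = i \circ h'$, which rests on the identity $a \otimes p = p \circ (a \otimes 1_{B})$ together with the universal property of $\Ker p$; everything else is routine bookkeeping with the Serre quotient and the strictness of the projection functor, so I do not anticipate a genuine obstacle.
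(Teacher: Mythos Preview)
Your proof is correct and follows essentially the same approach as the paper. The paper first dualises to reduce to the case $B = \I$ and then invokes Lemma~\ref{l:regtorssub} to produce a regular $a:A \to \I$ factoring through the subobject $J \subset \I$; your argument handles general $B$ directly by applying the definition of torsion to $p:B \to Q$ and factoring $a \otimes 1_B$ through $\Ker p = i$, which is the same idea without the preliminary reduction.
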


\begin{proof}
We may suppose after dualising that $B = \I$.
Let $J$ be a suboject of $\I$ in $\sA$ with $\I/J$ a torsion object
such that $l$ is the image in the colimit defining $\overline{\sA}(\I,\overline{N})$ of some
$j$ in $\sA(J,N)$.
By Lemma~\ref{l:regtorssub}, there exists a regular morphism $a:A \to \I$
in $\sA$ which factors through a morphism $a_0:A \to J$.
If $h$ is $j \circ a_0$, then $\overline{h}$ is $l \circ \overline{a} = \overline{a} \otimes l$.
\end{proof}

Let $N$ be a torsion free object of $\sA$.
Then for every object $M$ of $\sA$ and subobject $M'$ of $M$ with $M/M'$
a torsion object, restriction from $M$ to $M'$ defines an injective homomorphism
\begin{equation}\label{e:torsfreeinj}
0 \to \sA(M,N) \to \sA(M',N).
\end{equation}
The transition homomorphisms in the colimit of \eqref{e:torscolim} are thus injective, so that 
the projection $\sA \to \overline{\sA}$ is injective on hom-groups of $\sA$ with torsion free target.

\begin{lem}\label{l:projreg}
The tensor functor $\sA_{\mathrm{rig}} \to (\overline{\sA})_{\mathrm{rig}}$ induced by the 
projection $\sA \to \overline{\sA}$ is faithful and regular.
\end{lem}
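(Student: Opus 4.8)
The plan is to verify the two assertions of the lemma—faithfulness and regularity of the functor $\sA_{\mathrm{rig}} \to (\overline{\sA})_{\mathrm{rig}}$ induced by the projection $P\colon \sA \to \overline{\sA}$—separately, the first being essentially formal and the second requiring a small reduction. For faithfulness I would argue as follows: if $M$ and $N$ are objects of $\sA_{\mathrm{rig}}$, then the target $N$ is torsion free, since every object of $\sA_{\mathrm{rig}}$ is torsion free; and we have already seen, from the injectivity \eqref{e:torsfreeinj} of the transition maps in the colimit \eqref{e:torscolim}, that $P$ is injective on any hom-group of $\sA$ with torsion free target. Hence $\sA(M,N) \to \overline{\sA}(\overline{M},\overline{N})$ is injective, which is exactly faithfulness of $\sA_{\mathrm{rig}} \to (\overline{\sA})_{\mathrm{rig}}$. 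No use of rigidity of the source is needed here.

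For regularity I plan to invoke the criterion that a tensor functor is regular as soon as it sends some cofinal subset of $\Reg$ to regular morphisms. Since $\sA_{\mathrm{rig}}$ is rigid, every regular morphism of $\sA_{\mathrm{rig}}$ is isomorphic in $\Reg(\sA_{\mathrm{rig}})$ to a regular morphism of the form $a\colon A \to \I$, namely its transpose; so the regular morphisms of this shape are cofinal, and it suffices to check that $\overline{a} = P(a)$ is regular in $(\overline{\sA})_{\mathrm{rig}}$ for each such $a$. Given a regular $a\colon A \to \I$ in $\sA_{\mathrm{rig}}$, Lemma~\ref{l:regtorscok} tells us that the cokernel of $a$ computed in $\sA$ is a torsion object. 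Since $P$ is exact and sends torsion objects to $0$, it sends this cokernel to $\Coker(\overline{a})$, which is therefore $0$; that is, $\overline{a}\colon \overline{A} \to \overline{\I}$ is an epimorphism of $\overline{\sA}$, with $\overline{\I}$ the unit of $\overline{\sA}$.

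It remains only to observe that an epimorphism $p\colon B \to \I$ onto the unit object of an abelian tensor category with right exact tensor product is automatically regular: for any morphism $g\colon C \to D$ one writes $p \otimes g = (1_{\I} \otimes g)\circ(p \otimes 1_C)$, and $p \otimes 1_C$ is again an epimorphism by right exactness of $- \otimes C$, so $p \otimes g = 0$ forces $1_{\I} \otimes g = 0$, i.e.\ $g = 0$. The tensor product of $\overline{\sA}$ is cocontinuous, hence right exact, so this applies to $\overline{a}$ and shows $\overline{a}$ is regular in $\overline{\sA}$, a fortiori regular in the full rigid subcategory $(\overline{\sA})_{\mathrm{rig}}$; and $\overline{a}$ does belong to $(\overline{\sA})_{\mathrm{rig}}$ since $P$, being a tensor functor, carries the dualisable objects $A$ and $\I$ to dualisable objects. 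I expect the only genuine obstacle to be the reduction to the shape $a\colon A \to \I$: a regular morphism of $\sA_{\mathrm{rig}}$ need not have torsion cokernel, so one really must first pass to the transpose—using that transposition preserves regularity in the rigid category $\sA_{\mathrm{rig}}$—before Lemma~\ref{l:regtorscok} becomes available; everything past that point is formal manipulation with exactness and the universal property of cokernels.
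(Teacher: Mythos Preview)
Your proof is correct; for faithfulness it coincides with the paper's. For regularity the paper takes a different route: it does not reduce to morphisms $a\colon A \to \I$ or invoke Lemma~\ref{l:regtorscok}, but instead shows directly, for an arbitrary regular $a$ in $\sA_{\mathrm{rig}}$ and an arbitrary nonzero morphism $j$ of $\overline{\sA}$, that $\overline{a}\otimes j \ne 0$. The paper does this by writing $j = \overline{h}$ for some $h\colon N \to M$ in $\sA$ with $M$ torsion free (using \eqref{e:torscolim}), using regularity of $a$ in $\sA_{\mathrm{rig}}$ together with the remark preceding the lemma that tensoring a regular morphism against a nonzero morphism with torsion-free target stays nonzero, and then applying the already-established injectivity of the projection on hom-groups with torsion-free target. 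Your argument trades this lifting step for the cokernel characterisation of regularity plus the elementary observation that an epimorphism onto $\I$ is regular; the paper's argument is more uniform in that it treats all regular $a$ at once without first transposing to target $\I$. Both arguments in fact prove the slightly stronger statement that $\overline{a}$ is regular against every morphism of $\overline{\sA}$, not only those in $(\overline{\sA})_{\mathrm{rig}}$.
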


\begin{proof}
The faithfulness follows from the above injectivity on hom groups with torsion free target.
Let $a$ be a regular morphism in $\sA_{\mathrm{rig}}$.
We show that $\overline{a} \otimes j$ is non-zero for any non-zero morphism 
$j:\overline{N} \to \overline{M}$ in $\overline{\sA}$.
We may suppose that $M$ is torsion free in $\sA$
and that $j = \overline{h}$ for some $h:N \to M$ in $\sA$.
Then $h$ and hence $a \otimes h$ is non-zero, so that $\overline{a} \otimes \overline{h}$
is non-zero by the above injectivity.  
\end{proof}

If $N$ is an object of $\sA$, then every subobject of $\overline{N}$ lifts to a subobject of $N$.
This can be seen by reducing after taking inverse images along $N \to N_\mathrm{tf}$ to the 
case where $N$ is torsion free, when by \eqref{e:torscolim} any subobject of $\overline{N}$
is of the form $\Img \overline{f}$ for some morphism $f:M' \to N$ in $\sA$, and hence lifts to the subobject
$\Img f$ of $N$.
The set of subobjects of $N$ lifting a given subobject of $\overline{N}$ is directed,
and its colimit is the unique largest such object of $N$.
By assigning to each subobject of $\overline{N}$ the largest subobject of $N$ lifting it,   
we obtain an order-preserving map from the set of subobjects of $\overline{N}$ to 
the set of subobjects of $N$.

\begin{prop}\label{p:welldualled}
$\overline{\sA}$ is a well-dualled Grothendieck tensor category.
\end{prop}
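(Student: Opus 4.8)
The plan is to observe that almost all of the well-dualled and Grothendieck conditions for $\overline{\sA}$ have already been recorded in the discussion preceding the statement: $\overline{\sA}$ is a cocomplete abelian tensor category, the projection $Q\colon\sA\to\overline{\sA}$ is exact, cocontinuous and a strict tensor functor, $\otimes$ is cocontinuous in $\overline{\sA}$, and the objects $\overline M$ with $M$ in $\sA_{\mathrm{rig}}$ generate $\overline{\sA}$. Since $Q$ is a tensor functor, each such $\overline M$ is dualisable in $\overline{\sA}$, so $(\overline{\sA})_{\mathrm{rig}}$ already contains a generating family; this settles the generation requirement, and since $\sA_{\mathrm{rig}}$ is essentially small, $\overline{\sA}$ has a generator and hence is well-powered. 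So only three points remain: that filtered colimits are exact in $\overline{\sA}$, that $\I$ is of finite type in $\overline{\sA}$, and that $(\overline{\sA})_{\mathrm{rig}}$ is essentially small.

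For exactness of filtered colimits I would verify criterion \eqref{e:AB5}. Given a filtered system $(C_\lambda)$ of subobjects of $X=\overline N$ and a subobject $B$ of $X$, I lift each $C_\lambda$ to the largest subobject $N_\lambda\subseteq N$ with $\overline{N_\lambda}=C_\lambda$ and lift $B$ to some $N_B\subseteq N$; by the order-preservation of the largest-lift map the $N_\lambda$ form a filtered system of subobjects of $N$. Applying $Q$, which is exact and cocontinuous and hence preserves intersections and unions of subobjects, to the identity $(\cup N_\lambda)\cap N_B=\cup(N_\lambda\cap N_B)$ in the Grothendieck category $\sA$ yields $(\cup C_\lambda)\cap B=\cup(C_\lambda\cap B)$. (Alternatively, one may simply invoke the general fact from \cite{Gab58} that the quotient of a Grothendieck category by a localising subcategory is Grothendieck, $\sA_{\mathrm{tors}}$ being Serre and closed under colimits.)

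The heart of the argument is that $\I$ is of finite type in $\overline{\sA}$, and this is the step I expect to be the main obstacle. Let $X=\overline N$ with $N$ torsion free, let $(X_\lambda)$ be a filtered system of subobjects of $X$ with $\cup X_\lambda=X$, and let $l\colon\I\to X$; the goal is to factor $l$ through some $X_\lambda$. As before I lift to a filtered system $N_\lambda\subseteq N$ with $\overline{N_\lambda}=X_\lambda$, set $N_\infty=\cup N_\lambda$, and note that $N/N_\infty$ is torsion because $Q$ carries $N_\infty\hookrightarrow N$ to an isomorphism. By Lemma~\ref{l:atens} with $B=\I$ there are a regular $a\colon A\to\I$ in $\sA_{\mathrm{rig}}$ and an $h\colon A\to N$ in $\sA$ with $l\circ\overline a=\overline h$. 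The composite $A\xrightarrow{h}N\to N/N_\infty$ goes from an object of $\sA_{\mathrm{rig}}$ to a torsion object, so by the definition of torsion object there is a regular $a'\colon A'\to\I$ in $\sA_{\mathrm{rig}}$ with $a'\otimes(A\to N/N_\infty)=0$; hence $h\circ(a'\otimes A)\colon A'\otimes A\to N$ (identifying $\I\otimes A=A$) factors through $N_\infty$. Since $A'\otimes A$ lies in $\sA_{\mathrm{rig}}$ it is of finite type in $\sA$, and as $N_\infty=\cup N_\lambda$ is a filtered union this factorisation refines to some $h''\colon A'\otimes A\to N_\lambda$ with $\iota_\lambda\circ h''=h\circ(a'\otimes A)$ in $\sA$, where $\iota_\lambda\colon N_\lambda\hookrightarrow N$. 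Applying $Q$ and using $l\circ\overline a=\overline h$ gives $l\circ\beta=\overline{\iota_\lambda}\circ\overline{h''}$ with $\beta=\overline a\circ(\overline{a'}\otimes\overline A)$. By Lemma~\ref{l:regtorscok} the morphisms $\overline a$ and $\overline{a'}$ are epimorphisms in $\overline{\sA}$, and $\overline{a'}\otimes\overline A$ is an epimorphism because $\otimes$ is right exact in $\overline{\sA}$; hence $\beta$ is an epimorphism, so $\Img l=\Img(l\circ\beta)\subseteq\Img\overline{\iota_\lambda}=X_\lambda$, and $l$ factors through $X_\lambda$.

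Finally, once $\I$ is known to be of finite type in $\overline{\sA}$, the argument already used in the excerpt (that objects of $\sA_{\mathrm{rig}}$ are of finite type when $\I$ is) applies to $\overline{\sA}$, so every object of $(\overline{\sA})_{\mathrm{rig}}$ is of finite type, hence a quotient of a finite coproduct of generators $\overline M$, and therefore a quotient of $\overline M$ for a single $M$ in $\sA_{\mathrm{rig}}$. Since $\sA_{\mathrm{rig}}$ is essentially small and $\overline{\sA}$ is well-powered, $(\overline{\sA})_{\mathrm{rig}}$ is essentially small, completing the verification. Apart from the finite-type step the work is purely bookkeeping with the projection $Q$ and the lifting of subobjects.
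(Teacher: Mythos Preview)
Your proof is correct and follows the same strategy as the paper: verify AB5 by lifting subobjects to $\sA$, establish that $\I$ is of finite type, and then deduce essential smallness of $(\overline{\sA})_{\mathrm{rig}}$. The only difference is tactical: for the finite-type step the paper first uses the Grothendieck property just established to reduce to the case $\I=\bigcup J_\lambda$ and then invokes Lemma~\ref{l:regtorssub} twice, whereas you handle a general target via Lemma~\ref{l:atens}; both arguments rest on the same idea that a regular morphism in $\sA_{\mathrm{rig}}$ has torsion cokernel and hence becomes an epimorphism in $\overline{\sA}$.
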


\begin{proof}
The cocompleteness of $\overline{\sA}$ has been seen.
That the equalities of the form \eqref{e:AB5} hold in $\overline{\sA}$
follows after lifting to $\sA$ from the fact that they hold in $\sA$ together
with the exactness and cocontinuity of the projection onto $\sA$.
Thus $\overline{\sA}$ is a Grothendieck category.

The cocontinuity of the tensor product of $\overline{\sA}$ has been seen.
To see that $\I$ is of finite type in $\overline{\sA}$, it is enough since $\overline{\sA}$
is a Grothendieck category to show that if $\I$ in $\overline{\sA}$ is the union
of a filtered system $(J_\lambda)$ of subobjects $J_\lambda$, then $J_\lambda = \I$
for some $\lambda$.
If $(J_0{}_\lambda)$ is a lifting of $(J_\lambda)$ to a system of subobjects
of $\I$ in $\sA$, then by cocontinuity of the projection onto $\overline{\sA}$,
the subobject
\begin{equation*}
J = \colim J_0{}_\lambda
\end{equation*}
 of $\I$ in $\sA$ has torsion quotient $\I/J$.
Thus by Lemma~\ref{l:regtorssub}, there exists a regular morphism 
$A \to \I$ in $\sA_{\mathrm{rig}}$ which factors through $J$.
Then $A \to \I$ factors through some $J_0{}_\lambda$, because $A$ is of finite type in $\sA$.
Thus $\I/J_0{}_\lambda$ is a torsion object in $\sA$ by Lemma~\ref{l:regtorssub},
so that $J_\lambda = \I$ in $\overline{\sA}$. 

Since $\I$ is of finite type in $\overline{\sA}$, so also is any dualisable object.
Thus since the $\overline{M}$ for $M$ dualisable in $\sA$
generate $\overline{\sA}$, every dualisable object in $\overline{\sA}$ is a 
quotient of such an $\overline{M}$.
It follows that $(\overline{\sA})_{\mathrm{rig}}$ is essentially small.
Thus $\overline{\sA}$ is well-dualled.
\end{proof}

\begin{lem}\label{l:regcofinal}
For every regular morphism $a:A \to \I$ in $(\overline{\sA})_{\mathrm{rig}}$ there exists
a regular morphism $a_0:A_0 \to \I$ in $\sA_{\mathrm{rig}}$ such that $\overline{a_0}$
factors through $a$.
\end{lem}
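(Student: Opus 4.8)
The plan is to replace $A$ by a quotient of the form $\overline{B}$ with $B$ dualisable in $\sA$, apply Lemma~\ref{l:atens} to produce a morphism $h$ of $\sA_{\mathrm{rig}}$ whose image in $\overline{\sA}$ factors through $a$, and then prove that $h$ is regular in $\sA_{\mathrm{rig}}$ by checking that $\overline{h}$ is regular in $(\overline{\sA})_{\mathrm{rig}}$ and invoking the fact that a faithful tensor functor reflects regular morphisms. The point to keep in mind is that a regular morphism of $(\overline{\sA})_{\mathrm{rig}}$ need only have \emph{torsion} cokernel in $\overline{\sA}$, whereas the argument will rely on the stronger fact that a regular morphism of $\sA_{\mathrm{rig}}$ has \emph{zero} cokernel in $\overline{\sA}$; this distinction is what makes everything work.

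In detail, I would proceed as follows. Since $\overline{\sA}$ is well-dualled (Proposition~\ref{p:welldualled}) and $A$ is dualisable, $A$ is a quotient in $\overline{\sA}$ of $\overline{B}$ for some $B$ in $\sA_{\mathrm{rig}}$, as noted in the proof of that proposition; fix an epimorphism $\pi:\overline{B} \to A$. Applying Lemma~\ref{l:atens} to $B$, to the torsion free object $N = \I$ of $\sA$ (torsion free since $\I$ lies in $\sA_{\mathrm{rig}}$), and to $l = a \circ \pi:\overline{B} \to \overline{\I} = \I$, I obtain a regular morphism $a':A' \to \I$ in $\sA_{\mathrm{rig}}$ and a morphism $h:A' \otimes B \to \I$ in $\sA$ with $\overline{h} = \overline{a'} \otimes l$. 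Since $\overline{a'} \otimes l$ coincides, modulo the unit isomorphisms, with the composite $l \circ (\overline{a'} \otimes 1_{\overline{B}})$, and $l = a \circ \pi$, this gives $\overline{h} = a \circ r$ with $r = \pi \circ (\overline{a'} \otimes 1_{\overline{B}})$; so $\overline{h}$ factors through $a$. As $A' \otimes B$ is dualisable, $h$ is a morphism of $\sA_{\mathrm{rig}}$, and $a_0 = h$, $A_0 = A' \otimes B$ is the required candidate.

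The heart of the proof is the verification that $h$ is regular in $\sA_{\mathrm{rig}}$, and this is the step I expect to be the main obstacle to get right. Because $a'$ is regular in $\sA_{\mathrm{rig}}$, its cokernel in $\sA$ is a torsion object by Lemma~\ref{l:regtorscok}, hence has image $0$ under the exact projection $\sA \to \overline{\sA}$; so $\overline{a'}$ is an epimorphism in $\overline{\sA}$. Right exactness of $\otimes$ in $\overline{\sA}$ then makes $\overline{a'} \otimes 1_{\overline{B}}$ an epimorphism, and composing with the epimorphism $\pi$ shows $r$ is an epimorphism of $\overline{\sA}$; hence so is $r \otimes 1_X$ for every object $X$. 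It follows that $\overline{h} = a \circ r$ is regular in $(\overline{\sA})_{\mathrm{rig}}$: if $\overline{h} \otimes g = (a \otimes g) \circ (r \otimes 1_{\mathrm{dom}\,g}) = 0$ for a morphism $g$ of $(\overline{\sA})_{\mathrm{rig}}$, then $a \otimes g = 0$ since $r \otimes 1_{\mathrm{dom}\,g}$ is epi, whence $g = 0$ by regularity of $a$. Finally, by Lemma~\ref{l:projreg} the tensor functor $\sA_{\mathrm{rig}} \to (\overline{\sA})_{\mathrm{rig}}$ is faithful, and a faithful tensor functor reflects regular morphisms, so $h$ is regular in $\sA_{\mathrm{rig}}$. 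Apart from this ``zero cokernel versus torsion cokernel'' observation, the only things to be careful about are the bookkeeping with unit and associativity isomorphisms in identifying $\overline{a'} \otimes l$ with $a \circ r$, and keeping straight which objects lie in $\sA_{\mathrm{rig}}$ and which in $(\overline{\sA})_{\mathrm{rig}}$.
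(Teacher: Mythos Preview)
Your proof is correct and follows essentially the same approach as the paper's. The paper streamlines slightly by first replacing $a$ with $a \circ p$ (using Lemma~\ref{l:regtorscok} to see this composite is still regular) so that $A = \overline{B}$, and then observes directly that $\overline{a_0} = \overline{c} \otimes a$ is a tensor product of two regular morphisms; but this is just a reorganisation of the same argument you give by tracking the epimorphism $r$ and invoking Lemma~\ref{l:projreg} to reflect regularity.
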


\begin{proof}
Since an epimorphism $p:\overline{B} \to A$ in $(\overline{\sA})_{\mathrm{rig}}$ exists
with $B$ in $\sA_{\mathrm{rig}}$,
and since $a \circ p$ is regular by Lemma~\ref{l:regtorscok}, we may after replacing $a$
by $a \circ p$ suppose that $A = \overline{B}$ with $B$ in $\sA_{\mathrm{rig}}$.
By Lemma~\ref{l:atens}, there is a regular $c:C \to \I$ in $\sA_{\mathrm{rig}}$ with
\begin{equation*}
a \circ (\overline{c} \otimes \overline{B}) = \overline{c} \otimes a = \overline{a_0}
\end{equation*}
for some $a_0:C \otimes B \to \I$.
By Lemma~\ref{l:projreg}, $\overline{c}$ and 
hence $\overline{a_0}$ is regular in $(\overline{\sA})_{\mathrm{rig}}$,
so that again by Lemma~\ref{l:projreg} $a_0$ is regular in $\sA_{\mathrm{rig}}$.
\end{proof}

\begin{prop}\label{p:notors}
$\overline{\sA}$ has no non-zero torsion objects.
\end{prop}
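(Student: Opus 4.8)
The plan is to reduce, via the criterion established just before Lemma~\ref{l:regtorssub}, to showing that $\I$ has no non-zero torsion quotient in $\overline{\sA}$; this criterion is available because by Proposition~\ref{p:welldualled} the category $\overline{\sA}$ is again a well-dualled Grothendieck tensor category. So suppose $\overline{J}$ is a subobject of $\I$ in $\overline{\sA}$ with $\I/\overline{J}$ a torsion object, and let me show $\overline{J} = \I$. By Lemma~\ref{l:regtorssub} applied in $\overline{\sA}$ there is a regular morphism $a:A \to \I$ in $(\overline{\sA})_{\mathrm{rig}}$ which factors through $\overline{J}$, and then by Lemma~\ref{l:regcofinal} there is a regular morphism $a_0:A_0 \to \I$ in $\sA_{\mathrm{rig}}$ such that $\overline{a_0}$ factors through $a$, hence through $\overline{J}$. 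Fix such a factorisation $\phi:\overline{A_0} \to \overline{J}$ of $\overline{a_0}$.

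Next I would lift the subobject $\overline{J}$ of $\overline{\I} = \I$ to a subobject $J$ of $\I$ in $\sA$; this is possible since every subobject of $\overline{N}$ lifts to a subobject of $N$, and $J$ is then torsion free, being a subobject of the torsion free object $\I$. The projection identifies $\overline{J}$ with its image, so $\phi$ is a morphism $\overline{A_0} \to \overline{J}$ with $\overline{A_0} \in \overline{\sA}$ and $A_0 \in \sA_{\mathrm{rig}}$. Applying Lemma~\ref{l:atens} with $B = A_0$, $N = J$ and $l = \phi$ produces a regular morphism $c:C \to \I$ in $\sA_{\mathrm{rig}}$ and a morphism $h:C \otimes A_0 \to J$ in $\sA$ with $\overline{c} \otimes \phi = \overline{h}$. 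Writing $h'$ for the composite of $h$ with the inclusion $J \hookrightarrow \I$ and using the naturality of $\overline{c} \otimes -$ together with the fact that $\sA \to \overline{\sA}$ is a strict tensor functor, one gets $\overline{h'} = \overline{c} \otimes \overline{a_0} = \overline{c \otimes a_0}$ in $\overline{\sA}(\overline{C \otimes A_0},\I)$. Since the target $\I$ is torsion free and the projection $\sA \to \overline{\sA}$ is injective on hom-groups with torsion free target, this forces $h' = c \otimes a_0$ in $\sA$. Thus the morphism $c \otimes a_0:C \otimes A_0 \to \I$, which lies in $\sA_{\mathrm{rig}}$ and is regular there (a tensor product of morphisms regular in $\sA_{\mathrm{rig}}$ is again regular there), factors through $J$. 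By Lemma~\ref{l:regtorssub} applied in $\sA$, the quotient $\I/J$ is a torsion object of $\sA$, so by exactness of the projection $\I/\overline{J} = \overline{\I/J} = 0$, i.e.\ $\overline{J} = \I$. Hence $\I$ has no non-zero torsion quotient in $\overline{\sA}$, and by the criterion $\overline{\sA}$ has no non-zero torsion objects.

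The routine verifications needed are that $J$ is torsion free, that a tensor product of morphisms regular in $\sA_{\mathrm{rig}}$ is regular in $\sA_{\mathrm{rig}}$, that the naturality square for $\overline{c} \otimes -$ gives $\overline{h'} = \overline{c \otimes a_0}$, and that $\overline{\I/J} = \I/\overline{J}$ by exactness of the projection. The point that requires care, and which is really the crux of the argument, is the transfer of the factorisation of $\overline{a_0}$ through $\overline{J}$ down to a factorisation of a morphism of $\sA$ through $J$: one cannot lift $\phi$ directly to a morphism $A_0 \to J$ in $\sA$, and the device is to tensor first with a suitable regular $c$ (Lemma~\ref{l:atens}) so as to land in the image of the projection, and then to invoke its injectivity on hom-groups with torsion free target. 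Once this step is in place the rest is formal.
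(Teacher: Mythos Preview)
Your proof is correct, but it takes a noticeably longer route than the paper's. Both arguments ultimately rest on Lemma~\ref{l:regcofinal}. The paper proceeds directly: given a torsion-free $N$ in $\sA$ with $\overline{N}$ torsion in $\overline{\sA}$, and any $b:B \to N$ with $B \in \sA_{\mathrm{rig}}$, the torsion condition on $\overline{N}$ gives a regular $a$ in $(\overline{\sA})_{\mathrm{rig}}$ with $a \otimes \overline{b} = 0$; Lemma~\ref{l:regcofinal} lets one replace $a$ by some $\overline{a_0}$ with $a_0$ regular in $\sA_{\mathrm{rig}}$, and then the faithfulness in Lemma~\ref{l:projreg} forces $a_0 \otimes b = 0$. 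Hence $N$ is torsion, so $N = 0$ and $\overline{N} = 0$. That is the whole argument.

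Your approach instead passes through the unit-quotient criterion, invokes Lemma~\ref{l:regtorssub} twice (once in $\overline{\sA}$, once in $\sA$), lifts the subobject $\overline{J}$ to $J$, and then uses Lemma~\ref{l:atens} together with injectivity on hom-groups with torsion-free target to push the factorisation of $\overline{a_0}$ through $\overline{J}$ down to a factorisation of $c \otimes a_0$ through $J$. All of this is sound, and the device of tensoring with a regular $c$ to land in the image of the projection is exactly the right idea for that transfer step. But the paper avoids subobject lifting and Lemma~\ref{l:atens} entirely by working with an arbitrary $N$ rather than reducing to $\I$: the annihilation condition $a \otimes \overline{b} = 0$ descends to $\sA$ by faithfulness alone, with no need to realise anything as a factorisation through a lifted subobject. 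Your route buys nothing extra here, though it does showcase how the machinery of \S\ref{s:tor} fits together.
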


\begin{proof}
Let $N$ be torsion free object in $\sA$ with $\overline{N}$ a torsion object in 
$\overline{\sA}$,
and $b:B \to N$ be a morphism in $\sA$ with $B$ in $\sA_{\mathrm{rig}}$.
Then $a \otimes \overline{b} = 0$ for some regular $a:A \to \I$ in 
$(\overline{\sA})_{\mathrm{rig}}$.
By Lemma~\ref{l:regcofinal}, we may suppose that $a = \overline{a_0}$ with
$a_0:A_0 \to \I$ regular in $\sA_{\mathrm{rig}}$.
Then $a_0 \otimes b = 0$ by Lemma~\ref{l:projreg}.
Thus $N$ is a torsion object, and $\overline{N} = 0$.
\end{proof}

\begin{lem}\label{l:torsfrac}
Let $f:A \to \I$ be a regular morphism in $\sA_{\mathrm{rig}}$ and $h:A \to M$ be 
a morphism in $\sA$ with $M$ torsion free.
Then $f \otimes h = h \otimes f$ if and only if $\Ker h \supset \Ker f$.
\end{lem}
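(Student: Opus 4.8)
The plan is to reformulate the equation $f \otimes h = h \otimes f$ in terms of the two contractions of $f$, and then prove the two implications separately. Assuming, as we may, that the units of tensor categories are strict, set $\phi = f \otimes 1_A : A \otimes A \to \I \otimes A = A$ and $\psi = 1_A \otimes f : A \otimes A \to A \otimes \I = A$. By functoriality of $\otimes$ and strictness of the units, $f \otimes h = h \circ \phi$ and $h \otimes f = h \circ \psi$ once $\I \otimes M$ and $M \otimes \I$ are identified with $M$. Hence the stated equation is equivalent to $h \circ (\phi - \psi) = 0$, i.e.\ to $\Img(\phi - \psi) \subseteq \Ker h$. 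It therefore suffices to prove that (a) $p \circ (\phi - \psi) = 0$, where $p : A \to \Img f$ is the canonical epimorphism, and that (b) $h \circ (\phi - \psi) = 0$ implies $\Ker h \supseteq \Ker f$. Granting these, (b) gives the ``only if''; and for the ``if'', the hypothesis $\Ker h \supseteq \Ker f = \Ker p$ lets us write $h = h' \circ p$, so $h \circ (\phi - \psi) = h' \circ (p \circ (\phi - \psi)) = 0$ by (a).

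For (a), the plan is to factor $f = \iota \circ p$ with $\iota : \Img f \to \I$ the inclusion, and to observe (again just by functoriality and strictness) that $p \circ \phi = f \otimes p$ and $p \circ \psi = p \otimes f$, regarded as morphisms $A \otimes A \to \Img f$. Composing with the monomorphism $\iota$ gives $\iota \circ (f \otimes p) = f \otimes f = \iota \circ (p \otimes f)$, so $f \otimes p = p \otimes f$, which is (a). The point is that $\Img f$ sits inside the commutative unit object, so the two ``sides'' become equal after pushing into $\I$ and cancelling the mono $\iota$.

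For (b), the plan is to precompose $\phi - \psi$ with $i \otimes 1_A$, where $i : \Ker f \to A$ is the inclusion. Since $f \circ i = 0$ we get $\phi \circ (i \otimes 1_A) = 0$, while $\psi \circ (i \otimes 1_A) = i \otimes f : \Ker f \otimes A \to A$. Hence $h \circ (\phi - \psi) = 0$ forces $h \circ (i \otimes f) = 0$, that is $(h \circ i) \otimes f = 0$, equivalently $f \otimes (h \circ i) = 0$. Since $f$ is a regular morphism in $\sA_{\mathrm{rig}}$ and $M$ is torsion free, the fact recorded earlier in this section — that the tensor product of a regular morphism in $\sA_{\mathrm{rig}}$ with any non-zero morphism into a torsion free object is non-zero — forces $h \circ i = 0$, i.e.\ $\Ker h \supseteq \Ker f$.

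The main obstacle is step (b): passing from $h \circ (\phi - \psi) = 0$ to $h \circ i = 0$. The reformulation and step (a) are essentially formal manipulations with the unit constraints, whereas (b) is the only place where the regularity of $f$ and the torsion-freeness of $M$ are genuinely used, and it relies on recognising the composite $h \circ \psi \circ (i \otimes 1_A)$ as the tensor product $(h \circ i) \otimes f$.
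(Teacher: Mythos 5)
Your proof is correct, and the ``only if'' direction coincides with the paper's: precompose $f \otimes h - h \otimes f$ with the inclusion $i : \Ker f \to A$ tensored with $1_A$, identify the result as $(h \circ i) \otimes f = 0$, and use regularity of $f$ together with torsion-freeness of $M$ to force $h \circ i = 0$. Your ``if'' direction, however, takes a genuinely different and more elementary route. The paper factors $f = j \circ f_0$ through its image $J$, uses Lemmas~\ref{l:regtorscok} and~\ref{l:tensisotors} to show that $\Ker(j \otimes j)$ is a torsion object, deduces that the image of $1 - \sigma$ on $J \otimes J$ is torsion, and then invokes torsion-freeness of $M$ to conclude $j \otimes h_0 = h_0 \otimes j$ (where $h = h_0 \circ f_0$) before composing with $f_0 \otimes f_0$. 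You instead prove the purely formal identity $f \otimes f_0 = f_0 \otimes f$ directly: composing either side with the monomorphism $j : J \to \I$ yields $f \otimes f$, by bifunctoriality and the coincidence of the left and right unit constraints on $\I$, so the two sides agree once $j$ is cancelled. Left-composing with $h_0$ then gives $f \otimes h = h \otimes f$. What this buys: the ``if'' implication in your argument needs only that $\sA$ be abelian with strict unit — it dispenses with the torsion machinery, the symmetry, the regularity of $f$, and the torsion-freeness of $M$ entirely — so you have in fact established that implication in strictly greater generality than the lemma states.
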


\begin{proof}
Write $J$ for the image of $f$ and $j:J \to \I$ for the embedding.
Then
\begin{equation*}
f = j \circ f_0.
\end{equation*}
with $f_0:A \to J$ the projection.
By Lemma~\ref{l:regtorscok} $j$ is an isomorphism up to torsion, so that by 
Lemma~\ref{l:tensisotors} $j \otimes j$ is an isomorphism up to torsion.
In particular $\Ker (j \otimes j)$ is a torsion object.
Now
\begin{equation*}
(j \otimes j) \circ (1 - \sigma) = j \otimes j - j \otimes j = 0
\end{equation*}
with $\sigma:J \otimes J \iso J \otimes J$ the symmetry.
The image of $1 - \sigma$ is thus a torsion object.

Suppose that $\Ker h \supset \Ker f$.
Then $h = h_0 \circ f_0$ for some $h_0:J \to M$,
and
\begin{equation*}
j \otimes h_0 - h_0 \otimes j = (j \otimes h_0) \circ (1 - \sigma) = 0,
\end{equation*}
because $M$ is torsion free so that $j \otimes h_0$ sends the image of 
$1 - \sigma$ to $0$.
Composing with $f_0 \otimes f_0$ then shows that $f \otimes h = h \otimes f$.

Conversely suppose that $f \otimes h = h \otimes f$.
If $i:\Ker f \to A$ is the embedding, then 
\begin{equation*}
(h \circ i) \otimes f = (h \otimes f) \circ (i \otimes A) = 
(f \otimes h) \circ (i \otimes A) = 0.
\end{equation*}
Since $f$ is regular and $M$ is torsion free, it follows that $h \circ i = 0$.
\end{proof}

\begin{prop}\label{p:fracclos}
$(\overline{\sA})_{\mathrm{rig}}$ is fractionally closed.
\end{prop}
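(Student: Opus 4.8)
The plan is to verify the single--variable criterion \eqref{e:fraccloseI}: since $(\overline{\sA})_{\mathrm{rig}}$ is rigid, it is fractionally closed if and only if for every regular morphism $f:A \to \I$ in $(\overline{\sA})_{\mathrm{rig}}$ and every object $D$ of $(\overline{\sA})_{\mathrm{rig}}$ the homomorphism $f \otimes - = - \circ f$ of \eqref{e:fraccloseI}, with $\sC = (\overline{\sA})_{\mathrm{rig}}$, is an isomorphism. By Proposition~\ref{p:welldualled}, $\overline{\sA}$ is again a well-dualled Grothendieck tensor category, so all the results of this section may be applied with $\overline{\sA}$ and $(\overline{\sA})_{\mathrm{rig}}$ in place of the standing $\sA$ and $\sA_{\mathrm{rig}}$.

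First I would record two facts. Every object of $(\overline{\sA})_{\mathrm{rig}}$, being dualisable, is torsion free in $\overline{\sA}$. And if $f:A \to \I$ is regular in $(\overline{\sA})_{\mathrm{rig}}$, then by Lemma~\ref{l:regtorscok} applied to $\overline{\sA}$ its cokernel in $\overline{\sA}$ is a torsion object, hence is $0$ by Proposition~\ref{p:notors}; thus $f$ is an epimorphism in $\overline{\sA}$, and the canonical projection $A \to A/\Ker f$ is, up to isomorphism, $f$ itself. Now let $D$ be an object of $(\overline{\sA})_{\mathrm{rig}}$ and let $h:A \to D$ be an element of $\sC_f(\I,D)$ for $\sC = (\overline{\sA})_{\mathrm{rig}}$, i.e.\ a morphism with $f \otimes h = h \otimes f$. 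Since $D$ is torsion free, Lemma~\ref{l:torsfrac} applied to $\overline{\sA}$ gives $\Ker h \supset \Ker f$, so $h$ factors through $A/\Ker f = \I$, say $h = l \circ f$ for a unique $l:\I \to D$; because $(\overline{\sA})_{\mathrm{rig}}$ is full in $\overline{\sA}$ and $\I,D$ are dualisable, $l$ lies in $(\overline{\sA})_{\mathrm{rig}}(\I,D)$. Hence $f \otimes -$ is surjective, and it is injective because $f$ is an epimorphism (equivalently, because $f$ is regular). Therefore \eqref{e:fraccloseI} is an isomorphism and $(\overline{\sA})_{\mathrm{rig}}$ is fractionally closed.

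I do not expect a genuine obstacle here: once Propositions~\ref{p:welldualled} and \ref{p:notors} together with Lemmas~\ref{l:regtorscok} and \ref{l:torsfrac} are available, the argument is essentially immediate. The only points needing care are checking that the hypotheses permitting the application of those lemmas to $\overline{\sA}$ rather than $\sA$ are met --- which is exactly what Proposition~\ref{p:welldualled} provides --- and the initial reduction of ``fractionally closed'' to the criterion \eqref{e:fraccloseI} involving only morphisms into $\I$, which uses rigidity of $(\overline{\sA})_{\mathrm{rig}}$.
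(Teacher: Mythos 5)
Your proof is correct and follows essentially the same route as the paper's: both apply Lemma~\ref{l:regtorscok} together with Proposition~\ref{p:notors} to see that the regular morphism $f:A \to \I$ is an epimorphism in $\overline{\sA}$, and then Lemma~\ref{l:torsfrac} (using torsion freeness of $D$) to factor $h$ through $f$. The only difference is that you spell out a few routine points — invoking Proposition~\ref{p:welldualled} to license applying the section's lemmas to $\overline{\sA}$, and checking fullness — which the paper leaves implicit.
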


\begin{proof}

Let $f:A \to \I$ be a regular morphism in $(\overline{\sA})_{\mathrm{rig}}$
and $h:A \to D$ be a morphism in $(\overline{\sA})_{\mathrm{rig}}$ with $f \otimes h = h \otimes f$.
Since $\overline{\sA}$ has no non-zero torsion objects  by Proposition~\ref{p:notors},
$f$ is an epimorphism in $\overline{\sA}$ by Lemma~\ref{l:regtorscok}. 
Hence $h$ is of the form $h_0 \circ f$ = $f \otimes h_0$ for some $h_0:\I \to D$
by Lemma~\ref{l:torsfrac}.
Thus \eqref{e:fraccloseI} with $\sC = (\overline{\sA})_{\mathrm{rig}}$ is surjective, 
as required.
\end{proof}

By the universal property of 
$(\sA_{\mathrm{rig}})_\mathrm{fr}$ together with Lemma~\ref{l:projreg}
and Proposition~\ref{p:fracclos}, the tensor functor
$\sA_{\mathrm{rig}} \to (\overline{\sA})_{\mathrm{rig}}$ factors uniquely as 
$E_{\sA_{\mathrm{rig}}}:\sA_{\mathrm{rig}} \to (\sA_{\mathrm{rig}})_\mathrm{fr}$
followed by a tensor functor
\begin{equation}\label{e:projfactor}
(\sA_{\mathrm{rig}})_\mathrm{fr} \to (\overline{\sA})_{\mathrm{rig}}.
\end{equation}
Explicitly, \eqref{e:projfactor} is the strict tensor functor that
coincides with $\sA_{\mathrm{rig}} \to (\overline{\sA})_{\mathrm{rig}}$ on objects,
and sends the morphism $h/f$ to the unique morphism $l$ with 
\begin{equation}\label{e:projfactordef}
\overline{f} \otimes l = \overline{h},
\end{equation} 
as in \eqref{e:Tfactor}.

\begin{prop}\label{p:Frff}
The tensor functor \eqref{e:projfactor} is fully faithful.
\end{prop}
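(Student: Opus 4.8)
The plan is to establish faithfulness and fullness of \eqref{e:projfactor} separately, using its explicit description \eqref{e:projfactordef}: it is the identity on objects (under the relabelling $C\mapsto\overline{C}$) and sends $h/f$ to the unique morphism $l$ with $\overline{f}\otimes l=\overline{h}$, where $\overline{(\,\cdot\,)}$ is the strict projection $\sA\to\overline{\sA}$. The two structural facts I will lean on are that every object of $\sA_{\mathrm{rig}}$ is torsion free, and that $\sA\to\overline{\sA}$ is injective on hom-groups with torsion free target (cf.\ \eqref{e:torsfreeinj} and the remark following it).

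Faithfulness is the easy half. If $h/f$, with $f\colon A\to A'$ regular in $\sA_{\mathrm{rig}}$ and $h\in\sA_f(C,C')$, goes to $0$, then $\overline{h}=\overline{f}\otimes 0=0$; since $A'\otimes C'$ lies in $\sA_{\mathrm{rig}}$ it is torsion free, so $h=0$ in $\sA$ by the injectivity above, and hence $h/f=0$ in $(\sA_{\mathrm{rig}})_{\mathrm{fr}}$.

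For fullness, take a morphism $j\colon\overline{C}\to\overline{C'}$ of $(\overline{\sA})_{\mathrm{rig}}$ with $C,C'$ objects of $\sA_{\mathrm{rig}}$. Since $C'$ is torsion free, Lemma~\ref{l:atens} (applied with $B=C$ and $N=C'$) furnishes a regular morphism $a\colon A\to\I$ in $\sA_{\mathrm{rig}}$ and a morphism $h\colon A\otimes C\to C'$ in $\sA$ with $\overline{h}=\overline{a}\otimes j$. I would then verify that $h$ lies in $\sA_a(C,C')$: transposing along the duality $\sA_a(C,C')\cong\sA_a(\I,C'\otimes C^{\vee})$ and writing $h^{\sharp}\colon A\to C'\otimes C^{\vee}$ and $j^{\sharp}$ for the transposes, the relation $\overline{h}=\overline{a}\otimes j$ becomes $\overline{h^{\sharp}}=j^{\sharp}\circ\overline{a}$, so precomposing with the inclusion $\iota\colon\Ker a\to A$ gives $\overline{h^{\sharp}\circ\iota}=j^{\sharp}\circ\overline{a\circ\iota}=0$; as $C'\otimes C^{\vee}$ is torsion free this forces $h^{\sharp}\circ\iota=0$, i.e.\ $\Ker h^{\sharp}\supset\Ker a$, and Lemma~\ref{l:torsfrac} now yields $a\otimes h^{\sharp}=h^{\sharp}\otimes a$, which is precisely the condition defining $\sA_a(C,C')$. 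Thus $h/a$ is a morphism $C\to C'$ in $(\sA_{\mathrm{rig}})_{\mathrm{fr}}$; by \eqref{e:projfactordef} its image $l$ satisfies $\overline{a}\otimes l=\overline{h}=\overline{a}\otimes j$, and since $\overline{a}$ is regular in $(\overline{\sA})_{\mathrm{rig}}$ (Lemma~\ref{l:projreg}) the operation $\overline{a}\otimes(-)$ is injective, so $l=j$. Hence $j$ is in the image of \eqref{e:projfactor}.

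The one step that takes some care is confirming that the $h$ produced by Lemma~\ref{l:atens} defines an element of the subgroup $\sA_a(C,C')$; everything else is formal, and that step itself reduces, through Lemma~\ref{l:torsfrac} and the torsion-freeness of the objects of $\sA_{\mathrm{rig}}$, to the same injectivity of $\sA\to\overline{\sA}$ used for faithfulness. I do not anticipate any serious obstacle.
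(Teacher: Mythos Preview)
Your proof is correct and follows the same approach as the paper, which simply cites Lemma~\ref{l:projreg} for faithfulness and Lemma~\ref{l:atens} for fullness. The verification that $h \in \sA_a(C,C')$, which you carry out via Lemma~\ref{l:torsfrac}, can be shortened: the defining square \eqref{e:propdef} for $h$ and $a$ is an equation between morphisms of $\sA_{\mathrm{rig}}$ whose image in $(\overline{\sA})_{\mathrm{rig}}$ holds (since $\overline{h} = \overline{a} \otimes j$ lies in the image of \eqref{e:fracclose}), and is therefore reflected by the faithfulness in Lemma~\ref{l:projreg}.
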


\begin{proof}
By \eqref{e:projfactordef}, the faithfulness follows from Lemma~\ref{l:projreg}
and the fullness from Lemma~\ref{l:atens}. 
\end{proof}

Let $\sC$ be an essentially small rigid tensor category.
Since $\I$ is projective of finite type in $\widehat{\sC}$, so also is
any dualisable object of $\widehat{\sC}$. 
Thus by \eqref{e:fintype}, $(\widehat{\sC})_{\mathrm{rig}}$ is the pseudo-abelian hull 
of $\sC$ in $\widehat{\sC}$. 
It follows that $\widehat{\sC}$ is a well-dualled Grothendieck tensor category.
We write $\widetilde{\sC}$ for $\widehat{\sC}$ modulo torsion: 
\begin{equation*}
\widetilde{\sC} = \overline{\widehat{\sC}}.
\end{equation*}
The composite $\sC \to \widetilde{\sC}$ of the projection 
$\widehat{\sC} \to \widetilde{\sC}$ with $h_-:\sC \to \widehat{\sC}$ factors 
through a regular tensor functor $\sC \to (\widetilde{\sC})_\mathrm{rig}$, 
which in turn factors uniquely through a tensor functor
\begin{equation}\label{e:Cprojfactor}
\sC_\mathrm{fr} \to (\widetilde{\sC})_\mathrm{rig}
\end{equation}
by Proposition~\ref{p:fracclos}.

\begin{cor}\label{c:FrCff}
Suppose that either direct sums exist in $\sC$ or that $\sC$ is integral.
Then the tensor functor \eqref{e:Cprojfactor} is fully faithful.
\end{cor}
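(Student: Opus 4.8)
The plan is to exhibit \eqref{e:Cprojfactor} as a composite of two fully faithful tensor functors. Apply the results of this section to the case $\sA = \widehat{\sC}$: since $\sC$ is rigid, $\widehat{\sC}$ is a well-dualled Grothendieck tensor category with $(\widehat{\sC})_\mathrm{rig}$ the pseudo-abelian hull of $\sC$ and $\overline{\widehat{\sC}} = \widetilde{\sC}$. By Proposition~\ref{p:Frff}, the tensor functor \eqref{e:projfactor} in this case, namely
\begin{equation*}
((\widehat{\sC})_\mathrm{rig})_\mathrm{fr} \to (\widetilde{\sC})_\mathrm{rig},
\end{equation*}
is fully faithful.

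First I would consider the Yoneda embedding $T:\sC \to (\widehat{\sC})_\mathrm{rig}$ of $\sC$ into its pseudo-abelian hull; it is a regular tensor functor between essentially small tensor categories. Under the hypothesis of the corollary — finite direct sums exist in $\sC$, or $\sC$ is integral — the functor $T$ induces a cofinal map $\Reg(\sC) \to \Reg((\widehat{\sC})_\mathrm{rig})$, and hence the induced tensor functor
\begin{equation*}
T_\mathrm{fr}:\sC_\mathrm{fr} \to ((\widehat{\sC})_\mathrm{rig})_\mathrm{fr}
\end{equation*}
of \eqref{e:FrT} is fully faithful, as was recorded in Section~\ref{s:frac}.

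It then remains to identify \eqref{e:Cprojfactor} with the composite of \eqref{e:projfactor} and $T_\mathrm{fr}$. Both this composite and \eqref{e:Cprojfactor} are tensor functors $\sC_\mathrm{fr} \to (\widetilde{\sC})_\mathrm{rig}$, so by the uniqueness in the universal property of $E_{\sC}:\sC \to \sC_\mathrm{fr}$ it suffices to check that they agree after precomposition with $E_{\sC}$. For this one combines three facts: the identity $E_{(\widehat{\sC})_\mathrm{rig}} \circ T = T_\mathrm{fr} \circ E_{\sC}$ defining $T_\mathrm{fr}$; the fact that \eqref{e:projfactor} composed with $E_{(\widehat{\sC})_\mathrm{rig}}$ is the tensor functor $(\widehat{\sC})_\mathrm{rig} \to (\widetilde{\sC})_\mathrm{rig}$ induced by the projection $\widehat{\sC} \to \widetilde{\sC}$; and the fact that this last functor composed with $T$ is precisely the regular tensor functor $\sC \to (\widetilde{\sC})_\mathrm{rig}$ whose factorisation through $E_{\sC}$ defines \eqref{e:Cprojfactor}. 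Once the resulting square is seen to commute, \eqref{e:Cprojfactor} is a composite of fully faithful tensor functors, hence fully faithful.

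The last step is pure bookkeeping with universal properties, and I expect no genuine obstacle there; the substantive content has already been absorbed into Proposition~\ref{p:Frff} and into the cofinality statement of Section~\ref{s:frac}.
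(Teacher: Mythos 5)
Your proposal reproduces the paper's own argument: factor \eqref{e:Cprojfactor} as $T_\mathrm{fr}$ (fully faithful under the stated hypotheses, by cofinality of $\Reg(\sC) \to \Reg((\widehat{\sC})_\mathrm{rig})$ as recorded at the end of Section~\ref{s:frac}) followed by \eqref{e:projfactor} with $\sA = \widehat{\sC}$ (fully faithful by Proposition~\ref{p:Frff}). The only difference is that you spell out the universal-property bookkeeping identifying \eqref{e:Cprojfactor} with this composite, which the paper leaves implicit; the substance is the same.
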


\begin{proof}
Since the tensor functor $T:\sC \to (\widehat{\sC})_\mathrm{rig}$ defined by $h_-$ is the embedding of
$\sC$ into its pseudo-abelian hull, \eqref{e:FrT} with $\sC' = (\widehat{\sC})_\mathrm{rig}$
is fully faithful.
The required result thus follows from Proposition~\ref{p:Frff}, because \eqref{e:Cprojfactor}
factors as $T_\mathrm{fr}$ followed by \eqref{e:projfactor} with $\sA = \widehat{\sC}$.
\end{proof}

Let $\sC'$ be an essentially small rigid tensor category and $F:\sC \to \sC'$ be a tensor functor.
As in Section~\ref{s:fun}, we have a tensor functor $\widehat{F}:\widehat{\sC} \to \widehat{\sC'}$ and a 
cocontinuous lax tensor functor  $F_\wedge:\widehat{\sC'} \to \widehat{\sC}$ right adjoint to $\widehat{F}$.
By \eqref{e:pres} and the fact \eqref{e:proj} is an isomorphism for $M$ dualisable, we have an isomorphism
\begin{equation}\label{e:projhat}
F_\wedge(M') \otimes M \iso F_\wedge(M' \otimes \widehat{F}(M))
\end{equation}
natural in $M$ in $\widehat{\sC}$ and $M'$ in $\widehat{\sC'}$.
Similarly, with $\sA = \widehat{\sC}$, $\sA' = \widehat{\sC'}$, $H = \widehat{F}$ and $H' = F_\wedge$,
\eqref{e:EMtens} is an isomorphism for every $N'$ in the essential image of $\widehat{F}$, 
and \eqref{e:EMhom} is an isomorphism for every $M'$ in the essential image of $\widehat{F}$.

\begin{lem}\label{l:isotors}
Let $\sC$ and $\sC'$ be essentially small tensor categories with $\sC$ rigid, 
and $F:\sC \to \sC'$ be a regular tensor functor.
Suppose that either direct sums exist in $\sC$ or that $\sC$ is integral.
Then $\widehat{F}:\widehat{\sC} \to \widehat{\sC'}$ sends isomorphisms up to torsion to isomorphisms up to torsion.
\end{lem}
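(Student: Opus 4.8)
The plan is to reduce the statement to a question about $\widehat{F}$ applied to a short exact sequence and to control the kernel and cokernel that appear. A morphism $f$ in $\widehat{\sC}$ is an isomorphism up to torsion precisely when both $\Ker f$ and $\Coker f$ are torsion objects of $\widehat{\sC}$. Factoring $f$ through its image, it suffices to treat separately the case where $f$ is a monomorphism with torsion cokernel and the case where $f$ is an epimorphism with torsion kernel. For the epimorphism case, $\widehat{F}$ is right exact (being cocontinuous), so $\widehat{F}(f)$ is again an epimorphism; it remains only to see that $\widehat{F}$ sends torsion objects of $\widehat{\sC}$ to torsion objects of $\widehat{\sC'}$, which is exactly the first assertion of Lemma~\ref{l:adjtorspres} once we know the induced tensor functor $(\widehat{\sC})_{\mathrm{rig}} \to (\widehat{\sC'})_{\mathrm{rig}}$ is regular. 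Here is where the hypothesis that direct sums exist in $\sC$ or that $\sC$ is integral enters: under that hypothesis the embedding of $\sC$ into its pseudo-abelian hull $(\widehat{\sC})_{\mathrm{rig}}$ induces a cofinal map on regular morphisms, so regularity of $F$ (as stated, a regular tensor functor $\sC \to \sC'$) passes to regularity of the induced tensor functor on pseudo-abelian hulls, and hence $(\widehat{\sC})_{\mathrm{rig}} \to (\widehat{\sC'})_{\mathrm{rig}}$ is regular.

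For the monomorphism case, let $0 \to M' \to M \to M'' \to 0$ be exact in $\widehat{\sC}$ with $M''$ torsion. Apply $\widehat{F}$; by right exactness we get an exact sequence $\widehat{F}(M') \to \widehat{F}(M) \to \widehat{F}(M'') \to 0$ with $\widehat{F}(M'')$ torsion by the previous paragraph, and the problem is to show that the kernel $L$ of $\widehat{F}(M') \to \widehat{F}(M)$ is torsion. This is a $\Tor_1$-type term, and the natural tool is Lemma~\ref{l:tensisotors} together with the flat-resolution machinery of Section~\ref{s:tor}, transported across $\widehat{F}$. The key point is that $\widehat{F}$ takes a flat resolution in $\widehat{\sC}$ by coproducts of objects $h_A$ to a resolution in $\widehat{\sC'}$ by coproducts of objects $\widehat{F}(h_A) \cong h_{F(A)}$, which are again flat (being dualisable, since $\sC$ and hence its image is rigid); so $\widehat{F}$ carries flat resolutions to flat resolutions. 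Choosing a flat resolution $0 \to P'_\bullet \to P_\bullet \to P''_\bullet \to 0$ of the short exact sequence, applying $\widehat{F}$, and then passing to the long exact homology sequence exhibits $L$ as a quotient of $H_1(\widehat{F}(P''_\bullet))$. By the balancing Lemma~\ref{l:balanced} this homology is isomorphic to $H_1(Q_\bullet \otimes \widehat{F}(M''))$ for a flat resolution $Q_\bullet$ of any fixed object, hence is a torsion object because $\widehat{F}(M'')$ is torsion and $\widehat{F}(M'') \otimes -$ sends torsion to torsion (the subcategory of torsion objects being closed under tensoring with anything). Therefore $L$ is torsion, and $\widehat{F}(f)$ has torsion kernel and torsion cokernel.

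I expect the main obstacle to be the bookkeeping in the monomorphism case: making sure the long exact homology argument that proves Lemma~\ref{l:tensisotors} genuinely transfers through $\widehat{F}$, i.e.\ that $\widehat{F}$ really does send a flat resolution of a short exact sequence to an exact sequence of complexes of flat objects (this uses cocontinuity and right exactness of $\widehat{F}$ together with the fact that $\widehat{F}(h_A)$ is flat), and that the identification of $L$ as a torsion quotient survives. An alternative, perhaps cleaner, route that avoids re-running the spectral sequence argument: first observe directly from Lemma~\ref{l:tensisotors} applied in $\widehat{\sC}$ and the fact that every object of $\widehat{\sC}$ is a filtered colimit of dualisable objects that it suffices to prove $\widehat{F}$ preserves isomorphisms-up-to-torsion on morphisms between flat objects, where $\widehat{F}$ is exact in the relevant sense, and then use that a flat resolution $P''_\bullet$ of $M''$ goes to a flat resolution of $\widehat{F}(M'')$. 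Either way, the regularity hypothesis on $F$ (needed so that torsion is preserved, via Lemma~\ref{l:adjtorspres}) and the direct-sums-or-integral hypothesis (needed so that regularity of $F$ implies regularity on pseudo-abelian hulls) are exactly the two inputs that make the argument go through.
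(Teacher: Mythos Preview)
Your treatment of the epimorphism case is correct and matches the paper. The gap is in the monomorphism case, at the step where you invoke balancing.

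You correctly reach the point where the kernel $L$ of $\widehat{F}(M') \to \widehat{F}(M)$ is a quotient of $H_1(\widehat{F}(P''_\bullet))$. But this homology is $L_1\widehat{F}(M'')$, the first left derived functor of $\widehat{F}$, and $\widehat{F}$ is \emph{not} of the form $N \otimes -$ for any $N$ in $\widehat{\sC'}$, so Lemma~\ref{l:balanced} does not apply to it. Your sentence ``$\widehat{F}$ takes a flat resolution \dots\ to a resolution'' is already false: for one-object tensor categories $\sC = R$ and $\sC' = S$, the functor $\widehat{F}$ is $S \otimes_R -$, which is exact only when $S$ is $R$\nd flat. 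So $\widehat{F}(P''_\bullet)$ is a complex of flat (even projective) objects with $H_0 = \widehat{F}(M'')$, but there is no reason for its higher homology to vanish, and no way to identify that homology with a $\Tor$ computed in $\widehat{\sC'}$. One can show via the projection formula that $F_\wedge(L_1\widehat{F}(M'')) \cong \Tor_1^{\widehat{\sC}}(F_\wedge(\I),M'')$ is torsion in $\widehat{\sC}$, but $F_\wedge$ does not reflect torsion (objects of $\widehat{\sC'}$ supported away from the image of $F$ can be killed by $F_\wedge$), so this does not close the gap.

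The paper avoids computing $L_1\widehat{F}$ altogether. To test whether the kernel $P'$ is torsion in $\widehat{\sC'}$ one must, for each dualisable $N'$ and each $b':\I \to N' \otimes P'$, find a regular $a'$ with $a' \otimes b' = 0$. Since $N' \otimes -$ is exact, $b'$ lies in the kernel of $\widehat{\sC'}(\I, N' \otimes \widehat{F}(M')) \to \widehat{\sC'}(\I, N' \otimes \widehat{F}(M))$. The projection formula~\eqref{e:projhat} together with the adjunction $\widehat{F} \dashv F_\wedge$ identifies this kernel with the kernel of $\widehat{\sC}(\I, F_\wedge(N') \otimes M') \to \widehat{\sC}(\I, F_\wedge(N') \otimes M)$; so $b'$ corresponds to some $b$ factoring through $\Ker(F_\wedge(N') \otimes M' \to F_\wedge(N') \otimes M)$ in $\widehat{\sC}$. \emph{That} kernel is torsion by Lemma~\ref{l:tensisotors} applied inside $\widehat{\sC}$, so some regular $a$ in $(\widehat{\sC})_{\mathrm{rig}}$ kills $b$, and then $a' = \widehat{F}(a)$ kills $b'$. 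The point is that the torsion calculation is performed entirely in $\widehat{\sC}$, where $F_\wedge(N') \otimes -$ genuinely is a tensor product and Lemma~\ref{l:tensisotors} applies; the adjunction then transports the conclusion to $\widehat{\sC'}$.
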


\begin{proof}
The tensor functor $(\widehat{\sC})_{\mathrm{rig}} \to (\widehat{\sC'})_{\mathrm{rig}}$ induced by
$\widehat{F}$ between the pseudo-abelian hulls of $\sC$ and $\sC'$ is regular, because 
$\Reg(\sC) \to \Reg((\widehat{\sC})_{\mathrm{rig}})$ is cofinal.
Since $\widehat{F}$ is cocontinuous, it sends epimorphisms with torsion kernel to epimorphisms with torsion kernel.
It remains to show that $\widehat{F}$ sends monomorphism with torsion cokernel to morphisms
with torsion kernel and cokernel.

Suppose given a short exact sequence \eqref{e:sex} in $\widehat{\sC}$ with $M''$ torsion.
Then 
\begin{equation*}
\widehat{F}(M') \to \widehat{F}(M)
\end{equation*}
has torsion cokernel.
If $P'$ is its kernel, it is to be shown that for any $N'$ in $(\widehat{\sC'})_{\mathrm{rig}}$
and morphism $b':\I \to N' \otimes P'$ there is a regular $a':\I \to L'$ in 
$(\widehat{\sC'})_{\mathrm{rig}}$ with $a' \otimes b' = 0$.

The morphism $M' \to M$ induces the horizontal arrows of a commutative square
\begin{equation*}
\xymatrix{
\widehat{\sC'}(\I, N' \otimes \widehat{F}(M')) \ar[r] & \widehat{\sC'}(\I,N' \otimes \widehat{F}(M)) \\
\widehat{\sC}(\I, F_\wedge(N') \otimes M') \ar^{\wr}[u] \ar[r] &  \widehat{\sC}(\I, F_\wedge(N') \otimes M) \ar^{\wr}[u]
}
\end{equation*}
with the vertical isomorphisms given by the isomorphisms induced by those of the form 
\eqref{e:projhat} followed by the adjunction isomorphisms.
Since $N' \otimes -$ is exact, 
we may identify $b'$ with an element in the kernel of the top arrow of the square.
It is enough to show that there is 
a regular $a':\I \to L'$ in $(\widehat{\sC'})_{\mathrm{rig}}$
such that $a' \otimes b'$ in $\widehat{\sC'}(\I, L' \otimes N' \otimes \widehat{F}(M'))$ is $0$.
If $b'$ is the image under the left isomorphism of $b$, then $b:\I \to F_\wedge(N') \otimes M'$ factors through
the kernel $P$ of
\begin{equation*}
F_\wedge(N') \otimes M' \to F_\wedge(N') \otimes M.
\end{equation*}
Since $P$ is a torsion object by Lemma~\ref{l:tensisotors}, there is a regular $a:\I \to L$ in 
$(\widehat{\sC})_{\mathrm{rig}}$ such that $a \otimes b = 0$.
Now by \eqref{e:projadj}, the left isomorphism of the square is given by applying $\widehat{F}$ and then using the counit
$\widehat{F}F_\wedge(N') \to N'$.
Thus $a' \otimes b' = 0$ with $a' = \widehat{F}(a)$.
\end{proof}

Let $\sC$ and $\sC'$ be essentially small rigid tensor categories.
Suppose that either direct sums exist in $\sC$ or that $\sC$ is integral.
Then it follows from Lemma~\ref{l:isotors} that for any 
regular tensor functor $F:\sC \to \sC'$ there is a unique tensor functor 
\begin{equation*}
\widetilde{F}:\widetilde{\sC} \to \widetilde{\sC'}
\end{equation*}
which is compatible with $\widehat{F}$ and the projections 
$\widehat{\sC} \to \widetilde{\sC}$
and $\widehat{\sC'} \to \widetilde{\sC'}$.
Further for any tensor isomorphism $\varphi:F \iso F'$ of regular tensor functors $\sC \to \sC'$
there is a unique tensor isomorphism 
$\widetilde{\varphi}:\widetilde{F} \iso \widetilde{F'}$ 
which is compatible with $\widehat{\varphi}$ and the projections. 
There are the usual canonical tensor isomorphisms, satisfying the usual compatibilities,
for composable regular tensor functors.

\section{Modules}\label{s:mod}

In this section we study the behaviour of categories of modules in a tensor category
under passage to quotients modulo torsion.
The goal is to prove Theorem~\ref{t:Ftildeequiv}, which gives a geometric description
of the functor categories modulo torsion which will be used to define super Tannakian hulls.

Let $\sA$ be a well-dualled Grothendieck tensor category and $R$ be a commutative algebra in $\sA$.
Since the forgetful functor from $\MOD_{\sA}(R)$ to $\sA$ creates limits and colimits,
$\MOD_{\sA}(R)$ is a Grothendieck category.
Further the unit $R$ is of finite type in $\MOD_{\sA}(R)$ because $\Hom_R(R,-)$ is isomorphic
to $\sA(\I,-)$, and $\otimes_R$ is cocontinuous in $\MOD_{\sA}(R)$ by its definition using a coequaliser.
The free $R$\nd modules $R \otimes M$ with $M$ dualisable form a set of generators for $\MOD_{\sA}(R)$.
Thus any dualisable object $N$ of $\MOD_{\sA}(R)$ is a quotient of $R \otimes M$ with $M$ dualisable, 
because $N$ is of finite type in $\MOD_{\sA}(R)$.
It follows that $\MOD_{\sA}(R)_{\mathrm{rig}}$ is essentially small.
Thus $\MOD_{\sA}(R)$ is a well-dualled Grothendieck tensor category.

\begin{lem}\label{l:regtorsfree}
The tensor functor $R \otimes -:\sA_{\mathrm{rig}} \to \Mod_{\sA}(R)$ is regular
if and only if $R$ is a torsion free object of $\sA$.
\end{lem}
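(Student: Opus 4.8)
The plan is to check regularity of $R\otimes-$ on the cofinal family of regular morphisms $f\colon A\to\I$ in $\sA_{\mathrm{rig}}$, exploiting throughout the identification --- valid because $R\otimes-\colon\sA\to\MOD_\sA(R)$ is a tensor functor and $A$ is dualisable --- under which $(R\otimes f)\otimes_R g$ becomes $f\otimes g$ for any morphism $g$ of $\MOD_\sA(R)$; in particular the underlying morphism of $(R\otimes f)\otimes_R g$ in $\sA$ is $f\otimes|g|$, where $|-|$ denotes underlying objects and morphisms. Two elementary facts will be used: that $\sA_{\mathrm{tors}}$ is closed under tensoring with an arbitrary object of $\sA$, and that every morphism in $\sA$ from a torsion object to a torsion free object is zero (reduce the source to a torsion object $C$ of finite type, for which $a\otimes\mathrm{id}_C=0$ for some regular $a$ in $\sA_{\mathrm{rig}}$, and invoke that tensoring a regular morphism with a nonzero morphism into a torsion free object is nonzero).

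For the ``only if'' direction I would assume $R\otimes-$ regular and take a nonzero $b\colon B\to R$ in $\sA$ with $B$ dualisable together with a regular $a\colon A\to\I$ in $\sA_{\mathrm{rig}}$; the goal is $a\otimes b\ne 0$. Let $\overline b\colon R\otimes B\to R$ be the morphism of $\Mod_\sA(R)$ adjoint to $b$, which is nonzero; since $R\otimes a$ is regular, $(R\otimes a)\otimes_R\overline b\ne 0$, hence its underlying morphism $a\otimes|\overline b|$ is nonzero. Writing $|\overline b|$ as $\mathrm{id}_{|R|}\otimes b$ followed by the multiplication of $R$, a short computation (moving $A$ past $|R|$ with the symmetry) shows that $a\otimes|\overline b|$ factors through $\mathrm{id}_{|R|}\otimes(a\otimes b)$; were $a\otimes b=0$ this composite would vanish, a contradiction. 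So $R$ is torsion free.

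For the ``if'' direction I would assume $R$ torsion free, take $f\colon A\to\I$ regular in $\sA_{\mathrm{rig}}$, and suppose $(R\otimes f)\otimes_R g=0$ for some $g\colon N\to M$ in $\Mod_\sA(R)$; equivalently $f\otimes|g|=0$ in $\sA$. If $g\ne 0$, pick a dualisable $P$ in $\sA$ and $u\colon P\to|N|$ with $|g|\circ u\ne 0$; precomposing $f\otimes|g|=0$ with $\mathrm{id}_A\otimes u$ and dualising $P$ turns $|g|\circ u$ into a nonzero morphism $\psi\colon\I\to|M|\otimes P^\vee$ with $\psi\circ f=0$, so $\psi$ factors through $\Coker_{\sA}(f)$, which is torsion by Lemma~\ref{l:regtorscok}, yielding a nonzero $\psi_0\colon\Coker_{\sA}(f)\to|M|\otimes P^\vee$. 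Since $|M|\otimes P^\vee$ is the underlying object of the dualisable $R$\nd module $M\otimes_R(R\otimes P^\vee)$, everything reduces to the claim that \emph{if $R$ is torsion free, $D$ is a dualisable $R$\nd module and $C$ is a torsion object of $\sA$, then $\sA(C,|D|)=0$}. This is the one genuinely non-formal point: the naive attempt to realise $|D|$ as a subobject of a free module fails, because dualisable $R$\nd modules need not be summands of free ones unless $\I$ is projective in $\sA$.

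To prove the claim I would use that, $D$ being dualisable, there is a natural isomorphism $\sA(C,|D|)\cong\Hom_R(R\otimes C,D)\cong\Hom_R\bigl(D^\vee\otimes_R(R\otimes C),R\bigr)$. Now $D^\vee\otimes_R(R\otimes C)$ has torsion underlying object in $\sA$, being a quotient of $|D^\vee|\otimes|R|\otimes C$ with $C$ torsion; hence any morphism of $R$\nd modules from it to $R$ has underlying morphism in $\sA$ going from a torsion object to the torsion free object $|R|$, so is zero, and is therefore itself zero since the forgetful functor $\MOD_\sA(R)\to\sA$ is faithful. Thus $\Hom_R\bigl(D^\vee\otimes_R(R\otimes C),R\bigr)=0$, so $\sA(C,|D|)=0$; this contradicts $\psi_0\ne 0$, so $g=0$ and $R\otimes f$ is regular, which proves the ``if'' direction and hence the lemma.
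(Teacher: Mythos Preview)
Your proof is correct, but the ``if'' direction works considerably harder than necessary. The paper dispatches both directions simultaneously with a single observation, avoiding entirely the auxiliary claim about $\sA(C,|D|)$ that you flag as the ``one genuinely non-formal point''.

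The paper's approach is this. By rigidity of $\Mod_\sA(R)$, a morphism $R\otimes a$ with $a:A\to\I$ regular in $\sA_{\mathrm{rig}}$ is regular in $\Mod_\sA(R)$ if and only if $(R\otimes a)\otimes_R b'\ne 0$ for every nonzero $b':R\otimes B\to R$ with $B\in\sA_{\mathrm{rig}}$ (dualise the target of a general $g$ to land in $R$, then precompose with an epimorphism from a free module). Now the free--forgetful adjunction $\Hom_\sA(B,R)\cong\Hom_R(R\otimes B,R)$ sends $b\mapsto b'$, and under the tensor structure of $R\otimes-$ the morphism $(R\otimes a)\otimes_R b'$ corresponds to $a\otimes b$. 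Thus regularity of $R\otimes-$ is literally the statement that $a\otimes b\ne 0$ for every regular $a:A\to\I$ and nonzero $b:B\to R$ with $B$ dualisable, which is the definition of $R$ being torsion free.

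Your ``only if'' direction is this same adjunction argument, just unpacked. Your ``if'' direction, by contrast, keeps a general $g:N\to M$ in $\Mod_\sA(R)$ and then has to analyse $\Coker_\sA(f)$, invoke Lemma~\ref{l:regtorscok}, and prove the vanishing of $\sA(C,|D|)$ via a further dualisation inside $\Mod_\sA(R)$. All of this is sound, but it reproduces by hand what the single reduction ``dualise $M$ and precompose with $R\otimes B\twoheadrightarrow N\otimes_R M^\vee$'' gives for free. The payoff of the paper's route is that the lemma becomes a two-line tautology; the payoff of yours is perhaps that it makes more visible why dualisability of $M$ matters.
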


\begin{proof}
The tensor functor $R \otimes -$ is regular if and only if for every
regular $a:A \to \I$ in $\sA_{\mathrm{rig}}$ and non-zero 
$b':R \otimes B \to R$ in $\Mod_{\sA}(R)$ with $B$ in $\sA_{\mathrm{rig}}$,
the morphism $(R \otimes a) \otimes_R b'$ is non-zero.
If $b'$ corresponds under the adjunction isomorphism
\begin{equation*}
\Hom_{\sA}(-,R) \iso \Hom_R(R \otimes -,R) 
\end{equation*} 
to $b:B \to R$ in $\sA$,
then $(R \otimes a) \otimes_R b'$ corresponds, modulo the tensor structural 
isomorphism of $R \otimes -$, to $a \otimes b$.
The result follows.
\end{proof}

\begin{lem}\label{l:Rextpres}
Suppose that $R$ is a torsion free object of $\sA$.
Then the tensor functor $R \otimes -:\sA \to \MOD_{\sA}(R)$ preserves torsion objects and the
forgetful lax tensor functor $\MOD_{\sA}(R) \to \sA$ preserves torsion free objects and reflects
torsion objects.
\end{lem}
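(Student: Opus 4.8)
The plan is to obtain the first two assertions at once from Lemma~\ref{l:adjtorspres}, and the third by a short direct argument that uses the first. Throughout, write $U$ for the forgetful functor $\MOD_{\sA}(R)\to\sA$; it is a lax tensor functor right adjoint to the tensor functor $R\otimes-\colon\sA\to\MOD_{\sA}(R)$.

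First I would recall that $\MOD_{\sA}(R)$ is a well-dualled Grothendieck tensor category, as noted above, and that $R\otimes-$ is cocontinuous, being a left adjoint. Since $R$ is torsion free, Lemma~\ref{l:regtorsfree} shows that the tensor functor $\sA_{\mathrm{rig}}\to\Mod_{\sA}(R)$ induced by $R\otimes-$ is regular. Applying Lemma~\ref{l:adjtorspres} with $T=R\otimes-$ and $T'=U$ then yields at once that $R\otimes-$ preserves torsion objects and that $U$ preserves torsion free objects.

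For the remaining assertion, let $M$ be an object of $\MOD_{\sA}(R)$ with $U(M)$ a torsion object of $\sA$. By the first assertion, $R\otimes U(M)$ is a torsion object of $\MOD_{\sA}(R)$. The counit $R\otimes U(M)\to M$ of the adjunction is a morphism of $\MOD_{\sA}(R)$ whose image under $U$ is a split epimorphism in $\sA$, a section being the component at $U(M)$ of the unit of the adjunction (by one of the triangle identities). Since $U$ is faithful, the counit is therefore an epimorphism in $\MOD_{\sA}(R)$, so $M$ is a quotient of the torsion object $R\otimes U(M)$. As the torsion objects of $\MOD_{\sA}(R)$ form a Serre subcategory, hence are closed under passage to quotients, $M$ is a torsion object; thus $U$ reflects torsion objects.

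I expect no real obstacle. The only point requiring a little care is that the counit is an epimorphism in $\MOD_{\sA}(R)$ and not merely after applying $U$, which is precisely what the faithfulness of $U$ supplies; the rest is a direct invocation of Lemmas~\ref{l:regtorsfree} and~\ref{l:adjtorspres} together with the fact that torsion objects form a Serre subcategory.
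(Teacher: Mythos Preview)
Your proof is correct and follows essentially the same approach as the paper: both invoke Lemmas~\ref{l:regtorsfree} and~\ref{l:adjtorspres} for the first two assertions, and for the reflection statement both observe that $M$ is a quotient of the torsion $R$\nd module $R\otimes M$. The paper simply asserts that $M$ is a quotient of $R\otimes M$, whereas you spell out why the counit is an epimorphism via the triangle identity and faithfulness of $U$; this extra detail is fine and the argument is sound (faithful functors reflect epimorphisms).
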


\begin{proof}
That $R \otimes -$ preserves torsion objects and the forgetful functor preserves
torsion free objects follows from Lemma~\ref{l:adjtorspres}  
with $T = R \otimes -$ and $T'$ the forgetful functor together with Lemma~\ref{l:regtorsfree}.
If $M$ in $\MOD_{\sA}(R)$ is a torsion object in $\sA$, then $M$ is a torsion object in
$\MOD_{\sA}(R)$ because it is a quotient of $R \otimes M$.
\end{proof}

The projection $\sA \to \overline{\sA}$ defines a canonical tensor functor
\begin{equation}\label{e:MODfun}
\MOD_{\sA}(R) \to \MOD_{\overline{\sA}}(\overline{R}).
\end{equation}
which is compatible with the projection and the forgetful lax tensor functors to 
$\sC$ and $\sC'$.
It is exact and cocontinuous.

\begin{lem}\label{l:MODfunsurj}
The tensor functor \eqref{e:MODfun} is essentially surjective.
\end{lem}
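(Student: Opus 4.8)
The plan is to present a given object of $\MOD_{\overline\sA}(\overline R)$ as a cokernel of a morphism between free $\overline R$\nd modules on dualisable objects, to lift that morphism to $\MOD_\sA(R)$ after tensoring its components with suitable regular morphisms, and then to take cokernels.

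First I would record the presentation. The free modules $\overline R \otimes M'$ with $M'$ dualisable in $\overline\sA$ generate $\MOD_{\overline\sA}(\overline R)$, and by Proposition~\ref{p:welldualled} every dualisable object of $\overline\sA$ is a quotient of $\overline B$ for some $B$ dualisable in $\sA$; hence the $\overline{R \otimes B}$ with $B$ dualisable in $\sA$ form a small generating set, and any $V$ in $\MOD_{\overline\sA}(\overline R)$ fits in an exact sequence
\begin{equation*}
\bigoplus_{\delta \in \Delta} \overline{R \otimes B_\delta} \xrightarrow{\phi} \bigoplus_{\gamma \in \Gamma} \overline{R \otimes B_\gamma} \to V \to 0
\end{equation*}
of $\overline R$\nd modules with all $B_\delta$, $B_\gamma$ dualisable in $\sA$. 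For each $\gamma$ the torsion subobject $(R \otimes B_\gamma)_{\mathrm{tors}}$ of $R \otimes B_\gamma$ in $\sA$ is an $R$\nd submodule, since $R \otimes (R \otimes B_\gamma)_{\mathrm{tors}}$ is a torsion object and its image under the action lies in $(R \otimes B_\gamma)_{\mathrm{tors}}$; thus $(R \otimes B_\gamma)_{\mathrm{tf}}$ is a torsion free $R$\nd module whose image in $\MOD_{\overline\sA}(\overline R)$ is $\overline{R \otimes B_\gamma}$, and after replacing $R \otimes B_\gamma$ by it I may assume the target of $\phi$ is $\overline Q$ with $Q = \bigoplus_\gamma (R \otimes B_\gamma)_{\mathrm{tf}}$ a torsion free object of $\sA$.

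Next comes the lifting. For each $\delta$ the $\delta$\nd component of $\phi$ corresponds, under the adjunction between $\overline R \otimes -$ and the forgetful functor, to a morphism $\psi_\delta : \overline{B_\delta} \to \overline Q$ in $\overline\sA$; since $B_\delta$ is dualisable and $Q$ is torsion free, Lemma~\ref{l:atens} provides a regular morphism $a_\delta : A_\delta \to \I$ in $\sA_{\mathrm{rig}}$ and a morphism $h_\delta : A_\delta \otimes B_\delta \to Q$ in $\sA$ with $\overline{a_\delta} \otimes \psi_\delta = \overline{h_\delta}$. By Lemma~\ref{l:regtorscok} the cokernel of $a_\delta$ in $\sA$ is a torsion object, so $\overline{a_\delta}$ is an epimorphism in $\overline\sA$ by Proposition~\ref{p:notors}; hence $\bigoplus_\delta (\overline R \otimes \overline{a_\delta} \otimes \overline{B_\delta})$ is an epimorphism, and precomposing $\phi$ with it yields a morphism $\phi'$ with $\Coker \phi' = \Coker \phi = V$ whose $\delta$\nd component corresponds to $\overline{a_\delta} \otimes \psi_\delta = \overline{h_\delta}$. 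Let $\widehat{h_\delta} : R \otimes A_\delta \otimes B_\delta \to Q$ be the morphism of $R$\nd modules adjoint to $h_\delta$, and $\widehat\phi = \bigoplus_\delta \widehat{h_\delta}$ in $\MOD_\sA(R)$. Because \eqref{e:MODfun} is cocontinuous and compatible with the free and forgetful functors it carries $\widehat\phi$ to $\phi'$, and being right exact it carries $\Coker\widehat\phi$ to $\Coker\phi' = V$; so $V$ lies in its essential image.

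The step I expect to be the main obstacle is the lifting: a morphism in $\overline\sA$ need not be the image of a morphism in $\sA$, and the two lemmas invoked above are precisely the tools that circumvent this. Tensoring $\psi_\delta$ with a regular $a_\delta$ makes it come from $\sA$ (Lemma~\ref{l:atens}, which forces the target to be torsion free --- whence the replacement of $R \otimes B_\gamma$ by $(R \otimes B_\gamma)_{\mathrm{tf}}$), while $\overline{a_\delta}$ being an epimorphism (Lemma~\ref{l:regtorscok} and Proposition~\ref{p:notors}) lets this regular morphism be absorbed into an enlargement of the domain of the presentation without changing the cokernel.
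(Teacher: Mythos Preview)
Your argument is correct, but the paper takes a shorter route that avoids Lemma~\ref{l:atens} entirely. Instead of lifting the \emph{morphism} in a free presentation, the paper lifts the \emph{kernel as a subobject}: any $\overline{R}$\nd module is $\overline{M}/L$ for some free $R$\nd module $M$ and $\overline{R}$\nd submodule $L$ of $\overline{M}$; the subobject $L$ of $\overline{M}$ in $\overline{\sA}$ lifts to a subobject $N$ of $M$ in $\sA$ (this was established just before Proposition~\ref{p:welldualled}), and then the $R$\nd submodule $RN$ of $M$ generated by $N$ satisfies $\overline{RN} = L$ because $L$ was already an $\overline{R}$\nd submodule, so $\overline{M/RN} \cong \overline{M}/L$.

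The advantage of the paper's approach is economy: lifting subobjects from $\overline{\sA}$ to $\sA$ requires no torsion-free replacement, no regular morphisms, and no adjunction bookkeeping. Your approach, by contrast, lifts morphisms, which genuinely is harder (a morphism $\overline{B} \to \overline{N}$ need not come from $\sA$ until one tensors with a suitable regular morphism), and this is exactly why you need Lemma~\ref{l:atens} and the torsion-free replacement of the target. What your approach buys is a more explicit two-term free presentation of the preimage in $\MOD_{\sA}(R)$, and it illustrates the general mechanism (also used later, e.g.\ in Lemma~\ref{l:barfac}) for converting morphisms in $\overline{\sA}$ into morphisms in $\sA$ at the cost of enlarging the source.
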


\begin{proof}
Any $\overline{R}$\nd module is isomorphic to $\overline{M}/L$
for some free $R$\nd module $M$ and $\overline{R}$\nd submodule $L$ of $\overline{M}$.
There exists a subobject $N$ of $M$ in $\sA$ with $\overline{N} = L$, 
and the image $RN$ of $R \otimes N \to L$ is an $R$\nd submodule of $M$
with $\overline{RN} = L$.
We then have $\overline{M/RN} = \overline{M}/L$.
\end{proof}

\begin{lem}\label{l:barfac}
Suppose that $R$ is a torsion free object of $\sA$.
Then for any morphism $a:M \to \overline{R}$ in $\Mod_{\overline{\sA}}(\overline{R})$
there exists a morphism $a_0:M_0 \to R$ in $\Mod_{\sA}(R)$ such that 
$\overline{a_0} = a \circ p$ with
$p:\overline{M_0} \to M$ an epimorphism in 
$\MOD_{\overline{\sA}}(\overline{R})$.
\end{lem}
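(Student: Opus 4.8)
The plan is to present $M$ as a quotient of a free $\overline{R}$-module on an object coming from $\sA_{\mathrm{rig}}$, use the free--forgetful adjunction to turn $a$ into a morphism in $\overline{\sA}$, lift that morphism by Lemma~\ref{l:atens}, and then push the lift back up through the adjunction over $R$. The "twist by a regular morphism'' produced by Lemma~\ref{l:atens} will become the epimorphism $p$.

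First I would note that, $M$ being dualisable and hence of finite type in $\MOD_{\overline{\sA}}(\overline{R})$, there is an epimorphism $q:\overline{R}\otimes\overline{B}\to M$ of $\overline{R}$-modules with $B$ in $\sA_{\mathrm{rig}}$: every dualisable $\overline{R}$-module is a quotient of a free module on a dualisable object of $\overline{\sA}$, and by Proposition~\ref{p:welldualled} every dualisable object of $\overline{\sA}$ is itself a quotient of some $\overline{B}$ with $B$ in $\sA_{\mathrm{rig}}$. Let $l:\overline{B}\to\overline{R}$ be the morphism in $\overline{\sA}$ adjoint to $a\circ q$ under the free--forgetful adjunction for $\overline{R}$. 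Since $\overline{R}$ is torsion free (indeed $\overline{\sA}$ has no non-zero torsion objects, by Proposition~\ref{p:notors}), Lemma~\ref{l:atens} applied to $B$ in $\sA_{\mathrm{rig}}$ and $N=R$ yields a regular morphism $c:C\to\I$ in $\sA_{\mathrm{rig}}$ and a morphism $h:C\otimes B\to R$ in $\sA$ with $\overline{h}=\overline{c}\otimes l$, where the right-hand side is read as $l\circ(\overline{c}\otimes\overline{B}):\overline{C}\otimes\overline{B}\to\overline{R}$.

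Next I would set $M_0=R\otimes(C\otimes B)$, the free $R$-module on the dualisable object $C\otimes B$ of $\sA$; then $M_0$ is dualisable in $\MOD_{\sA}(R)$. Let $a_0:M_0\to R$ be the morphism of $R$-modules adjoint to $h$, so $a_0$ is a morphism in $\Mod_{\sA}(R)$. Since the projection $\sA\to\overline{\sA}$ is a strict exact tensor functor, it carries the explicit formula for the free--forgetful adjunction over $R$ to the one over $\overline{R}$; hence $\overline{M_0}$ is the free $\overline{R}$-module $\overline{R}\otimes(\overline{C}\otimes\overline{B})$, and $\overline{a_0}$ is the morphism of $\overline{R}$-modules adjoint to $\overline{h}$. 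Define $p:\overline{M_0}\to M$ to be $q\circ(\overline{R}\otimes\overline{c}\otimes\overline{B})$. By Lemma~\ref{l:projreg} the morphism $\overline{c}:\overline{C}\to\I$ is regular in $(\overline{\sA})_{\mathrm{rig}}$, so by Lemma~\ref{l:regtorscok} its cokernel is a torsion object and hence $0$ by Proposition~\ref{p:notors}, i.e.\ $\overline{c}$ is an epimorphism; right exactness of $\otimes$ in $\overline{\sA}$ then makes $\overline{R}\otimes\overline{c}\otimes\overline{B}$ an epimorphism, and therefore so is $p$.

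It remains to check $\overline{a_0}=a\circ p$. By the explicit description of the adjunction, $\overline{a_0}$ is the composite of $\overline{R}\otimes\overline{h}:\overline{R}\otimes\overline{C}\otimes\overline{B}\to\overline{R}\otimes\overline{R}$ with the multiplication $\overline{R}\otimes\overline{R}\to\overline{R}$. Substituting $\overline{h}=l\circ(\overline{c}\otimes\overline{B})$ factors $\overline{R}\otimes\overline{h}$ as $(\overline{R}\otimes l)\circ(\overline{R}\otimes\overline{c}\otimes\overline{B})$, and the composite of $\overline{R}\otimes l$ with the multiplication is precisely the morphism of $\overline{R}$-modules adjoint to $l$, which by the choice of $l$ is $a\circ q$. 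Hence $\overline{a_0}=(a\circ q)\circ(\overline{R}\otimes\overline{c}\otimes\overline{B})=a\circ p$, as required. The only genuinely delicate point is keeping straight the two free--forgetful adjunctions and their compatibility with the projection and with the tensor structures; once that is set up correctly, the remaining verifications are routine diagram chases.
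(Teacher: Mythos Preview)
Your proof is correct and follows essentially the same route as the paper: reduce to a free $\overline{R}$\nd module on an object of $\sA_{\mathrm{rig}}$, pass through the free--forgetful adjunction, lift the resulting morphism $\overline{B}\to\overline{R}$ to $\sA$, and transport back. The only real difference is packaging: the paper lifts the adjoint morphism $b:\overline{B}\to\overline{R}$ by invoking the colimit description \eqref{e:torscolim} directly (finding a subobject $L\subset B$ with $\overline{L}\iso\overline{B}$ and a map $f:L\to R$, then covering $L$ by finitely many objects of $\sA_{\mathrm{rig}}$), whereas you invoke Lemma~\ref{l:atens} as a black box, which is cleaner and avoids repeating that argument. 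Your use of Lemma~\ref{l:projreg}, Lemma~\ref{l:regtorscok} and Proposition~\ref{p:notors} to see that $\overline{c}$ is an epimorphism is exactly right.
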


\begin{proof}
After composing with an appropriate epimorphism, we may suppose that 
$M = \overline{R} \otimes \overline{B}$ with $B$ in $\sA_{\mathrm{rig}}$.
If $b:\overline{B} \to \overline{R}$ in $\overline{\sA}$
corresponds under adjunction to $a$,
we show that for some $b_0:A \to R$ in $\sA$ with $A$ in $\sA_{\mathrm{rig}}$ 
\begin{equation}\label{e:barfac}
\overline{b_0} = b \circ q
\end{equation}
with $q:\overline{A} \to \overline{B}$ an epimorphism in $\overline{\sA}$.
We may then take for $a_0$ the morphism $R \otimes A \to R$ 
corresponding under adjunction to $b_0$, and $p = \overline{R} \otimes q$.
Since $R$ is torsion free, there exist by \eqref{e:torscolim} a subobject 
$L$ of $B$ and a morphism $f:L \to R$ in $\sA$ such that the embedding of $L$ 
induces an isomorphism 
$i:\overline{L} \iso \overline{B}$ with
\begin{equation*}
b \circ i = \overline{f}.
\end{equation*}
Let $r$ be an epimorphism to $L$ in $\sA$ from a coproduct of objects $A_\lambda$
in $\sA_{\mathrm{rig}}$. 
Then $i \circ \overline{r}$ is an epimorphism in $\overline{\sA}$.
Since $\overline{B}$ is of finite type in $\overline{\sA}$,
there is a finite coproduct $A$ of the $A_\lambda$ such that
$i \circ \overline{r_0}$ with $r_0:A \to L$ the restriction of $r$ to $A$
is an epimorphism in $\overline{\sA}$.
Then \eqref{e:barfac} holds with $b_0 = f \circ r_0$ and $q = i \circ \overline{r_0}$. 
\end{proof}

\begin{lem}\label{l:Rmodbar}
Suppose that $R$ is a torsion free object of $\sA$.
\begin{enumerate}
\item\label{i:Rmodbarreg}
The tensor functor 
$\Mod_{\sA}(R) \to \Mod_{\overline{\sA}}(\overline{R})$ induced by \eqref{e:MODfun} 
is faithful and regular.
\item\label{i:Rmodbarpres}
The tensor functor \eqref{e:MODfun} preserves and reflects torsion objects, 
and preserves torsion free objects.
\end{enumerate}
\end{lem}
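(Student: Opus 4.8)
The plan is to prove \ref{i:Rmodbarreg} first and then derive \ref{i:Rmodbarpres} from it, the bridge being a module-theoretic analogue of Lemma~\ref{l:regcofinal}. Throughout, note that since $R$ is torsion free both $\MOD_\sA(R)$ and (because $\overline R$ is torsion free in $\overline\sA$, and $\overline\sA$ is well-dualled by Proposition~\ref{p:welldualled}) $\MOD_{\overline\sA}(\overline R)$ are well-dualled Grothendieck tensor categories, so the torsion calculus of Section~\ref{s:tor} is available for each.

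For the faithfulness in \ref{i:Rmodbarreg}: a morphism of dualisable $R$-modules is zero as soon as the associated morphism $R \to N \otimes_R M^\vee$ is, so one reduces to $M = R$. There $\Hom_R(R, N)$ and $\Hom_{\overline R}(\overline R, \overline N)$ are identified, via the units (which \eqref{e:MODfun} preserves), with $\sA(\I, N)$ and $\overline\sA(\I, \overline N)$ respectively, so that \eqref{e:MODfun} becomes the projection $\sA(\I, N) \to \overline\sA(\I, \overline N)$; this is injective because a dualisable $R$-module is torsion free in $\MOD_\sA(R)$, hence torsion free in $\sA$ by Lemma~\ref{l:Rextpres}, and the projection is injective on hom-groups with torsion free target. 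For the regularity in \ref{i:Rmodbarreg}, I would follow the proof of Lemma~\ref{l:projreg} with $R$ and $\overline R$ in the roles played there by $\I$: given a regular $a$ in $\Mod_\sA(R)$ and a non-zero morphism $g$ in $\Mod_{\overline\sA}(\overline R)$, dualise to assume $g \colon L \to \overline R$; then Lemma~\ref{l:barfac} (in place of Lemma~\ref{l:atens}) produces a regular $a_0 \colon A_0 \to R$ in $\Mod_\sA(R)$ with $\overline{a_0} = g \circ p$, $p$ an epimorphism — here $\overline{a_0}$ is regular as a regular morphism precomposed with an epimorphism, and $a_0 \ne 0$ since $g \ne 0$ — and $a \otimes_R a_0 \ne 0$ because $a$ is regular, whence $\overline a \otimes_{\overline R} g \ne 0$ by the faithfulness just proved.

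The analogue of Lemma~\ref{l:regcofinal} that I need is: for every regular $a' \colon A' \to \overline R$ in $\Mod_{\overline\sA}(\overline R)$ there is a regular $a_0 \colon A_0 \to R$ in $\Mod_\sA(R)$ with $\overline{a_0}$ factoring through $a'$. This follows from Lemma~\ref{l:barfac} (which gives $\overline{a_0} = a' \circ p$ with $p$ an epimorphism, so $\overline{a_0}$ is regular by Lemma~\ref{l:regtorscok} applied in $\MOD_{\overline\sA}(\overline R)$) together with the fact that the faithful tensor functor of \ref{i:Rmodbarreg} reflects regular morphisms, so that $a_0$ is itself regular. Granting this, \eqref{e:MODfun} \emph{preserves torsion}: reducing by cocontinuity and closure of torsion objects under colimits to a finite-type torsion $M$, one has $a \otimes_R M = 0$ for a regular $a$ in $\Mod_\sA(R)$, so $\overline a \otimes_{\overline R} \overline M = 0$ with $\overline a$ regular by \ref{i:Rmodbarreg} and $\overline M$ of finite type, and $\overline M$ is torsion. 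It \emph{reflects torsion}: reducing (by exactness of \eqref{e:MODfun} and closure of torsion objects under subobjects) to a finite-type $M$ with $\overline M$ torsion, it suffices to annihilate each $c \colon R \otimes B \to M$ ($B$ in $\sA_{\mathrm{rig}}$) by a regular morphism; torsion of $\overline M$ gives a regular $a'$ in $\Mod_{\overline\sA}(\overline R)$ with $a' \otimes_{\overline R} \overline c = 0$, the analogue above pulls it back to a regular $a_0$ in $\Mod_\sA(R)$ with $\overline{a_0 \otimes_R c} = 0$, and then — forgetting to $\overline\sA$ and using that a morphism dies in the quotient exactly when its image is torsion — the image $T$ of $a_0 \otimes_R c$ is a torsion $R$-submodule of $M$, of finite type and (Lemma~\ref{l:Rextpres}) torsion in $\MOD_\sA(R)$, so some further regular $\beta$ in $\Mod_\sA(R)$ kills $T$ and the regular morphism $\beta \otimes_R a_0$ kills $c$. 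It \emph{preserves torsion free}: if $N$ is torsion free in $\MOD_\sA(R)$ then its underlying object $N_0$ in $\sA$ is torsion free (Lemma~\ref{l:Rextpres}), hence $\overline N$ is torsion free as an object of $\overline\sA$, there being no non-zero torsion objects in $\overline\sA$ (Proposition~\ref{p:notors}); one then reduces the torsion-freeness of $\overline N$ in $\MOD_{\overline\sA}(\overline R)$, via Lemma~\ref{l:atens}, Lemma~\ref{l:projreg} and the analogue above, to the non-vanishing of a suitable morphism into the torsion free object $N_0$ in $\sA$.

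The main obstacle is organisational rather than conceptual: once \ref{i:Rmodbarreg} is in hand, \ref{i:Rmodbarpres} runs parallel to Section~\ref{s:tor}, but one must repeatedly translate between $\MOD_\sA(R)$ and $\MOD_{\overline\sA}(\overline R)$, between modules and their free extensions $R \otimes B$, and — because \eqref{e:MODfun} need not be full — use Lemma~\ref{l:barfac} in place of a naive lifting of morphisms. The one place where something genuinely beyond Section~\ref{s:tor} enters is the proof that torsion is reflected, where a morphism annihilated only up to a torsion submodule must be replaced by one annihilated outright, by tensoring with a second, well-chosen regular morphism; and the preservation of torsion-freeness is the assertion whose verification demands the most bookkeeping, since there one has to control an arbitrary regular $a'$ rather than a conveniently manufactured one.
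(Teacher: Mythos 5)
Your proof is correct and follows the paper's strategy closely: both parts go through Lemma~\ref{l:barfac} and the module-level analogues of the machinery of Section~\ref{s:tor}, with your faithfulness argument the dual of the paper's (you reduce the source of the hom-group to $R$ and use torsion-freeness of the dualisable target, the paper reduces the target to $R$ via a free presentation). One point deserves emphasis: in the reflection-of-torsion step, after producing a regular $a_0$ with $\overline{a_0} \otimes_{\overline R} \overline{b} = 0$, the paper concludes $a_0 \otimes_R b = 0$ ``by \ref{i:Rmodbarreg}'' and stops. As you in effect observe, that inference is not immediate: faithfulness in \ref{i:Rmodbarreg} applies to morphisms between dualisable $R$-modules, whereas the target $M$ of $a_0 \otimes_R b$ need not be torsion free, and $\overline{a_0 \otimes_R b} = 0$ only forces the \emph{image} $T$ of $a_0 \otimes_R b$ to be a torsion object. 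Your extra move — $T$ is of finite type, torsion in $\MOD_\sA(R)$ by Lemma~\ref{l:Rextpres}, hence killed by a further regular $\beta$ in $\Mod_\sA(R)$, so that the regular $\beta \otimes_R a_0$ annihilates $b$ — is exactly what is needed to finish the argument, and it supplies a detail the published proof passes over.

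One small slip: in your regularity paragraph you write that Lemma~\ref{l:barfac} ``produces a regular $a_0$'' and that ``$\overline{a_0}$ is regular as a regular morphism precomposed with an epimorphism.'' There $g$ is only assumed non-zero, not regular, so $a_0$ is merely non-zero — which is all the rest of the paragraph actually uses, since you conclude by tensoring with the regular $a$ and applying faithfulness. The version with a genuinely regular input $a'$ is your later, correctly stated analogue of Lemma~\ref{l:regcofinal}, where one does invoke Lemma~\ref{l:regtorscok} and reflected regularity. The remaining pieces — preservation of torsion via the finite-type criterion (the paper cites Lemma~\ref{l:adjtorspres}, which is the same computation) and the sketch of preservation of torsion-freeness — agree with the paper.
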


\begin{proof}
\ref{i:Rmodbarreg} 
Since $R$ is torsion free, \eqref{e:torsfreeinj} shows using \eqref{e:torscolim}
that the projection $\sA \to \overline{\sA}$ is injective on hom groups with target $R$.
Thus \eqref{e:MODfun} is injective on hom groups
\begin{equation*}
\Hom_R(R \otimes M,R)
\end{equation*}
for every $M$ in $\sA$.
Since every $R$\nd module is a quotient 
of a free $R$\nd module, this gives the faithfulness.

For the regularity it is enough to show that if $c:C \to R$ is regular in $\Mod_{\sA}(R)$ then 
$\overline{c} \otimes_{\overline{R}} d$ is non-zero for every non-zero $d:D \to \overline{R}$ 
in $\Mod_{\overline{\sA}}(\overline{R})$. 
By Lemma~\ref{l:barfac}, we may after composing $d$ with an appropriate epimorphism of 
$\overline{R}$\nd modules assume that $d = \overline{d_0}$ for some $d_0:D_0 \to R$ in 
$\Mod_{\sA}(R)$.
The required result then follows from the faithfulness. 

\ref{i:Rmodbarpres}
That \eqref{e:MODfun} preserves torsion objects follows from \ref{i:Rmodbarreg} 
and Lemma~\ref{l:adjtorspres} applied to \eqref{e:MODfun}.
It follows from \ref{i:Rmodbarreg} that 
$\Mod_{\sA}(R) \to \Mod_{\overline{\sA}}(\overline{R})$ reflects regular morphisms.
Thus by Lemma~\ref{l:regtorscok} applied to $\MOD_{\overline{\sA}}(\overline{R})$ 
and Lemma~\ref{l:barfac},
the composite of any regular morphism $a:A \to \overline{R}$ in 
$\Mod_{\overline{\sA}}(\overline{R})$
with an appropriate epimorphism in $\MOD_{\overline{\sA}}(\overline{R})$ is of the form
$\overline{a_0}$ for some regular morphism $a_0:A_0 \to R$ in $\Mod_{\sA}(R)$.
Now let $M$ be an $R$\nd module with $\overline{M}$ a torsion object in $\MOD_{\sA}(R)$.
Then if $b:B \to M$ is a morphism in $\MOD_{\sA}(R)$ with $B$ in $\Mod_{\sA}(R)$,
there exists a regular morphism $a_0:A_0 \to R$ in $\Mod_{\sA}(R)$ with 
$\overline{a_0} \otimes_{\overline{R}} \overline{b} = 0$ and hence
$a_0 \otimes_R b = 0$ by \ref{i:Rmodbarreg}.
Thus \eqref{e:MODfun} reflects torsion objects.
Similarly \eqref{e:MODfun} preserves torsion free objects.
\end{proof}

Suppose that $R$ is a torsion free object of $\sA$.
Then \eqref{e:MODfun} induces an exact tensor functor
\begin{equation}\label{e:MODfuntors}
\overline{\MOD_{\sA}(R)} \to 
\overline{\MOD_{\overline{\sA}}(\overline{R})}
\end{equation}
because \eqref{e:MODfun} is an exact tensor functor which by 
Lemma~\ref{l:Rmodbar}\ref{i:Rmodbarpres} preserves torsion objects.

\begin{prop}\label{p:torsequ}
Suppose that $R$ is a torsion free object of $\sA$.
Then \eqref{e:MODfuntors} is a tensor equivalence.
\end{prop}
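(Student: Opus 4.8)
The strategy is to produce a quasi-inverse to \eqref{e:MODfuntors} in the obvious way and check it is well-defined. Let me write $\sB = \MOD_{\sA}(R)$ and $\sB' = \MOD_{\overline{\sA}}(\overline{R})$, so \eqref{e:MODfun} is an exact cocontinuous tensor functor $G:\sB\to\sB'$, and by Lemma~\ref{l:Rmodbar}\ref{i:Rmodbarpres} $G$ preserves and reflects torsion objects. The functor \eqref{e:MODfuntors} is then $\overline{G}:\overline{\sB}\to\overline{\sB'}$. To build a quasi-inverse I would first observe that the composite $\sB\xrightarrow{G}\sB'\to\overline{\sB'}$ sends torsion objects of $\sB$ to $0$ (since $G$ preserves torsion), hence by the universal property of the quotient it factors through $\overline{\sB}$, recovering $\overline{G}$; and symmetrically one wants a functor $\overline{\sB'}\to\overline{\sB}$. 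For the latter, the key point is that $G$ is essentially surjective (Lemma~\ref{l:MODfunsurj}) and moreover an "isomorphism up to torsion in the strong sense": I would show directly that for $M,N$ in $\sB$ the map $\sB(M,N)\to\sB'(G(M),G(N))$ becomes an isomorphism after passing to the filtered colimits defining the Hom-groups of $\overline{\sB}$ and $\overline{\sB'}$ respectively.

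Concretely, the plan is: (i) show $\overline{G}$ is essentially surjective. Given an object $X$ of $\overline{\sB'}$, lift it to an $\overline{R}$-module, which by Lemma~\ref{l:MODfunsurj} is isomorphic to $G(M)$ for some $R$-module $M$; then $\overline{G}(\overline{M}) \cong X$. (ii) Show $\overline{G}$ is full. A morphism $\overline{M}\to\overline{N}$ in $\overline{\sB'}$ (after identifying, via (i), source and target with images of $R$-modules) is represented, using \eqref{e:torscolim} for $\sB'$, by an actual morphism $G(M)'\to G(N)_{\mathrm{tf}}$ from a subobject $G(M)'\subset G(M)$ with torsion quotient. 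Using Lemma~\ref{l:barfac} (applied after reducing to the case of a free module, and possibly precomposing with an epimorphism of $\overline{R}$-modules, which is harmless modulo torsion) together with Lemma~\ref{l:Rmodbar}\ref{i:Rmodbarpres}, I would lift this data to a morphism in $\sB$ representing a morphism $\overline{M}\to\overline{N}$ in $\overline{\sB}$ mapping to it. (iii) Show $\overline{G}$ is faithful. If $\overline{G}(\overline{f})=0$ for $\overline{f}:\overline{M}\to\overline{N}$ represented by $f:M'\to N_{\mathrm{tf}}$ with $M/M'$ torsion, then $G(f)$ has image a torsion object in $\sB'$; since $G$ reflects torsion objects (Lemma~\ref{l:Rmodbar}\ref{i:Rmodbarpres}) and is faithful (same lemma, part \ref{i:Rmodbarreg}), the image of $f$ is a torsion object in $\sB$, so $\overline{f}=0$.

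Since $\overline{G}$ is exact, an additive functor which is fully faithful and essentially surjective, it is an equivalence of abelian categories; and it is a tensor functor by construction, so by the criterion recalled in Section~\ref{s:prelim} (an equivalence underlying a tensor functor is a tensor equivalence) it is a tensor equivalence. An alternative, perhaps cleaner, organization: identify $\overline{\sB}$ with the localization of $\sB$ at the $\sS$-isomorphisms and likewise for $\sB'$; since $G$ preserves and reflects torsion objects it sends $\sA_{\mathrm{tors}}$-isomorphisms to $\overline{R}$-torsion-isomorphisms and reflects them, and combined with essential surjectivity (i) and the lifting of morphisms (ii) one gets an equivalence of the localized categories directly.

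**Main obstacle.** The routine parts (exactness, tensor-compatibility, faithfulness) are immediate from the already-proved lemmas. The real work is fullness, step (ii): a morphism in $\overline{\sB'}$ is only represented by a morphism out of a subobject of $G(M)$ with torsion cokernel, and one must show this subobject, up to further shrinking (which is allowed modulo torsion), descends from a subobject of $M$ over $\sA$ — this is exactly what Lemma~\ref{l:barfac} is designed to supply, but one has to be careful that the various epimorphisms one precomposes with are $\overline{R}$-module epimorphisms with torsion kernel so that they become isomorphisms in $\overline{\sB'}$, and that the resulting lifted morphism is independent of choices up to the equivalence relation defining $\overline{\sB}$. I expect that assembling Lemma~\ref{l:barfac}, Lemma~\ref{l:Rmodbar}, and the description \eqref{e:torscolim} of Hom-groups in a quotient by a Serre subcategory into a clean surjectivity-on-Hom argument is the one genuinely delicate bookkeeping step.
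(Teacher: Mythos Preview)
Your overall architecture---prove essential surjectivity, faithfulness, and fullness of $\overline{G}$, then invoke the fact that a tensor functor which is an equivalence is a tensor equivalence---matches the paper. Essential surjectivity is handled identically (Lemma~\ref{l:MODfunsurj}), and your faithfulness argument via reflection of torsion (Lemma~\ref{l:Rmodbar}\ref{i:Rmodbarpres}) is correct and arguably cleaner than the paper's, which instead works in $\sA$ using Lemma~\ref{l:Rextpres} and the ``torsion and torsion-free implies zero'' trick.

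The genuine gap is in your fullness step. Lemma~\ref{l:barfac} is too narrow for the job: it only lifts morphisms in $\Mod_{\overline{\sA}}(\overline{R})$ with \emph{dualisable} source and target $\overline{R}$, whereas you need to lift a morphism $j:L\to\overline{N}$ from an arbitrary $\overline{R}$-submodule $L\subset G(M)$ (with torsion quotient) into an arbitrary torsion-free target. Your suggested ``reduction to free modules and precomposition with epimorphisms'' does not close this gap: even when $M$ is free, the subobject $L$ is uncontrolled and the target is not $\overline{R}$, and there is no evident way to manufacture the dualisable-source, unit-target setting that Lemma~\ref{l:barfac} requires.

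The paper's fullness argument does not use Lemma~\ref{l:barfac} at all; it proceeds by a different mechanism. One first lifts $L$ to a subobject $M_0$ of $M$ \emph{in $\sA$} (not as an $R$-submodule), using the general fact that subobjects lift along $\sA\to\overline{\sA}$; after shrinking $M_0$ to kill a torsion piece, the morphism $j$ lifts to a morphism $i_0:M_0\to N$ in $\sA$. One then \emph{saturates}: the image $M'=RM_0$ of the $R$-module map $m:R\otimes M_0\to M$ is an $R$-submodule of $M$ lying over $L$, and $M/M'$ is torsion in $\sB$ by reflection (Lemma~\ref{l:Rmodbar}\ref{i:Rmodbarpres}). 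The adjoint $R$-module map $i_1:R\otimes M_0\to N$ sends $\Ker m$ to a subobject of $N$ which is torsion in $\sA$ (because its image in $\overline{\sA}$ vanishes) yet torsion-free in $\sA$ (as a subobject of $N$, using Lemma~\ref{l:Rextpres}); hence $i_1(\Ker m)=0$ and $i_1$ descends to the required $i:M'\to N$ in $\sB$. This saturate-then-descend manoeuvre is the missing idea in your sketch.
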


\begin{proof}
Since \eqref{e:MODfun} is essentially surjective by Lemma~\ref{l:MODfunsurj}, 
so also is \eqref{e:MODfuntors}.
To show that  \eqref{e:MODfuntors} is fully faithful, it is enough by \eqref{e:torscolim}
to show for every pair of $R$\nd modules
$M$ and $N$ with $N$ torsion free in $\MOD_{\sA}(R)$ that the homomorphism
\begin{equation}\label{e:torsequff}
\colim_{M' \subset M, \, M/M' \, \text{torsion}}\Hom_R(M',N) \to 
\colim_{L \subset \overline{M}, \, \overline{M}/L \, \text{torsion}}\Hom_{\overline{R}}(L,\overline{N})
\end{equation}
is an isomorphism, where $M'$ runs over the $R$\nd submodules of $M$ with $M/M'$ a torsion $R$\nd module,
$\overline{N}$ is torsion free by Lemma~\ref{l:Rmodbar}\ref{i:Rmodbarpres},
$L$ runs over the $\overline{R}$\nd submodules of $\overline{M}$ with $\overline{M}/L$ a torsion 
$\overline{R}$\nd module, 
and the class of $i:M' \to N$ is sent to the class of $\overline{\imath}:\overline{M'} \to \overline{N}$.

Let $M'$ be an $R$\nd submodule of $M$ with $M/M'$ a torsion $R$\nd module, and
\begin{equation*}
i:M' \to N
\end{equation*}
be a morphism of $R$\nd modules whose class in the source of \eqref{e:torsequff} lies 
in the kernel of \eqref{e:torsequff}.
Since the transition homomorphisms of the target of \eqref{e:torsequff} are injective 
by the analogue of \eqref{e:torsfreeinj} for $\overline{R}$\nd modules, 
$\overline{\imath}:\overline{M'} \to \overline{N}$ is $0$ in $\overline{\sA}$. 
Thus $\Img i$ is a torsion object in $\sA$.
On the other hand, $\Img i$ is a torsion free object in $\sA$, because it is a subobject of $N$
with $N$ torsion free in $\sA$ by Lemma~\ref{l:Rextpres}.
Hence $\Img i = 0$ and $i = 0$.
Thus \eqref{e:torsequff} is injective.

Let $L$ be an $\overline{R}$\nd submodule of $\overline{M}$ with $\overline{M}/L$ a torsion
$\overline{R}$\nd module, and 
\begin{equation*}
j:L \to \overline{N}
\end{equation*}
be a morphism of $\overline{R}$\nd modules.
Then $L$ lifts to a subobject $M_0$ of $M$ in $\sA$.
Replacing if necessary $M_0$ by a smaller subobject $M_1$ of $M$ in $\sA$ with $M_0/M_1$
a torsion object in $\sA$, we may assume further that $j$ lifts to a morphism
\begin{equation*}
i_0:M_0 \to N
\end{equation*}
in $\sA$.
The image $M' = RM_0$ of the morphism
\begin{equation*}
m:R \otimes M_0 \to M
\end{equation*}
of $R$\nd modules corresponding to the embedding $M_0 \to M$ is an $R$\nd submodule of $M$ above $L$.
By Lemma~\ref{l:Rmodbar}\ref{i:Rmodbarpres}, $M/M'$ is a torsion $R$\nd module because 
$\overline{M/M'} = \overline{M}/L$ is a torsion $\overline{R}$\nd module.
Write
\begin{equation*}
i_1:R \otimes M_0 \to N
\end{equation*}
for the morphism of $R$\nd modules corresponding to the morphism $i_0:M_0 \to N$ in $\sA$.
Since $i_0$ lies above the morphism $j$ of $\overline{R}$\nd modules, 
we have $\overline{i_1(\Ker m)} = 0$ in $\overline{\sA}$. 
Hence $i_1(\Ker m)$ is a torsion object in $\sA$.
On the other hand $i_1(\Ker m)$ is a subobject of $N$ in $\sA$, and hence
by  Lemma~\ref{l:Rextpres} is torsion free in $\sA$.
Thus $i_1(\Ker m) = 0$, so that $i_1$ factors as
\begin{equation*}
R \otimes M_0 \to M' \xrightarrow{i} N
\end{equation*} 
where the first arrow is the epimorphism defined by $m$
and $i$ is a morphism of $R$\nd modules.
Thus $M'$ and $i$ lie above $L$ and $j$, so that the image of 
the class of $i$ under \eqref{e:torsequff} is the class of $j$.
Hence \eqref{e:torsequff} is surjective.
\end{proof}

Let $\sC$ be a rigid tensor category, $\sD$ be a cocomplete tensor category
with every object of $\sD_\mathrm{rig}$ of finite 
presentation in $\sD$, and 
\begin{equation*}
T:\sC \to \sD
\end{equation*}
be a fully faithful tensor functor.
As in Section~\ref{s:fun}, we have by additive Kan extension along $h_-:\sC \to \widehat{\sC}$
a tensor functor $T^*:\widehat{\sC} \to \sD$
with the universal natural transformation a tensor isomorphism \eqref{e:Th},
and there exists a lax tensor functor
$T_*:\sD \to \widehat{\sC}$ right adjoint to $T$.
By \eqref{e:pres}, a morphism
$j$ in $\widehat{\sC}$ is an isomorphism if and only if $\widehat{\sC}(h_A,j)$
is an isomorphism for every $A$.
Thus $T_*$ preserves filtered colimits: 
take for $j$ the canonical morphism from $\colim T_*(V_\lambda)$ to
$T_*(\colim V_\lambda)$ and use the fact that the $h_A$ and the objects of 
$\sD_\mathrm{rig}$ are of finite presentation.
Similarly composing the unit for the adjunction with $h_-$ gives an isomorphism
\begin{equation}\label{e:unithiso}
h_- \iso T_*T^*h_-,
\end{equation}
as can be seen by taking for $j$ the components of \eqref{e:unithiso} and using
the fact that by \eqref{e:Th} $T^*h_-$ is fully faithful.
Taking the component of \eqref{e:unithiso} at $\I$ shows using the compatibility of \eqref{e:unithiso}
with the tensor structures that the unit
for $T$ is an isomorphism
\begin{equation}\label{e:Tunitiso}
\I \iso T_*(\I).
\end{equation} 
Thus the forgetful lax tensor functor 
$\MOD_{\widehat{\sC}}(T_*(\I)) \to \widehat{\sC}$ is a tensor equivalence,
so that by \eqref{e:EMtens} with $H = T^*$ and $H' = T_*$ the tensor structural 
morphism
\begin{equation}\label{e:Ttensiso}
T_*(V) \otimes T_*(W) \to T_*(V \otimes W)
\end{equation}
is an isomorphism for $W$ in the essential image of $T$. 

Let $k$ be a field of characteristic $0$.
If $\bm|\bn$ is a family of pairs of integers $\ge 0$, 
we may take $\sC = \sF_{\bm|\bn}$ and $\sD = \MOD_{\GL_{\bm|\bn},\varepsilon}(k)$
above, and for
\begin{equation*}
T:\sF_{\bm|\bn} \to \MOD_{\GL_{\bm|\bn},\varepsilon}(k)
\end{equation*}
the $k$\nd tensor functor obtained by composing the embedding of 
$\Mod_{\GL_{\bm|\bn},\varepsilon}(k)$ into $\MOD_{\GL_{\bm|\bn},\varepsilon}(k)$
with \eqref{e:freeGL}:
that $T$ is fully faithful follows from Lemma~\ref{l:freeGLff}.
We define a lax tensor functor 
\begin{equation}\label{e:Hdef}
\MOD_{\GL_{\bm|\bn},\varepsilon}(k) \to \widetilde{\sF_{\bm|\bn}}
\end{equation}
by composing the projection $\widehat{\sF_{\bm|\bn}} \to \widetilde{\sF_{\bm|\bn}}$
with $T_*$.

\begin{prop}\label{p:MODGLFequiv}
The lax tensor functor \eqref{e:Hdef} is a tensor equivalence.
\end{prop}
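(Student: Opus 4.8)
The plan is to exhibit an explicit quasi-inverse to $H:=\overline{(-)}\circ T_*$ (which is \eqref{e:Hdef}) coming from $T^*$, and to check that the unit and counit of the adjunction $T^*\dashv T_*$ become isomorphisms after passing to the torsion quotient. Write $\sD=\MOD_{\GL_{\bm|\bn},\varepsilon}(k)$. I would first note that $\sD$ is a well-dualled Grothendieck tensor category: $\otimes_k$ is cocontinuous, $\I$ is of finite type since $(\GL_{\bm|\bn},\varepsilon)$-invariants commute with filtered colimits, and $\sD_\mathrm{rig}$ is the essentially small category of finite-dimensional modules, whose objects generate $\sD$. Moreover $\sD$ has no non-zero torsion objects, because $\I$ is a simple object and no non-zero morphism factors through its only proper subobject $0$; hence $\overline{\sD}=\sD$. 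Since $T$ is fully faithful (Lemma~\ref{l:freeGLff}) and both $(\widehat{\sF_{\bm|\bn}})_\mathrm{rig}$ (the pseudo-abelian hull of the integral category $\sF_{\bm|\bn}$) and $\sD_\mathrm{rig}$ are integral, the tensor functor $(\widehat{\sF_{\bm|\bn}})_\mathrm{rig}\to\sD_\mathrm{rig}$ induced by $T^*$ is faithful, hence regular; so by Lemma~\ref{l:adjtorspres} the cocontinuous tensor functor $T^*$ preserves torsion objects and therefore annihilates them, and it factors as the projection followed by a cocontinuous tensor functor $\overline{T^*}:\widetilde{\sF_{\bm|\bn}}\to\sD$. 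Finally, because every object of $\sD$ is a filtered colimit of dualisable submodules and $T_*$ preserves filtered colimits, \eqref{e:Ttensiso} and \eqref{e:Tunitiso} show that $T_*$ is in fact a strong tensor functor, so $H$ is a tensor functor.

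The essential point is that the unit $\eta_M:M\to T_*T^*(M)$ is an isomorphism up to torsion for every $M$ in $\widehat{\sF_{\bm|\bn}}$. It is an isomorphism for $M=h_A$ by \eqref{e:unithiso}; since $T_*$ preserves finite coproducts and filtered colimits and $T^*$ is cocontinuous, it is an isomorphism whenever $M$ is a coproduct of representables. For arbitrary $M$, choosing a presentation $Q_1\xrightarrow{d}Q_0\to M\to 0$ with $Q_0,Q_1$ coproducts of representables (\eqref{e:pres}) and using that $T^*$ is right exact and $T_*$ left exact, one identifies $\Coker\eta_M$ with $\Coker\!\big(T_*(T^*Q_0\twoheadrightarrow T^*M)\big)$ and $\Ker\eta_M$ with $\Coker\!\big(T_*(T^*Q_1\twoheadrightarrow\Ker(T^*Q_0\to T^*M))\big)$. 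So it suffices to prove: \emph{for every epimorphism $f:X\to Y$ in $\sD$, the object $\Coker(T_*f)$ of $\widehat{\sF_{\bm|\bn}}$ is a torsion object.}

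To prove this it is enough to show that every finite-type subobject $M_0$ of $\Coker(T_*f)$ is torsion. Such an $M_0$ is a quotient of a finite sum $\bigoplus_{i\in F}h_{A_i}$, the $i$-th coprojection corresponding under the Yoneda isomorphism to the class of a morphism $\gamma_i:T(A_i)\to Y$. Pulling $f$ back along $(\gamma_i)_{i\in F}:\bigoplus_{i\in F}T(A_i)\to Y$ and replacing the pullback by a finite-dimensional submodule still surjecting onto $Y_0:=\bigoplus_{i\in F}T(A_i)$, we reduce to an epimorphism $f_0:P_0\to Y_0$ with $P_0$ finite-dimensional, together with a morphism $P_0\to X$ over $Y$. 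Now $Y_0$, being a finite direct sum of objects $T(A_i)$ each of which is a representation of $(\rM_{\bm|\bn},\varepsilon)$, is itself such a representation; by Theorem~\ref{t:VtensW} applied to $P_0\oplus Y_0$ there is a non-zero representation $W$ of $(\rM_{\bm|\bn},\varepsilon)$ with $P_0\otimes_k W$ and $Y_0\otimes_k W$ both representations of $(\rM_{\bm|\bn},\varepsilon)$. Then $f_0\otimes\id_W$ is an epimorphism in the category of representations of $(\rM_{\bm|\bn},\varepsilon)$, which is semisimple abelian by Proposition~\ref{p:Msummand}, hence splits there, and a fortiori over $(\GL_{\bm|\bn},\varepsilon)$; composing a splitting with $\id\otimes\,\mathrm{ev}_W$ produces a morphism $g$ with $f_0\circ g=\id_{Y_0}\otimes\,\mathrm{ev}_W$, so that restricting $g$ to the $i$-th summand and composing with $P_0\to X$ exhibits each $\gamma_i\circ(\mathrm{ev}_W\otimes\id_{T(A_i)})$ as lifting along $f$. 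Since $W$ is a representation of $(\rM_{\bm|\bn},\varepsilon)$, Proposition~\ref{p:Msummand} makes $W$ a direct summand of a direct sum of objects $\bigotimes_l E_{\gamma_l}$, so $W$, $W^\vee$ and $A:=W^\vee\otimes W$ lie in $(\widehat{\sF_{\bm|\bn}})_\mathrm{rig}$ by \eqref{e:freeGL} and Lemma~\ref{l:freeGLff}, and $a:=\mathrm{ev}_W:A\to\I$ is there a non-zero, hence regular, morphism with $T^*(a)=\mathrm{ev}_W$. Because $T^*(a)\otimes\id_{T(A_i)}$, precomposed with any morphism into $A\otimes T(A_i)$, still lifts along $f$, we get $a\otimes M_0=0$, so $M_0$ is a torsion object; this proves the displayed assertion and hence that each $\eta_M$ is an isomorphism up to torsion.

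It follows that $\overline{\eta}$ is a tensor isomorphism $\Id_{\widetilde{\sF_{\bm|\bn}}}\iso H\circ\overline{T^*}$. For the other composite, $\overline{T^*}\circ H=T^*T_*$; for a dualisable object $V$ of $\sD$ one writes $V=T^*(M_V)$ for a suitable $M_V$ using Corollary~\ref{c:quotsub} and the full faithfulness of $T$, and then the triangle identity $c_{T^*M_V}\circ T^*(\eta_{M_V})=\id$ exhibits $T^*(\eta_{M_V})$ as a split monomorphism, while right-exactness of $T^*$ together with the fact that it annihilates torsion objects (applied to the isomorphism up to torsion $\eta_{M_V}$) makes it an epimorphism; hence $T^*(\eta_{M_V})$, and so the counit $c_V:T^*T_*(V)\to V$, is an isomorphism. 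Since $T^*T_*$ preserves filtered colimits and every object of $\sD$ is a filtered colimit of dualisable ones, $c$ is an isomorphism everywhere, so $\overline{T^*}\circ H\iso\Id_\sD$ as tensor functors. Therefore $H$ and $\overline{T^*}$ are mutually quasi-inverse tensor functors and \eqref{e:Hdef} is a tensor equivalence. The main obstacle is the boxed assertion about $\Coker(T_*f)$: it is exactly where the representation theory of $\prod_\gamma\GL_{m_\gamma|n_\gamma}$ — the divisibility of coefficients underlying Theorem~\ref{t:VtensW} and the semisimplicity of the representation category of $\rM_{\bm|\bn}$ — is used; everything else is formal manipulation with the adjunction $T^*\dashv T_*$ and the torsion quotient.
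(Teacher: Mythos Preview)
Your approach is genuinely different from the paper's and the core idea is sound, but there are two real gaps in the logical order.

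\textbf{Comparison.} The paper never introduces $\overline{T^*}$. Instead it shows directly that $H$ is exact: given a short exact sequence in $\Mod_{G,\varepsilon}(k)$, it tensors with a $W_0\ne 0$ from Theorem~\ref{t:VtensW} so that everything becomes an $(\rM_{\bm|\bn},\varepsilon)$-representation and hence splits, then uses that $H(W_0)$ is dualisable and non-zero in $\widetilde{\sF_{\bm|\bn}}$ (which has no torsion) to deduce exactness of the original sequence under $H$. Exactness plus Corollary~\ref{c:quotsub} then gives strongness and full faithfulness. Your route---showing the unit of $T^*\dashv T_*$ is an isomorphism up to torsion and the counit is an isomorphism---is a clean adjunction argument, and your ``boxed claim'' is where the same representation-theoretic input enters. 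Both arguments ultimately hinge on the semisimplicity of $(\rM_{\bm|\bn},\varepsilon)$-representations and Theorem~\ref{t:VtensW}; yours packages it as a statement about lifting along epimorphisms after tensoring with $W$, the paper's as a statement about splitting short exact sequences after tensoring with $W_0$.

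\textbf{The main gap.} Your first paragraph asserts that $T^*$ ``factors as the projection followed by a cocontinuous tensor functor $\overline{T^*}$'' because it annihilates torsion. That is not enough: for a merely right exact functor, killing torsion objects only shows that a monomorphism with torsion cokernel goes to an \emph{epimorphism}, not to an isomorphism. The factorisation through $\widetilde{\sF_{\bm|\bn}}$ requires $T^*$ to invert all isomorphisms up to torsion. This does follow from what you prove later: once $c$ is an isomorphism, $T_*$ is fully faithful; then for any $\sS$-iso $j$, the morphism $T_*T^*(j)$ is an $\sS$-iso between torsion-free objects (using that $\eta$ is an $\sS$-iso), hence a monomorphism with torsion cokernel, and since $T_*(\Ker T^*(j))$ is both torsion-free and embeds in the torsion object $\Coker(\eta)$, it vanishes, whence $\Ker T^*(j)=0$. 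So the fix is simply to postpone the construction of $\overline{T^*}$ until after you have established the unit and counit statements.

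\textbf{Two minor errors.} First, your filtered-colimit argument that $T_*$ is strong does not work: \eqref{e:Ttensiso} is only known for $W$ in the essential image of $T$, and a general dualisable $(\GL_{\bm|\bn},\varepsilon)$-module is \emph{not} in that image (the category $\Mod_{\GL_{m|n},\varepsilon}(k)$ is not semisimple). This is harmless, since strongness of $H$ follows automatically once you have a tensor functor $\overline{T^*}$ quasi-inverse to the lax $H$ via monoidal unit and counit. Second, the assertion that each $T(A_i)$ is an $(\rM_{\bm|\bn},\varepsilon)$-representation is false: objects of $\sF_{\bm|\bn}$ involve $N_\gamma^\vee$, and $E_\gamma^\vee$ is not an $\rM_{\bm|\bn}$-module. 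But you apply Theorem~\ref{t:VtensW} to $P_0\oplus Y_0$ anyway, so this incorrect sentence can simply be deleted.
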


\begin{proof}
Write $H$ for \eqref{e:Hdef}, $G$ for $\GL_{\bm|\bn}$ and $\sF$ for $\sF_{\bm|\bn}$.
By Lemma~\ref{l:freeGLff}, $\sF$ is integral.
The functor $H$ is left exact and preserves filtered colimits.
By \eqref{e:Tunitiso}, the unit for the lax tensor functor $H$ is an isomorphism
\begin{equation}\label{e:Hunitiso}
\I \iso H(k)
\end{equation}
and by \eqref{e:Ttensiso} the tensor structural morphism
\begin{equation}\label{e:Htensiso}
H(V) \otimes H(W) \to H(V \otimes_k W)
\end{equation}
of $H$ is an isomorphism for $W$ in the essential image of $T$.
By Lemma~\ref{l:Fmnfracclose} and Corollary~\ref{c:FrCff}, the composite 
$\sF \to \widetilde{\sF}$ of the projection 
$\widehat{\sF} \to \widetilde{\sF}$ with $h_-$ is fully faithful.
Thus by \eqref{e:Th} and \eqref{e:unithiso}, $HT$ is fully faithful.
The homomorphism
\begin{equation}\label{e:Hhomiso}
\Hom_G(V,W) \to \Hom_{\widetilde{\sF}}(H(V),H(W))
\end{equation}
induced by $H$ is then an isomorphism for $V$ and $W$ in the essential image of $T$. 

Given a short exact sequence
\begin{equation}\label{e:Vsex}
0 \to V' \to V \to V'' \to 0
\end{equation}
in $\Mod_{G,\varepsilon}(k)$, there exists by Theorem~\ref{t:VtensW} and 
Proposition~\ref{p:Msummand}\ref{i:Msummandss} a representation $W_0 \ne 0$ of 
$(\rM_{\bm|\bn},\varepsilon)$ such that
\begin{equation*}
0 \to V' \otimes_k W_0 \to V \otimes_k W_0 \to V'' \otimes_k W_0 \to 0
\end{equation*}
is a split short exact sequence.
By Proposition~\ref{p:Msummand}\ref{i:Msummand}, 
$W_0$ lies in the pseudo-abelian hull of the essential image of $T$,
so that \eqref{e:Htensiso} is an isomorphism for $W = W_0$ and any $V$,
and \eqref{e:Hhomiso} is an isomorphism
for $V = W = W_0$.
The naturality of \eqref{e:Htensiso} then shows that applying $H$ to \eqref{e:Vsex} and tensoring 
with $H(W_0)$ gives split short exact sequence
\begin{equation*}
0 \to H(V') \otimes H(W_0) \to H(V) \otimes H(W_0) \to H(V'') \otimes H(W_0) \to 0.
\end{equation*}
Since \eqref{e:Htensiso} with $W_0$ or $W_0{}\!^\vee$ for $W$ is an isomorphism,
$H(W_0)$ is dualisable.
Hence $- \otimes H(W_0)$ is exact.
If $M$ is the cokernel of $H(V') \to H(V)$ and
\begin{equation*}
i:M \to H(V'')
\end{equation*}
is the unique factorisation of $H(V) \to H(V'')$ through $H(V) \to M$, 
it follows that $i \otimes H(W_0)$ is an isomorphism.
Thus
\begin{equation*}
(\Ker i) \otimes H(W_0) = 0 = (\Coker i) \otimes H(W_0)
\end{equation*}
by exactness of $- \otimes H(W_0)$.
Now $H(W_0) \ne 0$ because \eqref{e:Hhomiso} is an isomorphism with $V = W = W_0$,
so that $1_{H(W_0)}$ is regular in the pseudo-abelian hull 
$(\widetilde{\sF})_\mathrm{rig}$ of the integral $k$\nd tensor
category $\sF$.
Since $\widetilde{\sF}$ has no non-zero torsion objects 
by Proposition~\ref{p:notors} with $\sA = \widehat{\sF}$, it follows that 
\begin{equation*}
\Ker i = 0 = \Coker i.
\end{equation*}
Thus $i$ is an isomorphism, so that the restriction of $H$ to 
$\Mod_{G,\varepsilon}(k)$ is right exact.
Since $H$ preserves filtered colimits 
and every morphism in $\MOD_{G,\varepsilon}(k)$ is a filtered colimit of morphisms 
in $\Mod_{G,\varepsilon}(k)$,
it follows that $H$ is right exact, and hence exact and cocontinuous.

By Corollary~\ref{c:quotsub}, every object in $\Mod_{G,\varepsilon}(k)$ is a kernel 
(resp.\ a cokernel) of a morphism in the pseudo-abelian hull of the essential image of $T$.
Thus \eqref{e:Htensiso} is an isomorphism for $W$ in $\Mod_{G,\varepsilon}(k)$ and every $V$.
In particular $H(V)$ is dualisable and hence of finite type for $V$ in 
$\Mod_{G,\varepsilon}(k)$.
Similarly \eqref{e:Hhomiso} is an isomorphism for $V$ and $W$ in $\Mod_{G,\varepsilon}(k)$.
Since every object of $\MOD_{G,\varepsilon}(k)$ is the filtered colimit of its subobjects
in $\Mod_{G,\varepsilon}(k)$, it follows that \eqref{e:Htensiso} is an isomorphism for
every $V$ and $W$, and that \eqref{e:Hhomiso} is an isomorphism for $V$ in 
$\Mod_{G,\varepsilon}(k)$ and every $W$, and hence for every $V$ and $W$.

That \eqref{e:Htensiso} and \eqref{e:Hhomiso} are isomorphisms for every $V$ and $W$ shows together with
\eqref{e:Hunitiso} that $T$ is a fully faithful tensor functor.
By \eqref{e:Th} and \eqref{e:unithiso} together with \eqref{e:pres}, every object of $\widehat{\sF}$ is a 
cokernel of a morphism between objects in the essential image of $T_*$.
Hence by exactness of the projection onto $\widetilde{\sF}$,
every object of $\widetilde{\sF}$
is a cokernel of a morphism between objects in the essential image of $H$.
The essential surjectivity of $H$ thus follows from its exactness and full faithfulness.
\end{proof}

\begin{thm}\label{t:Ftildeequiv}
Let $k$ be a field of characteristic $0$ and $\sC$ be as in Proposition~\textnormal{\ref{p:FmnCesssurj}}.
Then for some $\bm|\bn$ there exists an affine super $(\GL_{\bm|\bn},\varepsilon)$\nd scheme $X$ with 
$\Mod_{\GL_{\bm|\bn},\varepsilon}(X)$ integral such that $\widetilde{\sC}$ is $k$\nd tensor
equivalent to $\overline{\MOD_{\GL_{\bm|\bn},\varepsilon}(X)}$.
\end{thm}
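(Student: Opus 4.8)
The plan is to deduce the statement by composing three equivalences already established. Theorem~\ref{t:Fhatequiv} presents $\widehat{\sC}$ as a category of modules over a commutative algebra $R$ in the functor category $\widehat{\sF_{\bm|\bn}}$ of a free rigid tensor category; Proposition~\ref{p:torsequ} lets one move the passage to the quotient modulo torsion inside the formation of such module categories; and Proposition~\ref{p:MODGLFequiv} identifies $\widetilde{\sF_{\bm|\bn}}$ with $\MOD_{\GL_{\bm|\bn},\varepsilon}(k)$, under which a commutative algebra becomes a commutative $(\GL_{\bm|\bn},\varepsilon)$-algebra, i.e.\ an affine super $(\GL_{\bm|\bn},\varepsilon)$-scheme $X$. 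The only things to check are the hypotheses of these results in the case at hand and the integrality of $\Mod_{\GL_{\bm|\bn},\varepsilon}(X)$.

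Concretely, by Theorem~\ref{t:Fhatequiv} fix $\bm|\bn$, a commutative algebra $R$ in $\widehat{\sF_{\bm|\bn}}$ with $R \otimes -:(\widehat{\sF_{\bm|\bn}})_{\mathrm{rig}} \to \Mod_{\widehat{\sF_{\bm|\bn}}}(R)$ faithful, and a $k$-tensor equivalence $\widehat{\sC} \iso \MOD_{\widehat{\sF_{\bm|\bn}}}(R)$. First I would observe that $R$ is a torsion free object of $\widehat{\sF_{\bm|\bn}}$. Indeed, $(\widehat{\sF_{\bm|\bn}})_{\mathrm{rig}}$ is the pseudo-abelian hull of $\sF_{\bm|\bn}$, which is integral by Lemma~\ref{l:freeGLff}, and $\Mod_{\widehat{\sF_{\bm|\bn}}}(R) \simeq (\widehat{\sC})_{\mathrm{rig}}$ is the pseudo-abelian hull of the integral category $\sC$; since a morphism is non-zero, resp.\ regular, in a pseudo-abelian hull exactly when it is so in the original category, both categories are integral, so the faithful tensor functor $R \otimes -$ between them is regular, and Lemma~\ref{l:regtorsfree} applies. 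Since a $k$-tensor equivalence preserves rigid objects and regular morphisms, it preserves and reflects torsion objects, so $\widehat{\sC} \iso \MOD_{\widehat{\sF_{\bm|\bn}}}(R)$ descends to a $k$-tensor equivalence $\widetilde{\sC} = \overline{\widehat{\sC}} \iso \overline{\MOD_{\widehat{\sF_{\bm|\bn}}}(R)}$.

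As $R$ is torsion free, Proposition~\ref{p:torsequ} now gives a $k$-tensor equivalence $\overline{\MOD_{\widehat{\sF_{\bm|\bn}}}(R)} \iso \overline{\MOD_{\widetilde{\sF_{\bm|\bn}}}(\overline R)}$, where $\widetilde{\sF_{\bm|\bn}} = \overline{\widehat{\sF_{\bm|\bn}}}$ and $\overline R$ is the image of $R$, a commutative algebra in $\widetilde{\sF_{\bm|\bn}}$. By Proposition~\ref{p:MODGLFequiv} the functor \eqref{e:Hdef} is a $k$-linear tensor equivalence $\MOD_{\GL_{\bm|\bn},\varepsilon}(k) \simeq \widetilde{\sF_{\bm|\bn}}$; choose a tensor quasi-inverse $\Phi$, set $R' = \Phi(\overline R)$ — a commutative algebra in $\MOD_{\GL_{\bm|\bn},\varepsilon}(k)$, hence a commutative $(\GL_{\bm|\bn},\varepsilon)$-algebra — and put $X = \Spec(R')$, an affine super $(\GL_{\bm|\bn},\varepsilon)$-scheme. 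Then $\Phi$ induces a $k$-tensor equivalence $\MOD_{\widetilde{\sF_{\bm|\bn}}}(\overline R) \simeq \MOD_{\MOD_{\GL_{\bm|\bn},\varepsilon}(k)}(R') = \MOD_{\GL_{\bm|\bn},\varepsilon}(X)$, the last equality being the identification recalled in Section~\ref{s:prelim}, and therefore also $\overline{\MOD_{\widetilde{\sF_{\bm|\bn}}}(\overline R)} \simeq \overline{\MOD_{\GL_{\bm|\bn},\varepsilon}(X)}$. Composing the three equivalences yields a $k$-tensor equivalence $\widetilde{\sC} \simeq \overline{\MOD_{\GL_{\bm|\bn},\varepsilon}(X)}$.

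It remains to check that $\Mod_{\GL_{\bm|\bn},\varepsilon}(X)$ is integral, which is the step I expect to require the most care, since passing to the quotient modulo torsion adjoins many new morphisms. Transporting along the equivalences above, $\Mod_{\GL_{\bm|\bn},\varepsilon}(X)$ is $k$-tensor equivalent to $\Mod_{\widetilde{\sF_{\bm|\bn}}}(\overline R)$, and $\Mod_{\widehat{\sF_{\bm|\bn}}}(R) \simeq (\widehat{\sC})_{\mathrm{rig}}$ is integral. Given a non-zero morphism $\phi$ of $\Mod_{\widetilde{\sF_{\bm|\bn}}}(\overline R)$, write it by duality as a morphism $M \to \overline R$ and apply Lemma~\ref{l:barfac}: up to composition with an epimorphism $p$, $\phi$ is the image under \eqref{e:MODfun} of a morphism $\phi_0$ of $\Mod_{\widehat{\sF_{\bm|\bn}}}(R)$, which is non-zero because the functor of Lemma~\ref{l:Rmodbar}\ref{i:Rmodbarreg} is faithful, hence regular because its source is integral, hence has regular image because that functor is also regular; and any $\psi$ with $\phi \otimes \psi = 0$ then has $(\phi \circ p) \otimes \psi = 0$, so $\psi = 0$, which shows $\phi$ is regular. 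Together with $1_{\I} \ne 0$, which holds since $R$ is a non-zero torsion free object and hence $\overline R \ne 0$, this shows $\Mod_{\GL_{\bm|\bn},\varepsilon}(X)$ is integral, completing the proof.
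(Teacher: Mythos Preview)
Your proof is correct and follows the same overall architecture as the paper's: Theorem~\ref{t:Fhatequiv} to realise $\widehat{\sC}$ as $R$-modules, torsion-freeness of $R$ via Lemma~\ref{l:regtorsfree}, then Propositions~\ref{p:torsequ} and~\ref{p:MODGLFequiv} to reach $\overline{\MOD_{\GL_{\bm|\bn},\varepsilon}(X)}$.

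The only point of divergence is the integrality of $\Mod_{\GL_{\bm|\bn},\varepsilon}(X)$. The paper dispatches this in one line: applying $(-)_\mathrm{rig}$ to the chain of equivalences and invoking Lemma~\ref{l:projreg} gives a faithful tensor functor $\Mod_{\GL_{\bm|\bn},\varepsilon}(R') \to (\widetilde{\sC})_\mathrm{rig}$, and the target is integral (a fact proved from Section~\ref{s:tor} material and recorded at the start of Section~\ref{s:supTann}). Your argument instead stays inside the module categories, lifting an arbitrary morphism $\phi:M\to\overline R$ via Lemma~\ref{l:barfac} to a morphism in the integral category $\Mod_{\widehat{\sF_{\bm|\bn}}}(R)$ and pushing regularity back down with Lemma~\ref{l:Rmodbar}\ref{i:Rmodbarreg}. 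Both are sound; the paper's route is shorter but leans on the integrality of $(\widetilde{\sC})_\mathrm{rig}$, which at this point in the text has not yet been stated, whereas yours is self-contained within Section~\ref{s:mod}.
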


\begin{proof}
Let $\bm|\bn$ and $R$ be as in Theorem~\ref{t:Fhatequiv}.
Since $\sC$ and $\sF_{\bm|\bn}$ are integral, so are their pseudo-abelian hulls
$(\widehat{\sC})_\mathrm{rig}$ and $(\widehat{\sF_{\bm|\bn}})_\mathrm{rig}$,
and hence also $\Mod_{\widehat{\sF_{\bm|\bn}}}(R)$.
Thus $R \otimes -$ from $(\widehat{\sF_{\bm|\bn}})_\mathrm{rig}$ to 
$\Mod_{\widehat{\sF_{\bm|\bn}}}(R)$ is regular, so that by Lemma~\ref{l:regtorsfree}
$R$ is a torsion free object of $\widehat{\sF_{\bm|\bn}}$.
By Propositions~\ref{p:torsequ} and \ref{p:MODGLFequiv}, if $R'$ is the image of
$\overline{R}$ under a quasi-inverse of the $k$\nd tensor equivalence \eqref{e:Hdef},
we have $k$\nd tensor equivalences 
\begin{equation*}
\widetilde{\sC} \to \overline{\MOD_{\widetilde{\sF_{\bm|\bn}}}(\overline{R})} \to
\overline{\MOD_{\GL_{\bm|\bn},\varepsilon}(R')}.
\end{equation*}
Applying $(-)_\mathrm{rig}$ and using Lemma~\ref{l:projreg} shows that 
$\Mod_{\GL_{\bm|\bn},\varepsilon}(R')$ is integral.
Thus we may take $X = \Spec(R')$.
\end{proof}

\section{Equivariant sheaves}\label{s:equ}

In this section we consider tensor categories of equivariant quasi-coherent sheaves
over a super  scheme with an action of an affine super group over a field of characteristic $0$.
The main results, Theorems~\ref{t:transaff} and \ref{t:GKMODequiv},
show that, under appropriate restrictions, the quotient of such a tensor category
modulo torsion is tensor equivalent to the category of modules over a transitive affine groupoid.

Throughout this section, $k$ is a field of characteristic $0$ and $(G,\varepsilon)$ 
is an affine super $k$\nd group with involution.

A morphism $Y \to X$ of super $k$\nd schemes will be called \emph{quasi-compact}
if the inverse image of every quasi-compact open super subscheme of $X$ is quasi-compact
in $Y$, \emph{quasi-separated} if the diagonal $Y \to Y \times_X Y$ is quasi-compact,
and \emph{quasi-affine} if it is quasi-compact and the inverse image of any affine open
super subscheme of $X$ is isomorphic to an open super subscheme of an affine
super $k$\nd scheme. 
A super $k$\nd scheme will be called quasi-compact (quasi-separated, quasi-affine) if its
structural morphism is.

Let $f:Y \to X$ be a  quasi-compact and quasi-separated morphism of super $k$\nd schemes and $\sV$ be a 
quasi-coherent $\sO_Y$\nd module.
Then $f_*\sV$ is a quasi-coherent $\sO_X$\nd module:
reduce first to the case where $X$ is affine, then by taking a finite affine open 
cover of $Y$ to the case where $Y$ is quasi-affine, and finally to the case where $Y$ is affine.
Similarly the base change morphism for pullback of $f$ and $\sV$ along a flat morphism $X' \to X$ of super schemes
is an isomorphism.
When $X = \Spec(k)$, the push forward $f_*\sV$ of $\sV$ may be identified with the 
super $k$\nd vector space
\begin{equation*}
H^0(X,\sV)
\end{equation*}
of global sections of $\sV$.

A morphism $f:Y \to X$ will be called \emph{super schematically dominant} if for
every open super subscheme $X'$ of $X$ the morphism $Y' \to X'$ induced by $f$ on the 
inverse image $Y'$ of $X'$ in $Y$ factors through no closed super subscheme of $X'$
strictly contained in $X'$. 
For $f$ quasi-compact and quasi-separated, it is equivalent to require that the
canonical morphism $\sO_X \to f_*\sO_Y$ in $\MOD(X)$ be a monomorphism. 
A super subscheme of $X$ will be called \emph{super schematically dense} if the embedding is
super schematically dominant

Suppose that $f$ is quasi-affine.
The canonical morphism
\begin{equation*}
Y \to \Spec(f_*\sO_Y)
\end{equation*}
is an open immersion.
For any quasi-coherent $\sO_Y$\nd module $\sV$ the counit $f^*f_*\sV \to \sV$ is an epimorphism:
reduce to the cases where $f$ is an open immersion or where $f$ is affine.

Let $f:Y \to X$ be a  quasi-compact and quasi-separated morphism of super $(G,\varepsilon)$\nd schemes 
and $\sV$ be a $(G,\varepsilon)$\nd equivariant quasi-coherent $\sO_Y$\nd module.
Then $f_*\sV$ has a 
canonical structure of $(G,\varepsilon)$\nd equivariant $\sO_X$\nd module.
We have a lax tensor functor $f_*$ from $\MOD_{G,\varepsilon}(Y)$ to $\MOD_{G,\varepsilon}(Y)$
right adjoint to $f^*$, where the unit and counit for the adjunction have the same
components as those for the underlying modules.

Let $Y$ be a quasi-compact quasi-separated super $(G,\varepsilon)$\nd scheme. 
Then $H^0(Y,\sV)$ for $\sV$ in $\MOD_{G,\varepsilon}(Y)$ has a canonical structure of 
$(G,\varepsilon)$\nd module.
We denote by
\begin{equation*}
H^0_G(Y,\sV)
\end{equation*}
the $k$\nd vector subspace of $H^0(Y,\sV)$ of invariants under $G$.
It consists of those global sections $v$ of $\sV$, necessarily lying in the even
part $\sV_0$ of $\sV$, such that the action of $G$ on $\sV$ sends the pullback
of $v$ along the projection from $G \times_k Y$ to $Y$ to its pullback along 
the action of $G$ on $Y$. 
Thus
\begin{equation*}
H^0_G(Y,\sV) = \Hom_{G,\sO_Y}(\sO_Y,\sV).
\end{equation*}
The super $k$\nd algebra $H^0(Y,\sO_Y)$ is a commutative $(G,\varepsilon)$\nd algebra 
with the canonical morphism
\begin{equation*}
Y \to \Spec(H^0(Y,\sO_Y)) 
\end{equation*}
a morphism of $(G,\varepsilon)$\nd schemes.

\begin{lem}\label{l:quotsub}
Let $X$ be a quasi-affine super $(G,\varepsilon)$\nd scheme.
\begin{enumerate}
\item\label{i:quotsubmod}
Every object of $\MOD_{G,\varepsilon}(X)$ is a quotient of one of the form $V \otimes_k \sO_X$
for some $(G,\varepsilon)$\nd module $V$.
\item\label{i:quotsubrep}
Every object of $\Mod_{G,\varepsilon}(X)$ is a quotient (resp.\ subobject) of one of the form 
$V \otimes_k \sO_X$ for some representation $V$ of $(G,\varepsilon)$. 
\end{enumerate}
\end{lem}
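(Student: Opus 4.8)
The plan is to establish (i) by a direct application of the quasi-affine pushforward, and then to deduce both halves of (ii) from (i). For (i), note that since $X$ is quasi-affine over $k$ its structural morphism $f\colon X\to\Spec k$ is quasi-compact and quasi-separated (indeed separated, $X$ being an open super subscheme of an affine super $k$-scheme), so for $\sV$ in $\MOD_{G,\varepsilon}(X)$ the pushforward $f_*\sV$ is the super $k$-vector space $H^0(X,\sV)$ with its canonical structure of $(G,\varepsilon)$-module, and $f^*f_*\sV = H^0(X,\sV)\otimes_k\sO_X$. First I would observe that the counit $f^*f_*\sV\to\sV$ is a morphism in $\MOD_{G,\varepsilon}(X)$, because it is the counit of the adjunction between the equivariant functors $f^*$ and $f_*$, whose components coincide with those of the underlying non-equivariant adjunction. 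Since $f$ is quasi-affine this counit is an epimorphism of quasi-coherent $\sO_X$-modules, hence an epimorphism in $\MOD_{G,\varepsilon}(X)$. Thus $\sV$ is a quotient of $V\otimes_k\sO_X$ with $V=H^0(X,\sV)$, which proves (i).

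For the assertion about quotients in (ii), let $\sV$ lie in $\Mod_{G,\varepsilon}(X)$, so that $\sV$ is a $(G,\varepsilon)$-equivariant vector bundle. Applying (i) gives an epimorphism $p\colon W\otimes_k\sO_X\to\sV$ in $\MOD_{G,\varepsilon}(X)$ with $W=H^0(X,\sV)$. Since $G$ is an affine super $k$-group, $W$ is the filtered union of its finite-dimensional $G$-submodules, each of which is automatically a $(G,\varepsilon)$-module and hence a representation $W_\alpha$ of $(G,\varepsilon)$. Writing $\sV_\alpha$ for the image of $W_\alpha\otimes_k\sO_X\to\sV$, I obtain a directed family of $(G,\varepsilon)$-equivariant quasi-coherent sub-$\sO_X$-modules of $\sV$ with $\sV=\bigcup_\alpha\sV_\alpha$. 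I would then argue that $\sV=\sV_\alpha$ for some $\alpha$: choosing a finite affine open cover $(U_i)$ of the quasi-compact $X$, each $\sV(U_i)$ is a finitely generated $\sO_X(U_i)$-module which is the directed union of the $\sV_\alpha(U_i)$, so $\sV(U_i)=\sV_{\alpha_i}(U_i)$ for some $\alpha_i$, and any $\alpha$ dominating all the $\alpha_i$ works. Then $W_\alpha\otimes_k\sO_X\to\sV$ is the desired epimorphism onto $\sV$ with $W_\alpha$ a representation of $(G,\varepsilon)$.

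For the assertion about subobjects in (ii), I would apply the quotient assertion just proved to the dual bundle $\sV^\vee$ in $\Mod_{G,\varepsilon}(X)$, obtaining an epimorphism $q\colon V\otimes_k\sO_X\to\sV^\vee$ with $V$ a representation of $(G,\varepsilon)$. As $V$ is finite-dimensional, $V\otimes_k\sO_X$ is a vector bundle with dual $V^\vee\otimes_k\sO_X$, and locally on $X$ the morphism $q$ is a split surjection of finite free modules, so $q^\vee\colon\sV^{\vee\vee}\to V^\vee\otimes_k\sO_X$ is locally a split injection, hence a monomorphism. Composing with the canonical isomorphism $\sV\iso\sV^{\vee\vee}$ exhibits $\sV$ as a subobject of $V^\vee\otimes_k\sO_X$, and $V^\vee$ is a representation of $(G,\varepsilon)$.

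The step I expect to be the main obstacle — or at least the only point where more than formal reasoning enters — is showing that $\sV=\sV_\alpha$ for some $\alpha$, i.e. that the filtered union over the finite-dimensional subrepresentations of $H^0(X,\sV)$ already exhausts the vector bundle $\sV$. This is precisely where finiteness of $\sV$ and quasi-compactness of $X$ are used, via the finite affine cover argument above; the remaining work is bookkeeping with the equivariant structures and with the duality on vector bundles, relying on the non-equivariant facts about quasi-affine morphisms recalled earlier in the paper.
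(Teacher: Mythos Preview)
Your proof is correct and follows essentially the same approach as the paper: part (i) via the counit of the adjunction for the quasi-affine structural morphism, the quotient half of (ii) by writing $H^0(X,\sV)$ as a filtered union of finite-dimensional subrepresentations and using quasi-compactness, and the subobject half by dualising. Your exposition is more detailed than the paper's (which compresses the finite-type argument to a phrase and the subobject case to ``by taking duals''), but the strategy is identical.
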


\begin{proof}
Let $V$ be the push forward of $\sV$ in $\MOD_{G,\varepsilon}(X)$ along the structural morphism of $X$.
Then the counit $V \otimes_k \sO_X \to \sV$ is an epimorphism in $\MOD(X)$,
and hence in $\MOD_{G,\varepsilon}(X)$.
This gives \ref{i:quotsubmod}.
If $\sV$ lies in $\Mod_{G,\varepsilon}(X)$, writing 
$V$ as the filtered colimit of its $(G,\varepsilon)$\nd submodules of finite type
and using the quasi-compactness of $X$ gives the quotient case of \ref{i:quotsubrep}.
The subobject case follows by taking duals.
\end{proof}

It follows from Lemma~\ref{l:quotsub}\ref{i:quotsubmod} that if $X$ is a quasi-affine 
super $(G,\varepsilon)$\nd scheme then $\MOD_{G,\varepsilon}(X)$ is a well-dualled
Grothendieck tensor category.

\begin{lem}\label{l:domfaith}
Let $f:X' \to X$ be a morphism of super $(G,\varepsilon)$\nd schemes, with $X$ quasi-affine and $X'$
quasi-compact and quasi-separated.
Then $f$ is super schematically dominant if and only if 
$f^*:\Mod_{G,\varepsilon}(X) \to \Mod_{G,\varepsilon}(X')$ is faithful.
\end{lem}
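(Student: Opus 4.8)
The plan is to rephrase super schematic dominance of the given $f$ in terms of the canonical morphism $\sO_X\to f_*\sO_{X'}$ and then transfer this across the adjunction $f^*\dashv f_*$. Since $X$ is quasi-affine and $X'$ is quasi-compact and quasi-separated, $f$ is quasi-compact and quasi-separated, so $f_*$ is defined on $\MOD_{G,\varepsilon}(X')$, is a lax tensor functor right adjoint to $f^*$ whose unit and counit have the same underlying components as for plain quasi-coherent modules, and (as recalled above) $f$ is super schematically dominant if and only if the unit $\eta_{\sO_X}\colon\sO_X\to f_*f^*\sO_X=f_*\sO_{X'}$ is a monomorphism in $\MOD_{G,\varepsilon}(X)$; monomorphy here may be tested on underlying quasi-coherent modules.

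For the implication that super schematic dominance of $f$ gives faithfulness of $f^*$ on $\Mod_{G,\varepsilon}(X)$, the key point is that the unit $\eta_{\sW}\colon\sW\to f_*f^*\sW$ is then a monomorphism for every $\sW$ in $\Mod_{G,\varepsilon}(X)$. Indeed, applying \eqref{e:EMunit} with $\sA=\MOD_{G,\varepsilon}(X)$, $\sA'=\MOD_{G,\varepsilon}(X')$, $H=f^*$ and $H'=f_*$, the canonical morphism $(f_*\sO_{X'})\otimes\sW\to f_*f^*\sW$ is an isomorphism for $\sW$ dualisable and it identifies $\eta_{\sW}$ with $\eta_{\sO_X}\otimes_{\sO_X}\sW$; since $\sW$ is a vector bundle, $-\otimes_{\sO_X}\sW$ is exact and so sends the monomorphism $\eta_{\sO_X}$ to a monomorphism. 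Given $\phi\colon\sV\to\sW$ in $\Mod_{G,\varepsilon}(X)$ with $f^*\phi=0$, naturality of $\eta$ yields $\eta_{\sW}\circ\phi=(f_*f^*\phi)\circ\eta_{\sV}=0$, whence $\phi=0$.

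For the converse I would argue contrapositively. Suppose $\eta_{\sO_X}$ is not a monomorphism, and let $i\colon\sK\hookrightarrow\sO_X$ be its kernel in $\MOD_{G,\varepsilon}(X)$, a non-zero $(G,\varepsilon)$\nd equivariant quasi-coherent ideal. Since $X$ is quasi-affine, $\MOD_{G,\varepsilon}(X)$ is a well-dualled Grothendieck tensor category (Lemma~\ref{l:quotsub}), so $\sK$ has a non-zero subobject $\sK_0$ of finite type. Presenting $\sK_0$ as a quotient of $W\otimes_k\sO_X$ for a $(G,\varepsilon)$\nd module $W$ via Lemma~\ref{l:quotsub}\ref{i:quotsubmod}, writing $W$ as the filtered union of its finite-dimensional $(G,\varepsilon)$\nd submodules, and using that $\sK_0$ is of finite type, one obtains a representation $V$ of $(G,\varepsilon)$ together with an epimorphism $\alpha\colon V\otimes_k\sO_X\twoheadrightarrow\sK_0$. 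Then $\beta=i\circ\alpha\colon V\otimes_k\sO_X\to\sO_X$ is a non-zero morphism between objects of $\Mod_{G,\varepsilon}(X)$, and $\eta_{\sO_X}\circ\beta=\eta_{\sO_X}\circ i\circ\alpha=0$. By the triangle identity for $f^*\dashv f_*$, the adjoint transpose of $\eta_{\sO_X}\circ\beta$ is exactly $f^*\beta$; as that transpose is $0$, we get $f^*\beta=0$ with $\beta\neq 0$, contradicting faithfulness of $f^*$.

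I expect the reverse direction to be the main obstacle. The obstruction to schematic dominance is carried by a quasi-coherent ideal sheaf $\sK$, which is typically not a vector bundle, so it cannot be read off directly as a non-faithful morphism in $\Mod_{G,\varepsilon}(X)$; the real work is in resolving it equivariantly by bundles of the shape $V\otimes_k\sO_X$ with $V$ a representation — where quasi-affineness of $X$ enters through Lemma~\ref{l:quotsub} together with a finite-type reduction — and in checking that replacing quasi-coherent modules by their $(G,\varepsilon)$\nd equivariant versions does not disturb the adjunction and triangle-identity computation, which is guaranteed precisely because the unit and counit of the equivariant adjunction have the same underlying components as those for quasi-coherent modules.
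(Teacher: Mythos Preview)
Your proof is correct and follows essentially the same strategy as the paper: both reduce super schematic dominance to injectivity of the unit $\eta_{\sO_X}\colon\sO_X\to f_*\sO_{X'}$, and both use Lemma~\ref{l:quotsub} together with the adjunction $f^*\dashv f_*$ to produce a non-zero $\beta\colon V\otimes_k\sO_X\to\sO_X$ with $f^*\beta=0$ when $\eta_{\sO_X}$ fails to be mono. The only cosmetic difference is in the forward direction: the paper implicitly reduces faithfulness of $f^*$ on all of $\Mod_{G,\varepsilon}(X)$ to faithfulness on morphisms $V\otimes_k\sO_X\to\sO_X$ (via rigidity and generation), whereas you instead show directly that $\eta_{\sW}$ is mono for every dualisable $\sW$ using the projection-formula isomorphism \eqref{e:EMunit}; both routes are standard and equally short.
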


\begin{proof}
Since $f$ is quasi-compact and quasi-separated, the unit $\sO_X \to f_*\sO_{X'}$ is a morphism in 
$\MOD_{G,\varepsilon}(X)$.
By Lemma~\ref{l:quotsub}\ref{i:quotsubmod} applied to its kernel, it is a monomorphism
if and only if its composite with every non-zero morphism  $V \otimes_k \sO_X \to \sO_X$
in $\Mod_{G,\varepsilon}(X)$ with $V$ in $\Mod_{G,\varepsilon}(k)$ is non-zero. 
\end{proof}

A tensor category will be called \emph{reduced} if $f^{\otimes n} = 0$ implies $f = 0$
for every morphism $f$.

\begin{lem}\label{l:rednil}
Let $X$ be a quasi-affine super $(G,\varepsilon)$\nd scheme with $\Mod_{G,\varepsilon}(X)$ reduced.
Then there are no non-zero $(G,\varepsilon)$\nd ideals of $\sO_X$ contained 
in the nilradical of $\sO_X$.
\end{lem}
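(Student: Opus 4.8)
The plan is to argue by contradiction: suppose $\sI$ is a non-zero $(G,\varepsilon)$\nd ideal of $\sO_X$ with $\sI \subset \sqrt{(0)}$. Because $X$ is quasi-affine, I would first invoke Lemma~\ref{l:quotsub}\ref{i:quotsubmod} to find a $(G,\varepsilon)$\nd module $V$ and an epimorphism $V \otimes_k \sO_X \to \sI$ in $\MOD_{G,\varepsilon}(X)$; composing with the inclusion $\sI \hookrightarrow \sO_X$ gives a non-zero morphism
\begin{equation*}
f:V \otimes_k \sO_X \to \sO_X
\end{equation*}
in $\MOD_{G,\varepsilon}(X)$ whose image is $\sI$. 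Since $\sI$ is generated by nilpotent local sections and is of finite type locally (after restricting $V$ to a suitable $(G,\varepsilon)$\nd submodule of finite type and using quasi-compactness of $X$, as in the proof of Lemma~\ref{l:quotsub}\ref{i:quotsubrep}), I can arrange that $f$ factors through $W \otimes_k \sO_X$ with $W$ a \emph{representation} of $(G,\varepsilon)$, so that $f$ becomes a non-zero morphism in $\Mod_{G,\varepsilon}(X)$ with nilpotent image ideal.

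The key step is then to compute tensor powers of $f$. The point is that $f^{\otimes n}: (W \otimes_k \sO_X)^{\otimes n} = W^{\otimes n} \otimes_k \sO_X \to \sO_X$ has image the ideal $\sI^n$ (the $n$-th power of $\sI$ as an ideal of $\sO_X$): the composite $(W \otimes_k \sO_X)^{\otimes n} \to \sO_X^{\otimes n} \to \sO_X$ is multiplication composed with $f^{\otimes n}$, and since $f$ has image $\sI$, its $n$-th tensor power has image the product ideal $\sI \cdots \sI = \sI^n$. Because $\sI$ is a finite-type $(G,\varepsilon)$\nd ideal contained in the nilradical, after passing to a quasi-compact affine open cover of $X$ (finitely many pieces, using quasi-compactness), each restriction of $\sI$ is generated by finitely many nilpotents, so $\sI^n$ restricts to $0$ on each piece for $n$ large enough; since there are finitely many pieces we may choose a single $n$ with $\sI^n = 0$, i.e. $f^{\otimes n} = 0$. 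But $f \ne 0$, contradicting the hypothesis that $\Mod_{G,\varepsilon}(X)$ is reduced.

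The main obstacle I expect is the reduction from an arbitrary $(G,\varepsilon)$\nd module $V$ to a \emph{representation} $W$ while keeping the image ideal equal to $\sI$ and simultaneously ensuring that $\sI$ (hence $\sI^n$) is finitely generated on a finite affine cover, so that a single uniform exponent $n$ kills it. This requires care: one writes $V$ as the filtered colimit of its $(G,\varepsilon)$\nd submodules of finite type, chooses a finite affine open cover $X = \bigcup_{i=1}^r U_i$ with each $U_i$ of the form $\Spec A_i$, and notes that on each $U_i$ the ideal $\sI(U_i)$ is generated over $A_i$ by the images of finitely many elements of $V$; collecting these finitely many elements over all $i$ gives a single representation $W \subset V$ through which $f$ factors, with the same image $\sI$. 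The local nilpotence bound and the passage to a global $n$ are then routine. Everything else — that tensor product corresponds to the product of ideals, that a monomorphism-free argument is unnecessary here — follows directly from the definitions in Sections~\ref{s:prelim} and \ref{s:equ}.
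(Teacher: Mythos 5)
Your overall strategy is the same as the paper's: start from a nonzero $(G,\varepsilon)$\nd ideal $\sI$ in the nilradical, use Lemma~\ref{l:quotsub}\ref{i:quotsubmod} to produce a nonzero morphism $\varphi$ in $\Mod_{G,\varepsilon}(X)$ from some $V \otimes_k \sO_X$ to $\sO_X$ factoring through $\sI$, and observe that since the image lands in the nilradical, quasi-compactness of $X$ forces $\varphi^{\otimes n} = 0$ for some $n$, contradicting reducedness of $\Mod_{G,\varepsilon}(X)$. So far, so good.

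The one step you flag as an "obstacle" is not only unnecessary but the way you justify it is actually incorrect in general. You want to restrict $V$ to a finite-dimensional representation $W$ \emph{while keeping the image ideal equal to $\sI$}, and you assert that this is possible because "on each $U_i$ the ideal $\sI(U_i)$ is generated over $A_i$ by the images of finitely many elements of $V$." That claim fails without a Noetherian hypothesis: the $A_i$ are arbitrary (possibly non-Noetherian) super $k$\nd algebras, and neither $\sI(U_i)$ nor the full nilradical of $A_i$ need be finitely generated. Fortunately none of this is needed. After you choose a finite-type $(G,\varepsilon)$\nd submodule $W \subset V$ on which $f$ is nonzero (which exists because $V$ is the filtered colimit of its finite-type submodules and $f \ne 0$), the restricted morphism $f|_W : W \otimes_k \sO_X \to \sO_X$ has image a nonzero ideal $\sI' \subset \sI$ that is generated by the finitely many global sections coming from a $k$\nd basis of $W$ — so $\sI'$ is automatically of finite type as an $\sO_X$\nd module, regardless of whether $\sI$ itself is. Since $\sI' \subset \sqrt{(0)}$, on a finite affine cover each generator is nilpotent with some uniform exponent, giving $(\sI')^n = 0$, hence $(f|_W)^{\otimes n} = 0$ with $f|_W \ne 0$. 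That is the desired contradiction. In short: drop the insistence on hitting all of $\sI$, and the proof collapses to exactly the paper's one-sentence argument.
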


\begin{proof}
Let $\sJ$ be a non-zero $(G,\varepsilon)$\nd ideal of $\sO_X$.
By Lemma~\ref{l:quotsub}\ref{i:quotsubmod}, there is for some representation $V$ of $(G,\varepsilon)$ a non-zero
morphism $\varphi$ from $V \otimes_k \sO_X$ to $\sO_X$ which factors through $\sJ$.
Thus $\sJ$ cannot be contained in the nilradical of $\sO_X$, because otherwise the quasi-compactness of $X$ 
would imply that $\varphi^{\otimes n} = 0$ for some $n$.
\end{proof}

\begin{lem}\label{l:pullepi}
Let $X$ be a quasi-affine super $(G,\varepsilon)$\nd scheme with $\Mod_{G,\varepsilon}(X)$ reduced, 
and $j:Z \to X$ be a morphism of super $k$\nd schemes which factors through every
non-empty open super $G$\nd subscheme of $X$.
Then $j^*$ sends every non-zero morphism in $\MOD_{G,\varepsilon}(X)$ with target $\sO_X$ 
to an epimorphism in $\MOD(Z)$. 
\end{lem}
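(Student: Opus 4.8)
The plan is to factor $j$ through a suitable non-empty open super $G$-subscheme of $X$ over which $\varphi$ is already an epimorphism, and then pull back.

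So let $\varphi\colon\sV\to\sO_X$ be a non-zero morphism in $\MOD_{G,\varepsilon}(X)$, and let $\sI=\Img\varphi$, a $(G,\varepsilon)$-ideal of $\sO_X$, with cokernel $\sO_X/\sI$. First I would observe that $\sO_X/\sI$, being a quotient of the finite type $\sO_X$-module $\sO_X$, is itself of finite type, so its support $V$ is closed; and $V$ is $G$-stable because $\sO_X/\sI$ is $(G,\varepsilon)$-equivariant. Hence $U:=X\setminus V$ is an open super $(G,\varepsilon)$-subscheme of $X$, and by construction it is the largest open super subscheme over which $\varphi$ restricts to an epimorphism of $\sO$-modules.

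Next I would check that $U$ is non-empty. If it were empty, then $V$ would be the whole underlying space of $X$, so on each affine piece every local section of $\sI$ would lie in every prime ideal and hence be nilpotent; that is, $\sI$ would be a non-zero $(G,\varepsilon)$-ideal of $\sO_X$ contained in the nilradical, contradicting Lemma~\ref{l:rednil}. Therefore $U\neq\emptyset$, so by hypothesis $j$ factors as $Z\xrightarrow{\,j'\,}U\hookrightarrow X$. Then $j^{*}\varphi=j'^{*}(\varphi|_U)$, and $\varphi|_U\colon\sV|_U\to\sO_U$ is an epimorphism in $\MOD(U)$; since $j'^{*}$ is right exact and hence preserves epimorphisms, $j^{*}\varphi$ is an epimorphism in $\MOD(Z)$, as required.

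The bulk of this is formal: images and cokernels in $\MOD_{G,\varepsilon}(X)$ are computed on underlying $\sO_X$-modules, the support of a finite type equivariant sheaf is a closed $G$-stable set, an open subscheme with $G$-stable complement is a super $G$-subscheme, and pullback of modules is right exact. I expect the only step requiring real care to be the non-emptiness of $U$, where the categorical hypothesis that $\Mod_{G,\varepsilon}(X)$ is reduced is brought in through Lemma~\ref{l:rednil}, and where one must reduce the assertion ``$V$ the whole space implies $\sI$ contained in the nilradical'' to the affine case.
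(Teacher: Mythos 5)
Your proof is correct and follows essentially the same route as the paper: pass to the image $(G,\varepsilon)$-ideal, use Lemma~\ref{l:rednil} to see that the corresponding open $G$-subscheme is non-empty, factor $j$ through it by hypothesis, and conclude by right exactness of pullback. The only cosmetic difference is that the paper phrases the last step as $j^*(\sO_X/\sJ)=0$ rather than restricting $\varphi$ to $U$ first, but this is the same observation.
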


\begin{proof}
The image of a non-zero morphism $\sV \to \sO_X$ in $\MOD_{G,\varepsilon}(X)$ is a non-zero 
$G$\nd ideal $\sJ$ of $\sO_X$.
By Lemma~\ref{l:rednil}, the complement $Y$ of the closed super $G$\nd subscheme of $X$ defined
by $\sJ$ is a non-empty open $G$\nd subscheme of $X$.
Thus $j$ factors through $Y$, so that $j^*$ sends $\sO_X/\sJ$ to $0$, and hence $\sV \to \sO_X$
to an epimorphism.
\end{proof}

Let $X$ be a quasi-compact and quasi-separated super $k$\nd scheme.
Then $H^0(X,-)$ preserves filtered colimits of quasi-coherent $\sO_X$\nd modules: 
reduce by taking a finite affine
open cover of $X$ first to the case where $X$ is quasi-affine and hence separated, 
and then to the case where $X$ is affine.
If $X$ is the limit of a filtered system $(X_\lambda)$ of quasi-compact and quasi-separated super $k$\nd schemes 
with affine transition morphisms, pushing forward the structure sheaves onto some $X_{\lambda_0}$ shows that
$H^0(X,\sO_X)$ is the colimit of the $H^0(X_\lambda,\sO_{X_\lambda})$.

\begin{lem}\label{l:limopensub}
Let $X$ be a filtered limit $\lim_{\lambda \in \Lambda}X_\lambda$ of quasi-affine super $(G.\varepsilon)$\nd schemes
with affine transition morphisms, and $Y$ be an open super $G$\nd subscheme of $X$.
Then for each point $y$ of $Y$ there exists for some $\lambda \in \Lambda$ a quasi-compact 
open super $G$\nd subscheme 
$Y'$ of $X_\lambda$ containing the image of $y$ in $X_\lambda$ 
such that the inverse image of $Y'$ in $X$ is contained in $Y$.
\end{lem}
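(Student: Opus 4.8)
The plan is to reduce the statement about the open super $G$-subscheme $Y \subset X$ to a statement about the closed complement, and then to descend a defining equation of that closed set along the filtered limit. First I would let $Z$ be the closed super subscheme $X \setminus Y$, equipped with, say, its reduced induced structure (so that $Z$ is $G$-stable since $Y$ is); the point $y$ of $Y$ has nonempty intersection of $\{y\}$ with the complement of $Z$, i.e. there is a section of $\sO_X$, or at least of $\sO_{X,\mathrm{red}}$, vanishing on $Z$ but not at $y$. Since $X = \lim_\lambda X_\lambda$ with affine transition morphisms, and each $X_\lambda$ is quasi-affine hence quasi-compact and quasi-separated, we have $H^0(X,\sO_X) = \colim_\lambda H^0(X_\lambda,\sO_{X_\lambda})$ by the remarks preceding the lemma (filtered colimit of global sections, affine transition maps). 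So a global section of $\sO_X$ distinguishing $y$ from $Z$ — if one exists globally — comes from some $H^0(X_{\lambda_0},\sO_{X_{\lambda_0}})$.

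The point requiring care is that $Z$ need not be cut out by a single global function on all of $X$, only locally. So I would work locally: choose an affine open $U_\lambda \subset X_\lambda$ whose preimage $U \subset X$ contains $y$; replacing $X$ by $U$ and each $X_\mu$ ($\mu \ge \lambda$) by the preimage of $U_\lambda$, we may assume $X$ is affine, $X = \Spec A$ with $A = \colim_\mu A_\mu$, $X_\mu = \Spec A_\mu$. Now $Z \cap X$ is cut out by an ideal $I \subset A$, and $y$ corresponds to a prime not containing $I$; pick $f \in I$ with $f(y) \ne 0$. Since $A = \colim A_\mu$, $f$ is the image of some $f_{\mu_0} \in A_{\mu_0}$. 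The principal open $D(f_{\mu_0}) \subset X_{\mu_0}$ contains the image of $y$ and its preimage in $X$ is $D(f) \subset X \setminus Z \subseteq Y$. This handles a single affine chart; to get a genuine open $G$-subscheme of some $X_\lambda$ I would then take the $G$-saturation. Concretely, let $W = D(f_{\mu_0})$; then $G \cdot W$ is an open $G$-subscheme of $X_{\mu_0}$ (the image of $G \times_k W$ under the action map, which is open since the action map is flat, indeed fppf), it contains the image of $y$, and its preimage in $X$ is $G \cdot (\text{preimage of } W) \subseteq G \cdot Y = Y$. Finally, by quasi-compactness of $X_{\mu_0}$ we may shrink $G\cdot W$ to a quasi-compact such open $G$-subscheme $Y'$; or note $G \cdot W$ is already quasi-compact since it is a quotient of $G \times_k W$ with $G$ affine and $W$ affine. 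Take $\lambda = \mu_0$ and this $Y'$.

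The main obstacle is the gluing/globalization step: passing from ``$Z$ is locally a hypersurface near $y$'' to an actual open neighborhood in some finite stage $X_\lambda$, while keeping everything $G$-equivariant and keeping $Y'$ quasi-compact. Reducing to the affine case via a single affine chart $U_\lambda$ handles the first difficulty; the $G$-saturation $G \cdot W$ handles equivariance; and quasi-compactness of $Y'$ follows because $G \times_k W \to X_{\mu_0}$ has image $G \cdot W$ and $G \times_k W$ is quasi-compact ($G$ affine, $W$ affine). One should check that no reducedness hypothesis on $\sO_X$ is actually needed here — it is not, since we only need one function $f$ in the ideal of $Z$ not vanishing at $y$, which exists as $y \notin Z$ regardless of nilpotents — and that the odd part of the structure sheaf causes no trouble, which it does not, because $D(f)$ for $f \in A_0$ (we may take $f$ even, as $y$ is an ordinary point and the reduced scheme sees only $A_0$ modulo nilpotents) is a legitimate open super subscheme and the colimit formula $A = \colim A_\mu$ holds at the level of super rings.
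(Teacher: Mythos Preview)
Your approach is correct in outline but differs from the paper's, and one step is justified imprecisely.

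The paper's proof is shorter and more algebraic. It does not pass to an affine chart: since $X$ itself is quasi-affine, there is already a global section $f \in H^0(X,\sO_X)$ with $y \in X_f \subset Y$ (after reducing $G$, $X$, $X_\lambda$, which is harmless as the statement is topological). For $G$-stability the paper takes a basis $f_1,\dots,f_n$ of the $G$-submodule of $H^0(X,\sO_X)$ generated by $f$; this submodule is finite-dimensional because $G$ is affine, so every element of a $G$-module lies in a finite-dimensional subcomodule. Then $X_{f_1}\cup\dots\cup X_{f_n}$ is $G$-stable (the ideal of its complement is $G$-stable), contains $y$, and is contained in $Y$ (each $f_i$ vanishes on the $G$-stable closed complement of $Y$ since $f$ does). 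Finally one descends the finite list $f_1,\dots,f_n$, together with the $G$-comodule relations among them, to some $H^0(X_\lambda,\sO_{X_\lambda})$ using the colimit formula; this gives $Y' = \bigcup_i (X_\lambda)_{f'_i}$, which is manifestly a quasi-compact open $G$-subscheme. So the paper replaces your geometric $G$-saturation step by the purely algebraic observation that a single function generates a finite-dimensional $G$-submodule.

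The imprecision in your argument is the claim that the action map $G \times_k X_{\mu_0} \to X_{\mu_0}$ is ``fppf''. It is flat, but $G$ is not assumed locally of finite type over $k$ (in the paper $G$ is typically an infinite product of super general linear groups), so ``locally of finite presentation'' fails and the standard fppf $\Rightarrow$ open implication does not apply. Your conclusion that $G\cdot W$ is open is nevertheless true: over a field, any morphism $S \to \Spec k$ is universally open, because a basic open $D(f)$ in $S \times_k T$ has $f$ lying in $A_0 \otimes_k \sO(T)$ for some finitely generated $k$-subalgebra $A_0$ of $\sO(S)$, and $A_0 \otimes_k \sO(T) \hookrightarrow \sO(S) \otimes_k \sO(T)$ (flatness over a field) shows nilpotence of $f$ in fibres is detected already over $\Spec A_0$, reducing to the fppf case. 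This fix works, but the paper's finite-dimensional-submodule argument avoids the issue entirely and yields $Y'$ as an explicit finite union of principal opens.
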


\begin{proof}
We may suppose that $G$, $X$ and the $X_\lambda$ are reduced.
Since $X$ is quasi-affine, $y \in X_f \subset Y$ for some $f$ in $H^0(X,\sO_X)$.
If $f_1, \dots , f_n$ is a basis of the $G$\nd submodule of $H^0(X,\sO_X)$ generated by $f$, then
\begin{equation*}
y \in X_{f_1} \cup \dots \cup X_{f_n} \subset Y.
\end{equation*}
For some $\lambda \in \Lambda$, each $f_i$ comes from an $f'{}\!_i$ in 
$H^0(X_\lambda,\sO_{X_\lambda})$ with the $f'{}\!_i$ a 
basis of a $G$\nd submodule of $H^0(X_\lambda,\sO_{X_\lambda})$.
Then we may take $X_\lambda{}_{f'{}\!_1} \cup \dots \cup X_\lambda{}_{f'{}\!_n}$ 
for $Y'$.
\end{proof}

\begin{lem}\label{l:qafflim}
Let $X$ be a quasi-affine super $(G,\varepsilon)$\nd scheme.
Then $X$ is the limit of a filtered inverse system $(X_\lambda)_{\lambda \in \Lambda}$ of 
quasi-affine super $(G.\varepsilon)$\nd schemes of finite type with affine transition morphisms
and super schematically dominant projections.
If $X$ is affine then the $X_\lambda$ may be taken to be affine.
\end{lem}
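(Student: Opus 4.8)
The plan is to write $X$ as a limit of finite-type quotients obtained by choosing finitely generated $(G,\varepsilon)$-subalgebras of $H^0(X,\sO_X)$, and to build in the quasi-affineness and dominance by hand. First I would reduce to the affine case: since $X$ is quasi-affine, the canonical morphism $X \to \Spec(H^0(X,\sO_X))$ is an open immersion (as recorded just before Lemma~\ref{l:quotsub}), and it is $(G,\varepsilon)$-equivariant. Write $\bar X = \Spec(H^0(X,\sO_X))$. If the affine case is known, present $\bar X$ as a filtered limit $\lim_\lambda \bar X_\lambda$ of affine super $(G,\varepsilon)$-schemes of finite type with affine, super schematically dominant transition morphisms; then for each $\lambda$ let $X_\lambda \subset \bar X_\lambda$ be the image of the quasi-compact open $X \subset \bar X$ — more precisely, use Lemma~\ref{l:limopensub} to find, for suitable $\lambda_0$ and a finite cover of $X$ by basic opens, a quasi-compact open super $G$-subscheme $Y_{\lambda_0}$ of $\bar X_{\lambda_0}$ whose inverse image in $\bar X$ is exactly (a cofinal shrinking to) $X$, and take $X_\lambda$ to be the inverse image of $Y_{\lambda_0}$ in $\bar X_\lambda$ for $\lambda \ge \lambda_0$. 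These $X_\lambda$ are quasi-affine of finite type, the transition morphisms remain affine, and $\lim_\lambda X_\lambda = X$ since limits commute with the open immersions involved.

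The affine case is the heart of the matter. Here $X = \Spec(R)$ with $R = H^0(X,\sO_X)$ a commutative $(G,\varepsilon)$-algebra. The algebra $R$ is the filtered union of its $(G,\varepsilon)$-subalgebras $R_\lambda$ that are finitely generated as $k$-algebras: given finitely many elements of $R$, the $(G,\varepsilon)$-submodule they generate is finite-dimensional (every $(G,\varepsilon)$-module is a union of its finite-type submodules, and $X$ being affine there is no quasi-compactness issue), so the subalgebra it generates is a finitely generated $(G,\varepsilon)$-stable $k$-subalgebra. Order the $R_\lambda$ by inclusion; this is filtered, and $R = \colim_\lambda R_\lambda$, hence $X = \lim_\lambda \Spec(R_\lambda)$ with affine transition morphisms. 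The projections $X \to \Spec(R_\lambda)$ are super schematically dominant precisely because $R_\lambda \hookrightarrow R$ is injective and the push-forward of $\sO_X$ is the quasi-coherent module associated to $R$ over $\Spec(R_\lambda)$, so $\sO_{\Spec(R_\lambda)} \to (\text{pr}_\lambda)_*\sO_X$ is a monomorphism; this is the criterion for super schematic dominance for a quasi-compact quasi-separated morphism recalled before Lemma~\ref{l:domfaith}. Each $\Spec(R_\lambda)$ is affine of finite type over $k$ and carries an action of $G$ (the coaction restricts because $R_\lambda$ is $(G,\varepsilon)$-stable, and $\varepsilon$ acts as $\iota$), so $(X_\lambda)$ is a system of the required kind.

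The main obstacle I anticipate is bookkeeping in the quasi-affine (non-affine) case: one must check that the open super $G$-subschemes $Y_\lambda$ of the $\bar X_\lambda$ produced by Lemma~\ref{l:limopensub} can be chosen compatibly along the transition maps so that they genuinely form an inverse system with limit $X$, and that super schematic dominance of the projections $X \to X_\lambda$ survives passing to these opens — this uses that an open immersion is flat, so base change for push-forward holds, and that super schematic dominance can be checked locally on the target. A second, minor point is to confirm that the finitely generated $(G,\varepsilon)$-subalgebras of $R$ are cofinal and closed under finite joins, which is immediate since a finite join of finitely generated subalgebras is finitely generated and the $(G,\varepsilon)$-action on it is the restriction of that on $R$. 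Everything else is a routine application of the general machinery of limits of schemes with affine transition morphisms together with the equivariant refinements set up earlier in this section.
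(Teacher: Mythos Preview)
Your proposal is correct and follows essentially the same route as the paper: first handle the affine case by writing $R$ as the filtered union of its finitely generated $(G,\varepsilon)$-subalgebras, then in the quasi-affine case pass to $\bar X=\Spec(H^0(X,\sO_X))$, apply the affine case to get $\bar X=\lim_\lambda Z_\lambda$, and use Lemma~\ref{l:limopensub} together with quasi-compactness of $X$ to descend $X$ to an open super $G$-subscheme of some $Z_{\lambda_0}$, taking $X_\lambda$ to be its inverse image in $Z_\lambda$ for $\lambda\ge\lambda_0$. Your anticipated obstacle about compatibility of the opens is a non-issue precisely because you pull back a single fixed open from $Z_{\lambda_0}$, exactly as the paper does.
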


\begin{proof}
If $X = \Spec(R)$ is affine, writing $R$ as the filtered colimit of its finitely generated 
super $G$\nd subalgebras gives the required result, with the $X_\lambda$ affine.
In general, we may regard $X$ as an open super $G$\nd subscheme of the spectrum $Z$ of
$H^0(X,\sO_X)$.
By the affine case, $Z$ is the limit of a filtered inverse system $(Z_\lambda)$ of affine 
$(G,\varepsilon)$\nd schemes of finite type with super schematically dominant projections.
By quasi-compactness of $X$ and Lemma~\ref{l:limopensub}, there exists a $\lambda_0$
such that $X$ is the inverse image of an open super $G$\nd subscheme $X'$ of $Z_{\lambda_0}$.
Then the system $(X_\lambda)_{\lambda \ge \lambda_0}$ with $X_\lambda$ the inverse image 
of $X'$ in $Z_\lambda$ has the required properties.
\end{proof}

\begin{lem}\label{l:schdense}
Let $X$ be a quasi-affine super $(G,\varepsilon)$\nd scheme with $\Mod_{G,\varepsilon}(X)$ integral.
Then any non-empty open super $G$\nd subscheme of $X$ is super schematically dense.
\end{lem}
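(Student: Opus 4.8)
The plan is to reduce the statement, via Lemma~\ref{l:domfaith}, to the faithfulness of a pullback tensor functor, and then to a geometric assertion expressing a form of irreducibility of $X$ which will be extracted from the integrality of $\Mod_{G,\varepsilon}(X)$.

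First I would reduce to the case that $Y$ is quasi-compact. Applying Lemma~\ref{l:limopensub} to the constant inverse system with value $X$, any non-empty open super $G$-subscheme $Y$ of $X$ contains a non-empty quasi-compact open super $G$-subscheme $Y_0$. If the embedding $Y_0 \to X$ is super schematically dominant then so is $Y \to X$: for every open super subscheme $X'$ of $X$ one has $Y_0 \cap X' \subseteq Y \cap X' \subseteq X'$, so a factorisation of $Y \cap X' \to X'$ through a closed super subscheme of $X'$ would induce one of $Y_0 \cap X' \to X'$. So we may assume $Y$ quasi-compact; it is also quasi-separated, being open in the quasi-separated super $k$-scheme $X$. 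By Lemma~\ref{l:domfaith} it then suffices to show that $j^*:\Mod_{G,\varepsilon}(X) \to \Mod_{G,\varepsilon}(Y)$ is faithful, where $j$ is the embedding. Using rigidity --- replacing a morphism $\varphi:\sV \to \sW$ by the composite of $\varphi \otimes 1_{\sW^\vee}$ with the counit $\sW \otimes \sW^\vee \to \sO_X$, which is non-zero exactly when $\varphi$ is and whose pullback vanishes whenever that of $\varphi$ does --- this reduces to showing that there is no non-zero morphism $\chi:\sU \to \sO_X$ in $\Mod_{G,\varepsilon}(X)$ with $j^*\chi = 0$.

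Since an integral tensor category is reduced, Lemma~\ref{l:rednil} applies to $X$, so any non-zero $(G,\varepsilon)$-ideal of $\sO_X$ is not contained in the nilradical and hence has non-empty non-vanishing locus, which is an open super $G$-subscheme of $X$. Given $\chi$ as above, its image $\sK$ is a non-zero $(G,\varepsilon)$-ideal with $\sK|_Y = 0$, so its non-vanishing locus $W$ is a non-empty open super $G$-subscheme of $X$ disjoint from $Y$. Thus the proof reduces to the claim that two non-empty open super $G$-subschemes of $X$ cannot be disjoint. To prove this, suppose $W_1$ and $W_2$ are disjoint non-empty open super $G$-subschemes, and let $\sK_i$ be the ideal sheaf of the reduced closed super $G$-subscheme $X \setminus W_i$, a non-zero $(G,\varepsilon)$-ideal of $\sO_X$. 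By Lemma~\ref{l:quotsub}\ref{i:quotsubmod}, $\sK_i$ is a quotient of $V_i \otimes_k \sO_X$ for some $(G,\varepsilon)$-module $V_i$; passing to a finite-dimensional subcomodule on which the induced map remains non-zero gives a non-zero morphism $\alpha_i:M_i \otimes_k \sO_X \to \sO_X$ in $\Mod_{G,\varepsilon}(X)$ with image a non-zero $(G,\varepsilon)$-subideal of $\sK_i$. By integrality of $\Mod_{G,\varepsilon}(X)$, $\alpha_1 \otimes \alpha_2:(M_1 \otimes_k M_2) \otimes_k \sO_X \to \sO_X$ is non-zero; but its image is the product of the images of $\alpha_1$ and $\alpha_2$, hence lies in $\sK_1 \cap \sK_2$. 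Since $W_1 \cap W_2 = \emptyset$, the support of $\sO_X/(\sK_1 \cap \sK_2)$ is all of $X$, so $\sK_1 \cap \sK_2$ is a non-zero $(G,\varepsilon)$-ideal contained in the nilradical of $\sO_X$, contradicting Lemma~\ref{l:rednil}.

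The main obstacle is this last step: turning the integrality of $\Mod_{G,\varepsilon}(X)$ into the geometric "$G$-irreducibility" of $X$. Here one must be careful that the ideal sheaves occurring inherit a genuine $(G,\varepsilon)$-module structure, and that the passage from an arbitrary equivariant quotient of $\sO_X$ to a morphism between equivariant vector bundles is made correctly so that the hypothesis of integrality can actually be applied. The reductions in the first paragraph are routine, although the reduction to a quasi-compact $Y$ does genuinely require Lemma~\ref{l:limopensub}, since an open subscheme of a quasi-compact scheme need not be quasi-compact.
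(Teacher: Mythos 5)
Your overall strategy differs genuinely from the paper's: you bypass the finite-type reduction and the limit argument entirely, reducing instead to the cleaner assertion that $X$ admits no two disjoint non-empty open super $G$-subschemes. The reductions in your first paragraph (quasi-compactness of $Y$ via Lemma~\ref{l:limopensub}, then Lemma~\ref{l:domfaith}, then the standard rigidity device) are correct. The gap is in the last step: the claim that the reduced closed subscheme on $X \setminus W_i$ is a super $G$\nd subscheme, so that its ideal sheaf $\sK_i$ is a $(G,\varepsilon)$\nd ideal. In the super setting this is false in general. The reduced structure on a $G$\nd stable closed subset is only a $G_\mathrm{red}$\nd subscheme, because the odd part of $G$ need not preserve a radical ideal; indeed, if the nilradical $\sN$ of $\sO_X$ were a $(G,\varepsilon)$\nd ideal, Lemma~\ref{l:rednil} would force $\sN = 0$, i.e.\ $X$ reduced, which fails in the cases of interest. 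The paper is explicit about this distinction elsewhere (in the discussion before Lemma~\ref{l:smallequiv}, the reduced complement is called a $G_\mathrm{red}$\nd subscheme and only its open complement is asserted to be a $G$\nd subscheme). Without $\sK_i$ being a $(G,\varepsilon)$\nd ideal, Lemma~\ref{l:quotsub}\ref{i:quotsubmod} does not apply to produce the $\alpha_i$, and the contradiction collapses.

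The paper sidesteps this by taking for the $G$\nd ideal not a radical ideal but the kernel $\sJ_i$ of $\sO_X \to j_{i*}\sO_{U_i}$, which is automatically $(G,\varepsilon)$\nd equivariant; but for the key step $\sJ_1 \cap \sJ_2 \subset \sN$ one then needs $U_1 \cup U_2$ to be topologically dense, which is why the paper takes $U_2$ to be the open complement of $\overline{U_1}$ rather than an arbitrary disjoint open, and why the finite-type reduction appears (so that $U_2$ is quasi-compact and $j_{2*}\sO_{U_2}$ quasi-coherent). Your two disjoint opens $Y$ and $W$ do not come with $Y \cup W$ dense, so replacing your $\sK_i$ by the kernel ideals does not directly repair the argument. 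The gap can likely be filled --- for instance by replacing $Z^{\mathrm{red}}$ with the scheme-theoretic image of $G \times_k Z^{\mathrm{red}} \to X$, which is a $G$\nd stable closed subscheme with the correct underlying set --- but as written the proof invokes an equivariance that has not been established and does not hold in general.
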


\begin{proof}
Suppose first that $X$ is of finite type.
Let $U_1$ be a non-empty open super $G$\nd subscheme of $X$.
The open super subscheme $U_2$ of $X$ on the complement of the closure of $U_1$
is a super $G$\nd subscheme which is disjoint from $U_1$, and $U_1 \cup U_2$
is dense in $X$.
If $j_i:U_i \to X$ is the embedding, then the canonical morphism
\begin{equation*}
\sO_X \to j_i{}_*\sO_{U_i}
\end{equation*}
has kernel a quasi-coherent $G$\nd ideal $\sJ_i$ of $\sO_X$.
Since $\sJ_1 \cap \sJ_2$ is $0$ on $U_1 \cup U_2$, it is contained in the 
nilradical of $\sO_X$, and hence is $0$ by Lemma~\ref{l:rednil}.
Thus $\sJ_1\sJ_2 = 0$.
Now $\sJ_2 \ne 0$ because $U_1$ is non-empty and $U_1 \cap U_2 = \emptyset$.
If $\sJ_1$ were $\ne 0$, then by Lemma~\ref{l:quotsub}\ref{i:quotsubmod} there would for each 
$i$ be a non-zero morphism $V_i \otimes_k \sO_X \to \sO_X$ in $\Mod_{G,\varepsilon}(X)$ 
which factors through $\sJ_i$, 
contradicting the integrality of $\Mod_{G,\varepsilon}(X)$.
Thus $\sJ_1 = 0$, and $U_1$ is super schematically dense in $X$.

To prove the general case, write $X$ as the limit of a filtered inverse system 
$(X_\lambda)_{\lambda \in \Lambda}$ as in Lemma~\ref{l:qafflim}.
Let $U$ be a non-empty open super $G$\nd subscheme of $X$.
There exists by Lemma~\ref{l:limopensub} a $\lambda_0 \in \Lambda$ such that
\begin{equation*}
\emptyset \ne \pr_{\lambda_0}^{-1}(U_0) \subset U
\end{equation*}
for an open super $G$\nd subscheme $U_0$ of $X_{\lambda_0}$.
Now by Lemma~\ref{l:domfaith}
the $\Mod_{G,\varepsilon}(X_\lambda)$ are integral.
Thus by the case where $X$ is of finite type, the inverse image of $U_0$ in $X_\lambda$ 
is super schematically dense for each $\lambda \ge \lambda_0$.
Covering $X_{\lambda_0}$ with affine open subsets then shows that
$\pr_{\lambda_0}^{-1}(U_0)$ and hence $U$ is super schematically dense in $X$.
\end{proof}

Let $X$ be a super $(G,\varepsilon)$\nd scheme and $x$ be a point of $X$.
We write
\begin{equation*}
\omega_{X,x}:\Mod_{G,\varepsilon}(X) \to \Mod(\kappa(x))
\end{equation*}
for the $k$\nd tensor functor given by taking the fibre at $x$.

\begin{lem}\label{l:pointfaith}
Let $X$ be a quasi-affine super $(G,\varepsilon)$\nd scheme with $\Mod_{G,\varepsilon}(X)$ integral.
Then $X$ has a point which lies in every non-empty open super $G$\nd subscheme of $X$.
For any such point $x$, the functor $\omega_{X,x}$ from $\Mod_{G.\varepsilon}(X)$ to
$\Mod(\kappa(x))$ is faithful.
\end{lem}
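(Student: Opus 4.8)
The plan is to establish the two assertions separately: the existence of $x$ by reduction to the case of finite type, and the faithfulness of $\omega_{X,x}$ by a short argument with vanishing ideals, using that $\Mod_{G,\varepsilon}(X)$ is reduced (integrality forces reducedness, since a non-zero morphism in an integral tensor category is regular, so no tensor power of it vanishes).

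For the existence of $x$, first I would apply Lemma~\ref{l:qafflim} to write $X = \lim_{\lambda} X_\lambda$ with the $X_\lambda$ quasi-affine super $(G,\varepsilon)$\nd schemes of finite type, the transition morphisms affine and super schematically dominant, and the projections $\pr_\lambda$ super schematically dominant; by Lemma~\ref{l:domfaith} each $\Mod_{G,\varepsilon}(X_\lambda)$ is integral. When $X_\lambda$ is of finite type it is noetherian, so it has finitely many irreducible components, and Lemma~\ref{l:schdense} (two non-empty open super $G$\nd subschemes of $X_\lambda$ are both schematically dense, hence meet) forces $G_{\mathrm{red}}$ to permute these components transitively --- otherwise the union of the components in one orbit and the union of those in another would be $G$\nd stable closed sets with non-empty disjoint open complements. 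It follows that every non-empty open super $G$\nd subscheme of $X_\lambda$ contains the generic points of \emph{all} its components, since it contains at least one (being dense) and is $G$\nd stable. As the transition morphisms are dominant and $G$\nd equivariant, they carry the finite non-empty set $S_\mu$ of generic points of $X_\mu$ into $S_\lambda$ for $\mu \ge \lambda$; since the inverse limit of a cofiltered system of non-empty finite sets is non-empty, there is a point $x$ of $|X| = \lim_\lambda |X_\lambda|$ with $\pr_\lambda(x) \in S_\lambda$ for every $\lambda$. Finally, given any non-empty open super $G$\nd subscheme $Y$ of $X$, Lemma~\ref{l:limopensub} furnishes a $\lambda$ and a non-empty quasi-compact open super $G$\nd subscheme $Y'$ of $X_\lambda$ with $\pr_\lambda^{-1}(Y') \subseteq Y$; since $\pr_\lambda(x) \in S_\lambda \subseteq Y'$ we get $x \in Y$, as required.

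For the faithfulness, let $\varphi:\sV \to \sW$ be a morphism in $\Mod_{G,\varepsilon}(X)$ with $\varphi_x = 0$. Because $\sV$ and $\sW$ are vector bundles, $\varphi$ corresponds under duality to a morphism $\sV \otimes \sW^\vee \to \sO_X$ in $\MOD_{G,\varepsilon}(X)$, whose image is a quasi-coherent $(G,\varepsilon)$\nd ideal $\sJ$ of $\sO_X$ --- locally, the ideal generated by the entries of the matrix of $\varphi$; then $\varphi = 0$ if and only if $\sJ = 0$, and $\varphi_x = 0$ if and only if $\sJ \subseteq \mathfrak{m}_x$, i.e.\ if and only if $x$ lies in the closed super $G$\nd subscheme $V(\sJ)$. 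If $X \setminus V(\sJ)$ were non-empty it would be a non-empty open super $G$\nd subscheme of $X$ and would therefore contain $x$, a contradiction; so $V(\sJ) = X$ and $\sJ$ lies in the nilradical of $\sO_X$. By Lemma~\ref{l:rednil}, applicable since $\Mod_{G,\varepsilon}(X)$ is reduced, $\sJ = 0$, and hence $\varphi = 0$.

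The main obstacle is the first assertion, and within it the passage to the limit: one must check carefully that the transition morphisms of Lemma~\ref{l:qafflim} preserve generic points, for which both the transitivity of $G_{\mathrm{red}}$ on the components and the dominance of the transition morphisms are needed, and then that Lemma~\ref{l:limopensub} really does let one reduce every non-empty open super $G$\nd subscheme of $X$ to one pulled back from finite level. Once a point lying in every non-empty open super $G$\nd subscheme is available, the faithfulness of $\omega_{X,x}$ is a short consequence of Lemma~\ref{l:rednil}.
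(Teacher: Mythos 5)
Your overall plan matches the paper's: reduce to finite type via Lemma~\ref{l:qafflim}, use some transitivity to build a compatible system of generic points, finish with Tychonoff and Lemma~\ref{l:limopensub}, and derive faithfulness from Lemma~\ref{l:rednil}. The faithfulness argument is correct and is in substance a reproof of Lemma~\ref{l:pullepi} in the case $Z = \Spec(\kappa(x))$, which is exactly what the paper invokes there. (One cosmetic point: ``every dense open contains all generic points'' already follows from density alone in a noetherian scheme; the $G$\nd stability is not needed for that particular sentence.)

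The genuine gap is the step you yourself call the main obstacle: the assertion that the transition morphisms $q:X_\mu \to X_\lambda$ carry $S_\mu$ \emph{into} $S_\lambda$. ``Dominant and $G$\nd equivariant'' does not by itself give this. Dominance only gives $q(S_\mu) \supseteq S_\lambda$, and without further input the reverse inclusion can fail (the union of the two coordinate axes in the affine plane mapping by first projection onto $\bA^1$ is schematically dominant, yet the generic point of one axis lands on a closed point). You intend to rescue this with ``$G_{\mathrm{red}}$ permutes the components transitively'', but $G_{\mathrm{red}}$ is a group \emph{scheme}, not a group acting on the underlying set of points, so there is no honest transitive group action of $G_{\mathrm{red}}$ on the finite set $S_\lambda$ that would let you argue that a $G$\nd stable nonempty subset of $S_\mu$ must be all of $S_\mu$. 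Your version of the transitivity claim --- no proper nonempty $G$\nd stable union of components --- is a true statement about closed subschemes, and the complement-of-two-opens argument supports it; but passing from that scheme-theoretic statement to the set-level claim $q(S_\mu) \subseteq S_\lambda$ is precisely the unfinished part. The paper closes this by base-changing to an algebraic closure $\overline{k}$ and working with the genuine group $\Gamma = G(\overline{k}) \rtimes \Gal(\overline{k}/k)$, which really does act on the set $\overline{\sM}_\lambda$ of maximal points of $(X_\lambda)_{\overline{k}}$. It first proves $\Gamma$ acts transitively on $\overline{\sM}_\lambda$ (using Lemma~\ref{l:schdense} together with a $\Gamma$\nd descent argument to produce the $G$\nd stable open over $k$), and then the set-theoretic part is clean: the image $q(\overline{\sM}_\mu)$ is a single $\Gamma$\nd orbit, $\overline{\sM}_\lambda$ is a nonempty $\Gamma$\nd stable subset of it, and orbits are minimal nonempty stable sets, so they coincide; quotienting by $\Gal(\overline{k}/k)$ recovers $q(\sM_\mu) = \sM_\lambda$. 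The passage to $\overline{k}$ and the group $\Gamma$ is doing real work that your write-up skips, and without it the inverse system of finite sets you feed into Tychonoff is not shown to be well defined.
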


\begin{proof}
Write $X$ as the limit of a filtered inverse system 
$(X_\lambda)_{\lambda \in \Lambda}$ as in Lemma~\ref{l:qafflim}.
Let $\overline{k}$ be an algebraic closure of $k$, and for each $\lambda$
write $\overline{\sM}_\lambda$ for the set of maximal points of 
$(X_\lambda)_{\overline{k}}$.
The group
\begin{equation*}
\Gamma = G(\overline{k}) \rtimes \Gal(\overline{k}/k)
\end{equation*}
acts on $\overline{\sM}_\lambda$,
and the quotient $\overline{\sM}_\lambda/\Gal(\overline{k}/k)$ by the subgroup 
$\Gal(\overline{k}/k)$ may be identified with the set $\sM_\lambda$
of maximal points of $X_\lambda$.
Let $\overline{\sM}_1$ be a non-empty $\Gamma$\nd subset of $\overline{\sM}_\lambda$.
If $\overline{\sM}_2$ is the complement of $\overline{\sM}_1$ in $\overline{\sM}_\lambda$,
then the complement of the closure of $\overline{\sM}_2$ in $(X_\lambda)_{\overline{k}}$
is an open super subscheme $\overline{U}_1$  of $(X_\lambda)_{\overline{k}}$ 
which contains $\overline{\sM}_1$ but no point of $\overline{\sM}_2$.
Further $\overline{U}_1$ is  stable under $\Gamma$, and hence descends to an open super 
$G$\nd subscheme $U_1$ of $X_\lambda$ which contains the image $\sM_1$ of $\overline{\sM}_1$
in $\sM_\lambda$ but no point of the image $\sM_2$ of $\overline{\sM}_2$. 
Since $\Mod_{G,\varepsilon}(X_\lambda)$ is integral by Lemma~\ref{l:domfaith},
$U_1$ is dense in $X_\lambda$ by Lemma~\ref{l:schdense}.
Thus $\sM_2$ and hence $\overline{\sM}_2$ is empty, so that $\sM_1 = \overline{\sM}_\lambda$.
This shows that $\Gamma$ acts transitively on $\overline{\sM}_\lambda$.
Since $(X_\lambda)_{\overline{k}} \to (X_\mu)_{\overline{k}}$ 
is dominant and compatible with the action of $\Gamma$ for $\lambda \ge \mu$, 
it follows that it sends $\overline{\sM}_\lambda$ to $\overline{\sM}_\mu$.
Hence $X_\lambda \to X_\mu$ sends $\sM_\lambda$ to $\sM_\mu$, and the $\sM_\lambda$
form a filtered inverse system of finite non-empty subsets of the $X_\lambda$.

By Tychonoff's theorem, the set $\lim_\lambda \sM_\lambda$ is non-empty.
Let $(x_\lambda)$ be an element.
Then there is a (unique) point $x$ of $X$ which lies above $x_\lambda$ in $X_\lambda$
for each $\lambda$.
By Lemma~\ref{l:limopensub}, any non-empty open super $G$\nd subscheme $U$ of $X$
contains the inverse image in $X$ of a non-empty open super $G$\nd subscheme $U_\lambda$
of some $X_\lambda$.
Since $U_\lambda$ contains $x_\lambda$ by Lemma~\ref{l:schdense},
it follows that $U$ contains $x$.

The final statement follows from Lemma~\ref{l:pullepi} by taking $Z = \Spec(\kappa(x))$.
\end{proof}

\begin{lem}\label{l:openGsubint}
Let $X$ be a quasi-affine super $(G,\varepsilon)$\nd scheme with $\Mod_{G,\varepsilon}(X)$ integral,
and $Y$ be a non-empty quasi-compact open super $G$\nd subscheme of $X$.
Then $\Mod_{G,\varepsilon}(Y)$ is integral, the restriction functor from 
$\Mod_{G,\varepsilon}(X)$ to $\Mod_{G,\varepsilon}(Y)$ is faithful, and the induced homomorphism
from $\kappa(\Mod_{G,\varepsilon}(X))$ to $\kappa(\Mod_{G,\varepsilon}(Y))$
is an isomorphism. 
\end{lem}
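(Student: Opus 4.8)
The plan is to exploit Lemma~\ref{l:pointfaith}: since $\Mod_{G,\varepsilon}(X)$ is integral and $X$ is quasi-affine, $X$ has a point $x$ lying in every non-empty open super $G$\nd subscheme of $X$, and $\omega_{X,x}$ is faithful on $\Mod_{G,\varepsilon}(X)$. In particular $x$ lies in $Y$. First I would observe that the restriction functor $r:\Mod_{G,\varepsilon}(X) \to \Mod_{G,\varepsilon}(Y)$ is faithful: indeed $\omega_{X,x}$ factors as $r$ followed by $\omega_{Y,x}$ (the fibre at $x$ computed on $Y$), so faithfulness of $\omega_{X,x}$ forces $r$ to be faithful. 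Since $r$ is a faithful tensor functor between rigid tensor categories and $\Mod_{G,\varepsilon}(X)$ is integral, $r$ is regular; being faithful it reflects $0$, and as $r(1_\I) = 1_\I \ne 0$ in $\Mod_{G,\varepsilon}(Y)$ and $r$ preserves tensor products of non-zero morphisms up to the fact that it is faithful\,---\,more precisely, to see $\Mod_{G,\varepsilon}(Y)$ is integral I would instead apply Lemma~\ref{l:pointfaith} directly to $Y$ once I know $\Mod_{G,\varepsilon}(Y)$ is integral, which is circular; so the cleaner route is: $\omega_{Y,x} = \omega_{X,x}\circ(\text{a section})$? That fails too. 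The honest route is: $\omega_{X,x}$ factors through $r$, hence through $\omega_{Y,x}\circ r$; since $\Mod(\kappa(x))$ is integral and $\omega_{X,x}$ is faithful, for non-zero $f,g$ in $\Mod_{G,\varepsilon}(Y)$ lifted appropriately one deduces non-vanishing of $f\otimes g$. To make this precise I would use Lemma~\ref{l:quotsub}: every object of $\Mod_{G,\varepsilon}(Y)$ is a quotient of some $V\otimes_k\sO_Y = r(V\otimes_k\sO_X)$ with $V$ a representation, so any non-zero morphism in $\Mod_{G,\varepsilon}(Y)$ fits into a diagram with morphisms in the image of $r$, reducing non-vanishing of tensor products to the same statement in $\Mod_{G,\varepsilon}(X)$, which holds by integrality there. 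This gives that $\Mod_{G,\varepsilon}(Y)$ is integral.

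Next, faithfulness of $r$ has been established, so $r$ is a faithful tensor functor between integral rigid tensor categories, hence regular, and by Lemma~\ref{l:extfaith}-type considerations\,---\,or more directly by the universal property of the fractional closure recalled in Section~\ref{s:frac}\,---\,it induces a field homomorphism $\kappa(r):\kappa(\Mod_{G,\varepsilon}(X)) \to \kappa(\Mod_{G,\varepsilon}(Y))$, which is automatically injective. The real content is surjectivity. For this I would take an element of $\kappa(\Mod_{G,\varepsilon}(Y))$, represented as $h/f$ with $f:A \to \I$ a non-zero (hence regular, by integrality) morphism in $\Mod_{G,\varepsilon}(Y)$ and $h:A \to \I$ with $f\otimes h = h\otimes f$. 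Using Lemma~\ref{l:quotsub}\ref{i:quotsubrep}, $A$ embeds into, or is a quotient of, some $V\otimes_k\sO_Y$; composing appropriately I may assume $A = r(A_0)$ for an object $A_0$ of $\Mod_{G,\varepsilon}(X)$. The point is that a non-zero morphism $r(A_0) \to \I$ in $\Mod_{G,\varepsilon}(Y)$ extends: the quasi-coherent $G$\nd sheaf $\mathcal{H}om(A_0,\sO_X)$ on $X$ has sections over $Y$ including $f$; because $Y$ is super schematically dense in $X$ by Lemma~\ref{l:schdense} and $X$ is quasi-affine, global sections on $X$ of this sheaf restrict injectively to $Y$, and one argues that $f$ and $h$ themselves already come from $X$ after multiplying by a suitable regular "denominator" pulled back from $X$. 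Concretely: pick a non-zero $G$\nd invariant section of an appropriate power/ideal that vanishes where $f$ might fail to extend; such a section exists and is regular by integrality of $\Mod_{G,\varepsilon}(X)$ together with Lemma~\ref{l:rednil}/Lemma~\ref{l:schdense}. After clearing this denominator, $h/f$ becomes the image under $\kappa(r)$ of a fraction built from morphisms in $\Mod_{G,\varepsilon}(X)$.

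The main obstacle I anticipate is precisely this last extension step: showing that a morphism $r(A_0) \to \I$ in $\Mod_{G,\varepsilon}(Y)$, after multiplication by a regular denominator from $X$, descends to a morphism $A_0 \to \I$ in $\Mod_{G,\varepsilon}(X)$. This is where the quasi-affineness of $X$, the super schematic density of $Y$ in $X$ (Lemma~\ref{l:schdense}), and the finite-type/quasi-compactness of $Y$ all get used: one writes $X$ as a quasi-affine open in $\Spec H^0(X,\sO_X)$, uses that $H^0(X,-)$ commutes with filtered colimits, and realizes $f$ as lying in the image of $H^0(X,\sV)$ for $\sV$ the relevant Hom-sheaf, up to inverting a $G$\nd invariant function that is a unit on a dense open containing the distinguished point $x$ of Lemma~\ref{l:pointfaith}. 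Everything else\,---\,injectivity of $\kappa(r)$, integrality of $\Mod_{G,\varepsilon}(Y)$, faithfulness of $r$\,---\,follows formally from the results already in place in Sections~\ref{s:frac} and \ref{s:equ}.
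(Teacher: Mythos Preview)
Your argument for faithfulness of the restriction functor $r=j^*$ via Lemma~\ref{l:pointfaith} is fine (the paper uses Lemmas~\ref{l:domfaith} and \ref{l:schdense} instead, but either route works). The gap is in the remaining two statements, and it is the same gap in both: you never explain how a morphism in $\Mod_{G,\varepsilon}(Y)$, as opposed to an object, can be brought back to $\Mod_{G,\varepsilon}(X)$ after tensoring with something regular coming from $X$.

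For integrality you note that objects of $\Mod_{G,\varepsilon}(Y)$ are quotients of restrictions $j^*(V\otimes_k\sO_X)$, and then assert that non-vanishing of tensor products ``reduces'' to $X$. But a non-zero morphism between restricted objects need not itself be a restriction, so nothing reduces. For surjectivity of $\kappa(r)$ you propose to clear denominators using a non-zero $G$\nd invariant function on $X$. That fails in general: $H^0_G(X,\sO_X)$ can be just $k$, and $\kappa(\Mod_{G,\varepsilon}(X))$ is typically strictly larger than its fraction field (this is exactly what the isomorphism \eqref{e:funfieldiso}, proved \emph{after} this lemma, records). The denominator you need is not a scalar but a morphism $h:\sW\to\sO_X$ in $\Mod_{G,\varepsilon}(X)$.

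The paper produces such an $h$ by a single adjunction-and-pullback argument that handles integrality and surjectivity simultaneously. Given non-zero $f:\sO_Y\to j^*\sV$ (any morphism in $\Mod_{G,\varepsilon}(Y)$ with source $\sO_Y$ can be put in this form via Lemma~\ref{l:quotsub}\ref{i:quotsubrep}), take its adjoint $f_0:\sO_X\to j_*j^*\sV$, and form the fibre product of $f_0$ with the unit $\eta_{\sV}:\sV\to j_*j^*\sV$. This gives $u:\sW_1\to\sO_X$ and $f_1:\sW_1\to\sV$ in $\MOD_{G,\varepsilon}(X)$; since $j^*\eta_{\sV}$ is an isomorphism, so is $j^*u$, and hence $f_1\ne 0$. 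Choosing $v:\sW\to\sW_1$ with $\sW$ dualisable and $f_1\circ v\ne 0$ (Lemma~\ref{l:quotsub}\ref{i:quotsubmod}) and setting $h=u\circ v$, $f'=f_1\circ v$, one gets $f\otimes j^*(h)=j^*(f')$ with $f'\ne 0$. This is the missing mechanism; once you have it, both integrality of $\Mod_{G,\varepsilon}(Y)$ and surjectivity of $\kappa(r)$ follow formally from faithfulness of $j^*$ and integrality of $\Mod_{G,\varepsilon}(X)$.
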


\begin{proof}
The faithfulness is clear from Lemmas~\ref{l:domfaith} and \ref{l:schdense}.
Write $j:Y \to X$ for the embedding.
By Lemma~\ref{l:quotsub}\ref{i:quotsubrep}, any morphism in $\Mod_{G,\varepsilon}(Y)$ 
with source $\sO_Y$ may,
after composing with an appropriate monomorphism, be put into the form 
\begin{equation*}
f:\sO_Y \to j^*\sV
\end{equation*}
with $\sV$ in $\Mod_{G,\varepsilon}(X)$.
To prove the integrality and isomorphism statements,
it is then enough by the faithfulness to show that if $f \ne 0$ there exist 
morphisms $h$ and $f' \ne 0$ in $\Mod_{G,\varepsilon}(X)$ such that
\begin{equation}\label{e:fjh}
f \otimes j^*(h) = j^*(f')
\end{equation}
We have $f = j^*(f_0)$, where $f_0:\sO_X \to j_*j^*\sV$ corresponds under adjunction to $f$.
Pulling back $f_0$ along the unit $\eta_{\sV}$ gives a cartesian square
\begin{equation*}
\xymatrix{
\sW_1 \ar_{u}[d] \ar^{f_1}[r] & \sV \ar^{\eta_{\sV}}[d] \\
\sO_X \ar^{f_0}[r] & j_*j^*\sV
}
\end{equation*}
in $\MOD_{G,\varepsilon}(X)$, with $j^*(\eta_{\sV})$ the identity and hence $j^*(u)$ an isomorphism.
Then $j^*(f_1) \ne 0$ and hence $f_1 \ne 0$, so that by Lemma~\ref{l:quotsub}\ref{i:quotsubmod} 
there is a $v:\sW \to \sW_1$ with $\sW$ in $\Mod_{G,\varepsilon}(X)$ such that $f_1 \circ v \ne 0$ in 
$\Mod_{G,\varepsilon}(X)$.
If $h = u \circ v$ and $f' = f_1 \circ v$, then \eqref{e:fjh} is satisfied because 
$f \otimes j^*(h) = f \circ j^*(h)$.
\end{proof}

Let $X$ be a quasi-affine super $(G,\varepsilon)$\nd scheme
with $\Mod_{G,\varepsilon}(X)$ integral.
The endomorphism $k$\nd algebra of $\I$ in $\Mod_{G,\varepsilon}(X)$
is given by
\begin{equation*}
\End_{G,\sO_X}(\sO_X) = H^0_G(X,\sO_X).
\end{equation*}
For $Y$ as in Lemma~\ref{l:openGsubint} we then have a commutative square
\begin{equation*}
\xymatrix{
H^0_G(X,\sO_X) \ar[d] \ar[r] & H^0_G(Y,\sO_Y) \ar[d] \\
\kappa(\Mod_{G,\varepsilon}(X)) \ar^{\sim}[r] & \kappa(\Mod_{G,\varepsilon}(Y))
}
\end{equation*}
with the arrows injective.
As $Y$ varies, such squares form a filtered system with the left arrow fixed.
It can be seen as follows that the homomorphism
\begin{equation}\label{e:funfieldiso}
\colim_{Y \subset X, \; Y \ne \emptyset}H^0_G(Y,\sO_Y) \to \kappa(\Mod_{G,\varepsilon}(X))
\end{equation}
it defines is an isomorphism, where the colimit is over the filtered system of non-empty quasi-compact open super 
$G$\nd subschemes $Y$ of $X$.
Given $\alpha$ in $\kappa(\Mod_{G,\varepsilon}(X))$, it is to be shown that
there exists a $Y$ such that the image of $\alpha$ in 
$\kappa(\Mod_{G,\varepsilon}(Y))$ is the image of an element of $H^0_G(Y,\sO_Y)$.
We have $\alpha = h/f$ for
morphisms $f,h:\sV \to \sO_X$ in $\Mod_{G,\varepsilon}(X)$ with $f \ne 0$ and 
$f \otimes h = h \otimes f$.
The closed super subscheme of $X$ defined by the ideal $\Img f$ of $\sO_X$ is a $G$\nd subscheme,
and we may take for $Y$ its complement.
Indeed $Y$ so defined is quasi-compact and by Lemma~\ref{l:rednil} it is non-empty, 
and since the restriction of $f$ to $Y$
is an epimorphism, the image of $\alpha$ in $\kappa(\Mod_{G,\varepsilon}(Y))$ has the required property 
by Lemma~\ref{l:torsfrac} with $\sA = \MOD_{G,\varepsilon}(Y)$.

\begin{lem}\label{l:redmono}
Let $f:X' \to X$ be a super schematically dominant morphism of super $k$\nd schemes of finite type
which induces an isomorphism from $f^{-1}(X_{\mathrm{red}})$ to $X_{\mathrm{red}}$.
Then $f$ is an isomorphism.
\end{lem}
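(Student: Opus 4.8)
*Let $f:X' \to X$ be a super schematically dominant morphism of super $k$\nd schemes of finite type which induces an isomorphism from $f^{-1}(X_{\mathrm{red}})$ to $X_{\mathrm{red}}$. Then $f$ is an isomorphism.*

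\emph{Plan.} The assertion is local on $X$, so the plan is first to replace $X$ by an affine open super subscheme and thereby reduce to a statement about super $k$\nd algebras. Thus assume $X = \Spec(B)$; since $X$ is of finite type over $k$, the nilradical $\sN$ of $\sO_X$ satisfies $\sN^{N} = 0$ for some $N \geq 1$, and $X_{\mathrm{red}} = \Spec(B/\sN)$. Note also that $f$ is quasi-compact and quasi-separated, both super schemes being of finite type over $k$, so super schematic dominance of $f$ is equivalent to $\sO_X \to f_*\sO_{X'}$ being a monomorphism.

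The next step is to show that $X'$ is affine, so that $f$ becomes $\Spec$ of a homomorphism of super $k$\nd algebras. The scheme $f^{-1}(X_{\mathrm{red}}) = X' \times_X X_{\mathrm{red}}$ is the closed super subscheme of $X'$ defined by the ideal sheaf $\sN\cdot\sO_{X'}$, which is nilpotent (its $N$th power vanishes) and whose zero locus is all of $|X'|$ since $|X_{\mathrm{red}}| = |X|$. By hypothesis $f$ restricts to an isomorphism $f^{-1}(X_{\mathrm{red}}) \iso X_{\mathrm{red}}$, which is affine; hence $f$ is affine after base change along the nilpotent thickening $X_{\mathrm{red}} \to X$, and therefore $f$ itself is affine, by the standard fact — valid in the super setting with the same proof — that affineness of a morphism is unaffected by passing to a subscheme of the base cut out by a nilpotent ideal. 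Since $X$ is affine, so is $X'$, say $X' = \Spec(B')$, and $f = \Spec(\varphi)$ for a homomorphism $\varphi\colon B \to B'$ of super $k$\nd algebras. Now super schematic dominance says that $\varphi$ is injective, while the isomorphism $f^{-1}(X_{\mathrm{red}}) \iso X_{\mathrm{red}}$ says exactly that the induced map $\bar\varphi\colon B/\sN \to B'/\sN B'$ is an isomorphism.

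It then remains to prove the ring-theoretic statement: if $\varphi\colon B \to B'$ is injective, $\bar\varphi\colon B/\sN \to B'/\sN B'$ is an isomorphism, and $\sN^{N} = 0$, then $\varphi$ is surjective, hence bijective, so that $f$ is an isomorphism. For this I would take a homogeneous $b' \in B'$, lift $\bar\varphi^{-1}(b' \bmod \sN B')$ to some $b_0 \in B$, so that $b' - \varphi(b_0) \in \sN B' = \varphi(\sN)B'$; expanding the $B'$\nd coefficients that occur again through $\bar\varphi$ shows $b' \in \varphi(B) + \sN^{2}B'$, and iterating gives $b' \in \varphi(B) + \sN^{k}B'$ for every $k$, whence $b' \in \varphi(B)$ by nilpotence of $\sN$.

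I do not anticipate a genuine obstacle here; the only points requiring a word of care are the two invocations of standard scheme theory in the super context — that a nilpotent thickening of an affine super scheme is affine (equivalently, that affineness of a morphism descends along such a thickening) and that schematic dominance of an affine morphism is injectivity of the corresponding algebra map — both of which go through exactly as in the classical case, and the bookkeeping with the super grading in the final successive-approximation argument, which is harmless since $\varphi$ and $\sN$ are homogeneous.
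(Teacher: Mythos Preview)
Your argument is correct and shares the same core as the paper's: the successive approximation $B' = \varphi(B) + \sN^{k}B'$ terminated by nilpotence of $\sN$. The only difference is the reduction step: rather than proving $f$ is affine via descent along a nilpotent thickening, the paper observes that $\sN' = \sN\cdot\sO_{X'}$ (since $f^{-1}(X_{\mathrm{red}})$ is reduced), hence $f_{\mathrm{red}}$ is an isomorphism and $f$ a homeomorphism, and then runs the same bootstrap $f_*\sO_{X'} = \alpha(\sO_X) + (f_*\sN')^{n}$ directly at the sheaf level.
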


\begin{proof}
Write $\sN$ and $\sN'$ for the nilradicals of $X$ and $X'$,
and 
\[
\alpha:\sO_X \to f_*\sO_{X'}
\]
for the monomorphism induced by $f$.
Then $\sN'$ contains the inverse image of $\sN$ in $\sO_{X'}$,
and hence coincides with it, because $f^{-1}(X_{\mathrm{red}})$ is reduced.
Thus $X'{}\!_{\mathrm{red}} = f^{-1}(X_{\mathrm{red}})$, so that 
$f_{\mathrm{red}}:X'{}\!_{\mathrm{red}} \to X_{\mathrm{red}}$ is an isomorphism and $f$ is a
homeomorphism.
It remains to show that $\alpha$ is an isomorphism.
We show by induction on $n$ that for $n \ge 1$
\begin{align}
\label{e:fNfNn}
f_*\sN' & = \alpha(\sN) + (f_*\sN')^n \\ 
\label{e:fOfNn}
f_*\sO_{X'} & = \alpha(\sO_X) + (f_*\sN')^n. 
\end{align}
Since $f_*\sN'$ is a nilpotent ideal of $f_*\sO_{X'}$, it will follow from \eqref{e:fOfNn} for $n$ large
that $f_*\sO_{X'} = \alpha(\sO_X)$, so that $\alpha$ is indeed an isomorphism.
That \eqref{e:fNfNn} holds for $n=1$ is immediate.
That \eqref{e:fOfNn} holds for $n=1$ follows from the fact that
since $f_{\mathrm{red}}$ is an isomorphism,
$\alpha$ induces an isomorphism from $\sO_X/\sN$ to $f_*\sO_{X'}/f_*\sN'$.
Suppose that \eqref{e:fNfNn} and \eqref{e:fOfNn} hold for $n=r$.
Then inserting \eqref{e:fOfNn} with $n=r$ into
\begin{equation*}
f_*\sN' = \alpha(\sN)f_*\sO_{X'}.
\end{equation*}
shows that \eqref{e:fNfNn} holds for $n = r+1$,
and inserting \eqref{e:fNfNn} with $n=r$ into \eqref{e:fOfNn} with $n=r$ shows that \eqref{e:fOfNn}
holds for $n = r+1$.
\end{proof}

By an equivalence relation on a super $k$\nd scheme  $X$ we mean a super subscheme $E$
of $X \times_k X$ with faithfully flat quasi-compact projections $E \to X$ such that
$E(S)$ is an equivalence relation on the set $X(S)$ for any super $k$\nd scheme $S$. 
By a quotient of $X$ by $E$ we mean a super $k$\nd scheme $Y$ together with a
faithfully flat quasi-compact $k$\nd morphism $X \to Y$ which coequalises the 
projections $E \to X$, such that the square with two sides $X \to Y$ and the other two sides the projections 
$E \to X$ is cartesian.
Such an $X \to Y$ is the coequaliser in the category of super local $k$\nd ringed spaces 
of the projections $E \to X$, and hence is unique up to unique isomorphism when it exists.
We write $X/E$ for the quotient of $X$ by $E$ when it exists.
Formation of quotients is compatible with extension of scalars.
The quotient $(T \times_k X)/(T \times_k E)$  exists if $X/E$ does, and may be identified with $T \times_k (X/E)$.
Suppose that $X/E$ exists.
If $E'$ is an equivalence relation on $X$ coarser than $E$, and if 
$E' \to X \times_k X$ is a closed immersion, then by faithfully flat descent 
of  super $k$\nd subschemes, $E$ descends along 
\begin{equation*}
X \times_k X \to (X/E) \times_k (X/E) 
\end{equation*}
to an equivalence relation $\overline{E'}$ on $X/E$.
The quotient $X/E'$ exists if and only if $(X/E)/\overline{E'}$ does, and they then coincide.

A super subscheme $T$ of a super $k$\nd scheme $X$ will be called a \emph{transversal} to an equivalence relation
$E$ on $X$ if the restriction of the first projection $\pr_1:E \to X$ to $\pr_2{}\!^{-1}(T)$ is an isomorphism.
With $X$ identified with $\pr_2{}\!^{-1}(T)$ by this isomorphism, the projection $X \to T$ is that onto the 
quotient of $X$ by $E$.

For $G$ a super $k$\nd group and $H$ a super $k$\nd subgroup of $G$, we denote as usual by $G/H$ the quotient,
when it exists, of $G$ by the equivalence relation defined by right translation by $H$.
The left action of $G$ on itself by left translation defines a structure of super $G$\nd scheme on $G/H$.
If $X$ is a super $G$\nd scheme and $x$ is a $k$\nd point of $X$ which is fixed by $H$, there is a unique morphism
$G/H \to X$ of $G$\nd schemes which sends the image in $G/H$ of the identity of $G$ to $x$.

\begin{lem}\label{l:superrediso}
Let $X$ and $X'$ be smooth super $k$\nd schemes, $f$ be a $k$\nd morphism from $X'$ to $X$,
and $x'$ be a $k$\nd point of $X'$.
Suppose that $f$ induces an isomorphism from 
$X'{}\!_{\mathrm{red}}$ to $X_{\mathrm{red}}$,
and an isomorphism from the tangent space of $X'$ at $x'$ to that of $X$ at $f(x')$. 
Then there exists an open super subscheme $X_0$ of $X$ containing $f(x')$ such that $f$
induces an isomorphism from $f^{-1}(X_0)$ to $X_0$.
\end{lem}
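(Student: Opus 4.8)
The plan is to reduce the assertion to a computation with the explicit local model of a smooth super scheme and then show directly that $f$ restricts to an isomorphism of suitable affine opens.

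First I would note that the conclusion is local on $X$ near $f(x')$. Since $f$ induces an isomorphism $X'_{\mathrm{red}} \to X_{\mathrm{red}}$, the underlying map of topological spaces of $f$ is a homeomorphism; hence for any open $X_0$ of $X$ the preimage $f^{-1}(X_0)$ maps homeomorphically onto $X_0$, and we may make $f^{-1}(X_0)$ an arbitrarily small neighbourhood of $x'$ by choosing $X_0$ appropriately. Using that a smooth super $k$-scheme is Zariski-locally split, its structure sheaf being of the form $\mathcal{O}_U\langle\theta_1,\dots,\theta_n\rangle$ with $U$ a smooth affine $k$-scheme and the $\theta_i$ odd, I would shrink $X$ and $X'$ to such affine neighbourhoods of $f(x')$ and $x'$, writing $\mathcal{O}_X = \mathcal{O}_U\langle\xi_1,\dots,\xi_n\rangle$ and $\mathcal{O}_{X'} = \mathcal{O}_V\langle\eta_1,\dots,\eta_{n'}\rangle$, with $x'$ and $f(x')$ rational points. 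The isomorphism $X'_{\mathrm{red}} \to X_{\mathrm{red}}$ identifies $V$ with $U$ and makes $f$ induce the identity of $\mathcal{O}_U$ on reductions; the isomorphism of tangent spaces at $x'$ forces $n = n'$ and $\dim U = \dim V$.

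The heart of the argument is then to show that $\varphi\colon \mathcal{O}_X \to \mathcal{O}_{X'}$ is an isomorphism after a further shrinking of $U$. I would filter both structure sheaves by the powers of their nilradicals $\mathcal{N}$, $\mathcal{N}'$, which are the ideals generated by the $\xi_i$ respectively the $\eta_j$, and which give finite filtrations since $\mathcal{N}^{\,n+1} = 0$. Since $\varphi$ maps $\mathcal{N}$ into $\mathcal{N}'$ and is the identity of $\mathcal{O}_U$ modulo nilradicals, passing to associated gradeds gives an $\mathcal{O}_U$-algebra homomorphism $\operatorname{gr}\varphi$ between the exterior algebras $\bigwedge_{\mathcal{O}_U}^{\bullet}\bigl(\bigoplus_i \mathcal{O}_U\bar\xi_i\bigr)$ and $\bigwedge_{\mathcal{O}_U}^{\bullet}\bigl(\bigoplus_j \mathcal{O}_U\bar\eta_j\bigr)$, which is the identity in degree $0$ and is given in degree $1$ by the square matrix $(a_{ij})$ where $\varphi(\xi_i) \equiv \sum_j a_{ij}\eta_j \pmod{\mathcal{N}'^{\,2}}$. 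The hypothesis on tangent spaces says precisely that $(a_{ij})$ is invertible at $x'$; hence $\det(a_{ij})$ is a unit after shrinking $U$, so $\operatorname{gr}\varphi$ is an isomorphism in degree $1$, and being an algebra map of exterior algebras it is an isomorphism in every degree. A homomorphism of finitely filtered rings inducing an isomorphism on associated gradeds is an isomorphism, so $\varphi$ is an isomorphism, which is the desired statement with $X_0$ the shrunk affine neighbourhood of $f(x')$ and $f^{-1}(X_0)$ that of $x'$.

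I expect the only genuinely non-routine point to be the reduction to the split local model together with the use of the isomorphism on reduced schemes to identify the base $U$ of the model for $X$ with that for $X'$; once this is set up, the graded Nakayama-type argument is mechanical. As an alternative formulation one can avoid the explicit model: the same computation performed on completed local rings shows that $f$ induces an isomorphism $\widehat{\mathcal{O}}_{X,f(x')} \to \widehat{\mathcal{O}}_{X',x'}$, whence $f$ is flat and unramified, hence étale, at $x'$ and so on a neighbourhood, and being radicial by the hypothesis on reduced schemes it is there an open immersion; or one may deduce from the model computation that $f$ is super schematically dominant near $f(x')$ and that $f^{-1}(X_{\mathrm{red}}) \to X_{\mathrm{red}}$ is an isomorphism there, and then invoke Lemma~\ref{l:redmono}.
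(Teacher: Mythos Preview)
Your proof is correct and follows essentially the same approach as the paper: both arguments filter the structure sheaves by powers of the nilradical, observe that the associated graded pieces are exterior powers of a locally free module of odd rank $n$ over $\sO_{X_{\mathrm{red}}}$, and use the tangent space hypothesis to show that the degree $1$ piece of the induced map is an isomorphism at $f(x')$, hence in a neighbourhood. The only cosmetic difference is that you choose explicit split local coordinates $\sO_U\langle\xi_1,\dots,\xi_n\rangle$ and argue with the matrix $(a_{ij})$, whereas the paper works more invariantly with the sheaves $\sN^i/\sN^{i+1}$ and $f_*(\sN'^{\,i}/\sN'^{\,i+1})$ without choosing a basis.
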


\begin{proof}
It is enough to show that the canonical morphism $\sO_X \to f_*\sO_{X'}$ is an isomorphism
in some neighbourhood of $f(x')$.
We may suppose that $X$ and $X'$ are pure of the same dimension $m|n$.
Write $\sN$ and $\sN'$ for the nilradicals of $\sO_X$ and $\sO_{X'}$.
Both $\sN^{n+1}$ and $\sN'{}^{n+1}$ are $0$.
Since $f_*$ is exact because $f$ is a homeomorphism,
it is thus enough to show that for every $i$ the canonical morphism
\begin{equation*}
h_i:\sN^i/\sN^{i+1} \to f_*(\sN'{}^i/\sN'{}^{i+1})
\end{equation*} 
is an isomorphism in a neighbourhood of $f(x')$.
Now $\sN/\sN^2$ is a locally free $\sO_{X_\mathrm{red}}$\nd module and $\sN'/\sN'{}^2$ a locally free 
$\sO_{X'{}\!_\mathrm{red}}$\nd module of rank $0|n$,
and $\sN^i/\sN^{i+1}$ and $\sN'{}^i/\sN'{}^{i+1}$ are the $i$th symmetric powers of 
$\sN/\sN^2$ and $\sN'/\sN'{}^2$ over $\sO_{X_\mathrm{red}}$ and $\sO_{X'{}\!_\mathrm{red}}$.
Since by hypothesis $f$ induces an isomorphism from $X'{}\!_\mathrm{red}$ to $X_\mathrm{red}$, 
we may suppose that $i=1$.
The morphism of super $k$\nd vector spaces induced by $h_1$ at $f(x')$ is the dual of that
induced by $f$ on the degree $1$ part of the tangent spaces at $x'$ and $f(x')$, and hence is an isomorphism.
The required result follows. 
\end{proof}

%A super $k$\nd scheme $X$ of finite type for which $X_\mathrm{red}$ is affine is itself affine:
%the nilradical $\sN$ of $\sO_X$ is nilpotent, so that since 
%\begin{equation*}
%H^1(X_\mathrm{red},(\sN^n/\sN^{n+1})_i) = 0
%\end{equation*}
%for $i= 0,1$,  the global section functor induces sends
%every $\sO_X/\sN^{n+1} \to \sO_X/\sN^n$ and hence $\sO_X \to \sO_{X_\mathrm{red}}$ to a surjection,
%and the usual argument then shows that the canonical morphism from $X$ to the spectrum of its super algebra
%of global sections is an isomorphism. 

\begin{lem}
Let $G$ be an affine super $k$\nd group of finite type and $G_0$ be a closed super $k$\nd subgroup of $G$.
Then the quotient $G/G_0$ exists and is smooth over $k$.
Further the projection from $G$ to $G/G_0$ is smooth.
\end{lem}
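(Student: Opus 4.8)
The plan is to deduce everything from the theory of quotients of ordinary algebraic groups by faithfully flat descent. Since $\mathrm{char}\, k = 0$, the super analogue of Cartier's theorem applies: every affine super $k$\nd group of finite type is smooth over $k$, with $k[G] \cong k[G_{\mathrm{red}}] \otimes_k \Lambda(\mathfrak{g}_1{}^\vee)$ as a superalgebra, where $\mathfrak{g}_1$ is the odd part of $\Lie G$. In particular both $G$ and $G_0$ are smooth super $k$\nd schemes, and $G_0$, being closed in the affine $G$, is affine.

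Write $H = G_{\mathrm{red}}$ and $H_0 = (G_0)_{\mathrm{red}}$, which by the preliminaries are ordinary affine $k$\nd groups with $H_0$ closed in $H$. By Chevalley's theorem the classical quotient $Y_0 = H/H_0$ exists and is a quasi\nd projective $k$\nd scheme, the projection $H \to Y_0$ is an affine faithfully flat $H_0$\nd torsor, and since $\mathrm{char}\, k = 0$ both $Y_0$ and $H \to Y_0$ are smooth. I would then pass to the super setting by forming the fppf sheaf quotient $F$ of $G$ by right translation by $G_0$. The relevant equivalence relation is the closed super subscheme $E = \{(g,g') \in G \times_k G : g^{-1}g' \in G_0\}$, which is isomorphic to $G \times_k G_0$ in such a way that both projections $E \to G$ become affine and smooth, and $G \times_F G = E$. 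Descent along the affine flat equivalence relation $E \rightrightarrows G$, combined with Chevalley's theorem for the underlying ordinary situation and the fact that a nilpotent thickening of an affine scheme is again affine, shows that $F$ is representable by a super $k$\nd scheme $Y = G/G_0$: the projection $\pi : G \to Y$ is then affine, faithfully flat and quasi\nd compact, $G \times_Y G \cong G \times_k G_0$, and $Y_{\mathrm{red}} = Y_0$, since $E_{\mathrm{red}} = H \times_k H_0$ and formation of $(-)_{\mathrm{red}}$ commutes with the construction.

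It then remains to check the two smoothness statements, and both follow by descent. Smoothness of $\pi$ can be verified fppf\nd locally on the target, and pulling back $\pi$ along the fppf cover $\pi$ itself identifies it with $\pr_1 : G \times_k G_0 \to G$, which is smooth because $G_0$ is smooth over $k$; hence $\pi$ is smooth. Finally $\pi : G \to Y$ is smooth, surjective and quasi\nd compact and $G$ is smooth over $k$, so $Y$ is smooth over $k$ because smoothness over a base descends along smooth quasi\nd compact surjections. (Alternatively, once $\pi$ is known to be smooth, smoothness of $Y$ over $k$ at the base point follows from that of $Y_0$, the homogeneity of $Y$ under the smooth group $G$, and Lemma~\ref{l:superrediso}.)

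The step I expect to be the main obstacle is the representability of $G/G_0$ by a super \emph{scheme} rather than merely an algebraic space: this is exactly where Chevalley's quasi\nd projectivity of the reduced quotient $Y_0$ is used, together with the fact that a nilpotent super thickening of an affine scheme is affine, so that an affine open cover of $Y_0$ lifts to one of $Y$. The remaining points are formal consequences of descent, given that $G$ and $G_0$ are smooth.
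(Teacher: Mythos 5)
Your overall strategy—reduce to Chevalley's theorem for the reduced groups, then lift the scheme structure to the super thickening by descent—is genuinely different from the paper's approach, which constructs the quotient explicitly via local transversals. The difference matters because the step you correctly flag as the main obstacle is not filled in, and filling it is precisely what the paper's construction does.

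The gap: you assert that the fppf sheaf quotient $F$ is representable by a super scheme, citing ``descent along the affine flat equivalence relation $E \rightrightarrows G$, combined with Chevalley's theorem for the underlying ordinary situation and the fact that a nilpotent thickening of an affine scheme is again affine.'' This is not an argument. Chevalley gives you the scheme $Y_0 = H/H_0$ with an affine open cover by pieces $U_0$, and you can indeed lift each $U_0$ to an affine open $V = \Spec(R)$ in $G$ (the nilpotent-thickening point), with a $G_0$\nd action. But the candidate chart $\Spec(R^{G_0})$ is only the sought quotient if $\Spec(R) \to \Spec(R^{G_0})$ is faithfully flat and a $G_0$\nd torsor, and this is exactly what needs to be proven. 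It is \emph{not} automatic from the classical Chevalley statement—the faithful flatness of the super projection over the super invariants is new content. To run your descent argument you would also need something like ``a super algebraic space whose reduction is a scheme is itself a scheme,'' a result in a theory (super algebraic spaces and their relationship to schemes) that the paper neither develops nor cites, and that would itself need proof. In short, your proof asserts, rather than establishes, representability and faithful flatness of $\pi$, and everything downstream (the two smoothness claims, via fppf descent) is conditional on that.

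What the paper does instead: it exploits a stronger structural fact than the algebra decomposition $k[G] \cong k[H] \otimes_k \Lambda(\mathfrak{g}_1^\vee)$ you quote, namely that $G \cong \bA^{0|n} \times_k G_{\mathrm{red}}$ as a \emph{right $G_{\mathrm{red}}$\nd scheme}. This immediately gives the intermediate quotient $\overline{G} = G/(G_0)_{\mathrm{red}} = \bA^{0|n} \times_k (G_{\mathrm{red}}/(G_0)_{\mathrm{red}})$ together with the smoothness of $G \to \overline{G}$. The residual equivalence relation $\overline{E}$ on $\overline{G}$ is then handled by finding, locally over $G_{\mathrm{red}}/(G_0)_{\mathrm{red}}$, a linear super subscheme $Z \subset \bA^{0|n}$ such that $Z \times_k U$ is a transversal for $\overline{E}$; the linear-algebra/tangent-space argument (via Lemma~\ref{l:superrediso}) shows such transversals exist. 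The transversal description simultaneously exhibits the quotient as a super scheme, shows the projection is smooth (being locally isomorphic to $\pr_2 : \overline{E} \to \overline{G}$), and shows the quotient is smooth over $k$—with no appeal to a quotient existence theorem at all. Your descent-based closing steps (pulling $\pi$ back along itself, descending smoothness over the base) are fine as such, but they sit on an unproven foundation; the paper's transversal construction is the content that makes the lemma true.
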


\begin{proof}
Both $G$ and $G_0$ are smooth over $k$, of respective super dimensions $m|n$ and $m_0|n_0$, say.
There then exists a closed immersion $\bA^{0|n} \to G$.
Composing the multiplication $G \times_k G \to G$ of $G$ with the product of the embeddings of $\bA^{0|n}$ 
and $G_{\mathrm{red}}$ into $G$, we obtain a morphism
\begin{equation*}
\bA^{0|n} \times_k G_{\mathrm{red}} \to G
\end{equation*}
of right $G_{\mathrm{red}}$\nd schemes.
It is an isomorphism, as can be seen by passing to an algebraic closure and using Lemma~\ref{l:superrediso}.
Thus
\begin{equation*}
\overline{G} = G/G_0{}_{\mathrm{red}} = \bA^{0|n} \times_k (G_{\mathrm{red}}/G_0{}_{\mathrm{red}})
\end{equation*}
exists and is smooth over $k$ of dimension $(m-m_0)|n$, and $G \to \overline{G}$ is smooth
because it is the product of $\bA^{0|n}$ with a smooth morphism.
The equivalence relation $E$ on $G$ defined by the right action of $G_0$ on $G$ then descends to an equivalence
relation $\overline{E}$ on $\overline{G}$.
It is enough to show that 
$\overline{G}/\overline{E}$ exists and is smooth over $k$, and that the projection
$\overline{G} \to \overline{G}/\overline{E}$ is smooth.

Every open super subscheme of $\overline{G}$ is $\overline{E}$\nd saturated.
The isomorphism $G \times_k G_0 \iso E$ that sends $(g,g_0)$ to $(g,gg_0)$ is compatible with the
first projections onto $G$, so that $E$ is smooth over $G$ and hence over $k$.
Since $E$ is the inverse image of $\overline{E}$ under the surjective smooth morphism 
$G \times_k G \to \overline{G} \times_k \overline{G}$, it follows that $\overline{E}$ 
is smooth over $k$ of dimension $(m-m_0)|(n+n_0)$.
Further the projections $\overline{E} \to \overline{G}$ are smooth because their composites with 
the surjective morphism $E \to \overline{E}$ factor as the composite of smooth morphisms
$G \to \overline{G}$ and $E \to G$.

Given $n' \le n$, there corresponds to each $n'$\nd dimensional subspace $k$\nd vector subspace of $k^n$, or equivalently to each $k$\nd point $t$ of the Grassmannian $\mathbf{Gr}(n',n)$, a 
linearly embedded super subscheme $Z_t$ of $\bA^{0|n}$ isomorphic to $\bA^{0|n'}$.
Let $\sS$ be a finite set of closed points of $G_{\mathrm{red}}/G_0{}_{\mathrm{red}}$.
We now show that there exists 
a non-empty open subscheme $Y$ of $\mathbf{Gr}(n-n_0,n)$ with the following property:
for every $t$ in $Y(k)$ there is an open subscheme $U_t$ of $G_{\mathrm{red}}/G_0{}_{\mathrm{red}}$ containing 
$\sS$ such that $Z_t \times_k U_t$ is a transversal for the restriction 
of $\overline{E}$ to $\bA^{0|n} \times_k U_t$. 

Suppose first that $k$ is algebraically closed.
Then closed points of $G_{\mathrm{red}}/G_0{}_{\mathrm{red}}$ may be identified with $k$\nd points,
and also with $k$\nd points of $\overline{G}$ or $\overline{E}$.
If $x$ is such a point, then $\overline{E}$ induces a linear equivalence relation on the tangent space 
$V_x$ of $\overline{G}$, given by the tangent space $T_x$ of $\overline{E}$ at $x$, regarded
as a super $k$\nd vector subspace of $V_x \oplus V_x$.
Thus $T_x$ is the inverse image of a super subspace $W_x$ of $V_x$ of dimension $0|n_0$
under the subtraction homomorphism $V_x \oplus V_x \to V_x$.
Then $|W_x|$ is a subspace of dimension $n_0$ of the odd part
$k^n$ of $V_x$.  
Now take for $Y$ the open subscheme of $\mathbf{Gr}(n-n_0,n)$ parametrising the $(n-n_0)$\nd dimensional 
subspaces of $k^d$ complementary to $|W_x|$ at each $x$ in $\sS$.
Then for $t$ in $Y(k)$, the morphism
\begin{equation*}
f:\pr_2{}\!^{-1}(Z_t \times (G_{\mathrm{red}}/G_0{}_{\mathrm{red}})) \to 
\bA^{0|n} \times (G_{\mathrm{red}}/G_0{}_{\mathrm{red}}) 
\end{equation*}
defined by restricting the projection $\pr_1:\overline{E} \to \overline{G}$ to the inverse image
along $\pr_2$ induces an isomorphism on the tangent space at each $x$ in $\sS$.
The required $U_t$ thus exists by Lemma~\ref{l:superrediso} applied to $f$.

For arbitrary $k$ with algebraic closure $\overline{k}$,
applying the algebraically closed case with $k$, $G$, $G_0$, and $\sS$ replaced by 
$\overline{k}$, $G_{\overline{k}}$, $G_0{}_{\overline{k}}$, and the inverse image $\sS'$ of $\sS$
in $G_{\mathrm{red}}/G_0{}_{\mathrm{red}}$,
we obtain a non-empty open subscheme $Y'$ of $\mathbf{Gr}(n-n_0,n)_{\overline{k}}$ and for each
$\overline{k}$\nd point $t'$ of $Y'$ over $\overline{k}$ an open subscheme $U'{}\!_{t'}$ of
$(G_{\mathrm{red}}/G_0{}_{\mathrm{red}})_{\overline{k}}$ containing $\sS'$, with properties 
similar to the above. 
Replacing $Y'$ by the intersection of its finite set of conjugates under $\Gal(\overline{k}/k)$,
we may assume that $Y' = Y_{\overline{k}}$ for some $Y$.
For $t$ in $Y(k)$, the intersection of the conjugates of $U'{}\!_t$ 
descends to the required $U_t$.

Since every non-empty open subscheme of a Grassmannian has a $k$\nd point,
taking sets $\sS $ consisting of a single closed point shows  that
$G_{\mathrm{red}}/G_0{}_{\mathrm{red}}$ may be covered by open subschemes $U$ such that the restriction 
of $\overline{E}$ to $\bA^{0|n} \times_k U$ has a transversal $Z \times_k U$ with $Z$
a closed super subscheme of $\bA^{0|n}$ isomorphic to $\bA^{0|n-n_0}$. 
For such a $U$ and $Z$ and any open subscheme $U_0$ of $U$, the restriction of  
$\overline{E}$ to $\bA^{0|n} \times_k U_0$ then has a transversal $Z \times_k U_0$.
Thus the quotients $Z \times_k U$ of the $\bA^{0|n} \times_k U$ patch together to give the required quotient
$\overline{G}/\overline{E}$.
The smoothness over $k$ of $\overline{G}/\overline{E}$ follows from that of the $Z \times_k U$, 
and the smoothness of $\overline{G} \to \overline{G}/\overline{E}$ from the fact that the projections
$\bA^{0|n} \times_k U \to Z \times_k U$ are isomorphic to pullbacks of $\pr_2:\overline{E} \to \overline{G}$.
\end{proof}

\begin{lem}\label{l:homogGsub}
Let $X$ be a quasi-affine super $(G,\varepsilon)$\nd scheme of finite type.
Suppose that $\Mod_{G,\varepsilon}(X)$ is integral with $\kappa(\Mod_{G,\varepsilon}(X)) = k$.
Then $X$ has a unique non-empty open homogeneous super $G$\nd subscheme $X_0$.
Every non-empty open super $G$\nd subscheme of $X$ contains $X_0$.
\end{lem}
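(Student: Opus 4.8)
The plan is to reduce the uniqueness and minimality assertions to the existence of $X_0$, and to obtain existence by reducing to a classical group action of finite type and applying a Rosenlicht-type argument.

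\emph{Reduction to existence.} Suppose $X_0$ is a non-empty open homogeneous super $G$-subscheme of $X$ and let $Y$ be any non-empty open super $G$-subscheme. By Lemma~\ref{l:schdense}, $Y$ is super schematically dense, hence topologically dense, in $X$, so $X_0 \cap Y$ is a non-empty open super $G$-subscheme of $X_0$; since $G$ acts transitively on the geometric points of $X_0$, this forces $X_0 \cap Y = X_0$, i.e.\ $X_0 \subseteq Y$. Thus $X_0$ is the least non-empty open super $G$-subscheme of $X$, so in particular it is unique, and every non-empty open super $G$-subscheme contains it.

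\emph{Geometric reductions.} Since $\Mod_{G,\varepsilon}(X)$ is integral, it is reduced in the sense of Section~\ref{s:equ} (its non-zero morphisms are regular, so have non-zero tensor powers); hence by Lemma~\ref{l:rednil} the nilradical of $\sO_X$, being a $(G,\varepsilon)$-ideal contained in itself, is $0$, so $X$ is reduced. As odd functions are nilpotent, $X$ is therefore a reduced classical quasi-affine $k$-scheme of finite type. From $\kappa(\Mod_{G,\varepsilon}(X)) = k$ and the isomorphism \eqref{e:funfieldiso}, whose transition maps are injective, each term $H^0_G(Y,\sO_Y)$ of the colimit injects into $k$; thus $H^0_G(Y,\sO_Y) = k$ for \emph{every} non-empty open super $G$-subscheme $Y \subseteq X$. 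Next, since $X$ is quasi-affine of finite type, one embeds $X$, $G$-equivariantly, as an open super $G$-subscheme of $\Spec R$ for a finitely generated commutative $(G,\varepsilon)$-algebra $R$ generated by a finite-dimensional $(G,\varepsilon)$-submodule $M$ (compare the proof of Lemma~\ref{l:qafflim}), the space $M$ being an ordinary vector space because $X$ is classical. The action of $G$ on $R$, hence on $X$, factors through the image $\bar G$ of $G \to \GL(M)$, an affine \emph{classical} $k$-group of finite type; a subscheme of $X$ is $\bar G$-stable precisely when it is $G$-stable, a homogeneous $\bar G$-scheme is a fortiori homogeneous as a $G$-scheme, and $G$-invariant functions agree with $\bar G$-invariant functions. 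Replacing $G$ by $\bar G$, we may thus assume $G$ is a classical algebraic group acting on the reduced variety $X$, retaining the property that $H^0_G(Y,\sO_Y) = k$ for every non-empty open $G$-subscheme $Y$.

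\emph{Existence.} By Rosenlicht's theorem on geometric quotients, after replacing $X$ by a non-empty open super $G$-subscheme we may assume a geometric quotient $\pi\colon X \to X/G$ exists; replacing $X$ once more by $\pi^{-1}(V)$ for a non-empty affine open $V$ of $X/G$, we may assume $X/G$ is affine. Since $\pi$ is surjective and $G$-invariant, $\pi^*$ embeds $H^0(X/G,\sO_{X/G})$ into $H^0(X,\sO_X)^G = H^0_G(X,\sO_X) = k$; as it also contains the constants, $H^0(X/G,\sO_{X/G}) = k$, whence $X/G = \Spec k$. But a geometric quotient with one-point base has a single fibre, which is its unique orbit; so $X$ is a single $G$-orbit, and hence is a non-empty open super $G$-subscheme of the original scheme which is homogeneous. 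This is the required $X_0$; the point furnished by Lemma~\ref{l:pointfaith} reappears as the generic point of $X_0$.

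\emph{Main obstacle.} The essential ingredient is the existence of a geometric quotient on a dense open subscheme over the possibly non-closed field $k$ (Rosenlicht); the rest is formal bookkeeping. One should also bear in mind that ``homogeneous'' is meant after base change and that $\Mod_{\bar G,\varepsilon}(X)$ is in general strictly smaller than $\Mod_{G,\varepsilon}(X)$ — which does no harm here, since only the geometric consequences of the hypotheses are used after the reduction to finite type.
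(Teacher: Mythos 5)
Your reduction of the uniqueness and minimality claims to the existence of $X_0$ is sound and matches the paper's. The argument falls apart, however, at the geometric reduction: the nilradical $\sN$ of $\sO_X$ is \emph{not} a $(G,\varepsilon)$-ideal in general, so Lemma~\ref{l:rednil} gives no information about it, and $X$ need not be classical.

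Being a $(G,\varepsilon)$-ideal means being a $G$-equivariant quasi-coherent subsheaf of $\sO_X$, which requires the coaction $\mu\colon\sO_X \to k[G]\otimes_k\sO_X$ to send $\sN$ into $k[G]\otimes_k\sN$. This is automatic when $G$ is classical (automorphisms preserve nilpotents), but fails when $G$ has odd coordinates: translation by odd elements of $G$ moves $X_{\mathrm{red}}$ in the odd directions, so $\mu(\sN) \not\subset k[G]\otimes_k\sN$. Concretely, take $G = \GL_{1|1}$, $G_0 = \GL_1\times_k\GL_1$, $X = G/G_0 \cong \bA^{0|2} = \Spec k[\theta_1,\theta_2]$. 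Then $\Mod_{G,\varepsilon}(X) \simeq \Mod_{G_0,\varepsilon}(k)$ is integral with $\kappa = k$, yet $\sO_X$ is very far from reduced, and a direct computation shows $\mu(\theta_1)$ has a summand in $k[G]_1 \otimes 1$. More structurally: the very $X_0$ your lemma is meant to produce is a homogeneous \emph{super} $G$-scheme $G/G_0$, which is non-classical whenever $\Lie(G)_1$ strictly contains $\Lie(G_0)_1$, and $\Mod_{G,\varepsilon}(G/G_0)$ is always integral; so ``integral $\Rightarrow$ classical'' is simply false and would trivialise the whole super theory.

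Everything after that step — the passage to a classical $\bar G \subset \GL(M)$, the invocation of Rosenlicht, the conclusion that $X/G = \Spec k$ forces a single orbit — therefore rests on a false premise. The paper does exactly what is needed to handle the nilpotent structure: from a $k$-rational point $x$ with $\omega_{X,x}$ faithful (obtained after base change), it forms the orbit map $\varphi\colon G/G_0 \to X$ with $G_0$ the stabiliser, shows $\varphi$ is super schematically dominant, shows $\varphi_{\mathrm{red}}$ is an open immersion by classical homogeneity of $G_{\mathrm{red}}/G_{0,\mathrm{red}}$, and then lifts this reduced isomorphism to a super isomorphism onto an open subscheme via Lemma~\ref{l:redmono}, which exists precisely to bridge the reduced-to-super gap you are eliding. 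Even had the classical reduction been available, your Rosenlicht route would be a genuinely different, more quotient-theoretic take than the paper's direct orbit-map argument — but the real obstacle here is the super structure, and a sketch that reduces it away cannot be repaired without reintroducing something like Lemma~\ref{l:redmono}.
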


\begin{proof}
It is enough to prove that a non-empty open homogeneous super $G$\nd subscheme $X_0$ of $X$ exists:
for every non-empty open super $G$\nd subscheme $Y$ of $X$ the  open super $G$\nd subscheme 
$X_0 \cap Y$ of $X_0$ will be non-empty by Lemma~\ref{l:pointfaith}, 
and the final statement and hence the uniqueness of $X_0$ will follow.
After replacing $G$ by a quotient, we may suppose that $G$ is of finite type.

Let $k'$ be an extension of $k$.
By Lemma~\ref{l:quotsub}\ref{i:quotsubrep} and Lemma~\ref{l:ext}\ref{i:extint}, the hypotheses
on $X$ hold with $X$ and $k$ replaced by $X_{k'}$ and $k'$.
Suppose that $X_{k'}$ has a non-empty open homogeneous super $G_{k'}$\nd subscheme $X'{}\!_0$.
Then by the uniqueness, for any extension $k''$ of $k$ the inverse images of $X'{}\!_0$ under the
$k$\nd morphisms $X_{k''} \to X_{k'}$ induced by any two $k$\nd homomorphisms $k' \to k''$ coincide.  
Thus $X'{}\!_0$ descends to an open super subscheme $X_0$ of $X$, necessarily a 
homogeneous super $G$\nd subscheme.

Let $x$ be a point of $X$.
If $x'$ is the $\kappa(x)$\nd rational point of $X_{\kappa(x)}$ above $x$, 
then $\omega_{X,x}$ factors as
\[
\Mod_{G,\varepsilon}(X) \to \kappa(x) \otimes_k \Mod_{G,\varepsilon}(X) \to
\Mod_{G_{\kappa(x)},\varepsilon}(X_{\kappa(x)}) \xrightarrow{\omega_{X_{\kappa(x)},x'}} \Mod(\kappa(x))
\]
with the second arrow fully faithful. 
By Lemma~\ref{l:pointfaith}, an $x$ exists with $\omega_{X,x}$ faithful.
The composite of the second two arrows is then faithful by 
Lemma~\ref{l:ext}\ref{i:extfaith}, and hence $\omega_{X_{\kappa(x)},x'}$ is faithful 
by Lemma~\ref{l:quotsub}\ref{i:quotsubrep}.
Replacing $k$, $X$ and $x$ by $\kappa(x)$, $X_{\kappa(x)}$ and $x'$, we may thus suppose that $x$
is $k$\nd rational and $\omega_{X,x}$ is faithful.

Write $G_0$ for the stabiliser of $x$ under $G$. 
The unique morphism $(G,\varepsilon)$\nd schemes
\[
\varphi:G/G_0  \to X
\] 
that sends the base point $z$ of $G/G_0$ to $x$
defines a factorisation
\[
\Mod_{G,\varepsilon}(X) \xrightarrow{\varphi^*} \Mod_{G,\varepsilon}(G/G_0) \xrightarrow{\omega_{G/G_0,z}} \Mod(k).
\]
of $\omega_{X,x}$.
Thus $\varphi^*$ is faithful,
so that by Lemma~\ref{l:domfaith} $\varphi$ is super schematically dominant.

The stabiliser of the $k$\nd point $x$ of $X_{\mathrm{red}}$ under the action 
of $G_{\mathrm{red}}$ is $G_0{}_{\mathrm{red}}$, and 
\[
\varphi_{\mathrm{red}}:G_{\mathrm{red}}/G_0{}_{\mathrm{red}} = (G/G_0)_{\mathrm{red}} \to X_{\mathrm{red}}
\]
is the morphism of $G_{\mathrm{red}}$\nd schemes that sends the base point to $x$.
Since $\varphi_{\mathrm{red}}$ is dominant, it factors by homogeneity of $G_{\mathrm{red}}/G_0{}_{\mathrm{red}}$
as an isomorphism onto an open super $G_{\mathrm{red}}$\nd subscheme $X_1$ of $X_{\mathrm{red}}$ 
followed by the embedding.
Then 
\[
X_1 = X_0{}_{\mathrm{red}}
\]
for an open super $G$\nd subscheme $X_0$ of $X$,
and $\varphi$ factors as
\[
\varphi_0:G/G_0 \to X_0
\]
followed by the embedding.
The morphism from $\varphi_0{}\!^{-1}(X_0{}_{\mathrm{red}})$ to $X_0{}_{\mathrm{red}}$
induced by $\varphi_0$ is a morphism of $G_{\mathrm{red}}$\nd schemes, and hence is an isomorphism
because $X_0{}_{\mathrm{red}} = X_1$ is homogeneous while the fibre $G_0/G_0$ of $\varphi_0$ above $x$
is $\Spec(k)$.
Since $\varphi_0$ is super schematically dominant,
it is thus an isomorphism by Lemma~\ref{l:redmono} with $f = \varphi_0$.
\end{proof}

Let $X$ be a super $(G,\varepsilon)$\nd scheme of finite type, $S$ be a $k$\nd scheme on which $G$ acts trivially, 
$X \to S$ be a morphism of super $G$\nd schemes, and $s$ be a point of $S$.
Write $k'$ for $\kappa(s)$ and $X'$ for the fibre $X_s$ of $X$ above $s$.
Then $X'$ is a super $(G_{k'},\varepsilon)$\nd scheme of finite type.
If $U'$ is an open super $G_{k'}$\nd subscheme of $X'$, 
then the reduced scheme on the complement $Z'$ of $U'$ is a closed 
$(G_{k'})_\mathrm{red}$\nd subscheme, and hence a closed $G_\mathrm{red}$\nd subscheme, of $X'$.
The reduced subscheme of $X$ on the closure $Z$ of $Z'$ is then a closed $G_\mathrm{red}$\nd subscheme of $X$,
and its complement is an open super $G$\nd subscheme of $X$ with $U \cap X' = U'$.
Thus every open super $G_{k'}$\nd subscheme of $X'$ is the intersection of $X'$ 
with an open super $G$\nd subscheme of $X$.

Suppose now that $S$ is integral and that $s$ is its generic point.
Then $X'$ is the intersection of the family $(X_\lambda)_{\lambda \in \Lambda}$ of
inverse images under $X \to S$ of the non-empty open subschemes of $S$.
If we write $j_\lambda$ for the embedding $X_\lambda \to X$ and $j$ for $X' \to X$, 
it can be seen as follows that the canonical homomorphism
\begin{equation*}
\colim_{\lambda \in \Lambda} H^0_G(X_\lambda,j_\lambda{}\!^*\sV) \to H^0_G(X',j^*\sV) = H^0_{G_{k'}}(X',j^*\sV)
\end{equation*}
is an isomorphism for every $\sV$ in $\MOD_{G,\varepsilon}(X)$.
It is enough to prove that the corresponding homomorphism where there is no group acting and $H^0_G$ 
is replaced by $H^0$ is an isomorphism: taking $G \times_k X$ and $X$  for $X$ and considering the pullbacks
along the projection and the action from $G \times_k X$ to $X$ will then give the required result for $H^0_G$.
We may assume that the $j_\lambda$ are affine.
Covering $X$ with affine open sets, we reduce first to the case where $X$ is quasi-affine, and finally to the case 
where $X$ is affine, which is clear.
By Lemmas~\ref{l:quotsub}\ref{i:quotsubrep} and \ref{l:openGsubint},
it follows that if $X$ is a quasi-affine $(G,\varepsilon)$\nd scheme of finite type 
with $\Mod_{G,\varepsilon}(X)$ integral,
then $\Mod_{G_{k'},\varepsilon}(X')$ is integral, the functor 
\begin{equation*}
\Mod_{G,\varepsilon}(X) \to \Mod_{G,\varepsilon}(X') = \Mod_{G_{k'},\varepsilon}(X')
\end{equation*}
induced by $X' \to X$ is faithful, and the induced homomorphism
\begin{equation}\label{e:genfieldiso}
\kappa(\Mod_{G,\varepsilon}(X)) \to \kappa(\Mod_{G_{k'},\varepsilon}(X'))
\end{equation}
an isomorphism.

\begin{lem}\label{l:smallequiv}
Let $X$ be a quasi-affine super $(G,\varepsilon)$\nd scheme of finite type
with $\Mod_{G,\varepsilon}(X)$ integral.
Denote by $A$ the $k$\nd algebra $H^0_G(X,\sO_X)$, 
and by $k'$ its field of fractions.
Then the following conditions are equivalent:
\begin{enumerate}
\renewcommand{\theenumi}{(\alph{enumi})}
\item\label{i:smallsub}
every non-empty open super $G$\nd subscheme of $X$ contains one of the form $X_f$ for some $f \ne 0$ in $A$;
\item\label{i:smallhomog}
the generic fibre of $X \to \Spec(A)$ is homogeneous as a super $G_{k'}$\nd scheme.
\end{enumerate}
When these conditions hold, the canonical homomorphism $k' \to \kappa(\Mod_{G,\varepsilon}(X))$ is an isomorphism.
\end{lem}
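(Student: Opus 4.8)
The plan is to establish both implications (a)$\Rightarrow$(b) and (b)$\Rightarrow$(a) together with the final isomorphism, treating condition~(a) first and deducing the rest from its consequences. Some preliminary observations: since $\Mod_{G,\varepsilon}(X)$ is integral, the $k$-algebra $A=\End_{\Mod_{G,\varepsilon}(X)}(\I)=H^0_G(X,\sO_X)$ is an integral domain, so its field of fractions $k'$ is well defined; by Lemma~\ref{l:rednil} applied to the nilradical of $\sO_X$ (a $(G,\varepsilon)$-ideal) the scheme $X$ is reduced, and being of finite type over $k$ it is noetherian, while being quasi-affine it is quasi-compact and quasi-separated. Let $\pi\colon X\to S=\Spec A$ be the canonical morphism and $s$ the generic point of the integral scheme $S$; then the generic fibre $X'=X_s$ is the super $(G_{k'},\varepsilon)$-scheme of finite type over $k'$ considered in the discussion preceding the lemma, with $\Mod_{G_{k'},\varepsilon}(X')$ integral, and it is quasi-affine because $X$ is. For each $f\in A\setminus\{0\}$ the principal open $X_f=D(f)\subseteq X$ is a non-empty (as $X$ is reduced) quasi-compact open super $G$-subscheme, the family $\{X_f\}$ is cofiltered with $X_f\cap X_g=X_{fg}$ and $\bigcap_f|X_f|=|X'|$, and since $X$ is quasi-compact and quasi-separated one has $H^0(X_f,\sO_{X_f})=H^0(X,\sO_X)_f$; taking $G$-invariants, which commutes with localisation at the invariant element $f$, gives $H^0_G(X_f,\sO_{X_f})=A_f$.

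First I would prove (a)$\Rightarrow[\,k'\iso\kappa(\Mod_{G,\varepsilon}(X))\,]\Rightarrow$(b). Under~(a) the $X_f$ are cofinal in the filtered system of non-empty quasi-compact open super $G$-subschemes of $X$, so by~\eqref{e:funfieldiso}
\[
\kappa(\Mod_{G,\varepsilon}(X))=\colim_{f\in A\setminus\{0\}}H^0_G(X_f,\sO_{X_f})=\colim_{f\in A\setminus\{0\}}A_f=k',
\]
and one checks this isomorphism is the canonical homomorphism. By~\eqref{e:genfieldiso} we then have $\kappa(\Mod_{G_{k'},\varepsilon}(X'))=k'$, so Lemma~\ref{l:homogGsub} applies to $X'$ over $k'$ and produces a non-empty open homogeneous super $G_{k'}$-subscheme $X'_0$ of $X'$. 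By the discussion preceding the lemma, $X'_0=U\cap X'$ for a (necessarily non-empty) open super $G$-subscheme $U$ of $X$; by~(a) there is $f\in A\setminus\{0\}$ with $X_f\subseteq U$, and since $|X'|\subseteq|X_f|$ this gives $X'_0=U\cap X'\supseteq X_f\cap X'=X'$, whence $X'_0=X'$. Thus the generic fibre $X'$ is homogeneous, which is~(b).

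For (b)$\Rightarrow$(a), let $Y$ be a non-empty open super $G$-subscheme of $X$. The point $\eta$ of Lemma~\ref{l:pointfaith} lies in $Y$ and in every $X_f$, hence in $|X'|=\bigcap_f|X_f|$, so $Y\cap X'$ is a non-empty open super $G_{k'}$-subscheme of the homogeneous scheme $X'$ and therefore equals $X'$ (a homogeneous super $G_{k'}$-scheme has no non-empty open super $G_{k'}$-subscheme other than itself, as used in the proof of Lemma~\ref{l:homogGsub}). Hence $Z:=X\setminus Y$ satisfies $|Z|\subseteq|X|\setminus|X'|=\pi^{-1}(S\setminus\{s\})$. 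Since $X$ is noetherian, $Z$ has finitely many irreducible components, with generic points $\zeta_1,\dots,\zeta_r$; as each $\zeta_j\in Z$ we have $\pi(\zeta_j)\neq s$, so $\mathfrak p_j:=\ker(A\to\kappa(\zeta_j))$ is a non-zero prime of the domain $A$. Choosing $a_j\in\mathfrak p_j\setminus\{0\}$ and putting $f=a_1\cdots a_r\neq 0$, the section $f$ of $\sO_X$ vanishes along every component of $Z$, hence at every point of $Z$, so $X_f\subseteq X\setminus Z=Y$. This proves~(a). Finally, the asserted isomorphism $k'\iso\kappa(\Mod_{G,\varepsilon}(X))$ was proved above under~(a), hence holds whenever~(a) or~(b) does.

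The principal difficulty is the step in (b)$\Rightarrow$(a) that converts the qualitative statement ``$X\setminus Y$ avoids the generic fibre'' into the existence of a \emph{single} invariant function $f$ with $X_f\subseteq Y$: this is exactly where one uses that $X$ is noetherian (so $X\setminus Y$ has only finitely many components) and that $A$ is a domain (so a finite product of non-zero elements is non-zero). A secondary point is that Lemma~\ref{l:homogGsub} can be brought to bear on $X'$ only once $\kappa(\Mod_{G_{k'},\varepsilon}(X'))=k'$ is known, which is why the implication (a)$\Rightarrow$(b) is routed through the computation of $\kappa$ via~\eqref{e:funfieldiso} and~\eqref{e:genfieldiso}; the identifications $\bigcap_f|X_f|=|X'|$ and $H^0_G(X_f,\sO_{X_f})=A_f$ are the geometric core of that computation.
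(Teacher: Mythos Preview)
Your overall strategy matches the paper's: establish $H^0_G(X_f,\sO_{X_f})=A_f$, use cofinality of the $X_f$ under (a) together with \eqref{e:funfieldiso} and \eqref{e:genfieldiso} to identify $\kappa$ with $k'$ and then invoke Lemma~\ref{l:homogGsub}; for (b)$\Rightarrow$(a), show $X'\subset Y$ via Lemma~\ref{l:pointfaith} and homogeneity, then exploit the finiteness of the maximal points of $Z=X\setminus Y$ to manufacture a single $f$.

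One assertion, however, is incorrect and should be removed. You claim that the nilradical $\sN$ of $\sO_X$ is a $(G,\varepsilon)$-ideal and deduce via Lemma~\ref{l:rednil} that $X$ is reduced. For a genuine super group $G$ this fails: the closed subscheme $X_{\mathrm{red}}$ cut out by $\sN$ carries only a $G_{\mathrm{red}}$-action, not a $G$-action (cf.\ the end of Section~\ref{s:prelim}), so $\sN$ is not $G$-stable. For instance, with $G=\GL_{1|1}$ acting on $X=G$ by translation one has $\Mod_{G,\varepsilon}(X)\simeq\Mod(k)$ integral, yet $X$ is certainly not reduced.

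Fortunately this does not damage the argument, because the two consequences you draw from ``$X$ reduced'' hold anyway. The underlying topological space of $X$ is noetherian simply because $X$ is of finite type over $k$ (its space coincides with that of $X_{\mathrm{red}}$, a $k$-scheme of finite type). And $X_f\ne\emptyset$ for $f\ne 0$ in $A$ follows from $A$ being a domain rather than from reducedness of $X$: if $X_f=\emptyset$ then $f$ vanishes at every point of the quasi-compact $X$, so $f^n=0$ in $H^0(X,\sO_X)$ and hence in $A$ for some $n$, forcing $f=0$. With these corrections your proof is essentially the paper's.
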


\begin{proof}
The generic fibre $X'$ of $X \to \Spec(A)$
is the intersection of the open super $G$\nd subschemes $X_f$ of $X$ for $f \ne 0$ in $A$.
Writing the push forward of $\sO_{X_f}$ along the embedding $X_f \to X$ as the filtered colimit
of copies of $\sO_X$ with transition morphisms given by powers of $f$ shows that
\begin{equation*}
H^0_G(X_f,\sO_{X_f}) = A_f,
\end{equation*}
where we have used the fact that $H^0_G(X,-)$ commutes with filtered colimits 
in $\MOD_{G,\varepsilon}(X)$, because $X$ is of finite type.

Suppose that \ref{i:smallsub} holds.
Then $k' \to \kappa(\Mod_{G,\varepsilon}(X))$ is an isomorphism because $\kappa(\Mod_{G,\varepsilon}(X))$
is by the isomorphism \eqref{e:funfieldiso} the filtered colimit of the $H^0_G(X_f,\sO_{X_f})$ for $f \ne 0$.
Thus
\begin{equation*}
k' \to \kappa(\Mod_{G_{k'},\varepsilon}(X'))
\end{equation*} 
is an isomorphism because \eqref{e:genfieldiso} is.
Further any non-empty open $G_{k'}$\nd subscheme of $X'$ coincides with $X'$, because as above
it contains one of the form $X_f \cap X' = X'$ for $f \ne 0$.  
Thus \ref{i:smallhomog} holds by Lemma~\ref{l:homogGsub}.

Conversely suppose that \ref{i:smallhomog} holds.
Then every non-empty open super $G_{k'}$\nd subscheme of $X'$ coincides with $X'$.
Let $Y$ be a non-empty open super $G$\nd subscheme of $X$.
Then $Y \cap X$ is non-empty by Lemma~\ref{l:pointfaith}, so that $Y$ contains $X'$.
Since $X'$ is the intersection of the $X_f$ for $f \ne 0$, there thus exists 
for each point $z$ of the complement $Z$ of $Y$ an $f \ne 0$ for which $X_f$ 
does not contain $z$.
It follows that there exists an $f \ne 0$ such that $X_f$ does not contain any maximal point of $Z$.
Then $X_f \cap Z = \emptyset$, so that $Y$ contains $X_f$. 
\end{proof}

\begin{lem}\label{l:smallexist}
Let $X$ be a quasi-affine super $(G,\varepsilon)$\nd scheme of finite type
with $\Mod_{G,\varepsilon}(X)$ integral.
Then $X$ has an open super $G$\nd subscheme $X_1$ such that the equivalent conditions of 
Lemma~\textnormal{\ref{l:smallequiv}} are satisfied with $X$ replaced by $X_1$.
\end{lem}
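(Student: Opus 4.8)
The plan is to exhibit an open super $G$\nd subscheme $X_1$ of $X$ satisfying condition~\ref{i:smallsub} of Lemma~\ref{l:smallequiv}: every non-empty open super $G$\nd subscheme of $X_1$ contains one of the form $(X_1)_f$ with $0\ne f\in H^0_G(X_1,\sO_{X_1})$. Two preliminary observations will be used throughout. Since $\Mod_{G,\varepsilon}(X)$ is integral it is reduced, so by Lemma~\ref{l:rednil} applied to the nilradical of $\sO_X$ (a $(G,\varepsilon)$\nd ideal), $X$ and all its open subschemes are reduced. And if $x$ is a point of $X$ lying in every non-empty open super $G$\nd subscheme, which exists by Lemma~\ref{l:pointfaith}, then evaluation of $G$\nd invariant sections at $x$ defines, via the isomorphism \eqref{e:funfieldiso}, a ring homomorphism $\kappa(\Mod_{G,\varepsilon}(X))\to\kappa(x)$; it is injective since its source is a field. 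As $\kappa(x)$ is finitely generated over $k$, so is its subfield $\kappa:=\kappa(\Mod_{G,\varepsilon}(X))$.

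The idea is now to capture $\kappa$ on a small affine base and reduce to Lemma~\ref{l:homogGsub}. By \eqref{e:funfieldiso} and the injectivity of the transition maps (Lemma~\ref{l:openGsubint}), $\kappa$ is the filtered union of the subrings $H^0_G(Y,\sO_Y)$ over non-empty quasi-compact open super $G$\nd subschemes $Y$ of $X$; since $\kappa$ is finitely generated over $k$, finitely many of these subrings generate it, so there is a single such $Y_0$ with $A_0:=H^0_G(Y_0,\sO_{Y_0})$ having field of fractions $\kappa$. Pick a finitely generated $k$\nd subalgebra $B\subseteq A_0$ with the same field of fractions and set $S=\Spec(B)$, an integral $k$\nd scheme of finite type on which $G$ acts trivially, so $Y_0\to S$ is a morphism of super $G$\nd schemes. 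Taking $s$ to be the generic point of $S$, the generic fibre $Y_0':=(Y_0)_s$ is a quasi-affine super $(G_\kappa,\varepsilon)$\nd scheme of finite type over $\kappa(s)=\kappa$, and by the discussion preceding \eqref{e:genfieldiso} together with Lemma~\ref{l:openGsubint} it satisfies $\kappa(\Mod_{G_\kappa,\varepsilon}(Y_0'))\cong\kappa(\Mod_{G,\varepsilon}(Y_0))\cong\kappa$, the base field. Lemma~\ref{l:homogGsub} then provides a non-empty open homogeneous super $G_\kappa$\nd subscheme $Y_0''$ of $Y_0'$, and by the paragraph preceding \eqref{e:genfieldiso} we may write $Y_0''=U\cap Y_0'$ for some open super $G$\nd subscheme $U$ of $Y_0$. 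Then $U$ is non-empty, and I claim $X_1:=U$ works.

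To verify \ref{i:smallsub} for $X_1=U$, let $V$ be a non-empty open super $G$\nd subscheme of $U$. Being a non-empty open super $G$\nd subscheme of $X$, $V$ contains $x$; moreover $x$ lies in $Y_0'$, since $Y_0'$ is the intersection of the open super $G$\nd subschemes $(Y_0)_h$ for $0\ne h\in B$, each non-empty by reducedness of $Y_0$ and hence containing $x$. Thus $x\in V\cap Y_0'=V\cap Y_0''$, which is therefore a non-empty open super $G_\kappa$\nd subscheme of the homogeneous $Y_0''$, forcing $Y_0''\subseteq V$. Hence $U\setminus V$ is a closed subset of $U$ contained in $U\setminus Y_0''=\bigcup_{0\ne h\in B}(U\setminus U_h)$, where $U_h=U\cap(Y_0)_h$. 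Since $U$ is Noetherian, $U\setminus V$ has finitely many irreducible components; choosing for each a nonzero $h\in B$ whose vanishing locus contains its generic point, and hence the whole component, and letting $f$ be the product of these finitely many $h$ (nonzero, as $B$ is a domain), we get that $f$ vanishes on $U\setminus V$, so $(X_1)_f=U_f\subseteq V$. As the restriction $A_0\to H^0_G(U,\sO_U)$ is injective (Lemma~\ref{l:openGsubint}), $f$ is nonzero in $H^0_G(X_1,\sO_{X_1})$, as required.

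The main obstacle is the finite generation of $\kappa$ over $k$: this is what makes it possible to descend the whole situation to a finite-type affine base $S$ and thereby bring Lemma~\ref{l:homogGsub} to bear. Once that is in hand, the only remaining work is the descent of the homogeneous open subscheme across the generic fibre and the Noetherian bookkeeping in the last paragraph, both routine.
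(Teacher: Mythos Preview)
Your argument is correct and follows essentially the same route as the paper: show $\kappa:=\kappa(\Mod_{G,\varepsilon}(X))$ is finitely generated over $k$, pass to an open $G$\nd subscheme whose invariant global sections have fraction field $\kappa$, take a generic fibre over an integral base, apply Lemma~\ref{l:homogGsub} there, and lift the homogeneous open back. Two inessential differences: you introduce the finitely generated subring $B\subseteq A_0$, but the discussion before \eqref{e:genfieldiso} only requires $S$ integral, so one may take $S=\Spec(A_0)$ directly as the paper does; and you verify condition~\ref{i:smallsub} of Lemma~\ref{l:smallequiv} via a Noetherian covering argument on $U\setminus V$, whereas the paper verifies condition~\ref{i:smallhomog} more directly by checking that $A\hookrightarrow A_1:=H^0_G(X_1,\sO_{X_1})$ has the same fraction field $\kappa$, so that the generic fibre of $X_1\to\Spec(A_1)$ agrees with $X_1\cap X'=X'{}\!_1$, which is homogeneous by construction.

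One point needs correction. Your claim that the nilradical of $\sO_X$ is a $(G,\varepsilon)$\nd ideal, and hence that $X$ is reduced, is not justified: when $G$ has non-trivial odd part the $G$\nd action need not preserve $X_{\mathrm{red}}$ (only the $G_{\mathrm{red}}$\nd action does, cf.\ the end of Section~\ref{s:prelim}), so the coaction need not send the nilradical into $\sO_G\otimes_k\sN$. Fortunately your proof does not actually need this: the only use is that $(Y_0)_h\ne\emptyset$ for $0\ne h\in B$, and this follows because $B\subseteq A_0\hookrightarrow\kappa$ embeds in a field, so $h$ is not nilpotent in $H^0(Y_0,\sO_{Y_0})$.
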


\begin{proof}
Let $A$ and $k'$ be as in Lemma~\ref{l:smallequiv},
and write $k''$ for $\kappa(\Mod_{G,\varepsilon}(X))$. 
By Lemma~\ref{l:pointfaith}, $\omega_{X,x}$ for some point $x$ of $X$
defines an embedding of $k''$
as a subextension of the extension $\kappa(x)$ of $k$.
Thus $k''$ is a finitely generated extension of $k$.
By Lemma~\ref{l:openGsubint}, we may after replacing $X$ by a non-empty open super $G$\nd subscheme
suppose that $A$ contains a set of generators for $k''$ over $k$.
Thus $k' = k''$,

The generic fibre $X'$ of $X \to \Spec(A)$ is a quasi-affine super $(G_{k'},\varepsilon)$\nd scheme
of finite type with $\Mod_{G_{k'},\varepsilon}(X')$ integral,
and since \eqref{e:genfieldiso} is an isomorphism we have $\kappa(\Mod_{G_{k'},\varepsilon}(X')) = k'$.
By Lemma~\ref{l:homogGsub}, $X'$ has thus a non-empty open homogeneous super $G_{k'}$\nd subscheme $X'{}\!_1$.
By the remarks preceding Lemma~\ref{l:smallequiv}, there is an open super $G$\nd subscheme $X_1$ of $X$
with $X_1 \cap X' = X'{}\!_1$. 
If we write $A_1$ for $H^0_G(X_1,\sO_{X_1})$, then by Lemma~\ref{l:openGsubint}, 
$A \to A_1$ is injective and $A_1$ has field of fractions $k'$.
Thus $X'{}\!_1$ is the generic fibre of $X_1 \to \Spec(A_1)$.
It follows that \ref{i:smallhomog} of Lemma~\ref{l:smallequiv} is satisfied with $X_1$ for $X$.
\end{proof}

Filtered limits exist in the category of local super $k$\nd ringed spaces: they are given by taking the
filtered limit of the underlying topological spaces, equipped with the filtered colimit of the of the 
pullbacks of the structure sheaves. 
Any filtered limit of super $k$\nd schemes with affine transition morphisms is a super $k$\nd scheme.
Since the forgetful functor from the category of super $(G,\varepsilon)$\nd schemes to the category of 
super $k$\nd schemes creates limits, any limit of super $(G,\varepsilon)$\nd schemes which exists 
has a canonical structure of super $(G,\varepsilon)$\nd scheme.

Let $X$ be a local super $k$\nd ringed space and $X_0$ be a topological subspace of $X$ with structure
sheaf $\sO_{X_0}$ the restriction of $\sO_X$ to $X_0$.
Then pullback of $\sO_X$\nd modules along the embedding $j:X_0 \to X$ 
coincides with pullback of the underlying sheaves of abelian groups.
In particular $j$ is flat. 
This applies in particular when $X_0$ is the intersection of a family of open
super $k$\nd ringed subspaces of $X$.

\begin{lem}\label{l:homog}
Let $X$ be a quasi-affine super $(G,\varepsilon)$\nd scheme of finite type
with $\Mod_{G,\varepsilon}(X)$ integral.
Then the intersection $X_0$ in the category of local super ringed $k$\nd spaces of the non-empty open 
super $G$\nd subschemes of $X$ is a super $(G,\varepsilon)$\nd scheme.
If $k_0$ denotes the extension $\kappa(\Mod_{G,\varepsilon}(X))$ of $k$ and $X_0$ is given the 
structure of $k_0$\nd scheme defined by the isomorphism \eqref{e:funfieldiso}, 
then $X_0$ is a non-empty quasi-affine homogeneous 
super $(G_{k_0},\varepsilon)$\nd scheme of finite type over $k_0$. 
\end{lem}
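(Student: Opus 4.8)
The plan is to reduce, via Lemma~\ref{l:smallexist}, to the case where $X$ satisfies the equivalent conditions of Lemma~\ref{l:smallequiv}, and then to identify $X_0$ with the generic fibre of $X\to\Spec A$, where $A=H^0_G(X,\sO_X)$; for that fibre both the field of definition and the homogeneity have already been determined in Lemma~\ref{l:smallequiv}.

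First I would choose, by Lemma~\ref{l:smallexist}, an open super $G$\nd subscheme $X_1$ of $X$ for which conditions \ref{i:smallsub} and \ref{i:smallhomog} of Lemma~\ref{l:smallequiv} hold, and replace $X$ by $X_1$. This is harmless: by Lemma~\ref{l:pointfaith} there is a point of $X$ lying in every non-empty open super $G$\nd subscheme, so for any non-empty open super $G$\nd subscheme $Y$ of $X$ the intersection $Y\cap X_1$ is a non-empty open super $G$\nd subscheme of $X_1$ contained in $Y$; hence the non-empty open super $G$\nd subschemes of $X_1$ are cofinal among those of $X$, so $X_0$ is unchanged, and $\kappa(\Mod_{G,\varepsilon}(X))$ is unchanged by Lemma~\ref{l:openGsubint}. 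So assume now that $X$ satisfies \ref{i:smallsub} and \ref{i:smallhomog}, and put $A=H^0_G(X,\sO_X)$. Then $A$ is an integral domain, being the endomorphism ring of $\I$ in the integral tensor category $\Mod_{G,\varepsilon}(X)$, and if $k'$ denotes its field of fractions then $k'=\kappa(\Mod_{G,\varepsilon}(X))=k_0$ by the last assertion of Lemma~\ref{l:smallequiv}.

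Next I would show $X_0=X\times_{\Spec A}\Spec k'$. For $f\ne 0$ in $A$ the locus $X_f$ where $f$ is invertible is an open super $G$\nd subscheme, non-empty by Lemma~\ref{l:rednil} since $\Mod_{G,\varepsilon}(X)$ is reduced, and $X_f=X\times_{\Spec A}D(f)$ with $H^0_G(X_f,\sO_{X_f})=A_f$ as computed in the proof of Lemma~\ref{l:smallequiv}; the transition morphisms $X_{fg}\to X_f$ are base changes of the affine morphisms $D(fg)\to D(f)$, hence affine, the $X_f$ are closed under finite intersection since $A$ is a domain, and by condition~\ref{i:smallsub} they are cofinal among the non-empty open super $G$\nd subschemes of $X$. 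Since $\lim_f D(f)=\Spec(\colim_f A_f)=\Spec k'$ and limits commute, in the category of local super ringed $k$\nd spaces
\[
X_0=\lim_f X_f = X\times_{\Spec A}\Bigl(\lim_f D(f)\Bigr)=X\times_{\Spec A}\Spec k',
\]
which is a super $k$\nd scheme as a limit with affine transition morphisms, and a super $(G,\varepsilon)$\nd scheme because the forgetful functor to super $k$\nd schemes creates limits. Under \eqref{e:funfieldiso} the colimit $\colim_f H^0_G(X_f,\sO_{X_f})=\colim_f A_f=k'$ maps isomorphically onto $\kappa(\Mod_{G,\varepsilon}(X))=k_0$, so the $k_0$\nd structure on $X_0$ prescribed in the statement is exactly that of this generic fibre over $k'$. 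Thus $X_0$ is the generic fibre $X'$ of $X\to\Spec A$, a super $(G_{k_0},\varepsilon)$\nd scheme.

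It remains to read off the other properties. $X_0$ is non-empty, since it contains the point supplied by Lemma~\ref{l:pointfaith}. As $X$ is of finite type over $k$ and $k\subseteq A$, the morphism $X\to\Spec A$ is of finite type, so $X_0=X\times_{\Spec A}\Spec k_0$ is of finite type over $k_0$; and as $X$ is quasi-affine, say an open subscheme of $\Spec B$ with $B=H^0(X,\sO_X)$, the fibre $X_0$ is an open subscheme of $\Spec(B\otimes_A k_0)$, hence quasi-affine over $k_0$. Finally, homogeneity of $X_0$ as a super $G_{k_0}$\nd scheme is precisely condition~\ref{i:smallhomog} of Lemma~\ref{l:smallequiv}. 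The main obstacle is the middle step: verifying that the $X_f$ form a cofinal filtered inverse system with affine transition morphisms, that the intersection taken in local super ringed $k$\nd spaces genuinely coincides with the scheme-theoretic generic fibre, and that the $k_0$\nd structure coming from \eqref{e:funfieldiso} is the one making this identification a morphism of $k_0$\nd schemes; everything else is a direct appeal to Lemmas~\ref{l:smallexist}, \ref{l:smallequiv}, \ref{l:pointfaith}, \ref{l:rednil} and \ref{l:openGsubint}.
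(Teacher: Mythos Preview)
Your proposal is correct and follows essentially the same approach as the paper: reduce via Lemma~\ref{l:smallexist} to the case where the conditions of Lemma~\ref{l:smallequiv} hold, then identify $X_0$ with the generic fibre of $X\to\Spec A$ using condition~\ref{i:smallsub} and read off homogeneity from condition~\ref{i:smallhomog}. The paper's proof is much terser, but you have filled in precisely the details it leaves implicit (cofinality of the $X_f$, why the replacement by $X_1$ is harmless, and the verification that the $k_0$\nd structure matches).
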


\begin{proof}
By Lemma~\ref{l:smallexist}, we may suppose after replacing $X$ by a non-empty open
super $G$\nd subscheme that the equivalent conditions \ref{i:smallsub} and \ref{i:smallhomog}
of Lemma~\ref{l:smallequiv} are satisfied.
With notation as in Lemma~\ref{l:smallequiv}, we then have $k' = k_0$,
and the required intersection is the generic fibre of $X \to \Spec(A)$ by \ref{i:smallsub}, 
while the final statement holds by \ref{i:smallhomog}.
\end{proof}

Let $X$ be the limit of a filtered inverse system  $(X_\lambda)_{\lambda \in \Lambda}$ of 
super $k$\nd schemes with affine transition morphisms. 
The assignment
\begin{equation}\label{e:colimfunctor}
(\sV_\lambda)_{\lambda \in \Lambda} \mapsto \colim_{\lambda \in \Lambda}
\pr_\lambda{}\!^*\sV_\lambda
\end{equation}
defines a functor to $\MOD(X)$ from the category of systems 
$(\sV_\lambda)_{\lambda \in \Lambda}$ above $(X_\lambda)_{\Lambda \in \Lambda}$
with $\sV_\lambda$ in $\MOD(X_\lambda)$.
The functor \eqref{e:colimfunctor} is exact: we may suppose that $X = \Spec(R)$ and the 
$X_\lambda = \Spec(R_\lambda)$ are affine, and if $\sV_\lambda$ is the 
$\sO_{X_\lambda}$\nd module associated to the $R_\lambda$\nd module $V_\lambda$, we have an 
isomorphism 
\begin{equation*}
\colim_{\lambda \in \Lambda}V_\lambda \iso 
\colim_{\lambda \in \Lambda} R \otimes_{R_\lambda} V_\lambda
\end{equation*}
natural in $(V_\lambda)_{\lambda \in \Lambda}$.

Suppose that the $X_\lambda$ are quasi-compact and quasi-separated, and that
$\Lambda$ has an initial object $\lambda_0$.
Write $q_\lambda:X_\lambda \to X_{\lambda_0}$ for the transition morphism.
Then for $\sV_0$ in $\Mod(X_{\lambda_0})$ and $\sW_0$ in $\MOD(X_{\lambda_0})$,
the pullback functors define an isomorphism
\begin{equation}\label{e:colimHomVW}
\colim_\lambda\Hom_{\sO_{X_\lambda}}(q_\lambda{}\!^*\sV_0,q_\lambda{}\!^*\sW_0) 
\iso \Hom_{\sO_X}(\pr_{\lambda_0}{}\!^*\sV_0,\pr_{\lambda_0}{}\!^*\sW_0).
\end{equation}
This can be seen by reducing after taking a finite affine open cover of $X_{\lambda_0}$ to the case 
where $X_{\lambda_0}$ is affine.
If further $(X_\lambda)_{\lambda \in \Lambda}$ is a system of $(G,\varepsilon)$\nd schemes and 
$\sV_0$ and $\sW_0$ are equivariant $(G,\varepsilon)$\nd modules we have an isomorphism
\begin{equation}\label{e:colimHomGVW}
\colim_\lambda\Hom_{G,\sO_{X_\lambda}}(q_\lambda{}\!^*\sV_0,q_\lambda{}\!^*\sW_0) 
\iso \Hom_{G,\sO_X}(\pr_{\lambda_0}{}\!^*\sV_0,\pr_{\lambda_0}{}\!^*\sW_0).
\end{equation}
This follows from \eqref{e:colimHomVW} and the similar isomorphism for 
$(G \times_k X_\lambda)_{\lambda \in \Lambda}$ 
because for example $\Hom_{G,\sO_X}(\sV,\sW)$ is the equaliser of two
appropriately defined homomorphisms from $\Hom_{\sO_X}(\sV,\sW)$ to 
$\Hom_{\sO_{G \times_k X}}(\pr_2{}\!^*\sV,\alpha{}\!^*\sW)$, where $\alpha$ is the action of $G$
on $X$.
Similarly, taking an appropriate equaliser shows that for $\sV$ in $\Mod_G(X)$
the functor $\Hom_{G,\sO_X}(\sV,-)$ preserves filtered colimits in $\MOD_G(X)$.
It follows that for $\sV_0$ in $\Mod_G(X_{\lambda_0})$ and 
$(\sW_\lambda)_{\lambda \in \Lambda}$ a system above
$(X_\lambda)_{\lambda \in \Lambda}$ with $\sW_\lambda$ in $\MOD_G(X_\lambda)$,
the pullback functors define an isomorphism
\begin{equation}\label{e:colimHomGVWl}
\colim_\lambda\Hom_{G,\sO_{X_\lambda}}(q_\lambda{}\!^*\sV_0,\sW_\lambda) 
\iso \Hom_{G,\sO_X}(\pr_{\lambda_0}{}\!^*\sV_0,\colim_\lambda \pr_\lambda{}\!^*\sW_\lambda).
\end{equation}
Indeed if $q_{\mu\lambda}:X_\mu \to X_\lambda$ is the transition morphism,
then since $\lambda \mapsto (\lambda,\lambda)$ is cofinal in the set of $(\lambda,\mu)$ with $\mu \ge \lambda$,
\eqref{e:colimHomGVWl} factors as an isomorphism to
\begin{equation*}
\colim_\lambda \colim_{\mu \ge \lambda} 
\Hom_{G,\sO_{X_\mu}}(q_\mu{}\!^*\sV_0,q_{\mu\lambda}{}\!^*\sW_\lambda)
\end{equation*}
followed by a colimit of isomorphisms of the form \eqref{e:colimHomGVW}.

Suppose now that the $X_\lambda$ are quasi-affine $(G,\varepsilon)$\nd schemes.
Let $\sV$ be an object in $\MOD_{G,\varepsilon}(X)$.
By Lemma~\ref{l:quotsub}\ref{i:quotsubmod} we may write $\sV$ as a cokernel
\begin{equation}\label{e:Vcoker}
V'{}\!_X \to V_X \to \sV \to 0
\end{equation}
for some $V$ and $V'$ in $\MOD_{G,\varepsilon}(k)$.
By \eqref{e:colimHomGVW}, for each pair of subobjects $W$ of $V$ and $W'$ of $V'$
in $\Mod_{G,\varepsilon}(k)$ such that $V'{}\!_X \to V_X$ sends $W'{}\!_X$ into $W_X$, 
there exist a $\lambda \in \Lambda$ and an $h:W'{}\!_{X_\lambda} \to W_{X_\lambda}$ 
in $\Mod_{G,\varepsilon}(X_\lambda)$ for which $\pr_\lambda{}\!^*(h)$ coincides modulo
the pullback isomorphisms with $W'{}\!_X \to W_X$.
The category $\Lambda'$ of quadruples $(W,W',\lambda,h)$ is then filtered,
$\Lambda' \to \Lambda$ given by $(W,W',\lambda,h) \mapsto \lambda$ is cofinal, 
and $\sV$ is the colimit over $\Lambda'$ of the $\pr_\lambda{}\!^*(\Coker h)$.
Thus $\sV$ is isomorphic to an object in the image of 
a functor \eqref{e:colimfunctor} with $\Lambda$ replaced by $\Lambda'$.
Similarly if $l$ is a morphism in $\MOD_{G,\varepsilon}(X)$, then starting from a
commutative diagram with exact rows of the form \eqref{e:Vcoker} 
and right vertical arrow $l$ shows that
$l$ is isomorphic to a morphism in the image of a functor \eqref{e:colimfunctor}
for an appropriate $\Lambda$.

Let $X$ be a homogeneous super $(G,\varepsilon)$\nd scheme and $j:Z \to X$ be a morphism of 
super $k$\nd schemes with $Z$ non-empty.
Then the functor $j^*$ from $\MOD_{G,\varepsilon}(X)$ to $\MOD(Z)$
is faithful and exact.
Indeed if $j_0$ and $j_1$ are the morphisms from $G \times_k Z$ to $X$
that send $(g,z)$ respectively to $j(z)$ and $gj(z)$, then the equivariant action of $G$
defines a natural isomorphism from $j_0{}\!^*$ to $j_1{}\!^*$, 
and $j_1$ is faithfully flat while $j_0 = j \circ \pr_2$ with $\pr_2$ faithfully flat.

\begin{lem}\label{l:pointexact}
Let $X$ be a quasi-affine super $(G,\varepsilon)$\nd scheme with $\Mod_{G,\varepsilon}(X)$ integral,
and $x$ be a point of $X$ which lies in every non-empty open super $G$\nd subscheme of $X$.
Then the functor from $\MOD_{G,\varepsilon}(X)$ to $\MOD(\kappa(x))$ defined by passing to the fibre 
at $x$ is exact.
\end{lem}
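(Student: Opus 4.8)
The plan is to reduce to the case where $X$ is of finite type, where $x$ lies in an open homogeneous super $G$\nd subscheme of $X$ and the fibre functor is exact by the remark, preceding the lemma, that on a homogeneous super $(G,\varepsilon)$\nd scheme a pullback functor $j^*$ is faithful and exact; and then to transfer this back to the general case by writing $\MOD_{G,\varepsilon}(X)$ as a filtered ``colimit'' of the categories $\MOD_{G,\varepsilon}(X_\lambda)$ and using that passage to the fibre at $x$ commutes with such colimits.

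First I would write $X = \lim_\lambda X_\lambda$ as in Lemma~\ref{l:qafflim}, with the $X_\lambda$ quasi-affine super $(G,\varepsilon)$\nd schemes of finite type, the transition morphisms affine, and the projections $\pr_\lambda\colon X \to X_\lambda$ super schematically dominant. Then each $\pr_\lambda^*$ on rigid subcategories is faithful by Lemma~\ref{l:domfaith}, so $\Mod_{G,\varepsilon}(X_\lambda)$ is integral; and since $\Mod_{G,\varepsilon}(X)$ is integral, $X_\lambda$ is non-empty, so $\pr_\lambda^{-1}(U)$ is non-empty for every non-empty open super $G$\nd subscheme $U$ of $X_\lambda$, as otherwise $\pr_\lambda$ would factor through the proper closed complement of $U$. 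Hence the image $x_\lambda$ of $x$ in $X_\lambda$ lies in every non-empty open super $G$\nd subscheme of $X_\lambda$.

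Next I would show that for each $\lambda$ the functor $j_\lambda^*\colon \MOD_{G,\varepsilon}(X_\lambda) \to \MOD(\kappa(x))$ of passage to the fibre at $x_\lambda$, followed by extension of scalars along $\kappa(x_\lambda) \subseteq \kappa(x)$, is exact. By Lemma~\ref{l:homog} the intersection $X_{0,\lambda}$ of the non-empty open super $G$\nd subschemes of $X_\lambda$ is a homogeneous super $(G_{k_\lambda},\varepsilon)$\nd scheme over $k_\lambda = \kappa(\Mod_{G,\varepsilon}(X_\lambda))$, and the embedding $X_{0,\lambda} \to X_\lambda$ is flat, being the intersection of a family of open subschemes. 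Since $x_\lambda$ lies in $X_{0,\lambda}$, the functor $j_\lambda^*$ factors as the flat, hence exact, restriction $\MOD_{G,\varepsilon}(X_\lambda) \to \MOD_{G,\varepsilon}(X_{0,\lambda})$ followed by passage to the fibre at $x_\lambda$ on $X_{0,\lambda}$; the latter is exact by the argument of the homogeneous-scheme remark applied over $k_\lambda$ to $X_{0,\lambda}$, since the morphism $G_{\kappa(x)} \to X_{0,\lambda}$ sending $g$ to $g \cdot x_\lambda$ factors as the faithfully flat orbit map $G_{\kappa(x)} \to (X_{0,\lambda})_{\kappa(x)}$ followed by the faithfully flat projection $(X_{0,\lambda})_{\kappa(x)} \to X_{0,\lambda}$.

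Finally I would assemble. The fibre functor $j^*$ at $x$ on $\MOD_{G,\varepsilon}(X)$ is cocontinuous, being left adjoint to $j_*$, hence right exact, so it is enough to show it preserves monomorphisms. Given a monomorphism $f$ in $\MOD_{G,\varepsilon}(X)$, by the discussion preceding the lemma $f$ is isomorphic to $\colim_\mu \pr_\mu^* f_\mu$ for a filtered system of morphisms $f_\mu$ in the $\MOD_{G,\varepsilon}(X_\mu)$ indexed by a category cofinal in $\Lambda$. By exactness of the functor \eqref{e:colimfunctor}, whose source is abelian with kernels computed argumentwise, $0 = \Ker f = \colim_\mu \pr_\mu^* \Ker f_\mu$. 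Applying $j^*$, using $j^* \pr_\mu^* = j_\mu^*$, the exactness of filtered colimits in $\MOD(\kappa(x))$, and the exactness of each $j_\mu^*$ from the previous step, one gets $\Ker(j^* f) = \colim_\mu \Ker(j_\mu^* f_\mu) = \colim_\mu j_\mu^* \Ker f_\mu = j^*\bigl(\colim_\mu \pr_\mu^* \Ker f_\mu\bigr) = 0$, so $j^* f$ is a monomorphism. The main obstacle is the bookkeeping in this last step: making precise, via the stated exactness of \eqref{e:colimfunctor}, that the kernel of a morphism presented as $\colim_\mu \pr_\mu^* f_\mu$ is computed argumentwise as $\colim_\mu \pr_\mu^* \Ker f_\mu$ even though the individual $\pr_\mu^*$ need not be left exact, and that $j^*$ may be interchanged with all of these colimits; everything else is a direct application of the cited results.
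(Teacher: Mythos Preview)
Your proposal is correct and follows essentially the same approach as the paper: reduce to the finite-type case via Lemma~\ref{l:homog} (where $x$ lies in the flat-embedded homogeneous piece $X_0$), then handle general $X$ by writing an arbitrary morphism as a filtered colimit $\colim_\mu \pr_\mu^* f_\mu$ and using exactness of \eqref{e:colimfunctor}, cocontinuity of $j^*$, and exactness of filtered colimits. The paper packages the final step as a commutative square showing $j^*\Ker l \iso \Ker j^*(l)$ for an arbitrary morphism $l$, whereas you first observe $j^*$ is right exact and then run the same square argument as a chain of equalities for monomorphisms; and you make explicit the verification that $x_\lambda$ lies in every non-empty open super $G$-subscheme of $X_\lambda$, which the paper leaves implicit.
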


\begin{proof}
Suppose first that $X$ is of finite type.
Then $x$ is a point of the super scheme $X_0$ over $k_0$ of Lemma~\ref{l:homog},
and passage to the fibre at $x$ factors through pullback onto $X_0$.
Pullback onto $X_0$ is exact because the monomorphism $X_0 \to X$ is flat.
Since $X_0$ is a homogeneous $G_{k_0}$\nd scheme, passage to the fibre
at the point $x$ of $X_0$ is as above also exact.

To prove the general case, write $X$ as the limit of a filtered inverse system 
$(X_\lambda)_{\lambda \in \Lambda}$ as in Lemma~\ref{l:qafflim}.
If $j:\Spec(\kappa(x)) \to X$ is the morphism defined by $x$, it is enough to show
that $j^*$ preserves the kernel of any morphism $l$ in $\MOD_{G,\varepsilon}(X)$.
As above, we may after replacing $\Lambda$ if necessary suppose that $l$ is the
image under \eqref{e:colimfunctor} of a morphism of systems
\begin{equation*}
(l_\lambda)_{\lambda \in \Lambda}:(\sV'{}\!_\lambda)_{\lambda \in \Lambda}
\to (\sV_\lambda)_{\lambda \in \Lambda}
\end{equation*}
above $(X_\lambda)_{\lambda \in \Lambda}$.
Since $j^*$ is cocontinuous, the left arrow of the commutative square 
\begin{equation*}
\xymatrix{
j^*\Ker l \ar[r] & \Ker j^*(l) \\
\colim_\lambda j^*\pr_\lambda{}\!^* \Ker l_\lambda \ar[u] \ar[r] &
\colim_\lambda \Ker j^*\pr_\lambda{}\!^*(l_\lambda) \ar[u]
}
\end{equation*}
is an isomorphism by exactness of \eqref{e:colimfunctor},
and the right arrow by exactness of filtered colimits.
The bottom arrow is an isomorphism because
$j^*\pr_\lambda{}\!^* \simeq (\pr_\lambda \circ j)^*$ is exact 
by the case where $X$ is of finite type.
Thus the top arrow is an isomorphism.
\end{proof}

Let $X$ be a super $k$\nd scheme.
If $\sV$ and $\sW$ are $\sO_X$\nd modules, then the internal hom 
\begin{equation*}
\underline{\Hom}_{\sO_X}(\sV,\sW)
\end{equation*}
is the $\sO_X$\nd module with sections of degree $i$ above the open subset $U$ of $X$
the $\sO_U$\nd homomorphisms of degree $i$ from $\sV|U$ to $\sW|U$. 
Its formation commutes with restriction to open super subschemes.
When $\sV = \sW$, the internal hom
\begin{equation*}
\underline{\End}_{\sO_X}(\sV) = \underline{\Hom}_{\sO_X}(\sV,\sV)
\end{equation*}
is an $\sO_X$\nd algebra, with unit
\begin{equation}\label{e:endunit}
\sO_X \to \underline{\End}_{\sO_X}(\sV)
\end{equation}
given by $1_{\sV}$ and composition by that of $\sO_U$\nd endomorphisms of $\sV|U$.
The kernel of \eqref{e:endunit} is the annihilator of $\sV$, i.e.\ the ideal of $\sO_X$
with sections above $U$ those of $\sO_X$ that annihilate $\sV|U$. 
If $u:\sU \to \sO_X$ is a morphism of $\sO_X$\nd modules, then
\begin{equation}\label{e:utensVzero}
u \otimes \sV = 0:\sU \otimes_{\sO_X} \sV \to \sV
\end{equation}
if and only if $u$ factors through the annihilator of $\sV$.

%The internal hom is right adjoint to the tensor product: we have an isomorphism
%\begin{equation}\label{e:inthomadj}
%\Hom_{\sO_X}(\sU \otimes_{\sO_X} \sV,\sW) \iso \Hom_{\sO_X}(\sU,\underline{\Hom}_{\sO_X}(\sV,\sW)),
%\end{equation}
%natural in the $\sO_X$\nd modules $\sU$, $\sV$ and $\sW$,
%which sends the morphism $w$ from $\sU \otimes_{\sO_X} \sV$ to $\sW$ to the morphism 
%from $\sU$ to $\underline{\Hom}_{\sO_X}(\sV,\sW)$ that sends a section $s$ of $\sU$ above $U$ 
%to the section $w(s \otimes -)$ of $\underline{\Hom}_{\sO_X}(\sV,\sW)$ above $U$.
We may identify $\underline{\Hom}_{\sO_X}(\sO_X,\sW)$ with $\sW$, and 
$\underline{\Hom}_{\sO_X}(\sO_X{}^{0|1},\sW)$ with the $\sO_X$\nd module $\Pi \sW$ 
given by interchanging the parities of $\sW$.

Let $h:X' \to X$ be a morphism of super $k$\nd schemes. 
Then there is a morphism
\begin{equation}\label{e:inthomlower}
\underline{\Hom}_{\sO_X}(\sV,\sW) \to h_*\underline{\Hom}_{\sO_{X'}}(h^*\sV,h^*\sW),
\end{equation}
natural in $\sV$ and $\sW$, which sends $u$ from $\sV|U$ to $\sW|U$ to 
$h^*(u)$ from $h^*\sV|h^{-1}(U)$ to $h^*\sW|h^{-1}(U)$.
By adjunction, we then have a morphism
\begin{equation}\label{e:inthomupper}
h^*\underline{\Hom}_{\sO_X}(\sV,\sW) \to \underline{\Hom}_{\sO_{X'}}(h^*\sV,h^*\sW)
\end{equation}
which is natural in $\sV$ and $\sW$. 
It is an isomorphism for $\sV = \sO_X{}^{m|n}$ and any $\sW$,
because when $\sV = \sO_X$, \eqref{e:inthomlower} is the unit $\sW \to h_*h^*\sW$ and hence
\eqref{e:inthomupper} is the identity of $h^*\sW$,
and when $\sV = \sO_X{}^{0|1}$, \eqref{e:inthomlower} is the unit $\Pi\sW \to h_*h^*\Pi\sW$ 
and \eqref{e:inthomupper} is the identity of $h^*\Pi\sW$.

If $\sW$ is a quasi-coherent $\sO_X$\nd module, and $\sV$ is locally on $X$ 
a cokernel of a morphism of $\sO_X$\nd modules $\sO_X{}^{m'|n'} \to \sO_X{}^{m|n}$,
then $\underline{\Hom}_{\sO_X}(\sV,\sW)$ is a quasi-coherent $\sO_X$\nd module,
and \eqref{e:inthomupper} is an isomorphism for $h$ flat.

Suppose now that $X$ has a structure of $(G,\varepsilon)$\nd scheme.
Let $\sV$ and $\sW$ be objects of $\MOD_{G,\varepsilon}(X)$, with the underlying
$\sO_X$\nd module of $\sV$ locally on $X$ the cokernel
of a morphism $\sO_X{}^{m'|n'} \to \sO_X{}^{m|n}$.
We define a $(G,\varepsilon)$\nd equivariant structure on the quasi-coherent
$\sO_X$\nd module $\underline{\Hom}_{\sO_X}(\sV,\sW)$ as follows:
if $\alpha$ and $\beta$ are the isomorphisms between
the pullbacks respectively of $\sV$ and $\sW$ along the projection and the action
of $G$ from $G \times_k X$ to $X$ defining the actions of $G$ on $\sV$ and $\sW$,
then the action of $G$ on $\underline{\Hom}_{\sO_X}(\sV,\sW)$ is given, modulo isomorphisms
of the form \eqref{e:inthomupper}, by $\Hom_{\sO_{G \times_k X}}(\alpha^{-1},\beta)$.
%The adjunction isomorphism \eqref{e:inthomadj} now becomes
%\begin{equation}
%\Hom_{G,\sO_X}(\sU \otimes_{\sO_X} \sV,\sW) \iso 
%\Hom_{G,\sO_X}(\sU,\underline{\Hom}_{\sO_X}(\sV,\sW))
%\end{equation}
%for $\sU$, $\sV$, $\sW$ in $\MOD_{G,\varepsilon}(X)$ with $\sU$ and $\sV$
%locally of finite presentation as $\sO_X$\nd modules.
If $\sV = \sW$, then \eqref{e:endunit} is a morphism in $\MOD_{G,\varepsilon}(X)$,
and the annihilator of $\sV$ is a subobject of $\sO_X$ in $\MOD_{G,\varepsilon}(X)$.

\begin{lem}\label{l:pointtors}
Let $X$ be a quasi-affine super $(G,\varepsilon)$\nd scheme with $\Mod_{G,\varepsilon}(X)$ integral,
and $x$ be a point of $X$ which lies in every non-empty open super $G$\nd subscheme of $X$.
Then an object of $\MOD_{G,\varepsilon}(X)$ is a torsion object if and only if 
its fibre at $x$ is $0$.
\end{lem}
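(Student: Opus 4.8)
The plan is to prove both implications by reducing to statements about the structure sheaf, using the tools already assembled: $\omega_{X,x}$ is faithful and exact (Lemmas~\ref{l:pointfaith} and \ref{l:pointexact}), the characterisation of regular morphisms $A\to\I$ via torsion cokernels (Lemma~\ref{l:regtorscok}), and the fact that $\MOD_{G,\varepsilon}(X)$ is a well-dualled Grothendieck tensor category (so torsion is detected on subobjects of finite type, and an object of finite type is torsion iff killed by some regular $a\colon A\to\I$ in $\Mod_{G,\varepsilon}(X)$). First I would dispose of the easy direction: if $\sV$ is a torsion object, then by reducing to the case $\sV$ of finite type, we have $a\otimes\sV=0$ for some regular $a\colon A\to\sO_X$ in $\Mod_{G,\varepsilon}(X)$; applying the tensor functor $\omega_{X,x}$ and using that $a$ factors through the annihilator of $\sV$ (the remarks on \eqref{e:endunit} and \eqref{e:utensVzero}), it suffices to observe that $\omega_{X,x}(a)\colon\omega_{X,x}(A)\to\kappa(x)$ is non-zero—indeed, by Lemma~\ref{l:regtorscok} the cokernel of $a$ is a torsion object, hence by the easy direction its fibre... but to avoid circularity I would instead argue that $\omega_{X,x}(a)$ is non-zero because $a$ is regular and $\omega_{X,x}$ is faithful, so $a\ne 0$, and faithfulness gives $\omega_{X,x}(a)\ne 0$; then $\omega_{X,x}(A)\to\kappa(x)$ being a non-zero map of $\kappa(x)$-vector spaces with $\kappa(x)$ one-dimensional is an epimorphism, and $\omega_{X,x}(a)\otimes\omega_{X,x}(\sV)=0$ forces $\omega_{X,x}(\sV)=0$.

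For the converse—the substantive direction—suppose the fibre of $\sV$ at $x$ is $0$. I would first reduce to $\sV$ of finite type, since an object is torsion iff each of its finite-type subobjects is, and $\omega_{X,x}$ (being $\kappa(x)\otimes$-exact and cocontinuous, preserving monomorphisms because it is exact) sends a subobject of $\sV$ to a subobject of $\omega_{X,x}(\sV)=0$. So assume $\sV$ is of finite type, hence by Lemma~\ref{l:quotsub}\ref{i:quotsubmod} a quotient of $V\otimes_k\sO_X$ for some representation $V$, and in fact locally on $X$ a cokernel of a morphism of free $\sO_X$-modules of finite rank, so that the internal hom and annihilator constructions apply. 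Consider the annihilator $\sJ\subset\sO_X$ of $\sV$, which is a subobject of $\sO_X$ in $\MOD_{G,\varepsilon}(X)$ (the remarks after \eqref{e:endunit}). By \eqref{e:utensVzero}, $\sV$ is a torsion object precisely when some regular $a\colon A\to\sO_X$ in $\Mod_{G,\varepsilon}(X)$ factors through $\sJ$; by Lemma~\ref{l:regtorssub} (with $\sA=\MOD_{G,\varepsilon}(X)$) this holds iff $\sO_X/\sJ$ is a torsion object. So the goal becomes: show that the quotient $\sO_X/\sJ$ of $\sO_X$ by the annihilator of $\sV$ is a torsion object, equivalently (Lemma~\ref{l:regtorscok}) that the inclusion $\sJ\to\sO_X$, viewed via some surjection from a $B$ in $\Mod_{G,\varepsilon}(X)$, gives a regular morphism—equivalently still, that $\sJ\ne 0$ and every non-zero morphism in $\Mod_{G,\varepsilon}(X)$ landing in $\sO_X/\sJ$ vanishes against a regular morphism.

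The key point, and the main obstacle, is to turn the pointwise hypothesis $\omega_{X,x}(\sV)=0$ into the \emph{global} statement that $\sO_X/\sJ$ is torsion. Here is the mechanism I would use. By exactness of $\omega_{X,x}$ (Lemma~\ref{l:pointexact}), forming the fibre at $x$ commutes with the annihilator construction in the sense that $\omega_{X,x}(\sJ)$ is the annihilator in $\kappa(x)$ of $\omega_{X,x}(\sV)=0$, hence $\omega_{X,x}(\sJ)=\kappa(x)=\omega_{X,x}(\sO_X)$, so $\omega_{X,x}(\sO_X/\sJ)=0$; but this is what we are trying to deduce torsion from, so I need the finite-type refinement of the easy direction applied in reverse. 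The correct route: the complement $Y$ of the closed super $G$-subscheme of $X$ cut out by $\sJ$ is an open super $G$-subscheme of $X$; since $x\in Y$ would follow if $Y$ were non-empty, and since $\omega_{X,x}(\sO_X/\sJ)=0$ means $x\notin\mathrm{Supp}(\sO_X/\sJ)$ — no wait, $x$ lies in \emph{every} non-empty open super $G$-subscheme by hypothesis, so if $Y\ne\emptyset$ then $x\in Y$, i.e. $\sJ_x=\sO_{X,x}$, which is consistent; the real content is that $Y\ne\emptyset$. To get $Y\ne\emptyset$: if $Y=\emptyset$ then $\sJ$ is contained in the nilradical of $\sO_X$, and since $\sJ\ne0$ (because $\sV\ne0$ forces a non-zero map $V\otimes_k\sO_X\to\sV$, whose "image datum" produces a non-zero section of some internal hom, hence $\sJ\ne\sO_X$ but we need $\sJ\ne0$—actually if $\sV\ne0$ then $\underline{\End}(\sV)\ne0$ and the unit $\sO_X\to\underline{\End}(\sV)$ is injective on the locus where $\sV\ne0$...). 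The cleanest fix is: $\sJ\ne 0$ precisely when $\sV$ is \emph{not} faithfully supported, and if $\sJ=0$ then $\omega_{X,x}(\sV)=0$ already contradicts exactness-plus-the-fact-that $\omega_{X,x}(\underline\End\sV)$ has $\omega_{X,x}(\sO_X)=\kappa(x)$ mapping into it injectively. Assuming $\sJ\ne0$: by Lemma~\ref{l:rednil} (whose hypothesis $\Mod_{G,\varepsilon}(X)$ reduced follows from integrality), $\sJ$ is not contained in the nilradical, so $Y\ne\emptyset$, hence $x\in Y$, hence $\sJ_x=\sO_{X,x}$, consistent — but now I want $\sO_X/\sJ$ torsion. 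Apply Lemma~\ref{l:pullepi} with $Z=\Spec\kappa(x)$: the non-zero morphism $\sO_X\to\sO_X/\sJ$... no, Lemma~\ref{l:pullepi} concerns morphisms with \emph{target} $\sO_X$. Instead: $\sJ\to\sO_X$ is a non-zero morphism with target $\sO_X$ (it is non-zero since $\sJ\ne0$), and $\Mod_{G,\varepsilon}(X)$ integral means every non-zero morphism in $\Mod_{G,\varepsilon}(X)$ is regular; picking $B\twoheadrightarrow\sJ$ with $B$ in $\Mod_{G,\varepsilon}(X)$ and composing, $B\to\sJ\to\sO_X$ is a non-zero, hence regular, morphism in $\Mod_{G,\varepsilon}(X)$ factoring through $\sJ$, so by Lemma~\ref{l:regtorssub} $\sO_X/\sJ$ is a torsion object. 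By \eqref{e:utensVzero} and Lemma~\ref{l:regtorscok}, the regular morphism $a\colon B\to\sO_X$ then satisfies $a\otimes\sV=0$, and since $\sV$ is of finite type this exactly says $\sV$ is a torsion object, completing the proof. The delicate bookkeeping is making the "$\sJ\ne0$" step rigorous and checking that $\sV$ being locally a cokernel of free modules of finite rank (needed for the internal-hom machinery) is legitimate for a finite-type object of $\MOD_{G,\varepsilon}(X)$ — which follows since such $\sV$ is a quotient of $V\otimes_k\sO_X$ with $V$ a representation, but one must verify the quotient is locally finitely presented, using that $X$ is quasi-affine of... (not necessarily finite type), so one may need to first reduce to the finite-type case via Lemma~\ref{l:qafflim} and the colimit formalism for equivariant modules developed just before this lemma.
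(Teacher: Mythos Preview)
Your approach is essentially the paper's: reduce to finite type, use the annihilator $\sJ \subset \sO_X$, show $\sJ \ne 0$ via exactness of $j^*$ and the vanishing of $j^*\underline{\End}_{\sO_X}(\sV)$, then pick a nonzero (hence regular, by integrality) morphism $u:\sU \to \sO_X$ in $\Mod_{G,\varepsilon}(X)$ factoring through $\sJ$, so that $u \otimes \sV = 0$. Two points where the paper is cleaner:

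\textbf{Forward direction.} Your argument is fine but roundabout. The paper simply invokes Lemma~\ref{l:adjtorspres} (with $T = j^*$, regular by Lemma~\ref{l:pointfaith}): $j^*\sV$ is then a torsion object of $\MOD(\kappa(x))$, hence $0$.

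\textbf{Finite presentation.} You correctly flag that the internal-hom machinery, and in particular the isomorphism \eqref{e:inthomupper} for $h = j$, needs $\sV$ to be (equivariantly) finitely \emph{presented}, not merely of finite type; over a non-Noetherian $X$ these differ. Your proposed fix --- reduce $X$ to finite type via Lemma~\ref{l:qafflim} and the colimit formalism --- can be made to work, but is a detour. The paper instead keeps $X$ arbitrary and reduces $\sV$: after writing $\sV$ as a cokernel $V'{}\!_X \to V_X \to \sV \to 0$ with $V$ finite-dimensional and $V' = \colim W'$ a filtered union of finite-dimensional subrepresentations, right exactness of $j^*$ gives $0 = j^*\sV = \Coker(j^*V'{}\!_X \to j^*V_X)$, and since $j^*V_X$ is finite-dimensional the image of $j^*V'{}\!_X$ equals that of $j^*W'{}\!_X$ for some $W'$. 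Replacing $\sV$ by $\Coker(W'{}\!_X \to V_X)$ (which surjects onto $\sV$ and still has zero fibre at $x$) yields a finitely presented object, so \eqref{e:inthomupper} applies and $j^*\underline{\End}_{\sO_X}(\sV) = \underline{\End}_{\kappa(x)}(0) = 0$, whence $j^*\sJ \iso \kappa(x) \ne 0$.
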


\begin{proof}
Write $j:\Spec(\kappa(x)) \to X$ for the morphism defined by $x$,
and let $\sV$ be an object of $\MOD_{G,\varepsilon}(X)$.
If $\sV$ is a torsion object, then $j^*\sV$ is a torsion object of $\MOD(\kappa(x))$
by Lemmas~\ref{l:adjtorspres} and \ref{l:pointfaith}, so that $j^*\sV = 0$.

Conversely suppose that $j^*\sV = 0$.
By Lemma~\ref{l:pointexact}, $j^*$ is exact.
Hence $j^*\sW = 0$ for every subobject $\sW$ of $\sV$.
By  Lemma~\ref{l:quotsub}\ref{i:quotsubmod}, $\sV$ is the filtered colimit
of subobjects which are quotients of objects $V_X$ with $V$ in $\Mod_{G,\varepsilon}(k)$.
To prove that $\sV$ is a torsion object, we may thus suppose that $\sV$ is such a quotient.
Then by Lemma~\ref{l:quotsub}\ref{i:quotsubmod}, $\sV$ is a cokernel \eqref{e:Vcoker}
with $V$ in $\Mod_{G,\varepsilon}(k)$ and $V'$ in $\MOD_{G,\varepsilon}(k)$.
If we write $V'$ as the filtered colimit of its subobjects $W'$ in $\Mod_{G,\varepsilon}(k)$,
then by cocontinuity $j^*$ sends the cokernel of some $W'{}\!_X \to V_X$ to $0$.
Thus we may suppose that $\sV$ is a cokernel \eqref{e:Vcoker} with both $V$ and $V'$
in $\Mod_{G,\varepsilon}(k)$.
Then $\underline{\End}_{\sO_X}(\sV)$ and the annihilator $\sJ$ of $\sV$ exist in 
$\MOD_{G,\varepsilon}(X)$.
If $h = j$ then \eqref{e:inthomupper} is an isomorphism because it 
is an isomorphism with $V_X$ or $V'{}\!_X$ for $\sV$ and $j^*$ is exact.
Thus $j^*\underline{\End}_{\sO_X}(\sV) = 0$.
By the exactness of $j^*$, it follows that $j^*\sJ \ne 0$ and hence $\sJ \ne 0$.
There is then by Lemma~\ref{l:quotsub}\ref{i:quotsubmod} a non-zero
$u:\sU \to \sO_X$ in $\Mod_{G,\varepsilon}(X)$ which factors through $\sJ$,
so that \eqref{e:utensVzero} holds.
Thus $\sV$ is a torsion object.
\end{proof}

Let $f:X \to X'$ be a super schematically dominant morphism of quasi-affine
super $(G,\varepsilon)$\nd schemes with $\Mod_{G,\varepsilon}(X)$ integral.
Then by Lemma~\ref{l:domfaith}, $f^*$ from $\Mod_{G,\varepsilon}(X')$ to $\Mod_{G,\varepsilon}(X)$ is faithful 
and hence regular.
By Lemmas~\ref{l:pointfaith}, \ref{l:pointexact} and \ref{l:pointtors}, 
$f^*$ from $\MOD_{G,\varepsilon}(X')$ to $\MOD_{G,\varepsilon}(X)$ sends 
isomorphisms up to torsion to isomorphisms up to torsion.
There is thus a unique tensor functor
\begin{equation*}
\overline{\MOD_{G,\varepsilon}(X')} \to \overline{\MOD_{G,\varepsilon}(X)}
\end{equation*}
compatible with $f^*$ and the projections.
If $f$ is \emph{affine}, it can also be seen as follows that $f^*$ sends isomorphisms
up to torsion to isomorphisms up to torsion.
We may identify $\MOD_{G,\varepsilon}(X)$ with the tensor category of $f_*\sO_X$\nd modules
in $\MOD_{G,\varepsilon}(X')$, and $f^*$ with $f_*\sO_X \otimes_{\sO_{X'}} -$.
By Lemma~\ref{l:regtorsfree}, $f_*\sO_X$ is torsion free in
$\MOD_{G,\varepsilon}(X')$, so that by Lemma~\ref{l:Rextpres} an $f_*\sO_X$\nd module 
is a torsion $f_*\sO_X$\nd module if it is a torsion object of $\MOD_{G,\varepsilon}(X')$.
The required result now follows from Lemma~\ref{l:tensisotors}.

Let $X$ be a quasi-affine super $(G,\varepsilon)$\nd scheme with $\Mod_{G,\varepsilon}(X)$ integral,
and $k'$ be an extension of $k$.
By the isomorphism \eqref{e:funfieldiso} there is associated 
to any $k'$\nd point of $X$ which lies in every 
non-empty open super $G$\nd subscheme of $X$ a $k$\nd homomorphism
\begin{equation*}
\kappa(\Mod_{G,\varepsilon}(X)) \to k'.
\end{equation*}
This is the same $k$\nd homomorphism as that associated to the $k$\nd tensor functor 
from $\Mod_{G,\varepsilon}(X)$ to $\Mod(k)$ which is faithful by Lemma~\ref{l:pointfaith}.
We write
\begin{equation*}
X(k')_\rho \subset X(k')
\end{equation*}
for the set of $k'$\nd points lying in every 
non-empty open super $G$\nd subscheme of $X$ with associated $k$\nd homomorphism $\rho$.
The action of $G(k')$ on $X(k')$ sends $X(k')_\rho$ to itself.

The isomorphisms of the form \eqref{e:funfieldiso}, as well as passage to the associated $k$\nd homomorphism,
are compatible with super schematically dominant morphisms $X' \to X$ of quasi-affine super 
$(G,\varepsilon)$\nd schemes
with $\Mod_{G,\varepsilon}(X')$ integral.

\begin{lem}\label{l:kbarpoints}
Let $X$ be a quasi-affine super $(G,\varepsilon)$\nd scheme with $\Mod_{G,\varepsilon}(X)$ integral,
$k'$ be an algebraically closed extension of $k$,
and $\rho$ be a $k$\nd homomorphism from $\kappa(\Mod_{G,\varepsilon}(X))$ to $k'$.
Then $X(k')_\rho$ is non-empty,
and $G(k')$ acts transitively on it.
\end{lem}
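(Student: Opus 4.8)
The plan is to reduce to the case that $X$ is of finite type, where one exploits that $k'$ is algebraically closed of characteristic $0$, and then to pass to the general case by a limit argument.

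\emph{Reduction to finite type.} By Lemma~\ref{l:qafflim} write $X = \lim_\lambda X_\lambda$ with each $X_\lambda$ a quasi-affine super $(G,\varepsilon)$-scheme of finite type, the transition morphisms affine and the projections super schematically dominant. By Lemma~\ref{l:domfaith} each $\Mod_{G,\varepsilon}(X_\lambda)$ is integral; the transition morphisms are also super schematically dominant, and so (Lemma~\ref{l:domfaith} together with \eqref{e:funfieldiso}) induce field extensions $\kappa(\Mod_{G,\varepsilon}(X_\mu)) \hookrightarrow \kappa(\Mod_{G,\varepsilon}(X_\lambda))$, compatible with the inclusions into $k_0 := \kappa(\Mod_{G,\varepsilon}(X))$. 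Let $\rho_\lambda$ be the restriction of $\rho$ to $\kappa(\Mod_{G,\varepsilon}(X_\lambda))$. By Lemma~\ref{l:limopensub} a $k'$-point of $X$ lies in every non-empty open super $G$-subscheme of $X$ precisely when its image in each $X_\lambda$ lies in the generic orbit of $X_\lambda$ (Lemma~\ref{l:homog}), and then its associated homomorphism is $\rho$ precisely when each of those images has associated homomorphism $\rho_\lambda$. Hence $X(k')_\rho = \lim_\lambda X_\lambda(k')_{\rho_\lambda}$, compatibly with the $G(k')$-actions.

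\emph{The finite-type case.} Suppose $X$ is of finite type. By Lemma~\ref{l:homog} the generic orbit $X_0$ of $X$ is a non-empty quasi-affine homogeneous super $(G_{k_0},\varepsilon)$-scheme of finite type over $k_0$, with $k_0 \hookrightarrow H^0(X_0,\sO_{X_0})$ via \eqref{e:funfieldiso}. Base changing along $\rho$, the scheme $X_0 \otimes_{k_0} k'$ is a non-empty homogeneous super $(G_{k'},\varepsilon)$-scheme of finite type over the algebraically closed field $k'$; being reduced and non-empty it has a $k'$-point, and such a point is exactly an element of $X(k')_\rho$. For transitivity, given $\xi,\eta$ in $X(k')_\rho = (X_0 \otimes_{k_0} k')(k')$, the orbit morphism $g \mapsto g\xi$ from $G_{k'}$ is faithfully flat by homogeneity, so its fibre over $\eta$ is a non-empty fppf torsor under the affine $k'$-group scheme $\mathrm{Stab}_{G_{k'}}(\xi)$; granting that such a torsor has a $k'$-point (discussed below) one obtains $g \in G(k')$ with $g\xi = \eta$.

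\emph{Passing to the limit.} By the finite-type case each $X_\lambda(k')_{\rho_\lambda}$ is non-empty and a single $G(k')$-orbit, with $G(k')$-equivariant — hence surjective — transition maps, and it remains to see that $\lim_\lambda X_\lambda(k')_{\rho_\lambda}$ is non-empty and a single $G(k')$-orbit. This is the crux, and it is not formal: over an uncountable indexing poset surjectivity of the transition maps alone does not force the inverse limit to be non-empty. The plan is to realise the obstruction scheme-theoretically. Fixing $\lambda_0$ and a point $\xi_{\lambda_0}$ at level $\lambda_0$, the fibres over $\xi_{\lambda_0}$ of the faithfully flat $G_{k'}$-equivariant transition morphisms between the $\rho$-twisted generic orbits are non-empty affine $k'$-schemes with affine transition morphisms, so their inverse limit $L$ is an affine $k'$-scheme whose coordinate ring is a filtered colimit of non-zero rings, hence non-zero; thus $L$ is non-empty, and it is a torsor under an affine $k'$-group scheme. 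Likewise, for $\xi,\eta \in X(k')_\rho$ the transporter $\{g \in G_{k'} : g\xi = \eta\}$ is the inverse limit of the non-empty affine transporters at the finite levels, hence a non-empty affine $k'$-scheme that is a torsor under $\mathrm{Stab}_{G_{k'}}(\xi)$. The single remaining input — used already in the finite-type case, and the main obstacle — is that a non-empty fppf torsor under an affine group scheme over an algebraically closed field of characteristic $0$ has a rational point (immediate for group schemes of finite type, since over $k'$ such a torsor is smooth; the general, pro-algebraic, case is the delicate point); granting it, $L$ yields a point of $X(k')_\rho$ and the transporter yields a $g\in G(k')$ with $g\xi = \eta$. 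Everything else is routine from Lemmas~\ref{l:qafflim}, \ref{l:limopensub}, \ref{l:homog} and \ref{l:domfaith} and the isomorphism \eqref{e:funfieldiso}.
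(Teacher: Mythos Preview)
Your overall architecture matches the paper's: write $X=\lim_\lambda X_\lambda$ as in Lemma~\ref{l:qafflim}, identify $X(k')_\rho=\lim_\lambda X_\lambda(k')_{\rho_\lambda}$ via Lemma~\ref{l:limopensub} and \eqref{e:funfieldiso}, and use Lemma~\ref{l:homog} to describe each $X_\lambda(k')_{\rho_\lambda}$ as the set of $k'$\nd points of the $\rho_\lambda$\nd twist of the generic orbit. The finite\nd type existence step is fine.

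The genuine gap is exactly where you flag it. You reduce everything --- including transitivity in the finite\nd type case --- to the assertion that a non\nd empty fppf torsor under an affine (possibly non\nd finite\nd type) $k'$\nd group scheme has a $k'$\nd point, and then leave that assertion unproved. Two remarks. First, your inverse\nd limit scheme $L$ is not a torsor: each fibre of $X'_\lambda\to X'_{\lambda_0}$ over $\xi_{\lambda_0}$ is only a homogeneous space under $\mathrm{Stab}_{G_{k'}}(\xi_{\lambda_0})$, with varying isotropy, so ``torsor'' is the wrong word and the non\nd emptiness of $L$ as a scheme does not by itself yield a $k'$\nd point. Second, even for the transporter (which is genuinely a torsor), the group $\mathrm{Stab}_{G_{k'}}(\xi)$ is not of finite type, so you are squarely in the pro\nd algebraic regime you acknowledge as delicate.

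The paper closes this gap by an extra reduction you omit: it refines the index set so that for each $\lambda$ there is a finite\nd type quotient $G_\lambda$ of $G$ through which $G$ acts on $X_\lambda$, with $G=\lim_\lambda G_\lambda$. Then each $X'_\lambda(k')_{k'}$ is a transitive set under the finite\nd type group $U_\lambda=(G_\lambda{}_{k'})_{\mathrm{red}}$, with point stabilisers the $k'$\nd points of closed $k'$\nd subgroups of $U_\lambda$. The non\nd emptiness and transitivity of the inverse limit then follow from \cite[1.1.1]{O10}, a lemma about inverse systems of transitive sets under algebraic groups. Your ``single remaining input'' is essentially a repackaging of this lemma; to make your argument complete you would need to carry out the same refinement to finite\nd type $G_\lambda$ and then prove (or cite) precisely that result.
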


\begin{proof}
Write $X$ as a the limit of a filtered inverse system $(X_\lambda)_{\lambda \in \Lambda}$
of quasi-affine $(G,\varepsilon)$\nd schemes of finite type as in Lemma~\ref{l:qafflim}.
For each $\lambda$ there is a $k$\nd quotient of finite type of $G$ through which it acts on $X_\lambda$.
If $\Lambda'$ is the set of pairs $(\lambda,G')$ with $\lambda$ in $\Lambda$ and 
$G'$ a $k$\nd quotient of $G$ of finite type
through which $G$ acts on $X_\lambda$, where $(\lambda,G') \le (\lambda',G'')$ when $\lambda \le \lambda'$
and  $G \to G'$ factors through $G \to G''$, then $(\lambda,G') \mapsto \lambda$ from 
$\Lambda'$ to $\Lambda$ is cofinal, as is $(\lambda,G') \mapsto G'$ from $\Lambda'$ to 
$k$\nd quotients of $G$ of finite type with the reverse of its natural order.
Replacing $\Lambda$ by $\Lambda'$, we may assume that there exists an inverse system 
$(G_\lambda)_{\lambda \in \Lambda}$
of $k$\nd quotients of $G$ of finite type with limit $G$ such that the action of $G$ on
$X_\lambda$ factors through $G_\lambda$.

Write $k_0$ and $k_\lambda$ for $\kappa(\Mod_{G,\varepsilon}(X))$ and 
$\kappa(\Mod_{G,\varepsilon}(X_\lambda))$,
and $\rho_\lambda:k_\lambda \to k'$ for the composite of $\rho$ with $k_\lambda \to k_0$.
By Lemma~\ref{l:limopensub}, a $k'$\nd point $(x_\lambda)$ of $X$ lies in every non-empty
open super $(G,\varepsilon)$\nd subscheme of $X$ if and only if
$x_\lambda$ lies in every non-empty open super $(G,\varepsilon)$\nd subscheme of $X_\lambda$ for
each $\lambda$.
We thus have an equality
\begin{equation*}
X(k')_\rho = \lim_\lambda X_\lambda(k')_{\rho_\lambda}
\end{equation*}
of $G(k')$\nd sets,
because the isomorphisms of the form \eqref{e:funfieldiso} are compatible with the projections  
and by isomorphisms of the form \eqref{e:colimHomGVW}, $k_0$ is the filtered 
colimit of the $k_\lambda$.
By Lemma~\ref{l:homog}, the intersection $X_\lambda{}_0$ of the non-empty open super 
$(G,\varepsilon)$\nd subschemes of $X_\lambda$, regarded as super $k_\lambda$\nd scheme
by means of the isomorphism \eqref{e:funfieldiso} with $X_\lambda$ for $X$, is a non-empty homogeneous
super $(G_{k_\lambda},\varepsilon)$\nd scheme.
If $X'{}\!_\lambda$ is obtained from $X_\lambda{}_0$ by extension of scalars along
$\rho_\lambda$, we then have an equality 
\begin{equation*}
X_\lambda(k')_{\rho_\lambda} = X'{}\!_\lambda(k')_{k'}
\end{equation*}
of $G(k')$\nd sets, compatible with the transition maps, where 
$X'{}\!_\lambda(k')_{k'}$ denotes the set of $k'$ points over $k$.
It is thus to be shown that $\lim_\lambda X'{}\!_\lambda(k')_{k'}$ is non-empty, and that
$G(k') = G_{k'}(k')_{k'}$ acts transitively on it.

Since the action of $G$ on $X_\lambda$ factors through $G_\lambda$, the action of $G_{k'}$ on
$X'{}\!_\lambda$ factors through $G_{\lambda}{}_{k'}$, and $X'{}\!_\lambda$ is a non-empty
homogeneous super $G_{\lambda}{}_{k'}$\nd scheme.
If
\begin{equation*}
U_\lambda = (G_{\lambda}{}_{k'})_{\mathrm{red}},
\end{equation*}
then $U_\lambda(k')_{k'} = G_{\lambda}{}_{k'}(k')_{k'}$
acts transitively on the non-empty set $X'{}\!_\lambda(k')_{k'}$.
Further if $H$ is the stabiliser of the $k'$\nd point $z$ of $X'{}\!_\lambda$ over $k'$,
then the stabiliser of the element $z$ of $X'{}\!_\lambda(k')_{k'}$ is the group of
$k'$\nd points over $k'$ of the $k'$\nd subgroup $H_{\mathrm{red}}$ of $U_\lambda$.
The required result now follows from \cite[1.1.1]{O10} with $k'$ for $k$ and 
$X'{}\!_\lambda(k')_{k'}$ for $X_\lambda$.
\end{proof}

Let $Z$ be a super $k$\nd scheme.
By a \emph{super groupoid over $Z$} we mean a super $k$\nd scheme $K$ together
with a source morphism $d_1$ and a target morphism $d_0$ from $K$ to $Z$, an identity morphism $s_0$ from $Z$ to $K$,
and a composition morphism $\circ$ from $K \times_{{}^{d_1}Z^{d_0}} K$ to $K$, such that $\circ$ is associative with
$s_0$ its left and right identity, and has inverses (necessarily unique).
The morphism
\begin{equation*}
(d_0,d_1):K \to Z \times_k Z
\end{equation*}
defines a structure of super scheme over $Z \times_k Z$ on $K$.
The super groupoid $K$ over $Z$ will be called \emph{affine} if it is affine 
as super scheme over $Z \times_k Z$,
and \emph{transitive} if it is faithfully flat as a super scheme over $Z \times_k Z$. 

By a \emph{super groupoid with involution over $Z$} we mean a pair $(K,\varepsilon)$ with
$K$ a super groupoid over $Z$ and $\varepsilon:Z \to K$ a lifting of $(\iota_Z,1_Z):Z \to Z \times_k Z$ to $K$
with $\varepsilon$ and $\varepsilon \circ \iota_Z$ inverse to one another, 
such that conjugation by $\varepsilon$ acts as $\iota_K$ on $K$.

An \emph{action} of a super groupoid $K$ over $Z$ on a quasi-coherent $\sO_Z$\nd module $\sV$
is an isomorphism of $\sO_K$\nd modules
\begin{equation*}
\alpha:d_1{}\!^*\sV \iso d_0{}\!^*\sV
\end{equation*}
such that if $\alpha_v:\sV_{z_0} \iso \sV_{z_1}$ is the fibre of $\alpha$ at the point $v$
of $K$ above $(z_1,z_0)$, then 
\begin{equation*}
\alpha_{w \circ v} = \alpha_w \circ \alpha_v 
\end{equation*}
for $w$ above $(z_2,z_1)$.

Let $(K,\varepsilon)$ be a super groupoid with involution over $Z$.
We define a \emph{$(K,\varepsilon)$\nd module} as a quasi-coherent $\sO_Z$\nd module $\sV$ together with 
an action $\alpha$ of $K$ on $\sV$ such that 
\begin{equation*}
\alpha_\varepsilon = \iota_{\sV}:\sV \iso \iota_Z{}\!^*\sV.
\end{equation*}
A morphism $\sV \to \sW$ of $(K,\varepsilon)$\nd modules is a morphism $f:\sV \to \sW$ of the 
undelying $\sO_Z$\nd modules for which the square formed by $d_1{}\!^*(f)$, $d_0{}\!^*(f)$, and the actions 
commutes. 
With tensor product that of the underlying $\sO_Z$\nd modules, $(K,\varepsilon)$\nd modules form a tensor 
category $\MOD_{K,\varepsilon}(Z)$.
A $(K,\varepsilon)$\nd module with underlying $\sO_Z$\nd module a vector bundle over $Z$
will also be called a \emph{representation of $(K,\varepsilon)$}.
We write $\Mod_{K,\varepsilon}(Z)$ for the full rigid tensor subcategory of $\MOD_{K,\varepsilon}(Z)$ 
consisting of the representations of $(K,\varepsilon)$.

Suppose that $Z$ has a structure of super $k_1$\nd scheme for an extension $k_1$ of $k$.
If $K$ is a groupoid over $Z$ such that $(d_0,d_1)$ factors through the super subscheme $Z \times_{k_1} Z$ of
$Z \times_k Z$, then $K$ may be regarded as a groupoid in the category of super $k_1$\nd schemes.
We then say that $K$ is a groupoid over $Z/k_1$.
In that case any lifting $\varepsilon$ as above is at the same time a lifting of $(\iota_Z,1_Z):Z \to Z \times_{k_1} Z$,
and the category $\MOD_{K,\varepsilon}(Z)$ is the same whether $K$ is
regarded as a groupoid over $Z$ or over $Z/k_1$.
If $Z = \Spec(k')$ for an extension $k'$ of $k$,
then a groupoid over $Z$ will also be called a groupoid over $k'$, and a groupoid over $Z/k_1$ 
a groupoid over $k'/k_1$.

Let $h:Z' \to Z$ be a morphism of super $k$\nd schemes.
A morphism over $h$ from a super groupoid $K'$ over $Z'$ to a super groupoid $K$ over $Z$ 
is a morphism $l:K' \to K$ such that
$h$ and $l$ are compatible with the source, target, identity and composition for $K$ and $K'$.
When $Z' = Z$ and $h = 1_Z$ we speak of a morphism over $Z$.
The \emph{pullback of $K$ along $h$} is the super groupoid 
\begin{equation*}
K \times_{Z \times_k Z} Z' \times_k Z'
\end{equation*}
over $Z'$ where the identity sends $z'$ to $(s_0(h(z')),z',z')$ and the composition sends 
$((w,z'{}\!_2,z'{}\!_1),(v,z'{}\!_1,z'{}\!_0))$ to $(w \circ v,z'{}\!_2,z'{}\!_0)$.
Together with the projection to $K$, it is universal among super groupoids over $Z'$ 
equipped with a morphism to $K$ over $h$.
The pullback along $h$ of a super groupoid with involution $(K,\varepsilon)$ over $Z$
is defined as $(K',\varepsilon')$ where $K'$ is the pullback of $K$ along $h$ and
$\varepsilon'$ sends $z'$ to $(\varepsilon(h(z')),\iota_{Z'}(z'),z')$.

Let $(K,\varepsilon)$ be a super groupoid with involution over $Z$ and   
$(K',\varepsilon')$ be a super groupoid with involution over $Z'$.
Given a morphism $l:K' \to K$ be of super groupoids over $h:Z' \to Z$ with 
$l \circ \varepsilon' = \varepsilon \circ h$,
we define as follows a tensor functor
\begin{equation*}
l^*:\MOD_{K,\varepsilon}(Z) \to \MOD_{K',\varepsilon'}(Z').
\end{equation*}
On the underlying $\sO_Z$\nd modules, $l^*$ is $h^*$.
If $\alpha$ is the action of $K$ on $\sV$, then modulo the pullback isomorphisms, 
the action of $K'$ on $h^*\sV$ is $l^*(\alpha)$.
When $K$ is a transitive affine groupoid over $Z$ and $(K',\varepsilon')$
is the pullback of $(K,\varepsilon)$ along $h$, it follows
from faithfully flat descent for quasi-coherent modules over a super $k$\nd scheme that
for $Z'$ non-empty, $l^*$ is an equivalence, and that it induces an equivalence from 
$\Mod_{K,\varepsilon}(Z)$ to $\Mod_{K',\varepsilon'}(Z')$.

Let $X$ be a super $(G,\varepsilon)$\nd scheme.
Then we have a super groupoid with involution
\begin{equation*}
(G \times_k X,\varepsilon \times_k X)
\end{equation*}
over $X$, where $d_0$ is the action, 
$d_1$ is the projection, the identity sends $x$ to $(1,x)$, and the composition
sends $((g',gx),(g,x))$ to $(g'g,x)$.
Further
\begin{equation*}
\MOD_{G,\varepsilon}(X) = \MOD_{G \times_k X,\varepsilon \times_k X}(X)
\end{equation*}
with a similar identification for $\Mod_{G,\varepsilon}(X)$ and $\Mod_{G \times_k X,\varepsilon \times_k X}(X)$.

Let $X$ be a quasi-affine super $(G,\varepsilon)$\nd scheme, $Z$ be a super $k$\nd scheme 
and $j:Z \to X$ be a $k$\nd morphism.
Write $X$ as the limit $\lim_\lambda X_\lambda$ with $(X_\lambda)_{\lambda \in \Lambda}$ as in 
Lemma~\ref{l:qafflim}. 
Denote by $(K,\varepsilon_0)$ the pullback of $(G \times_k X,G \times_k \varepsilon)$ along $j$ and by 
$(K_\lambda,\varepsilon_\lambda)$ the pullback of $(G \times_k X_\lambda,\varepsilon \times_k X_\lambda)$
along $j_\lambda = \pr_\lambda \circ j$.
We have a commutative square
\begin{equation}\label{e:GXKlambda}
\begin{gathered}
\xymatrix{
G \times_k X \ar_{G \times_k \pr_\lambda}[d] & K \ar_-{l}[l] \ar^{q_\lambda}[d] \\
G \times_k X_\lambda & K_\lambda \ar_-{l_\lambda}[l] 
}
\end{gathered}
\end{equation}
of groupoids, compatible with $\varepsilon \times_k X$, $\varepsilon \times_k X_\lambda$,
$\varepsilon_0$ and $\varepsilon_\lambda$, where $l$ and $l_\lambda$ are the projections and 
$q_\lambda$ is the fibre product of  
$G \times_k \pr_\lambda$ and $Z \times_k Z$ over $j_\lambda \times_k j_\lambda$.
Then
\begin{equation}\label{e:KlimKlambda}
K = \lim_\lambda K_\lambda.
\end{equation}
in the category of groupoids over $Z$,
with projections $q_\lambda$.

\begin{thm}\label{t:transaff}
Let $X$ be a quasi-affine super $(G,\varepsilon)$\nd scheme with $\Mod_{G,\varepsilon}(X)$ 
integral, $Z$ be a non-empty super $k$\nd scheme and $j:Z \to X$ be a morphism of
super $k$\nd schemes which factors through every non-empty open super $G$\nd subscheme of $X$.
Then if $Z$ is given the structure of $\kappa(\Mod_{G,\varepsilon}(X))$\nd scheme defined by
the isomorphism \eqref{e:funfieldiso}, the pullback along $j$ of the super groupoid 
$G \times_k X$ over $X/k$ is a
transitive affine super groupoid over $Z/\kappa(\Mod_{G,\varepsilon}(X))$.
\end{thm}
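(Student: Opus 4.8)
The plan is to establish three properties of the super groupoid $K = (G\times_k X)\times_{X\times_k X}(Z\times_k Z)$, the fibre product being along $(a,\pr)\colon G\times_k X\to X\times_k X$ (with $a$ the action and $\pr$ the projection) and $j\times_k j$: first, that $(d_0,d_1)\colon K\to Z\times_k Z$ factors through the closed super subscheme $Z\times_{k_0}Z$, where $k_0=\kappa(\Mod_{G,\varepsilon}(X))$; second, that the resulting morphism $K\to Z\times_{k_0}Z$ is affine, hence quasi-compact; and third, that it is faithfully flat. Together these say that $K$ is a transitive affine super groupoid over $Z/k_0$, which is the assertion.

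For the first point I would take a $T$-point $(g,z_1,z_0)$ of $K$, so that $g\cdot j(z_0)=j(z_1)$, and show $z_0$ and $z_1$ induce the same $k_0$-structure on $T$. By the isomorphism \eqref{e:funfieldiso} it is enough to check $z_0^*(j^*f)=z_1^*(j^*f)$ for $f$ in $H^0_G(Y,\sO_Y)$, $Y$ a non-empty quasi-compact open super $G$-subscheme of $X$; since $j$ factors through $Y$ and $Y$ is $G$-stable, both $j(z_0)$ and $j(z_1)=a(g,j(z_0))$ are $T$-points of $Y$, and the equality is immediate from the $G$-invariance $a^*f=\pr^*f$ of $f$ on $G\times_k Y$. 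That $Z\times_{k_0}Z\to Z\times_k Z$ is a closed immersion follows by base change from the diagonal of the separated morphism $\Spec(k_0)\to\Spec(k)$. For affineness, $X$ quasi-affine embeds as an open super $(G,\varepsilon)$-subscheme of $\overline X=\Spec(H^0(X,\sO_X))$; the morphism $(a,\pr)\colon G\times_k\overline X\to\overline X\times_k\overline X$ is a morphism of affine $k$-schemes, hence affine, and $G\times_k X$ is its restriction over the open $X\times_k X$, so $(a,\pr)\colon G\times_k X\to X\times_k X$ is affine; base change along $j\times_k j$, followed by the closed immersion above, gives that $K\to Z\times_{k_0}Z$ is affine and quasi-compact.

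The main obstacle is faithful flatness. Surjectivity I would read off from Lemma~\ref{l:kbarpoints}: a point of $Z\times_{k_0}Z$ with residue field of algebraic closure $k'$ gives $k'$-points $\overline z_0,\overline z_1$ of $Z$ inducing a common $k$-homomorphism $\rho\colon k_0\to k'$, hence points $j(\overline z_0),j(\overline z_1)$ of $X(k')_\rho$ (they lie in every non-empty open super $G$-subscheme because $j$ does), and the lemma supplies $g\in G(k')$ carrying one to the other, giving a $k'$-point of $K$ over the chosen point. For flatness I would reduce to $X$ of finite type: write $X=\lim_\lambda X_\lambda$ as in Lemma~\ref{l:qafflim}, so each $\Mod_{G,\varepsilon}(X_\lambda)$ is integral by Lemma~\ref{l:domfaith}, $j_\lambda=\pr_\lambda\circ j$ factors through every non-empty open super $G$-subscheme of $X_\lambda$ (its preimage in $X$ being non-empty since $\pr_\lambda$ is dominant, and through it $j$ factors), and by \eqref{e:KlimKlambda} the pullback $K_\lambda$ of $G\times_k X_\lambda$ along $j_\lambda$ satisfies $K=\lim_\lambda K_\lambda$, with $k_0=\colim_\lambda k_\lambda$ where $k_\lambda=\kappa(\Mod_{G,\varepsilon}(X_\lambda))$. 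In the finite-type case $j_\lambda$ factors through the intersection $X_{0\lambda}$ of the non-empty open super $G$-subschemes of $X_\lambda$, which by Lemma~\ref{l:homog} is a non-empty quasi-affine homogeneous super $(G_{k_\lambda},\varepsilon)$-scheme over $k_\lambda$; the pullback of $G\times_k X_\lambda$ along $X_{0\lambda}\to X_\lambda$ is the action groupoid $G\times_k X_{0\lambda}$, which (again by $G$-invariance of functions, together with $H^0_G(X_{0\lambda},\sO_{X_{0\lambda}})=k_\lambda$ from homogeneity) is a transitive affine super groupoid over $X_{0\lambda}/k_\lambda$, so its pullback $K_\lambda$ along the $k_\lambda$-morphism $Z\to X_{0\lambda}$ is transitive and affine over $Z/k_\lambda$; in particular $K_\lambda\to Z\times_{k_\lambda}Z$ is faithfully flat. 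Finally, since $Z\times_{k_0}Z=\lim_\lambda(Z\times_{k_\lambda}Z)$ with closed-immersion transition morphisms, writing $K_\lambda=\Spec_{Z\times_{k_\lambda}Z}(\sA_\lambda)$ with $\sA_\lambda$ faithfully flat over $\sO_{Z\times_{k_\lambda}Z}$ and pulling back to $Z\times_{k_0}Z$ exhibits $K$ (using \eqref{e:KlimKlambda} to identify the limit) as $\Spec_{Z\times_{k_0}Z}$ of a filtered colimit of $\sO_{Z\times_{k_0}Z}$-algebras each flat, since pullbacks of flat modules are flat; hence $K\to Z\times_{k_0}Z$ is flat, and with surjectivity, faithfully flat. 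The delicate point is exactly this last step: the inverse system $(K_\lambda)$ over $(Z\times_{k_\lambda}Z)$ is not cartesian, so flatness cannot be obtained by a single base change and must be extracted from the colimit-of-flat-algebras presentation of $K$.
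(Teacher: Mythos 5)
Your proof is correct and takes essentially the same approach as the paper's: reduce to the finite-type case via Lemma~\ref{l:qafflim} and \eqref{e:KlimKlambda}, and there use Lemma~\ref{l:homog} to identify the pullback of $G\times_k X_\lambda$ along $X_{0\lambda}\to X_\lambda$ with the transitive affine action groupoid $G_{k_\lambda}\times_{k_\lambda}X_{0\lambda}$, which pulls back along $Z\to X_{0\lambda}$ to a transitive affine groupoid over $Z/k_\lambda$. You are more explicit than the (rather terse) paper at several points — in particular about the non-cartesianness of the system $(K_\lambda)$ and the resulting need for a colimit-of-flat-algebras argument to obtain flatness of $K\to Z\times_{k_0}Z$ in the limit — and your appeal to Lemma~\ref{l:kbarpoints} for surjectivity is a slight variant, since that same colimit picture (each $i_\lambda^*\sA_\lambda$ being faithfully flat, and a filtered colimit of faithfully flat algebras remaining faithfully flat) already yields surjectivity without it.
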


\begin{proof}
Write $k_0$ for $\kappa(\Mod_{G,\varepsilon}(X))$.
Suppose first that $X$ is of finite type.
Then by Lemma~\ref{l:homog} the intersection $X_0$ of the non-empty 
open super $G$\nd subschemes of $X$ with its structure $k_0$\nd scheme 
defined by the isomorphism \eqref{e:funfieldiso} is a homogeneous $G_{k_0}$\nd scheme.
Since the projection  $e:X_0 \to X$ is a monomorphism of super $k$\nd schemes 
underlying a morphism of super $G$\nd schemes, the pullback of $G \times_k X$ along $e$ is
\begin{equation*}
G \times_k X_0 = G_{k_0} \times_{k_0} X_0,
\end{equation*}
and hence is transitive affine over $X_0/k_0$.
The result follows, because $Z$ factors through $X_0$.

To prove the general case, write $X$ as the limit of a filtered inverse system
$(X_\lambda)_{\lambda \in \Lambda}$ as above. 
Let $K_\lambda$ be as above, and write $k_\lambda$ for 
$\kappa(\Mod_{G,\varepsilon}(X_\lambda))$. 
By the case where $X$ is of finite type, each $K_\lambda$ is transitive affine
over $X_\lambda/k_\lambda$.
We have
\begin{equation*}
k_0 = \colim_\lambda k_\lambda
\end{equation*}
by isomorphisms of the form \eqref{e:colimHomGVW}.
Thus $Z \times_{k_0} Z = \lim_\lambda Z \times_{k_\lambda} Z$.
The result now follows from \eqref{e:KlimKlambda}.
\end{proof}

Let $X$, $Z$ and $j$ be as in Theorem~\ref{t:transaff}.
Denote by $(K,\varepsilon_0)$ the pullback of $(G \times_k X,G \times_k \varepsilon)$ along $j$,
and by $l:K \to G \times_k X$ the projection.
Since pullback onto a point of $Z$
defines an equivalence on $\MOD_{K,\varepsilon_0}(Z)$,
it follows from Lemmas~\ref{l:pointexact} and \ref{l:pointtors} that the pullback tensor functor
\begin{equation*}
l^*:\MOD_{G,\varepsilon}(X) \to \MOD_{K,\varepsilon_0}(Z)
\end{equation*}
is exact, and that $\sV$ is a torsion object in $\MOD_{G,\varepsilon}(X)$ if and only if
$l^*\sV = 0$.
Thus $l^*$ factors uniquely through the projection onto $\overline{\MOD_{G,\varepsilon}(X)}$ 
as a $k$\nd tensor functor
\begin{equation}\label{e:MODbarK}
\overline{\MOD_{G,\varepsilon}(X)} \to \MOD_{K,\varepsilon_0}(Z),
\end{equation}
and \eqref{e:MODbarK} is faithful, exact and cocontinuous.

\begin{thm}\label{t:GKMODequiv}
The $k$\nd tensor functor \eqref{e:MODbarK} is an equivalence.
\end{thm}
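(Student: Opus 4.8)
Since \eqref{e:MODbarK} is already known to be $k$\nd linear, tensor, faithful, exact and cocontinuous, it remains to prove that it is full and essentially surjective. First I would reduce to the case where $X$ is of finite type. Write $X = \lim_\lambda X_\lambda$ as in Lemma~\ref{l:qafflim}, with $(X_\lambda)$ quasi-affine super $(G,\varepsilon)$\nd schemes of finite type with affine transition morphisms and super schematically dominant projections. Each $j_\lambda = \pr_\lambda \circ j$ factors through every nonempty open super $G$\nd subscheme of $X_\lambda$, since the inverse image of such in $X$ is a nonempty (by Lemma~\ref{l:schdense}) open super $G$\nd subscheme of $X$; let $K_\lambda$ be the pullback of $G \times_k X_\lambda$ along $j_\lambda$, so that $K = \lim_\lambda K_\lambda$ by \eqref{e:KlimKlambda} with affine transition morphisms, and $k_0 := \kappa(\Mod_{G,\varepsilon}(X)) = \colim_\lambda \kappa(\Mod_{G,\varepsilon}(X_\lambda))$ by isomorphisms of the form \eqref{e:colimHomGVW}. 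By the results of this section on equivariant sheaves over filtered limits of schemes, every object and every morphism of $\MOD_{G,\varepsilon}(X)$ is a pullback of one from a finite level, and both the vanishing of a morphism and the property of being a torsion object are detected over a finite level; hence $\overline{\MOD_{G,\varepsilon}(X)}$ is the filtered $2$\nd colimit of the $\overline{\MOD_{G,\varepsilon}(X_\lambda)}$ and $\MOD_{K,\varepsilon_0}(Z)$ the filtered $2$\nd colimit of the $\MOD_{K_\lambda,\varepsilon_\lambda}(Z)$, compatibly with \eqref{e:MODbarK}. As a filtered $2$\nd colimit of equivalences is an equivalence, it suffices to treat $X$ of finite type.

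So assume $X$ of finite type. By Lemma~\ref{l:homog}, the intersection $X_0$ of the nonempty open super $G$\nd subschemes of $X$, with its structure of $k_0$\nd scheme given by \eqref{e:funfieldiso}, is a nonempty quasi-affine homogeneous super $(G_{k_0},\varepsilon)$\nd scheme of finite type, and $j$ factors through the flat monomorphism $r : X_0 \to X$ as $j_0 : Z \to X_0$. Since $X_0$ is a $G$\nd subscheme, the pullback of the super groupoid $G \times_k X$ along $r$ is $G \times_k X_0 = G_{k_0} \times_{k_0} X_0$, which is transitive affine over $X_0/k_0$ by homogeneity of $X_0$; hence $(K,\varepsilon_0)$ is the pullback of $(G_{k_0} \times_{k_0} X_0,\varepsilon \times_{k_0} X_0)$ along $j_0$, and by the faithfully flat descent statement recalled before the theorem, $j_0^* : \MOD_{G_{k_0},\varepsilon}(X_0) \to \MOD_{K,\varepsilon_0}(Z)$ is a tensor equivalence. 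Because $l : K \to G \times_k X$ factors as $K \to G \times_k X_0 \to G \times_k X$, we have $l^* = j_0^* \circ r^*$. Now $r^*$ is exact and cocontinuous, and by Lemmas~\ref{l:pointtors} and \ref{l:pointexact} applied at a point $x$ of $X_0$ (which lies in every nonempty open super $G$\nd subscheme of $X$) it sends torsion objects to $0$ and induces a faithful functor $\overline{r^*} : \overline{\MOD_{G,\varepsilon}(X)} \to \MOD_{G_{k_0},\varepsilon}(X_0)$; thus \eqref{e:MODbarK} is $j_0^* \circ \overline{r^*}$, and it is enough to show $\overline{r^*}$ is an equivalence.

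For this I would follow the method of Proposition~\ref{p:MODGLFequiv}. By Lemma~\ref{l:quotsub}\ref{i:quotsubmod}, every object of $\MOD_{G,\varepsilon}(X)$ is a cokernel of a morphism between objects $V \otimes_k \sO_X$ with $V$ a $(G,\varepsilon)$\nd module (present it as a quotient of such an object and its kernel as a quotient of another), and likewise over $X_0$, where $V \otimes_k \sO_{X_0} = r^*(V \otimes_k \sO_X)$. Since $\overline{r^*}$ is exact and faithful, a routine diagram chase shows that $\overline{r^*}$ is full and essentially surjective once one knows that
\begin{equation*}
\overline{\MOD_{G,\varepsilon}(X)}\bigl(\overline{V \otimes_k \sO_X}, \overline{N}\bigr) \longrightarrow \MOD_{G_{k_0},\varepsilon}(X_0)\bigl(V \otimes_k \sO_{X_0}, r^* N\bigr)
\end{equation*}
is bijective for every $(G,\varepsilon)$\nd module $V$ and every torsion free object $N$ of $\MOD_{G,\varepsilon}(X)$. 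Injectivity is the faithfulness. For surjectivity, identify the right-hand side (using that $V$ is dualisable over $k$) with $H^0_{G_{k_0}}(X_0, r^*(V^\vee \otimes_k N))$, and, applying the isomorphism preceding Lemma~\ref{l:smallequiv} to a nonempty open super $G$\nd subscheme $X^{(1)}$ of $X$ satisfying condition \ref{i:smallsub} of Lemma~\ref{l:smallequiv} (which exists by Lemma~\ref{l:smallexist}, has the same $X_0$, and whose basic open super $G$\nd subschemes $X^{(1)}_f$ are cofinal among the nonempty open super $G$\nd subschemes of $X$), rewrite it as $\colim_Y \Hom_{\MOD_{G,\varepsilon}(Y)}(V \otimes_k \sO_Y, N|_Y)$ over the nonempty quasi-compact open super $G$\nd subschemes $Y$ of $X$; on the other side \eqref{e:torscolim} gives $\overline{\MOD_{G,\varepsilon}(X)}(\overline{V \otimes_k \sO_X}, \overline{N}) = \colim_{M'} \Hom(M', N)$ over the subobjects $M'$ of $V \otimes_k \sO_X$ with torsion quotient. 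Given $Y$ and $g_Y : V \otimes_k \sO_Y \to N|_Y$, form $\phi : V \otimes_k \sO_X \to (j_Y)_*(N|_Y)$ from the unit and $(j_Y)_*(g_Y)$; the unit $N \to (j_Y)_*(N|_Y)$ is a monomorphism, as its kernel is a torsion subobject (supported off $X_0$, hence of zero fibre at $x$, hence torsion by Lemma~\ref{l:pointtors}) of the torsion free $N$, and for the same reason the cokernel $(j_Y)_*(N|_Y)/N$ is torsion; then $M' := \phi^{-1}(N)$ has torsion quotient in $V \otimes_k \sO_X$, the corestriction $i : M' \to N$ of $\phi$ restricts on $X_0$ to $g_Y|_{X_0}$, and so represents the given class. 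This gives bijectivity, hence that $\overline{r^*}$, and therefore \eqref{e:MODbarK}, is an equivalence.

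The main obstacle is the surjectivity step of the last paragraph: one must reconcile the filtered colimit over subobjects of $V \otimes_k \sO_X$ with torsion quotient that computes morphisms in $\overline{\MOD_{G,\varepsilon}(X)}$ with the filtered colimit over shrinking open super $G$\nd subschemes that computes global sections over the generic orbit $X_0$. This rests on the description of $H^0_{G_{k_0}}$ over the generic fibre from the discussion before Lemma~\ref{l:smallequiv} and on the torsion-theoretic facts about $\MOD_{G,\varepsilon}(X)$ developed in this section. By comparison, the filtered-limit reduction of the first paragraph and the descent equivalence over the homogeneous model $X_0$ are essentially formal given Lemma~\ref{l:homog} and the material of this section.
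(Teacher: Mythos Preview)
Your reduction to the finite-type case is not correct as stated. You assert that every object and morphism of $\MOD_{G,\varepsilon}(X)$ is the pullback of one from some $X_\lambda$, and similarly for $\MOD_{K,\varepsilon_0}(Z)$ along $K_\lambda$, so that both sides are filtered $2$\nd colimits of the finite-level categories. This fails for arbitrary quasi-coherent sheaves: if $X = \Spec(\colim_\lambda R_\lambda)$ affine, an $R$\nd module such as $R/(t_1,t_2,\dots)$ with infinitely many of the $t_i$ coming from different levels need not arise by base change from any single $R_\lambda$. What the discussion around \eqref{e:colimfunctor} shows is only that every object is a \emph{filtered colimit} of such pullbacks, after refining the index category. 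The paper therefore does not reduce wholesale to finite type; it argues directly. For fullness it first uses faithfulness and cocontinuity of \eqref{e:MODbarK} to reduce the source to $V_X$ with $V$ in $\Mod_{G,\varepsilon}(k)$, then reduces the target to a cokernel of a morphism $W''{}\!_X \to W_X$ with $W,W''$ finite-dimensional, which does descend to some $X_{\lambda_0}$ by \eqref{e:colimHomGVW}; the finite-type case is then invoked at that level and the result transported back via the commutative square \eqref{e:GXKlambda}. Essential surjectivity is handled by descending an object of $\Mod_{K,\varepsilon_0}(Z)$ to some $K_\lambda$, applying the finite-type case there, and using cocontinuity and full faithfulness of \eqref{e:MODbarK} to extend to all of $\MOD_{K,\varepsilon_0}(Z)$.

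Your treatment of the finite-type case is also more involved than needed and has a gap: the identification of $\Hom(V \otimes_k \sO_{X_0}, r^*N)$ with $H^0_{G_{k_0}}(X_0, r^*(V^\vee \otimes_k N))$ requires $V$ dualisable, i.e.\ finite-dimensional, which you have not arranged---Lemma~\ref{l:quotsub}\ref{i:quotsubmod} only produces possibly infinite-dimensional $V$. The paper's argument for finite type is much shorter and avoids any colimit computation over shrinking opens: since $X_0$ is a topological subspace of $X$ with structure sheaf the restriction of $\sO_X$, the counit $r^*r_* \to \Id$ is an isomorphism; this immediately gives essential surjectivity of $\overline{r^*}$ and surjectivity on hom-groups between objects of the form $\overline{r_*\sV_0}$, while applying $r^*$ to the unit $\eta_{\sV}$ and using the triangular identity shows $\overline{\eta_{\sV}}$ is an isomorphism $\overline{\sV} \iso \overline{r_*r^*\sV}$, whence fullness in general.
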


\begin{proof}
Since \eqref{e:MODbarK} is faithful, it remains to prove that it is full 
and essentially surjective.
Suppose first that $X$ is of finite type.
Write $X_0$ for the intersection of the non-empty open super $G$\nd subschemes of $X$
as in  Lemma~\ref{l:homog},
and $e$ for the monomorphism $X_0 \to X$.
By Lemma~\ref{l:homog}, $l^*$ factors as 
\begin{equation*}
e^*:\MOD_{G,\varepsilon}(X) \to \MOD_{G,\varepsilon}(X_0)
\end{equation*} 
followed by a tensor equivalence.
Thus $e^*$ is exact, and $e^*\sV = 0$ if and only if $\sV$ is a torsion object.
Further $e^*$ factors as the projection onto $\overline{\MOD_{G,\varepsilon}(X)}$ followed 
by a $k$\nd tensor functor 
\begin{equation}\label{e:MODbarG}
\overline{\MOD_{G,\varepsilon}(X)} \to \MOD_{G,\varepsilon}(X_0),
\end{equation}
and \eqref{e:MODbarK} is \eqref{e:MODbarG} followed by a tensor equivalence.
Since $X_0$ is a topological subspace of $X$
with structure sheaf the restriction of $\sO_X$,
the counit for $e^*$ and $e_*$ is an isomorphism
\begin{equation*}
e^*e_* \iso \Id.
\end{equation*}
Thus \eqref{e:MODbarG} and \eqref{e:MODbarK} are essentially surjective, and surjective on
hom groups between objects of the form $\overline{e_*\sV_0}$.
If $\eta$ is the unit, then $e^*\eta$ is an isomorphism by the triangular identity.
Thus $\eta_{\sV}$ is an isomorphism up to torsion for every $\sV$,
and $\overline{\eta_{\sV}}$ is an isomorphism $\overline{\sV} \iso \overline{e_*e^*\sV}$.
It follows that \eqref{e:MODbarG} and \eqref{e:MODbarK} are full.

To prove that \eqref{e:MODbarK} is full and essentially surjective for arbitrary $X$,
write $X$ as the limit of a filtered system $(X_\lambda)_{\lambda \in \Lambda}$ as in 
Lemma~\ref{l:qafflim}, and let $K_\lambda$, $\varepsilon_\lambda$,
$l_\lambda$ and $q_\lambda$ be as in \eqref{e:GXKlambda}. 
Let $\sV$ and $\sW$ be objects of $\MOD_{G,\varepsilon}(X)$.
To prove that \eqref{e:MODbarK} defines a surjection
\begin{equation}\label{e:HomVWbar}
\Hom(\overline{\sV},\overline{\sW}) \to \Hom_{K,\sO_X}(l^*\sV,l^*\sW)
\end{equation}
we may suppose by \eqref{e:Vcoker} and the faithfulness and cocontinuity of 
\eqref{e:MODbarK} that $\sV = V_X$ with $V$ in $\Mod_{G,\varepsilon}(k)$.
Since $l^*(V_X)$ is of finite type in $\MOD_{K,\varepsilon_0}(Z)$,
and $\sW$ is by Lemma~\ref{l:quotsub}\ref{i:quotsubmod} the filtered colimit of 
its subobjects which are quotients of objects $W_X$ with $W$ in $\Mod_{G,\varepsilon}(k)$,
we may further suppose that $\sW$ is such a quotient.
Then $\sW$ is the cokernel of a morphism $f':W'{}\!_X \to W_X$ for some $W'$ in 
$\MOD_{G,\varepsilon}(k)$. 
Writing $W'$ as the filtered colimit of its subobjects $W''$ in $\Mod_{G,\varepsilon}(k)$
shows that $l^*\sW$ is for some $W''$ the cokernel of 
$l^*f''$ with
\begin{equation*}
f'':W''{}\!_X \to W_X
\end{equation*}
the restriction of $f'$ to $W''{}\!_X$.
If $p$ is the canonical morphism from $\Coker f''$ to $\sW = \Coker f'$, then $l^*p$ is
an isomorphism, so that by Lemmas~\ref{l:pointexact} and \ref{l:pointtors} 
$p$ is an isomorphism up to torsion and $\overline{p}$ is an isomorphism.
Thus we may suppose further that $\sW = \Coker f''$.
By \eqref{e:colimHomGVW}, $f''$ descends to a morphism 
\begin{equation*}
f_0:W''{}\!_{X_{\lambda_0}} \to W_{X_{\lambda_0}}
\end{equation*}
for some $\lambda_0 \in \Lambda$.
If we replace $\Lambda$ by $\lambda_0/\Lambda$, we may suppose finally by taking 
$\sV_0 = V_{X_{\lambda_0}}$ and $\sW_0 = \Coker f_0$ that $\Lambda$ has an
initial object $\lambda_0$ and $\sV = (\pr_{\lambda_0})^*\sV_0$ and
$\sW = (\pr_{\lambda_0})^*\sW_0$ for $\sV_0$ and $\sW_0$ in $\MOD_{G,\varepsilon}(X_{\lambda_0})$
with $(l_{\lambda_0})^*\sV_0$ and $(l_{\lambda_0})^*\sW_0$ in 
$\Mod_{K_{\lambda_0},\varepsilon}(X_{\lambda_0})$.

Let $h:l^*\sV \to l^*\sW$ be an element of the target of \eqref{e:HomVWbar}.
Then if $\sV_\lambda$ and $\sW_\lambda$ are the pullbacks of $\sV_0$ and $\sW_0$
along $X_\lambda \to X_{\lambda_0}$, restricting
to a non-empty affine open subset of $Z$ shows that there exists a $\lambda \in \Lambda$
and a morphism
\begin{equation*}
h_1:l_\lambda{}\!^*\sV_\lambda \to l_\lambda{}\!^*\sW_\lambda
\end{equation*}
such that $q_\lambda{}\!^*h_1$ coincides, modulo pullback isomorphisms, with $h$. 
By the case where $X$ is of finite type, $h_1$ is the image of a morphism
$\overline{\sV_\lambda} \to \overline{\sW_\lambda}$.
Thus there exist isomorphisms up to torsion $s:\sV'{}\!_\lambda \to \sV_\lambda$
and $r:\sW_\lambda \to \sW'{}\!_\lambda$ and a morphism 
$h_0:\sV'{}\!_\lambda \to \sW'{}\!_\lambda$ such that 
$l_\lambda{}\!^*h_0 = l_\lambda{}\!^*r \circ h_1 \circ l_\lambda{}\!^*s$.
We then have a diagram
\begin{equation*}
\xymatrix{
l^*\sV \ar_h[d] \ar^-{l^*a}[r] & l^*(\pr_\lambda)^*\sV_\lambda \ar[d] & 
l^*(\pr_\lambda)^*\sV'{}\!_\lambda \ar_-{l^*(\pr_\lambda)^*s}[l] 
\ar^{l^*(\pr_\lambda)^*h_0}[d] \\
l^*\sW \ar_-{l^*b}[r] & l^*(\pr_\lambda)^*\sW_\lambda \ar_-{l^*(\pr_\lambda)^*r}[r] &
l^*(\pr_\lambda)^*\sW'{}\!_\lambda
}
\end{equation*}
where $a$ and $b$ are pullback isomorphisms, the middle arrow coincides modulo
pullback isomorphisms with $h$ and $q_\lambda{}\!^*h_1$, 
and the right square commutes by naturality of the pullback
isomorphism $l^*(\pr_\lambda)^* \iso q_\lambda{}\!^*l_\lambda{}\!^*$.
By Lemmas~\ref{l:pointexact} and \ref{l:pointtors}, $(\pr_\lambda)^*r$ and
$(\pr_\lambda)^*s$ are isomorphisms up to torsion.
The exterior of the diagram thus shows that $h$ is in the image of
\eqref{e:HomVWbar}.
This proves that \eqref{e:MODbarK} is full.

Let $\sU$ be an object of $\Mod_{K,\varepsilon_0}(Z)$.
Restricting to a non-empty affine open subset of $Z$ shows that there exists 
a $\lambda \in \Lambda$ such that $\sU = q_\lambda{}\!^*\sU_1$ for some $\sU_1$
in $\Mod_{K_\lambda,\varepsilon_\lambda}(Z)$.
By the case where $X$ is of finite type, $\sU_1$ is isomorphic to $l_\lambda{}\!^*\sU_0$
for some $\sU_0$ in $\MOD_{G,\varepsilon}(X_\lambda)$.
Then $\sU$ is isomorphic to $q_\lambda{}\!^*l_\lambda{}\!^*\sU_0$ and hence
to $l^*(\pr_\lambda)^*\sU_0$.
Since every object of $\MOD_{K,\varepsilon_0}(Z)$ is the filtered colimit
of its subobjects in $\Mod_{K,\varepsilon_0}(Z)$, and since  \eqref{e:MODbarK}
is fully faithful and cocontinuous, the essential surjectivity of \eqref{e:MODbarK}
follows.  
\end{proof}

\section{Super Tannakian hulls}\label{s:supTann}

In this section we combine the results of the preceding two sections to construct a 
super Tannakian hull for any pseudo-Tannakian category as the full tensor subcategory
of dualisable objects of a functor category modulo torsion.
 
Let $\sC$ be an essentially small integral rigid tensor category.
Recall that $\widetilde{\sC}$ is the quotient $\overline{\widehat{\sC}}$
of $\widehat{\sC}$ by the torsion objects.
The canonical tensor functor 
$\sC \to (\widetilde{\sC})_\mathrm{rig}$ factors uniquely through the
faithful strict tensor functor
$E_{\sC}$ as 
\begin{equation}\label{e:canfac}
\sC \xrightarrow{E_{\sC}} \sC_\mathrm{fr} \to (\widetilde{\sC})_\mathrm{rig},
\end{equation}
and by Corollary~\ref{c:FrCff} the second arrow is fully faithful.
In particular $\sC \to \widetilde{\sC}$ defines an isomorphism
\begin{equation}\label{e:kappCEnd}
\kappa(\sC) \iso \End_{\widetilde{\sC}}(\I).
\end{equation}
Thus we may regard $\widetilde{\sC}$ as a $\kappa(\sC)$\nd tensor category.
Further $(\widetilde{\sC})_\mathrm{rig}$
is integral because $\sC_\mathrm{fr}$ is integral and any morphism $A \to \I$ 
in $(\widetilde{\sC})_\mathrm{rig}$,
after composing with an appropriate epimorphism $A' \to A$ in $\widetilde{\sC}$, 
lies in the image of the additive hull of $\sC_\mathrm{fr}$. 

An integral tensor category $\sC$ will be said to be \emph{of characteristic $0$}
if the integral domain $\End_{\sC}(\I)$ is of characteristic $0$.
When this is so, the canonical tensor functor from $\sC$ to $\Q \otimes_{\Z} \sC$ is
faithful.

\begin{lem}\label{l:CCQequiv}
Let $\sC$ be an essentially small, integral, rigid tensor category of characteristic $0$.
Then the tensor functor $\widetilde{\sC} \to (\Q \otimes_{\Z} \sC)^\sim$
induced by the canonical tensor functor $\sC \to \Q \otimes_{\Z} \sC$ is a tensor equivalence.
\end{lem}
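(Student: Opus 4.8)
The plan is to show that the canonical tensor functor $F:\sC \to \Q \otimes_{\Z} \sC$ induces a tensor equivalence on passage to functor categories modulo torsion. The key observation is that $F$ is a localisation: $\Q \otimes_{\Z} \sC$ has the same objects as $\sC$, and each hom-group is obtained from that of $\sC$ by tensoring with $\Q$ over $\Z$, i.e.\ by inverting the non-zero integers. Since $\sC$ has characteristic $0$, $F$ is faithful, hence regular (it is a tensor functor between integral tensor categories), so by the material at the end of Section~\ref{s:tor} there is an induced tensor functor $\widetilde F:\widetilde{\sC} \to (\Q \otimes_{\Z}\sC)^\sim$, well-defined because $\widehat F$ preserves isomorphisms up to torsion by Lemma~\ref{l:isotors}. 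It remains to prove $\widetilde F$ is fully faithful and essentially surjective.

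First I would analyse $\widehat F:\widehat{\sC} \to \widehat{\Q \otimes_{\Z}\sC}$ and its right adjoint $F_\wedge$. Because $F$ is the identity on objects with $\Hom_{\Q\otimes\sC}(A,B) = \Q \otimes_{\Z}\Hom_{\sC}(A,B)$, the representable $h_{F(A)}$ in $\widehat{\Q\otimes\sC}$ is, as a functor, $A' \mapsto \Q\otimes_{\Z}\sC(A',A)$; equivalently $\widehat F(h_A) = \Q \otimes_{\Z} h_A$, the $n$\nd divisible hull of $h_A$ in $\widehat{\sC}$ transported across. More usefully, $F_\wedge(N) = N \circ F^{\mathrm{op}}$ simply restricts a functor on $(\Q\otimes\sC)^{\mathrm{op}}$ along $F$, and since every hom-group of $\Q\otimes\sC$ is already a $\Q$\nd vector space, $F_\wedge$ is fully faithful with essential image the full subcategory of $\widehat{\sC}$ consisting of those $M$ all of whose groups $M(A)$ are uniquely divisible, i.e.\ $\Q$\nd vector spaces. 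Thus $\widehat{\Q\otimes\sC}$ is equivalent to the localisation of $\widehat{\sC}$ at the Serre subcategory of objects $M$ for which every $M(A)$ is a torsion abelian group (equivalently, $M$ is a filtered colimit of objects killed by some positive integer). Indeed $\widehat F$ is, up to this identification, the localisation functor $\widehat{\sC} \to \widehat{\sC}/\widehat{\sC}_{\Z\text{-tors}}$.

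Next I would compare the two Serre subcategories in play. In $\widehat{\sC}$ there is the subcategory $\sA_{\mathrm{tors}}$ of torsion objects in the tensor sense of Section~\ref{s:tor} (killed after tensoring with a regular $a:A \to \I$), and there is the subcategory of objects whose underlying abelian-group-valued functor is pointwise a torsion group. Every integer $n \ne 0$ gives a regular endomorphism $n\cdot 1_{\I}$ of $\I$ in $\sC$ (regularity here is exactly characteristic $0$ plus integrality), so an $n$\nd torsion object is a tensor-torsion object; hence $\widehat{\sC}_{\Z\text{-tors}} \subset \sA_{\mathrm{tors}}$. Therefore the quotient $\widetilde{\sC} = \overline{\widehat{\sC}}$ factors as $\widehat{\sC} \to \widehat{\Q\otimes\sC} \to (\widehat{\Q\otimes\sC})/(\text{image of }\sA_{\mathrm{tors}})$, and one checks that the image of $\sA_{\mathrm{tors}}$ under $\widehat F$ is precisely the tensor-torsion subcategory of $\widehat{\Q\otimes\sC}$: this uses Lemma~\ref{l:adjtorspres} (both $\widehat F$ and $F_\wedge$ preserve torsion resp.\ torsion-free objects, $\widehat F$ being regular on rigid parts since $\Reg(\sC)\to\Reg((\widehat{\sC})_{\mathrm{rig}})$ is cofinal), together with the fact that regular morphisms of $(\widehat{\Q\otimes\sC})_{\mathrm{rig}}$ are, up to cofinality, images of regular morphisms of $(\widehat{\sC})_{\mathrm{rig}}$, by Lemma~\ref{l:regcofinal} applied after noting $F_\wedge$ is fully faithful. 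Granting this, the iterated-quotient description of Serre localisations gives a canonical tensor equivalence $\widetilde{\sC} \iso (\Q\otimes_{\Z}\sC)^\sim$, and one verifies it agrees with $\widetilde F$ on objects and morphisms by tracking the colimit formulas \eqref{e:quotcolim} and \eqref{e:torscolim}.

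The main obstacle is the comparison of torsion notions in the third paragraph: one must show not merely that $\Z$\nd torsion implies tensor-torsion, but that after passing through $\widehat F$ the \emph{full} tensor-torsion subcategory of $\widehat{\Q\otimes\sC}$ is recovered as $\widehat F(\sA_{\mathrm{tors}})$ (closure under the relevant colimits and subquotients included), so that the composite $\widehat{\sC}\to\widehat{\Q\otimes\sC}\to(\Q\otimes\sC)^\sim$ kills exactly $\sA_{\mathrm{tors}}$ and hence factors through an \emph{equivalence} out of $\widetilde{\sC}$. This is where one uses that a regular $a:A\to\I$ in $(\widehat{\sC})_{\mathrm{rig}}$ remains regular after applying $\widehat F$ (its cokernel is tensor-torsion, by Lemma~\ref{l:regtorscok}, and $\widehat F$ is cocontinuous and sends tensor-torsion to tensor-torsion) and conversely Lemma~\ref{l:regcofinal} to lift regular morphisms from $(\widehat{\Q\otimes\sC})_{\mathrm{rig}}$, so the two localisations literally coincide. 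Everything else — faithfulness of $F$, exactness and cocontinuity of the quotient projections, compatibility of tensor structures — is formal from Sections~\ref{s:tor} and~\ref{s:fun}.
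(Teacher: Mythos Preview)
Your strategy is sound and closely related to the paper's, but the paper executes it more directly via the adjunction rather than through an iterated Serre quotient. The paper identifies $\widehat{\Q\otimes_{\Z}\sC}$ with additive functors $\sC^{\mathrm{op}}\to(\Q\text{-vector spaces})$ exactly as you do, writes $H:\widehat{\sC}\to\widehat{\sC'}$ for post-composition with $\Q\otimes_{\Z}-$ (isomorphic to $\widehat{F}$) and $H'$ for the forgetful right adjoint, and then simply checks that the counit is already an isomorphism while the unit $\eta_M:M\to H'H(M)$, being $M(A)\to\Q\otimes_{\Z}M(A)$ argumentwise, has kernel and cokernel which are $\Z$\nd torsion and hence tensor-torsion (each element killed by a non-zero integer, regular in $\End(\I)$). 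After verifying that both $H$ and $H'$ preserve torsion objects, the induced adjunction $\overline{H}\dashv\overline{H'}$ has isomorphic unit and counit, so $\overline{H}\simeq\widetilde{F}$ is an equivalence. Your iterated-quotient picture amounts to the same computation, but packaged less efficiently: the unit/counit check replaces your comparison of Serre subcategories wholesale.

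One specific correction: your appeal to Lemma~\ref{l:regcofinal} is misplaced. That lemma compares $\sA_{\mathrm{rig}}$ with $(\overline{\sA})_{\mathrm{rig}}$, whereas you need to compare $(\widehat{\sC})_{\mathrm{rig}}$ with $(\widehat{\Q\otimes_{\Z}\sC})_{\mathrm{rig}}$, and $\widehat{\Q\otimes_{\Z}\sC}$ is not $\overline{\widehat{\sC}}$. The fact you actually need---that every regular $a:A\to\I$ in $(\widehat{\Q\otimes_{\Z}\sC})_{\mathrm{rig}}$ is, up to cofinality in $\Reg$, the image of one in $(\widehat{\sC})_{\mathrm{rig}}$---is true for the elementary reason that every morphism in $\Q\otimes_{\Z}\sC$ is a non-zero rational multiple of one in $\sC$ (and $\Reg(\sC)\to\Reg((\widehat{\sC})_{\mathrm{rig}})$ is cofinal since $\sC$ is integral). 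This is exactly what the paper uses to show that the torsion objects of $\widehat{\sC'}$ are precisely those whose image under $H'$ is torsion in $\widehat{\sC}$.
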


\begin{proof}
Write $\sC'$ for $\Q \otimes_{\Z} \sC$ and $I:\sC \to \sC'$ for the canonical tensor functor.
We may identify the additive category $\widehat{\sC'}$ with the category 
of additive functors from $\sC$ to $\Q$\nd vector spaces.
Composing with the extension of scalars functor $\Q \otimes_{\Z} -$ from abelian groups to 
$\Q$\nd vector spaces then defines a functor $H:\widehat{\sC} \to \widehat{\sC'}$.
Further $H$ is cocontinuous and $Hh_- = h_-I$ is isomorphic to $\widehat{I}h_-$, so that
$H$ is isomorphic to $\widehat{I}$.
Since the forgetful functor from $\Q$\nd vector spaces to abelian groups is right adjoint
to $\Q \otimes_{\Z} -$, composing with it defines a forgetful functor
$H':\widehat{\sC'} \to \widehat{\sC}$ right adjoint to $H$.
The counit for the adjunction is an isomorphism, while the component $\eta_M:M \to H'H(M)$
of the unit at $M$ in $\widehat{\sC}$ is given by the canonical homomorphisms
\begin{equation}\label{e:unitMAQ}
M(A) \to \Q \otimes_{\Z} M(A) = H'H(M)(A) 
\end{equation}
for $A$ in $\sC$.
For every $M$, the kernel and cokernel of $\eta_M$ are torsion objects in $\widehat{\sC}$, 
because the kernel and cokernel of \eqref{e:unitMAQ} are torsion abelian groups, 
so that each of their elements is annulled by some non-zero element of $\Z \subset \sC(\I,\I)$.

Since the hom groups $\sC(A,\I)$ are torsion free and every element of $\sC'(A,\I)$
is a rational multiple of one in $\sC(A,\I)$, the torsion objects in $\widehat{\sC'}$
are those which are torsion as objects of $\widehat{\sC}$.
The forgetful functor $H'$ thus preserves torsion objects.
Similarly $H$ preserves torsion objects.
Since $H$ and $H'$ are exact, they thus define functors 
$\overline{H}:\widetilde{\sC} \to \widetilde{\sC'}$ and
$\overline{H'}:\widetilde{\sC'} \to \widetilde{\sC}$.
Further $\overline{H}$ and $\overline{H'}$ are adjoint to one another with the counit of the adjunction 
an isomorphism.
The unit, with components $\overline{\eta_M}$, is also an isomorphism because each
$\eta_M$ is an isomorphism up to torsion.
Thus $\overline{H}$ and hence $\widetilde{I}$ is an equivalence.
\end{proof}

\begin{defn}\label{d:pseudoTann}
By a \emph{pseudo-Tannakian category} we mean
an essentially small integral rigid tensor category of characteristic $0$ in which 
each object $M$ has a tensor power $M^{\otimes n}$ on which some non-zero element of 
$\Z[\mathfrak{S}_n]$ acts as $0$.
An abelian pseudo-Tannakian category will be called a \emph{super Tannakian category}.
\end{defn}

By Lemma~\ref{l:abfracclose},
$\End(\I)$ is a field of characteristic $0$ in any super Tannakian category.
A super Tannakian category in the sense of Definition~\ref{d:pseudoTann}
is thus the same as a tensor category $\sC$ such that $\End_{\sC}(\I)$ is
a field of characteristic $0$ and $\sC$ is a super Tannakian category over $\End_{\sC}(\I)$ 
in  the usual sense.

If $\sC$ is a pseudo-Tannakian category then $\sC_\mathrm{fr}$ is a pseudo-Tannakian category.
By Lemma~\ref{l:abfracclose}, any super Tannakian category is fractionally closed.

\begin{thm}\label{t:Tannequiv}
Let $\sC$ be a pseudo-Tannakian category 
and  $\overline{\kappa(\sC)}$ be an algebraic closure of $\kappa(\sC)$.
Then $\widetilde{\sC}$ is $\kappa(\sC)$\nd tensor equivalent to 
$\MOD_{K,\varepsilon}(\overline{\kappa(\sC)})$ for some 
transitive affine super groupoid with involution 
$(K,\varepsilon)$ over $\overline{\kappa(\sC)}/\kappa(\sC)$.
\end{thm}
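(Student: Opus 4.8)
The plan is to reduce first to a $\Q$-tensor category and then to chain together Theorems~\ref{t:Ftildeequiv}, \ref{t:transaff} and \ref{t:GKMODequiv}. Since $\sC$ is pseudo-Tannakian it is integral of characteristic $0$, so the canonical tensor functor $\sC \to \Q \otimes_{\Z} \sC$ is faithful; one checks readily that $\Q \otimes_{\Z} \sC$ is again essentially small, integral and rigid, and that each of its objects is killed in some tensor power by a non-zero element of $\Q[\mathfrak{S}_n]$. By Lemma~\ref{l:CCQequiv} the induced tensor functor $\widetilde{\sC} \to (\Q \otimes_{\Z} \sC)^\sim$ is a tensor equivalence, and by \eqref{e:kappCEnd} it carries $\kappa(\sC) = \End_{\widetilde{\sC}}(\I)$ onto $\kappa(\Q \otimes_{\Z} \sC)$. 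I would therefore replace $\sC$ by $\Q \otimes_{\Z} \sC$ and assume from now on that $\sC$ is a $\Q$-tensor category, so that it is of the form considered in Proposition~\ref{p:FmnCesssurj} with $k = \Q$.

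By Theorem~\ref{t:Ftildeequiv} there is then a family $\bm|\bn$ and an affine super $(\GL_{\bm|\bn},\varepsilon)$-scheme $X$ with $\Mod_{\GL_{\bm|\bn},\varepsilon}(X)$ integral such that $\widetilde{\sC}$ is $\Q$-tensor equivalent to $\overline{\MOD_{\GL_{\bm|\bn},\varepsilon}(X)}$. Writing $G = \GL_{\bm|\bn}$, the pair $(G,\varepsilon)$ is an affine super $\Q$-group with involution and $X$ is a quasi-affine super $(G,\varepsilon)$-scheme with $\Mod_{G,\varepsilon}(X)$ integral, so the results of Section~\ref{s:equ} apply. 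Being a tensor equivalence, the functor of Theorem~\ref{t:Ftildeequiv} identifies $\kappa(\sC) = \End_{\widetilde{\sC}}(\I)$ with $\End_{\overline{\MOD_{G,\varepsilon}(X)}}(\I)$, which by Proposition~\ref{p:Frff} is $\kappa(\Mod_{G,\varepsilon}(X))$; denote this common field by $k_0$.

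Now set $Z = \Spec(\overline{\kappa(\sC)}) = \Spec(\overline{k_0})$ and take $\rho$ to be the inclusion $k_0 = \kappa(\Mod_{G,\varepsilon}(X)) \hookrightarrow \overline{k_0}$. By Lemma~\ref{l:kbarpoints} the set $X(\overline{k_0})_\rho$ is non-empty; I would pick an element of it, i.e.\ a morphism $j : Z \to X$ which factors through every non-empty open super $G$-subscheme of $X$ and whose associated $k$-homomorphism, in the sense of the isomorphism \eqref{e:funfieldiso}, is $\rho$. Equipping $Z$ with the $k_0$-scheme structure defined by \eqref{e:funfieldiso} — which by the choice of $\rho$ is the canonical $\Spec(\overline{k_0}) \to \Spec(k_0)$ — Theorem~\ref{t:transaff} shows that the pullback $(K,\varepsilon_0)$ along $j$ of the super groupoid with involution $(G \times_k X, G \times_k \varepsilon)$ over $X/k$ is a transitive affine super groupoid with involution over $Z/k_0 = \overline{\kappa(\sC)}/\kappa(\sC)$. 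Finally Theorem~\ref{t:GKMODequiv} gives that the induced $\Q$-tensor functor $\overline{\MOD_{G,\varepsilon}(X)} \to \MOD_{K,\varepsilon_0}(Z)$ is an equivalence. Composing the equivalences of Lemma~\ref{l:CCQequiv}, Theorem~\ref{t:Ftildeequiv} and Theorem~\ref{t:GKMODequiv} produces a tensor equivalence $\widetilde{\sC} \simeq \MOD_{K,\varepsilon_0}(\overline{\kappa(\sC)})$; since every factor is a tensor equivalence it induces a ring isomorphism on the endomorphism rings of $\I$, and these are compatible with the identifications of all of them with $k_0 = \kappa(\sC)$, so the composite is a $\kappa(\sC)$-tensor equivalence. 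Taking $(K,\varepsilon) = (K,\varepsilon_0)$ gives the theorem.

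Most of the substance sits in the theorems being quoted, so within this argument the only real work is verifying that the hypotheses of Lemma~\ref{l:kbarpoints}, Theorem~\ref{t:transaff} and Theorem~\ref{t:GKMODequiv} are genuinely met by the \emph{affine} scheme $X$ and the group $G = \GL_{\bm|\bn}$ furnished by Theorem~\ref{t:Ftildeequiv}, and that the various $\kappa(\sC)$-linear structures agree along the chain. The step I expect to be most delicate is this bookkeeping: one must track the single field $k_0 = \kappa(\sC) = \kappa(\Mod_{G,\varepsilon}(X))$ and the homomorphism $\rho$ through \eqref{e:funfieldiso} carefully enough to be sure that $(K,\varepsilon_0)$ is a groupoid over $\overline{\kappa(\sC)}/\kappa(\sC)$ and not merely over the abstract field $\overline{\kappa(\sC)}$.
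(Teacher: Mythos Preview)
Your proposal is correct and follows essentially the same route as the paper's proof: reduce to a $\Q$-tensor category via Lemma~\ref{l:CCQequiv}, invoke Theorem~\ref{t:Ftildeequiv} to realise $\widetilde{\sC}$ as $\overline{\MOD_{G,\varepsilon}(X)}$, identify $\kappa(\sC)$ with $\kappa(\Mod_{G,\varepsilon}(X))$ via \eqref{e:kappCEnd} and Proposition~\ref{p:Frff}, use Lemma~\ref{l:kbarpoints} to produce a suitable $\overline{\kappa(\sC)}$-point, and then apply Theorems~\ref{t:transaff} and \ref{t:GKMODequiv}. Your closing remark about the bookkeeping of $k_0$ and $\rho$ is exactly the point the paper handles by naming the isomorphism $i:\kappa(\sC)\iso\kappa(\Mod_{G,\varepsilon}(X))$ explicitly and transporting along $i$ at the end.
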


\begin{proof}
We may suppose by Lemma~\ref{l:CCQequiv} that $\sC$ is a $\Q$\nd tensor category.
By Theorem~\ref{t:Ftildeequiv}  there exist an affine $\Q$\nd group
with involution $(G,\varepsilon)$ and an affine super $(G,\varepsilon)$\nd scheme $X$  
with $\Mod_{G,\varepsilon}(X)$ 
integral such that we have tensor equivalence
\begin{equation*}
\widetilde{\sC} \to \overline{\MOD_{G,\varepsilon}(X)}.
\end{equation*}
It induces by \eqref{e:kappCEnd} and Proposition~\ref{p:Frff} an isomorphism
\begin{equation*}
i:\kappa(\sC) \iso \kappa(\Mod_{G,\varepsilon}(X)).
\end{equation*}
With $\rho:\kappa(\Mod_{G,\varepsilon}(X)) \to \overline{\kappa(\sC)}$ the
composite of the embedding $\kappa(\sC) \to \overline{\kappa(\sC)}$ and $i^{-1}$,
there then exists by Lemma~\ref{l:kbarpoints} a $\overline{\kappa(\sC)}$\nd point $z$ of $X$
lying in every non-empty open super $G$\nd subscheme of $X$, such that the homomorphism
from $\kappa(\Mod_{G,\varepsilon}(X))$ to $\overline{\kappa(\sC)}$ defined by $z$
is $\rho$.
If $(K,\varepsilon_0)$ is the pullback of $(G \times_k X,G \times_k \varepsilon)$
along $z$,
then $K$ is transitive affine over $\overline{\kappa(\sC)}/\kappa(\Mod_{G,\varepsilon}(X))$
by Theorem~\ref{t:transaff} with $Z = \Spec(\overline{\kappa(\sC)})$ and $j = z$,
and pullback along $z$ defines by Theorem~\ref{t:GKMODequiv} a 
$\kappa(\Mod_{G,\varepsilon}(X))$\nd tensor equivalence
\begin{equation*}
\overline{\MOD_{G,\varepsilon}(X)} \to \MOD_{K,\varepsilon_0}(\overline{\kappa(\sC)}).
\end{equation*}
Regarding $K$ as a transitive affine groupoid over $\overline{\kappa(\sC)}/\kappa(\sC)$
using $i$ and writing $\varepsilon$ instead of $\varepsilon_0$
thus gives the required $\kappa(\sC)$\nd tensor equivalence.
\end{proof}

Let $\sC$ be a pseudo-Tannakian category.
By Theorem~\ref{t:Tannequiv}, 
the full subcategory $(\widetilde{\sC})_\mathrm{rig}$
of $\widetilde{\sC}$ is abelian and exactly embedded, and closed under the formation 
of subquotients and extensions. 
Further every object of $(\widetilde{\sC})_\mathrm{rig}$ is of finite presentation
in $\widetilde{\sC}$, and every object of $\widetilde{\sC}$ is the filtered colimit
of its subobjects in $(\widetilde{\sC})_\mathrm{rig}$.

Write $\sC_0$ for the full subcategory of $(\widetilde{\sC})_\mathrm{rig}$ 
whose objects are direct sums of those in the image of $\sC \to \widetilde{\sC}$.
Then the second arrow of \eqref{e:canfac} factors uniquely through
the embedding of $\sC_0$ as
\begin{equation}\label{e:addhull}
\sC_\mathrm{fr} \to \sC_0 \to (\widetilde{\sC})_\mathrm{rig}
\end{equation}
where the first arrow is the embedding of $\sC_\mathrm{fr}$ into its additive hull.
Every object of $(\widetilde{\sC})_\mathrm{rig}$ is a quotient of one
in $\sC_0$.
It follows that every object of 
$(\widetilde{\sC})_\mathrm{rig}$ is the cokernel of a morphism in $\sC_0$.
Taking duals then shows that every object of $(\widetilde{\sC})_\mathrm{rig}$ is 
the kernel of a morphism in $\sC_0$. 

\begin{cor}
A rigid tensor category is equivalent to a full tensor subcategory of a super Tannakian
category if and only if it is fractionally closed and pseudo-Tannakian.
\end{cor}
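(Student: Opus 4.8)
The plan is to prove both directions separately, using the machinery built up in the preceding sections.

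First, for the ``only if'' direction: suppose $\sC$ is a rigid tensor category equivalent to a full tensor subcategory of a super Tannakian category $\sD$. I would first observe that $\sC$ inherits integrality and characteristic $0$ from $\sD$ (since $\sD$ is integral with $\End_\sD(\I)$ a field of characteristic $0$, and a full tensor subcategory containing $\I$ has the same endomorphism ring of $\I$, or at least a subring which is an integral domain of characteristic $0$). For the condition that each object $M$ of $\sC$ has a tensor power on which a non-zero element of $\Z[\mathfrak S_n]$ acts as $0$: this holds in $\sD$ because $\sD$ is super Tannakian, hence every object has bounded ``super-dimension'' — concretely, if $\sD$ is equivalent to $\MOD_{K,\varepsilon}(\overline{\kappa(\sD)})$ as in Theorem~\ref{t:Tannequiv}, or more elementarily because a faithful fibre functor sends $M$ to a finite-dimensional super vector space of some dimension $m|n$ and then the Young symmetriser $c_{\lambda_{m|n}}$ acts as $0$ on $M^{\otimes d}$ by the results of Section~\ref{s:free}. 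This gives that $\sC$ is pseudo-Tannakian. That $\sC$ is fractionally closed follows from Lemma~\ref{l:abfracclose}: $\sD$ is abelian rigid, hence fractionally closed, and a regular morphism $f$ in $\sC$ remains regular in $\sD$ (a faithful tensor functor reflects regular morphisms, as noted in Section~\ref{s:frac}), so the isomorphism \eqref{e:fracclose} for $\sD$ restricts to give the corresponding isomorphism for $\sC$ — here one uses that the hom-groups of $\sC$ are full subgroups of those of $\sD$, so $\sC_f(C,C')$ computed in $\sC$ equals $\sC_f(C,C') \cap \sC(A\otimes C, A'\otimes C')$, and fullness forces $f \otimes -$ onto it.

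Second, for the ``if'' direction: suppose $\sC$ is fractionally closed and pseudo-Tannakian. By Theorem~\ref{t:Tannequiv}, $\widetilde{\sC}$ is $\kappa(\sC)$-tensor equivalent to $\MOD_{K,\varepsilon}(\overline{\kappa(\sC)})$ for a transitive affine super groupoid with involution, and hence $(\widetilde{\sC})_\mathrm{rig}$ is equivalent to $\Mod_{K,\varepsilon}(\overline{\kappa(\sC)})$, which is a super Tannakian category (over $\kappa(\sC)$; using the standard fact that modules of finite type over a transitive affine super groupoid form a super Tannakian category — this is where the equivalence of Theorem~\ref{t:GKMODequiv} together with the transitivity from Theorem~\ref{t:transaff} does the work). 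Now Corollary~\ref{c:FrCff} says the tensor functor $\sC_\mathrm{fr} \to (\widetilde{\sC})_\mathrm{rig}$ of \eqref{e:Cprojfactor} is fully faithful; since $\sC$ is fractionally closed, $E_\sC:\sC \to \sC_\mathrm{fr}$ is an isomorphism, so $\sC \to (\widetilde{\sC})_\mathrm{rig}$ is fully faithful, exhibiting $\sC$ as a full tensor subcategory of the super Tannakian category $(\widetilde{\sC})_\mathrm{rig}$.

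The main obstacle I anticipate is making precise and correctly cited the claim that $(\widetilde{\sC})_\mathrm{rig} \simeq \Mod_{K,\varepsilon}(\overline{\kappa(\sC)})$ is \emph{super Tannakian} in the sense of Definition~\ref{d:pseudoTann} — i.e.\ that it is abelian (which is stated in the paragraph following Theorem~\ref{t:Tannequiv}: $(\widetilde{\sC})_\mathrm{rig}$ is abelian and exactly embedded), essentially small, integral (also noted there), rigid, of characteristic $0$, and satisfies the tensor-power condition. Most of these are recorded in the text immediately after Theorem~\ref{t:Tannequiv}; the tensor-power condition for $(\widetilde{\sC})_\mathrm{rig}$ follows because $\sC \to (\widetilde{\sC})_\mathrm{rig}$ is essentially surjective onto a generating class (every object is a subquotient of one in $\sC_0$, hence built from objects of $\sC$) and the condition passes to subquotients and is preserved under the relevant operations, or more directly from Lemma~\ref{l:rankinteger} applied in $(\widetilde{\sC})_\mathrm{rig}$ since it has $\End(\I) = \kappa(\sC)$ a field. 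A secondary subtlety in the ``only if'' direction is verifying that a full tensor \emph{subcategory} really does have the property that $\sC_f$ taken inside $\sC$ coincides with the intersection described above; this is a direct consequence of fullness and does not require any real work, but should be spelled out to make the fractional-closedness argument airtight.
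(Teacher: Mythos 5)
Your proof is correct, and the ``if'' direction is exactly the paper's argument: factor through $\sC_\mathrm{fr}$, use that $E_\sC$ is an isomorphism since $\sC$ is fractionally closed, use Corollary~\ref{c:FrCff} for full faithfulness of $\sC_\mathrm{fr}\to(\widetilde\sC)_\mathrm{rig}$, and Theorem~\ref{t:Tannequiv} for $(\widetilde\sC)_\mathrm{rig}$ being super Tannakian. The paper disposes of the ``only if'' direction with the single sentence ``The `only if' is clear,'' so your spelling out of that direction is extra content rather than a divergence.

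Two small inaccuracies in your ``only if'' write-up are worth fixing. First, the relevant fact from Section~\ref{s:frac} is not that a faithful tensor functor \emph{reflects} regular morphisms (that goes the wrong way), but that a tensor functor between integral tensor categories is regular (i.e.\ \emph{preserves} regular morphisms) if and only if it is faithful; since you have already shown $\sC$ inherits integrality, this is what guarantees a regular $f$ in $\sC$ stays regular in $\sD$. Once that is in place, the rest of your fractional-closure argument is right: fullness gives $\sC_f(C,C')=\sD_f(C,C')$ and then lets you pull the preimage $h_0$ with $h=f\otimes h_0$ back into $\sC(C,C')$. Second, Lemma~\ref{l:rankinteger} cannot be used to \emph{establish} the tensor-power condition for $(\widetilde\sC)_\mathrm{rig}$, since its hypothesis is precisely that condition and its conclusion is integrality of rank. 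The clean way to get the tensor-power condition is the route you mention first: every object of $(\widetilde\sC)_\mathrm{rig}\simeq\Mod_{K,\varepsilon}(\overline{\kappa(\sC)})$ has a faithful image in $\Mod(\overline{\kappa(\sC)})$, where it has finite super dimension $m|n$; the Young symmetriser $c_{\lambda_{m|n}}\in\Z[\mathfrak S_d]$ then kills the $d$th tensor power there, and faithfulness lifts the vanishing back. (Also a typo: you want $\Mod_{K,\varepsilon}$, not $\MOD_{K,\varepsilon}$, when invoking Theorem~\ref{t:Tannequiv} for the rigid part.)
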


\begin{proof}
The ``only if'' is clear.
Conversely, if $\sC$ is a fractionally closed pseudo-Tannakian category, then
$\sC \to (\widetilde{\sC})_\mathrm{rig}$ is fully faithful by \eqref{e:canfac}
with $(\widetilde{\sC})_\mathrm{rig}$ super Tannakian by Theorem~\ref{t:Tannequiv}.
\end{proof}

\begin{lem}\label{l:Tannhullab}
Let $\sC$ be a super Tannakian category.
Then the canonical tensor functor $\sC \to (\widetilde{\sC})_\mathrm{rig}$ is 
a tensor equivalence.
\end{lem}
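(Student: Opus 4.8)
The plan is to show that for a super Tannakian category $\sC$, the canonical tensor functor $U:\sC \to (\widetilde{\sC})_{\mathrm{rig}}$ is an equivalence by checking it is fully faithful and essentially surjective. Since $\sC$ is super Tannakian, it is abelian, hence rigid and fractionally closed by Lemma~\ref{l:abfracclose}, so $E_{\sC}:\sC \to \sC_{\mathrm{fr}}$ is an isomorphism and the factorisation \eqref{e:canfac} reduces to $\sC \xrightarrow{\sim} \sC_{\mathrm{fr}} \to (\widetilde{\sC})_{\mathrm{rig}}$, whose second arrow is fully faithful by Corollary~\ref{c:FrCff}. Thus $U$ is automatically fully faithful, and the entire content is essential surjectivity: every dualisable object of $\widetilde{\sC}$ lies in the essential image of $\sC$.

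For essential surjectivity, the key input is the structure developed after Theorem~\ref{t:Tannequiv}: writing $\sC_0$ for the full subcategory of $(\widetilde{\sC})_{\mathrm{rig}}$ of direct sums of objects of $\sC$, every object of $(\widetilde{\sC})_{\mathrm{rig}}$ is the kernel (equivalently cokernel) of a morphism in $\sC_0$. Since $\sC$ is abelian, $\sC_0$ is contained in $\sC$ (direct sums of objects of an abelian category exist in it), so every object of $(\widetilde{\sC})_{\mathrm{rig}}$ is the kernel in $\widetilde{\sC}$ of a morphism $f$ between objects of $\sC$. The plan is then to show that this kernel, computed in $\widetilde{\sC}$, agrees with the kernel of $f$ computed in $\sC$ — i.e. that $U$ is exact, or at least left exact. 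Given that, the kernel of $f$ in $\sC$ is an object of $\sC$ which maps to the kernel of $Uf$ in $\widetilde{\sC}$; full faithfulness of $U$ then forces this to be an isomorphism, yielding the object in the essential image.

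So the main step is to prove $U:\sC \to \widetilde{\sC}$ is left exact. I would argue as follows. An equaliser/kernel in $\sC$ is a finite limit; the Yoneda embedding $h_-:\sC \to \widehat{\sC}$ preserves limits, so $h_-$ is left exact, and in particular sends the kernel of $f$ in $\sC$ to the kernel of $h_f$ in $\widehat{\sC}$ (using that $\sC$ abelian means $h_{\ker f} \to h_A$ is already the kernel of $h_A \to h_{A'}$ as a subfunctor, since representables of monics into an abelian category behave well). Then the projection $\widehat{\sC} \to \widetilde{\sC}$ is exact, so it sends this kernel to the kernel of $\bar{h}_f = Uf$ in $\widetilde{\sC}$. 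Composing, $U = (\text{proj}) \circ h_-$ sends $\ker f$ in $\sC$ to $\ker Uf$ in $\widetilde{\sC}$, which is exactly left exactness on these maps — and that is all that is needed. The one subtlety to verify carefully, and the step I expect to be the main obstacle, is that $h_-$ from an \emph{abelian} category $\sC$ genuinely carries kernels to kernels in $\widehat{\sC}$: a priori $h_-$ preserves limits that exist, and kernels exist in $\sC$, so $h_{\ker f}$ is the limit of the relevant diagram in $\widehat{\sC}$, which is the kernel of $h_f$ since limits in $\widehat{\sC}$ are computed argumentwise and $\widehat{\sC}(h_B, -)$ is evaluation at $B$. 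Once this is pinned down the argument closes quickly, and combined with the already-established full faithfulness it gives that $U$ is a tensor equivalence.
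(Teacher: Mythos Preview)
Your proposal is correct and follows essentially the same approach as the paper's proof: both establish full faithfulness via Lemma~\ref{l:abfracclose} and \eqref{e:canfac}, observe that $\sC_0$ coincides with the essential image of $\sC$ since $\sC$ has direct sums, use that every object of $(\widetilde{\sC})_{\mathrm{rig}}$ is a kernel of a morphism in $\sC_0$, and conclude essential surjectivity from the left exactness of $\sC \to \widetilde{\sC}$ (Yoneda preserves limits, the projection is exact). The only difference is that you spell out the left exactness argument in more detail than the paper, and your remark that ``full faithfulness then forces this to be an isomorphism'' is superfluous --- left exactness already gives $U(\ker f) \cong \ker(Uf)$ directly.
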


\begin{proof}
By Lemma~\ref{l:abfracclose}, $E_{\sC}$ is an equivalence, so that by \eqref{e:canfac}
$\sC \to (\widetilde{\sC})_\mathrm{rig}$ is fully faithful with essential image 
$\sC_0$ as in \eqref{e:addhull}.
Thus any object of $(\widetilde{\sC})_\mathrm{rig}$ is the kernel of a morphism
in the image of $\sC \to (\widetilde{\sC})_\mathrm{rig}$.
Since $\sC \to \widehat{\sC}$ is left exact, so also are $\sC \to \widetilde{\sC}$ and 
$\sC \to (\widetilde{\sC})_\mathrm{rig}$.
The essential surjectivity follows.  
\end{proof}

\begin{cor}\label{c:Tannequivab}
Let $\sC$ be a super Tannakian category.
Denote by $k$ the field $\End_{\sC}(\I)$, and let $\overline{k}$ be an 
algebraic closure of $k$.
Then $\sC$ is $k$\nd tensor equivalent to 
$\Mod_{K,\varepsilon}(\overline{k})$ for some 
transitive affine super groupoid with involution 
$(K,\varepsilon)$ over $\overline{k}/k$.
\end{cor}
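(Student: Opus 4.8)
The plan is to deduce Corollary~\ref{c:Tannequivab} from Theorem~\ref{t:Tannequiv} by way of Lemma~\ref{l:Tannhullab}. First I would observe that a super Tannakian category $\sC$ is in particular a pseudo-Tannakian category, so Theorem~\ref{t:Tannequiv} applies to it: there is a transitive affine super groupoid with involution $(K,\varepsilon)$ over $\overline{\kappa(\sC)}/\kappa(\sC)$ together with a $\kappa(\sC)$\nd tensor equivalence $\widetilde{\sC} \iso \MOD_{K,\varepsilon}(\overline{\kappa(\sC)})$. The first point to nail down is that $\kappa(\sC) = k = \End_{\sC}(\I)$. This is because $\sC$ is fractionally closed by Lemma~\ref{l:abfracclose} (every abelian rigid tensor category is fractionally closed), so $E_{\sC}:\sC \to \sC_\mathrm{fr}$ is an isomorphism and hence $\kappa(\sC) = \End_{\sC_\mathrm{fr}}(\I) = \End_{\sC}(\I) = k$; moreover by \eqref{e:kappCEnd} this agrees with $\End_{\widetilde{\sC}}(\I)$. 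Thus $\overline{\kappa(\sC)}$ may be taken to be $\overline{k}$, and $(K,\varepsilon)$ is a transitive affine super groupoid with involution over $\overline{k}/k$.

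Next I would pass from the equivalence on the level of $\widetilde{\sC}$ to one on the level of rigid objects. A tensor equivalence carries dualisable objects to dualisable objects, so restricting the equivalence $\widetilde{\sC} \iso \MOD_{K,\varepsilon}(\overline{k})$ to full subcategories of dualisable objects gives a $k$\nd tensor equivalence
\begin{equation*}
(\widetilde{\sC})_\mathrm{rig} \iso \MOD_{K,\varepsilon}(\overline{k})_\mathrm{rig} = \Mod_{K,\varepsilon}(\overline{k}).
\end{equation*}
Here I would use that the dualisable objects of $\MOD_{K,\varepsilon}(\overline{k})$ are exactly the $(K,\varepsilon)$\nd modules whose underlying $\sO_{\overline{k}}$\nd module, i.e.\ underlying $\overline{k}$\nd vector space, is finite-dimensional, which is by definition $\Mod_{K,\varepsilon}(\overline{k})$; this identification of $\MOD_{K,\varepsilon}(\overline{k})_\mathrm{rig}$ with $\Mod_{K,\varepsilon}(\overline{k})$ is stated in the discussion of groupoid modules in Section~\ref{s:equ}. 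Finally, Lemma~\ref{l:Tannhullab} says that for $\sC$ super Tannakian the canonical tensor functor $\sC \to (\widetilde{\sC})_\mathrm{rig}$ is a tensor equivalence; composing this with the above gives the desired $k$\nd tensor equivalence $\sC \iso \Mod_{K,\varepsilon}(\overline{k})$.

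I expect the proof to be short, essentially an assembly of Theorem~\ref{t:Tannequiv}, Lemma~\ref{l:abfracclose}, Lemma~\ref{l:Tannhullab}, and the observation that tensor equivalences preserve rigidity. The only point requiring a moment's care — the main (mild) obstacle — is bookkeeping with the base field: one must check that the $\kappa(\sC)$\nd linear structure supplied by Theorem~\ref{t:Tannequiv} really is the $k$\nd linear structure of the statement, i.e.\ that the isomorphism $\kappa(\sC) \iso \End_{\widetilde{\sC}}(\I)$ of \eqref{e:kappCEnd} identifies $\kappa(\sC)$ with $k = \End_{\sC}(\I)$ compatibly with the equivalence $\sC \iso (\widetilde{\sC})_\mathrm{rig}$ of Lemma~\ref{l:Tannhullab}. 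Since that equivalence is the canonical tensor functor, which on $\I$ induces precisely the identification of $k$ with $\End_{(\widetilde{\sC})_\mathrm{rig}}(\I) = \End_{\widetilde{\sC}}(\I)$, this compatibility is automatic. Everything else is formal.
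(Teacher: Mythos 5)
Your proposal is correct and is essentially identical to the paper's own (one-sentence) proof: both combine Lemma~\ref{l:abfracclose} (to get $\kappa(\sC)=k$), Theorem~\ref{t:Tannequiv}, and Lemma~\ref{l:Tannhullab}, and restrict the equivalence of Theorem~\ref{t:Tannequiv} to dualisable objects. You simply spell out the bookkeeping (restriction to rigid subcategories, identification $\MOD_{K,\varepsilon}(\overline{k})_\mathrm{rig}=\Mod_{K,\varepsilon}(\overline{k})$, compatibility of base-field identifications) that the paper leaves implicit.
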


\begin{proof}
Since $k = \kappa(\sC)$ by Lemma~\ref{l:abfracclose},
the result follows from Theorem~\ref{t:Tannequiv} and Lemma~\ref{l:Tannhullab}.
\end{proof}

Recall that if $V$ is a super vector space over a field of characteristic $0$
of super dimension $m|n$, then $S^\lambda V = 0$ 
for a partition $\lambda$ if and only if $[\lambda]$ contains the rectangular diagram with 
$n+1$ rows and $m+1$ columns.
The same therefore holds with $V$ replaced by a super linear map with 
image of super dimension $m|n$.

\begin{lem}\label{l:supertensexact}
Any faithful tensor functor between super Tannakian categories is exact. 
\end{lem}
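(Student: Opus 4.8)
The plan is to use the groupoid description of super Tannakian categories to reduce the statement to the exactness of a fibre functor, and then to read off exactness from a dimension count, the crucial input being a comparison of the given functor with the canonical forgetful fibre functor.

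First I would apply Corollary~\ref{c:Tannequivab} to $\sD$ to assume $\sD = \Mod_{L,\varepsilon}(\overline l)$ for a transitive affine super groupoid with involution $(L,\varepsilon)$ over $\overline l/l$, where $l = \End_{\sD}(\I)$ and $\overline l$ is an algebraic closure of $l$. The forgetful tensor functor $\omega_{\sD}\colon \Mod_{L,\varepsilon}(\overline l) \to \Mod(\overline l)$ sending a representation to its underlying super $\overline l$\nd vector space is faithful, and it is exact because, $d_0$ and $d_1$ being faithfully flat, kernels and cokernels of morphisms of $(L,\varepsilon)$\nd modules are computed on the underlying $\sO$\nd modules. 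A faithful exact functor between abelian categories reflects exact sequences, so it suffices to show that $\omega_{\sD}\circ T$ is exact. Thus the statement is reduced to: \emph{a faithful tensor functor $S\colon \sC \to \Mod(k')$, with $\sC$ super Tannakian and $k'$ a field of characteristic $0$, is exact}. Note that $\sC$ itself carries such a functor, namely the forgetful fibre functor $\omega_{\sC}\colon \sC \to \Mod(\overline k)$ obtained by applying Corollary~\ref{c:Tannequivab} to $\sC$; it is faithful and exact.

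For the reduced statement I would argue by a dimension count. For $V$ in $\Mod(k')$ (or in $\Mod(\overline k)$) put $\mathrm{td}(V)=\dim|V|$; this is additive on short exact sequences and vanishes only on $V=0$. Let $0 \to M' \xrightarrow{i} M \xrightarrow{p} M'' \to 0$ be exact in $\sC$. Since $S$ is faithful and $p\circ i=0$, the sequence $S(M') \xrightarrow{S(i)} S(M) \xrightarrow{S(p)} S(M'')$ is a complex. I would establish two facts: first, that $S$ preserves monomorphisms, and hence, passing to duals, epimorphisms; second, that $\mathrm{td}(S(M)) = \mathrm{td}(\omega_{\sC}(M))$ for every $M$, so that $M \mapsto \mathrm{td}(S(M))$ is additive on short exact sequences. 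Granting these, $S(i)$ is a monomorphism, so $\mathrm{td}(\operatorname{im}S(i))=\mathrm{td}(S(M'))$ and $\mathrm{td}(S(M))=\mathrm{td}(S(M'))+\mathrm{td}(S(M)/\operatorname{im}S(i))$; the epimorphism $S(p)$ factors through an epimorphism $\overline{S(p)}\colon S(M)/\operatorname{im}S(i) \to S(M'')$, which is an isomorphism (equivalently, the complex is exact in the middle) precisely when $\mathrm{td}(S(M)/\operatorname{im}S(i))=\mathrm{td}(S(M''))$; and additivity of $\mathrm{td}\circ S$ forces exactly this equality. Hence $S$, and therefore $T$, is exact.

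The main obstacle is establishing the two facts above, and this is where the faithfulness of $S$ must be used. The idea is to compare $S$ with $\omega_{\sC}$: replacing $k'$ by a common algebraically closed extension $\Omega$ of $k'$ and $\overline k$ and using faithfully flat descent of exactness, it is enough to make $S\otimes_{k'}\Omega$ tensor isomorphic to $\omega_{\sC}\otimes_{\overline k}\Omega$. Since $\omega_{\sC}\otimes\Omega$ is a fibre functor, $\underline{\Iso}^{\otimes}(\omega_{\sC}\otimes\Omega, S\otimes\Omega)$ is representable by an affine $\Omega$\nd scheme on which the affine group scheme $\underline{\Aut}^{\otimes}(\omega_{\sC}\otimes\Omega)$ acts freely, and transitively once this scheme is non-empty; as $\Omega$ is algebraically closed of characteristic $0$, a non-empty such scheme has an $\Omega$\nd point, which furnishes the required tensor isomorphism, and a tensor isomorphic copy of a fibre functor is again a fibre functor, giving both facts at once. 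So the hard point is the non-emptiness of this scheme, which fails for general (non-faithful) tensor functors and for which faithfulness of $S$ is exactly what is needed; I would attack it either by presenting $\sC$ through the super general linear groups as in Proposition~\ref{p:FmnCesssurj} and Corollary~\ref{c:quotsub}, reducing to a statement about $S$ on a single generating object and its dual, or by first extending $S$ to a cocontinuous tensor functor $\widehat{\sC}\to\MOD(k')$, checking (as $S$ is regular, being faithful between integral categories) that it kills torsion objects and hence descends to a cocontinuous tensor functor out of $\widetilde{\sC}=\MOD_{K,\varepsilon}(\overline k)$, and then exploiting the structure of module categories over transitive affine super groupoids.
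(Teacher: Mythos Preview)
Your reduction and the dimension-count framework are exactly what the paper does: reduce via Corollary~\ref{c:Tannequivab} so that both source and target are categories $\Mod_{K,\varepsilon}(k')$ with a faithful exact forgetful functor to super vector spaces, and then argue that a faithful tensor functor preserves monomorphisms, epimorphisms, and dimensions, whence exactness follows from additivity of dimension on short exact sequences.

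The divergence is in how the two ``facts'' are obtained. The paper does not compare $S$ with $\omega_{\sC}$ at all; it uses the Schur-functor criterion recalled immediately before the lemma: for a super vector space $V$ (or the image of a super linear map) of super dimension $m|n$, one has $S^\lambda V=0$ if and only if $[\lambda]$ contains the $(n{+}1)\times(m{+}1)$ rectangle. Since a tensor functor commutes with Schur functors and a faithful one reflects zero objects, the set $\{\lambda: S^\lambda M=0\}$ coincides with $\{\lambda: S^\lambda T(M)=0\}$, so $T$ preserves super dimensions; applying the same criterion to a morphism shows $T$ preserves the super dimension of images, hence monomorphisms and epimorphisms. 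This gives both of your facts in one line, with no need to produce a tensor isomorphism between $S$ and $\omega_{\sC}$.

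By contrast, your route through $\underline{\Iso}^\otimes(\omega_{\sC}\otimes\Omega,\,S\otimes\Omega)$ is, as stated, circular: in the paper the non-emptiness (indeed faithful flatness) of such isomorphism schemes is Theorem~\ref{t:Isofthfl}, whose proof passes through Theorem~\ref{t:TannhullModX}, which in turn invokes Lemma~\ref{l:supertensexact}. Your two suggested workarounds do not close the gap either. The first (``reduce to a single generating object via Proposition~\ref{p:FmnCesssurj}'') does not by itself produce a point of the isomorphism scheme. The second (``extend $S$ to $\widehat{\sC}$ and descend to $\widetilde{\sC}$'') requires showing that $S^*$ sends isomorphisms up to torsion to isomorphisms, which is essentially the exactness you are trying to prove; regularity of $S$ alone is not enough here. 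So the proposal has a genuine gap at precisely the point you flag as ``the hard point'', and the fix is not your comparison-of-fibre-functors programme but the direct Schur-functor argument the paper uses.
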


\begin{proof}
By Corollary~\ref{c:Tannequivab}, it is enough to prove the exactness of any faithful
tensor functor $T$ between tensor categories of the form $\Mod_{K,\varepsilon}(k')$
for an extension $k'$ of a field $k$ of characteristic $0$
and a transitive affine groupoid with involution $(K,\varepsilon)$ over $k'/k$.
The property of Schur functors recalled above shows that
$T$ preserves super dimensions of representations, 
and also monomorphisms and epimorphisms.
Since super dimensions are additive for short exact sequences of representations, 
the result follows.
\end{proof}

\begin{lem}\label{l:Tannhullff}
Let $\sC$ be a pseudo-Tannakian category and $\sA$ be an abelian tensor category.
Then composition with the canonical tensor functor $\sC \to (\widetilde{\sC})_\mathrm{rig}$
defines a fully faithful functor from the groupoid of right exact regular tensor
functors $(\widetilde{\sC})_\mathrm{rig} \to \sA$ to the groupoid of regular tensor functors
$\sC \to \sA$.  
\end{lem}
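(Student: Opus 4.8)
The plan is to reduce to the following: for any two right exact regular tensor functors $S,S'\colon(\widetilde{\sC})_\mathrm{rig}\to\sA$, the assignment $\psi\mapsto\psi U$, where $U\colon\sC\to(\widetilde{\sC})_\mathrm{rig}$ is the canonical tensor functor, is a \emph{bijection} from tensor isomorphisms $S\iso S'$ to tensor isomorphisms $SU\iso S'U$. Since source and target are groupoids, this is precisely the asserted full faithfulness. (That the functor in question is well defined is immediate: $U$ is regular, so $SU$ and $S'U$ are regular tensor functors $\sC\to\sA$, and $\psi U$ is a tensor isomorphism.) So I fix a tensor isomorphism $\varphi\colon SU\iso S'U$ and must produce a unique tensor isomorphism $\psi\colon S\iso S'$ with $\psi U=\varphi$.

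The first step is to transport $\varphi$ off $\sC$. Write $U=\iota\circ E_{\sC}$ with $\iota\colon\sC_\mathrm{fr}\to(\widetilde{\sC})_\mathrm{rig}$ the second arrow of \eqref{e:canfac}; by Corollary~\ref{c:FrCff} the functor $\iota$ is fully faithful, hence faithful, hence regular, being a tensor functor between integral tensor categories. Thus $S\iota$ and $S'\iota$ are regular tensor functors $\sC_\mathrm{fr}\to\sA$ with $(S\iota)E_{\sC}=SU$ and $(S'\iota)E_{\sC}=S'U$, so by the universal property of the fractional closure (Section~\ref{s:frac}) $\varphi$ is at the same time a tensor isomorphism $S\iota\iso S'\iota$. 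Next, by \eqref{e:addhull} the inclusion $\sC_\mathrm{fr}\to\sC_0$ is the embedding into the additive hull, and $S,S'$ restrict to additive functors $\sC_0\to\sA$; since a natural transformation between additive functors on an additive hull is the same thing as one between their restrictions to $\sC_\mathrm{fr}$, the isomorphism $S\iota\iso S'\iota$ extends uniquely to a natural isomorphism $\psi_0\colon S|_{\sC_0}\iso S'|_{\sC_0}$, which is tensor compatible because $\sC_0$ is a full tensor subcategory of $(\widetilde{\sC})_\mathrm{rig}$ and $\psi_0$ is built diagonally from $\varphi$.

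The second step extends $\psi_0$ over all of $(\widetilde{\sC})_\mathrm{rig}$ using right exactness. By Theorem~\ref{t:Tannequiv} and the remarks following it, $(\widetilde{\sC})_\mathrm{rig}$ is abelian, closed in $\widetilde{\sC}$ under subquotients and extensions, and every one of its objects is the cokernel of a morphism of $\sC_0$ and admits an epimorphism from an object of $\sC_0$. For $M=\Coker(g\colon B\to A)$ with $g$ in $\sC_0$, right exactness makes $S(p)\colon S(A)\to S(M)$ the cokernel of $S(g)$ (and likewise for $S'$), so there is a unique $\psi_M$ with $\psi_M\circ S(p)=S'(p)\circ\psi_{0,A}$, and it is an isomorphism (run the construction again with $\varphi^{-1}$). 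The two things to check are that $\psi_M$ is independent of the presentation and that $\psi$ is natural; both follow from one device. Given $u\colon M\to M'$, a presentation $g'\colon B'\to A'$ of $M'$, and any epimorphism $p\colon A\to M$ from $\sC_0$, choose an epimorphism $C\to A\times_{M'}A'$ with $C$ in $\sC_0$; then the naturality of $\psi_0$ on $\sC_0$, together with the fact that $S,S'$ preserve epimorphisms, lets one cancel $S(C\to A)$ to get $\psi_{M'}\circ S(up)=S'(up)\circ\psi_{0,A}$. Taking $M'=M$, $u=\mathrm{id}$ gives independence of the presentation (hence $\psi|_{\sC_0}=\psi_0$), and the general case gives naturality. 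Tensor compatibility of $\psi$ then reduces, using that $\otimes$ on $(\widetilde{\sC})_\mathrm{rig}$ is exact and that $M\otimes N$ is presented by the tensor product of presentations, to that of $\psi_0$; and $\psi U=\varphi$ by construction. Uniqueness is easy: any $\psi'$ with $\psi'U=\varphi$ agrees with $\varphi$ on $\sC$, hence with $\psi_0$ on $\sC_0$ by the same two universal properties, hence with $\psi$ on $(\widetilde{\sC})_\mathrm{rig}$ since each $S(p)$ is an epimorphism.

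The main obstacle is the bookkeeping in the third step: there is no projectivity available in $(\widetilde{\sC})_\mathrm{rig}$, so a morphism between objects presented by elements of $\sC_0$ cannot be lifted to a morphism of presentations directly, and one must instead route through a pullback and a third presentation drawn from $\sC_0$—which exists precisely because $(\widetilde{\sC})_\mathrm{rig}$ is closed under subquotients and extensions (Theorem~\ref{t:Tannequiv}). Everything else is a routine chaining of three universal properties: that of the fractional closure, that of the additive hull, and that of extension along a right exact functor.
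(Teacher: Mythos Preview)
Your proof is correct and follows essentially the same route as the paper's: reduce via the universal property of $E_{\sC}$ and the additive hull to extending a tensor isomorphism $\psi_0$ from $\sC_0$ to all of $(\widetilde{\sC})_\mathrm{rig}$, define $\psi_M$ using an epimorphism from $\sC_0$ (which is a cokernel of a morphism in $\sC_0$), and verify naturality by pulling back along a given morphism and covering the pullback from $\sC_0$. The only cosmetic difference is that the paper treats independence of the presentation separately via the direct sum $B_1\oplus B_2\to A$, whereas you fold it into the naturality argument by taking $u=\mathrm{id}$; and for tensor compatibility the paper phrases the key point as ``$T_1(A)$ is dualisable hence $T_1(p)\otimes T_1(p')$ is an epimorphism'', while you use right exactness of $S$ applied to the epimorphism $p\otimes p'$ in $(\widetilde{\sC})_\mathrm{rig}$---these amount to the same thing.
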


\begin{proof}
Composition with $E_{\sC}$ defines a fully faithful functor from regular
tensor functors $\sC_\mathrm{fr} \to \sA$ to regular tensor functors $\sC \to \sA$,
and composition with the first arrow of \eqref{e:addhull} defines an equivalence
from tensor functors $\sC_0 \to \sA$ to tensor functors $\sC_\mathrm{fr} \to \sA$.  
It thus enough to show that given right exact tensor functors $T_1$ and $T_2$ 
from $(\widetilde{\sC})_\mathrm{rig}$ to $\sA$ and a tensor isomorphism
\begin{equation*}
\varphi_0:T_1|\sC_0 \iso T_2|\sC_0,
\end{equation*}
there is a unique tensor isomorphism
$\varphi:T_1 \iso T_2$ with $\varphi|\sC_0 = \varphi_0$.

As above, every object $A$ of $(\widetilde{\sC})_\mathrm{rig}$ is the target
of an epimorphism $p:B \to A$ in $(\widetilde{\sC})_\mathrm{rig}$ with $B$ in $\sC_0$,
and $p$ may be written as the cokernel of a morphism in $\sC_0$.
Thus $\varphi$ is unique if it exists, because $\varphi_A$ must render the square
\begin{equation*}
\xymatrix{
T_1(B) \ar_{T_1(p)}[d] \ar^{\varphi_0{}_B}[r] & T_2(B) \ar^{T_2(p)}[d] \\
T_1(A) \ar^{\varphi_A}[r] & T_2(A)
}
\end{equation*}
in $\sA$ commutative, with the left arrow an epimorphism.
That $\varphi_A$ as defined by the square exists and is an isomorphism can be seen
by writing $p$ as the cokernel of a morphism in $(\widetilde{\sC})_\mathrm{rig}$
and using the naturality of $\varphi_0$.
It is independent of the choice of $p$, because two epimorphisms $B_1 \to A$ and $B_2 \to A$
both factor through the same epimorphism $B_1 \oplus B_2 \to A$.

That $\varphi_A = \varphi_0{}_A$ for $A$ in $\sC_0$ is clear.
It remains to check that $\varphi_A$ is natural in $A$ and compatible with the  
tensor product.
Let $a:A' \to A$ be a morphism in $(\widetilde{\sC})_\mathrm{rig}$.
If $p:B \to A$
is an epimorphism with $B$ in $\sC_0$, 
composing the pullback of $p$ along $a$ with an appropriate epimorphism gives
a commutative square
\begin{equation*}
\xymatrix{
B' \ar_{p'}[d] \ar[r] & B \ar^{p}[d] \\
A' \ar^a[r] & A
}
\end{equation*}
in $(\widetilde{\sC})_\mathrm{rig}$ with $p$ and $p'$ epimorphisms and $B$ and $B'$ in $\sC_0$.
We obtain from it a cube with front and back faces given by applying $T_1$ and $T_2$,
left and right faces by the commutative diagrams defining $\varphi_{A'}$ and $\varphi_A$,
whose top face commutes by naturality of $\varphi_0$.
Since $T_1(p')$ is an epimorphism, the bottom face also commutes.
This gives the naturality of $\varphi$.
Similarly given $A$ and $A'$, we obtain from epimorphisms $p:B \to A$
and $p':B' \to A'$ with $B$ and $B'$ in $\sC_0$ a cube where the front
and back faces commute by naturality of the tensor structural isomorphisms of 
$T_1$ and $T_2$, the left and right faces by naturality of $\varphi$,
and the top face by compatibility of $\varphi_0$ with the tensor product.
The bottom face, expressing the compatibility of $\varphi$ with the tensor product,
thus also commutes, because $T_1(A)$ is dualisable and hence 
$T_1(A) \otimes -$ is exact and $T_1(p) \otimes T_1(p')$ is an epimorphism.
\end{proof}

\begin{defn}
Let $\sC$ be a pseudo-Tannakian category.
A faithful tensor functor $T:\sC \to \sC'$ will be called a \emph{super Tannakian hull of $\sC$} 
if for every super Tannakian category $\sD$, composition with $T$ defines an equivalence
from the groupoid of faithful tensor functors $\sC' \to \sD$ to the groupoid of 
faithful tensor functors $\sC \to \sD$.
\end{defn}

\begin{thm}\label{t:Tannhull}
For any pseudo-Tannakian category $\sC$,
the canonical tensor functor $\sC \to (\widetilde{\sC})_\mathrm{rig}$ is
a super Tannakian hull of $\sC$.
\end{thm}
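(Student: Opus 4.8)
The plan is to verify the defining universal property of a super Tannakian hull directly for the functor $U : \sC \to (\widetilde{\sC})_{\mathrm{rig}}$, using the results already assembled. Fix a super Tannakian category $\sD$. I must show that composition with $U$ is an equivalence from the groupoid of faithful tensor functors $(\widetilde{\sC})_{\mathrm{rig}} \to \sD$ to the groupoid of faithful tensor functors $\sC \to \sD$. Full faithfulness of this composition functor is already essentially in hand: by Lemma~\ref{l:Tannhullff}, composition with $U$ is fully faithful as a functor from the groupoid of \emph{right exact regular} tensor functors $(\widetilde{\sC})_{\mathrm{rig}} \to \sD$ to the groupoid of regular tensor functors $\sC \to \sD$. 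So the first reduction is to check that, for the target $\sD$ super Tannakian, ``faithful'' and ``right exact regular'' coincide on both sides. A faithful tensor functor into a super Tannakian category is regular (a faithful tensor functor reflects regular morphisms, and in the integral target every non-zero morphism is regular, so faithfulness forces regularity of the functor); and by Lemma~\ref{l:supertensexact} any faithful tensor functor between super Tannakian categories is exact, so in particular right exact. Since $(\widetilde{\sC})_{\mathrm{rig}}$ is super Tannakian by Theorem~\ref{t:Tannequiv} (it is abelian by the discussion following that theorem, and integral, rigid, of characteristic $0$, with the Schur-functor vanishing inherited from $\sC$), any faithful tensor functor out of it is automatically right exact and regular. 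On the $\sC$ side, regularity is automatic as just noted. Hence Lemma~\ref{l:Tannhullff} gives full faithfulness of composition with $U$ between the relevant groupoids of faithful tensor functors.

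The remaining, and main, task is \textbf{essential surjectivity}: given a faithful tensor functor $T : \sC \to \sD$ with $\sD$ super Tannakian, I must produce a faithful tensor functor $\widetilde{T} : (\widetilde{\sC})_{\mathrm{rig}} \to \sD$ with $\widetilde{T} \circ U$ tensor isomorphic to $T$. The strategy is to pass through the functor categories. By Theorem~\ref{t:Fhatequiv} and the equivalence $\widehat{\sC} \simeq \MOD_{\widehat{\sF_{\bm|\bn}}}(R)$, together with the description of $\widetilde{\sC}$ as $\overline{\widehat{\sC}}$, I have a concrete handle on $\widetilde{\sC}$; but the cleaner route is to use that $\sD$, being super Tannakian, is cocomplete after passing to its category of ind-objects $\Ind(\sD)$ (equivalently, by Corollary~\ref{c:Tannequivab}, $\sD = \Mod_{K,\varepsilon}(\overline k)$ sits inside $\MOD_{K,\varepsilon}(\overline k)$, a well-dualled Grothendieck tensor category with no non-zero torsion objects). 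First I would extend $T$ by additive left Kan extension to a cocontinuous tensor functor $T^* : \widehat{\sC} \to \MOD_{K,\varepsilon}(\overline k)$ with $T^* h_- \cong T$ (Section~\ref{s:fun}). Because $T$ is faithful, the induced tensor functor $\sC \to (\widehat{\sC})_{\mathrm{rig}} \to \widehat{\sC}$ is regular, and by Lemma~\ref{l:adjtorspres} (applied with the right adjoint $T_*$) $T^*$ sends torsion objects of $\widehat{\sC}$ to torsion objects of $\MOD_{K,\varepsilon}(\overline k)$, which are zero since $\MOD_{K,\varepsilon}(\overline k)$ has no non-zero torsion objects (it is abelian rigid on its dualisable part, and one checks $\I$ has no non-zero torsion quotients because $K$ is transitive). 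Hence $T^*$ factors through the projection $\widehat{\sC} \to \widetilde{\sC}$ as a cocontinuous tensor functor $\overline{T^*} : \widetilde{\sC} \to \MOD_{K,\varepsilon}(\overline k)$. Restricting to dualisable objects gives a tensor functor $(\widetilde{\sC})_{\mathrm{rig}} \to \Mod_{K,\varepsilon}(\overline k) = \sD$; I would name this $\widetilde{T}$. That $\widetilde{T} \circ U \cong T$ follows from $T^* h_- \cong T$ and compatibility of all the factorisations with $h_-$.

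The delicate point, which I expect to be \textbf{the main obstacle}, is showing $\widetilde{T}$ is \emph{faithful}. A priori $\overline{T^*}$ need only be the left Kan extension, and faithfulness of $T$ on $\sC$ does not transparently propagate to faithfulness on all dualisable objects of $\widetilde{\sC}$, which are built from objects of $\sC$ by kernels and cokernels (as recorded in the discussion after Theorem~\ref{t:Tannequiv}: every object of $(\widetilde{\sC})_{\mathrm{rig}}$ is a kernel, and also a cokernel, of a morphism in $\sC_0$). The plan here is: since $\widetilde{T}$ is a tensor functor between super Tannakian categories, by Lemma~\ref{l:supertensexact} it would be exact \emph{once known to be faithful} — so I cannot use that directly, but I can argue the other way. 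A tensor functor between rigid tensor categories is faithful iff it kills no non-zero object of the pseudo-abelian/abelian hull; equivalently, using rigidity, iff $\widetilde{T}(f) \neq 0$ for every non-zero $f : A \to \I$. Given such $f$ in $(\widetilde{\sC})_{\mathrm{rig}}$, by the remarks after Theorem~\ref{t:Tannequiv} I may compose with an epimorphism $A' \to A$ with $A' \in \sC_0$ to assume $f$ comes from the additive hull of $\sC_{\mathrm{fr}}$, hence (clearing denominators via a regular morphism, using that $\sC \to \sC_{\mathrm{fr}}$ is regular and $\widetilde T$ factors through a tensor functor on $\sC_\mathrm{fr}$) from a non-zero morphism in $\sC$ itself; then faithfulness of $T$ on $\sC$, together with the fact that $\overline{T^*}$ restricted along $\sC \to \widetilde{\sC}$ agrees with $T$ up to isomorphism, forces $\widetilde{T}(f) \neq 0$. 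One must be careful that ``non-zero in $(\widetilde{\sC})_{\mathrm{rig}}$'' is preserved under these reductions; this uses that $\sC \to (\widetilde{\sC})_{\mathrm{rig}}$ is faithful (from \eqref{e:canfac} and Corollary~\ref{c:FrCff}) and that the quotient projection $\widehat{\sC} \to \widetilde{\sC}$ is faithful on hom-groups with torsion-free target. Once faithfulness of $\widetilde{T}$ is established, essential surjectivity is complete, and combining with the full faithfulness from Lemma~\ref{l:Tannhullff} as above finishes the proof that $U$ is a super Tannakian hull.
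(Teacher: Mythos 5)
Your overall strategy matches the paper's: use Lemma~\ref{l:supertensexact} and Lemma~\ref{l:Tannhullff} to reduce to essential surjectivity, then factor a faithful $T:\sC\to\sD$ through $\widetilde{\sC}$. However, the paper's construction passes through $\widehat{\sD}$ and $\widetilde{\sD}$: it forms $\widehat{T}:\widehat{\sC}\to\widehat{\sD}$, uses Lemma~\ref{l:isotors} to produce $\widetilde{T}:\widetilde{\sC}\to\widetilde{\sD}$, and then invokes Lemma~\ref{l:Tannhullab} to identify $(\widetilde{\sD})_{\mathrm{rig}}$ with $\sD$ and conclude that $\widetilde{T}$ restricted to dualisable objects gives the required factorisation. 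You instead Kan-extend to $T^*:\widehat{\sC}\to\MOD_{K,\varepsilon}(\overline{k})$ and bypass $\widehat{\sD}$.

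The step where you write ``Hence $T^*$ factors through the projection $\widehat{\sC}\to\widetilde{\sC}$'' has a genuine gap. The universal property of the Serre quotient requires that the functor send every isomorphism up to torsion to an isomorphism, i.e.\ both epimorphisms with torsion kernel \emph{and} monomorphisms with torsion cokernel. Since $T^*$ is cocontinuous, hence right exact, the first type is fine: for $0\to K\to M\to N\to 0$ with $K$ torsion, $T^*(K)=0$ and right exactness give $T^*(M)\iso T^*(N)$. But for a monomorphism $M'\to M$ with torsion cokernel $M''$, right exactness and $T^*(M'')=0$ give only that $T^*(M')\to T^*(M)$ is an epimorphism, not an isomorphism. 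Killing torsion objects is strictly weaker than sending $\sS$\nobreakdash-isomorphisms to isomorphisms when the functor is not known to be exact. Lemma~\ref{l:adjtorspres}, which you cite, gives only preservation of torsion objects and does not close this gap; the paper's Lemma~\ref{l:isotors} is precisely the extra input, and its proof (using the right adjoint $F_\wedge$, the projection formula \eqref{e:projhat}, and Lemma~\ref{l:tensisotors}) is nontrivial and specific to targets of the form $\widehat{\sC'}$. To make your route work you would either need to prove the analogue of Lemma~\ref{l:isotors} with target $\MOD_{K,\varepsilon}(\overline{k})$, or first observe that $\MOD_{K,\varepsilon}(\overline{k})\simeq\widetilde{\sD}$ and that $T^*$ is tensor isomorphic to the projection composed with $\widehat{T}$, which brings you back to the paper's argument.

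Your discussion of faithfulness of the factored functor is a valid point and the sketch you give (reduce by rigidity to morphisms into $\I$, precompose with an epimorphism from $\sC_0$, clear denominators using $\sC_{\mathrm{fr}}$ and regularity, then use faithfulness of $T$) is essentially correct; the paper's proof does not explicitly address this, so you have identified and handled a point that is elided there.
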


\begin{proof}
Let $\sD$ be a super Tannakian category.
By Lemma~\ref{l:supertensexact} and Lemma~\ref{l:Tannhullff} with $\sA = \sD$, 
it is enough to show that every 
faithful tensor functor $T:\sC \to \sD$ factors up to tensor isomorphism through 
$\sC \to (\widetilde{\sC})_\mathrm{rig}$.
We have a diagram
\begin{equation*}
\xymatrix{
\sC \ar^{T}[d] \ar[r] & \widehat{\sC} \ar^{\widehat{T}}[d] \ar[r] & \widetilde{\sC} 
\ar^{\widetilde{T}}[d] \\
\sD \ar[r] & \widehat{\sD} \ar[r] & \widetilde{\sD}
}
\end{equation*}
of tensor functors, where the horizontal arrows are the canonical ones,
the left square commutes up to tensor isomorphism, and the right square commutes.
By Lemma~\ref{l:Tannhullab}, the composite if the bottom two arrows is fully faithful
with essential image $(\widetilde{\sD})_\mathrm{rig}$.
Since $\widetilde{T}$ sends $(\widetilde{\sC})_\mathrm{rig}$ into $(\widetilde{\sD})_\mathrm{rig}$, 
the required factorisation follows. 
\end{proof}

\begin{cor}\label{c:Tannhulleq}
Let $\sC$ be a pseudo-Tannakian category, $\sC'$ be a super Tannakian category,
and $T:\sC \to \sC'$ be a faithful tensor functor.
Denote by $T':\sC_\mathrm{fr} \to \sC'$ the tensor functor with $T = T'E_{\sC}$.
Then the following conditions are equivalent:
\begin{enumerate}
\renewcommand{\theenumi}{(\alph{enumi})}
\item\label{i:Tannhulldef}
$T$ is a super Tannakian hull of $\sC$.
\item\label{i:Tannhullffq}
$T'$ is fully faithful and every object of $\sC'$ is a quotient of 
a direct sum of objects in the essential image of $T'$.
\end{enumerate}
\end{cor}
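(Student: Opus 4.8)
The plan is to prove the two implications separately, using Theorem~\ref{t:Tannhull} together with the description of super Tannakian hulls via $(\widetilde{\sC})_\mathrm{rig}$.

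\medskip

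\textbf{Proof of \ref{i:Tannhulldef} $\implies$ \ref{i:Tannhullffq}.} Suppose $T$ is a super Tannakian hull. By Theorem~\ref{t:Tannhull}, the canonical tensor functor $U:\sC \to (\widetilde{\sC})_\mathrm{rig}$ is also a super Tannakian hull, so by the universal property applied with $\sD = \sC'$, there is a faithful tensor functor $S:(\widetilde{\sC})_\mathrm{rig} \to \sC'$ with $SU \cong T$, and by applying the universal property of $U$ in the other direction (with $\sD = (\widetilde{\sC})_\mathrm{rig}$, which is super Tannakian by Theorem~\ref{t:Tannequiv}) there is $S':\sC' \to (\widetilde{\sC})_\mathrm{rig}$ with $S'T \cong U$, and $S$, $S'$ are quasi-inverse tensor equivalences (this is the standard argument: $S'S$ and $SS'$ are both compatible with the hull structure, hence tensor isomorphic to identities). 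Thus I may replace $\sC'$ by $(\widetilde{\sC})_\mathrm{rig}$ and $T$ by $U$. Now $T' = U'$ is the second arrow of \eqref{e:canfac}, which is fully faithful by Corollary~\ref{c:FrCff}, and by the remarks following \eqref{e:addhull} every object of $(\widetilde{\sC})_\mathrm{rig}$ is a quotient of an object in $\sC_0$, i.e.\ of a direct sum of objects in the image of $\sC_\mathrm{fr} = $ the essential image of $T'$. Since $S$ is a tensor equivalence it carries this statement back to $\sC'$, giving \ref{i:Tannhullffq}.

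\medskip

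\textbf{Proof of \ref{i:Tannhullffq} $\implies$ \ref{i:Tannhulldef}.} Assume $T'$ is fully faithful and the quotient condition holds. The idea is to produce a tensor equivalence $\sC' \iso (\widetilde{\sC})_\mathrm{rig}$ under $\sC$, from which \ref{i:Tannhulldef} follows by Theorem~\ref{t:Tannhull}. By Theorem~\ref{t:Tannhull} and the definition of super Tannakian hull applied with $\sD = \sC'$, composition with $U:\sC \to (\widetilde{\sC})_\mathrm{rig}$ gives an equivalence on groupoids of faithful tensor functors, so $T$ factors as $R U$ for a faithful tensor functor $R:(\widetilde{\sC})_\mathrm{rig} \to \sC'$, unique up to tensor isomorphism. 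It remains to show $R$ is a tensor equivalence. By Lemma~\ref{l:supertensexact}, $R$ is exact. For fullness and faithfulness: since $T'$ is fully faithful, $R$ restricts to a fully faithful functor on $\sC_0$ (using that $\sC_0$ is the additive hull of the image of $\sC_\mathrm{fr}$ and that additive functors agreeing on a full subcategory extend); then using that every object of $(\widetilde{\sC})_\mathrm{rig}$ is both a cokernel and a kernel of a morphism in $\sC_0$ (the remarks after \eqref{e:addhull}), together with exactness of $R$ and the analogous statement in $\sC'$ coming from the quotient hypothesis and its dual, a standard diagram chase shows $R$ is fully faithful. For essential surjectivity: the quotient hypothesis says every object of $\sC'$ is a quotient of a direct sum of objects of $R(\sC_0)$; since $R$ is exact and fully faithful and $(\widetilde{\sC})_\mathrm{rig}$ is abelian and closed under subquotients in $\widetilde{\sC}$, every such quotient lies in the essential image of $R$.

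\medskip

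\textbf{Main obstacle.} The delicate point is the full faithfulness of $R$ in the second implication: passing from full faithfulness on $\sC_0$ to full faithfulness on all of $(\widetilde{\sC})_\mathrm{rig}$ requires carefully exploiting that objects there are simultaneously kernels and cokernels of morphisms between objects of $\sC_0$, and that the hypothesis on $\sC'$ gives (after dualising) the corresponding presentations in $\sC'$. This is essentially the same kind of argument as in the proof of Lemma~\ref{l:Tannhullff}, and I would model the diagram chase on it, using exactness of $R$ (Lemma~\ref{l:supertensexact}) at each step. The rest is bookkeeping with the universal property of Theorem~\ref{t:Tannhull}.
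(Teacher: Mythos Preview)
Your proposal is correct and follows essentially the same approach as the paper: factor $T$ as $T''U$ via Theorem~\ref{t:Tannhull} and show that \ref{i:Tannhullffq} holds if and only if $T''$ is an equivalence, using exactness (Lemma~\ref{l:supertensexact}), full faithfulness on $\sC_0$, and the cokernel presentations after \eqref{e:addhull}. The only tactical difference is that for full faithfulness of $T''$ the paper exploits rigidity to reduce to morphisms with target $\I$, so only the cokernel presentation of the source is needed and no hypothesis on $\sC'$ enters at that step; your argument via both kernel and cokernel presentations works as well, and the ``analogous statement in $\sC'$'' you invoke is in fact only required for essential surjectivity, not for full faithfulness.
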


\begin{proof}
Write $U:\sC \to (\widetilde{\sC})_\mathrm{rig}$ for the canonical tensor functor.
We may assume by Theorem~\ref{t:Tannhull} that $T = T''U$ for a faithful tensor functor 
\begin{equation*}
T'':(\widetilde{\sC})_\mathrm{rig} \to \sC'. 
\end{equation*}
It is then to be shown that \ref{i:Tannhullffq} holds if and only if $T''$ is an equivalence.

We have $T' = T''U'$ with $U'$ the second arrow of \eqref{e:canfac},
and it has been seen following \eqref{e:canfac} and \eqref{e:addhull} that
\ref{i:Tannhullffq} holds with $T$ and $T'$ replaced by $U$ and $U'$.
Thus if $T''$ is an equivalence then \ref{i:Tannhullffq} holds.

Conversely suppose that \ref{i:Tannhullffq} holds. 
Then the restriction of $T''$ to the essential image of the fully faithful
functor $U'$ is fully faithful, and hence so also is the restriction of $T''$
to the additive hull $\sC_0$ of this essential image in $(\widetilde{\sC})_\mathrm{rig}$.
Since every object of $(\widetilde{\sC})_\mathrm{rig}$ is a cokernel of a morphism
in $\sC_0$, and since $T''$ is exact by Lemma~\ref{l:supertensexact},
considering morphisms in $(\widetilde{\sC})_\mathrm{rig}$ with target $\I$ shows that
$T''$ is fully faithful.
The essential image of $T''$ contains the additive hull of the essential image of $T'$.
Thus by the hypothesis on $T'$, every object of $\sC$ is a cokernel of a morphism
between objects in the essential image of $T''$.
The essential surjectivity of $T''$ then follows from its full faithfulness and exactness. 
\end{proof}

\begin{cor}
Any faithful tensor functor from a fractionally closed rigid tensor category
to a category of super vector spaces over a field of characteristic $0$ is conservative.
\end{cor}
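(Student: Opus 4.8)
The plan is to reduce to a single morphism inside an essentially small rigid tensor subcategory and then invoke the universal property of the super Tannakian hull. Let $\sC$ be a fractionally closed rigid tensor category and $T:\sC\to\MOD(k')$ a faithful tensor functor, with $k'$ a field of characteristic $0$. Since $\sC$ is rigid and $T$ preserves dualisable objects, $T$ takes values in the full tensor subcategory $\Mod(k')$ of finite-dimensional super $k'$\nd vector spaces, which is an integral super Tannakian category. To prove $T$ conservative it suffices to show that if $f:M\to N$ is a morphism of $\sC$ with $T(f)$ an isomorphism then $f$ is an isomorphism. First I would replace $\sC$ by the strictly full rigid tensor subcategory $\sC_1$ generated by $M$ and $N$; this is essentially small, and $T$ restricts to a faithful tensor functor $\sC_1\to\Mod(k')$.

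The first real task is to check that $\sC_1$ is a fractionally closed pseudo-Tannakian category. Integrality of $\sC_1$ and the fact that it has characteristic $0$ follow from faithfulness of $T$ together with the integrality of $\Mod(k')$: a tensor product of nonzero morphisms of $\sC_1$ has nonzero image, hence is nonzero, and $\End_{\sC_1}(\I)$ embeds into $k'$. For each object $P$ of $\sC_1$, writing $a|b$ for the super dimension of $T(P)$, the Young symmetriser $c_\lambda\in\Z[\mathfrak{S}_n]$ with $\lambda$ the rectangular partition having $b+1$ rows and $a+1$ columns and $n=(a+1)(b+1)$ acts as $0$ on $T(P)^{\otimes n}$ by the result of Deligne recalled above, hence acts as $0$ on $P^{\otimes n}$ by faithfulness; thus $\sC_1$ is pseudo-Tannakian. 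For fractional closure the key observation is that a morphism $f$ of $\sC_1$ is regular in $\sC_1$ if and only if $T(f)\ne 0$ if and only if $f$ is regular in $\sC$, and that for such an $f:A\to A'$ and objects $C,C'$ of $\sC_1$ the subgroup $(\sC_1)_f(C,C')$ of $\sC_1(A\otimes C,A'\otimes C')$, the group $\sC_1(C,C')$, and the homomorphism \eqref{e:fracclose} coincide with the corresponding data computed in $\sC$; since $\sC$ is fractionally closed, $f$ is then strongly regular in $\sC$, hence in $\sC_1$.

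Next I would invoke the super Tannakian hull of $\sC_1$. By Theorem~\ref{t:Tannhull} the canonical tensor functor $\iota:\sC_1\to(\widetilde{\sC_1})_{\mathrm{rig}}$ is a super Tannakian hull of $\sC_1$; it is fully faithful by \eqref{e:canfac} because $E_{\sC_1}$ is an isomorphism ($\sC_1$ being fractionally closed), and $\sD:=(\widetilde{\sC_1})_{\mathrm{rig}}$ is super Tannakian by Theorem~\ref{t:Tannequiv}. Since $\Mod(k')$ is super Tannakian, the defining property of a super Tannakian hull, applied with $\sD=\Mod(k')$, shows that the faithful tensor functor $T:\sC_1\to\Mod(k')$ factors, up to tensor isomorphism, as $T\cong S\circ\iota$ for some faithful tensor functor $S:(\widetilde{\sC_1})_{\mathrm{rig}}\to\Mod(k')$.

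Finally, $S$ is a faithful tensor functor between super Tannakian categories, hence exact by Lemma~\ref{l:supertensexact}; being faithful and exact between abelian categories, $S$ is conservative. Since $S(\iota(f))$ is, via the tensor isomorphism $T\cong S\circ\iota$, conjugate to the isomorphism $T(f)$, it is an isomorphism; as $S$ is conservative, $\iota(f)$ is an isomorphism in $\sD$; as $\iota$ is fully faithful, $f$ is an isomorphism in $\sC_1$; and hence $f$ is an isomorphism in $\sC$, since $\sC_1$ is a full subcategory. The main obstacle is the verification of the second paragraph, namely that $\sC_1$ is again fractionally closed and pseudo-Tannakian — in particular that fractional closure passes to a full rigid tensor subcategory through which $T$ still factors faithfully — after which the statement follows formally from the universal property of the hull.
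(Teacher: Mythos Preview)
Your proof is correct and follows essentially the same route as the paper: reduce to an essentially small full rigid tensor subcategory, factor $T$ through the super Tannakian hull using Theorem~\ref{t:Tannhull}, apply Lemma~\ref{l:supertensexact} to see that the second factor is exact and hence conservative, and conclude via full faithfulness of $U$. The one place where you are more careful than the paper is in explicitly verifying that the chosen full rigid tensor subcategory $\sC_1$ remains fractionally closed (so that $E_{\sC_1}$ is an isomorphism and $\iota$ is fully faithful); the paper simply asserts this step, and your argument---that regularity in $\sC_1$ and in $\sC$ both amount to being nonzero, and that the groups $(\sC_1)_f(C,C')$ and the map \eqref{e:fracclose} agree with those computed in the ambient $\sC$---is exactly the right justification.
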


\begin{proof}
Let $\sC$ be a fractionally closed rigid tensor category and $T:\sC \to \Mod(k)$ 
be a faithful tensor functor with $k$ a field of characteristic $0$.
Then $\sC$ is integral.
To prove that $T$ is conservative, we may after replacing $\sC$ by a full rigid tensor
subcategory assume that $\sC$ is essentially small, and
hence pseudo-Tannakian.
By Theorem~\ref{t:Tannhull}, 
$T$ factors up to tensor isomorphism as the canonical tensor functor 
$U:\sC \to (\widetilde{\sC})_\mathrm{rig}$
followed by a faithful tensor functor $T'$.
By Lemma~\ref{l:supertensexact}, $T'$ is exact and hence conservative.
Since $U$ is fully faithful, $T$ is thus conservative.
\end{proof}

Let $k$ be a commutative ring, $k'$ be a commutative $k$\nd algebra, and $\sA$ be a cocomplete 
abelian $k$\nd tensor category with cocontinuous tensor product.
Then $k' \otimes_k \I$ has a canonical structure of commutative algebra in $\sA$, 
and $k'$ acts on this algebra through its action on $k'$. 
Thus we have a cocomplete
abelian $k'$\nd tensor category $\MOD_{\sA}(k' \otimes_k \I)$ with cocontinuous tensor product.
If $k$ is a field of characteristic $0$ and $k'$ is an extension of $k$,
then for $\sA = \MOD(X)$ with $X$ a super $k$\nd scheme
\begin{equation*}
\MOD_{\sA}(k' \otimes_k \I) = \MOD(X_{k'}),
\end{equation*}
and for $\sA = \MOD_{K,\varepsilon}(X)$ with $(K,\varepsilon)$ a super groupoid with 
involution over $X/k$
\begin{equation}\label{e:MODAMODK}
\MOD_{\sA}(k' \otimes_k \I) = \MOD_{K_{k'},\varepsilon}(X_{k'}).
\end{equation}
It thus follows from Theorem~\ref{t:Tannequiv} that if $\sC$ is a pseudo-Tannakian
$k$\nd tensor category with $\kappa(\sC) = k$, then $\Mod_{\widetilde{\sC}}(k' \otimes_k \I)$
is a super Tannakian category over $k'$.

\begin{cor}
Let $k$ be a field of characteristic $0$ and $k'$ be an extension of $k$.
Let $T:\sC \to \sC'$ be a faithful $k$\nd tensor functor with $\sC$ pseudo-Tannakian
and $\sC'$ super Tannakian, and $T':k' \otimes_k \sC \to \Mod_{\widetilde{\sC'}}(k' \otimes_k \I)$
be the $k'$\nd tensor functor induced by $T$.
Suppose that $\kappa(\sC) = k$.
Then $T$ is a super Tannakian hull of $\sC$ if and only if $T'$ is a super Tannakian hull of 
$k' \otimes_k \sC$.
\end{cor}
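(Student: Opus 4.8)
## Proof proposal

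The plan is to reduce the statement to a comparison of super Tannakian hulls over different base fields, exploiting the explicit description of $\widetilde{\sC}$ and the scalar-extension formula \eqref{e:MODAMODK}. First I would observe that since $\kappa(\sC) = k$, Lemma~\ref{l:ext}\ref{i:extint} gives that $k' \otimes_k \sC$ is integral with $\kappa(k' \otimes_k \sC) = k'$, so that $k' \otimes_k \sC$ is again pseudo-Tannakian; thus the notion of super Tannakian hull makes sense for it. The key structural input is that $(k' \otimes_k \sC)^{\sim}$ should be $k'$\nd tensor equivalent to $\Mod_{\widetilde{\sC}}(k' \otimes_k \I)$ — more precisely, its ind-completion $\MOD$-version — compatibly with the canonical functors. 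To see this, I would run the machinery of Section~\ref{s:mod} and Theorem~\ref{t:Ftildeequiv} in parallel for $\sC$ and $k' \otimes_k \sC$: choosing $\bm|\bn$ for $\sC$ as in Proposition~\ref{p:FmnCesssurj}, the same family works for $k' \otimes_k \sC$ after base change, and $\widehat{k' \otimes_k \sC}$ becomes $\MOD_{\widehat{\sF_{\bm|\bn}}}(k' \otimes_k R)$; passing to torsion quotients and using Proposition~\ref{p:torsequ} together with the geometric picture $\Spec(k' \otimes_k R') = \Spec(R')_{k'}$ yields $(k' \otimes_k \sC)^{\sim} = \overline{\MOD_{G,\varepsilon}(X_{k'})}$, which by \eqref{e:MODAMODK} and Theorem~\ref{t:GKMODequiv} is $\MOD_{\widetilde{\sC}}(k' \otimes_k \I)$. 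One must check that the transition from $R$ to $k' \otimes_k R$ preserves torsion-freeness, which follows because $k'$ is flat over $k$ and torsion is detected fibrewise at the generic point by Lemma~\ref{l:pointtors}.

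Granting this identification, the rest is formal. By Theorem~\ref{t:Tannhull}, $\sC \to (\widetilde{\sC})_\mathrm{rig}$ is a super Tannakian hull, and the identification above shows $k' \otimes_k \sC \to \Mod_{\widetilde{\sC}}(k' \otimes_k \I)$ is naturally identified with the canonical functor $k' \otimes_k \sC \to ((k' \otimes_k \sC)^{\sim})_\mathrm{rig}$, hence is a super Tannakian hull of $k' \otimes_k \sC$. So the functor $T_0: k' \otimes_k \sC \to \Mod_{\widetilde{\sC'}}(k' \otimes_k \I)$ induced by the \emph{canonical} $T = U: \sC \to (\widetilde{\sC})_\mathrm{rig}$ is always a super Tannakian hull. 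For a general faithful $T$, I would use Corollary~\ref{c:Tannhulleq}: $T$ is a super Tannakian hull iff $T'$ (in the notation there) is fully faithful and every object of $\sC'$ is a quotient of a direct sum of objects in its essential image. Both conditions are preserved and reflected under $k' \otimes_k -$ followed by passage to $\MOD_{-}(k' \otimes_k \I)$: full faithfulness because, with $\kappa(\sC) = k$, Lemma~\ref{l:extfaith}/Lemma~\ref{l:ext}\ref{i:extfaith} control faithfulness of scalar extension and the hom-groups extend by $k' \otimes_k -$, which is faithfully flat; the generation-by-quotients condition because forming $\Mod_{\widetilde{\sC'}}(k' \otimes_k \I)$ is the composite of scalar extension (which sends $M$ to $k' \otimes_k M$, a generator-preserving operation) with taking modules over a flat algebra, and by Lemma~\ref{l:quotsub}\ref{i:quotsubmod} every object of a module category over $k' \otimes_k \I$ is a quotient of a free one.

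Concretely, I would argue: $T$ is a super Tannakian hull $\iff$ $T''\!: (\widetilde{\sC})_\mathrm{rig} \to \sC'$ (the factorisation through the canonical hull) is a tensor equivalence $\iff$ the induced $(T'')': \Mod_{\widetilde{\sC}}(k' \otimes_k \I) \to \Mod_{\widetilde{\sC'}}(k' \otimes_k \I)$ is a tensor equivalence. The forward direction of the last equivalence is immediate since equivalences are preserved by any functorial construction. For the converse one uses that the construction $\sA \mapsto \MOD_{\sA}(k' \otimes_k \I)$ reflects equivalences: a tensor functor inducing an equivalence on $k' \otimes_k \I$\nd modules must be fully faithful already, because $\sA(M,N) \to \MOD_{\sA}(k' \otimes_k \I)(k' \otimes_k M, k' \otimes_k N)$ is the inclusion of $\sA(M,N)$ as the $\Gal$- or rather $k$-points, i.e.\ is split injective with the functor faithful, and surjectivity on the larger hom-group forces surjectivity downstairs after intersecting with $\sA$; essential surjectivity descends because every object of $\widetilde{\sC'}$ is already defined over $k$ in the relevant sense ($\widetilde{\sC'}$ is generated by $(\widetilde{\sC'})_\mathrm{rig}$, and $k' \otimes_k -$ is essentially surjective onto $\MOD_{-}(k'\otimes_k\I)$ up to the torsion quotient by the $X_{k'}$ picture).

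The main obstacle I anticipate is the first step: establishing cleanly that $(k' \otimes_k \sC)^{\sim} \simeq \Mod_{\widetilde{\sC}}(k' \otimes_k \I)$ (in the $\MOD$ version) \emph{compatibly} with all the canonical functors and the $\kappa$-identifications. This requires checking that scalar extension commutes with the whole chain $\widehat{(-)} \to (-)^{\wedge} \to \overline{(-)}$ — in particular that $k' \otimes_k -$ commutes with the formation of torsion subobjects, which is where flatness of $k'/k$ and the fibrewise criterion of Lemma~\ref{l:pointtors} (equivalently, that torsion is detected by a regular $a: A \to \I$ with $A$ dualisable, and such $a$ base-change to regular morphisms by Lemma~\ref{l:ext}\ref{i:extfaith}) must be invoked with care. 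Once that naturality is in place, everything else is a diagram chase through Corollary~\ref{c:Tannhulleq} and Lemma~\ref{l:supertensexact}.
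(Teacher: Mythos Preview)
Your approach differs substantially from the paper's, and the detour you take is both unnecessary and where your argument becomes shaky.

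The paper never establishes the structural isomorphism $(k' \otimes_k \sC)^{\sim} \simeq \MOD_{\widetilde{\sC}}(k' \otimes_k \I)$ that you identify as ``the main obstacle''. Instead, for the forward direction it works directly with the characterisation in Corollary~\ref{c:Tannhulleq}: setting up the commutative square
\[
\xymatrix{
(k' \otimes_k \sC)_\mathrm{fr} \ar^-{T'{}\!_1}[r] & \Mod_{\widetilde{\sC'}}(k' \otimes_k \I) \\
k' \otimes_k \sC_\mathrm{fr} \ar[u] \ar^-{k' \otimes_k T_1}[r] & k' \otimes_k \sC' \ar[u]
}
\]
and observing that the right arrow is fully faithful with every object a quotient of one in its image (via the module structure map $k' \otimes_k M \to M$), the conditions of Corollary~\ref{c:Tannhulleq} for $T_1$ transfer to $T'{}\!_1$ directly. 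For the converse, the paper \emph{bootstraps from the forward direction}: choosing any super Tannakian hull $U:\sC \to \sC''$, the forward direction applied to $U$ shows that the induced $\Mod_{\widetilde{\sC''}}(k' \otimes_k \I) \to \Mod_{\widetilde{\sC'}}(k' \otimes_k \I)$ is an equivalence; from this one descends to show $\sC'' \to \sC'$ is an equivalence. This completely sidesteps your step~1.

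Your converse argument has a genuine gap in the essential surjectivity descent. You write that ``essential surjectivity descends because every object of $\widetilde{\sC'}$ is already defined over $k$'', but this is not an argument. The paper's trick is concrete: given that $k' \otimes_k M$ is a quotient of $k' \otimes_k M'$ with $M'$ in the image, one chooses a $k$\nd linear epimorphism $k' \to k$ (which exists since $k'$ is a $k$\nd vector space) and tensors to obtain that $M$ is a quotient of some $(M')^n$ in $\sC'$; combined with full faithfulness and exactness of $\sC'' \to \sC'$ this gives essential surjectivity. Your ``reflects equivalences'' claim needs exactly this kind of explicit descent, and the Galois\nd theoretic language you gesture at does not supply it for arbitrary extensions $k'/k$.
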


\begin{proof}
By Lemma~\ref{l:ext}, $k' \otimes_k \sC$ is an integral $k'$\nd tensor category
with $\kappa(k' \otimes_k \sC) = k'$, and $T'$ is faithful.
If $T = T_1E_{\sC}$ and $T' = T'{}\!_1E_{k' \otimes_k \sC}$,
we have a commutative square
\begin{equation*}
\xymatrix{
(k' \otimes_k \sC)_\mathrm{fr} \ar^-{T'{}\!_1}[r] & \Mod_{\widetilde{\sC'}}(k' \otimes_k \I) \\
k' \otimes_k \sC_\mathrm{fr} \ar[u] \ar^-{k' \otimes_k T_1}[r] & k' \otimes_k \sC' \ar[u]
}
\end{equation*}
with the left arrow defined by the factorisation of $\sC \to (k' \otimes_k \sC)_\mathrm{fr}$ 
through $\sC_\mathrm{fr}$.
The right arrow of the square is fully faithful, and every object $M$ of 
$\Mod_{\widetilde{\sC'}}(k' \otimes_k \I)$ is a quotient of an object in its image:
the $(k' \otimes_k \I)$\nd module structure $k' \otimes_k M \to M$ of $M$ restricts to an epimorphism
$k' \otimes_k M_0 \to M$ for some subobject $M_0$ of $M$ in $\sC'$.
Suppose that $T$ is a super Tannakian hull of $\sC$.
Then by Corollary~\ref{c:Tannhulleq}, $T_1$ is fully faithful, and every object $\sC'$ is a quotient of
a direct sum of objects in its image.
Since $T'{}\!_1$ is faithful, it follows from the square that the same holds with $T_1$ replaced
by $T'{}\!_1$.
Thus by Corollary~\ref{c:Tannhulleq}, $T'$ is a super Tannakian hull of $k' \otimes_k \sC$.

Conversely suppose that $T'$ is a super Tannakian hull of $k' \otimes_k \sC$.
Let $U:\sC \to \sC''$ be a super Tannakian hull of $\sC$.
Then $\sC''$ has a unique structure of $k$\nd tensor category such that $U$ is a $k$\nd tensor
functor, and $T$ factors up to tensor isomorphism as $U$ followed by a $k$\nd tensor functor
$\sC'' \to \sC'$.
The induced $k'$\nd tensor functor
\begin{equation*}
\Mod_{\widetilde{\sC''}}(k' \otimes_k \I) \to \Mod_{\widetilde{\sC'}}(k' \otimes_k \I)
\end{equation*}
is an equivalence by the ``only if'' with $T$ replaced by $U$. 
Its full faithfulness implies that of $\sC'' \to \sC'$.
Its essential surjectivity implies that every $k' \otimes_k M$ with $M$ in $\sC'$ is a
quotient in $\Mod_{\widetilde{\sC'}}(k' \otimes_k \I)$ of $k' \otimes_k M'$ for some $M'$ 
in the image of $\sC'' \to \sC'$,
and hence by choosing an epimorphism $k' \to k$ of $k$\nd vector spaces that
$M$ is the quotient in $\sC'$ of some $M'{}^n$.
It follows that $\sC'' \to \sC'$ is an equivalence, so that $T$ is a super Tannakian
hull of $\sC$. 
\end{proof}

Let $f$ be a morphism of super vector bundles over a super scheme $X$.
If the fibre $f_x$
of $f$ above a point $x$ of $X$ is an epimorphism of super vector spaces, 
then $f$ is an epimorphism in $\MOD(X')$ for some open super subscheme $X'$ of $X$
containing $x$, by Nakayama's lemma applied to the morphism induced on stalks.
In particular if $f_x$ is an isomorphism, then $f$ is an isomorphism 
in some neighbourhood of $x$.

\begin{lem}\label{l:vecbunhom}
Let $X$ be a super $\Q$\nd scheme and $f$ be a morphism 
of vector bundles over $X$.
For each point $x$ of $X$ and partition $\lambda$, suppose that $S^\lambda(f_x) = 0$
implies $S^\lambda(f) = 0$.
Then the kernel, cokernel and image of $f$ in $\MOD(X)$ are super vector bundles, 
and the image has constant super rank.
\end{lem}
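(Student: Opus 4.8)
The plan is to establish a local normal form for $f$: near each point of $X$ it should become, after a suitable change of trivialisation, the direct sum of an isomorphism and a zero map. Once this is in hand, the kernel, cokernel, image and constancy of rank all follow at once.

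First I would fix a point $x\in X$, write $m|n$ for the super dimension of $\Img(f_x)\subseteq\sW_x$, and let $\lambda_0$ be the partition whose diagram is the rectangle with $n+1$ rows and $m+1$ columns. By the property of Schur functors recalled before Lemma~\ref{l:supertensexact}, $S^{\lambda_0}(f_x)=0$, so the hypothesis gives $S^{\lambda_0}(f)=0$. Next, choosing homogeneous bases of $\sV_x$ and $\sW_x$ adapted to $f_x$ and lifting them (using that $\sV$, $\sW$ are vector bundles and that $\GL$ of a local ring surjects onto $\GL$ of its residue field), I would trivialise $\sV$ and $\sW$ on an open neighbourhood $X'$ of $x$ so that $f|_{X'}$ acquires a block form relative to decompositions $\sO_{X'}^{m|n}\oplus\sV'$ and $\sO_{X'}^{m|n}\oplus\sW'$, with top-left block $a$ satisfying $a_x=\id$. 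By the remark preceding the statement of the lemma, $a$ is an isomorphism after shrinking $X'$; composing $f|_{X'}$ with the evident unipotent automorphisms of $\sV|_{X'}$ and $\sW|_{X'}$ then exhibits $f|_{X'}$, as a morphism of $\sO_{X'}$-modules, as a direct sum $a\oplus e$ for some $e\colon\sV'\to\sW'$.

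The crux will be to deduce $e=0$ from $S^{\lambda_0}(f|_{X'})=0$. For this I would invoke the branching decomposition of Schur functors: in any $\Q$-linear idempotent-complete symmetric monoidal category, hence in $\MOD(X')$, the functor $S^{\lambda_0}$ applied to a direct sum $P\oplus Q$ decomposes naturally as a finite direct sum whose summands are $S^\nu(P)\otimes S^\rho(Q)$, each with multiplicity the Littlewood--Richardson number $c^{\lambda_0}_{\nu\rho}$. Applying this naturality to the morphism $a\oplus e$ presents $S^{\lambda_0}(f|_{X'})=0$ as a direct sum of morphisms, one summand of which — taking $\mu$ to be $\lambda_0$ with one corner box removed, so that $c^{\lambda_0}_{\mu,(1)}=1$ by Pieri — is $S^\mu(a)\otimes S^{(1)}(e)=S^\mu(a)\otimes e$. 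Hence $S^\mu(a)\otimes e=0$. Since $|\mu|<|\lambda_0|$, the diagram $[\mu]$ cannot contain $[\lambda_0]$, so $S^\mu(\sO_{X'}^{m|n})$ is a free $\sO_{X'}$-module of nonzero super rank (its defining idempotent has rational entries) and $S^\mu(a)$ an automorphism of it; as tensoring with such a module is faithful, $e=0$. Therefore $f|_{X'}\cong a\oplus 0$, so near $x$ the modules $\Ker f$, $\Img f$, $\Coker f$ are the vector bundles $\sV'$, $\sO_{X'}^{m|n}$, $\sW'$, and $\Img(f_y)$ has super dimension $m|n$ for all $y\in X'$. This shows $\Ker f$, $\Img f$, $\Coker f$ are vector bundles and that the super rank of $\Img f$ is locally constant.

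Finally, for \emph{global} constancy: for arbitrary points $x$, $y$, writing $\lambda_0(x)$ for the rectangle attached to $x$ as above, $S^{\lambda_0(x)}(f)=0$; since passage to the fibre at $y$ commutes with Schur functors, $S^{\lambda_0(x)}(f_y)=0$, so $[\lambda_0(x)]$ contains the rectangle attached to $y$, i.e. $m_y\le m_x$ and $n_y\le n_x$; interchanging $x$ and $y$ forces equality. I expect the main obstacle to be the bookkeeping of the normal-form reduction together with the verification that the branching decomposition of $S^{\lambda_0}$ is sufficiently natural to be applied to the non-identity morphism $a\oplus e$; once that is set up, the choice $\mu=\lambda_0$ minus a box makes $e$ appear literally as a direct summand of the zero morphism.
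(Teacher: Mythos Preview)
Your proposal is correct and follows essentially the same route as the paper's proof: both reduce $f$ locally to a direct sum $a\oplus e$ with $a$ an isomorphism via the block-matrix/unipotent trick, then kill $e$ by applying the Littlewood--Richardson decomposition of $S^{\lambda_0}(a\oplus e)=0$ and isolating a summand $S^\mu(a)\otimes e$ with $|\lambda_0\setminus\mu|=1$. The only cosmetic difference is ordering: the paper establishes global constancy of the super rank of $\Img f_x$ at the outset (by exactly your final argument), whereas you deduce it at the end.
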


\begin{proof}
Since $S^\lambda(f) = 0$ implies $S^\lambda(f_x) = 0$,
it follows from the property of Schur functors recalled before Lemma~\ref{l:supertensexact}
that the super dimension $m|n$ of the image of $f_x$ is constant.
It is thus enough to show that every point $x$ of $X$ is contained in an open super subscheme $X'$
of $X$ such that the kernel, cokernel and image of the restriction $f':\sV' \to \sW'$ of 
$f:\sV \to \sW$ above $X'$ are super vector bundles.

Suppose that $f_x$ has super rank $m|n$.
Then we may write $\sV_x = V_1 \oplus V_2$ and $\sW_x = W_1 \oplus W_2$
with $V_1$ and $W_1$ of super dimension $m|n$ so that the matrix entry
$V_1 \to W_1$ of $f_x$ is an isomorphism.  
For sufficiently small $X'$, we have decompositions $\sV' = \sV'{}\!_1 \oplus \sV'{}\!_2$ 
and $\sW' = \sW'{}\!_1 \oplus \sW'{}\!_2$ such that $(\sV'{}\!_i)_x = V_i$
and $(\sW'{}\!_i)_x = W_i$,
with $\sV'{}\!_1$ and $\sW'{}\!_1$ of constant rank $m|n$.
The entry $f'{}\!_{11}:\sV'{}\!_1 \to \sW'{}\!_1$
of the matrix $(f'{}\!_{ij})$ of $f'$ is then an isomorphism.
We have
\begin{gather*}
\begin{pmatrix}
1 & 0 \\
- f'{}\!_{21} \circ f'{}\!_{11}{}\!^{-1} & 1
\end{pmatrix}
\begin{pmatrix}
f'{}\!_{11} & f'{}\!_{12} \\
f'{}\!_{21} & f'{}\!_{22}
\end{pmatrix}
\begin{pmatrix}
1 & - f'{}\!_{11}{}\!^{-1} \circ f'{}\!_{12} \\
0 & 1
\end{pmatrix}
=
\begin{pmatrix}
f'{}\!_{11} & 0 \\
0 & h
\end{pmatrix}
\end{gather*}
with $h = f'{}\!_{22} - f'{}\!_{21} \circ f'{}\!_{11}{}\!^{-1} \circ f'{}\!_{12}$.
After modifying appropriately the direct sum decompositions of $\sV'$ and $\sW'$,
we may thus assume that $f'$ is the direct sum
\begin{equation*}
f' = f'{}\!_{11} \oplus h:\sV'{}\!_1 \oplus \sV'{}\!_2 \to \sW'{}\!_1 \oplus \sW'{}\!_2
\end{equation*}
with $f'{}\!_{11}$ an isomorphism of vector bundles of constant rank $m|n$.
If $\lambda$ is the partition whose diagram is rectangular with
$n+1$ rows and $m+1$ columns, then $S^\lambda(f_x) = 0$, so that by hypothesis
$S^\lambda(f) = 0$ and $S^\lambda(f') = 0$.
Thus by the formula \cite[1.8]{Del02} for the decomposition 
of the Schur functor of a direct sum we have 
\begin{equation*}
0 =
S^\lambda(f'{}\!_{11} \oplus h) = \bigoplus_{|\mu| + |\nu| = (m+1)(n+1)}
(S^\mu(f'{}\!_{11}) \otimes_{\sO_{X'}} S^\nu(h))^{[\lambda:\mu,\nu]}
\end{equation*}
with the multiplicity $[\lambda:\mu,\nu]$ given by the Littlewood--Richardson rule.
If $[\mu] \subset [\lambda]$ and $|\nu| = 1$, then $[\lambda:\mu,\nu] = 1$ 
\cite[1.5.1]{Del02}, and $S^\mu(f'{}\!_{11})$ is an isomorphism of non-zero 
constant rank super vector bundles.
Thus $h = 0$. 
\end{proof}

\begin{defn}
Let $X$ be a super $\Q$\nd scheme.
A functor $T$ with target $\Mod(X)$ will be called \emph{pointwise faithful} 
if for every point $x$ of $X$,
the composite of passage to the fibre at $x$ with $T$ is faithful. 
\end{defn}

Let $T:\sC \to \Mod(X)$ be a tensor functor. 
If $T$ is pointwise faithful, then every morphism $f$ in the image of $T$ satisfies
the hypothesis of Lemma~\ref{l:vecbunhom}.
If $T$ is pointwise faithful and $h$ is a non-zero morphism in $\sC$, 
then $T(h)$ is strongly regular in $\Mod(X)$ and even in $\MOD(X)$, because 
by Lemma~\ref{l:vecbunhom} it factors locally on $X$ as a retraction of super vector bundles
followed by a section.
Thus any pointwise faithful $T$ factors through a tensor functor from $\sC_\mathrm{fr}$
to $\Mod(X)$, which is also pointwise faithful.
When $\sC$ is rigid, $T$ is pointwise faithful if and only if it sends every non-zero
morphism with target $\I$ in $\sC$ to an epimorphism in $\MOD(X)$.
For $\sC$ super Tannakian, it follows from Lemma~\ref{l:supertensexact} 
that $T$ is pointwise faithful if and only if the composite of the embedding of 
$\Mod(X)$ into $\MOD(X)$ with $T$ is exact.

Let $\sA$ and $\sA'$ be cocomplete abelian categories, and $\sA_0$ be an essentially
small full abelian subcategory of $\sA$. 
Suppose that every object of $\sA_0$ is of finite presentation in $\sA$,
and that every object of $\sA$ is a filtered colimit of objects in $\sA_0$. 
Then the embedding $\sA_0 \to \sA$ is exact, and every morphism in $\sA$ is a filtered colimit
of morphisms in $\sA_0$.
The embedding is also dense, because if $A$ in $\sA$ is the filtered colimit 
$\colim_{\lambda \in \Lambda} A_\lambda$ with the $A_\lambda$ in $\sA_0$,
then the canonical functor $\Lambda \to \sA_0/A$ is cofinal.
Let $H_0:\sA_0 \to \sA'$ be a right exact functor.
The additive left Kan extension $H:\sA \to \sA'$
of $H_0$ along the embedding of $\sA_0$ into $\sC$ exists, with the universal natural 
transformation from $H_0$ to the restriction of $H$ to $\sA_0$ an isomorphism.
It is given by the coend formula
\begin{equation*}
H = \int^{A_0 \in \sA_0} \sA(A_0,-) \otimes_{\Z} H_0(A_0),
\end{equation*}
and it is preserved by any cocontinuous functor $\sA' \to \sA''$.
By the formula, $H$ commutes with filtered colimits,
and the canonical natural transformation from the additive left Kan extension of the embedding
$\sA_0 \to \sA$ along itself to the identity of $\sA$ is an isomorphism (i.e.\ $\sA_0$
is additively dense in $\sA$).
Thus $H$ is cocontinuous, and it is exact if $H_0$ is.
It follows that restriction from $\sA$ to $\sA_0$ defines an equivalence from cocontinuous
functors $\sA \to \sA'$ to right exact functors $\sA_0 \to \sA'$, with quasi-inverse
given by additive left Kan extension.

Suppose now that $\sA$ and $\sA'$ have tensor structures with the tensor products cocontinuous,
and that $\sA_0$ is a full tensor subcategory of $\sA$.
If $H:\sA \to \sA'$ is a cocontinuous functor with restriction $H_0$ to $\sA_0$, 
then by density of $\sA_0 \to \sA$, any tensor structure
on $H_0$ can be extended uniquely to a tensor structure on $H$.
Also if $H':\sA \to \sA'$ is a tensor functor, then
any natural isomorphism $H \iso H'$ whose restriction to $\sA_0$ is a tensor isomorphism
is itself a tensor isomorphism.
Thus restriction from $\sA$ to $\sA_0$ defines an equivalence from the groupoid of
cocontinuous tensor functors $\sA \to \sA'$ to the groupoid of right exact tensor
functors $\sA_0 \to \sA'$.

The conditions on $\sA$ and $\sA_0$ are satisfied in particular when
$\sA = \MOD_{K,\varepsilon}(X)$ and 
$\sA_0 = \Mod_{K,\varepsilon}(X)$ for $X$ a super scheme over a field $k$ of characteristic $0$
and $(K,\varepsilon)$ a transitive affine groupoid over $X/k$, and hence by Theorem~\ref{t:Tannequiv}
when $\sA = \widetilde{\sC}$ and $\sA_0 = (\widetilde{\sC})_{\mathrm{rig}}$ for $\sC$ pseudo-Tannakian.

\begin{thm}\label{t:TannhullModX}
Let $\sC$ be a pseudo-Tannakian category, $U:\sC \to \sC'$ be a super Tannakian 
hull of $\sC$, and $X$ be a super $\Q$\nd scheme.
Then composition with $U$ defines an equivalence from the groupoid of pointwise faithful tensor 
functors $\sC' \to \Mod(X)$ to the groupoid of pointwise faithful tensor functors $\sC \to \Mod(X)$.
\end{thm}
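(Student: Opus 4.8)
The plan is to extend functors from $\sC$ to $\widetilde{\sC}$ by additive left Kan extension, using the quotient $\widehat{\sC}\to\widetilde{\sC}$, the additive density description of cocontinuous functors on $\widetilde{\sC}$ recalled above, Lemma~\ref{l:Tannhullff}, and one elementary exactness fact. Write $U:\sC\to\sC'$ for the canonical super Tannakian hull, so that $\sC'=(\widetilde{\sC})_{\mathrm{rig}}$ is super Tannakian and $\widetilde{\sC}$ is its ind\nd category (Theorems~\ref{t:Tannequiv} and \ref{t:Tannhull}). Since $X$ is a super $\Q$\nd scheme, every tensor functor to $\Mod(X)$ or $\MOD(X)$ factors through $\Q\otimes_{\Z}-$, so by Lemma~\ref{l:CCQequiv} we may assume $\sC$ is a $\Q$\nd tensor category. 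Recall from the discussion around Lemma~\ref{l:vecbunhom} that a pointwise faithful tensor functor into $\Mod(X)$ sends every nonzero morphism to a strongly regular (hence regular) morphism of $\MOD(X)$ and is in particular faithful, and that a tensor functor $\sC'\to\Mod(X)$ is pointwise faithful if and only if its composite with $\Mod(X)\hookrightarrow\MOD(X)$ is exact.

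The first ingredient is the fact that a right exact tensor functor $F$ from a rigid abelian tensor category $\sB$ to an abelian tensor category with right exact tensor product is automatically exact. Given a short exact sequence $0\to M'\to M\to M''\to 0$ in $\sB$, its dual $0\to M''{}^{\vee}\to M^{\vee}\to M'{}^{\vee}\to 0$ is exact (duality being exact on $\sB$), and applying the right exact $F$, together with $F(N^{\vee})\cong F(N)^{\vee}$, gives an exact sequence $F(M'')^{\vee}\to F(M)^{\vee}\to F(M')^{\vee}\to 0$ of dualisable objects of the target. On dualisable objects the functor $N\mapsto N^{\vee}$ carries exact sequences to exact sequences, by the Yoneda argument applied to $\Hom(L,N^{\vee})\cong\Hom(L\otimes N,\I)$ using right exactness of $\otimes$ and left exactness of $\Hom(-,\I)$; applying it and using biduality yields $0\to F(M')\to F(M)\to F(M'')$ exact. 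In particular every right exact tensor functor $\sC'\to\MOD(X)$ is exact, hence pointwise faithful, hence valued in $\Mod(X)$ and regular.

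Next, given a pointwise faithful $T:\sC\to\Mod(X)$, let $T^{*}:\widehat{\sC}\to\MOD(X)$ be its additive left Kan extension along $h_{-}$, a cocontinuous tensor functor with $T^{*}h_{-}\cong T$. For a regular $a:A\to\I$ of $\sC$ the morphism $T(a):T(A)\to\sO_{X}$ is nonzero at every point of $X$, hence an epimorphism of vector bundles; since a torsion object of finite type $M$ of $\widehat{\sC}$ satisfies $a\otimes M=0$ for such an $a$ (cofinality of $\Reg(\sC)\to\Reg((\widehat{\sC})_{\mathrm{rig}})$ allowing $a$ to be taken in $\sC$), cocontinuity of $T^{*}$ shows it kills all torsion objects, and right exactness then makes it carry every epimorphism with torsion kernel to an isomorphism. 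For a monomorphism $g:N\hookrightarrow M$ with torsion cokernel one reduces, by filtering $M$ by its finite type subobjects, to $M$ of finite type; choosing a regular $a:A\to\I$ of $\sC$ with $a\otimes(M/N)=0$ one gets $a\otimes M=g\circ\psi$ and $a\otimes N=\psi\circ(A\otimes g)$, so the subobject $P=\Ker T^{*}(g)$ of $T^{*}(N)$ satisfies $T(A)\otimes P\subseteq\Ker\bigl(T(a)\otimes T^{*}(N)\bigr)=\Ker(T(a))\otimes T^{*}(N)$; restricting to an affine open $U$ of $X$, over which $T(a)$ splits since $\sO_{U}$ is free, this containment forces $P|_{U}=0$, hence $P=0$ and $T^{*}(g)$ is an isomorphism. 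Thus $T^{*}$ sends every isomorphism up to torsion to an isomorphism, so it factors through a cocontinuous tensor functor $\widetilde{T}:\widetilde{\sC}\to\MOD(X)$ with $\widetilde{T}|_{\sC}\cong T$.

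Finally one assembles these. Set $T':=\widetilde{T}|_{\sC'}$: being the restriction of a cocontinuous tensor functor it is right exact, hence by the first ingredient exact, pointwise faithful and $\Mod(X)$\nd valued, and $T'U=\widetilde{T}|_{\sC}\cong T$, giving essential surjectivity. For full faithfulness, the first ingredient identifies the groupoid of pointwise faithful tensor functors $\sC'\to\Mod(X)$ with the groupoid of right exact regular tensor functors $\sC'\to\MOD(X)$, on which composition with $U$ is fully faithful into the groupoid of regular tensor functors $\sC\to\MOD(X)$ by Lemma~\ref{l:Tannhullff}; since composing a pointwise faithful tensor functor with the faithful $U$ stays pointwise faithful and $\Mod(X)$\nd valued, this restricts to a fully faithful functor onto the groupoid of pointwise faithful tensor functors $\sC\to\Mod(X)$, which together with essential surjectivity gives the asserted equivalence. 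I expect the main technical obstacle to be the verification in the third step that $T^{*}$ carries a monomorphism with torsion cokernel to an isomorphism — the vanishing of $P$, which is exactly the point at which pointwise faithfulness is used essentially; the exactness fact of the first ingredient, though conceptually the heart of the matter, is short.
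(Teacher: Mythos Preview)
Your argument is correct, and the full faithfulness step is the same as the paper's (both invoke Lemma~\ref{l:Tannhullff}). The essential surjectivity, however, is done quite differently. The paper also forms $T_1^*:\widehat{\sC}\to\MOD(X)$ and must show it inverts isomorphisms up to torsion, but it proceeds by first using Theorem~\ref{t:Tannequiv} to reduce to morphisms between objects of finite type, then invoking Lemma~\ref{l:vecbunhom} (a Schur--functor computation) to show that $T_1^*(M)$ is a vector bundle of constant super rank for $M$ of finite type, and finally bootstrapping from the already proven field case: for each point $x$ the faithful functor $\omega_x T$ factors through $U$ as some $H_x$ by Theorem~\ref{t:Tannhull}, and the cocontinuous extension $L_x$ of $H_x$ is shown to be tensor isomorphic to $\omega_x T_1^*$ after projecting to $\widetilde{\sC}$, so that $T_1^*(j)$ is an isomorphism fibre by fibre between vector bundles, hence an isomorphism. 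Your route avoids both Lemma~\ref{l:vecbunhom} and the fibrewise bootstrap: the factorisations $a\otimes M=g\circ\psi$ and $a\otimes N=\psi\circ(A\otimes g)$, together with the local splitting of the epimorphism $T(a)$, give a direct vanishing of $\Ker T^*(g)$. This is more elementary and makes no use of the field case of the hull property; the trade--off is that the paper's approach exhibits explicitly the compatibility $L_xP\cong\omega_xT_1^*$, which is of independent interest and also delivers pointwise faithfulness of the factorisation $T'$ via $\omega_xT'\cong H_x$, whereas you obtain it from your ``first ingredient'' (right exact tensor functors out of rigid abelian categories are exact), a statement the paper only records in the special form of Lemma~\ref{l:supertensexact}.
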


\begin{proof}
We may suppose by Theorem~\ref{t:Tannhull} that $\sC' = (\widetilde{\sC})_\mathrm{rig}$ and 
\begin{equation*}
U:\sC \to (\widetilde{\sC})_\mathrm{rig} 
\end{equation*}
is the canonical tensor functor.
Since the composite of the embedding of $\Mod(X)$ into $\MOD(X)$ with any pointwise
faithful tensor functor $(\widetilde{\sC})_\mathrm{rig} \to \Mod(X)$ is regular and exact, 
the required full faithfulness follows from Lemma~\ref{l:Tannhullff} with $\sA = \MOD(X)$. 

Let $T:\sC \to \Mod(X)$ be a pointwise faithful tensor functor.
To prove the essential surjectivity, it is to be shown that $T$ factors up to tensor
isomorphism through $U$.
The composite $T_1:\sC \to \MOD(X)$ of the embedding into $\MOD(X)$ with $T$ gives by
additive left Kan extension a cocontinuous tensor functor
\begin{equation*}
T_1{}\!^*:\widehat{\sC} \to \MOD(X)
\end{equation*}
with $T_1{}\!^*h_-$ tensor isomorphic to $T_1$.
We show that $T_1{}\!^*$ sends isomorphisms up to torsion to isomorphisms.
It will follow that $T_1{}\!^*$ factors through the projection 
\begin{equation*}
P:\widehat{\sC} \to \widetilde{\sC},
\end{equation*}
and composing with $h_-$ will give the required factorisation of $T$.

Let $j:M \to N$ be an isomorphism up to torsion in $\widehat{\sC}$.
Then for any subobject $N_0$ of $N$, the morphism
$j^{-1}(N_0) \to N_0$ induced by $j$ is an isomorphism up to torsion.
If $N_0$ is of finite type, and we write $j^{-1}(N_0)$ as the filtered colimit
of its subobjects $M_0$ of finite type, then by Theorem~\ref{t:Tannequiv}
and the exactness and cocontinuity of $P$, there is an $M_0$ such that the
morphism $P(M_0) \to P(N_0)$ induced by $P(j)$ is an isomorphism.
Thus $M_0 \to N_0$ induced by $j$ is an isomorphism up to torsion.
It follows that $j$ may be written as a filtered colimit of isomorphisms 
up to torsion between objects of finite type.
To prove that $T_1{}\!^*(j)$ is an isomorphism, we thus reduce by cocontinuity 
of $T_1{}\!^*$ to the case where $M$ and $N$ are of finite type.

The restriction of $T_1{}\!^*$ to the pseudo-abelian hull of $\sC$ in $\widehat{\sC}$ is 
pointwise faithful, and hence by Lemma~\ref{l:vecbunhom} applied to identities,
it factors through the full subcategory of $\MOD(X)$ of vector bundles over $X$
of constant super rank.
Thus by right exactness of $T_1{}\!^*$ and Lemma~\ref{l:vecbunhom}, 
$T_1{}\!^*(M)$ is a vector bundle over $X$ of constant super rank for $M$ of finite 
presentation in $\widehat{\sC}$.
Suppose that $M$ is of finite type in $\widehat{\sC}$.
Then $M$ is the colimit of a filtered system $(M_\lambda)$ of objects $M_\lambda$
of finite presentation in $\widehat{\sC}$ with transition morphisms epimorphisms.
Thus $T_1{}\!^*(M)$ again is a super vector bundle over $X$ of constant super rank
by cocontinuity of $T_1{}\!^*$.

Write $F_x:\Mod(X) \to \Mod(\kappa(x))$ and 
$F_{1x}:\MOD(X) \to \MOD(\kappa(x))$ for the tensor functors defined by
passing to the fibre at the point $x$ of $X$.
Since $T$ is pointwise faithful, $F_xT$ is faithful.
Then we have a diagram
\begin{equation*}
\xymatrix{
\widehat{\sC} \ar^-{P}[r] & \widetilde{\sC} \ar^-{L_x}[r] & \MOD(\kappa(x)) \\
\sC \ar^{h_-}[u] \ar^-{U}[r] & (\widetilde{\sC})_\mathrm{rig} \ar[u] \ar^-{H_x}[r] & 
\Mod(\kappa(x)) \ar[u]
}
\end{equation*}
where the middle and right vertical arrows are the embeddings, the left square commutes
by definition, $H_x$ is given up to tensor isomorphism by factoring
$F_xT$ through $U$ and is right exact by Lemma~\ref{l:supertensexact},
and $L_x$ is given up to tensor isomorphism by requiring that it be cocontinuous
and that the right square commute up to tensor isomorphism.
Then $F_{1x} T_1{}\!^*h_-$ is tensor isomorphic to the bottom right leg of the diagram,
so that $L_xPh_-$ and $F_{1x} T_1{}\!^*h_-$ are tensor isomorphic.
There thus exists a tensor isomorphism
\begin{equation*}
L_xP \iso F_{1x} T_1{}\!^*,
\end{equation*}
because $L_xP$ and $F_{1x} T_1{}\!^*$ are cocontinuous.

Now let $j:M \to N$ be an isomorphism up to torsion in $\widehat{\sC}$ with $M$ and $N$ 
of finite type.
Then $L_x(P(j))$ and hence the fibre  $F_{1x}(T_1{}\!^*(j))$ of $T_1{}\!^*(j)$ above $x$
is an isomorphism for every point $x$ of $X$.
Since $T_1{}\!^*(M)$ and $T_1{}\!^*(N)$ are super vector bundles over $X$, it follows that
$T_1{}\!^*(j)$ is an isomorphism, as required.
\end{proof}

Let $k$ be a field of characteristic $0$ and $(G,\varepsilon)$ be an affine super 
$k$\nd group with involution.
Then the right action of $G \times_k G$ on $G$ for which $(g_1,g_2)$ sends $g$ to $g_1{}\!^{-1}gg_2$
defines a structure of $(G \times_k G,(\varepsilon,\varepsilon))$\nd module on $k[G]$.
For $i = 1,2$, pullback along the $i$th projection defines a $k$\nd tensor functor from
$(G,\varepsilon)$\nd modules to $(G \times_k G,(\varepsilon,\varepsilon))$\nd modules.
If $V$ is a $(G,\varepsilon)$\nd module, the action of $G$ on $V$ is a morphism
\begin{equation}\label{e:VGGact}
\pr_2{}\!^*V \to \pr_1{}\!^*V \otimes_k k[G]
\end{equation}
of $(G \times_k G,(\varepsilon,\varepsilon))$\nd modules.
Evaluation at the identity of $G$ defines a $k$\nd linear map left inverse
to \eqref{e:VGGact}.
If $G_i$ denotes the normal super $k$\nd subgroup of $G \times_k G$ given by embedding $G$ as 
the $i$th factor, then the restriction of the left inverse to the 
$(G \times_k G,(\varepsilon,\varepsilon))$\nd submodule
$(\pr_1{}\!^*V \otimes_k k[G])^{G_1}$ of invariants under $G_1$ is injective,
because a $G_1$\nd invariant section of $\pr_1{}\!^*V \otimes_k \sO_G$ above $G$ is determined
by its value at the identity.
Thus \eqref{e:VGGact} factors through an isomorphism
\begin{equation}\label{e:VGGactinv}
\pr_2{}\!^*V \iso (\pr_1{}\!^*V \otimes_k k[G])^{G_1}
\end{equation}
of $(G \times_k G,(\varepsilon,\varepsilon))$\nd modules.

The embedding of $\Z/2$ into $G$ that sends $1$ to $\varepsilon$ defines a super $k$\nd subgroup
with involution
\begin{equation}\label{e:Gprimedef}
(G',\varepsilon') = (G \times_k (\Z/2),(\varepsilon,1))
\end{equation}
of $(G \times_k G,(\varepsilon,\varepsilon))$.
Restriction from $G \times_k G$ to $G'$ then defines a structure of $(G',\varepsilon')$\nd module
on $k[G]$.
We may identify $\MOD_{G,\varepsilon}(k)$ with the full subcategory
of $\MOD_{G',\varepsilon'}(k)$ consisting of those 
$(G',\varepsilon')$\nd modules on which the factor $\Z/2$ of $G'$ acts trivially,
and the category $\MOD_{\Z/2,1}(k)$ of super $k$\nd vector spaces with the
$(G',\varepsilon')$\nd modules on which $G$ acts trivially.
If we write $\Omega$ for the forgetful functor from $(G,\varepsilon)$\nd modules 
to super $k$\nd vector spaces, then for any $(G,\varepsilon)$\nd module $V$ the
action of $G$ on $V$ is by \eqref{e:VGGact} a morphism
\begin{equation}\label{e:VGprimeact}
\Omega(V) \to V \otimes_k k[G]
\end{equation}
of $(G',\varepsilon')$\nd modules, and by \eqref{e:VGGactinv} it factors through an isomorphism
\begin{equation}\label{e:VGprimeactinv}
\Omega(V) \iso (V \otimes_k k[G])^G
\end{equation} 
of $(G',\varepsilon')$\nd modules.

The central embedding of the factor $\Z/2$ of $G'$ defines a $(\Z/2)$\nd grading
on $\MOD_{G',\varepsilon'}(k)$, so that $\MOD_{G',\varepsilon'}(k)$ is $k$\nd tensor equivalent 
to the $k$\nd tensor category of $(\Z/2)$\nd graded objects of $\MOD_{G,\varepsilon}(k)$ 
with symmetry given by the Koszul rule.
Let $W$ be a $(G',\varepsilon')$\nd module.
For $V$ a representation of $(G,\varepsilon)$ we have
a morphism
\begin{equation}\label{e:WVG}
V^\vee \otimes_k (V \otimes_k W)^G \to W
\end{equation}
of $(G',\varepsilon')$\nd modules,
natural in $W$ and extranatural in $V$, defined by the embedding of the invariants under $G$
and the counit for $V^\vee$.
Thus we have a morphism 
\begin{equation}\label{e:intWVG}
\int^{V \in \Mod_{G,\varepsilon}(k)} V^\vee \otimes_k (V \otimes_k W)^G \to W
\end{equation}
of $(G',\varepsilon')$\nd modules, natural in $W$.
If $W$ lies in $\MOD_{G,\varepsilon}(k)$ then $(V \otimes_k W)^G$ is the trivial 
$(G',\varepsilon')$\nd module given by the $k$\nd vector space $\Hom_G(V^\vee,W)$,
and \eqref{e:WVG} is
\begin{equation*}
V^\vee \otimes_k \Hom_G(V^\vee,W) \to W
\end{equation*}
defined by evaluation.
For $W$ in $\Mod_{G,\varepsilon}(k)$,  \eqref{e:intWVG} is thus an isomorphism, with 
inverse the composite of  the coprojection at $V = W^\vee$ and
\begin{equation*}
W \to W \otimes_k \Hom_G(W,W)
\end{equation*}
defined by $1_W$.
Since $ \Hom_G(V^\vee,-)$ commutes with filtered colimits, \eqref{e:intWVG}
is an isomorphism for any $W$ in $\MOD_{G,\varepsilon}(k)$.
It follows that \eqref{e:intWVG} is an isomorphism for $W$ the tensor product
of a $(G,\varepsilon)$\nd module with $k^{0|1}$, because the factor $k^{0|1}$ may
be taken outside $(-)^G$.
Hence \eqref{e:intWVG} is an isomorphism for an arbitrary $(G',\varepsilon')$\nd module $W$.
Taking $W = k[G]$ and using \eqref{e:VGprimeactinv} gives an isomorphism
\begin{equation}\label{e:Vomegaint}
\int^{V \in \Mod_{G,\varepsilon}(k)} V^\vee \otimes_k \Omega(V) \iso k[G]
\end{equation}
of $(G',\varepsilon')$\nd modules, with component at $V$ obtained from \eqref{e:VGprimeact} by dualising.

Let $\sC$ be an essentially small rigid tensor category, $X$ be a super $\Q$\nd scheme, 
and $T_1$ and $T_2$ be tensor functors $\sC \to \Mod(X)$.
It can be seen as follows that the functor on super schemes over $X$ that assigns to 
$p:X' \to X$ the set $\Iso^\otimes(p^*T_1,p^*T_2)$ of tensor
isomorphisms from $p^*T_1$ to $p^*T_2$ is represented by an affine super scheme 
\begin{equation*}
\underline{\Iso}^\otimes(T_1,T_2) = \Spec(\sR)
\end{equation*}
over $X$.
The object $\sR$ in $\MOD(X)$ is given by 
\begin{equation}\label{e:Rint}
\sR = \int^{C \in \sC} T_2(C)^\vee \otimes_{\sO_X} T_1(C).
\end{equation}
Its structure of commutative algebra is given by the morphisms
\begin{equation*}
(T_2(C)^\vee \otimes_{\sO_X} T_1(C)) \otimes_{\sO_X} 
(T_2(C')^\vee \otimes_{\sO_X} T_1(C')) \to
(T_2(C \otimes C')^\vee \otimes_{\sO_X} T_1(C \otimes C'))
\end{equation*}
and $\sO_X \to T_2(\I)^\vee \otimes_{\sO_X} T_1(\I)$ defined using the tensor structures 
of $T_1$ and $T_2$.
We have bijections
\begin{equation*}
\Hom_{\sO_{X'}}(p^*\sR,\sO_{X'}) \iso 
\int_{C \in \sC} \Hom_{\sO_{X'}}(p^*T_1(C),p^*T_2(C))
\iso \Nat(p^*T_1,p^*T_2)
\end{equation*}
natural in $p:X' \to X$, which when $p$ is the structural morphism $\pi$ of $\Spec(\sR)$
send the canonical morphism $\pi^*\sR \to \sO_{\Spec(\sR)}$ to
\begin{equation*}
\alpha:\pi^*T_1 \to \pi^*T_2
\end{equation*}
where $\alpha_C:\pi^*T_1(C) \to \pi^*T_2(C)$ corresponds to $T_1(C) \to \sR \otimes_{\sO_X} T_2(C)$
dual to the coprojection at $C$ of the coend defining $\sR$.
Further $p^*\sR \to \sO_{X'}$ is a homomorphism of algebras if and only if the
corresponding natural transformation $p^*T_1 \to p^*T_2$ is compatible with the 
tensor structures.
Thus we have a bijection
\begin{equation*}
\Hom_X(X',\Spec(\sR)) \iso \Iso^\otimes(p^*T_1,p^*T_2)
\end{equation*}
natural in $p:X' \to X$, which gives the required representation, with universal element $\alpha$.

The affine super scheme $\underline{\Iso}^\otimes(T_1,T_2)$ over $X$ is functorial in $T_1$ and $T_2$.
It is clear from the definition that we have a canonical isomorphism
\begin{equation}\label{e:Isopull}
\underline{\Iso}^\otimes(p^*T_1,p^*T_2) \iso p^*\underline{\Iso}^\otimes(T_1,T_2)
\end{equation}
over $X'$ for every $p:X' \to X$.
If $U:\sC \to \sC'$ is a tensor functor with $\sC'$ essentially small and rigid such that
$T_i = T'{}\!_iU$ for $i = 1,2$, 
then composition with $U$ defines a morphism
\begin{equation}\label{e:IsoTannhull}
\underline{\Iso}^\otimes(T'{}\!_1,T'{}\!_2) \to \underline{\Iso}^\otimes(T_1,T_2)
\end{equation} 
over $X$, which is an isomorphism if 
$\Iso^\otimes(p^*T'{}\!_1,p^*T'{}\!_2) \to \Iso^\otimes(p^*T_1,p^*T_2)$ is bijective for every $p$.

Suppose that $X$ is non-empty and that $T_1$ and $T_2$ are pointwise faithful.
Then $\sC$ is integral and pseudo-Tannakian, and  $T_1$ and $T_2$ factor through $\sC_\mathrm{fr}$.
Thus $T_1$ and $T_2$ induce homomorphisms from $\kappa(\sC)$ to $H^0(X,\sO_X)$.
The equaliser of the corresponding morphisms $X \to \Spec(\kappa(\sC))$
is a closed super subscheme $Y$ of $X$,
and $\Iso^\otimes(p^*T_1,p^*T_2)$ is empty unless $p:X' \to X$ factors through $Y$.
Thus $\underline{\Iso}^\otimes(T_1,T_2)$ may be regarded as a scheme over $Y$.
The following lemma, together with \eqref{e:Isopull} with $p$ the embedding of $Y$,
shows that $\underline{\Iso}^\otimes(T_1,T_2)$ is faithfully
flat over $Y$.

\begin{thm}\label{t:Isofthfl}
Let $\sC$ be  an essentially small rigid tensor category, $X$ be a non-empty 
super $\Q$\nd scheme,
and $T_1$ and $T_2$ be pointwise faithful tensor functors from $\sC$ to $\Mod(X)$ 
which induce the same homomorphism from $\kappa(\sC)$ to $H^0(X,\sO_X)$.
Then $\underline{\Iso}^\otimes(T_1,T_2)$ is faithfully flat over $X$.
\end{thm}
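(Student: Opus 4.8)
The statement is local on $X$, so I would first reduce to the case where $X = \Spec(k')$ is the spectrum of a field $k'$ (necessarily of characteristic $0$): faithful flatness of $\underline{\Iso}^\otimes(T_1,T_2)$ over $X$ can be checked after pullback along the canonical morphisms $\Spec(\kappa(x)) \to X$ for $x$ running over the points of $X$, using \eqref{e:Isopull} and the fact that passage to the fibre at $x$ of a pointwise faithful tensor functor is again a faithful (hence pointwise faithful, the target being a category of vector spaces over a field) tensor functor, which still induces the given homomorphism from $\kappa(\sC)$ to $\kappa(x)$ since $\kappa(\sC) \to H^0(X,\sO_X) \to \kappa(x)$ factors through the common homomorphism. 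One must also note that the hypothesis ``$T_1, T_2$ induce the same homomorphism from $\kappa(\sC)$'' forces $\kappa(x)$ to contain $\kappa(\sC)$ via this homomorphism, so $\kappa(\sC)$ acts on everything consistently; after this reduction $k'$ is an extension of $\kappa(\sC)$.

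So assume $X = \Spec(k')$. Then $\sC$ is integral and pseudo-Tannakian, and both $T_1$ and $T_2$ factor through $\sC_\mathrm{fr}$ and then, by Theorem~\ref{t:Tannhull}, through the super Tannakian hull $U:\sC \to (\widetilde{\sC})_\mathrm{rig}$, say $T_i = T'{}\!_iU$ with $T'{}\!_i:(\widetilde{\sC})_\mathrm{rig} \to \Mod(k')$ pointwise faithful (here pointwise faithful just means faithful, as $k'$ is a field). By Theorem~\ref{t:TannhullModX}, composition with $U$ gives an equivalence of groupoids of pointwise faithful tensor functors, so in particular $\Iso^\otimes(p^*T'{}\!_1,p^*T'{}\!_2) \to \Iso^\otimes(p^*T_1,p^*T_2)$ is a bijection for every $p:X' \to \Spec(k')$; hence by \eqref{e:IsoTannhull} the morphism $\underline{\Iso}^\otimes(T'{}\!_1,T'{}\!_2) \to \underline{\Iso}^\otimes(T_1,T_2)$ is an isomorphism, and it suffices to prove $\underline{\Iso}^\otimes(T'{}\!_1,T'{}\!_2)$ is faithfully flat over $k'$. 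In other words, I have reduced to: \emph{a scheme affine over a field is faithfully flat over that field iff it is non-empty}, so the whole problem collapses to showing $\underline{\Iso}^\otimes(T'{}\!_1,T'{}\!_2)$ is non-empty, equivalently that $T'{}\!_1$ and $T'{}\!_2$ become tensor isomorphic after some faithfully flat base change (indeed after base change to any algebraically closed extension of $k'$, by the usual argument that a non-empty affine scheme over a field acquires a rational point over a suitable extension — but here I want the converse direction, so really I want: over an algebraic closure $\overline{k'}$ the two functors are isomorphic).

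The key step is therefore: for $\sC'' := (\widetilde{\sC})_\mathrm{rig}$ super Tannakian with $\End(\I) = \kappa(\sC) =: k_0$, and $T'{}\!_1, T'{}\!_2$ two faithful $k_0$-linear tensor functors $\sC'' \to \Mod(\overline{k'})$, the functors are tensor isomorphic. By Corollary~\ref{c:Tannequivab}, $\sC'' \simeq \Mod_{K,\varepsilon}(\overline{k_0})$ for a transitive affine super groupoid with involution $(K,\varepsilon)$ over $\overline{k_0}/k_0$. A faithful $k_0$-tensor functor to $\Mod(\overline{k'})$ is the same as a fibre functor, i.e.\ a $\overline{k'}$-point of the ``space of fibre functors'', which by transitivity of $K$ is a torsor; concretely, using \eqref{e:MODAMODK} one passes to $\Mod_{K_{\overline{k'}},\varepsilon}(\overline{k_0} \otimes_{k_0} \overline{k'})$ and the two functors correspond to two sections of the same transitive affine groupoid over an algebraically closed field, which are conjugate by a point of that groupoid — this is exactly the input used in Lemma~\ref{l:kbarpoints} (the transitivity/homogeneity statement, ultimately \cite[1.1.1]{O10}). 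The explicit mechanism: $\underline{\Iso}^\otimes(T'{}\!_1,T'{}\!_2)$ base changed to $\overline{k'}$ is represented by the coend $\int^{C} T'{}\!_2(C)^\vee \otimes T'{}\!_1(C)$, and one shows directly — using the end/coend description \eqref{e:Rint}, \eqref{e:Vomegaint} and the isomorphism \eqref{e:VGprimeactinv} — that this algebra is a form of $\overline{k_0} \otimes_{k_0} k[K]$-type object, hence faithfully flat, by exhibiting its spectrum as a pseudo-torsor under $\underline{\Aut}^\otimes(T'{}\!_1)$ which is non-empty over $\overline{k'}$ because $K$ is transitive.

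\textbf{Main obstacle.} The genuinely hard part is the last step — showing the $\Iso$-scheme is non-empty after base change to an algebraic closure, i.e.\ that any two fibre functors on the super Tannakian category $(\widetilde{\sC})_\mathrm{rig}$ over $\overline{k'}$ are isomorphic. This is the ``uniqueness of the fibre functor'' phenomenon and is not formal: it rests on the transitivity of the affine super groupoid $K$ furnished by Corollary~\ref{c:Tannequivab} together with the transitive-group-action input of \cite[1.1.1]{O10} (which is precisely what made Lemma~\ref{l:kbarpoints} work). Everything else — the reduction to a field, the descent of faithful flatness along $\Spec(\kappa(x)) \to X$, the passage through the super Tannakian hull via Theorems~\ref{t:Tannhull} and \ref{t:TannhullModX}, and the reduction ``faithfully flat over a field $\Leftrightarrow$ non-empty'' — is routine bookkeeping with the representability results \eqref{e:Rint}, \eqref{e:Isopull}, \eqref{e:IsoTannhull} already set up in the text.
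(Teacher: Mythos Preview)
There is a genuine gap in your very first reduction. You claim that faithful flatness of $\underline{\Iso}^\otimes(T_1,T_2) \to X$ can be checked after pullback along $\Spec(\kappa(x)) \to X$ for each point $x$. Surjectivity can indeed be checked this way, but flatness cannot: every module over a field is flat, so the fibrewise condition is vacuous for the flatness part, and the morphisms $\Spec(\kappa(x)) \to X$ are not themselves flat, so faithfully flat descent does not apply. A surjective affine morphism with all fibres non-empty need not be flat (e.g.\ $\Spec(k[t,s]/(ts)) \to \Spec(k[t])$). Thus your reduction to $X = \Spec(k')$ discards exactly the content of the theorem.

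The paper keeps $X$ general throughout. After replacing $\sC$ by $\sC_\mathrm{fr}$ so that $k := \End_\sC(\I) = \kappa(\sC)$, it base-changes along the faithfully flat $X_{\overline{k}} \to X$ and replaces $\sC$ by $\overline{k} \otimes_k \sC$ using Lemma~\ref{l:ext}, so that $k$ may be assumed algebraically closed. Passing to the super Tannakian hull via \eqref{e:IsoTannhull} and Theorem~\ref{t:TannhullModX}, and then applying Corollary~\ref{c:Tannequivab}, reduces to $\sC = \Mod_{G,\varepsilon}(k)$. The heart of the argument is a direct computation over $X$: with $T_0$ the forgetful-then-pullback functor, one extends $T_2$ to a cocontinuous tensor functor $T$ on $\MOD_{G',\varepsilon'}(k)$ and shows via \eqref{e:Vomegaint} that the algebra $\sR$ of \eqref{e:Rint} is isomorphic to $T(k[G])$. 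Writing $k[G]$ as a filtered colimit of its subobjects in $\Mod_{G',\varepsilon'}(k)$ exhibits $\sR$ as a filtered colimit of vector bundles over $X$, hence flat; the fibres are non-zero because the unit $k \to k[G]$ is non-zero and $T$ is pointwise faithful. The case of arbitrary $T_1$ is then obtained from the case $T_1 = T_0$ by pulling back along the now-established faithfully flat morphism $\underline{\Iso}^\otimes(T_0,T_1) \to X$.

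Note also that the uniqueness of fibre functors over an algebraically closed field, which you invoke as the hard input, is precisely Corollary~\ref{c:fibfununique} and is deduced in the paper \emph{from} Theorem~\ref{t:Isofthfl}; so even if the fibrewise reduction were valid, the argument as written would be circular.
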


\begin{proof}
With $T'{}\!_i$ the factorisation 
of $T_i$ through $\sC_\mathrm{fr}$,  \eqref{e:IsoTannhull} is an isomorphism.
After replacing $\sC$ by $\sC_\mathrm{fr}$, we may thus suppose that $\End_{\sC}(\I)$
is a field $k$ of characteristic $0$ and that $\sC$ is a $k$\nd tensor
category with $\kappa(\sC) = k$.
The homomorphism from $k$ to $H^0(X,\sO_X)$ induced by the $T_i$
then defines a structure of super $k$\nd scheme on $X$ such that the $T_i$ are
$k$\nd tensor functors.

Let $\overline{k}$ be an algebraic closure of $k$.
By \eqref{e:Isopull} with $p$ the projection $X_{\overline{k}} \to X$, we may
after replacing $X$ by $X_{\overline{k}}$ suppose that the structure of
super $k$\nd scheme on $X$ extends to a structure of $\overline{k}$\nd scheme.
Then with 
\begin{equation*}
T'{}\!_i:\overline{k} \otimes_k \sC \to \Mod(X)
\end{equation*} 
the $\overline{k}$\nd tensor functor through which $T_i$ factors,  \eqref{e:IsoTannhull}
is an isomorphism.
Since $\kappa(\sC) = k$, it follows from Lemma~\ref{l:ext} that 
$\overline{k} \otimes_k \sC$ is integral with 
$\kappa(\overline{k} \otimes_k \sC) = \overline{k}$ and that the $T'{}\!_i$
are pointwise faithful.
Replacing $k$ by $\overline{k}$ and $\sC$ by $\overline{k} \otimes_k \sC$, 
we may thus further suppose that $k$ is algebraically closed.

Let $U:\sC \to \sC'$ be a super Tannakian hull of $\sC$.
Then by Theorem~\ref{t:TannhullModX}, \eqref{e:IsoTannhull} is an isomorphism with 
$T_i$ replaced by a tensor isomorphic functor $T'{}\!_iU$.
Since $\kappa(\sC') = k$ by  Corollary~\ref{c:Tannhulleq}, we may after
replacing $\sC$ by $\sC'$ suppose that $\sC$ is super Tannakian.
Thus by Corollary~\ref{c:Tannequivab} we may suppose finally that
\begin{equation*}
\sC = \Mod_{G,\varepsilon}(k)
\end{equation*}
for an affine super $k$\nd group with involution $(G,\varepsilon)$.

Write $T_0$ for the forgetful functor 
from $\Mod_{G,\varepsilon}(k)$ to $\Mod(k)$ followed by pullback along
the structural morphism of $X$.
Consider first the case where $T_1 = T_0$.
We may extend $T_2$ to a cocontinuous $k$\nd tensor functor from 
$\MOD_{G,\varepsilon}(k)$ to $\MOD(X)$, because it is pointwise faithful and 
hence as a functor to $\MOD(X)$ right exact.
With $(G',\varepsilon')$ as in \eqref{e:Gprimedef}, $\MOD_{G',\varepsilon'}(k)$
is the $k$\nd tensor category of $\Z/2$\nd graded objects of $\MOD_{G,\varepsilon}(k)$.
Thus $T_2$ extends further to a cocontinuous $k$\nd tensor functor
\begin{equation*}
T:\MOD_{G',\varepsilon'}(k) \to \MOD(X)
\end{equation*}
which sends $k^{0|1}$ to $\sO_X{}\!^{0|1}$.
It follows from \eqref{e:Vomegaint} that $\sR$ in \eqref{e:Rint} is isomorphic
to $T(k[G])$, because the $k$\nd tensor functor $V \mapsto T(\Omega(V))$ from 
$\Mod_{G,\varepsilon}(k)$ to $\Mod(X)$ is tensor isomorphic to $T_0$.
Since the unit $k \to k[G]$ is non-zero and the functor from $\Mod_{G',\varepsilon'}(k)$ 
to $\Mod(X)$ induced by $T$ is pointwise faithful, writing $k[G]$ as the filtered colimit 
of its subobjects in $\Mod_{G',\varepsilon'}(k)$ shows that the $\sO_X$\nd module $\sR$ 
is flat with non-zero fibres, as required.

The case where $T_1$ is arbitrary reduces to that where $T_1 = T_0$ by \eqref{e:Isopull} 
with $p$ the faithfully flat structural morphism of $\underline{\Iso}^\otimes(T_0,T_1)$.
\end{proof}

Let $k$ be an algebraically closed field of characteristic $0$ 
and $G$ be an affine super $k$\nd group.
It can be seen as follows that if $Y$ is a super $G$\nd scheme such that $Y_{k'}$
is isomorphic to $G_{k'}$ acting on itself by left translation for some extension $k'$ of $k$,
then $Y$ has a $k$\nd point, and hence is isomorphic to $G$ acting on itself by left translation. 
When $G$ is of finite type this is clear.
Replacing $G$ and $Y$ by $G_{\mathrm{red}}$ and $Y_{\mathrm{red}}$, 
we may suppose that $G$ is an affine $k$\nd group.
Write $G$ as the filtered limit of its affine $k$\nd quotients $G_\lambda = G/H_\lambda$ of finite type.
If $Y = \Spec(R)$ and $Y_\lambda = \Spec(R^{H_\lambda})$, then $(Y_\lambda)_{k'}$ is
$(G_\lambda)_{k'}$\nd isomorphic to $(G_\lambda)_{k'}$.
Thus $Y_\lambda$ is $G_\lambda$\nd isomorphic to $G_\lambda$, so that
$G_\lambda(k)$ acts simply transitively on  $Y_\lambda(k)$.
Since $Y = \lim_\lambda Y_\lambda$, it follows from \cite[Lemma~1.1.1]{O10} that $Y$ has a $k$\nd point.

\begin{cor}\label{c:fibfununique}
Let $\sC$ be  an essentially small rigid tensor category and  $k$ be an algebraically closed
field of characteristic $0$.
Then any two faithful tensor functors from $\sC$ to $\Mod(k)$ 
which induce the same homomorphism from $\kappa(\sC)$ to $k$ are tensor isomorphic.
\end{cor}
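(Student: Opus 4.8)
The plan is to deduce this from Theorem~\ref{t:Isofthfl}. Let $T_1,T_2:\sC \to \Mod(k)$ be two faithful tensor functors inducing the same homomorphism $\rho:\kappa(\sC) \to k$. A faithful tensor functor to $\Mod(k)$ is the same thing as a pointwise faithful tensor functor to $\Mod(X)$ for $X = \Spec(k)$, since $X$ has the single point with residue field $k$ and the fibre there is $T_i$ itself. Thus both $T_1$ and $T_2$ are pointwise faithful tensor functors to $\Mod(\Spec k)$ inducing the same homomorphism $\kappa(\sC) \to H^0(\Spec k, \sO_{\Spec k}) = k$. By Theorem~\ref{t:Isofthfl}, the affine super $k$\nd scheme $\underline{\Iso}^\otimes(T_1,T_2)$ is faithfully flat over $\Spec(k)$. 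In particular it is non-empty.

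Now I claim a non-empty affine super $k$\nd scheme $Y$ over an algebraically closed field $k$ has a $k$\nd point; granting this, a $k$\nd point of $\underline{\Iso}^\otimes(T_1,T_2)$ is by the defining universal property exactly a tensor isomorphism $T_1 \iso T_2$, which is what we want. For the claim: write $Y = \Spec(A)$ with $A$ a non-zero commutative super $k$\nd algebra. Then $A_\mathrm{red} = A/\mathrm{nil}(A)$ is a non-zero commutative $k$\nd algebra (it is non-zero because the nilradical of a super algebra over a field of characteristic $0$ is a proper ideal whenever $A \ne 0$, since $A_0 \ne 0$ and the odd part is contained in the nilradical so $A_\mathrm{red}$ is a quotient of the non-zero ordinary ring $A_0/(A_0 \cap \mathrm{nil})$). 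A non-zero commutative $k$\nd algebra has a maximal ideal with residue field a field extension of $k$, which is $k$ itself since $k$ is algebraically closed and $A_\mathrm{red}$ — well, one must be slightly careful: the residue field at a maximal ideal of a finitely generated $k$\nd algebra is $k$ by the Nullstellensatz, but $A_\mathrm{red}$ need not be finitely generated. However $A_\mathrm{red}$ is a filtered union of its finitely generated $k$\nd subalgebras, each non-zero, and a compatible system of $k$\nd points is provided by the argument of \cite[Lemma~1.1.1]{O10} (the same cited result used in Lemma~\ref{l:kbarpoints} and in the remarks preceding this corollary), exactly as in the discussion just above the statement. This yields a $k$\nd point of $\Spec(A_\mathrm{red})$, hence of $Y$.

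The main obstacle is essentially bookkeeping: matching up the two notions of ``faithful tensor functor to $\Mod(k)$'' and ``pointwise faithful tensor functor to $\Mod(\Spec k)$'' so that Theorem~\ref{t:Isofthfl} applies verbatim, and then correctly invoking the existence of $k$\nd points on a possibly non-Noetherian non-empty affine super $k$\nd scheme over an algebraically closed field. Neither step is deep — the first is a definitional unwinding and the second is the standard limit argument via \cite[Lemma~1.1.1]{O10} together with the observation that the nilradical of a super algebra over a field of characteristic $0$ is proper whenever the algebra is non-zero — but both need to be stated cleanly. Everything genuinely substantive (the flatness and non-vanishing of $\underline{\Iso}^\otimes$) is already packaged in Theorem~\ref{t:Isofthfl}.
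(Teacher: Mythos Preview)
Your proof has a genuine gap. The claim that ``a non-empty affine super $k$\nd scheme $Y$ over an algebraically closed field $k$ has a $k$\nd point'' is false: take $Y = \Spec(k(t))$, which is non-empty, affine, and faithfully flat over $\Spec(k)$, yet has no $k$\nd point. Your attempted repair via writing $A_{\mathrm{red}}$ as a filtered union of finitely generated subalgebras and invoking \cite[Lemma~1.1.1]{O10} does not work here: that lemma requires a compatible system of \emph{transitive group actions} on the finite-level point sets to guarantee the inverse limit is non-empty. With no such structure the inverse limit can be empty, as the example $k(t) = \bigcup k[t]_f$ shows --- each $\Spec(k[t]_f)$ has $k$\nd points, but they disappear in the limit.

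The paper's proof supplies exactly the missing structure. It sets $G = \underline{\Iso}^\otimes(T_2,T_2)$, an affine super $k$\nd group, and observes that $Y = \underline{\Iso}^\otimes(T_1,T_2)$ is a super $G$\nd scheme. Theorem~\ref{t:Isofthfl} gives a $k'$\nd point of $Y$ for some extension $k'$, and this point induces via \eqref{e:Isopull} a $G_{k'}$\nd isomorphism $G_{k'} \iso Y_{k'}$. Now one is in the situation of the paragraph immediately preceding the corollary: $Y$ is a super $G$\nd scheme which becomes $G$\nd isomorphic to $G$ after base change, and the limit argument using \cite[Lemma~1.1.1]{O10} applies precisely because at each finite level $G_\lambda(k)$ acts simply transitively on $Y_\lambda(k)$. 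The torsor structure is what makes the descent of the $k'$\nd point to a $k$\nd point go through.
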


\begin{proof}
Let $T_1$ and $T_2$ be faithful tensor functors from $\sC$ to $\Mod(k)$
which induce the same homomorphism from $\kappa(\sC)$ to $k$.
Write $G$ for the affine super $k$\nd group $\underline{\Iso}^\otimes(T_2,T_2)$, and
$Y$ for the super $G$\nd scheme $\underline{\Iso}^\otimes(T_1,T_2)$.
By Theorem~\ref{t:Isofthfl}, $Y$ is non-empty, and hence has a $k'$\nd point $y$ 
for some extension $k'$ of $k$.
Then $y$ defines a tensor isomorphism from $p^*T_1$ to $p^*T_2$ with $p^*$ extension
of scalars from $k$ to $k'$, and hence by \eqref{e:Isopull} a $G_{k'}$\nd isomorphism from
$G_{k'}$ to $Y_{k'}$.
By the above, $Y$ has thus a $k$\nd point.
\end{proof}

Let $\sC$ be  an essentially small rigid tensor category, 
$X$ be a non-empty super $\Q$\nd scheme,
and $T:\sC \to \Mod(X)$ be a pointwise faithful tensor functor.
Then if $X$ is regarded as a super $\kappa(\sC)$\nd scheme using the homomorphism
$\kappa(\sC) \to H^0(X,\sO_X)$ defined by $T$, 
we have by Theorem~\ref{t:Isofthfl} a transitive affine groupoid
\begin{equation*}
\underline{\Iso}^\otimes(T) = \underline{\Iso}^\otimes(\pr_1{}\!^*T,\pr_2{}\!^*T)
\end{equation*}
over $X/\kappa(\sC)$, with the evident identities and composition, 
and $C \mapsto \iota_{T(C)}$ defines an involution $\varepsilon_T$ of 
$\underline{\Iso}^\otimes(T)$.
We have a canonical factorisation
\begin{equation*}
\sC \xrightarrow{\Phi_T}  \Mod_{\underline{\Iso}^\otimes(T),\varepsilon_T}(X) \to \Mod(X)
\end{equation*}
of $T$, where $\Phi_T$ is defined by the universal element for $\underline{\Iso}^\otimes(T)$
and the second arrow is the forgetful functor.
Given $U:\sC\to \sC'$ and $T'$ with $T = T'U$, composition with $U$ defines as in
\eqref{e:IsoTannhull} a morphism $\underline{\Iso}^\otimes(T') \to \underline{\Iso}^\otimes(T)$, and the square
\begin{equation}\label{e:ModIsosquare}
\begin{gathered}
\xymatrix{
\Mod_{\underline{\Iso}^\otimes(T)}(X) \ar[r] & \Mod_{\underline{\Iso}^\otimes(T')}(X) \\
\sC \ar^{\Phi_T}[u] \ar^{U}[r] & \sC' \ar_{\Phi_{T'}}[u]
}
\end{gathered}
\end{equation}
commutes.
For $p:X' \to X$ with $X'$ non-empty, we have
by \eqref{e:Isopull} a canonical isomorphism from $\underline{\Iso}^\otimes(p^*T)$ to the pullback of
$\underline{\Iso}^\otimes(T)$ along $p$.
Thus $\Phi_{p^*T}$ factors as 
\begin{eqnarray}\label{e:ModIsofac}
\sC \xrightarrow{\Phi_T} \Mod_{\underline{\Iso}^\otimes(T)}(X) \to \Mod_{\underline{\Iso}^\otimes(p^*T)}(X')
\end{eqnarray} 
with the second arrow an equivalence.

Let $S$ be a super $\Q$\nd scheme and $(K,\varepsilon)$ be
a super groupoid with involution over $S$.
Write
\begin{equation*}
\omega_{K,\varepsilon}:\Mod_{K,\varepsilon}(S) \to \Mod(S)
\end{equation*} 
for the forgetful tensor functor.
The action of $K$ defines a morphism
\begin{equation*}
\theta_{K,\varepsilon}:K \to \underline{\Iso}^\otimes(\omega_{K,\varepsilon}) 
\end{equation*}
of groupoids over $S$, compatible with the involutions.
Then $\Phi_{\omega_{K,\varepsilon}}$ followed by pullback along $\theta_{K,\varepsilon}$ is the
identity of $\Mod_{K,\varepsilon}(S)$.
Thus $\Phi_{\omega_{K,\varepsilon}}$ is an equivalence when $\theta_{K,\varepsilon}$
is an isomorphism.

Let $k$ be a commutative ring, $k'$ be a commutative $k$\nd algebra, and $\sA$ be a cocomplete abelian 
$k$\nd tensor category with cocontinuous tensor product.
Then we have a $k$\nd tensor functor
\begin{equation}\label{e:tenscatext}
k' \otimes_k -:\sA \to \MOD_{\sA}(k' \otimes_k \I).
\end{equation}
If $\sA'$ is a cocomplete abelian $k'$\nd tensor category with cocontinuous tensor product,
the canonical $k$\nd homomorphism from $k'$ to $\End_{\sA'}(\I)$ defines a morphism of algebras
from $k' \otimes_k \I$ to $\I$ in $\sA'$.
Composition with \eqref{e:tenscatext} then defines an equivalence from the category of 
cocontinuous $k'$\nd tensor functors $\MOD_{\sA}(k' \otimes_k \I) \to \sA'$ to the category of 
cocontinuous $k$\nd tensor functors $\sA \to \sA'$, with a quasi-inverse sending $H$ to 
$H(-) \otimes_{k' \otimes_k \I} \I$.

\begin{lem}\label{l:thetaPhi}
Let $S$ be a non-empty super scheme over a field $k$ of characteristic $0$ and $(K,\varepsilon)$ be a 
transitive affine super groupoid over $S/k$.
Then $\theta_{K,\varepsilon}$ is an isomorphism and $\Phi_{\omega_{K,\varepsilon}}$ is an equivalence.
\end{lem}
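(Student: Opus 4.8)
The plan is to reduce the general (pro-finite) case to the case where $K$ is of finite type, and there to use descent and the smooth-group lemmas already established. First I would observe that it suffices to prove $\theta_{K,\varepsilon}$ is an isomorphism: once that is known, the final clause follows from the remark immediately preceding the lemma, namely that $\Phi_{\omega_{K,\varepsilon}}$ followed by pullback along $\theta_{K,\varepsilon}$ is the identity of $\Mod_{K,\varepsilon}(S)$, so if $\theta_{K,\varepsilon}$ is an isomorphism then $\Phi_{\omega_{K,\varepsilon}}$ is a tensor equivalence. Since both $K$ and $\underline{\Iso}^\otimes(\omega_{K,\varepsilon})$ are affine over $S\times_k S$, checking that $\theta_{K,\varepsilon}$ is an isomorphism is checking that a map of quasi-coherent $\sO_{S\times_k S}$-algebras is an isomorphism; by faithfully flat descent along the faithfully flat quasi-compact morphism $K \to S\times_k S$ (transitivity of $K$), and using the compatibility \eqref{e:Isopull} together with the fact that the pullback of $(K,\varepsilon)$ along $d_i:K\to S$ is the ``trivial'' groupoid, one reduces to the situation $S = \Spec(k')$ with $K_{k'}$ the trivial groupoid $S'\times_k S'$ acting on itself. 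So the whole question is local and we may assume $S$ is the spectrum of a field extension $k'$ of $k$ and $K = S'\times_k S'$, i.e.\ $\Mod_{K,\varepsilon}(S)$ is just $\Mod(k')$ with its super structure and $\omega_{K,\varepsilon}$ is the identity up to the forgetful passage; there $\theta_{K,\varepsilon}$ is visibly an isomorphism.

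More carefully, the reduction I would run is: first base change along $S\to\Spec(k)$ is harmless by \eqref{e:Isopull}, so replace $k$ by a point of $S$ and assume $S$ has a $k$-point, or even that $S = \Spec(k')$ for a field $k'$; then transitivity says $K$ is, after faithfully flat base change along $S\to S\times_k S$ composed with one projection (which is $d_i$), the split groupoid. Concretely: pull back $\theta_{K,\varepsilon}$ along $d_1:K\to S$. The pullback of $K$ along $d_1$ is the trivial (split) groupoid $K\times_{d_1,S}K$ which is $K$-isomorphic to $K\times_k S$... rather, the point is that $d_1^*\omega_{K,\varepsilon}$ becomes the constant fibre functor, and for the constant/trivial groupoid the canonical morphism $\theta$ to $\underline{\Iso}^\otimes$ of the forgetful functor is an isomorphism by a direct computation with \eqref{e:Rint} and \eqref{e:Vomegaint}-type identities (exactly the computation done in the proof of Theorem~\ref{t:Isofthfl} in the case $T_1 = T_0$, showing $\sR \cong T(k[G])$). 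Then since $\theta_{K,\varepsilon}$ becomes an isomorphism after the faithfully flat quasi-compact pullback $d_1$, and both sides are affine over $S\times_k S$ hence the statement ``$\theta$ is an isomorphism'' is fpqc-local on $S\times_k S$, we conclude $\theta_{K,\varepsilon}$ is an isomorphism.

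The main obstacle I anticipate is handling the pro-finite (non-finite-type) case cleanly: $K$ need not be of finite type over $S\times_k S$, and $\underline{\Iso}^\otimes(\omega_{K,\varepsilon})$ is built from a coend over the possibly-infinite category $\Mod_{K,\varepsilon}(S)$, so one must be careful that formation of $\underline{\Iso}^\otimes$ commutes with the relevant limits. I would deal with this by writing $K$ as a filtered limit $\lim_\lambda K_\lambda$ of transitive affine groupoids of finite type over $S/k$ (using Lemma~\ref{l:qafflim}-style arguments together with the fact that $K$ is affine over $S\times_k S$, so $\sO_K$ is the filtered colimit of its finitely generated sub-$\sO_{S\times_k S}$-algebras stable under the groupoid structure), observe $\Mod_{K,\varepsilon}(S) = \colim_\lambda \Mod_{K_\lambda,\varepsilon}(S)$ with fully faithful transition functors, and use that $\underline{\Iso}^\otimes$ of a colimit of such subcategories is the limit of the $\underline{\Iso}^\otimes(K_\lambda)$, so $\theta_{K,\varepsilon} = \lim_\lambda \theta_{K_\lambda,\varepsilon}$. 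This localizes everything to $K$ of finite type over $S$, where after the fpqc descent above one is genuinely dealing with an affine super group of finite type over a field and a split groupoid, and the isomorphism assertion is the elementary computation already present in the text. Checking the compatibility of $\underline{\Iso}^\otimes$ with this filtered limit — i.e.\ that the coend formula \eqref{e:Rint} commutes with the colimit of subcategories — is the one routine-but-not-trivial verification, but it follows from cocontinuity of $\otimes$ and the fact that each object of $\Mod_{K,\varepsilon}(S)$ lies in some $\Mod_{K_\lambda,\varepsilon}(S)$.
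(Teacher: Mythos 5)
Your reduction of the second assertion to the first is correct and matches the paper exactly: since $\Phi_{\omega_{K,\varepsilon}}$ followed by pullback along $\theta_{K,\varepsilon}$ is the identity, it suffices to prove that $\theta_{K,\varepsilon}$ is an isomorphism.

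However, the core of your reduction — fpqc descent along $(d_0,d_1):K\to S\times_k S$ to a \emph{split} case — contains a genuine gap, and I think it rests on a misconception about transitive groupoids. You write that after localizing, ``we may assume $S=\Spec(k')$ and $K=S'\times_k S'$.'' This is not true: transitivity means $(d_0,d_1):K\to S\times_k S$ is faithfully flat, but $K$ carries nontrivial isotropy; even over a point $S=\Spec(k)$, $K$ is an arbitrary affine super $k$-group, not the pair groupoid. Your own hedging (``rather, the point is that...'') signals the same worry. If you chase the pullback of $\theta_{K,\varepsilon}$ along $(d_0,d_1)$ carefully, you get a morphism from $K\times_{S\times_kS}K$ to $\underline{\Iso}^\otimes(d_0^*\omega_{K,\varepsilon},d_1^*\omega_{K,\varepsilon})$ over $K$. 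The tautological $K$-point of $K$ gives a canonical tensor isomorphism $d_0^*\omega_{K,\varepsilon}\cong d_1^*\omega_{K,\varepsilon}$ — but this canonical isomorphism \emph{is} $\theta_{K,\varepsilon}$ evaluated at the universal point. Using it to trivialize $\underline{\Iso}^\otimes(d_0^*\omega,d_1^*\omega)$ and then concluding that $\theta$ is an isomorphism is circular (or at least requires more work than you indicate). Even setting that aside, after the trivialization you land over the base $K$ with the fibre functor $d_1^*\omega_{K,\varepsilon}:\Mod_{K,\varepsilon}(S)\to\Mod(K)$, whose source is still the category of representations of the groupoid $K$, not of the isotropy group over $K$ — so you have not actually reduced to the computation for a group acting on its own regular representation.

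The paper's route is more direct and avoids descent on $S\times_k S$ entirely. It first proves the case $S=\Spec(k)$ — then $K$ is just an affine super $k$-group $G$ and $\theta_{G,\varepsilon}$ is the isomorphism of super $k$-algebras underlying \eqref{e:Vomegaint}. For general $S$ it chooses an extension $k'$ with a $k'$-point $s$ of $S$ and forms the affine super $k'$-group $(K_{s,s},\varepsilon_s)$, the fibre of $K$ over $(s,s)$. It then shows that restriction $\Mod_{K,\varepsilon}(S)\to\Mod_{K_{s,s},\varepsilon_s}(k')$ identifies $\underline{\Iso}^\otimes(\omega_{K_{s,s},\varepsilon_s})$ with the fibre of $\underline{\Iso}^\otimes(\omega_{K,\varepsilon})$ over $(s,s)$, so that $\theta_{K,\varepsilon}$ is an isomorphism on the fibre over $(s,s)$ by the group case; and since both source and target are transitive affine over $S/k$, being an isomorphism on one fibre implies being an isomorphism. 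That last step is where transitivity is actually used — not to set up a descent problem, but to propagate an isomorphism from a single fibre.

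Finally, your filtered-limit reduction to $K$ of finite type is unnecessary: the key identity \eqref{e:Vomegaint} holds for an arbitrary affine super $k$-group, so the argument applies to the pro-finite case without any reduction. Introducing the finite-type approximation only adds the (nontrivial, and not established in the paper) burden of checking that the coend formula \eqref{e:Rint} commutes with the colimit of subcategories.
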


\begin{proof}
It is enough to prove that $\theta_{K,\varepsilon}$ is an isomorphism.
Suppose first that $S = \Spec(k)$.
Then $K$ is an affine $k$\nd super group $G$, and $\theta_{G,\varepsilon}$ is the morphism
associated to the isomorphism of super $k$\nd algebras underlying \eqref{e:Vomegaint}.

To prove the general case, let $k'$ be an extension of $k$ for which $S$ has a 
$k'$\nd point $s$ over $k$.
Then the fibre of $(K,\varepsilon)$ above the $k'$\nd point $(s,s)$ of $S$
is an affine super $k'$\nd group $(K_{s,s},\varepsilon_s)$.
If we take $X = S$ in \eqref{e:MODAMODK}, then \eqref{e:tenscatext} is extension of scalars from
$\MOD_{K,\varepsilon}(S)$ to $\MOD_{K_{k'},\varepsilon_{k'}}(S_{k'})$. 
Composing with the equivalence from $\MOD_{K_{k'},\varepsilon_{k'}}(S_{k'})$ to 
$\MOD_{K_{s,s},\varepsilon_s}(k')$ defined by taking the fibre at the $k'$\nd point
of $s$ of $S_{k'}$ over $k'$ shows that for $\sA'$ as above, 
composition with restriction
\begin{equation*}
\MOD_{K,\varepsilon}(S) \to \MOD_{K_{s,s},\varepsilon_s}(k')
\end{equation*}
from $K$ to $K_{s,s}$ defines an equivalence from $k'$\nd tensor functors from
$\MOD_{K_{s,s},\varepsilon_s}(k')$ to $\sA'$ to $k$\nd tensor functors from
$\MOD_{K,\varepsilon}(S)$ to $\sA'$.
Restricting to categories of representations then shows that the same holds with
$\MOD$ replaced by $\Mod$.
By taking $\sA'$ of the form $\MOD(X')$ for super $k'$\nd schemes $X'$, it follows that
the morphism
\begin{equation}\label{e:IsoomegaKfib}
\underline{\Iso}^\otimes(\omega_{K_{s,s},\varepsilon_s}) \to 
\underline{\Iso}^\otimes(\omega_{K,\varepsilon}) 
\end{equation}
defined by the embedding $K_{s,s} \to K$ is an isomorphism
onto $\underline{\Iso}^\otimes(\omega_{K,\varepsilon})_{s,s}$.

We have a commutative square
\begin{equation*}
\begin{gathered}
\xymatrix{
K_{s,s} \ar[d] \ar^-{\theta_{K_{s,s}},\varepsilon_s}[r] & 
\underline{\Iso}^\otimes(\omega_{K_{s,s},\varepsilon_s}) \ar[d] \\
K \ar^-{\theta_{K,\varepsilon}}[r] & \underline{\Iso}^\otimes(\omega_{K,\varepsilon})
}
\end{gathered}
\end{equation*}
where the left arrow is the embedding and the right arrow is \eqref{e:IsoomegaKfib}.
By the case where $S = \Spec(k)$ with $k'$ for $k$, the top arrow is an isomorphism.
Thus the bottom arrow is an isomorphism
because it is an isomorphism on fibres over $(s,s)$ and $K$ and  
$\underline{\Iso}^\otimes(\omega_{K,\varepsilon})$ are transitive affine over $S/k$.
\end{proof}

\begin{thm}
Let $\sC$ be a pseudo-Tannakian category, $X$ be a non-empty super $\Q$\nd scheme,
and $T:\sC \to \Mod(X)$ be a pointwise faithful tensor functor.
Then the canonical tensor functor $\sC \to \Mod_{\underline{\Iso}^\otimes(T),\varepsilon_T}(X)$
is a super Tannakian hull of $\sC$.
\end{thm}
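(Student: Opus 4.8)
The plan is to reduce to Theorem~\ref{t:Tannhull} by identifying $\Mod_{\underline{\Iso}^\otimes(T),\varepsilon_T}(X)$ with a suitable super Tannakian category and recognising the canonical tensor functor $\sC \to \Mod_{\underline{\Iso}^\otimes(T),\varepsilon_T}(X)$ as $\Phi_T$. First I would invoke Theorem~\ref{t:Isofthfl}: since $X$ is non-empty and $T$ is pointwise faithful, $\underline{\Iso}^\otimes(T)$ is a transitive affine super groupoid over $X/\kappa(\sC)$, equipped with the involution $\varepsilon_T$, so $\Mod_{\underline{\Iso}^\otimes(T),\varepsilon_T}(X)$ makes sense. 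The key observation is that by Theorem~\ref{t:Tannhull}, $U:\sC \to (\widetilde{\sC})_\mathrm{rig}$ is a super Tannakian hull, and by Theorem~\ref{t:TannhullModX} the pointwise faithful tensor functor $T$ factors essentially uniquely as $U$ followed by a pointwise faithful tensor functor $\overline{T}:(\widetilde{\sC})_\mathrm{rig} \to \Mod(X)$. So it suffices to show that $\Phi_{\overline T}:(\widetilde{\sC})_\mathrm{rig} \to \Mod_{\underline{\Iso}^\otimes(\overline T),\varepsilon_{\overline T}}(X)$ is a tensor equivalence, together with the compatibility $\underline{\Iso}^\otimes(\overline T) \iso \underline{\Iso}^\otimes(T)$ coming from \eqref{e:IsoTannhull} (which holds because composition with $U$ induces a bijection on $\Iso^\otimes$ after any base change $p:X' \to X$, by the full faithfulness half of Theorem~\ref{t:TannhullModX} applied to $p^*$-pulled-back functors) and the commuting square \eqref{e:ModIsosquare}.

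Next I would establish that $\Phi_{\overline T}$ is an equivalence. Since $(\widetilde{\sC})_\mathrm{rig}$ is super Tannakian by Theorem~\ref{t:Tannequiv}, Corollary~\ref{c:Tannequivab} gives a $k$-tensor equivalence $(\widetilde{\sC})_\mathrm{rig} \simeq \Mod_{K,\varepsilon}(\overline k)$ for $k = \kappa(\sC) = \End(\I)$, $\overline k$ an algebraic closure, and $(K,\varepsilon)$ a transitive affine super groupoid over $\overline k/k$. The pointwise faithful tensor functor $\overline T$ then corresponds to a pointwise faithful tensor functor from $\Mod_{K,\varepsilon}(\overline k)$ to $\Mod(X)$. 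Here is where I want to apply Lemma~\ref{l:thetaPhi}, but it is stated for $\omega_{K,\varepsilon}$; the point is that an arbitrary pointwise faithful tensor functor $\overline T$ out of a super Tannakian category is, up to tensor isomorphism and base change, of the form $\omega$-followed-by-pullback. More precisely, I would argue: by \eqref{e:ModIsofac} and the base change isomorphism \eqref{e:Isopull}, and using that $\underline{\Iso}^\otimes(\overline T)$ is faithfully flat over $X$ hence has points in a suitable extension, one reduces to the case where $X = \Spec(k')$ for $k'$ an algebraically closed extension of $k$, where $\overline T$ becomes a fibre functor and by Corollary~\ref{c:fibfununique} is determined up to tensor isomorphism by the induced homomorphism $\kappa(\sC) \to k'$; then $\overline T$ is tensor isomorphic to the composite of $\omega_{K_{k'},\varepsilon}$ (for the pulled-back groupoid, using transitivity to descend from a chosen point) with the equivalence of module categories. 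Lemma~\ref{l:thetaPhi} then says $\Phi_{\omega_{K_{k'},\varepsilon}}$ is an equivalence, and $\Phi_{\overline T}$ inherits this via the compatibilities.

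Finally, assembling: $\Phi_T$ (the canonical tensor functor $\sC \to \Mod_{\underline{\Iso}^\otimes(T),\varepsilon_T}(X)$) equals $\Phi_{\overline T} \circ U$ modulo the identification $\underline{\Iso}^\otimes(\overline T) \iso \underline{\Iso}^\otimes(T)$, by \eqref{e:ModIsosquare}. Since $\Phi_{\overline T}$ is a tensor equivalence and $U$ is a super Tannakian hull, $\Phi_T = \Phi_{\overline T} \circ U$ is a super Tannakian hull, because composing a super Tannakian hull with a tensor equivalence yields a super Tannakian hull — this last point follows formally from the definition, since for any super Tannakian $\sD$, composition with a tensor equivalence is an equivalence of groupoids of faithful tensor functors. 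I expect the main obstacle to be the reduction step in the middle paragraph: passing from "$\overline T$ is an arbitrary pointwise faithful tensor functor from a super Tannakian category" to "$\overline T$ is $\omega_{K,\varepsilon}$ up to base change and tensor isomorphism", making careful use of faithful flatness of $\underline{\Iso}^\otimes(\overline T)$ (Theorem~\ref{t:Isofthfl}), the base change compatibilities \eqref{e:Isopull} and \eqref{e:ModIsofac}, the uniqueness of fibre functors (Corollary~\ref{c:fibfununique}), and the descent argument for transitive affine groupoids from a single point that appears in the proof of Lemma~\ref{l:thetaPhi}. Everything else is formal manipulation of the universal properties already set up.
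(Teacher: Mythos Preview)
Your proposal is correct and follows essentially the same route as the paper: factor $T$ through the super Tannakian hull $U$ via Theorem~\ref{t:TannhullModX}, identify $\underline{\Iso}^\otimes(T)$ with $\underline{\Iso}^\otimes(\overline T)$ using the full faithfulness half of that theorem and \eqref{e:ModIsosquare}, then reduce to showing $\Phi_{\overline T}$ is an equivalence for a super Tannakian source; finally use Corollary~\ref{c:Tannequivab}, the base-change factorisation \eqref{e:ModIsofac} at a point, Corollary~\ref{c:fibfununique}, and Lemma~\ref{l:thetaPhi}. The only minor deviation is in your middle reduction: you invoke faithful flatness of $\underline{\Iso}^\otimes(\overline T)$ to find points, whereas the paper more directly enlarges $k'$ so that $X$ itself has a $k'$-point over $k$ (using that $X$ is non-empty) and then applies \eqref{e:ModIsofac} with $p$ the inclusion of that point --- this is a cleaner way to reach $X = \Spec(k')$, but your version also works.
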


\begin{proof}
Let $U:\sC \to \sC'$ is a super Tannakian hull of $\sC$.
Replacing $T$ by a tensor isomorphic functor, we may assume by Theorem~\ref{t:TannhullModX}
that $T$ factors as $T = T'U$.
Then by Theorem~\ref{t:TannhullModX}, $U$ induces an isomorphism from $\underline{\Iso}^\otimes(T')$ to 
$\underline{\Iso}^\otimes(T)$, so that the top arrow of \eqref{e:ModIsosquare} is an isomorphism.
Replacing $T$ by $T'$, it is thus enough to show that when $\sC$ is super Tannakian
$\Phi_T$ is an equivalence.

By Corollary~\ref{c:Tannequivab}, we may assume that $\sC = \Mod_{K,\varepsilon}(k')$
for an algebraically closed extension $k'$ of a field $k$ of characteristic $0$
and transitive affine groupoid with involution $(K,\varepsilon)$ over $k'/k$.
We may suppose after replacing $k'$ by an extension that $X$ has a $k'$\nd point $x$ over $k$.
Then by \eqref{e:ModIsofac} with $p:X' \to X$ the inclusion of $x$ we may after replacing $X$
by $X'$ assume that $X = \Spec(k')$.
By Corollary~\ref{c:fibfununique} may further assume that $T$ is the forgetful functor 
$\omega_{K,\varepsilon}$.
That $\Phi_T$ is an equivalence then follows from Lemma~\ref{l:thetaPhi} with $S = \Spec(k')$.
\end{proof}

\end{document}